\newtheorem{theorem}{Theorem}
\newtheorem{corollary}[theorem]{Corollary}
\newtheorem{definition}{Definition}
\newtheorem{lemma}{Lemma}
\newtheorem{proposition}[theorem]{Proposition}
\newtheorem{remark}{Remark}
 \newtheorem*{theorem*}{Rough version of the Main theorem}
\let\e=\varepsilon
\let\p=\partial
\let\O=\Omega
\let\o=\omega
\numberwithin{equation}{section}
\let\hide\iffalse
\let\unhide\fi
\DeclareMathAlphabet{\mathpzc}{OT1}{pzc}{m}{it}
\newcommand{\R}{\mathbb{R}}
\renewcommand{\S}{\mathbb{S}}
\newcommand{\be}{\begin{equation}}
\newcommand{\bm}{\begin{multline}}
\newcommand{\ee}{\end{equation}}
\newcommand{\dd}{\mathrm{d}}
\newcommand{\xb}{x_{\mathbf{b}}}
\newcommand{\tb}{t_{\mathbf{b}}}
\newcommand{\f}{\frac}
\newcommand{\Bes}{\begin{eqnarray*}}
\newcommand{\Ees}{\end{eqnarray*}}
\newcommand{\Be}{\begin{equation} }
\newcommand{\Ee}{\end{equation}}
\newcommand{\Bs}{\begin{split}}   
\newcommand{\vertiii}[1]{{\left\vert\kern-0.25ex\left\vert\kern-0.25ex\left\vert #1 
    \right\vert\kern-0.25ex\right\vert\kern-0.25ex\right\vert}}
    \newcommand{\vertip}[1]{{\left\{\kern-0.7ex\left\{\kern-0.25ex
    #1 
     \right\}\kern
    -0.7ex\right\}}}
\def\munderbar#1{\underline{\sbox\tw@{$#1$}\dp\tw@\z@\box\tw@}}
\def\p{\partial}
\def\O{\Omega}
\def\R{\mathbb{R}}
\def\B{\begin{equation}}
\def\E{\end{equation}}
\def\BN{\begin{eqnarray*}}
\def\EN{\end{eqnarray*}}
\def\bcb{\begin{color}{blue}}
\def\ec{\end{color}}
\def\bcr{\begin{color}{red}}
\def\ec{\end{color}}
\begin{document}

\title{
Incompressible Euler limit from Boltzmann equation with Diffuse Boundary Condition for Analytic data
}

 \author{Juhi Jang}
 \address{Department of Mathematics, University of Southern California, Los Angeles, CA, 90089 USA, email: juhijang@usc.edu}
 \author{Chanwoo Kim}
 \address{Department of Mathematics, University of Wisconsin-Madison, Madison, WI, 53706, USA, email: ckim.pde@gmail.com; chanwoo.kim@wisc.edu
 }

\maketitle


\begin{abstract}  
A rigorous derivation of the incompressible Euler equations with the no-penetration boundary condition from the Boltzmann equation with the diffuse reflection boundary condition has been a challenging open problem. We settle this open question in the affirmative when the initial data of fluid are well-prepared in a real analytic space, in 3D half space. As a key of this advance we capture the Navier-Stokes equations of  
\Be
\textit{viscosity} 
 \sim \frac{\textit{Knudsen number}}{\textit{Mach number}} \notag
\Ee
 satisfying the no-slip boundary condition, as an \textit{intermediary} approximation of the Euler equations through a new Hilbert-type expansion of the Boltzmann equation with the diffuse reflection boundary condition. Aiming to justify the approximation we establish a novel quantitative $L^p$-$L^\infty$ estimate of the Boltzmann perturbation around a local Maxwellian of such viscous approximation, along with the commutator estimates and the integrability gain of the hydrodynamic part in various spaces; we also establish direct estimates of the Navier-Stokes equations in higher regularity with the aid of the initial-boundary and boundary layer weights using a recent Green's function approach. The incompressible Euler limit follows as a byproduct of our framework.

 \end{abstract}


\tableofcontents

\section{Introduction}

An important and active research direction in mathematical physics/PDE is on the so-called Hilbert's sixth problem \cite{Hilbert} seeking a unified theory of the gas dynamics including different levels of descriptions from a mathematical standpoint by connecting the behavior of solutions to equations from kinetic theory to solutions of other systems that arise in formal limits, such as the N-body problem, the Euler equations, the Navier-Stokes equations, etc. In particular, the hydrodynamic limit of the Boltzmann equation has received a great deal of attention and enthusiasm in the mathematics and physics communities
since the pioneering work \cite{Hilbert1} by Hilbert, which was the first example of his sixth problem. Remarkably, all the basic fluid equations of  compressible, incompressible,  inviscid or viscous fluid dynamics can be derived from the Boltzmann equation of a rarefied gas dynamics upon the choice of appropriate scalings in small mean free path limit. Though formal derivations are rather well-understood, as far as mathematical justifications go, despite great progress over the decades 
(for example see \cite{BGL91, BGL93, EGKM2, GSR, Guo2006, Laure} and the references therein), full understanding of the hydrodynamic limit incorporating important physical applications such as boundary effects or physical phenomena is still far from being complete. The goal of this paper is to make a rigorous connection between the Boltzmann equation and the incompressible Euler equations in the presence of the boundary by bypassing the {\it inviscid limit} of the incompressible Navier-Stokes equations.


The dimensionless Boltzmann equation with the \textit{Strouhal number} $\mathpzc{St}$ and the \textit{Knudsen number} $\mathpzc{Kn}$ takes the form of 
\Be\label{Boltzmann}
\mathpzc{St} \p_t F  +  v\cdot \nabla_x F  = \f{1}{\mathpzc{Kn}} Q(F ,F ) .
\Ee 
Here the distribution function of the gas is denoted by $F(t,x,v) \geq 0$ with the time variable $t \in \R_+: = \{ t \geq 0\}$, the space variable $x = (x_1,x_2,x_3)\in \O \subset \R^3$, and the velocity variable $v  = (v_1,v_2,v_3) \in \R^3$.  The Boltzmann collision operator $Q(\cdot, \cdot)$ of the hard sphere takes the form of 
\Be\label{Q}
\begin{split}
Q(F,G)  = \frac{1}{2} \int_{\R^3} \int_{\S^2}  |(v-v_*) \cdot  \mathfrak{u} | \{ &
F(v^\prime) G(v^\prime_*) + G(v^\prime) F(v^\prime_*)\\
 &- F(v ) G(v_* ) - G(v ) F(v_* ) 
\}\dd \mathfrak{u} \dd v_*, \end{split}\Ee
where $v^\prime := v- ((v-v_*) \cdot  \mathfrak{u} )  \mathfrak{u} $ and $v_*^\prime := v_*+ ((v-v_*) \cdot  \mathfrak{u} )  \mathfrak{u} $. 
This operator satisfies  the so-called collision invariance property: for any $F(v)\text{ and } G(v) $ decaying sufficiently fast as $|v|\rightarrow 0$, 
\Be\label{collision_inv}
\int_{\R^3} Q(F,G) (v)  \Big( 1 , v  , \frac{| {v } |^2-3}{\sqrt{6}}
 \Big) \dd v=(0,0,0), 
\Ee
which represents the local conservation laws of mass, momentum and energy. The celebrated Boltzmann's H-theorem reveals the entropy dissipation: 
\Be\label{entropy}
\int_{\R^3} Q(F,F) (v)  \ln F(v) \dd v \leq 0, 
\Ee
for any $F(v)>0$ decaying sufficiently fast as $|v|\rightarrow 0$. An intrinsic equilibrium, satisfying $Q(\cdot, \cdot )=0$, is given by a local Maxwellian associated with the density $R>0$, the macroscopic velocity $U\in \R^3$ and the temperature $T>0$  
\Be\label{Maxwellian}
M_{R, U , T} (v): = \f{R}{(2\pi T)^{ \f{3}{2}}}\exp \left\{- \frac{|v-U|^2}{2T}\right\}, 
\Ee
which is known as the only configuration attaining the equality in \eqref{entropy}. 

In addition to the Strouhal number and Knudsen number we introduce the \textit{Mach number} $\mathpzc{Ma}$. By passing $\mathpzc{St}, \mathpzc{Kn},$ and $\mathpzc{Ma}$ to zero, one can formally derive PDEs of hydrodynamic variables for the fluctuations around the reference state $(1,0,1)$, which are determined as 
 \Be
 \Big( \rho (t,x), 
 {u} (t,x),
 \theta  (t,x)
 \Big)
 = \lim_{\mathpzc{Ma} \downarrow 0}
\f{1}{\mathpzc{Ma}}\int_{\R^3}
\{F  (t,x,v) -M_{1,0,1} (v)  \}
 \Big( 
1 ,
 v, 
 \frac{|v|^2-3}{\sqrt{6}}
 \Big) 
 \dd v  
 .\label{hydro_limit}
\Ee
The famous Reynolds number appears as a ratio between the Knudsen number and Mach number through  the von Karman relation: 
\Be\label{Re}
\frac{1}{\mathpzc{Re}} =   \frac{\mathpzc{Kn}}{\mathpzc{Ma}}. 
\Ee
For instance, the incompressible Navier-Stokes equations with $\mathpzc{Re}=1$, namely the viscosity of order one, can be derived by setting $\mathpzc{St}= \mathpzc{Ma}=\mathpzc{Kn}= \e$ as $ \e \downarrow 0$.  
 In this paper we are particularly interested in a scale of {\it large Reynolds number} as follows:
\Be\label{scale}
\mathpzc{St}=\e = \mathpzc{Ma}  \  \ \text{and} \ \ 
\mathpzc{Kn} =\kappa \e  \ \ \text{with} \ 
\kappa= \kappa(\e) \downarrow 0 \  \text{as} \   \e \downarrow 0, 
\Ee
through which we will derive the incompressible Euler equations with the no-penetration boundary condition in the limit 
\begin{align}
\p_t   {u} _E +   {u}_E  \cdot \nabla_x  {u}_E  +\nabla_x p _E  =0, \ 
\nabla_x \cdot   {u} _E  =0
 \ \  &\text{in} \ \ \O,\label{Euler} 
 \\
 u _E\cdot n  =0  \ \ &\text{on} \ \ \p\O,\label{no-pen}
\end{align}   
with
$\p_t \theta  + u _E \cdot \nabla_x \theta  =0$ and $
\nabla_x \theta  (t,x) +\nabla_x  \rho  (t,x)= 0$. Here $n=n(x)$ denotes an outward normal at $x$ on the boundary $\p\O$. 
For the sake of simplicity we set an initial datum $\theta_{0}(x)= 0=\rho_{0}(x)$ so that  
\Be\label{theta=zero}
\theta(t,x) = 0=\rho(t,x) \text{ for all } t \geq 0.
\Ee

In many important physical applications such as a turbulence theory, it would be relevant to take into account the physical boundary in the hydrodynamic limit. A boundary condition of the Boltzmann equation is determined by the interaction law of the gas with the boundary surface. One of the physical conditions is the so-called \textit{diffuse reflection boundary condition}, which takes into account an instantaneous thermal equilibration of reflecting 
gas particle (see \cite{DV,EGKM}): for $ (x,v) \in 
\{ \p\O \times \R^3: n(x) \cdot v<0\},$
\Be\label{diffuse_BC}
F (t,x,v)  = c_\mu M_{1,0,1}(v) \int_{n(x) \cdot \mathfrak{v}>0} F  (t,x,\mathfrak{v}) (n(x) \cdot \mathfrak{v}) \dd\mathfrak{v} ,
\Ee
where we have taken an isothermal boundary with a rescaled temperature $1$ for the sake of simplicity. Here, the normalization constant $c_\mu:= 1/\big(\int_{n(x) \cdot \mathfrak{v}>0} M_{1,0,1}  (\mathfrak{v}) (n(x) \cdot \mathfrak{v}) \dd\mathfrak{v}\big)$ leads to 
the null flux condition 
$
\int_{\R^3} F(t,x,v) (n(x) \cdot v) \dd v=0 \text{ on }x \in \p\O$.
\hide
In many important physical applications such as a turbulence theory, it would be relevant to take into account the physical boundary in the hydrodynamic limit. A boundary condition of the Boltzmann equation is determined by the interaction law of the gas with the boundary surface. One of the most physical conditions is the so-called \textit{diffuse reflection boundary condition}, which takes into account an instantaneous thermal equilibration of bounce-back gas particle (see \cite{DV,EGKM}). For the sake of simplicity we consider an isothermal boundary with a rescaled temperature $1$: 
\Be\label{diffuse_BC}
F (t,x,v)  = c_\mu M_{1,0,1}(v) \int_{n(x) \cdot \mathfrak{v}>0} F  (t,x,\mathfrak{v}) (n(x) \cdot \mathfrak{v}) \dd\mathfrak{v} \ \text{ for }  (x,v) \in 
\{ \p\O \times \R^3: n(x) \cdot v<0\}.
\Ee
Here $n(x)$ denotes an outward normal at $x$ on the boundary $\p\O$, while $c_\mu:= 1/\big(\int_{n(x) \cdot \mathfrak{v}>0} M_{1,0,1}  (\mathfrak{v}) (n(x) \cdot \mathfrak{v}) \dd\mathfrak{v}\big)$ implies the null flux condition $\int_{\R^3} F(t,x,v) (n(x) \cdot v) \dd v=0$ on $x \in \p\O$.

Perhaps inspired by (\ref{hydro_limit}) and (\ref{scale}) one might attempt to expand $F(t,x,v)$ around a local Maxwellian $M_{1,\e  u_E, 1}(v)$ associated with the Euler flow (\ref{Euler})-(\ref{no-pen}). Unfortunately this local Maxwellian does not honor the diffuse reflection boundary condition when the Euler flow satisfies a no-penetration boundary condition (\ref{no-pen}). %
In fact a size of the boundary mismatch could be an order of tangential component of the Euler flow $u_E$ at the boundary.

Therefore a uniform bound to verify the limit (\ref{hydro_limit}) is not expected even in the formal level. It is worth to note that such a mismatch does not appear at least in a formal level when a specular reflection boundary condition, $F(t,x,v)= F(t,x,R_x v)$ on $x \in \p\O$ where $R_x v= v- 2n(x) (n(x) \cdot v)$, is imposed (see Chapter 5 of \cite{Laure}).
\unhide
In particular, it is well-known that 
the diffuse boundary condition (\ref{diffuse_BC}) is a kinetic boundary condition featuring  a mismatch with the no-penetration boundary condition (\ref{no-pen}) of the the Euler flow under \eqref{scale}, without any small parameter with respect to $1/\mathpzc{Re}$ or $\mathpzc{Ma}$. One can readily see this by  
\hide
 Aiming for the limit (\ref{hydro_limit}) with the scale (\ref{scale}), 
 as a first attempt, one might try to study $F(t,x,v)$ through the  expansion of 
 \unhide
expanding $F$ around a local Maxwellian $M_{1,\e  u_E, 1}(v)$ associated with a flow of the no-penetration boundary condition (\ref{no-pen}) \textit{directly}. 
Unfortunately, this local Maxwellian \textit{does not honor}  
the diffuse reflection boundary condition when a flow satisfies \textit{the no-penetration boundary condition} (\ref{no-pen}).  %
In fact a size of the boundary mismatch could be an order of the tangential component of the Euler flow $u_E$ at the boundary. Therefore  a uniform bound to verify the limit (\ref{hydro_limit}) in a scale of large Reynolds number (\ref{scale}) is not expected even at the formal level.  This poses  a major obstacle in the Euler limit from the Boltzmann equation with the diffuse reflection boundary.  It is worth 
 noting that such a mismatch does not appear at least at the formal level when the specular reflection boundary condition is imposed: $F(t,x,v)= F(t,x,R_x v)$ on $x \in \p\O$ where $R_x v= v- 2n(x) (n(x) \cdot v)$; while the mismatch can possess a small factor for  the so-called Maxwell boundary condition, a convex combination of the specular reflection and the diffuse reflection boundary conditions, by choosing the coefficient for diffuse reflection known as \textit{the accommodation constant} to vanish as $\mathpzc{Re}\rightarrow \infty$.

 Remarkably, an analogous, better-known boundary mismatch phenomenon exists in the realm of mathematical fluid dynamics, specifically in the inviscid limit problem of the Navier-Stokes equations that addresses the validity of the Euler solutions as the leading order approximation of the Navier-Stokes solutions in the vanishing viscosity limit. The inviscid limit for the no-slip boundary condition features a boundary mismatch between two different boundary conditions for the Navier-Stokes and Euler flows. 
In fact, whether the solution to the Navier-Stokes equations with a $ \kappa\eta_0 $-viscosity (a physical constant $\eta_0$ can be computed explicitly from the Boltzmann theory as in \eqref{eta_0})  satisfying the no-slip boundary condition    
\begin{align}
\p_t u+ u \cdot \nabla_x u - \kappa \eta_0 \Delta u + \nabla_x p  &=0 \ \ \text{in} \ \O, \label{NS_k}\\
 \nabla_x \cdot u &=0 \ \ \text{in} \ \O,
 \label{incomp}
 \\
 \ \ u  &=0  \ \ \text{on} \ \p\O, \label{noslip}
\end{align} 
converges to the solution of the Euler equations satisfying the no-penetration boundary condition \eqref{Euler}-\eqref{no-pen} in $\kappa =  {1} / {\mathpzc{Re}} \downarrow 0$ is an outstanding problem, which is arguably the most relevant and challenging because of the mismatch of two boundary conditions between \eqref{noslip} and \eqref{no-pen} resulting in the formation of boundary layers such as Prandtl  layer and unbounded vorticity near the boundary. While the verification of the inviscid limit is still largely open, it holds under certain symmetry assumption on the domain and data or under the flat boundary and strong regularity such as analyticity at least near the boundary \cite{MM16}.  A classical way to tackle the inviscid limit problem is to study the Prandtl expansion \cite{SC1, SC2, Mae14}: $u(t,x_1,x_2,x_3)= u_E(t,x_1,x_2,x_3) + u_P (t,x_1,x_2,\frac{x_3}{\sqrt\kappa})+ O(\sqrt\kappa)$. Recently, different frameworks that avoid the boundary layer expansion have become available \cite{NN2018,KVW,FW}. 

The incompressible Euler limit from the Boltzmann equation turns out to be intimately tied to the inviscid limit of the incompressible Navier-Stokes equations, which accounts for the similarity of two boundary mismatches. A beautiful connection stems from the Navier-Stokes solutions of \eqref{NS_k}-\eqref{noslip} in large Reynolds numbers: at least formally, not only they are approximated by the Euler equations \eqref{Euler}-\eqref{no-pen} but also they approximate 
the Boltzmann equation \eqref{Boltzmann} under  \eqref{scale} with \eqref{diffuse_BC}, in fact better than the Euler equations  \eqref{Euler}-\eqref{no-pen} at each Mach number  $\e>0$, because the Navier-Stokes equations contain a high order correction term $\kappa \eta_0 \Delta u$ that captures the dissipative nature of the Boltzmann collision operator (as we will see in Section \ref{sec:1.1}). And importantly, a local Maxwellian $M_{1,\e  u, 1}(v)$ associated with $u$ satisfying the no-slip boundary condition (\ref{noslip}), satisfies the diffuse reflection boundary condition (\ref{diffuse_BC}) without singular terms. In other words, the Navier-Stokes solutions  are compatible with the diffuse reflection boundary condition. Therefore, under the scale \eqref{scale} the Navier-Stokes solution of \eqref{NS_k}-\eqref{noslip} stands in between the Boltzmann solution of  \eqref{Boltzmann}, \eqref{diffuse_BC} and the Euler solution \eqref{Euler}-\eqref{no-pen}. 

 In this paper, inspired by these observations, we propose to study the Euler limit from the Boltzmann equation through the Navier-Stokes solutions that hold both features of the Euler and the Boltzmann under \eqref{scale} at each Mach number $\e>0$. To this end, we expand the Boltzmann solution $F$ around a local Maxwellian associated with a Navier-Stokes flow  $u$ to (\ref{NS_k})-(\ref{noslip}): 
\Be\label{mu_e}
\mu(v) : = M_{1 , \e u , 1 }(v),
\Ee
as 
 \Be\label{F_e1}
F = \mu+ \e^2 f_2 \sqrt{\mu }  + \e^{3/2} f_R\sqrt{\mu}   ,
\Ee 
and analyze \eqref{F_e1} via a new Hilbert expansion presented in Section \ref{sec:1.1}.  
Although the notations $F^\e$ and $f^\e$ may be more precise for the equation depending on $\e$, 
 we will abuse the notations by dropping the superscript $\e$ for the sake of simplicity.  
\hide
Here, we introduce an auxiliary parameter $\delta=\delta(\e)\downarrow 0$ as $\e \downarrow 0$, which indicates a size of the fluctuation $(F-\mu)/ {\e}$    
 and of course one of the main goals of this article is to prove that $\delta$ can be chosen such that $\delta=\delta(\e)\downarrow 0$ as $\e \downarrow 0$, which will ensure that the first order leading approximation of $F$ at each fixed $\e>0$ is given by a Navier-Stokes flow.   
\unhide
The next order correction $f_2$ can be entirely determined by the Navier-Stokes flow and it turns out that its contribution is always smaller than $f_R$'s one in our choice of $\e$ and $\kappa$. 
 A choice of the range of the Mach number with respect to the Reynolds number: $\e \ll \kappa={1} / {\mathpzc{Re}}$ in $\e\downarrow 0$ plays an important role in our analysis and the formal expansion. We will discuss the relation and its role in Section \ref{sec:B_int}. With such a choice of the scale, uniform-in-$\e$ estimates of the Boltzmann remainder $f_R$ are achieved by a novel quantitative $L^p$-$L^\infty$ estimate in a setting of the local Maxwellian of the Navier-Stokes approximation \eqref{mu_e}, along with the commutator estimates and the integrability gain of the hydrodynamic part in various spaces. 
 \hide
Another natural choice might be $\e^{\mathfrak{q}}=\kappa$ with an integer $\mathfrak{q}\geq 1$. Then the second correction $\frac{1}{\kappa}Lf_2$ is shifted to the next hierarchy (see (\ref{eqtn_f_2})) and as a consequence the fluid equations become the Euler equations by losing $\kappa \eta_ 0 \Delta u$. Without the boundary, a higher order expansion $F=\mu_E+  [\e f_1   + \e^2 f_2  + \e^3 f_3 + \cdots + \e^r f_R] \sqrt{\mu_E}$ where $\mu_E=M_{1 , \e u_E , 1 }$ has been established in \cite{DEL,WZL}. In the presence of the boundary, on the other hand, such an expansion features a boundary mismatch. The usual approach is then based on a boundary layer expansion, correcting an interior Hilbert-like expansion at the boundary to satisfy the boundary conditions (\cite{GW,Wu}). Our approach is based on an interior expansion up to the second correction $f_2$ that {\it avoids} the boundary layer expansion, while uniform-in-$\e$ estimates of the Boltzmann remainder $f_R$ can be achieved by a novel quantitative $L^p$-$L^\infty$ estimate in a setting of the local Maxwellian of the Navier-Stokes approximation \eqref{mu_e}, along with commutator estimates and the integrability gain of the hydrodynamic part in various spaces. 
 \unhide

In order to establish the Euler limit by using the Navier-Stokes solutions of \eqref{NS_k}-\eqref{noslip} as a reference state as $ \e \downarrow 0$ in a scale of large Reynolds number (\ref{scale}), it is imperative to show the uniform-in-$\kappa$ convergence of the Navier-Stokes solutions to the Euler solutions of \eqref{Euler}-\eqref{no-pen}, where the inviscid limit comes into play. 
In this paper, we take the spatial domain to be the upper-half space with periodic boundary conditions in the horizontal components and analytic data for the Navier-Stokes solutions of \eqref{NS_k}-\eqref{noslip} and obtain uniform-in-$\kappa$ estimates built upon a recent development on the inviscid limit problem in the half-space based on the Green's function approach using the boundary vorticity formulation \cite{Mae14,NN2018,KVW,FW}.


\hide
This is possible since the associated $\theta\equiv0$ and $\rho\equiv0$ solve $\p_t \theta  + u  \cdot \nabla_x \theta -C_2\kappa \Delta \theta   =0, \ 
\nabla_x \theta  (t,x) + \nabla_x \rho  (t,x)= 0$.

 \unhide

\hide

To overcome such mismatch we use solutions of incompressible Navier-Stokes equations (with $\kappa \eta_0$-viscosity and a no-slip boundary condition) as a reference state
\begin{align}
\p_t u+ u \cdot \nabla_x u - \kappa \eta_0 \Delta u + \nabla_x p  &=0 \ \ \text{in} \ \O, \label{NS_k}\\
 \nabla_x \cdot u &=0 \ \ \text{in} \ \O,
 \label{incomp}
 \\
 \ \ u  &=0  \ \ \text{on} \ \p\O, \label{noslip}
\end{align}
where a physical constant $\eta_0$ will be determined later in (\ref{NS_k}). 
In other words we expand the Boltzmann solution $F$ around a local Maxwellian associated with a Navier-Stokes' solution $u$ to (\ref{NS_k})-(\ref{noslip})
\Be\label{mu_e}
\mu(v) : = M_{1 , \e u , 1 }(v),
\Ee
as 
\Be\label{F_e}
F = \mu + \e^2 f_2 \sqrt{\mu }  + \delta\e f_R\sqrt{\mu}.
\Ee 
Here, we are introducing an additional parameter $\delta>0$ satisfying $\delta(\e)\downarrow 0$ as $\e \downarrow 0$. We will discuss a desired relation between $\delta, \kappa,$ and $\e$ later in (\ref{delta}), which plays a role in our analysis. It is easy to check that $\mu$ satisfies the diffuse reflection BC (\ref{diffuse_BC}) from the no-slip BC (\ref{noslip}). As the Euler case we let the fluctuation of density $\rho$ and temperature $\theta$ vanish for all time. This is possible since the associate $\theta\equiv0$ and $\rho\equiv0$ solve $\p_t \theta  + u  \cdot \nabla_x \theta -C_2\kappa \Delta \theta   =0, \ 
\nabla_x \theta  (t,x) + \nabla_x \rho  (t,x)= 0$.

   \unhide



\hide
Inspired by 
 (\ref{hydro_limit}), we try to construct a family of solutions $F$ as an expansion around $\mu$ of (\ref{mu_e})
\Be\label{F_e}
F = \mu + \e^2 f_2 \sqrt{\mu }  + \delta\e f_R\sqrt{\mu}.
\Ee 

\bcr [Need a paragraph?: Do we need to mention viscous limit of fluid here? Then it would be natural to assume the half space and analytic data on the hydrodynamic part in the main Theorem. For example]

\ec

\unhide


Our main result concerns a rigorous justification of the passage from the solutions to the dimensionless Boltzmann equation (\ref{Boltzmann}) of the scale (\ref{scale}) with the diffuse reflection boundary condition (\ref{diffuse_BC}) to the solution of the incompressible Euler equation (\ref{Euler}) with the no-penetration boundary condition (\ref{no-pen}), without introducing any boundary expansion of the Boltzmann equation: 


\begin{theorem}[Informal statement] We consider a half space in 3D \label{thm_informal}
\Be\label{domain}
\O := \mathbb{T}^2 \times \R_+ \ni (x_1, x_2, x_3),
 \ \ \text{where } \mathbb{T} \text{ is a periodic interval of } (-\pi, \pi).
\Ee
For some choice of $\e$ and $\kappa(\e)$,
\hide
 \Be 
\delta= \sqrt{\e} \ \ \text{and} \ \ 
\delta \lesssim \exp (   -\kappa^{ -\mathfrak{P}}   ) 
 \ \ \text{for some   } \mathfrak{P}>0.  
\Ee
\unhide
 there exists a large set of initial data $u_{in}$, $f_{2,in}$ and $f_{R,in}$ 
such that a unique solution $F(t,x,v)$ of the form  \eqref{F_e1}   
to (\ref{Boltzmann}) and (\ref{diffuse_BC}) with (\ref{scale}) exists on $ [0,T]$ for some $T>0$ 
and satisfies 
\Be\notag
\sup_{0 \leq t \leq T}\left\|\frac{F (t,x,v)- M_{1, \e u , 1}  }{\e \sqrt{M_{1,\e u,1}}}  \right\|_{L^2(\O \times \R^3)}    \longrightarrow 0   \ \ \text{as} \ \ \e \downarrow 0,
\Ee 
and  
\Be\notag
\sup_{0 \leq t \leq T}\left\|\frac{F (t,x,v)- M_{1, \e u_E , 1}  }{\e
(1+ |v|)^2\sqrt{M_{1,0,1} }
} \right\|_{L^2(\O\times\R^3)}
     \longrightarrow 0   \ \ \text{as} \ \ \e \downarrow 0,
\Ee 
while $u$ and $u_E$ denote solutions of the Navier-Stokes (\ref{NS_k})-(\ref{noslip}) and Euler equations (\ref{Euler})-(\ref{no-pen}), respectively. 
\end{theorem}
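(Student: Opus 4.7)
The plan is to realize the three-tier scheme outlined in the introduction, Boltzmann $\to$ Navier--Stokes $\to$ Euler, with the local Maxwellian $\mu=M_{1,\e u,1}$ of the Navier--Stokes flow playing the role of an intermediate expansion center at each fixed $\e$. First I would construct the Navier--Stokes solution $u$ of \eqref{NS_k}--\eqref{noslip} from the analytic datum $u_{in}$ on a time interval $[0,T]$ independent of $\kappa$, borrowing the Green's-function/boundary-vorticity framework of \cite{Mae14,NN2018,KVW,FW} to obtain uniform-in-$\kappa$ analytic bounds plus higher derivative estimates weighted by boundary-layer functions, together with a quantitative inviscid limit $\sup_{[0,T]}\|u-u_E\|_{L^2}\to 0$ as $\kappa\downarrow 0$. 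These bounds will both feed the remainder estimate below and, at the end, allow the passage from the Navier--Stokes Maxwellian to the Euler Maxwellian.

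The core of the argument is the construction of $F$ via the expansion \eqref{F_e1}. Inserting this ansatz into \eqref{Boltzmann} and using $Q(\mu,\mu)=0$, the $O(\e^2)$ balance fixes $f_2$ by inverting the linearized collision operator on the microscopic part of $(\e\p_t+v\cdot\nabla_x)\mu$; by construction $f_2$ is controlled by derivatives of $u$ and is subleading in the regime $\e\ll\kappa$. The remaining equation for $f_R$ takes the schematic form
\[
\e\p_t f_R + v\cdot\nabla_x f_R + \tfrac{1}{\kappa\e}L_\mu f_R = \tfrac{1}{\sqrt{\kappa}}\Gamma_\mu(f_R,f_R) + \mathcal{S}(u,f_2;f_R),
\]
supplemented with the linearized diffuse boundary condition, which holds exactly because both $F$ and $\mu$ satisfy \eqref{diffuse_BC} thanks to \eqref{noslip}. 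The plan for closing a uniform-in-$\e$ estimate on $f_R$ is an $L^p$--$L^\infty$ bootstrap: the coercivity of $L_\mu$ provides control of the microscopic part $(\mathbf{I}-\mathbf{P}_\mu)f_R$ in weighted $L^2$ with gain $(\kappa\e)^{-1}$; the hydrodynamic part $\mathbf{P}_\mu f_R$ is recovered through a macroscopic moment/test-function argument that costs factors of $\e$ and generates commutators with $\mathbf{P}_\mu$, which must be absorbed using the uniform analytic regularity of $u$ from the first step. The $L^\infty$ tier would then be upgraded by iterating the diffuse-cycle stochastic characteristics of \cite{Guo2006,EGKM,EGKM2}, using the integrability gain for $\mathbf{P}_\mu f_R$ in mixed spaces to tame the singular velocity weights.

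The main obstacle I anticipate is the simultaneous control of the nonlinearity $\Gamma_\mu(f_R,f_R)/\sqrt{\kappa}$ and of the forcing arising from the Burnett mismatch between $\mu$ and the Navier--Stokes equation: both carry negative powers of $\kappa$ and $\e$, so the relation $\kappa=\kappa(\e)\downarrow 0$ must be carefully tuned in a regime where $\kappa$ is still large enough to guarantee the uniform analytic lifespan of Step~1 but small enough that the inviscid limit $u\to u_E$ is effective; the analyticity of the data is precisely what allows one to gain the very strong decay in $\kappa$ required to beat these losses via a continuity argument. Once the uniform bound $\|f_R(t)\|_{L^2}\lesssim 1$ is in hand, the first convergence of the theorem follows from
\[
\frac{F - M_{1,\e u,1}}{\e\sqrt{\mu}} = \e f_2 + \e^{1/2} f_R = O(\e^{1/2}) \ \text{in}\ L^2(\O\times\R^3),
\]
while the second convergence is obtained by the triangle inequality, estimating $M_{1,\e u,1}-M_{1,\e u_E,1}=O(\e|u-u_E|)$ pointwise via the mean value theorem in the fluid parameter and passing from $\sqrt{\mu}$ to the reference weight $(1+|v|)^2\sqrt{M_{1,0,1}}$ by elementary Gaussian comparison, and then invoking the inviscid limit from Step~1.
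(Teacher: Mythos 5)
Your proposal captures the correct three-tier architecture (Boltzmann $\to$ Navier--Stokes $\to$ Euler, with $\mu=M_{1,\e u,1}$ as the intermediate expansion center and an $L^p$--$L^\infty$ bootstrap for the remainder), but it misses the single device that actually makes the scheme close in the paper: the propagation of \emph{temporal derivatives}. The nonlinear term $\frac{\delta}{\e\kappa}\Gamma(f_R,f_R)$ can only be absorbed by the dissipation if one has an $L^6_x$ integrability gain on $Pf_R$ that the classical averaging lemma $H^{1/2}_x\subset L^3_x$ does not deliver in 3D; the paper obtains it via the test-function weak formulation \eqref{weak_form}, where the only price is the term $\iint \e\,\p_t f_R\,\psi$, controlled by $\e\|\p_t f_R\|_{L^2}$. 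Hence the whole a priori framework must be run \emph{simultaneously} for $f_R$ and $\p_t f_R$, the equation for $\p_t f_R$ involves $\nabla_x\p_t^2 u$, $\p_t^2 p$, etc., and this in turn forces the Navier--Stokes analysis (Theorem~\ref{thm_bound}) to track $\p_t\o$, $\p_t^2\o$ with initial-boundary-layer weights and to impose the compatibility conditions \eqref{CC} to avoid initial layers. Your proposal does not mention any of this; without it, the diffuse-cycle $L^\infty$ upgrade cannot be initiated because the needed $L^6$ input is missing.

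Two further points where the proposal misstates the mechanism. First, you claim "once the uniform bound $\|f_R(t)\|_{L^2}\lesssim 1$ is in hand," but the paper never obtains such a uniform bound: estimate \eqref{est:E} gives $\sqrt{\mathcal{E}(t)}\lesssim\delta^{-(\frac12-\frac3p(1-\frac p3))}$, which \emph{diverges} as $\e\downarrow 0$; only the weighted combination $\e f_2+\delta f_R$ appearing in $(F-\mu)/(\e\sqrt\mu)$ decays, at rate $\exp(-\mathfrak C T/(2\kappa^{1/2}))$, not $O(\e^{1/2})$. Second, the scaling tension you describe ($\kappa$ "large enough to guarantee the uniform analytic lifespan") is not the constraint in the paper; the analytic lifespan $T$ is \emph{uniform in $\kappa$}. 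The real constraint is that the Gronwall estimate for the remainder grows like $e^{T/\kappa^{\mathfrak P}}$ (because $\|\nabla_x u\|_{L^\infty}\sim\kappa^{-1/2}$ and the commutator terms $\frac1{\e^2\kappa}[\p_t,L]$ produce $\kappa^{-\mathfrak P}\int_0^t\|Pf_R\|_{L^2}^2$), and this growth must be dominated by the smallness of $\delta=\sqrt\e$, leading to the admissible range $\sqrt\e=\exp(-\mathfrak C T/\kappa^{1/2})$, i.e.\ $\e$ decaying super-exponentially in $1/\sqrt\kappa$.
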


The precise statement of Theorem \ref{thm_informal} is given in Theorem \ref{main_theorem} and Corollary \ref{Cor_EL} in Section \ref{sec:MT}. 
\hide
We refer to Theorem \ref{main_theorem} and Corollary \ref{Cor_EL} in Section \ref{sec:MT} for the full statement of the main theorem. 
\unhide

\begin{remark}
To the best of our knowledge our result of this paper appears to be the first rigorous incompressible Euler limit result from the Boltzmann solutions with the sole diffuse reflection (therefore the accommodation constant $\sim$1) in the boundary condition! Moreover, our framework captures the inviscid limit of mathematical fluid dynamics from the Boltzmann theory. 
\end{remark}

 \begin{remark}
 Another natural choice of the scale in the study of the Euler limit might be $\e^{\mathfrak{q}}=\kappa$ with an integer $\mathfrak{q}\geq 1$. Then the second correction $\frac{1}{\kappa}Lf_2$ is shifted to the next hierarchy (see (\ref{eqtn_f_2})) and as a consequence the Euler equations become the leading approximation with loss of $\kappa \eta_ 0 \Delta u$. Without the boundary, a higher order expansion $F=\mu_E+  [\e f_1   + \e^2 f_2  + \e^3 f_3 + \cdots + \e^r f_R] \sqrt{\mu_E}$ for $\mu_E=M_{1 , \e u_E , 1 }$ has been established in \cite{DEL,WZL}. In the presence of the boundary, on the other hand, such an expansion features a boundary mismatch. The usual approach is then drawn on a boundary layer expansion, correcting an interior Hilbert-like expansion at the boundary to satisfy the boundary conditions (for example, see \cite{GW,Wu}). Our approach is based on an interior expansion up to the second correction $f_2$ that {\it avoids} the boundary layer expansion under our choice of scale $\e \ll \kappa$ (see (\ref{choice:delta})).  \end{remark}

\hide
 \bcb

\begin{theorem*} 
\label{main_rough} 
We consider a half space in 3D
\Be\label{domain}
\O := \mathbb{T}^2 \times \R_+ \ni (x_1, x_2, x_3),
 \ \ \text{where } \mathbb{T} \text{ is a periodic interval of } (-\pi, \pi).
\Ee 
Let \Be \label{choice:delta:intro}
\delta= \sqrt{\e} \ \ \text{and} \ \ 
\delta \lesssim \exp (   -\kappa^{ -\mathfrak{P}}   ) 
 \ \ \text{for some   } \mathfrak{P}>0.  
\Ee

Suppose that an initial datum $u_{ \text{in}}$ is a real analytic divergence-free vector field in $\O$ and satisfying suitable compatibility conditions, and that  an initial datum for the Boltzmann equation is given as
\Be\label{F_initial}
F  |_{t=0}= M_{1, \e u_{ \text{in}} ,1 }+ \e^2 f_{2, in} \sqrt{\mu}   + \e \delta f_{R,  in}  \sqrt{\mu},
\Ee
while $f_{2,in}$ is determined by $u_{in}$, and $f_{R,in}$ is bounded in some functional space and satisfies suitable compatibility conditions.

Then there exists a unique real analytic solution $u$ in a time interval $[0,T]$ to the incompressible Navier-Stokes equation with the no-slip boundary condition (\ref{NS_k})- (\ref{noslip}) with a viscosity proportional to some $\kappa$ in (\ref{choice:delta:intro}). Moreover $u$ has a strong bound and convergence toward $u_E$ solving the Euler equation with the no-penetration boundary condition \eqref{Euler}-\eqref{no-pen} in $[0,T]$. 
And we construct a unique solution of the initial datum (\ref{F_initial}) solving the Boltzmann equation (\ref{Boltzmann}) with (\ref{scale}) satisfying the diffuse reflection boundary condition (\ref{diffuse_BC}) for all $\e$'s: 
$$F(t,x,v) =M_{1, \e u  ,1 }(v) + \e^2 f_2(t,x,v) \sqrt{\mu}  + \e \delta f_R (t,x,v) \sqrt{\mu}, \ \ \text{for}  \ t \in [0,T]. $$ 
Moreover 
\Be
 \frac{F  (t,x,v)- M_{1, 0, 1} (v)}{\e}
  \rightarrow u_E(t,x) \ \ \text{strongly in } L^\infty ((0,T);L^2(\O \times \R^3)) \ \ \text{as} \ \ \e \rightarrow 0.\notag
\Ee

\hide

Then for each $\e>0$ there exists a unique solution $F_\e$ to (\ref{Boltzmann}) and 
(\ref{diffuse_BC})  with (\ref{scale})  for $t \in [0, T]$ such that 
\Be
\sup_{0 \leq t \leq T}\left\|\frac{F_\e(t,x,v)- M_{1, 0, 1} (v)}{\e} -u_E(t,x)\right\|_{L^2(\O)}
\leq C \kappa (\e) \ \ \text{as} \ \ \e \rightarrow 0.
\Ee\unhide
\end{theorem*}
\ec
\unhide


 Before discussing the essence of the methodology and novelty of our result, we shall briefly overview some relevant literatures on the hydrodynamic limit of the Boltzmann equation. One of the first mathematical studies of the limits at the formal level may go back to a work \cite{Hilbert1} of Hilbert, in which he introduced so-called the Hilbert expansion. Based on the truncated Hilbert expansion rigorous justifications of fluid limits have been shown as long as the solutions of corresponding fluids are bounded in some suitable spaces, for example, in the compressible fluid limits in \cite{Caflisch, UA}, incompressible fluid limits in \cite{DEL, Guo2006, Mouhot}, diffusive limits from the Vlasov-Maxwell-Boltzmann system in \cite{Jang}, and relativistic fluid limits in \cite{SS}. All the derivations mentioned above did not take into account the boundary, while one of the main obstacles to study the Boltzmann solutions with the boundary is its boundary singularity (see \cite{Kim,GKTT1,GKTT2}). In \cite{Guo10}, an $L^p$-$L^\infty$ framework has been developed to construct a unique global solution of the Boltzmann equation with physical boundary conditions. Such a framework has been developed successfully in various problems of the Boltzmann theory (for example \cite{GJJ, GJ,GJ1,Guo_M, EGKM3, Chen, GW,KL,KL1, Wu}). In particular, in \cite{EGKM, EGKM2}, the authors have constructed a solution of the Boltzmann equation satisfying the diffuse reflection boundary condition and proved the validity of the hydrodynamic limit toward the incompressible Navier-Stokes-Fourier system in both steady and unsteady settings, based on a novel $L^6$-bound of the hydrodynamic part.

 Rigorous passage from the renormalized solutions of \cite{DP} (\cite{Mischler} with the physical boundary) of the Boltzmann equation  toward (weak) solutions of fluid equations has been also extensively explored (see \cite{Golse, Laure,JM} for the references in this direction). In particular, the program of the incompressible Navier-Stokes limit to the Leray-Hopf weak solutions has been  developed successfully in \cite{BGL91,BGL93, LM1,LM2,GSR} without the physical boundary and with the boundary in \cite{MS,JM}.
 As for the incompressible Euler limit, based on the relative entropy method, a dissipative solution of the incompressible Euler equations  in \cite{Lions} has been studied in \cite{LM1,LM2,SR} without the boundary. Notably the results have been extended to the domain with the boundary for the specular reflection boundary condition in \cite{Laure}, 
 and for the Maxwell boundary condition in \cite{BGP}, assuming to set that the accommodation constant (a factor of diffuse reflection) vanishes as $\e \downarrow 0$.
 
  \hide

 {\color{green}
[CK note: It is little bit funny that Bardos-Golse-P did not recognize \cite{Laure} but saying all incompressible Euler results had been established for periodic box or the whole space.I don't know why.. BGP was announced later I think....   It could be tricky later since the referee could be Saint-Raymond but also could be Toan then we will ask Bardos' opinion. Considering the situation I wrote  intentionally the sentence blandly for both not making anyone angry (but maybe too low key).] \bcr JJ: It looks good to me. \ec}
 
 \unhide
 \hide
 To the best of our knowledge our result of this paper appears to be the first rigorous incompressible Euler limit result from the Boltzmann solutions with the sole diffuse reflection (therefore the accommodation constant $\sim$1) in the boundary condition! Moreover, our framework captures the inviscid limit of mathematical fluid dynamics from the Boltzmann theory. 
\unhide  
 For the rest of this section, we present the strategy and key ideas developed in the proof of our result starting with a new (formal) Hilbert expansion followed by the control of the Boltzmann remainder $f_R$ and higher regularity of Navier-Stokes flows, for the rigorous justification of the formal expansion.

\hide

  pioneered in [38] and adapted to the case of the Boltzmann Equation in [10, 25], L. Saint-Raymond [28, 29] succeeded in deriving dissipative solutions of the Euler Equations in arbitrary space dimension (or classical solutions whenever they exist) from weak solutions of the BGK model [28] or from renormalized solutions of the Boltzmann Equation [29]. Later it is extened to the specular reflection boundary condition in \cite{Laure} and Bardos establish the incompressible Euler limit of the Boltzmann Equation set in a domain with boundary with Maxwell?s accommodation condition assuming that the accommodation parameter is small enough in terms of the Knudsen number.

 In [23], P.-L. Lions proposed a notion of dissipative solution of the incompressible Euler Equations ? in the same spirit of his defini- tion of the notion of viscosity solutions of Hamilton-Jacobi Equations, but using the conservation of energy instead of the maximum principle as in the Hamilton-Jacobi case. The weak-strong uniqueness property is satisfied by dissipative solutions of the incompressible Euler Equations (essentially by definition): if there exists a classical (C1) solution of the incompressible Euler Equations, all dissipative solutions with the same initial data must coincide with this classical solution on its maximal time in- terval of existence. Unfortunately, dissipative solutions are not known to satisfy the incompressible Euler Equations in the sense of distributions.

There has been a lot of mathematical study of the singular limit ? ? 0oftherescaled kinetic equation (2.2) to the Navier-Stokes-Fourier equations for n = 1. Anatural approach is to show that the Diperna-Lions renormalized solutions [12] tothe Boltzmann equation converge to the Leray-Hopf weak solutions to the incom-pressible Navier-Stokes-Fourier equations. Such a program was initiated in [1, 2].Unfortunately, certain a priori assumptions made were without justi?cation. Toovercome such a mathematical dif?culty, there have been many important contribu-tions over the years [3, 14, 27, 28, 30, 31], among others. Finally, in [15], a completeproof for such a limit was established for certain classes of collision kernels. Sucha breakthrough soon stimulated new progress in the study of more general collisionkernels [16, 26]. See review papers [29, 38] for more complete references in thisdirectio

On the other hand, in the context of the classical solutions for the Navier-Stokes-Fourier equations, whose existence is open for large-amplitude initial data, it wasproven in [11] that for any given smooth solution of the Navier-Stokes-Fourier sys-tem, one can ?nd nearby Boltzmann solutions F?such that f?2is of order ?2.Itisnot clear how arbitrary such a remainder f?2can initially be; see also [7, 25]. In[4], the Navier-Stokes-Fourier limit was justi?ed for general initial f?(0, x, v) withsmall amplitude, in which f?(t, x, v) decays to 0 in the whole space.
\unhide

\subsection{Hilbert expansion in a scale of large Reynolds number}\label{sec:1.1}
Through a new formal Hilbert-type expansion of Boltzmann equation with the diffuse reflection boundary condition we aim to capture the Navier-Stokes equations of vanishing viscosity proportional to $\mathpzc{Kn}/\mathpzc{Ma}$ and satisfying the no-slip boundary condition.

It is worth pointing out that although 
more convenient choice of an expansion of $F$ is seemingly the one around the global Maxwellian $\mu_0:= M_{1,0,1}$ such as $F=\mu_0 + \e (u \cdot v) \mu_0 +  \e^2 \tilde{f}_2 \sqrt{\mu_0} + \e \delta \tilde{f}_R \sqrt{\mu_0}$, unfortunately this choice will produce, in the Hilbert expansion (\ref{eqtn_f_1})-(\ref{eqtn_f_5}), an unbounded term $\frac{2}{\kappa \e } \frac{1}{\sqrt{\mu_0  }}Q(u \cdot v \mu_0  ,f_R \sqrt{\mu_0  })$ even compared to the strongest control in hand, namely a dissipation term (see (\ref{dissipation}))! To achieve a sharper estimate, which provides weaker restriction on $\kappa$ and $\e$, and hence weaker restriction on the initial data, we work on an expansion around the local Maxwellian $\mu$.

It is conceptually 
convenient in our analysis to introduce an auxiliary parameter $\delta=\delta(\e)\downarrow 0$ as $\e \downarrow 0$, 
which indicates a size of the fluctuation $(F-\mu)/ {\e}$:    
 \Be\label{F_e}
F = \mu+ \e^2 f_2 \sqrt{\mu }  + \e \delta f_R\sqrt{\mu}   .
\Ee 
In (\ref{F_e1}) we have chosen $\delta=\sqrt{\e}$ and in Section \ref{sec:MT} we will have the same choice such as (\ref{choice:delta}),
\hide
\Be \label{choice:delta}
\delta= \sqrt{\e} \ \ \text{and} \ \ 
\delta=\exp\Big(  \frac{  - \mathfrak{C}  T}{ \kappa^{ \mathfrak{P}} }\Big), 
\Ee\unhide
however in Section \ref{sec:B1}, Section \ref{sec:3} and Section \ref{sec:B}, 
$\delta$ will be regarded as a free parameter 
and 
will be chosen at the last step of closing our argument (as (\ref{choice:delta})!). 


\

{\bf Interior Expansion.} We investigate an expansion (\ref{F_e}) of the Boltzmann equation (\ref{Boltzmann}) at the local Maxwellian $\mu$ in (\ref{mu_e}). Let
\Be\label{L_Gamma}
L  f = \frac{-2}{\sqrt{\mu }} Q(\mu , \sqrt{\mu }f)   ,\ \
\Gamma (f,g)= \frac{1}{ \sqrt{\mu}}  Q(\sqrt{\mu}f, \sqrt{\mu}g) .
\Ee

  The operators $L$ and $\Gamma$ can be read as 
 \begin{align}
Lf(v)&= \nu  f(v) -Kf(v)= \nu (v) f(v)- \int_{\R^3} \mathbf{k} (v , v_* ) f(v_*) \dd v_*,\label{nu_K}\\
\Gamma(f,g) (t,v)  &=  \Gamma_+ (f,g) (t,v) - \Gamma_- (f,g) (t,v)
\notag
\\
&= \iint_{\R^3 \times \S^2} 
|(v-v_*) \cdot \mathfrak{u}| \sqrt{ \mu(v_* )} 
\big(
f(t,v ^\prime) g(t,v_*^\prime)
+ g(t,v ^\prime) f(t,v_*^\prime)\big)\dd \mathfrak{u} \dd v_*\label{Gamma}
\\
& \ \ -   \iint_{\R^3 \times \S^2} 
|(v-v_*) \cdot \mathfrak{u}| \sqrt{ \mu(v_* )} 
\big(f(t,v  ) g(t,v_* )+g(t,v  ) f(t,v_* )\big)
\dd \mathfrak{u} \dd v_*,\notag
\end{align} 
where the precise form of $\mathbf{k}$ is delayed to be presented in (\ref{decom_L0}). We will demonstrate basic properties of operators $L$ and $\Gamma$ in Section \ref{sec:2}. From (\ref{collision_inv}) the null space of $L$, denoted by $\mathcal{N}$, is 
a subspace of $L^2(\R^3)$ spanned by orthonormal bases $\{\varphi_i \sqrt{\mu }\}_{i=0}^4$ with
\hide  \Be
\begin{split}
\label{basis}
\varphi_0 := \frac{1}{\sqrt{1+ \delta \sigma}}
,   \  \ \ \varphi_i: = \frac{1}{\sqrt{1+ \delta \sigma}} \frac{v_i -\delta u_i }{\sqrt{1+ \delta \theta}}
 \ \ \text{for} \ i=1,2,3 
,   \  \ \ \varphi_4: =  \frac{1}{ \sqrt{1+ \delta \sigma}}
( \big|\f{v-\delta u}{\sqrt{1+ \delta \theta}}\big|^2-3)/\sqrt{6},\\
\varphi_0 := \frac{1}{\sqrt{1+ \delta \sigma}}
,   \  \ \ \varphi_i: = \frac{1}{\sqrt{1+ \delta \sigma}} \frac{v_i -\delta u_i }{\sqrt{1+ \delta \theta}}
 \ \ \text{for} \ i=1,2,3 
,   \  \ \ \varphi_4: =  \frac{1}{ \sqrt{1+ \delta \sigma}}
( \big|\f{v-\delta u}{\sqrt{1+ \delta \theta}}\big|^2-3)/\sqrt{6}
\end{split}
\Ee\unhide
\Be\label{basis}
\begin{split}
 &\varphi_0 := 1
,   \  \ \ \varphi_i: =   {v_i -\e u_i } 
 \ \ \text{for} \ i=1,2,3 
,   \  \ \ \varphi_4: =   
( | {v-\e u} |^2-3 )/{\sqrt{6}}.
%
%
\end{split}\Ee
We define a hydrodynamic projection $\mathbf{P}$ as an $L_v^2$-projection on $\mathcal{N}$ such as 
\Be\begin{split}
\label{P}
\mathbf{P} g:= \sum
  ( {P}_j g) \varphi_j \sqrt{\mu }, \ \ 
 {P}_j g:= \langle g ,\varphi_j \sqrt{\mu } \rangle  , \  \text{and}  \ \  
P g:= (P_0 g, P_1 g, P_2 g, P_3 g, P_4 g ),
%
%
\end{split}
\Ee
where $\langle \cdot, \cdot \rangle $ stands for an $L^2_v$-inner product. It is well-known that the operators enjoy $\mathbf{P}L=L \mathbf{P}=\mathbf{P} \Gamma=0$. Importantly the linear operator $L$ enjoys a coercivity away from the kernel $\mathcal{N}$: 
for $\nu(v)\geq0$ defined in (\ref{nu_K}) 
\Be\label{s_gap}
\langle Lf, f\rangle \geq \sigma_0 \|  \sqrt{\nu}(\mathbf{I} - \mathbf{P}) f \|_{L^2 (\R^3)}^2 \ \ \text{for some }   \sigma_0>0.
\Ee

Now we 
plug the expansion (\ref{F_e}) into the rescaled equation (\ref{Boltzmann}) with the scale (\ref{scale}). It turns out that by relating $f_2$ with the flow and locating it carefully in the hierarchy we can exhibit the dissipative nature of the Boltzmann collision operator at the leading order of the fluid approximation. In particular we locate $(v-\e u) \cdot \nabla_x (\mathbf{I} - \mathbf{P})f_2$ in $\frac{1}{\delta}$-order hierarchy to capture $\kappa$-order viscosity in the fluid equation (\ref{NS_k}):   
 \begin{align}
& 
 \p_t f_R 
 +  \frac{1}{\e} v\cdot \nabla_x f_R+    \frac{1}{ \e^2\kappa} Lf_R
 + \frac{(\p_t + \e^{-1} v\cdot \nabla_x) \sqrt{\mu}}{\sqrt{\mu}} f_R \label{eqtn_f_1}
\\
&=-
\frac{1}{\e \delta} 
 \Big\{
\frac{ \e^{-1} (v- \e u )\cdot \nabla_x \mu}{   \sqrt{\mu}} 
+ \frac{ 1}{    \kappa} L f_2\Big\}
\label{eqtn_f_2}
\\
& \ \    -
\frac{1}{\delta}  \Big\{ \frac{\e^{-1} \p_t \mu}{\sqrt{\mu}}
+  \frac{ \e^{-1}  u \cdot \nabla_x \mu}{   \sqrt{\mu}}  +   ( v- \e u)\cdot \nabla_x f_2
\Big\}
\label{eqtn_f_3}
\\
& \ \ -   \frac{\e}{\delta}   \Big\{  \p_t f_2
 +  u \cdot \nabla_x f_2
 + \frac{(\p_t + \e^{-1} v\cdot \nabla_x ) \sqrt{\mu}}{\sqrt{\mu}}f_2
 \Big\}\label{eqtn_f_4}
  \\
&  \ \ +  
    \frac{2}{\kappa}  \Gamma({f_2}, f_R) +   \frac{\e }{\delta\kappa} \Gamma({f_2}, f_2)
+    \frac{  \delta }{\e\kappa}\Gamma(f_R,f_R)   .\label{eqtn_f_5}
\end{align} 
\hide where we have used the following identities
\begin{align}
\e^{-1} v \cdot \nabla_x \mu = \sum_{\ell, m=1}^3 \varphi_\ell \p_\ell u_m \varphi_m  \mu 
+ \e  \sum_{\ell, m =1}^3 u_\ell \p_\ell u_m \varphi_m \mu
\end{align} 

\hide We note that from (\ref{f_1})
\Be\label{H1}
  \frac{1}{ \delta\e^2 \kappa} L f_1=0.
\Ee  \unhide \unhide
We can readily see an $L^2$-energy structure of $f_R$ with a strong dissipation 
\Be\label{dissipation}
\iint_{\O \times \R^3} \frac{1}{\e^{2} \kappa } L f_R f_R \dd v \dd x \gtrsim \| \e^{-1} \kappa^{-1/2} \sqrt{\nu} (\mathbf{I} - \mathbf{P}) f_R  \|_{L^2 (\O \times \R^3)}^2,
\Ee
 which inherits its lower bound from the coercivity 
 (\ref{s_gap}).

Let us first consider an $\frac{1}{\e \delta}$-hierarchy (\ref{eqtn_f_2}). For any non-vanishing term of $(\ref{eqtn_f_2})$ would cause unpleasant unboundedness, we make the term vanish entirely by solving an equation $(\ref{eqtn_f_2})=0$. By the Fredholm alternative, an inverse map 
\Be\label{L_inverse}
L^{-1}: \mathcal{N}^{\perp} \rightarrow \mathcal{N}^{\perp},  \ \text{ where }   \  \mathcal{N}^\perp  \   \text{stands an $L^2_v$-orthogonal complement of } \mathcal{N},
\Ee
 is well-defined and hence 
the solvability condition is given by
\Be\label{Comp1}
\frac{ \e^{-1} (v- \e u )\cdot \nabla_x \mu}{   \sqrt{\mu}}
=\sum_{\ell, m=1}^3 \p_\ell u_m\varphi_\ell  \varphi_m  \sqrt{\mu}
  \in \mathcal{N}^\perp.
\Ee
 This condition indeed implies the incompressible condition (\ref{incomp}). 
 
 Once (\ref{incomp}) holds,  we have $\sum_{\ell, m=1}^3 \p_\ell u_m\varphi_\ell  \varphi_m  \sqrt{\mu}= \sum_{\ell, m=1}^3 \p_\ell u_m
 (\varphi_\ell  \varphi_m - \frac{|v-\e u|^2}{3} \delta_{\ell m}
 )
  \sqrt{\mu}$. Now we solve $(\ref{eqtn_f_2})=0$ by 
setting
 \Be\label{f_2} 
 (\mathbf{I}-\mathbf{P}) f_2 \hide = \kappa  L^{-1} (\mathbf{I} - \mathbf{P})
   \bigg(\frac{\e^{-1} (v- \e u )\cdot \nabla_x \mu }{\sqrt{\mu}}
 \bigg) 

  \sum_{\ell, m=1}^3 \varphi_\ell \p_\ell u_m \varphi_m  \sqrt{\mu} 
\unhide
=   -\kappa \sum_{\ell,m=1}^3  A_{\ell m} \p_\ell u_m
 \ \ \text{with} \ \ A_{\ell m}:=L^{-1} \Big( \varphi_\ell \varphi_m \sqrt{\mu} - \frac{|v- \e u |^2}{3} \delta_{\ell m}\sqrt{\mu} \Big).
 \Ee
  \hide where we define  such that 
 \Be\label{AB}
 A_{ij}= L^{-1} \big( \varphi_i \varphi_j \sqrt{\mu} - \frac{|v- \e u |^2}{3} \delta_{ij}\sqrt{\mu} \big)  
  \Ee 
 while coefficients of $\mathbf{P}f_2 =\{ \tilde{\rho}  \varphi_0+ \sum_{\ell=1}^3 \tilde{u}  \cdot  \varphi_\ell +\tilde{\theta}  \varphi_4  \}\sqrt{\mu}$ haven't been determined yet. \unhide

Then we move to an $\frac{1}{\delta}$-hierarchy (\ref{eqtn_f_3}). The hydrodynamic part of (\ref{eqtn_f_3}), unless it vanishes, would induce an unbounded term again. We expand $\delta \times (\ref{eqtn_f_3})$, using (\ref{mu_e}) and (\ref{f_2}), as
\Be
\begin{split}
 - (v- \e u) \cdot   (\p_t u + u \cdot \nabla_x u ) \sqrt{\mu}  
 +  (v- \e u) \cdot \nabla_x   \mathbf{P} f_2\\
+ \kappa  (v- \e u) \cdot  \nabla_x
\Big( \sum_{\ell,m=1}^3 A_{\ell m} \p_\ell u_m\Big)
 . 
 \label{H_NS}
\end{split}\Ee
The leading order term of the last term in (\ref{H_NS}) contributes the following to the hydrodynamic part of (\ref{H_NS}) as
\Be
\begin{split}
\kappa  \sum_{\ell,m,k=1}^3 \big\langle \varphi_i \varphi_k  \sqrt{\mu},   
A_{\ell m}
\big\rangle  \p_k \p_\ell u_{  m}
&= \kappa  \sum_{\ell,m,k=1}^3 \Big\langle
\big( \varphi_i  \varphi_k  -\frac{|v-\e u|^2}{3} \delta_{i k}
\big)\sqrt{\mu}
,   
A_{\ell m}
\Big\rangle  \p_k \p_\ell u_{  m}\\
&
= \kappa    \sum_{\ell ,  m, k=1}^3 \left\langle 
L A_{i k}
,  A_{\ell m}
\right\rangle  \p_{k} \p_\ell u_{  m}
,
 \label{f_2:main}
\end{split}
\Ee
where we have used the fact $A_{\ell m } \in \mathcal{N}^\perp$ and $\frac{|v-\e u|^2}{3} \sqrt{\mu} \in \mathcal{N}$  at  the first step and the definition of $A_{ik}$ at the last step.  It is well-known (e.g. Lemma 4.4 in \cite{BGL93}) that for some constant $\eta_0 >0$
\hide
\Be
\langle LA_{\ell k }, A_{ij}\rangle = \eta_0   ( \delta_{ik} \delta_{j\ell} + \delta_{i\ell} \delta_{jk} ) - \frac{2}{3} \eta_0   \delta_{ij} \delta_{k\ell}.\label{eta_0}
\Ee
Therefore we deduce (\ref{f_2:main}) vanishes for $\ell=0,4$, while we can observe the $\kappa \eta_0$-viscosity term in (\ref{NS_k}) as 
 \begin{align}
(\ref{f_2:main}) = \kappa \eta_0 \sum_{i,j,k}   \{  ( \delta_{ik} \delta_{j\ell} + \delta_{i\ell} \delta_{jk} ) - \frac{2}{3}  \delta_{ij} \delta_{k\ell}\} \p_{k} \p_i u_{  j} 
 =\kappa\eta_0 \{\Delta u_{\ell}  -   \p_\ell    \nabla \cdot  u  -  \frac{2}{3}   \p_\ell  \nabla \cdot  u   \}= \kappa\eta_0\Delta u_{ \ell}
 \ \ \text{for} \ \ell=1,2,3. 
 \unhide
 \Be
\langle LA_{i k }, A_{\ell m}\rangle = \eta_0   ( \delta_{\ell k} \delta_{mi} + \delta_{\ell i} \delta_{mk} ) - \frac{2}{3} \eta_0   \delta_{\ell m} \delta_{ik}.\label{eta_0}
\Ee
Therefore we deduce that (\ref{f_2:main}) vanishes for $i=0,4,$ and the $\kappa \eta_0$-viscosity term in (\ref{NS_k}) can be captured:
 \Be\begin{split}
(\ref{f_2:main}) &= \kappa \eta_0 \sum_{\ell,m,k}   \{  ( \delta_{\ell k} \delta_{mi} + \delta_{\ell i} \delta_{mk} ) - \frac{2}{3}  \delta_{\ell m} \delta_{ik}\} \p_{k} \p_\ell u_{  m} \\
&
 =\kappa\eta_0 \{\Delta u_{i}  -   \p_i    \nabla \cdot  u  -  \frac{2}{3}   \p_i  \nabla \cdot  u   \}= \kappa\eta_0\Delta u_{i}
 \ \ \text{for} \ i =1,2,3.
\label{Delta_u}
 \end{split}\Ee
Here we have used the incompressible condition (\ref{incomp}) at the last step. 
 On the other hand, a leading order term of the hydrodynamic part of $(v- \e u) \cdot \nabla_x   \mathbf{P} f_2$ contributes to the pressure term of (\ref{NS_k}) by choosing a special form of $\mathbf{P}f_2$ as in (\ref{Pf_2}). Therefore the whole leading order terms of the hydrodynamic part in (\ref{eqtn_f_3}) do vanish by solving the Navier-Stokes equations (\ref{NS_k}) and (\ref{incomp})! For the sake of brevity we refer to Section \ref{sec:3} for the full expansion of (\ref{eqtn_f_1})-(\ref{eqtn_f_5}). 

\
 
{\bf Boundary Conditions.} Now we consider a {boundary condition of $f_R$}. Noticeably the local Maxwellian $\mu$ becomes $M_{1,0,1}$ on the boundary from the no-slip boundary condition (\ref{noslip}), and hence $\mu$ satisfies the diffuse reflection boundary condition (\ref{diffuse_BC}). For the detailed study of the boundary condition of $f_R$ we introduce the incoming and outgoing boundaries 
\[
\gamma_\pm := \{(x,v)\in \p\O \times \R^3: n(x) \cdot v \gtrless0 \}. 
\]
Since $\mu$ satisfies the diffuse reflection boundary condition (\ref{diffuse_BC}) with a constant wall temperature $ =1$, by plugging (\ref{F_e}) into the boundary condition, we arrive at 
\Be\notag
 (\e^2 f_2  + \delta \e f_R )|_{\gamma_-}=
 c_\mu  \sqrt{\mu(v) } \int_{n(x) \cdot \mathfrak{v}>0}  (  \e^2 f_2 + \delta \e f_R )  \sqrt{\mu(\mathfrak{v})}(n(x) \cdot \mathfrak{v}) \dd \mathfrak{v} .
\Ee 
Letting $P_{\gamma_+}$ be an $L^2 (\{ v : n(x) \cdot v>0 \})$-projection of $\sqrt{c_\mu \mu}$, we derive that 
\Be \label{bdry_fR_general}
\begin{split}
&f_R(t,x,v)|_{\gamma_-}=    P_{\gamma_+}   f_R(t,x,v)- \frac{\e}{\delta} (1- P_{\gamma_+}) f_2(t,x,v)\\
&:=      \sqrt{c_\mu \mu(v)} \int_{n(x) \cdot \mathfrak{v}>0} f_R  (t,x,\mathfrak{v})\sqrt{c_\mu \mu(\mathfrak{v})} (n(x) \cdot \mathfrak{v}) \dd \mathfrak{v} - \frac{\e}{\delta} (1- P_{\gamma_+}) f_2(t,x,v).
 \end{split} \Ee
 Note that $\int_{n(x) \cdot v>0} c_\mu\mu(v) (n(x) \cdot v) \dd v=1$.   

On the other hand, we emphasize that, with the no-penetrate boundary condition of $(\ref{no-pen})$, the associated local Maxwellian $M_{1,\e u_E,1}$ does not satisfy the diffuse reflection boundary condition in general. Therefore the Boltzmann remainder $f_R$ would have a singularity of an order of $1/\sqrt{\e}$ in (\ref{F_e1}).
 
\smallskip

\subsection{Uniform controls of the Boltzmann remainder $f_R$}\label{sec:B_int}
For 
 a rigorous justification of the Hilbert expansion (\ref{F_e}), the major task is to establish uniform-in-$\e$ estimates of the Boltzmann remainder $f_R$ in $L^2$. The equation of the Boltzmann remainder $f_R$ in (\ref{eqtn_f_1})-(\ref{eqtn_f_5}) with the boundary condition (\ref{bdry_fR_general}) features a discrepancy between the behavior of 
the hydrodynamic part $\mathbf{P}f_R$ and pure kinetic part $(\mathbf{I}-\mathbf{P})f_R$: schematically an $L^2$-energy estimate reads 
\Be\notag
\frac{d}{dt} \| f_R(t) \|_{L^2}^2  + \| \e^{-1}  \kappa^{-1/2}   (\mathbf{I}-\mathbf{P})f_R\|_{L^2}^2 \sim 
\|\nabla_x u\|_{L^\infty} \| \mathbf{P} f_R \|_{L^2}^2+ \iint_{\O \times \R^3} \frac{\delta}{\e \kappa} \Gamma ( \mathbf{P}f_R, \mathbf{P}f_R)
  (\mathbf{I}-\mathbf{P})f_R
. 
\Ee
A key difficulty arises from a growth of the hydrodynamic part at least as $
e^{\|\nabla_x u\|_{L^\infty} } $ which might behave as an exponential of the reciprocal of some power of the viscosity $\kappa$ due to the unbounded vorticity formed near the boundary, while such strong singularity of the hydrodynamic part enters the nonlinear estimate in turn. In fact such trilinear estimate can be effectively handled only by a point-wise bound of the solutions. Unfortunately as the physical boundary conditions create singularities in general (\cite{Kim}), the high Sobolev estimates would not be possible. In this paper we develop a quantitative $L^p$-$L^\infty$ estimate \textit{solely} in the setting of the local Maxwellian associated with the Navier-Stokes flow, in the presence of the diffuse reflection boundary.


Thanks to a strong control of the dissipation from the spectral gap of (\ref{s_gap}), the nonlinear term can be bounded as 
\Be
\delta \kappa^{-\frac{1}{2}}  
  \|   {P} f_R  \|_{L^\infty_tL^6_{x }}  \|   {P} f_R  \|_{L^2_tL^3_{x }} 
 \| \e^{-1} \kappa^{-\frac{1}{2}}\sqrt{\nu} (\mathbf{I} - \mathbf{P}) f_R \|_{L^2_{t,x,v}} 
 .
\label{nonlinear_0}
\Ee 
Notably an \textit{integrability gain} of the hydrodynamic part $Pf_R$ should play a role; however a classical velocity average lemma $Pf_R \in  H^{1/2}_x\subset  L^3_x$ fails to fulfill the need in $3D$. We achieve such a higher integrability by developing a recent $L^6$-bound of hydrodynamic part of \cite{EGKM2} in the setting of the local Maxwellian on the scale of large Reynolds number. We utilize the micro-macro decomposition and the equation to control $\kappa^{1/2}v\cdot \nabla_x \mathbf{P}  f_R$ mainly by $\frac{1}{\e \kappa^{1/2}} L(\mathbf{I} - \mathbf{P} )f_R$ and $ \e \kappa^{1/2}\p_t f_R$. 
We invert the operator $v\cdot \nabla_x \mathbf{P}$, employing a recent test function method of \cite{EGKM} in the local Maxwellian setting, to establish a crucial $L^6$-bound of the hydrodynamic part, which is controlled by the dissipation plus  the a priori $L^2$-bound of $\p_t f_R$:
  \Be\label{L6_intro}
  \| \kappa^{1/2} Pf_R(t) \|_{L^6_x} \lesssim    \| \e^{-1} \kappa^{-1/2}    (\mathbf{I} - \mathbf{P}) f_R (t)  \|_{L^{2}_{x,v}}  + \e \kappa^{1/2} \| \p_t f_R(t) \|_{L^2_{x,v}}
   + l.o.t.
  \Ee
In other words we can achieve the $L^6$-estimate of (\ref{L6_intro}) as ``one spatial derivative gain''  through the dissipation provided a temporal derivative being controlled, while the temporal derivative preserves the boundary conditions. It is a critical point in which a temporal derivative gets involved in our analysis of Boltzmann and fluids as well!

\hide

(\ref{NS_k})-(\ref{noslip}): 
the corresponding estimate to (\ref{nonlinear_0}) can be controlled,  
without $ P\p_t f_R$ in $L^6_x$ (hence a $\p_t^2 f_R$-estimate is not necessary), as  
\Be
\delta \kappa^{-\frac{3}{2}} 
  \|\kappa^{ \frac{1}{2}}  {P} f_R \|_{L^\infty_tL^6_{x }} 
  \|\kappa^{ \frac{1}{2}}  {P}\p_t  f_R \|_{L^2_tL^3_{x }} 
 \| \e^{-1} \kappa^{-\frac{1}{2}}\sqrt{\nu} (\mathbf{I} - \mathbf{P})\p_t  f_R \|_{L^2_{t,x,v}}  .
\label{nonlinear_1}
\Ee \unhide

New difficulties arise as commutator estimates of $\frac{1}{\e^2 \kappa} \{\p_t Lf_R- L \p_t f_R\}$ induce singularities even at the linear level, as well as $\p_t(\p_t + \e^{-1}  v\cdot \nabla_x) \sqrt{\mu}  f_R/\sqrt{\mu}$ and the source terms in the equation of $\p_t f_R$ possess higher temporal derivatives of the fluid with an initial layer. In fact after a careful analysis we realize such singular terms amount to
\Be\notag
 \frac{1}{\kappa^{\mathfrak{P}}}  \int^t_0\| Pf_R(s)\|_{ L^2_x}^2 \dd s  ,
\Ee
while $\mathfrak{P}$ depends on the singularity of derivatives of the Navier-Stokes flow in large Reynolds numbers. 

We establish a unified $L^\infty$-estimate in the local Maxwellian setting, devising a special weight function $\mathfrak{w}_{\varrho, \ss}(x,v)$ in order to control an extra growth in $|v|$ from $(\p_t + \e^{-1}  v\cdot \nabla_x) \sqrt{\mu}  f_R/\sqrt{\mu}$ and its temporal derivative. We control $f_R$ in $L^\infty_t L^\infty_x$ by the hydrodynamic part $Pf_R$ in $L^6$ and the dissipation, studying the particle-trajectory bouncing against the diffuse reflection boundary and geometric change of variables related to the bouncing trajectories. The temporal derivative $\p_t f_R$ needs some special attention since the source term of the equation of $\p_t f_R$ possesses $\nabla_x \p_t^2 u$, which turns out to have an initial-boundary layer. For that we measure $\p_t f_R$ using a different time-space norm, namely a weighted $L^2_tL^\infty_x$, and control it by the hydrodynamic part of $\p_t f_R$ in $L^2_tL^3_x$ (with more singular factor-in-$\e$ than the counterpart for $f_R$) and the dissipation. Although our estimate of $\p_tf_R$ is singular than $f_R$ due to our choice of different spaces, we are able to balance such extra singularity by the strong dissipation and careful trilinear estimates.

 We establish $L^2_tL^3_x-$controls for $Pf_R$ and $P\p_t f_R$ via the trajectory rather than the classical average lemma. In fact a direct application of such average lemma has some subtle issue since  the source terms of $f_R$ and $\p_t f_R$ equations are known to be bounded within a finite time interval only, while the $L^2_tL^3_x$-control enters the nonlinear estimates. In fact it is not clear whether our iteration of estimates would guarantee a nonempty finite time interval of validity. Instead we utilize the Duhamel formula along the trajectories and an extension of solutions in specially designed domains, and employ the $TT^*$-method developed in \cite{JV,GV,EGKM3}. As a result we achieve $L^2_tL^P_x$ estimates for $f_R$ and $\p_t f_R$ uniformly for all $p<3$, which gives us a sufficient bound in $L^2_tL^3_x$ by interpolating with our $L^\infty$-estimates.


 \hide
Moreover a commutator $\frac{1}{\e^2 \kappa} \{\p_t Lf_R- L \p_t f_R\}$ induces a singular term, even in a linear level of the energy estimate for $\p_t f_R$, as 
\Be\label{sing_Pf}
\frac{1}{\e^2 \kappa} \e \|\p_t u\|_{L^\infty_{t,x}}
\| Pf_R\|_{L^2_tL^2_x} 
\| \sqrt{\nu} (\mathbf{I} -\mathbf{P} )f_R \|_{L^2 _{t,x,v}} 
\lesssim 
\Big(\frac{1}{  \sqrt{\kappa}} \|\p_t u\|_{L^\infty_{t,x}}\Big)^2 \int^t_0\| Pf_R(s)\|_{ L^2_x}^2 \dd s + 
\text{Dissipation}. 
\Ee
Such singular linear terms also appear in the energy estimates for both $f_R$ and $\p_t f_R$ inevitably for the local Maxwellian being used, from $(\p_t + \e^{-1}  v\cdot \nabla_x) \sqrt{\mu}  f_R/\sqrt{\mu}$ and its temporal derivative, while its quadratic growth in $|v|$ is controlled by utilizing an $L^\infty_{x,v}$-bound with a strong (exponential) weight in $|v|$.


as
\Be\label{sing_Pf_2}
 \Big( \|\nabla_x u\|_{L^\infty_{t,x}}
 +  \|\nabla_x \p_t  u\|_{L^\infty_{t,x}}
 \Big)^2  \int^t_0\iint_{\O\times \R^3} |v|^2\big(|f_R|^2+ |\p_t f_R|^2\big)
 .
\Ee
The hydrodynamic parts $Pf_R$ and $P\p_t f_R$ in (\ref{sing_Pf_2}) contribute to $ \big( \|\nabla_x u\|_{L^\infty_{t,x}}
 +  \|\nabla_x \p_t  u\|_{L^\infty_{t,x}}
 \big)^2  \int^t_0   \big(\|Pf_R\|_{L^2_x}^2+ |P\p_t f_R|_{L^2_x}^2\big)
 $, while a strong (exponential) weight in $|v|$ for both $ f_R$ and $ \p_t f_R$ in $L^\infty_{x,v}$ is utilized to control a quadratic growth term in $|v|$. 
 \unhide

\hide we end up with 
\Be
\text{Energy} (t) \lesssim  1+
\Big(
\frac{1}{  \sqrt{\kappa}} \|\p_t u\|_{L^\infty_{t,x}}
+  \|\nabla_x u\|_{L^\infty_{t,x}}
 +  \|\nabla_x \p_t  u\|_{L^\infty_{t,x}}
\Big)^2
 \int_0^t \text{Energy} (s) \dd s .
\Ee\unhide

Finally upon combining all the estimates above together we are able to bound an energy by the Gronwall's inequality. The resulting bound is not uniform but growing exponentially as $e^{1/\kappa^{\mathfrak{P}}}$, in which the power depends on the higher regularity of the fluid.  Luckily we are able to find a range of $\e$ with respect to $\kappa$ in a scale of large Reynolds number to absorb the Gronwall growth, and achieve a uniform bound of the Boltzmann remainder, which ensures the rigorous justification of the Hilbert expansion in Section \ref{sec:1.1}. The main theorem of the uniform controls of the Boltzmann remainder $f_R$ is given in Theorem \ref{main_theorem:conditional}.

\hide
  Finally for a uniform bound of (\ref{nonlinear_1}) we balance the parameters as \bcb in \ec (\ref{scale}) with
\Be\label{delta}
\delta=\delta  (\e) \ll  \kappa(\e)^{3/2} \ \ \text{for all } \ \e>0.
\Ee

It turns out that $\p_t^2 \nabla_x u$ and $\p_t \nabla_x^2 u$ get involved in the energy estimate for $\p_t f_R$.  

Most singular contribution can be bounded using an integrability gain of the classical average lemma $L^2_t H^{1/2}_x \subset L^2_t L^3_x$ , as  

{\color{red}\texttt{ ...Need to incorporate $L^p, p<3$ bound in this subsection and mention the subtlety the velocity averaging lemma...}}

We obtain such a higher integrability in $L^6_x$ through \bcb the \ec a priori $L^2$-bound of $\p_t f_R$ and the dissipation by viewing the equation of $f_R$ as 
\Be\label{vf_x}
\kappa^{1/2}v\cdot \nabla_x \mathbf{P}  f_R= - \e \kappa^{1/2}\p_t f_R - \frac{1}{\e \kappa^{1/2}} L(\mathbf{I} - \mathbf{P} )f_R
  + g,
\Ee
with some prescribed $g$. 

This estimate \bcb (\ref{L6_intro}) \ec will provide the desired bound for (\ref{nonlinear_1}) only by taking a supremum in $t$ to the dissipation. We use a version of Sobolev embedding $L^\infty_t \subset H^1_t$ to control $L^\infty_t L^2_{x,v}$-norm of the dissipation provided $\e^{-1} \kappa^{-1/2}  \p_t  [(\mathbf{I} - \mathbf{P})  f_R  ]  \in L^2_t L^2_{x,v}$ holds. \unhide


\subsection{Higher Regularity of Navier-Stokes equations in the Inviscid Limit}\label{sec:NSlimit}

The inviscid limit of the Navier-Stokes equations \eqref{NS_k}-\eqref{noslip} is at the heart of our approach. Furthermore, in order to control $f_R$, as explained in the above, we need to derive quantitative higher regularity estimates of the Navier-Stokes solutions which are not directly available in the usual inviscid limit results.  
Before discussing new features of our analysis, we briefly discuss some prior works on the inviscid limit most relevant to our result. 
Due to the formation of boundary layers in the limit caused by the mismatch of boundary conditions \eqref{noslip} and \eqref{no-pen},  
a classical way to tackle the inviscid limit problem is via 
the Prandtl expansion,  
of which  rigorous justification was shown in  \cite{SC1, SC2} for well-prepared data with analytic regularity and in \cite{Mae14} for the initial datum with Sobolev regularity when the initial vorticity is bounded away from the boundary. In particular, the author of \cite{Mae14} introduced the boundary vorticity formulation of  \eqref{NS_k}-\eqref{noslip} (see  \eqref{NS}-\eqref{NSB})
which prompted subsequent interesting works in the field. Among others, in a recent work \cite{NN2018}, the authors proved the inviscid limit in 2D based on the Green's function approach based on Maekawa's vorticity formulation without having to construct  Prandtl boundary layer corrections but by utilizing the boundary layer weights in the norm. In \cite{KVW, FW}, the inviscid limit was shown for initial data that is analytic only near the boundary and has finite Sobolev regularity in the complement in 2D and 3D respectively. 

Our analysis of the Navier-Stokes solutions in the limit is based on the Green's function approach for the Stokes problem using the vorticity formulation \eqref{NS}-\eqref{NSB} in the same spirit of \cite{NN2018}. However, the existing methods \cite{NN2018,KVW,FW} do not immediately fulfill the goal of our hydrodynamic limit  because the analysis of our remainder $f_R$ requires higher regularity of Navier-Stokes solutions, more specifically $L^2$ and $L^\infty$ bounds for higher order derivatives up to two temporal derivatives of $\nabla_x u$ and $p$ and two spatial derivatives of $\p_tu$, while the existing methods do not decipher any bounds for temporal derivatives and the boundedness of the conormal derivatives in their analytic norms does not rule out $\frac{1}{x_3}$ singularity of the normal derivative of the vorticity in the boundary layer, which may cause the loss of $L^2$ integrability. To get around these issues, we pursue new estimates of temporal derivatives of the vorticity $\o$ by demanding the compatibility conditions for the initial data. With such conditions, the initial layer is absent for $\o$ and $\p_t\o$; we can derive an analogous integral representation formula for $\p_t \o$ so that we may run the same fixed point argument for  $\p_t\o$ as in \cite{NN2018} without the initial layer. For the second temporal derivative, we handle the initial-boundary layer for the horizontal part with the initial-boundary weight function. 
These new features allow us to attain the derivative estimates of the vorticity in the normal direction without {$\frac{1}{x_3}$} singularity near the boundary at the expense of losing a power of $\sqrt\kappa$, which is crucial for the control of $f_R$. The velocity and pressure estimates are then recovered by utilizing elliptic regularity results  and the Biot-Savart law in the analytic setting. 
The main results of Navier-Stokes solutions to \eqref{NS_k}-\eqref{noslip} are given in Theorem \ref{thm_bound}. 
\hide
\bcr

The rest of the paper proceeds as follows....

\ec
\unhide

\section{Main Results}\label{sec:MR}


For the sake of the readers we present the precise statement of main theorems and their notations in this section. We first present the uniform controls of the Boltzmann remainder $f_R$ of Theorem \ref{main_theorem:conditional}, and the higher regularity of the Navier-Stokes equations in the inviscid limit of Theorem \ref{thm_bound}. As a consequence of those two theorems we will show a rigorous justification of kinetic approximation of Navier-Stokes in high Reynolds numbers of Theorem \ref{main_theorem}. Then using the vorticity estimates in Theorem \ref{thm_bound} and the famous Kato's condition in the inviscid limit, we prove a hydrodynamic limit toward the incompressible Euler equations in Corollary \ref{Cor_EL}.

\subsection{Uniform controls of the Boltzmann remainder $f_R$ (Theorem \ref{main_theorem:conditional})}\label{sec:B1} We recall the  expansion of Boltzmann solution $F = \mu+ \e^2 f_2 \sqrt{\mu }  + \delta \e  f_R\sqrt{\mu} $ in (\ref{F_e}) around the local Maxwellian $\mu(v) : = M_{1 , \e u , 1 }(v)$ for any given flow $(u,p)$ solving the incompressible Navier-Stokes equation with the no-slip boundary condition (\ref{NS_k})-(\ref{noslip}).

 Inspired by the energy structure of the PDE and the coercivity of the linear operator $L$ in (\ref{s_gap}), we define an energy and a dissipation as 
\Be\label{ED}
\begin{split}
\mathcal{E} (t):= & \ \| f_R (t) \|_{L^2 (\O \times \R^3)}^2 + \| \p_t  f_R (t) \|_{L^2 (\O \times \R^3)}^2 ,\\
 \mathcal{D} (t) 
:=&  \ \int^t_0 \| \kappa^{-\frac{1}{2}} \e^{-1}
\sqrt{\nu}
 (\mathbf{I} - \mathbf{P}) f_R (s) \|_{L^2 (\O \times \R^3)}^2\dd s \\
 &  +\int^t_0  \|
 \kappa^{-\frac{1}{2}} \e^{-1} \sqrt{\nu}  (\mathbf{I} - \mathbf{P})\p_t  f_R (s) \|_{L^2 (\O \times \R^3)}^2
  \dd s \\
 &
 + \int^t_0 \Big(   | \e^{-\frac{1}{2}}
f_R(s)
|_{L^2_\gamma  }^2 +  | \e^{-\frac{1}{2}}
\p_t f_R(s)
|_{   L^2_\gamma}^2\Big) \dd s
.
\end{split}
\Ee
As explained in Section 1.2, the temporal derivative gets involved mainly in order to access the $L^6$-bound of the hydrodynamic part $\mathbf{P}f_R$, while we will control the following auxiliary norm to be used in order to handle the nonlinearity: for $p<3$ and $t>0$ 
\Be
\begin{split}
\mathcal{F}_p(t):= \sup_{0 \leq s \leq t} \Big\{&  \| \kappa^{1/2} Pf_R(s)\|_{L^6 (\O)} ^2
+ \| \kappa^{1/2} P   f_R\|_{L^2((0,s); L^p(\O))}^2\\
&+ \|  \kappa^{ \mathfrak{P}+1/2}   P \p_t f_R\|_{L^2((0,s); L^p(\O))}^2
+ \| \e^{1/2}\kappa \mathfrak{w}_{\varrho,\ss} f_R(s) \|_{L^{\infty}(\O \times \R^3) }^2
\\
&
+ \|   (\e  \kappa)^{3/p}  \kappa^{ \frac{1}{2}+\mathfrak{P}} \mathfrak{w}_{\varrho^\prime,\ss} f_R(s) \|_{L^2((0,s);L^{\infty}(\O \times \R^3)) }^2
\Big\}.
\end{split}
\Ee
Here we have introduced weight functions, in order to control an extra quadratic growth in $|v|$ from $(\p_t + \e^{-1}  v\cdot \nabla_x) \sqrt{\mu}  f_R/\sqrt{\mu}$
\Be\label{weight}
\mathfrak{w}_{\varrho, \ss}(x,v)=\mathfrak{w} := \exp\{\varrho |v|^2 - 
\mathfrak{z}_{\ss}(x_3) (x \cdot v) 
\} \ \ \text{for} \ \ 0< \ss \ll \frac{\varrho}{2\pi} \ \text{and} \ 0 < \varrho < \frac{1}{4},
\Ee 
where $\mathfrak{z}_{\ss}: \R_+ \rightarrow \R_+$ is defined as, for $\ss>0$ 
\Be\label{z_ss}
\begin{split}
\mathfrak{z}_{\ss}(x_3)= \ss   \ \ \text{for} \ \  x_3 \in \big[0, \frac{1}{\ss}-1 \big],  \ \ \text{and} \ \ 
\mathfrak{z}_{\ss}(x_3)=\frac{1}{1+ x_3}  \  \ \text{for} \ \  x_3\in \big[\frac{1}{\ss}-1 , \infty\big).
\end{split}
\Ee 
 We have denoted $\mathfrak{w}_{\varrho', \ss}(x,v) =\mathfrak{w}'$ for $\varrho'<\varrho$. Also we have denoted the boundary norms and integral as 
\Be\label{bdry_int}
\begin{split}
 |g|_{L^p_{\gamma }}  := \left( \int_{\gamma_+}|g|^p+\int_{\gamma_-}|g|^p \right)^{1/p},    \ |g|_{L^p_{\gamma_\pm}}  := \left(  \int_{\gamma_\pm}|g|^p\right)^{1/p},    \\ 
\int_{\gamma_\pm}  f := \int_{\p\O} \int_{n(x) \cdot v \gtrless0}f (x, v) |n(x) \cdot v| \dd v \dd S_x.
\end{split}\Ee

Next we discuss the initial data of the Boltzmann equation. We note that an initial datum of $f_2$ is already determined by given flow $(u,p)$. For given initial data $f_{R,0}:=f_{R,in}$, inspired by the PDE, we define  
\Be\label{initial_f}
\begin{split}
\p_t f_{R,0}:= &
 - \frac{1}{\e} v\cdot \nabla_x  f_{R,in}
    -   \frac{1}{ \e^2\kappa} L_{in} f_{R,in}
+  \frac{2}{\kappa} \Gamma_{in}({f_2}, f_{R,in})
+    \frac{  \sqrt{\e}}{\e\kappa}\Gamma_{in}(f_{R,in}, f_{R,in})
\\
&
-  \frac{( \p_t + 
\e^{-1} v\cdot \nabla_x) \sqrt{\mu_{in}}}{\sqrt{\mu_{in}}} f_{R,in}  + 
(\mathbf{I}- \mathbf{P})\mathfrak{R}_1 (u,p)|_{t=0} + \mathfrak{R}_2(u,p)|_{t=0},
\end{split}
\Ee
where   $(\mathbf{I}- \mathbf{P})\mathfrak{R}_1$ and $\mathfrak{R}_2$ are defined in (\ref{eqtn_fR}) with $\delta=\sqrt{\e}$ and $\mu_{in}$, $L_{in}$, $\Gamma_{in}$ are induced by the initial Naiver-Stokes velocity $u_{in}$. For the remainder $f_R$ in (\ref{F_e1}), we will use the norms of the initial data:
\Be\label{E} 
\mathcal{E}(0):=\mathcal{E} (f_{R,0}):=   \ \| f_{R,0}\|_{L^2 (\O \times \R^3)}^2 + \| \p_t  f_{R,0}  \|_{L^2(\O \times \R^3)}^2 ,\\
\Ee
\Be \label{initial_F}
\begin{split}
\mathcal{F}_p(0):=
 &\big\{\kappa^{\frac{1}{2}} |  f_{R,0}  |_{L^2_\gamma}
+ \kappa^{\mathfrak{P}+ \frac{1}{2}}   | \p_t f_{R,0}   |_{L^2_\gamma}\\
&+
\e^{\frac{1}{2}} \kappa \| \mathfrak{w} f_
{R,0}\|_{L^\infty (\bar{\O} \times \R^3)}
+ (\e \kappa)^{1+ \frac{3}{p}}  \kappa^{\mathfrak{P}} \| \mathfrak{w}^\prime \p_t f_{R,0}\|_{L^\infty(\bar{\O} \times \R^3)}\big\}^2.
\end{split}\Ee


\ 

\begin{theorem}[Uniform controls of the Boltzmann remainder $f_R$]
\label{main_theorem:conditional}
Suppose for $T>0$ and $\mathfrak{P}\geq 1/2$ 
\Be
  \sum_{\ell=0,1} \|\nabla_x\p_t^\ell  u\|_{L^\infty ([0,T]\times \bar{\O})}+
 \frac{1}{\kappa^{1/2}} \sum_{\ell=0,1,2}\|  \p_t^\ell u\|_{L^\infty ([0,T]\times \bar{\O})}
+ \frac{1}{\kappa^{1/2}}  \| p \|_{L^\infty ([0,T]\times \bar{\O})}
\lesssim 
\frac{1}{ \kappa^{  \mathfrak{P}}}.\label{mathfrak_C'}
\Ee  
We further assume that, for $0 \leq \mathfrak{P}^\prime < \mathfrak{P}$, 
\Be\label{condition:theorem}
\begin{split}
& 
\sum_{\ell=1,2}\| \p_t ^\ell u \|_{L^\infty ([0,T]; L^\infty(\bar{\O})\cap L^2(  {\O})  )}
+\sum_{
 \substack{0 \leq \ell \leq 1 \\
  1 \leq  |\beta| \leq 2
 } } \|\nabla_x^\beta \p_t^\ell  u\|_{L^\infty ([0,T]; L^\infty( \bar{\O})
 \cap L^2( {\O}) )}\\
 &+ \sum_{|\beta|=1} \|\nabla_x^\beta \p_t^2 u\|_{L^2([0,T]; L^\infty( \bar{\O})
 \cap L^2( {\O}) )}
 \\
 &+  \| \p_t^2 p \|_{L^2 ([0,T]; L^\infty(\bar{\O}) \cap L^2(\O)   )}
  +
  \sum_{|\beta|=0,1} \| \nabla_x ^\beta \p_t  p \|_{L^\infty ([0,T]; L^\infty(\bar{\O}) \cap L^2(\O))}
 \lesssim  \exp \Big({\frac{1}{\kappa^{\mathfrak{P}^\prime }}}\Big)
 . 
\end{split}\Ee
For given such $T>0$, let us choose $\e, \delta$ and $\kappa$ as, for some $\mathfrak{C}\gg 1$, 
\Be\label{choice:delta}
\delta= \sqrt{\e} \ \ \text{and} \ \ 
\delta=\exp\Big(  \frac{  - \mathfrak{C}  T}{ \kappa^{ \mathfrak{P}} }\Big). 
\Ee
Assume that an initial datum for the remainder $f_{R,in }$ satisfies,  for some $p<3$ and $|p-3|\ll 1$,  
\Be\label{initial_EF}
 \sqrt{\mathcal{E}(0) }+ \sqrt{\mathcal{F} _p (0) } \lesssim1.
\Ee
  Then we construct a unique solution $f_R(t,x,v)$ of the form of 
 \Be\notag
 F  = M_{1, \e u  ,1 } + \e^2 f_{2} 
  + \delta \e   f_{R}  \ \ \text{in} \ \ [0,T] \times \O \times \R^3,
 \Ee
which solves the Boltzmann equation (\ref{Boltzmann}) and the diffuse reflection boundary condition (\ref{diffuse_BC}) with the scale of (\ref{scale}) and (\ref{choice:delta}), and satisfies the initial condition $F|_{t=0}= M_{1, \e u _{in} ,1 } +  \e^2  f_{2} |_{t=0}
  +  \delta \e f_{R,in}$, in a time interval $t \in [0,T]$. Moreover, we have 
\Be\label{est:E}
\delta^{ \frac{1}{2}- \frac{3}{p} (1- \frac{p}{3})} \sup_{0 \leq t \leq T}   \big\{ \sqrt{\mathcal{E}(t)} +  \sqrt{\mathcal{D}(t)}+  \sqrt{\mathcal{F}_{ p } (t)}\big\} 
 \lesssim 1.
\Ee

  \hide

For $\e,\kappa$ in (\ref{choice:delta:intro}) (\ref{F_e}):
 \Be\notag
F = \mu+ \e^2 f_2 \sqrt{\mu }  + \e ^{3/2}f_R\sqrt{\mu} ,
\Ee 
as , for some $\mathfrak{C}\gg1$,

\hide
\Be\label{condition:delta_1}
\begin{split}
\delta &\leq \left[  \frac{
\kappa^{  \frac{3}{2} + \mathfrak{P} + \frac{3}{  p} (1- \frac{p}{3}) }
}{\mathfrak{C}_1  \big(
\mathcal{E}(0) + \mathcal{F}_p(0)\bcb+ 1  \ec\big) }  
\exp\Big(  \frac{  -2\mathfrak{C}_2 T}{ \kappa^{ \mathfrak{P}} }\Big) 
    \right]^{\frac{1}{1- \frac{6}{p} (1- \frac{p}{3})}},
    \end{split}
\Ee  \unhide

\hide
Further we assume that 
\Be\label{condition:theorem}
\begin{split}
&\e \kappa^{\frac{1}{2}}
\{\|(\ref{est:R1})\|_{L^2_{t,x}}+ \|(\ref{est:R2})\|_{L^2_{t,x}}+\|(\ref{est:R3})\|_{L^2_{t,x}}+ \|(\ref{est:R4})\|_{L^2_{t,x}}\}\\
&+ \e^{\frac{5}{2}} \kappa^{2} \{\|(\ref{est:R1})\|_{L^\infty_{t,x}}
+\|(\ref{est:R2})\|_{L^\infty_{t,x}}+\|(\ref{est:R3})\|_{L^\infty_{t,x}}
+\|(\ref{est:R4})\|_{L^\infty_{t,x}} \}\\
&
 + \e^{1/2} \| (\ref{est:f2})\|_{L^\infty_t L^6_x}
+ (  \frac{\e^{\frac{3}{2}}\kappa  }{\delta}+ \e^2
+ \e^{2- 3/p} \kappa^{1-\mathfrak{P}} \delta^{-1}
)
 \| (\ref{est:f2}) \|_{L^\infty_{t,x}} +  
 \frac{\e}{\kappa} \| 
 (\ref{est:f2})
  \|_{L^\infty_t   L_x^{\frac{2p}{p-2}}  }  +  \frac{\e \kappa^{\frac{1}{2}}}{\delta}| (\ref{est:f2})|_{L^\infty_tL^4(\p\O)}  \\
  &+ 
\end{split}
\Ee 
\Be\label{L6Linfty:force}
\begin{split}
  \frac{\e^{\frac{3}{2}}\kappa  }{\delta}
 \| (\ref{est:f2}) \|_{L^\infty_{t,x}} 
 +  \e^{\frac{3}{2}}   \kappa^{3} 
  \| (\ref{est:R2})\|_{L^\infty_{t,x}} 
 +  \frac{\e \kappa^{\frac{1}{2}}}{\delta}| (\ref{est:f2})|_{L^4(\p\O)}   + \e
  \kappa^{\frac{1}{2}}
\{  \| 
 (\ref{est:R1})  \|_{L^2_{x,v}} + \|(\ref{est:R2})
 \|_{L^2_{x,v}} \}
 <1 .\\
 \e \kappa^{1/2} \| (\ref{transp:mu}) \|_{L^\infty_{t,x}} + \e \kappa^{-1/2} \| (\ref{est:R2})\|_{L^\infty_{t,x}} <1\\
 \delta \e \| w f(0)\|_\infty+ \e^2 \| (\ref{est:f2})\|_\infty+ \delta \kappa^2 \e^2\| (\ref{est:R2}) \|_\infty<1\\
 \kappa^{1/2}\e \| (\ref{transp:mu})\|_{L^2_t L^\infty_x} < 1\\
\end{split}
\Ee
\Be\label{average_3D_ED:force}
\begin{split}
\kappa^{\frac{1}{2}}\e \| (\ref{transp:mu})\|_{L^2_t L^\infty_x}<1
\\
 \e^{\frac{1}{2}} \kappa^{-\frac{1}{2}} \delta (\ref{L6Linfty:force})
+\kappa^{ \frac{1}{2}}  \| f_R (0) \|_{L^2_\gamma} +
 \e \kappa^{ \frac{1}{2}}\{  \|
 (\ref{est:R3}) \|_{L^2_{t,x}   } +\| (\ref{est:R4})
  \||_{L^2_{t,x}   } \}<1,
  \end{split}
\Ee
\[
O(\e) \|u\|_{L^\infty_{t,x}} 
+
 \frac{\e}{\kappa} \| 
 (\ref{est:f2})
  \|_{L^\infty_t   L_x^{\frac{2p}{p-2}}  }+\e
\| (\ref{transp:mu})\|_{L^\infty_{t } L^{\frac{2p}{p-2}}_x }<1
\]
with 
\Be\label{average_3Dt_ED:force}
\begin{split}
& \kappa^{\frac{1}{2}}  \|\p_t  f_R (0) \|_{L^2_\gamma} 
+ 
(\e  \kappa)^{\frac{3}{p}+1} 
\| \mathfrak{w} ^\prime  \p_t  f_R(0 ) \|_{{ L^{\infty}_{x,v}}}
+ \e \kappa^{\frac{1}{2}}\Big[
 (
 \| (\ref{est:R3})\|_{L^2_{t,x}}
+    \| (\ref{est:R4})\|_{L^2_{t,x}})
  + (\e  \kappa)^{\frac{3}{p}+1} ( \|(\ref{est:R3}) \|_{L^\infty_{t,x}}+   \|(\ref{est:R4}) \|_{L^\infty_{t,x}})
  \Big]
\\
&
+(\e  \kappa)^{3/p} \frac{\e \kappa^{\frac{1}{2}}  }{\delta}
\Big[
 \|(\ref{est:f2_t})\|_{L^\infty_{t,x}} 
 + \e \|\p_t u\|_{L^\infty_{t,x}}  \|(\ref{est:f2}) \|_{L^\infty_{t,x}}
 \Big] +(\e  \kappa)^{3/p} \e^{\frac{1}{2}}  \big[  \|\p_t u \|_{L^\infty_{t,x}}+ \e 
\|(\ref{est:f2_t})\|_{L^\infty_{t,x}}+
  \e  \kappa \|(\ref{transp:mu_t}) \|_{L^\infty_{t,x}}
 \big]  (\ref{L6Linfty:force}),
\end{split}
\Ee
where we have assumed that 
\Be\label{average_3Dt_ED:assump}
\begin{split}
 \kappa^{ \frac{1}{2}}  + 
 \kappa^{ \frac{1}{2}} \e  (
1
 + \e^{\frac{3}{p}+ \frac{1}{2}} 
 )
  \| (\ref{transp:mu_t})\|_{L^\infty_{t,x}}
+ \e(\e  \kappa)^{\frac{3}{p}- \frac{1}{2}}  
   \big(  \|\p_t u \|_{L^\infty_{t,x}}+ \e 
\|(\ref{est:f2_t})\|_{L^\infty_{t,x}}
 \big)   
  + \e^2 \|   (\ref{est:f2})\|_{L^\infty_{x,v}}  
+ \e^2 \kappa  \| 
(\ref{transp:mu})
 \|_{L^\infty_{t,x}  }
  \lesssim 1.
\end{split}
\Ee

\hide
 \Be 
 \begin{split}
  \label{Linfty_3D_ED}
\sup_{0 \leq s \leq t} \| \mathfrak{w}_{\varrho, \ss} f _R(s) \|_\infty   
 \lesssim  & \ 
 \frac{1}{\e^{1/2} \kappa }
 \Big[\sqrt{ \mathcal{E}(t) }+ \sqrt{ \mathcal{D}(t) } \Big]  +\frac{\e}{\delta}
 \| (\ref{est:f2}) \|_{L^\infty_{t,x}} 
 +  \frac{\e^{\frac{1}{2}}}{ \kappa^{\frac{1}{2}}\delta}| (\ref{est:f2})|_{L^4(\p\O)} 
 + 
\e \kappa^2 \| (\ref{est:R2})\|_{L^\infty_{t,x}} 
 + \e^{\frac{1}{2}} \kappa^{-\frac{1}{2}} \| 
 (\ref{est:R1}) + (\ref{est:R2})
 \|_{L^2_{x,v}}
\\
&
 + \e^{-\frac{1}{2}} \kappa^{-1}   
   \|\e^{-1}\kappa^{-\frac{1}{2}}  (\mathbf{I} - \mathbf{P}) f_R (0) \|_{ {L^2 (\O \times \R^3)}} 
   +     \|  \mathfrak{w}_{\varrho, \ss} f (0)\|_\infty
   + 
 (\e \kappa )^{-1}   |(1- P_{\gamma_+}) f_R(0)|_{L^2({\gamma_+})}.
 \end{split}
 \Ee
 \unhide

\Be\label{relat:kappa_delta}
 \delta  \lesssim \kappa^{ \mathfrak{q}} e^{-  \mathfrak{C}^\prime \kappa^{- \mathfrak{P}} T } ,
 %
 %
\Ee
\unhide
If (\ref{initial_EF}) hold with $|p-3|\ll1$  for $p<3$ 
then a unique solution $f_R(t,x,v)$ of (\ref{F_e1}) to (\ref{Boltzmann}) and (\ref{diffuse_BC}) with (\ref{choice:delta}), satisfying the initial condition $F|_{t=0}= M_{1, \e u _{in} ,1 } + \e^2 f_{2} |_{t=0}
  + \delta \e   f_{R,in}$, exists\unhide \end{theorem}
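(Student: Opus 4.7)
The plan is to close a coupled $L^2$-$L^p$-$L^\infty$ estimate for $(f_R, \p_t f_R)$ using the dissipative structure inherited from the spectral gap $(\ref{s_gap})$, combined with the auxiliary hydrodynamic $L^6$ bound $(\ref{L6_intro})$, and then to select the relation $(\ref{choice:delta})$ between $\e, \delta, \kappa$ so that the Gronwall factor $\exp(\mathfrak{C}T/\kappa^{\mathfrak{P}})$ coming from the singular fluid norms in $(\ref{mathfrak_C'})$-$(\ref{condition:theorem})$ is absorbed. First I would write the equation of $f_R$ in the form $\p_t f_R + \e^{-1} v\cdot\nabla_x f_R + (\e^2\kappa)^{-1} L f_R = \mathfrak{R}_1(u,p) + \mathfrak{R}_2(u,p) + \Gamma$-nonlinearity, isolate the boundary contribution via $(\ref{bdry_fR_general})$, and derive the analogous equation for $\p_t f_R$ where the source picks up one extra temporal derivative of $u$ and $p$.

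Next I would perform the basic energy identity: multiplying by $f_R$ and by $\p_t f_R$, integrating, and exploiting the coercivity $(\ref{s_gap})$ produces $\frac{d}{dt}\mathcal{E}+ c\,\mathcal{D}_{\text{bulk}} \le \text{(linear singular terms)} + \text{(nonlinear terms)}$, with linear terms of the form $\|\nabla_x \p_t^\ell u\|_{L^\infty} \iint |v|^2|f_R|^2$ coming from the transport of $\sqrt{\mu}$, and a commutator of the form $\frac{1}{\e^2\kappa}\e\|\p_t u\|_{L^\infty}\|Pf_R\|_{L^2}\|\sqrt\nu(\mathbf{I}-\mathbf{P})f_R\|_{L^2}$ for the $\p_t$-equation. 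By Cauchy--Schwarz the latter is absorbed into the dissipation at the cost of $\frac{1}{\kappa}\|\p_t u\|_{L^\infty}^2\int_0^t\|Pf_R\|_{L^2}^2$, accounting for one power of $\kappa^{-\mathfrak{P}}$ in the exponential. The quadratic $|v|^2$ growth is controlled by the $L^\infty$ norm with weight $\mathfrak{w}_{\varrho,\ss}$ in $(\ref{weight})$, yielding the Gronwall-type structure $\mathcal{E}(t)+\mathcal{D}(t)\lesssim \mathcal{E}(0)+\int_0^t \kappa^{-2\mathfrak{P}}\mathcal{E}(s)\,ds + (\text{nonlinear})$.

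For the nonlinearity I would use the trilinear split of $(\ref{nonlinear_0})$ and treat three ranges: pure hydrodynamic-hydrodynamic is bounded via $\|Pf_R\|_{L^\infty_tL^6}\|Pf_R\|_{L^2_tL^p}$ times dissipation, using the $L^6$ bound $(\ref{L6_intro})$ obtained from the test-function inversion of $v\cdot\nabla_x\mathbf{P}$ adapted to the local Maxwellian $\mu$ as in \cite{EGKM2}, where $\kappa^{1/2}v\cdot\nabla_x Pf_R$ is replaced via the equation by $\frac{1}{\e\kappa^{1/2}}L(\mathbf{I}-\mathbf{P})f_R + \e\kappa^{1/2}\p_tf_R + g$; hydrodynamic-microscopic and microscopic-microscopic are handled by the standard $\Gamma$-estimate against the dissipation. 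The $L^\infty$ bound on $\mathfrak{w}f_R$ is derived by iterating the Duhamel formula along the diffuse-reflection trajectory using the geometric change of variables and, in the $\p_t f_R$ case, measured in a weaker $L^2_tL^\infty_x$ norm to accommodate the initial-boundary layer in $\nabla_x\p_t^2 u$. The $L^2_t L^p_x$ control of $Pf_R$ and $P\p_tf_R$ for $p<3$ arrives via an extension of $f_R$ to a specular domain and the $TT^\ast$-average argument of \cite{JV,GV,EGKM3}, then interpolated with the weighted $L^\infty$ bound to yield the $L^2_tL^3_x$ norm required by $(\ref{nonlinear_0})$.

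The main obstacle will be closing the loop: each of the five quantities in $\mathcal F_p$ appears on the right-hand side of the estimate for another, with prefactors that behave like $\kappa^{-\mathfrak{P}}$ or $\exp(\kappa^{-\mathfrak{P}'})$, and the cubic term contributes $\delta\,\kappa^{-3/2}\mathcal F_p^{3/2}$. I would first fix a local time of existence via a standard iteration in which the size of $f_R$ is bounded by $O(1)$ on $[0,T_*]$, then run the a priori estimate on $[0,T_*]$ with $T_*\le T$ and use the choice $\delta=\sqrt\e$ and $\delta=\exp(-\mathfrak{C}T/\kappa^{\mathfrak{P}})$ to force $\delta\kappa^{-3/2}\sqrt{\mathcal F_p}\ll 1$ after absorbing the exponential Gronwall factor; the condition $\delta^{1/2 - 3(1-p/3)/p}(\sqrt{\mathcal E}+\sqrt{\mathcal D}+\sqrt{\mathcal F_p})\lesssim 1$ in $(\ref{est:E})$ follows from interpolating the $L^p$-gain with the weighted $L^\infty$ bound, the exponent $3(1-p/3)/p$ being precisely the interpolation cost as $p\uparrow 3$. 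A continuation argument then extends $T_*$ to the full interval $[0,T]$, yielding existence and uniqueness of $f_R$ satisfying the claimed bound.
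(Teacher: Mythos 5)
Your proposal matches the paper's proof in its essential architecture: an $L^2$ energy identity for $(f_R,\p_t f_R)$ exploiting the coercivity $(\ref{s_gap})$, the $L^6$ hydrodynamic gain via the test-function inversion of $v\cdot\nabla_x\mathbf{P}$ in the local Maxwellian setting, a weighted $L^\infty$ bound along diffuse-reflection trajectories (measured in $L^2_tL^\infty_x$ for $\p_t f_R$ to accommodate the initial-boundary layer), the $L^2_tL^p_x$ ($p<3$) bound via the extension/$TT^\ast$ argument, interpolation to reach $L^2_tL^3_x$ for the nonlinearity, a Gronwall closure with a singular prefactor controlled by $(\ref{mathfrak_C'})$, and finally the choice $\delta=\sqrt\e=\exp(-\mathfrak{C}T/\kappa^{\mathfrak{P}})$ to absorb the exponential growth together with a continuation argument. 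The only discrepancy worth flagging is that you write the Gronwall multiplier as $\kappa^{-2\mathfrak{P}}$ (coming from $\kappa^{-1}\|\p_t u\|^2_{L^\infty}$ with $\|\p_t u\|_{L^\infty}\lesssim\kappa^{1/2-\mathfrak{P}}$), while the paper's display $(\ref{multiplier})$ asserts $\kappa^{-\mathfrak{P}}$; this affects only the power of $\kappa$ in the Gronwall exponent and hence the precise constant $\mathfrak{C}$ required in $(\ref{choice:delta})$, not the structure of the argument.
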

 

\begin{remark}The condition (\ref{choice:delta}) in the theorem is indeed the largest $\sqrt{\e}$ can be allowed. Any smaller $\sqrt{\e}$ than $\exp\Big(  \frac{  - \mathfrak{C}  T}{ \kappa^{1/2} }\Big)$ (which means $\sqrt{\e}$ decaying faster than $\exp\Big(  \frac{  - \mathfrak{C}  T}{ \kappa^{1/2} }\Big)$ as $\kappa \downarrow 0$) will produce the same result. 
In terms of (\ref{scale}) the relation (\ref{choice:delta}) implies that the Knudsen number $\mathpzc{Kn}$ has to vanish only slightly faster than the Mach number $\mathpzc{Ma}$:
\Be\label{scale_1}
\mathpzc{St}=\e = \mathpzc{Ma}  \  \ \text{and} \ \ 
 \sqrt{ \frac{T }{ \ln \e^{-1}   }} \lesssim \frac{\mathpzc{Kn}}{\mathpzc{Ma}} \downarrow 0  \ \ \text{as} \ \  \e \downarrow 0.
\Ee 
\hide
On the other hand, the factor $\frac{1}{\kappa^{1/2}}$ in (\ref{choice:delta:intro}) inherits from the asymptotic bound of  Navier-Stokes flow: 
\Be
  \sum_{\ell=0,1} \|\nabla_x\p_t^\ell  u\|_{L^\infty ([0,T]\times \bar{\O})}+
 \frac{1}{\kappa^{1/2}} \sum_{\ell=0,1,2}\|  \p_t^\ell u\|_{L^\infty ([0,T]\times \bar{\O})}
+ \frac{1}{\kappa^{1/2}}  \| p \|_{L^\infty ([0,T]\times \bar{\O})}
\lesssim 
\frac{1}{ \kappa^{  1/2}}.\label{mathfrak_C:1/2}
\Ee 
See Theorem \ref{main_theorem:conditional} for the relation.\unhide

 \hide
 We remark that the choice (\ref{choice:delta:intro}) implies that, in terms of (\ref{scale}),
\Be\notag
\mathpzc{St}=\e = \mathpzc{Ma}  \  \ \text{and} \ \ 
\kappa(\e) \downarrow 0 \ \ \text{as} \ \ \e \downarrow 0
 \ \  \text{while} \ \  
\kappa (\e)  \gtrsim     \Big(\ln \Big(\frac{1}{\sqrt{\e}}\Big)\Big)^{-\frac{1}{\mathfrak{P}}}  .
\Ee 
\unhide

 \hide

Otherwise the nonlinearity of (\ref{Boltzmann}), as a singular perturbation, turns out too severe in the scope of our mathematical analysis. 
\unhide
\end{remark}
 The proof of Theorem \ref{main_theorem:conditional} will be given in Section \ref{sec:B}.

\subsection{Higher regularity of Navier-Stokes equations in the inviscid limit (Theorem \ref{thm_bound})}  For the Navier-Stokes solutions to (\ref{NS_k})-(\ref{noslip}), we introduce real analytic norms and function spaces, adopted from \cite{NN2018} and \cite{FW} for the 3D counter part with slight modifications. 

In this subsection and Section \ref{sec:5}, we will use the following notations: $x=(x_h,x_3)=(x_1,x_2,x_3)\in \mathbb T^2\times \mathbb R_+=\Omega$, $\nabla_x=\nabla=(\nabla_h,\p_3)=(\p_{x_1},\p_{x_2},\p_{x_3})$;  for a vector valued function $g\in \mathbb R^3$, $g=(g_h,g_3)=(g_1,g_2,g_3)$.

We denote the vorticity by 
\Be
\o=\nabla \times   u, \ \ \ u =\nabla \times (- \Delta)^{-1} {\o},\label{vorticity}
\Ee
while the second identity  is the famous Biot-Savart law. Here $(- \Delta)^{-1}$ denotes the inverse of $-\Delta$ with the zero Dirichlet boundary condition on $\p\O$.

Our analysis of the Navier-Stokes solutions is based on the vorticity formulation in 3D (\cite{Mae13,Mae14}):
  \begin{align}
\p_t \o - \kappa \eta_0 \Delta\o = - u \cdot \nabla \o
+ \o \cdot \nabla u \ \ &\text{in} \ \ \O
,  \quad \label{NS} \\
\o \,|_{t=0}   = \o_{in}  \ \ &\text{in} \ \ \O, \label{NSI} \\
\kappa \eta_0 (\p_{x_3} + \sqrt{- \Delta_h})\o_h \,    = [\p_{x_3}(- \Delta)^{-1} (-u \cdot \nabla \o_h 
+ \o \cdot \nabla  u_h
) ] \,  , \ \ \o_3 =0 \ \ &\text{on} \ \ \p\O, \label{NSB}
\end{align} 
where $\sqrt{- \Delta_h}=|\nabla_h|$ is defined as 
\Be\label{sqrt_D}
 \sqrt{- \Delta_h}g(x_h,x_3)= \sum_{\xi \in \mathbb{Z}^2} |\xi| g_\xi (x_3)  e^{i x_h \cdot \xi}. 
\Ee Here, $g_{\xi } (x_3)= \frac{1}{(2\pi)^2} \iint_{\mathbb{T}^2} e^{-i x_h \cdot \xi} g(x_h,x_3) \dd x_h \in \mathbb C \text{ with } \xi= (\xi_1, \xi_2) \in \mathbb{Z}^2$ denotes the Fourier transform in the horizontal variables, which 
 satisfies $g(x_1,x_2,x_3)
 = \sum_{\xi \in \mathbb{Z}^2} g_{\xi} (x_3) e^{i x_h \cdot \xi }.$ The Fourier transform can be regarded as a function $g_{\xi} (z)$ where $z$ is sitting in a pencil-like complex domain: for any $\lambda> 0$, 
\Be\label{complex_domain}
\mathcal{H}_\lambda:=\Big\{ z \in \mathbb C : \text{Re}\,z\geq 0, \; | \text{Im}\, z| < \lambda \min \{ \text{Re}\,z, 1\} \Big\}.
\Ee

We define analytic function spaces without the boundary layer, $\mathfrak{L}^{p,\lambda}$, for holomorphic functions with a finite norm,  for $p\geq 1$,  
\Be\label{norm_L1}
\| g \|_{p,\lambda } := \sum_{\xi\in \mathbb Z^2} e^{\lambda |\xi|} \| g_{\xi } \|_{\mathcal{L}^p_\lambda}   \ \ \text{where} \ \ 
\|g_{\xi }\|_{\mathcal{L}^p_\lambda} := \sup_{0\leq \sigma\leq \lambda} 
\left(
\int_{\p \mathcal{H}_\sigma} | g_{\xi } (z)  |^p  |\dd z|
\right)^{1/p}.
\Ee

Next we introduce an $L^\infty$-based analytic boundary layer function space, for $\lambda>0$ and $\kappa \geq 0$, that consists of holomorphic functions in $\mathcal{H}_\lambda$ with a finite norm 
\Be\label{norm_BL}
\begin{split}
\|g\|_{\infty,\lambda,\kappa}  = \sum_{\xi\in {\mathbb Z^2}} e^{\lambda |\xi|} \| g_{\xi } \|_{\mathcal{L}^\infty_{\lambda, \kappa}}  ,
\end{split}
\Ee
where $\| g_{\xi } \|_{\mathcal{L}^\infty_{\lambda,0}} : = 
\| e^{\bar{\alpha} \text{Re}\,z}  g_{\xi } (z) \|_{\mathcal{L}^\infty_\lambda}
:= \sup_{z \in \mathcal{H}_\lambda}e^{\bar{\alpha} \text{Re}\,z}  g_{\xi } (z) 
$ and 
\Be\notag
 \| g_{\xi} \|_{\mathcal{L}^\infty_{\lambda, \kappa}}  := 
  \bigg\| \frac{e^{\bar{\alpha} \text{Re}\,z}}{ 1+ \phi_\kappa(z)} g_{\xi } (z) \bigg\|_{\mathcal{L}^\infty_\lambda}
  := \sup_{z \in \mathcal{H}_\lambda}\frac{e^{\bar{\alpha} \text{Re}\,z}}{ 1+ \phi_\kappa(z)} |g_{\xi } (z)| 
  .
\Ee
Here, a boundary layer weight function is defined  as 
\Be\label{BL}
\phi_\kappa (z):= \frac{1}{\sqrt\kappa} \phi ( \frac{z}{\sqrt\kappa}) 
 \ \ \text{with} \ \ \phi (z) = \frac{1}{ 1+|\text{Re}\,z|^\mathfrak{r}} \ \text{for some  } \mathfrak{r}>1.
\Ee
We define $\mathfrak B^{\lambda, \kappa}$ for holomorphic functions $g= (g_1,g_2,g_3)$ with a finite norm 
 \Be\label{[]}
[[g ]]_{ \infty, \lambda,\kappa}=\sum_{i=1,2} \| g_i \|_{\infty, \lambda, \kappa} + \| g_3 \|_{\infty, \lambda, 0}.
\Ee
We note that $\mathfrak B^{\lambda, \kappa } \subset \mathfrak{L}^{1,\lambda}$, but $\mathfrak{B}^{\lambda, 0} \subsetneqq\mathfrak{L}^{\infty,\lambda}$ if $\bar{\alpha}>0$.

\smallskip

Due to its singular nature of the Navier-Stokes flow in the inviscid limit, we introduce the conormal derivatives
\Be
 D= (D_h, D_3)= (\nabla_h, \zeta(x_3) \p_3) \ \ \text{where}  \ \ \zeta(z) = \frac{z}{1+z}. 
\Ee
With the multi-indices $\beta=(\beta_h,\beta_3):=(\beta_1,\beta_2,\beta_3)\in \mathbb N_0^3$, the higher derivatives are denoted by
 $D^\beta = \p_1^{\beta_1} \p_2 ^{\beta_2} D_3^{\beta_3}$ and 
 $D^\beta_\xi = (i \xi_1)^{\beta_1} (i \xi_2) ^{\beta_2} D_3^{\beta_3}$. 

Now we define, for $\lambda_0>0$, $\gamma_0>0$, $\alpha>0$, $\kappa \geq 0$, and $t \in (0, \frac{\lambda_0}{2 \gamma_0})$
 \Be\label{norm_BLT}
 \vertiii{g}_{\infty,\kappa}= \sup_{\lambda<\lambda_0-\gamma_0 t} \bigg\{ \sum_{ 0 \leq |\beta| \leq 1}  
[[
D^\beta g ]]_{
\infty, \lambda, \kappa } +  \sum_{ 
|\beta| =2}(\lambda_0-\lambda-\gamma_0 t)^\alpha [[D^\beta g
]]_{
 {\infty, \lambda, \kappa}
}  \bigg\},
\Ee
\Be\label{norm_L1T}
\begin{split}
\vertiii{g}_1 
= \sup_{\lambda<\lambda_0-\gamma_0 t} \bigg\{& \sum_{0\leq |\beta|\leq 1} \| 
D^\beta (1+|\nabla_h|) g \|_{1,\lambda} \\
& \ \ \ \ \  \ \ \ \ \ +  (\lambda_0-\lambda-\gamma_0 t)^\alpha\sum_{ |\beta|=2} \| D^\beta (1+|\nabla_h|) g \|_{1,\lambda}  \bigg\}.
\end{split}\Ee

With an initial-boundary layer weight function as in \cite{NN2018}
\Be
\phi_{\kappa t} (z)= \frac{1}{\sqrt{\kappa t}} \phi ( \frac{z}{\sqrt{\kappa t}}) 
,\label{IB_layer}
 \Ee
we define an initial-boundary layer function space $\mathfrak B^{\lambda, \kappa t}$ for holomorphic functions $g= (g_1,g_2,g_3)$ with a finite norm 
\Be\label{[[]]}
[[g ]]_{ \infty, \lambda,\kappa t}=\sum_{i=1,2} \| g_i \|_{\infty, \lambda, \kappa t} + \| g_3 \|_{\infty, \lambda, 0} ,
\Ee 
where an $L^\infty$-based analytic norm with the initial-boundary layer is defined as
\Be\label{norm_IBL}
\|g\|_{\infty,\lambda,\kappa t} = \sum_{\xi\in \mathbb Z^2} e^{\lambda |\xi|} \| g_{\xi } \|_{\mathcal{L}^\infty_{\lambda, \kappa t}}, \quad 
\| g_{\xi} \|_{\mathcal{L}^\infty_{\lambda, \kappa t}} = 
\bigg\| \frac{e^{\bar{\alpha} \text{Re}\,z}}{ 1+ \phi_\kappa(z)+ \phi_{\kappa t} (z)}   g_{\xi } (z)\bigg\|_{\mathcal{L}^\infty_\lambda} . 
\Ee
We finally define, for $t \in (0, \frac{\lambda_0}{2\gamma_0})$, 
 \Be\label{norm_IBLT}
 \vertiii{g}_{\infty, \kappa t}= \sup_{\lambda<\lambda_0-\gamma_0 t} \bigg\{
  \sum_{
 0 \leq |\beta| \leq 1
  }
 [[
D^\beta g]]_{\infty,\lambda, \kappa t}+ \sum_{|\beta|=2} 
 (\lambda_0-\lambda-\gamma_0 t)^\alpha
  [[
D^\beta g]]_{\infty,\lambda, \kappa t}
 \bigg\}.
\Ee
In this subsection and Section \ref{sec:5}, $\alpha$, $\bar{\alpha}$ are  given  positive small constants, $\lambda_0$ is a given positive constant, and $\gamma_0$ is a sufficiently large constant to be determined in Theorem \ref{thm_bound}.

Next we discuss the initial data of the velocity $u_{in}$ and the corresponding vorticity $\o_{in}= \nabla_x \times u_{in}$. Inspired by the PDEs, let 
\Be\begin{split}\label{idata}
 \o_0:=\o_{in},\quad \p_t\o_0:= \kappa\eta_0\Delta \o_0 - u_0\cdot\nabla \o_0+ \o_0 \cdot \nabla u_0,\\
 \quad
 u_0 := \nabla \times (-\Delta)^{-1} \o_0 ,
  \quad
  \p_t u_0 := \nabla \times (-\Delta)^{-1} \p_t \o_0 ,
 \\
\p_t^2\o_0:= \kappa\eta_0 \Delta \p_t\o_0 - u_{0} \cdot \nabla \p_t\o_0  - \p_t u_0  \cdot \nabla \o_0 + \o_0 \cdot \nabla \p_t u_0 + \p_t\o_0 \cdot \nabla u_0. 
\end{split}\Ee

\hide




\begin{definition}[Definition of $\mathfrak B^{\lambda, \kappa}$]
Define for

For  we define 

Let $\mathfrak B^{\lambda, \kappa}$ denote the space of analytic functions $g$ with $[[g]]_{\infty,\lambda,\kappa} <\infty$.

We also define, for $t \in (0, \frac{\lambda_0}{2\gamma})$,

\end{definition}

\begin{definition}

We define, for $t \in (0, \frac{\lambda_0}{2\gamma})$, 
\Be\label{norm_L1T}
\vertiii{g}_1= \sup_{\lambda<\lambda_0-\gamma t} \bigg\{ \sum_{0\leq |\beta|\leq 1} \| 
D^\beta (1+|\nabla_h|) g \|_{1,\lambda} +  (\lambda_0-\lambda-\gamma t)^\alpha\sum_{ |\beta|=2} \| D^\beta (1+|\nabla_h|) g \|_{1,\lambda}  \bigg\}
\Ee
where $ = \sqrt{-\Delta_h}$. 
\end{definition}

 \begin{remark} 
 \end{remark}

Consider analytic functions $g=g(x_1,x_2,x_3)$. 
The following symbols and notations will be used throughout the rest of the section. 

\begin{itemize}
\item 
\item 
\item 
\item 
\end{itemize}

\

\begin{definition}[$L^\infty$ based analytic norms including initial layers] 
\end{definition}


\

\begin{theorem}\label{thm_NS} Let $\lambda_0>0$ and $\o  \in \mathfrak B^{\lambda_0,\kappa}$ with
\Be
 \sum_{0\leq |\beta|\leq 2}  \|D^\beta \p_t^\ell\o_0  \|_{1,\lambda_0}+\sum_{0\leq |\beta|\leq 2} \|D^\beta \p_t^\ell\o_0 \|_{\infty,\lambda_0, \kappa}  <\infty \  \text{ for } \ \ell=0,1,2. \label{initial_norm}
\Ee
Further assume that $\o_0$ satisfies the compatibility conditions 
\Be\label{CC}
\begin{split}
\kappa \eta_0 (\p_{x_3} + \sqrt{- \Delta_h})\o_{0,h} \, |_{x_3=0} &= [\p_{x_3} (-\Delta)^{-1} (-u_0 \cdot \nabla \o_{0, h} + \o_0 \cdot \nabla u_{0,h}) ] \, |_{x_3=0}\\
\o_{0,3} |_{x_3=0} &=0, \quad \p_t\o_{0,3} |_{x_3=0} = 0 . 
\end{split}
\Ee

Then there exists a $\gamma>0$ and a time $T>0$ depending only on $\lambda_0$ and the size of the initial data such that the solution $\o(t)$ to the Navier-Stokes equations \eqref{NS}-\eqref{NSB} exists in $C^1([0,T]; \mathfrak B^{\lambda, \kappa})$ with $\p_t^2\o $ in $C(0,T; \mathfrak B^{\lambda, \kappa t})$ for $0<\lambda<\lambda_0$ satisfying 
\Be\label{norm_bound}
\sup_{t\in [0,T]} \left[\sum_{\ell=0}^2 \vertiii{\p_t^\ell \o(t)}_1 + \sum_{\ell=0}^1 \vertiii{\p_t^\ell \o(t)}_{\infty,\kappa} + \vertiii{\p_t^2\o(t)}_{\infty,\kappa t}   \right]<\infty 
\Ee
\end{theorem}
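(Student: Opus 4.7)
My proposal is to follow the integral equation / Green's function framework of Nguyen--Nguyen \cite{NN2018}, adapted to three dimensions and extended to two temporal derivatives, exploiting the compatibility conditions \eqref{CC} to rule out the initial layer for $\o$ and $\p_t\o$.

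\textbf{Step 1 (Integral representation and a priori estimates).} I would first recast the linearized Stokes system associated with \eqref{NS}-\eqref{NSB} via an explicit Green's function. For the horizontal vorticity $\o_h$, the heat semigroup on $\R_+$ with the Robin-type boundary operator $\kappa\eta_0(\p_{x_3}+\sqrt{-\Delta_h})$ can be written down after a horizontal Fourier transform, and for $\o_3$ one uses the homogeneous Dirichlet heat kernel. Applying Duhamel to $\o = S(t)\o_{in} + \int_0^t S(t-s) N[\o](s)\, ds$, where $N[\o] = -u\cdot\nabla\o + \o\cdot\nabla u$ and $u$ is recovered by Biot-Savart, I would then establish the bilinear estimate
\[
\vertiii{\int_0^t S(t-s) N[\o](s)\,ds}_{\bullet} \le C (\lambda_0,\gamma_0)\, \vertiii{\o}_{\bullet}\,\vertiii{\o}_{\bullet}
\]
in the combined norm $\vertiii{\cdot}_1+\vertiii{\cdot}_{\infty,\kappa}$. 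The analyticity loss that comes from the transport term $u\cdot\nabla\o$ and from the boundary term $\sqrt{-\Delta_h}\o_h$ is absorbed through the shrinking radius of analyticity $\lambda < \lambda_0 - \gamma_0 t$, with $\gamma_0$ chosen large, exactly as in the Cauchy--Kovalevskaya-type argument of \cite{NN2018}. The conormal second derivatives that blow up at the boundary are controlled in the weaker $(\lambda_0-\lambda-\gamma_0 t)^\alpha$-weight, and the boundary layer profile $\phi_\kappa$ in $\mathfrak B^{\lambda,\kappa}$ is propagated by $S(t)$ by the usual heat kernel computation.

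\textbf{Step 2 (Fixed point for $\o$ and $\p_t\o$ with no initial layer).} With Step 1 in hand, a standard contraction on a small ball of $\{\o : \sup_{t\in[0,T]}\vertiii{\o(t)}_1+\vertiii{\o(t)}_{\infty,\kappa} \le R\}$ for a short time $T>0$ produces the unique solution $\o\in C^1([0,T];\mathfrak B^{\lambda,\kappa})$. For $\p_t\o$, I would formally differentiate \eqref{NS}-\eqref{NSB} in $t$: the bulk equation acquires the commuted nonlinearity $-\p_t u\cdot\nabla\o - u\cdot\nabla\p_t\o + \p_t\o\cdot\nabla u+\o\cdot\nabla\p_t u$, and the boundary condition for $\p_t\o_h$ is obtained by differentiating the boundary equation in \eqref{NSB}; thanks to the first relation in \eqref{CC}, this differentiated boundary condition is consistent with the value of $\p_t\o_h|_{x_3=0}$ computed from the bulk equation at $t=0$, and $\p_t\o_{0,3}|_{x_3=0}=0$ by the second line of \eqref{CC}. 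Hence $\p_t\o$ solves the same Stokes system with initial data $\p_t\o_0$ from \eqref{idata} that matches the boundary conditions, so the same integral representation and fixed-point argument of Step 1 applies, yielding the bound on $\vertiii{\p_t\o}_1+\vertiii{\p_t\o}_{\infty,\kappa}$ without any initial layer.

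\textbf{Step 3 (Second time derivative with initial-boundary layer).} For $\p_t^2\o$ the compatibility conditions \eqref{CC} do \emph{not} match the differentiated boundary equation, so an initial layer of thickness $\sqrt{\kappa t}$ must be admitted in the horizontal components; this is precisely why the norm $\vertiii{\cdot}_{\infty,\kappa t}$ with the weight $\phi_{\kappa t}$ from \eqref{IB_layer} is introduced. I would again differentiate \eqref{NS}-\eqref{NSB} twice in $t$, write $\p_t^2\o = S(t)\p_t^2\o_0 + \int_0^t S(t-s)\,\p_t N[\o](s)\,ds$ plus a contribution from the boundary mismatch, and check that $S(t)$ maps $\mathfrak B^{\lambda_0,\kappa}$-initial data into the space with the $(1+\phi_{\kappa}+\phi_{\kappa t})$-weight. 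The nonlinear terms involve products like $\p_t u\cdot\nabla\p_t\o$ which are controlled by the bounds obtained in Step 2, and $\p_t^2\o$ enters only linearly through $u\cdot\nabla\p_t^2\o$ and $\p_t^2\o\cdot\nabla u$, so one more short-time fixed-point argument closes the estimate $\vertiii{\p_t^2\o}_{\infty,\kappa t}<\infty$.

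\textbf{Main obstacle.} The delicate point is the boundary term $\sqrt{-\Delta_h}\o_h$ in \eqref{NSB} combined with the boundary layer profile $\phi_\kappa$: it forces the normal derivative of $\o_h$ to behave like $\tfrac{1}{\sqrt\kappa}$ in the layer, and naively one gets a $\tfrac{1}{x_3}$ singularity when differentiating further in $x_3$, which would destroy the $L^2$ and $L^\infty$ integrability required for the Boltzmann part (see Theorem \ref{main_theorem:conditional}). The mechanism that saves us is the conormal derivative $D_3=\zeta(x_3)\p_3$: by estimating $D^\beta\o$ and $D^\beta\p_t\o$ rather than $\p_x^\beta\o$, and by exploiting the compatibility-induced absence of the initial layer for $\o,\p_t\o$, we trade the $\tfrac{1}{x_3}$ singularity for a controlled loss of a power $\sqrt\kappa$ in the constants, which is exactly what the hypothesis \eqref{mathfrak_C'} of Theorem \ref{main_theorem:conditional} allows.
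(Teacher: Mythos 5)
Your proposal follows the same Green's function / integral-representation strategy as the paper and identifies all of the essential mechanisms: the Duhamel formula for the Stokes propagator with mixed Robin--Dirichlet boundary conditions, the shrinking analyticity radius to absorb the loss of one derivative in the transport and $\sqrt{-\Delta_h}$ terms, the role of the first two compatibility conditions in \eqref{CC} in killing the boundary terms that arise when you differentiate the representation formula in time (so that $\o$ and $\p_t\o$ carry no initial-boundary layer), and the need for the $\phi_{\kappa t}$-weighted norm $\vertiii{\cdot}_{\infty,\kappa t}$ for $\p_t^2\o_h$ since no further compatibility is imposed. The conormal-derivative observation in your last paragraph is exactly the device the paper uses.

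Two minor imprecisions worth flagging, neither of which is a structural gap. First, the Duhamel formula for $\p_t^2\o$ should carry $\p_s^2 N$, not $\p_t N$, inside the time convolution (you differentiate the representation for $\p_t\o$ once more). Second, the attribution of compatibility conditions is slightly off: at the level of $\p_t\o$, the horizontal boundary term vanishes by the first line of \eqref{CC}, and the vertical boundary term $-\kappa\eta_0\p_y G_{\xi 3}(t,x_3,0)\o_{0\xi,3}(0)$ vanishes because $\o_{0,3}|_{x_3=0}=0$; the condition $\p_t\o_{0,3}|_{x_3=0}=0$ is the one needed at the next level to keep the vertical component of $\p_t^2\o$ free of the initial layer, while the horizontal component of $\p_t^2\o$ is where the $\phi_{\kappa t}$ weight is genuinely required.
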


The following point-wise bounds for $\o$, $u$, and $p$ are the consequences of Theorem \ref{thm_bound}. 
\unhide

\begin{theorem}\label{thm_bound} Let $\lambda_0>0$ and $\o_{in}  \in \mathfrak B^{\lambda_0,\kappa}$ with (\ref{idata}) satisfy 
\Be
 \sum_{0\leq |\beta|\leq 2}  \|D^\beta \p_t^\ell\o_0  \|_{1,\lambda_0}+\sum_{0\leq |\beta|\leq 2} \|D^\beta \p_t^\ell\o_0 \|_{\infty,\lambda_0, \kappa}  <\infty \  \text{ for } \ \ell=0,1,2. \label{initial_norm}
\Ee
Further assume that $\o_{in}=\o_0$ and (\ref{idata}) satisfies the compatibility conditions on $\p\O$  
\Be \label{CC}
\begin{split}
\kappa \eta_0 (\p_{x_3} + \sqrt{- \Delta_h})\o_{0,h}   &= [\p_{x_3} (-\Delta)^{-1} (-u_0 \cdot \nabla \o_{0, h} + \o_0 \cdot \nabla u_{0,h}) ]  , \\ 
\o_{0,3}  =0,  \  \  \p_t\o_{0,3}   = 0 . 
\end{split}
\Ee
Then there exists a constant $\gamma_0>0$ and a time $T>0$ depending only on $\lambda_0$ and the size of the initial data such that the solution $\o(t)$ to the vorticity formulation of the Navier-Stokes equations \eqref{NS}-\eqref{NSB} exists in $C^1([0,T]; \mathfrak B^{\lambda, \kappa})$ with $\p_t^2\o $ in $C(0,T; \mathfrak B^{\lambda, \kappa t})$ for $0<\lambda<\lambda_0$ satisfying  
\Be\label{norm_bound}
\sup_{t\in [0,T]} \left[\sum_{\ell=0}^2 \vertiii{\p_t^\ell \o(t)}_1 + \sum_{\ell=0}^1 \vertiii{\p_t^\ell \o(t)}_{\infty,\kappa} + \vertiii{\p_t^2\o(t)}_{\infty,\kappa t}   \right]<\infty .
\Ee

Furthermore, for each $(t,x)\in [0,T]\times\Omega $, 
\begin{enumerate}
\item (Bounds on the vorticity and its derivatives) $\o(t,x)$ enjoys the following bounds: 
\begin{align}
| \nabla_{h}^i \p_t^\ell \o_h (t,x) |& \lesssim e^{-\bar{\alpha} x_3} \left( 1 + \phi_\kappa (x_3) \right), \ \  | \nabla_{h}^i \p_t^\ell \o_3 (t,x) | \lesssim e^{-\bar{\alpha} x_3} \text{ for } i,\ell =0,1, \label{b1} \\
| \p_t^2 \o_h (t,x) |& \lesssim e^{-\bar{\alpha} x_3} \left( 1 + \phi_\kappa (x_3) + \phi_{\kappa t} (x_3) \right), \ \ | \p_t^2 \o_3 (t,x) | \lesssim e^{-\bar{\alpha} x_3}, \label{b2}\\
| \p_{x_3} \p_t^\ell \o_h (t,x) |& \lesssim {\kappa}^{-1} e^{-\bar{\alpha} x_3}, \ \  | \p_{x_3} \p_t^\ell \o_3 (t,x) | \lesssim  e^{-\bar{\alpha} x_3} \left( 1 + \phi_\kappa (x_3) \right) \text{ for } \ell =0,1. \label{b3}
\end{align}

 \item (Bounds on the velocity and its derivatives) The corresponding velocity field $u(t,x)$ satisfies the following: 
 \begin{align}
  \label{est:u_t}
|\p_t^\ell u (t,x)| &\lesssim 1 \ \ \text{for} \ \ell=0,1,2, \\ 
 \label{est:u1}
\sum_{1 \leq |\beta | \leq 2}  |\nabla ^{\beta } \p_t^\ell u(t,x )|
& \lesssim \big(1+ \phi_\kappa (x_3) + (|\beta|-1) {\kappa}^{-1}  \big)
 e^{-\min (1, \frac{\bar{\alpha}}{2} )x_3}  \ \ \text{for} \ \ell=0,1, \\
 \label{est:u2}
\sum_{ |\beta | =1}  |\nabla ^{\beta } \p_t^2 u(t,x )|
& \lesssim  \big(1+ \phi_\kappa (x_3) + \phi_{\kappa t} (x_3)\big)e^{-\min (1, \frac{\bar{\alpha}}{2} )x_3} . 
\end{align}
Moreover, we have the decay estimate for $\p_t^\ell u$: 
\Be\label{ut}
|\p_t^\ell u|\lesssim \kappa^{-\frac{1}{2}} e^{-\min(1,\frac{\bar\alpha}{2}) x_3}  \ \ \text{for} \ \ell=1,2. 
\Ee

\item (Bounds on the pressure and its derivatives) 
The pressure defined in \eqref{pressure} satisfies the following: 
\begin{align}
|\p_t^\ell p (t,x)|& \lesssim 1 \ \ \text{for} \ \ell=0,1,2,  \label{est:p}\\
\label{est:pdecay}
\sum_{0 \leq |\beta | \leq 1}  |\nabla ^{\beta } \p_t^\ell p(t,x )|
& \lesssim \kappa^{-\frac{1}{2}} 
 e^{-\min (1, \frac{\bar{\alpha}}{2} )x_3}   \ \ \text{for} \ \ell=0,1, \\
\label{est:pt2}
|\p_t^2 p|   &\lesssim ( \kappa^{-\frac12}  +\phi_{\kappa t}(x_3) )e^{-\min (1, \frac{\bar{\alpha}}{2} )x_3}  . 
\end{align}
\end{enumerate}
\end{theorem}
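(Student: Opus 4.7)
The plan is to build on the Green's function approach of \cite{NN2018} for the vorticity formulation \eqref{NS}-\eqref{NSB} and extend it in two directions: from 2D to 3D as in \cite{FW}, and crucially to the temporal derivatives $\p_t\o$ and $\p_t^2\o$ needed to control the Boltzmann remainder. First I would derive an integral representation for $\o$ by treating the linear Stokes problem
\[
\p_t\o-\kappa\eta_0\Delta\o=N(\o),\qquad \kappa\eta_0(\p_{x_3}+\sqrt{-\Delta_h})\o_h\big|_{x_3=0}=B(\o),\qquad \o_3|_{x_3=0}=0,
\]
where $N(\o)=-u\cdot\nabla\o+\o\cdot\nabla u$ and $B(\o)$ is the right-hand side in \eqref{NSB}. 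Working on the Fourier side in $x_h$ turns the problem into a family of one-dimensional heat problems on $\R_+$ indexed by $\xi\in\Z^2$; the Robin-type nonlocal boundary condition is converted into a Dirichlet-type condition via explicit kernels whose decay in the complex pencil $\mathcal H_\lambda$ is the source of the boundary-layer weight $\phi_\kappa$ in \eqref{BL}. I would then set up a Picard iteration in the space defined by $\vertiii{\cdot}_{\infty,\kappa}+\vertiii{\cdot}_1$, using the analytic Biot--Savart estimates to recover $u=\nabla\times(-\Delta)^{-1}\o$ and thereby close the nonlinear terms in the scale of shrinking analyticity radii $\lambda_0-\gamma_0 t$ by the usual Cauchy--Kowalewski trick.

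Next I would turn to the temporal derivatives. Differentiating in $t$ the equation and the boundary condition gives the same Stokes-type problem with $\o$ replaced by $\p_t\o$ but with modified forcing and boundary data expressible in terms of $\o$ and $u$. The compatibility conditions \eqref{CC} are exactly what is needed so that the initial data $\p_t\o_0$ defined in \eqref{idata} lies in $\mathfrak B^{\lambda_0,\kappa}$ and is consistent with the boundary condition, so that the same fixed-point argument can be run for $\p_t\o$ \emph{without an initial layer}. This yields the bounds \eqref{b1} and the $\kappa$-uniform part of \eqref{b3} for $\ell=0,1$. For $\p_t^2\o$, the further differentiated problem picks up source terms that involve $\p_t u\cdot\nabla\o$ and similar products; the horizontal components no longer satisfy a well-prepared boundary condition at $t=0$ and so an initial-boundary layer appears, which is precisely what the $\phi_{\kappa t}$ weight in \eqref{IB_layer} and the norm $\vertiii{\cdot}_{\infty,\kappa t}$ in \eqref{norm_IBLT} are designed to absorb. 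I would run the fixed point once more for $\p_t^2\o$ in this weighted space, obtaining \eqref{b2} and closing \eqref{norm_bound}.

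Finally the bounds on the velocity \eqref{est:u_t}--\eqref{ut} and the pressure \eqref{est:p}--\eqref{est:pt2} are recovered from the vorticity bounds by the Biot--Savart law and by solving the pressure Poisson equation
\[
-\Delta p=\nabla\cdot(u\cdot\nabla u),\qquad \p_{x_3}p\big|_{x_3=0}=\kappa\eta_0\Delta u_3\big|_{x_3=0},
\]
using $L^1$-Fourier estimates in the analytic norms \eqref{norm_L1} and the boundary-layer decomposition of $\o$. Notice that one derivative of $\o_h$ in $x_3$ costs a factor $\kappa^{-1}$ because the boundary-layer profile has scale $\sqrt\kappa$; this is how \eqref{b3} produces the factor $\kappa^{-1/2}$ in \eqref{ut} after integrating in $x_3$, and likewise for $\nabla p$ in \eqref{est:pdecay}. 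The time-derivative pressure estimates are obtained by differentiating the Poisson problem in $t$ and using the corresponding vorticity bounds, with the $\phi_{\kappa t}$ contribution of \eqref{est:pt2} inherited from \eqref{b2}.

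The main obstacle, in my view, is to avoid a $\tfrac1{x_3}$ singularity of $\p_{x_3}\o$ that would naively arise when one differentiates the boundary-layer part, which would destroy $L^2$ integrability in $x_3$ and the subsequent control of $f_R$. This is delicate because the Dirichlet-to-Neumann-type operator $\sqrt{-\Delta_h}$ in \eqref{NSB} creates a nonlocal coupling of the boundary layer to the interior, and the commutators generated by differentiating it in $t$ must be absorbed into $\phi_\kappa$ or $\phi_{\kappa t}$ without losing more than one power of $\sqrt\kappa$. The resolution is to carry the conormal derivative $D_3=\zeta(x_3)\p_3$ (which kills the boundary weight near $x_3=0$) through the a priori estimates and recover the full normal derivative only at the expense of a single factor $\kappa^{-1}$ at the very end, as reflected in \eqref{b3} and \eqref{est:u1}.
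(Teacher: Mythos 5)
Your overall strategy matches the paper: represent $\o$ via the Stokes Green's function in the frequency--pencil variables, propagate the analytic norms with the boundary-layer weight through a fixed point, then differentiate in time, use the compatibility conditions \eqref{CC} to eliminate the initial-boundary layer for $\p_t\o$ and absorb the one for $\p_t^2\o_h$ into $\phi_{\kappa t}$, and finally recover $u$ and $p$ by Biot--Savart and the pressure Poisson problem. That much is exactly what the paper does in Steps~1--3 and Steps~5--6.

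There is, however, a genuine gap at the very place you single out as the main obstacle. You correctly observe that the conormal norms only control $\zeta(x_3)\p_3\o$, so that naive division by $\zeta(x_3)\sim x_3$ produces a non-integrable $1/x_3$ singularity; but your proposed resolution --- ``carry $D_3=\zeta\p_3$ through the a priori estimates and recover the full normal derivative at the expense of a single factor $\kappa^{-1}$ at the very end'' --- does not by itself say why the cost is $\kappa^{-1}$ rather than $1/x_3$. Dividing out $\zeta$ is exactly what you must \emph{not} do. The missing idea is to go back to the equation itself: write $\p_{x_3}^2\o_{\xi,h}=\tfrac{1}{\kappa\eta_0}\bigl(\p_t\o_{\xi,h}+\kappa\eta_0|\xi|^2\o_{\xi,h}-N_{\xi,h}\bigr)$, use the Robin boundary condition \eqref{NSB_f} to read off $\p_{x_3}\o_{\xi,h}(t,0)=-|\xi|\o_{\xi,h}(t,0)+\tfrac{1}{\kappa\eta_0}B_\xi(t)$, and then integrate in $x_3$ from $0$; each term on the right is controlled in the $L^1_\lambda$-based analytic norms by $\vertiii{\o}_1$ and $\vertiii{\p_t\o}_1$, so the only loss is the explicit $1/\kappa$ from the diffusion coefficient. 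This is the step that converts conormal control into a genuine pointwise bound $|\p_{x_3}\o_h|\lesssim\kappa^{-1}e^{-\bar\alpha x_3}$ without any $1/x_3$ blow-up, and it is why \eqref{b3} requires the time-derivative norms to be controlled first. (A similar comment applies to your heuristic for \eqref{ut}: the decay estimate for $\p_t^\ell u$, $\ell=1,2$, is not obtained by ``integrating \eqref{b3} in $x_3$'' --- the Biot--Savart kernel does not provide the exponential decay for $u$ itself --- but directly from the momentum equation $\p_t u=\kappa\eta_0\Delta u-u\cdot\nabla u-\nabla p$ once the decaying estimates \eqref{est:u1} and \eqref{est:pdecay} are in hand.)
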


\begin{remark} For simplicity of the presentation, we have taken the analytic data with the same analyticity radius in $x_1$, $x_2$ and  $x_3$ with the exponential decay for large $x_3$. As shown in \cite{KVW, FW}, more general initial data requiring the analyticity only near the boundary can be taken. 
\end{remark}

\begin{remark}
The horizontal vorticity $\o_h$ and the vertical vorticity $\o_3$ obey different boundary conditions \eqref{NSB} which enforce different behaviors near the boundary. This is well-reflected in our $L^\infty$ based norms in \eqref{[]} and \eqref{[[]]}. 
As noted in \cite{FW},  such incompatible behaviors of $\o_h$ and $\o_3$ in 3D are dealt with the $L^1$ based norm \eqref{norm_L1T} which contains one more tangential derivative $(1+|\nabla_h|)$, which is different from   2D analysis \cite{NN2018,KVW}. 
\end{remark}

\begin{remark} We demand the compatibility conditions in \eqref{CC} in order to avoid singular initial-boundary layers for the temporal derivatives of the vorticity. 
If the first two conditions in \eqref{CC} were not satisfied, the initial-boundary layers would occur for the first temporal derivative of the vorticity. For the second temporal derivative, we handle the initial-boundary layer for the horizontal part 
with the initial-boundary layer weight, while for the vertical part we further demand $\p_t\o_{0,3}|_{x_3=0}=0$ in order to rule out a singular initial-boundary layer caused by the Dirichlet boundary condition. This amounts to requiring the second order vanishing condition at the boundary for $\o_{0,3}$, which is satisfied by a large class of $\o_0$. We remark that the first condition of \eqref{CC} is also satisfied by a large class of $\o_0$. In fact, if not, by the result of 
\cite{NN2018}, we can obtain a short time solution $\tilde\o(t)$ to \eqref{NS}-\eqref{NSB} and may reset the initial data by $\o_0 = \tilde\o(t=t_0)$ for sufficiently small $t_0>0$.  
\end{remark}
The proof of Theorem \ref{thm_bound} will be given in Section \ref{sec:5}.


 \hide

{\color{green}[CK note: I don't have a good idea where we should put the main theorem and how. Actually I don't like that main theorem appears in the end of the paper. But since there are many technical notions and conditions in the theorem I tentatively locate it here. Where we should put it?] \bcr JJ: I moved the main theorem to the intro. See how you feel. Another possibility that just occurred to me  is to give an informal statement of the theorem there, something like there exists a large set of initial data of $F=...$ leading to the solvability and convergence (so that we don't have to be specific about the regularity for the data):

\begin{theorem}[Informal statement] We consider a half space in 3D
$ 
\O := \mathbb{T}^2 \times \R_+ \ni (x_1, x_2, x_3)$ 
 where $ \mathbb{T} $ is a periodic interval of $ (-\pi, \pi)$.
Let \Be 
\delta= \sqrt{\e} \ \ \text{and} \ \ 
\delta \lesssim \exp (   -\kappa^{ -\mathfrak{P}}   ) 
 \ \ \text{for some   } \mathfrak{P}>0.  
\Ee
Then there exists a large set of initial data $u_{in}$, $f_{2,in}$ and $f_{R,in}$ 
such that a unique solution $F(t,x,v)$ of the form  \eqref{F_e}   
to (\ref{Boltzmann}) and (\ref{diffuse_BC}) with (\ref{scale}) exists on $ [0,T]$ for some $T>0$ 
and satisfies 
\Be\notag
\sup_{0 \leq t \leq T}\left\|\frac{F (t,x,v)- M_{1, 0, 1} (v)}{\e} -u_E(t,x)\right\|_{L^2(\O)}
\lesssim   \kappa \rightarrow 0   \ \ \text{as} \ \ \e \rightarrow 0.
\Ee
\end{theorem}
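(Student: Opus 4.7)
The plan is to combine the two main technical pillars established earlier in the paper, Theorem \ref{main_theorem:conditional} and Theorem \ref{thm_bound}, together with a short argument that transfers convergence from the Navier--Stokes reference state $u$ to the Euler flow $u_E$. First, for an analytic initial vorticity $\o_{in}$ satisfying the compatibility conditions \eqref{CC}, Theorem \ref{thm_bound} produces the Navier--Stokes velocity $u$ on some time interval $[0,T]$ together with the quantitative pointwise bounds \eqref{b1}--\eqref{est:pt2}. Because the boundary-layer profile $\phi_\kappa$ lies in $L^p(\R_+)$ for every $p<\infty$ with norm controlled by a power of $\kappa$ and each additional normal derivative loses at most a factor $\kappa^{-1/2}$, these bounds verify the hypotheses \eqref{mathfrak_C'} and \eqref{condition:theorem} of Theorem \ref{main_theorem:conditional} with some $\mathfrak{P}\geq 1/2$ and some smaller $\mathfrak{P}'<\mathfrak{P}$. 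The initial velocity $u_{in}$ is then chosen inside the (open) class of analytic data for which this applies, which gives a large admissible set.

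Next, the Boltzmann initial datum is constructed: $f_{2,in}$ is fixed by evaluating the explicit formula \eqref{f_2} at $t=0$ with the hydrodynamic component $\mathbf{P}f_2$ determined from the pressure as in Section \ref{sec:1.1}, while $f_{R,in}$ is any function satisfying the weighted initial smallness \eqref{initial_EF}. Setting $\delta=\sqrt\e$ and choosing $\kappa=\kappa(\e)$ via the scaling \eqref{choice:delta}, Theorem \ref{main_theorem:conditional} produces the unique Boltzmann solution
\begin{equation*}
F = \mu + \e^2 f_2 \sqrt{\mu} + \delta\e f_R \sqrt{\mu}
\end{equation*}
on $[0,T]$ satisfying the diffuse reflection boundary condition, together with the uniform bound \eqref{est:E}, which reads $\sqrt{\mathcal{E}(t)}+\sqrt{\mathcal{F}_p(t)} \lesssim \delta^{-\sigma(p)}$ for an exponent $\sigma(p)\to 0$ as $p\to 3^-$.

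The first convergence statement then follows from the identity
\begin{equation*}
\frac{F(t,x,v)-M_{1,\e u,1}(v)}{\e\sqrt{M_{1,\e u,1}(v)}} = \e f_2 + \delta f_R,
\end{equation*}
using the pointwise bounds on $f_2$ inherited from $(u,p)$ together with $\delta^{1-\sigma(p)}\to 0$ to bound the remainder piece in $L^\infty_tL^2_{x,v}$. For the second convergence, one splits
\begin{equation*}
F - M_{1,\e u_E,1} = \bigl(F-M_{1,\e u,1}\bigr)+\bigl(M_{1,\e u,1}-M_{1,\e u_E,1}\bigr).
\end{equation*}
A first-order Taylor expansion of the Maxwellian in the macroscopic velocity yields the pointwise bound $|M_{1,\e u,1}-M_{1,\e u_E,1}|\lesssim \e|u-u_E|(1+|v|)\sqrt{M_{1,0,1}}$, so it suffices to prove the inviscid limit $\|u-u_E\|_{L^\infty_tL^2_x(\O)}\to 0$. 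This is obtained as a byproduct of the analytic framework: the vorticity bounds \eqref{b1}--\eqref{b3} localize $\o-\o_E$ in a boundary layer of width $\sqrt\kappa$ in $L^2$, Kato's criterion is thereby verified, and the Biot--Savart law transfers the convergence to the velocity. The polynomial $|v|$ loss in the Maxwellian difference is absorbed by the weight $(1+|v|)^2\sqrt{M_{1,0,1}}$ appearing in the denominator.

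The main obstacle will be reconciling the three small parameters $\e$, $\delta$, $\kappa$. The $f_R$-energy inequality carries a Gronwall-type growth of the form $\exp(\mathfrak{C}T/\kappa^{\mathfrak{P}})$, which must be absorbed by the fluctuation size $\delta$; the choice \eqref{choice:delta} is essentially the largest admissible $\delta$ and forces the scale \eqref{scale_1} in which the Knudsen number vanishes only logarithmically faster than the Mach number. A delicate consistency check is that the initial-boundary layer in $\p_t^2 u$ and $\p_t^2 p$ contributes only a $\kappa^{-1/2}$ singularity, which is exactly what the weight $\phi_{\kappa t}$ in \eqref{b2} and \eqref{est:pt2} affords; this is what pins down the exponent $\mathfrak{P}\geq 1/2$ in \eqref{mathfrak_C'} and thereby enables the scale relation in the remark following \eqref{choice:delta}.
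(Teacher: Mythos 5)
Your proposal is correct and follows essentially the same route as the paper: the result is obtained by combining Theorem \ref{thm_bound} (whose estimates verify the hypotheses of Theorem \ref{main_theorem:conditional} with $\mathfrak{P}=\tfrac12$) with Theorem \ref{main_theorem:conditional}, and the Euler limit is then deduced exactly as in Corollary \ref{Cor_EL} by splitting $F-M_{1,\e u_E,1}$ into the Navier--Stokes approximation error plus the Maxwellian difference, Taylor-expanding the latter, and invoking Kato's criterion for the inviscid limit.
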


\

and perhaps put a remark after the statement about the assumptions and conditions about the data (analyticity, compatibility conditions  etc), and function spaces for $f_R$ etc by referring to later section, and have the precise theorem here as before...
 \ec}

 \unhide 
 
\hide

{\color{red}
\begin{remark}
The condition of $\delta, \e ,\kappa$ in the theorem is indeed the largest $\delta$ (and $\sqrt{\e}$) we can choose. Any smaller $\delta$ than $\exp\Big(  \frac{  - \mathfrak{C}  T}{ \kappa^{ \mathfrak{P}} }\Big)$ (which means $\delta$ decaying faster than $\exp\Big(  \frac{  - \mathfrak{C}  T}{ \kappa^{ \mathfrak{P}} }\Big)$ as $\kappa \downarrow 0$) will produce the same result. 
In terms of (\ref{scale}) the relation (\ref{choice:delta}) implies that  
\Be\label{scale_1}
\mathpzc{St}=\e = \mathpzc{Ma}  \  \ \text{and} \ \ 
\mathpzc{Kn} \gtrsim    \frac{T^{\mathfrak{P}} }{(\ln \e^{-1})^{\mathfrak{P}} }.
\Ee 
\end{remark} }

\unhide

\hide
Now we look hierarchies closely. We refer to Section 3 for the full details. Let us first consider an $\frac{1}{\e \delta}$-hierarchy (\ref{eqtn_f_2}). In the energy estimate any nonvanishing term of $(\ref{eqtn_f_2})$ would cause an unbounded term. We remove the term entirely by solving an equation $(\ref{eqtn_f_2})=0$. By the Fredholm alternative, an inverse map 
\Be\label{L_inverse}
L^{-1}: \mathcal{N}^{\perp} \rightarrow \mathcal{N}^{\perp},  \ \text{ where }   \  \mathcal{N}^\perp  \   \text{stands an $L^2$-orthogonal complement of } \mathcal{N},
\Ee
 is well-defined and hence 
the solvability condition is given by
\Be\label{Comp1}
\frac{ \e^{-1} (v- \e u )\cdot \nabla_x \mu}{   \sqrt{\mu}}
=\sum_{\ell, m=1}^3 \p_\ell u_m\varphi_\ell  \varphi_m  \sqrt{\mu}
  \in \mathcal{N}^\perp.
\Ee
 This condition indeed implies the incompressible condition (\ref{incomp}). Once (\ref{incomp}) hold, we have $\sum_{\ell, m=1}^3 \p_\ell u_m\varphi_\ell  \varphi_m  \sqrt{\mu}= \sum_{\ell, m=1}^3 \p_\ell u_m
 (\varphi_\ell  \varphi_m - \frac{|v-\e u|^2}{3} \delta_{\ell m}
 )
  \sqrt{\mu}$. Now we solve $(\ref{eqtn_f_2})=0$ by 
setting
 \Be\label{f_2} 
 (\mathbf{I}-\mathbf{P}) f_2 \hide = \kappa  L^{-1} (\mathbf{I} - \mathbf{P})
   \bigg(\frac{\e^{-1} (v- \e u )\cdot \nabla_x \mu }{\sqrt{\mu}}
 \bigg) 

  \sum_{\ell, m=1}^3 \varphi_\ell \p_\ell u_m \varphi_m  \sqrt{\mu} 
\unhide
=  \kappa \sum_{\ell,m=1}^3  A_{\ell m} \p_\ell u_m
 \ \ \text{with} \ \ A_{\ell m}:=L^{-1} \Big( \varphi_\ell \varphi_m \sqrt{\mu} - \frac{|v- \e u |^2}{3} \delta_{\ell m}\sqrt{\mu} \Big).
 \Ee
  \hide where we define  such that 
 \Be\label{AB}
 A_{ij}= L^{-1} \big( \varphi_i \varphi_j \sqrt{\mu} - \frac{|v- \e u |^2}{3} \delta_{ij}\sqrt{\mu} \big)  
  \Ee 
 while coefficients of $\mathbf{P}f_2 =\{ \tilde{\rho}  \varphi_0+ \sum_{\ell=1}^3 \tilde{u}  \cdot  \varphi_\ell +\tilde{\theta}  \varphi_4  \}\sqrt{\mu}$ haven't been determined yet. \unhide

Now we move to an $\frac{1}{\delta}$-hierarchy (\ref{eqtn_f_3}). Hydrodynamic part of (\ref{eqtn_f_3}), unless it vanishes, would induce an unbounded term in the energy estimate. We expand $\delta \times (\ref{eqtn_f_3})$, using (\ref{mu_e}) and (\ref{f_2}), 
\begin{align}
 (v- \e u) \cdot   (\p_t u + u \cdot \nabla_x u ) \sqrt{\mu}  
 +  (v- \e u) \cdot \nabla_x   \mathbf{P} f_2
+ \kappa  (v- \e u) \cdot  \nabla_x
\Big( \sum_{\ell,m=1}^3 A_{\ell m} \p_\ell u_m\Big)
 . 
 \label{H_NS}
\end{align}
The leading order term of the last term of (\ref{H_NS}) contributes the following to the hydrodynamic part of (\ref{H_NS}) as
\begin{align}
\kappa  \sum_{\ell,m,k=1}^3 \big\langle \varphi_i \varphi_k  \sqrt{\mu},   
A_{\ell m}
\big\rangle  \p_k \p_\ell u_{  m}
= \kappa  \sum_{\ell,m,k=1}^3 \Big\langle
\big( \varphi_i  \varphi_k  -\frac{|v-\e u|^2}{3} \delta_{i k}
\big)\sqrt{\mu}
,   
A_{\ell m}
\Big\rangle  \p_k \p_\ell u_{  m}
= \kappa    \sum_{\ell ,  m, k=1}^3 \left\langle 
L A_{i k}
,  A_{\ell m}
\right\rangle  \p_{k} \p_\ell u_{  m}
,
 \label{f_2:main}
\end{align}
where we have used the fact $A_{\ell m } \in \mathcal{N}^\perp$ and $\frac{|v-\e u|^2}{3} \sqrt{\mu} \in \mathcal{N}$ as the first step and the definition of $A_{ik}$ at the last step.  It is well-known (e.g. Lemma 4.4 in \cite{BGL93}) that for some constant $\eta_0 >0$
\Be
\langle LA_{\ell k }, A_{ij}\rangle = \eta_0   ( \delta_{ik} \delta_{j\ell} + \delta_{i\ell} \delta_{jk} ) - \frac{2}{3} \eta_0   \delta_{ij} \delta_{k\ell}.\label{eta_0}
\Ee
Therefore we deduce (\ref{f_2:main}) vanishes for $\ell=0,4$, while we can observe the $\kappa \eta_0$-viscosity term in (\ref{NS_k}) as 
 \begin{align}
(\ref{f_2:main}) = \kappa \eta_0 \sum_{i,j,k}   \{  ( \delta_{ik} \delta_{j\ell} + \delta_{i\ell} \delta_{jk} ) - \frac{2}{3}  \delta_{ij} \delta_{k\ell}\} \p_{k} \p_i u_{  j} 
 =\kappa\eta_0 \{\Delta u_{\ell}  -   \p_\ell    \nabla \cdot  u  -  \frac{2}{3}   \p_\ell  \nabla \cdot  u   \}= \kappa\eta_0\Delta u_{ \ell}
 \ \ \text{for} \ \ell=1,2,3
. \label{Delta_u}
 \end{align}
Here we have used the incompressible condition (\ref{incomp}) at the last step. 
 On the other hand, a leading order term of hydrodynamic part of $(v- \e u) \cdot \nabla_x   \mathbf{P} f_2$ contributes the pressure term of (\ref{NS_k}) by choosing a special form of $\mathbf{P}f_2$ as in (\ref{Pf_2}). Therefore whole leading order terms of the hydrodynamic part in (\ref{eqtn_f_3}) do vanish by solving the Navier-Stokes equation (\ref{NS_k}), and (\ref{incomp})!

\subsection{Temporal derivatives of $f_R$ 
}  A major analytic challenge is estimating a contribution of the nonlinear term uniformly. Most singular contribution can be bounded using an integrability gain of the classical average lemma $L^2_t H^{1/2}_x \subset L^2_t L^3_x$ in $3D$, as 
\Be
\left|\int_0^t\iint_{\O \times \R^3}\frac{\delta}{\e\kappa} \Gamma( \mathbf{P}f_R, \mathbf{P}f_R) (\mathbf{I} - \mathbf{P}) f_R\right| \lesssim 
\delta \kappa^{-\frac{3}{2}} 
  \|\kappa^{ \frac{1}{2}}  {P} f_R \|_{L^\infty_tL^6_{x }} 
  \|\kappa^{ \frac{1}{2}}  {P} f_R \|_{L^2_tL^3_{x }} 
 \| \e^{-1} \kappa^{-\frac{1}{2}}\sqrt{\nu} (\mathbf{I} - \mathbf{P}) f_R \|_{L^2_{t,x,v}}  
.
\label{nonlinear_1}
\Ee 
We obtain such a higher integrability in $L^6_x$ through a priori $L^2$-bound of $\p_t f_R$ and the dissipation by viewing the equation of $f_R$ as 
\Be\label{vf_x}
\kappa^{1/2}v\cdot \nabla_x \mathbf{P}  f_R= - \e \kappa^{1/2}\p_t f_R - \frac{1}{\e \kappa^{1/2}} L(\mathbf{I} - \mathbf{P} )f_R
  + g,
\Ee
with some prescribed $g$. A simple derivatives count and the Gagliardo-Nirenberg-Sobolev inequality $\dot{W}^{1,2} (\R^3)\subset L^6  (\R^3)$ suggests an $L_x^6$-bound of $\kappa^{1/2} P f_R$. Using a recent test function method of \cite{EGKM2}, indeed we can ``invert'' an operator $v\cdot \nabla_x \mathbf{P}$ and deduce a crucial estimate 
  \Be\label{L6}
  \| \kappa^{1/2} Pf_R(t) \|_{L^6_x} \lesssim \e \kappa^{1/2} \| \p_t f_R(t) \|_{L^2_{x,v}}
  + \| \e^{-1} \kappa^{-1/2}    (\mathbf{I} - \mathbf{P}) f_R (t)  \|_{L^{2}_{x,v}} 
  + \| g(t)  \|_{L^2_{x,v}} + l.o.t.
  \Ee
This estimate (\ref{L6}) will provide the desired bound for (\ref{nonlinear_1}) only by taking a supremum in $t$ to the dissipation. We use a version of Sobolev embedding $L^\infty_t \subset H^1_t$ to control $L^\infty_t L^2_{x,v}$-norm of the dissipation provided $\e^{-1} \kappa^{-1/2}  \p_t  [(\mathbf{I} - \mathbf{P})  f_R  ]  \in L^2_t L^2_{x,v}$ holds. This is a critical point in which a temporal derivatives get involved in our analysis!

An energy estimate for $\p_t f_R$ requires higher regularity for solutions of the fluid equation (\ref{NS_k})-(\ref{noslip}). Naturally this energy estimate for $\p_t f_R$ works similarly as one for $f_R$ provided higher regularity of the fluid since the equation for $\p_t f_R$ is basically quasi-linear. More precisely the corresponding estimate to (\ref{nonlinear_1}) only requires $\kappa^{1/2}\p_t Pf_R \in L^2_t L^3_x$ and $\e^{-1} \kappa^{-\frac{1}{2}}\sqrt{\nu}\p_t (\mathbf{I} - \mathbf{P}) f_R \in L^2_{t,x,v}$ but not necessarily $\kappa^{1/2}\p_t Pf_R \in L^\infty_t L^6_x$. Finally for a uniform bound of (\ref{nonlinear_1}) we balance the parameters as (\ref{scale}) with
\Be\label{delta}
\delta=\delta  (\e) \ll  \kappa(\e)^{3/2} \ \ \text{for all } \ \e>0.
\Ee

It turns out that $\p_t^2 \nabla_x u$ and $\p_t \nabla_x^2 u$ get involved in the energy estimate for $\p_t f_R$.

\subsection{Higher Regularity of Navier-Stokes equation in the Inviscid Limit}\label{sec:NSlimit}

\subsection{Main Theorem}

\unhide

 \subsection{Main Theorem}\label{sec:MT}

Now we present the full statement of the main theorems of this paper:
 \begin{theorem}[\textbf{Kinetic approximation of Navier-Stokes in large Reynolds numbers}]\label{main_theorem}
 We consider a half space $\O$ in 3D as in (\ref{domain}). Suppose an initial datum of the Navier-Stokes flow $u_{in}$ is divergence-free $\nabla_x \cdot u_{in}=0$ in $\O$ and the corresponding initial vorticity $\o_{in}= \nabla_x \times u_{in}$ belongs to the real analytic space $\mathfrak B^{\lambda_0,\kappa}$ of (\ref{[]}) for some $\lambda_0>0$ such that (\ref{initial_norm}) holds.
Further we assume that $\o_{in}$ satisfies the compatibility conditions (\ref{CC}) on $\p\O$.
 Then there exists a unique real analytic solution $(u(t,x), \nabla_x p(t,x))$ to \eqref{NS_k}-\eqref{noslip} in 
$[0,T] \times \O$, while $T>0$ only depends on $\lambda_0$ and the size of the initial data as in (\ref{initial_norm}).

\hide

 for $\lambda_0>0$ and satisfies (\ref{initial_norm}), while 
where $(\o_0, \p_t \o_0, \p_t^2 \o_0)$ and $(u_0,\p_t u_0)$ defined through the equation as in (\ref{idata}). Further assume that $\o_0$ satisfies the compatibility conditions (\ref{CC}).   
\unhide
Choosing a pressure $p(t,x)$ such that $p(t,x)\rightarrow 0$ as $x_3 \uparrow \infty$, we set the local Maxwellian and the second order correction $f_2$ as
\Be  \notag
\mu := M_{1,\e u, 1} =\f{1}{(2\pi  )^{ \f{3}{2}}}\exp \left\{- \frac{|v-\e u|^2}{2}\right\},\ \
f_2:=  \mathbf{P}f_2 + ( \mathbf{I}-\mathbf{P})f_2  =  p \varphi_0 \sqrt{\mu}+ (\ref{f_2}).
 \Ee
For given such $T>0$, let us choose $\e$ and $\kappa$ in the relation of (\ref{choice:delta}).
\hide
, for some $\mathfrak{C}\gg 1$, 
 \Be\label{choice:delta:intro}
\sqrt{\e}=\exp\Big(  \frac{  - \mathfrak{C}  T}{ \kappa^{1/2} }\Big) 
 . 
\Ee
\unhide


Assume that an initial datum for the remainder $f_{R,in }$ satisfies (\ref{initial_EF}) for some $p<3$ and $|p-3|\ll 1$.
  Then we construct a unique solution $f_R(t,x,v)$ of the form of 
 \Be\notag
 F  = M_{1, \e u  ,1 } + \e^2 f_{2} 
  + \e ^{3/2} f_{R}  \ \ \text{in} \ \ [0,T] \times \O \times \R^3,
 \Ee
which solves the Boltzmann equation (\ref{Boltzmann}) and the diffuse reflection boundary condition (\ref{diffuse_BC}) with the scale of (\ref{scale}) and (\ref{choice:delta}), and satisfies the initial condition $F|_{t=0}= M_{1, \e u _{in} ,1 } + \e^2 f_{2} |_{t=0}
  + \e ^{3/2} f_{R,in}$. 


Moreover we derive that, for each $\e$ and $\kappa$ of (\ref{choice:delta}), 
\Be\label{approx}
\sup_{0 \leq t \leq T}\left\|  \frac{F (t,x,v)- M_{1, \e u(t,x), 1} (v)}{\e  \sqrt{M_{1, \e u(t,x), 1} (v)}
}  \right\|_{L^2(\O \times \R^3)}  \lesssim 
\exp\Big(  \frac{  - \mathfrak{C}  T}{ 2\kappa^{1/2} }\Big)   \ \ \text{for} \ \ \kappa \ll1. 
\Ee

  \end{theorem}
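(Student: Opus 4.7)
The plan is to deduce the theorem as a corollary of Theorem~\ref{thm_bound} and Theorem~\ref{main_theorem:conditional}, linked by the scaling~(\ref{choice:delta}). There are three steps: produce the Navier-Stokes solution, verify that its regularity fulfils the fluid hypotheses of the conditional Boltzmann theorem, and read off (\ref{approx}) from the resulting bound (\ref{est:E}).

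\textbf{Step 1.} Under the analyticity (\ref{initial_norm}) and the compatibility (\ref{CC}), Theorem~\ref{thm_bound} furnishes a unique vorticity $\o\in C^1([0,T];\mathfrak B^{\lambda,\kappa})$ with $\p_t^2\o\in C(0,T;\mathfrak B^{\lambda,\kappa t})$ on a time $T>0$ depending only on $\lambda_0$ and the initial data. The velocity $u$ is recovered via the Biot-Savart law~(\ref{vorticity}) and the pressure $p$ via the Neumann problem $-\Delta p = \nabla_x\cdot(u\cdot\nabla_x u)$ normalized by $p\to 0$ as $x_3\uparrow\infty$, producing all pointwise bounds (\ref{b1})--(\ref{b3}), (\ref{est:u_t})--(\ref{ut}), (\ref{est:p})--(\ref{est:pt2}).

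\textbf{Step 2.} I would then verify (\ref{mathfrak_C'}) with $\mathfrak{P}=\tfrac12$ and (\ref{condition:theorem}) with any $\mathfrak P'\in(0,\tfrac12)$. For the former, the boundary-layer singularity $\phi_\kappa$ in (\ref{est:u1}) with $|\beta|=1$ gives $\|\nabla_x \p_t^\ell u\|_{L^\infty}\lesssim \kappa^{-1/2}$ for $\ell=0,1$; the sharp uniform bound $|\p_t^\ell u|\lesssim 1$ of (\ref{est:u_t}) (rather than the weaker decaying bound (\ref{ut})) is what produces $\kappa^{-1/2}\|\p_t^\ell u\|_{L^\infty}\lesssim \kappa^{-1/2}$ for $\ell=0,1,2$; and (\ref{est:pdecay}) yields $\kappa^{-1/2}\|p\|_{L^\infty}\lesssim \kappa^{-1/2}$. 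For (\ref{condition:theorem}), the mixed $L^\infty_x\cap L^2_x$ norms of the spatial derivatives are controlled using the exponential decay $e^{-\bar\alpha x_3}$ together with the integrability $\int_0^\infty\phi_\kappa(x_3)^2\dd x_3\lesssim \kappa^{-1/2}$; the only problematic term is $\nabla_x\p_t^2 u$, which carries a $\phi_{\kappa t}$ weight yielding an initial singularity $t^{-1/4}$ in the $L^\infty_x$-norm that remains square-integrable in $t$. Each quantity in (\ref{condition:theorem}) is then bounded by a power of $\kappa^{-1}$, which is $O(e^{\kappa^{-\mathfrak P'}})$ for any $\mathfrak P'>0$.

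\textbf{Step 3.} With Steps 1--2 and the scaling (\ref{choice:delta}) calibrated to $\mathfrak{P}=\tfrac12$, Theorem~\ref{main_theorem:conditional} supplies the unique remainder $f_R$ of (\ref{F_e1}) matching the prescribed initial data and satisfying (\ref{est:E}). Writing
\[
\frac{F-\mu}{\e\sqrt{\mu}} = \e f_2 + \sqrt{\e}\,f_R,
\]
the $\e f_2$ term is $O(\e)$ in $L^2_{x,v}$ from the fluid bounds on $p$ and on $\kappa\nabla_x u$, and this is much smaller than $\delta^{1/2} = \exp(-\mathfrak CT/(2\kappa^{1/2}))$ thanks to (\ref{choice:delta}). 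For the remainder, (\ref{est:E}) yields $\|\sqrt{\e}f_R(t)\|_{L^2}\lesssim \delta\cdot\delta^{\frac{3}{p}(1-p/3)-\frac12}=\delta^{\frac12+\frac{3}{p}(1-p/3)}$ after using $\sqrt{\e}=\delta$; picking $p<3$ with $|p-3|\ll 1$ drives the exponent arbitrarily close to $\tfrac12$ from above, so the bound reduces to $\lesssim \delta^{1/2}$, i.e.\ to (\ref{approx}). The hardest point is the knife-edge calibration in Step~2: (\ref{choice:delta}) places $\delta$ exactly at the break-even value between the hidden Gronwall growth $e^{1/\kappa^{\mathfrak{P}}}$ buried in Theorem~\ref{main_theorem:conditional} and the factor $\delta^{-1}$ absorbing it, so any extra power of $\kappa^{-1}$ that slips into the verification of (\ref{mathfrak_C'}) would either shrink the admissible range of $\e$ below what (\ref{choice:delta}) promises or destroy the convergence rate in (\ref{approx}).
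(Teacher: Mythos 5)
Your overall strategy is the same as the paper's: reduce the theorem to Theorem~\ref{thm_bound} and Theorem~\ref{main_theorem:conditional} with $\mathfrak{P}=\tfrac12$, and then read off (\ref{approx}) from the decomposition $\frac{F-\mu}{\e\sqrt\mu}=\e f_2+\sqrt\e\,f_R$ together with (\ref{est:E}) and the choice $\delta=\sqrt\e$. The endgame rate computation in Step~3, and the verification of (\ref{mathfrak_C'}) with $\mathfrak{P}=\tfrac12$ via $\phi_\kappa(0)\sim\kappa^{-1/2}$ in (\ref{est:u1}) together with (\ref{est:u_t}) and (\ref{est:p}), are both done correctly.

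There is, however, a computational slip in Step~2 that is not harmless. The initial--boundary layer weight $\phi_{\kappa t}(z)=\frac{1}{\sqrt{\kappa t}}\phi\!\left(\frac{z}{\sqrt{\kappa t}}\right)$ attains its supremum $(\kappa t)^{-1/2}$ at $z=0$, so the $L^\infty_x$-norm of $\nabla_x\p_t^2 u(t,\cdot)$ inherited from (\ref{est:u2}) (and of $\p_t^2 p(t,\cdot)$ from (\ref{est:pt2})) carries an initial-time singularity of order $t^{-1/2}$, not $t^{-1/4}$ as you assert; the rate $t^{-1/4}$ is what the $L^2_{x_3}$-norm of $\phi_{\kappa t}$ produces. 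Since $t\mapsto t^{-1/2}$ is \emph{not} square-integrable on $(0,T)$, the norms $\|\nabla_x^\beta\p_t^2 u\|_{L^2([0,T];L^\infty(\bar\O)\cap L^2(\O))}$ and $\|\p_t^2 p\|_{L^2([0,T];L^\infty(\bar\O)\cap L^2(\O))}$ appearing in (\ref{condition:theorem}) are not directly controlled by (\ref{est:u2})--(\ref{est:pt2}), so your verification of the hypothesis is incomplete as written. (The paper's one-line proof does not surface this either; a full argument would have to track exactly where the $L^\infty_x$-component of these terms is invoked inside Theorem~\ref{main_theorem:conditional} --- for instance $\|(\ref{est:R4})\|_{L^2_tL^\infty_x}$ in (\ref{Linfty_3D_t}) --- and show that the $\phi_{\kappa t}$ contribution is always paired with a factor restoring time-integrability, or else weaken (\ref{condition:theorem}) to a time-weighted $L^\infty_x$ control on these two quantities.) The rest of the argument is sound.
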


  \hide

We give a proof of Theorem \ref{main_theorem}, as a direct consequence of theorems following in this section,
while they are deliberately delayed to be stated since some of notions are complex:
\unhide
\begin{proof}
The existence of the Navier-Stokes solutions follows from Theorem \ref{thm_bound}. For the remaining assertions, we note that  
all  the estimates (\ref{est:u_t})-(\ref{ut}) of Theorem \ref{thm_bound} ensure the conditions of Theorem \ref{main_theorem:conditional} with $\mathfrak{P}=\frac{1}{2}$. Therefore the conclusion follows  directly as a consequence of Theorem \ref{main_theorem:conditional} and Theorem \ref{thm_bound}. 
\end{proof}

 The incompressible Euler limit follows as a byproduct of the main theorem:
\hide  
  As a direct consequence of the main theorems, 
we prove the incompressible Euler limit in the vanishing viscosity limit:\unhide
  
  \begin{corollary}[\textbf{Hydrodynamic limit toward the incompressible Euler equation}]\label{Cor_EL}
  Let $u_E(t,x)$ be a (unique) solution of the incompressible Euler equations (\ref{Euler})-(\ref{no-pen}) with the initial condition $u_E|_{t=0}= u_{in}$ in $\O$. Then 
   \Be\notag
\sup_{0 \leq t \leq T}\left\|
\frac{F (t,x,v)- M_{1, \e u_E (t,x), 1} (v)}{\e 
 (1+|v|)^2\sqrt{M_{1,0,1}(v) }
}  \right\|_{L^2(\O \times \R^3 )}   \longrightarrow 0   \ \ \text{as} \ \ \e \downarrow 0.
\Ee
  \end{corollary}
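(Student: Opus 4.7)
The plan is to split the difference via triangle inequality:
\[
F - M_{1,\e u_E, 1} \;=\; \underbrace{\big(F - M_{1,\e u, 1}\big)}_{(\mathrm{I})} \;+\; \underbrace{\big(M_{1,\e u, 1} - M_{1,\e u_E, 1}\big)}_{(\mathrm{II})},
\]
where $u$ is the Navier-Stokes flow from Theorem~\ref{main_theorem}. For $(\mathrm{I})$, since $\mu = M_{1,\e u,1}$ obeys $\mu(v) \lesssim e^{-c|v|^2}$ uniformly for $|\e u|\ll 1$, one has $\sqrt{\mu}\lesssim (1+|v|)^2\sqrt{M_{1,0,1}}$ (after possibly losing a harmless constant in the Gaussian exponent), so the estimate \eqref{approx} of Theorem~\ref{main_theorem} translates into
\[
\sup_{0\le t\le T}\Big\| \tfrac{(\mathrm{I})}{\e(1+|v|)^2\sqrt{M_{1,0,1}}}\Big\|_{L^2(\O\times\R^3)} \;\lesssim\; \exp\!\Big(\tfrac{-\mathfrak{C} T}{2\kappa^{1/2}}\Big) \;\longrightarrow\; 0,
\]
along the scaling \eqref{choice:delta}, which forces $\kappa(\e)\downarrow 0$ as $\e\downarrow 0$.

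For $(\mathrm{II})$, I would parametrize $u_s=(1-s)u_E + s u$ and use the fundamental theorem of calculus:
\[
M_{1,\e u,1}(v) - M_{1,\e u_E,1}(v) = \e (u-u_E)\cdot \int_0^1 (v-\e u_s)\, M_{1,\e u_s,1}(v)\,\dd s.
\]
Using $|u_s|\lesssim 1$ uniformly (which follows from \eqref{est:u_t} and the boundedness of $u_E$) and Gaussian bounds of $M_{1,\e u_s,1}$, the integrand is pointwise bounded by $\e(1+|v|)\sqrt{M_{1,0,1}}\, |u-u_E|$. Dividing by the reference weight $\e(1+|v|)^2\sqrt{M_{1,0,1}}$ and integrating in $v$, then in $x$, yields
\[
\sup_{0\le t\le T}\Big\| \tfrac{(\mathrm{II})}{\e(1+|v|)^2\sqrt{M_{1,0,1}}}\Big\|_{L^2(\O\times\R^3)}\;\lesssim\;\sup_{0\le t\le T}\|u(t)-u_E(t)\|_{L^2(\O)}.
\]

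The remaining step is the inviscid limit $u\to u_E$ in $L^\infty_t L^2_x$ as $\kappa\downarrow 0$. Here I would invoke Kato's criterion: the relative energy inequality between \eqref{NS_k}-\eqref{noslip} and \eqref{Euler}-\eqref{no-pen} reduces the $L^2$ convergence to verifying $\kappa \int_0^T \int_{\{x_3<\kappa\}}|\nabla u|^2\,\dd x\,\dd t \to 0$. The boundary layer bounds of Theorem~\ref{thm_bound}, in particular \eqref{b3} which gives $|\p_{x_3}\o|\lesssim \kappa^{-1}e^{-\bar\alpha x_3}$, combined with the tangential conormal estimate $|\nabla_h u|\lesssim 1+\phi_\kappa(x_3)$ from \eqref{est:u1}, yield $\kappa\int_{\{x_3<\kappa\}}|\nabla u|^2\,\dd x \lesssim \kappa \cdot \kappa^{-1}\cdot \kappa = \kappa \to 0$, which verifies Kato's criterion. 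The existence and uniqueness of the analytic Euler flow $u_E$ with initial data $u_{in}$ on the same time interval $[0,T]$ is a standard byproduct (indeed, passing to the limit in the analytic Navier-Stokes estimates gives an analytic Euler solution).

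The main obstacle I anticipate is not conceptual but technical: ensuring the scaling \eqref{choice:delta} is compatible with a quantitative rate for Kato's dissipation term and verifying that the mild loss of $\kappa^{-1}$ in the vertical derivative bound \eqref{b3} does not spoil Kato's localized estimate. Since the boundary layer weight $\phi_\kappa$ enters only in a strip of width $\sim\kappa^{1/2}$ and $|\p_{x_3}\o_h|$ is localized exponentially via $e^{-\bar\alpha x_3}$, the dissipation integral closes with room to spare. Combining the two bounds and sending $\e\downarrow 0$ (hence $\kappa\downarrow 0$ by \eqref{choice:delta}) concludes the corollary.
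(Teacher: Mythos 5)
Your proposal follows the same route as the paper: the same triangle-inequality decomposition $F - M_{1,\e u_E,1} = (F - M_{1,\e u,1}) + (M_{1,\e u,1} - M_{1,\e u_E,1})$, the same use of \eqref{approx} for the first piece, the same fundamental-theorem-of-calculus expansion of the second piece, and the same appeal to Kato's criterion \cite{kato} for the inviscid limit $u\to u_E$. The paper is terser, simply citing Theorem~\ref{thm_bound} and Kato, whereas you actually verify Kato's dissipation condition using \eqref{est:u1}; that verification is correct (and the mention of \eqref{b3} is not needed for it, since Kato's criterion only uses $|\nabla u|^2$, for which \eqref{est:u1} already gives $|\nabla u|\lesssim\phi_\kappa(x_3)\lesssim\kappa^{-1/2}$ in the strip).

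One small slip worth fixing in your Term~$(\mathrm{II})$ bound: the intermediate pointwise estimate $\e(1+|v|)\sqrt{M_{1,0,1}}\,|u-u_E|$ does \emph{not} close the $L^2_v$-estimate, because after dividing by $\e(1+|v|)^2\sqrt{M_{1,0,1}}$ you are left with $(1+|v|)^{-1}|u-u_E|$, and $(1+|v|)^{-1}\notin L^2(\R^3)$. The remedy is to keep the stronger Gaussian decay of the full Maxwellian: since $\|\e u_s\|_{L^\infty}\ll 1$ one has
$$
M_{1,\e u_s,1}(v)\lesssim e^{-|v|^2/8}\sqrt{M_{1,0,1}(v)},
$$
so the polynomial weight is absorbed, i.e. $|v-\e u_s|\,M_{1,\e u_s,1}(v)\lesssim\sqrt{M_{1,0,1}(v)}$ with no $(1+|v|)$ loss. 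Dividing then leaves $(1+|v|)^{-2}\in L^2(\R^3)$, and the rest of your argument goes through as intended.
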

\begin{proof}Note that 
\[
  F (t,x,v)- M_{1, \e u_E(t,x), 1} (v)    =   \big[F (t,x,v)- M_{1, \e u(t,x), 1} (v)\big] +\big[M_{1, \e u(t,x), 1} (v) - M_{1, \e u_E(t,x), 1} (v)\big].
\]
The first term can be bounded as in (\ref{approx}). We bound the second term by an expansion: 
\Be\begin{split}\notag
|u(t,x)-u_E(t,x)|
\int^\e_0 |(v-\e u_E) + a(u_E-u)| e^{- \frac{|(v-\e u_E) + a(u_E-u)|^2}{2}} \dd a .
\end{split}\Ee
Note that $\|\e  u \|_{L^\infty} \ll1 $ and $\|\e u_E\|_{L^\infty}\ll 1$ from Theorem \ref{thm_bound}. Then we conclude that the second term converges to $0$ as $\kappa \downarrow 0$ from Theorem \ref{thm_bound} and the famous Kato's condition for vanishing viscosity limit in \cite{kato}.\end{proof}

\section{Hilbert expansion around a local Maxwellian and Source terms}\label{sec:3}

 In this section we complete the Hilbert expansion along with the outline of the introduction. As a result we prove 
 \begin{proposition}\label{prop:Hilbert}
 Suppose that $F$ of (\ref{F_e}), with a free parameter $\delta$, solve (\ref{Boltzmann}) and (\ref{diffuse_BC}) with (\ref{scale}) and that $(u,p)$ solves (\ref{NS_k})-(\ref{noslip}).  We choose a hydrodynamic part $f_2$ as 
  \Be\label{Pf_2}
   \mathbf{P}f_2 =  p \varphi_0 \sqrt{\mu}, 
\Ee
with the pressure $p$ of the Navier-Stokes flow in (\ref{NS_k}), and $(\mathbf{I}- \mathbf{P})f_2$ has been given in (\ref{f_2}). Then $f_{R}$ in (\ref{F_e}) satisfies that  
 \Be
\Big[    \p_t 
  + \frac{1}{\e} v\cdot \nabla_x 
    +    \frac{1}{ \e^2\kappa} L
  \Big] f_R
=    \frac{2}{\kappa} \Gamma({f_2}, f_R)
+    \frac{   \delta }{\e\kappa}\Gamma(f_R, f_R)
-  \frac{( \p_t + 
\e^{-1} v\cdot \nabla_x) \sqrt{\mu}}{\sqrt{\mu}} f_{R}
  + 
(\mathbf{I}- \mathbf{P})\mathfrak{R}_1 + \mathfrak{R}_2,   \label{eqtn_fR}
\Ee
\Be
\begin{split}
&  \Big[    \p_t 
  + \frac{1}{\e} v\cdot \nabla_x 
    +    \frac{1}{ \e^2\kappa} L
  \Big] \p_t f_R \\
=&   - \frac{1}{\e^2 \kappa } L_t (\mathbf{I} - \mathbf{P}) f_R +\frac{1}{\e^2 \kappa } L(\mathbf{P}_t f_R)+    \frac{ 2 \delta }{\e\kappa}\Gamma(f_R,\p_t  f_R)
+ \frac{2}{\kappa} \Gamma({f_2},\p_t  f_R) 
 +  \frac{2}{\kappa} \Gamma(\p_t {f_2}, f_R) \\
&
+  \frac{2}{\kappa} \Gamma_t(  {f_2}, f_R)+ \frac{\delta}{\e \kappa } \Gamma_t (f_R,f_R)
\\
&-
 \frac{( \p_t +
  \e^{-1} v\cdot \nabla_x) \sqrt{\mu}}{\sqrt{\mu}} \p_{t} f_{R}-
\p_t \Big( \frac{( \p_t +
  \e^{-1} v\cdot \nabla_x) \sqrt{\mu}}{\sqrt{\mu}} \Big) f_{R} 
  \\
  & 
  +(\mathbf{I} -  \mathbf{P})\mathfrak{R}_3 + \mathfrak{R}_4 ,
\label{eqtn_fR_t}
 \end{split}\Ee 
where the commutators $L_t$, $\mathbf{P}_t$ and $\Gamma_t$ are given in (\ref{def:L_t}), while 
 \begin{align}
 e^{ \varrho |v-\e u|^{2}} |(\mathbf{I} - \mathbf{P}) \mathfrak{R}_{1} (t,x,v)|
&\lesssim 
  \frac{1}{\delta}
 \kappa |\nabla _x^2 u|
%
,\label{est:R1}\\
 e^{ \varrho |v-\e u|^{2}} |\mathfrak{R}_{2} (t,x,v)| 
 &\lesssim   
 \frac{\e}{\delta } (|p| + \kappa |\nabla_x u|)|\nabla_x u|
 + \frac{\e}{\delta} (|\p_t p| + \kappa |\nabla_x u|)
 \notag
 \\
 &
 + \frac{\e \kappa}{\delta} (|\nabla_x \p_t u| + |u| |\nabla_x^2 u|) \label{est:R2} \\
 &
 + \frac{\e^2}{\delta} 
 (|p| + \kappa | \nabla_x u|)
 (|\p_t u| + |u| |\nabla_x u|)
 %
,\notag\\
 e^{ \varrho |v-\e u|^{2}} |(\mathbf{I} - \mathbf{P}) \mathfrak{R}_{3} (t,x,v)|
&\lesssim 
\frac{\kappa}{\delta}  |\nabla_x^2 \p_t u| 
,\label{est:R3} 
\end{align}
 
\hide
 \Be\begin{split}
 e^{ \varrho |v-\e u|^{2}}|\mathfrak{R}_4 (t,x,v)| \lesssim& \  \frac{\e}{\delta \kappa}
 \{1+ 
 |\p_t^2 p| + |\nabla_x \p_t^2 u| 
 \}\notag
\\
&  \times 
 \mathfrak{q} (
|p|, |\nabla_x p|, |\p_t p|,|\nabla_x \p_t p | , |u| , |\nabla_x u|, |\p_t u|, |\nabla_x \p_t u|, |\nabla_x^2 u|, |\nabla_x^2 \p_t u|, 
|\tilde{u}|, |\nabla_x \tilde{u}|,|\p_t \tilde{u}| ,|\nabla_x \p_t \tilde{u}|, |\p_t ^2 \tilde{u}|
) 
,
\end{split}
\Ee\unhide
\Be\begin{split}\label{est:R4}
& e^{ \varrho |v-\e u|^{2}}|\mathfrak{R}_4 (t,x,v)| \\
&\lesssim
 \frac{\e}{\delta}  
|\p_t ^2 p|  +
\frac{\e \kappa}{\delta}   |\nabla_x \p_t^2 u| + \frac{\e}{\delta}|\nabla_x \p_t p||u| +
 \frac{\e\kappa }{\delta}|u| |\nabla_x^2 \p_t u|
+
\frac{\e \kappa}{\delta} (1+ \e \kappa |u|)|\p_t u ||\nabla_x^2 u|
\\
&
+\frac{\e}{\delta}
\{
(1+  |u|)(|p|+ \kappa |\nabla_x u|)
 +\kappa \e |\p_t u|
\}  
|\nabla_x \p_t u|+ \frac{\e^2}{\delta} \{|p|+\kappa |\nabla_x u|\} |\p_t^2 u| \\
&
+ 
\frac{\e}{\delta}
\{
(|u| + \e |p| + \e^2 |p||u|)|\p_t u|
+ (1+ \e |u|) |\p_t p|
\}
|\nabla_x u| 
+\frac{\e^2\kappa}{\delta} (1+ \e |u|) |\p_t u |
|\nabla_x u|^2
\\
&+ \frac{\e}{\delta}
\{
|\p_t u| + |\nabla_x p| + \e|\p_t p|
+ \frac{\e}{\kappa} (|p|^2 + \kappa |u| |\nabla_x p| + \e \kappa |\p_t u| |p|)
\}
|\p_t u| 
 .
\end{split}\Ee


\hide
\Be\begin{split}
\frac{\e\kappa }{\delta}|u| |\nabla_x^2 \p_t u|
+
\frac{\e \kappa}{\delta} (1+ \e \kappa |u|)|\p_t u ||\nabla_x^2 u|
+
\frac{\e \kappa}{\delta}   |\nabla_x \p_t^2 u|  
+\frac{\e}{\delta}
\{
(1+  |u|)(|p|+ \kappa |\nabla_x u|)
 +\kappa \e |\p_t u|
\}  
|\nabla_x \p_t u|\\
+ \frac{\e^2}{\delta} \{|p|+\kappa |\nabla_x u|\} |\p_t^2 u| 
+ 
\frac{\e}{\delta}
\{
(|u| + \e |p| + \e^2 |p||u|)|\p_t u|
+ (1+ \e |u|) |\p_t p|
\}
|\nabla_x u| 
+\frac{\e^2\kappa}{\delta} (1+ \e |u|) |\p_t u |
|\nabla_x u|^2
\\
+ \frac{\e}{\delta}
\{
|\p_t u| + |\nabla_x p| + \e|\p_t p|
+ \frac{\e}{\kappa} (|p|^2 + \kappa |u| |\nabla_x p| + \e \kappa |\p_t u| |p|)
\}
|\p_t u| 
+ \frac{\e}{\delta}|\nabla_x \p_t p||u| +\frac{\e}{\delta}  
|\p_t ^2 p|  
 + \frac{\e}{\delta \kappa} |p||\p_t p| 
\end{split}\Ee

\Be 
\begin{split}
&   \frac{\e}{\delta \kappa}\{|p|   + \kappa |\nabla_x u| \} 
 \{
 |\p_t p|  + \e |\p_t u|   |p|  
 + \kappa (|\nabla_x \p_t  u| + \e |\p_t u| |\nabla_x u|)
 \}
  \nu(v) e^{-\varrho |v-\e u|^2}\\
&+\frac{\e}{\delta} \{
|\p_t ^2 p| + \kappa |\nabla_x \p_t^2 u|
\}\\
&+ \frac{\e}{\delta}\{ |p|+ 
\kappa |\nabla_x u|  
\} \{
|\nabla_x \p_t  u|
 +\e 
(
 |\p_t^2 u| + |u|| \nabla_x \p_t u|+| \p_t u || \nabla_x u|
)
+ \e^2|\p_t u | (|\p_t u| + |u| | \nabla_x u|)
 \}
\\
&+ \frac{\e}{\delta}\{
|\p_t p| + \kappa |\p_t \nabla_x u| + \e |\p_t u | ( |p|  +   \kappa   |\nabla_x u|)
\} \{ |\nabla_x u| + \e |\p_t u| + \e |u| |\nabla_x u|\}
\\ 
 & + \frac{\e^2}{\delta \kappa}|\p_t u|\{|p|   + \kappa |\nabla_x u| \}^2   
\\
 &+  \frac{\e}{\delta}|\p_t u| \big\{
|\p_t u| + |u| |\nabla_x u|  + |\nabla_x p| 
+ \kappa |\nabla_x ^2 u|+  \e |\nabla_x u|
|p| 
+ \kappa \e |\nabla_x u| ^2 
\big\}  
\\
&+  \frac{\kappa \e}{\delta} 
\{
 (|\nabla_x \p_t u| + \e |\p_t u||\nabla_x u| )
|\nabla_x u| + \e | \p_t u||\nabla_x^2 u| 
\}
e^{- \varrho |v-\e u|^2}\\
&+ \frac{\e}{\delta} \bigg\{
 \Big\{   
  |\p_t u| |\nabla_x p|+ | u| |\nabla_x \p_t p| 
+ \e 
| \p_t p|
  \{ |\p_t u| + |u| |\nabla_x u|\} \\
& \  \ + \e   |  p|
\{ |\p_t^2 u| + |\p_t u| |\nabla_x u|+  |u| |\nabla_x \p_t  u|\}+ \e |\p_t u| 
 \Big\{
 |\p_t p| + |u| |\nabla_x p| 
+ \e 
|p|
\{ |\p_t u| + |u| |\nabla_x u|\}
\Big\}
\Big\}   \langle v-\e u\rangle ^{ 2}  
\bigg\}\\
&+ \frac{\e\kappa}{\delta}  
 \bigg\{   |\p_t u| |\nabla_x^2 u|+ |u| |\nabla_x^2 \p_t  u| 
+ \e (|\p_t u| + |u| |\nabla_x u|) |\nabla_x \p_t u|\\
& \ \ \ \ + \e (|\p_t^2 u| + |\p_t u| |\nabla_x u|+ |u| |\nabla_x \p_t u|) |\nabla_x u|
+ \e\kappa |\p_t u| 
\Big(
 | \nabla_x \p_t u| + |u| |\nabla_x^2 u| 
+ \e(|\p_t u| + |u| |\nabla_x u|) |\nabla_x u|
\Big)
 \bigg\}    
  \end{split}
\Ee

\unhide

 At the boundary $f_R$ and $\p_t f_R$ satisfy 
 \Be \label{bdry_fR}
\begin{split}
f_R(t,x,v)|_{\gamma_-} & =    P_{\gamma_+}   f_R(t,x,v)- \frac{\e}{\delta} (1- P_{\gamma_+}) (\mathbf{I}- \mathbf{P})f_2(t,x,v), 
 \end{split} \Ee 
 \Be\label{bdry_fR_t}
 \begin{split}
 \p_t f_R |_{\gamma_-} &= P_{\gamma_+} \p_t f_R- \frac{\e}{\delta}  (1-P_{\gamma_+})  \p_t (\mathbf{I} - \mathbf{P})f_2 
+r_{\gamma_+} (f_R)- \frac{\e}{\delta}r_{\gamma_+}  ((\mathbf{I} - \mathbf{P}) f_2),\\
r _{\gamma_+}(g) &:= \p_t \sqrt{c_\mu \mu(v)} \int_{n(x) \cdot \mathfrak{v}>0} g \sqrt{c_\mu \mu(\mathfrak{v})} n(x) \cdot \mathfrak{v} \dd \mathfrak{v} \\
& \ \ \ 
+ \sqrt{c_\mu \mu(v)} \int_{n(x) \cdot \mathfrak{v}>0}g  \p_t \sqrt{c_\mu \mu(\mathfrak{v})} n(x) \cdot \mathfrak{v} \dd \mathfrak{v}.
 \end{split}
 \Ee
 In addition, 
 \begin{align}
e^{  \varrho |v-\e u|^2}|f_2 (t,x,v)| \lesssim    & \ 
|p(t,x)|
+ \kappa |\nabla_x u(t,x)|,
\label{est:f2}\\ 
e^{  \varrho |v-\e u|^2}|\p_t f_2 (t,x,v)| \lesssim & \ 
  |\p_t p|
  + \kappa ( |\nabla_x\p _t u| + 
 \e |\p_t u| |\nabla_x u|)   
 + \e |\p_t u|
 |p| 
 ,
\label{est:f2_t}\\
\langle  v-\e u \rangle^{-2}  \Big| \frac{(\p_t + \e^{-1} v\cdot \nabla_x) \sqrt{\mu}}{\sqrt{\mu}} 
\Big| 
  \lesssim & \  |\nabla_x u| + 
  \underbrace{\e |\p_t u| + \e |u| |\nabla_x u|}_{(\ref{transp:mu})_*}
  ,\label{transp:mu}
  \end{align}
  \Be
  \begin{split}\label{transp:mu_t}
&\langle  v-\e u \rangle^{-2}\Big| 
\p_t \left(   \frac{(\p_t + \e^{-1} v\cdot \nabla_x) \sqrt{\mu}}{\sqrt{\mu}} \right)\Big|\\
&   \lesssim   \ 
 |\nabla_x \p_t  u|
 +
\underbrace{ \e 
 \{
  |\p_t^2 u| + |u|| \nabla_x \p_t u|+| \p_t u || \nabla_x u|
 \}
+ \e^2|\p_t u | (|\p_t u| + |u| | \nabla_x u|)}_{(\ref{transp:mu_t})_*}
 . 
\end{split}\Ee

 \hide
Here, from (\ref{est:f_2}), (\ref{r2}), (\ref{r3}), (\ref{r_4}), (\ref{r_5}), 
\Be \begin{split}\label{PR_1}
 \mathbf{P}\mathfrak{R}_1&:= \frac{\e}{2 \delta} \sum_{\ell=1}^3 \{- \tilde{u} \cdot \p_\ell u + \p_\ell u_\ell \tilde{u}_\ell \} \varphi_\ell \sqrt{\mu}  
 =O( \frac{\e}{\delta \kappa^{\frac{1}{2}}}) \langle v-\e u \rangle \sqrt{\mu} |\tilde{u}| | \kappa^{\frac{1}{2}} \nabla u|,
 \end{split}\Ee
 \Be\label{I-PR_1}
  \begin{split}
(\mathbf{I}- \mathbf{P})\mathfrak{R}_1&:=    \frac{\e }{\delta\kappa} \Gamma({f_2}, f_2)
 + \frac{\kappa}{\delta} (\mathbf{I}- \mathbf{P}) r_1 + \frac{1}{\delta} (\ref{r2}) + \frac{1}{\delta} (\ref{r3})  \\
 &= O(\frac{1}{\delta \kappa}) \langle v-\e u \rangle ^4 \mu^{\frac{1}{2}}
 \Big\{
 |\kappa \nabla^2 u|  + \kappa^{\frac{1}{2}} |\kappa^{\frac{1}{2}} \nabla u| 
 \big[ \e |(\tilde{\rho}, \tilde{u}, \tilde{\theta})| + \e \kappa^{-\frac{1}{2}}|\kappa^{\frac{1}{2}} \nabla_x u |\big]
 + \e |(\tilde{\rho}, \tilde{u}, \tilde{\theta})|^2 + \kappa^{\frac{1}{2}} 
 |\kappa^{\frac{1}{2}}  \nabla(\tilde{\rho}, \tilde{u}, \tilde{\theta}) |
 \Big\} ,
 \end{split}\Ee
  \Be\label{R2}
 \begin{split}
 \mathfrak{R}_2:= &\frac{\kappa}{\delta} r_1 + (\ref{eqtn_f_4}) + (\ref{eqtn_f_5})\\
 = & O(\frac{\e}{\delta \kappa^{\frac{1}{2}}})
 \langle v-\e u\rangle^6 \sqrt{\mu} \bigg\{
  \kappa^{\frac{1}{2}}(1+ |u|)\Big[ | \kappa \p_t \nabla u| + | \kappa \nabla^2 u| \Big]
  +  (1+ |u|) \Big[ |\kappa^{\frac{1}{2}} \p_t( \tilde{\rho}, \tilde{u}, \tilde{\theta})| + |\kappa^{\frac{1}{2}} \nabla( \tilde{\rho}, \tilde{u}, \tilde{\theta})|   \Big]
  \\
  & \ \ \ \ \ \ \ \ \ \ \ \ \ \ \ \ \ \ \ \ \ \ \ \ \ \ \  
 + 
 \Big[
  (1+ |u|) |(\tilde{\rho}, \tilde{u}, \tilde{\theta})|
 + (\frac{\e}{\kappa^{\frac{1}{2}}} + \kappa^{\frac{1}{2}}) |\kappa^{\frac{1}{2}} \nabla u |
 \Big]
  \Big[ |\kappa^{\frac{1}{2}}\p_t u| + |\kappa^{\frac{1}{2}} \nabla u| \Big]\bigg\}.
 \end{split}
 \Ee\unhide
\end{proposition}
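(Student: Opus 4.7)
The plan is to prove the proposition by direct substitution and careful bookkeeping, leveraging the hierarchy already laid out in Section~\ref{sec:1.1}. First, I would plug the ansatz (\ref{F_e}) into the Boltzmann equation (\ref{Boltzmann}) under the scaling (\ref{scale}); using the bilinearity of $Q$, the identity $Q(\mu,\mu)=0$, and the definitions (\ref{L_Gamma}) of $L$ and $\Gamma$, the resulting equation for $f_R$ is exactly (\ref{eqtn_f_1})--(\ref{eqtn_f_5}) together with the transport coefficient $\frac{(\p_t+\e^{-1}v\cdot\nabla_x)\sqrt\mu}{\sqrt\mu}f_R$ produced when $\p_t+\e^{-1}v\cdot\nabla_x$ is moved past the $\sqrt\mu$-factor in (\ref{F_e}). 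The content of the proposition is then to verify that, for $(u,p)$ solving (\ref{NS_k})--(\ref{noslip}) and $f_2$ chosen as in (\ref{Pf_2})--(\ref{f_2}), the two most singular lines (\ref{eqtn_f_2}) and the hydrodynamic projection of (\ref{eqtn_f_3}) vanish identically, leaving only the indicated source terms on the right-hand side.

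For (\ref{eqtn_f_2}), the identity $\sum_{\ell m}\p_\ell u_m\varphi_\ell\varphi_m\sqrt\mu=\sum_{\ell m}\p_\ell u_m(\varphi_\ell\varphi_m-\tfrac{1}{3}|v-\e u|^2\delta_{\ell m})\sqrt\mu$ reduces the Fredholm solvability condition to incompressibility (\ref{incomp}), and the inversion formula (\ref{f_2}) then gives $\frac{1}{\kappa}L(\mathbf{I}-\mathbf{P})f_2=-\e^{-1}(v-\e u)\cdot\nabla_x\mu/\sqrt\mu$, cancelling (\ref{eqtn_f_2}). For the hydrodynamic part of (\ref{eqtn_f_3}), applying $\mathbf{P}$ and expanding: the $\p_t\mu$ and $u\cdot\nabla_x\mu$ pieces contribute $-(v-\e u)\cdot(\p_t u+u\cdot\nabla_x u)\sqrt\mu$ at leading order, the $\mathbf{P}f_2=p\varphi_0\sqrt\mu$ piece contributes $-\nabla_x p\cdot(v-\e u)\sqrt\mu$, and the $(\mathbf{I}-\mathbf{P})f_2$ piece, via (\ref{f_2:main})--(\ref{Delta_u}) together with the Boltzmann identity (\ref{eta_0}), contributes $+\kappa\eta_0\Delta u\cdot(v-\e u)\sqrt\mu$. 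Summed, these three pieces equal a multiple of the Navier-Stokes momentum equation and vanish by (\ref{NS_k}). The remaining $(\mathbf{I}-\mathbf{P})$-part of (\ref{eqtn_f_3}) is \emph{defined} to be $(\mathbf{I}-\mathbf{P})\mathfrak{R}_1$, and (\ref{est:R1}) is immediate from $(\mathbf{I}-\mathbf{P})f_2\sim\kappa\nabla u$. Line (\ref{eqtn_f_4}) together with the $\frac{\e}{\delta\kappa}\Gamma(f_2,f_2)$ piece of (\ref{eqtn_f_5}) is packaged into $\mathfrak{R}_2$; the auxiliary bounds (\ref{est:f2}), (\ref{est:f2_t}), (\ref{transp:mu}), (\ref{transp:mu_t}) and then (\ref{est:R2}) follow by direct expansion of $\mu=M_{1,\e u,1}$, noting that $k$-th order spatial or temporal derivatives of $\sqrt\mu$ produce polynomials in $v-\e u$ of degree $2k$ times $\sqrt\mu$ with coefficients in $\nabla_x u,\p_t u,u|\nabla_x u|$; these polynomial factors are absorbed into the Gaussian weight via $\langle v-\e u\rangle^N e^{-|v-\e u|^2/2}\lesssim e^{-\varrho|v-\e u|^2}$ for $\varrho<1/4$.

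For the boundary condition (\ref{bdry_fR}), I would substitute (\ref{F_e}) into (\ref{diffuse_BC}) and observe that the no-slip condition (\ref{noslip}) forces $\mu|_{\p\O}=M_{1,0,1}$, which already satisfies (\ref{diffuse_BC}) with the same normalisation $c_\mu$; the remaining identity on $\e^2f_2\sqrt\mu+\e\delta f_R\sqrt\mu$ at $\gamma_-$ is exactly (\ref{bdry_fR_general}), and the reduction to (\ref{bdry_fR}) uses $(1-P_{\gamma_+})\mathbf{P}f_2=0$ since $\mathbf{P}f_2=p\sqrt\mu$ is proportional to $\sqrt{c_\mu\mu}$. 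The equation (\ref{eqtn_fR_t}) for $\p_t f_R$ is then produced by differentiating (\ref{eqtn_fR}) in time; since $L,\Gamma,\mathbf{P}$ depend on $t$ through $\mu=M_{1,\e u,1}$, $\p_t$ does not commute with them, generating the commutators $L_t,\Gamma_t,\mathbf{P}_t$ recorded in (\ref{def:L_t}). Differentiating (\ref{bdry_fR}) analogously yields (\ref{bdry_fR_t}), with $r_{\gamma_+}$ coming from $\p_t$ acting on the kernel $\sqrt{c_\mu\mu(v)}\sqrt{c_\mu\mu(\mathfrak v)}$.

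The proof is conceptually routine, and the main obstacle is purely computational: obtaining the long list (\ref{est:R4}) requires tracking every $\e,\kappa,\delta$ factor and the precise $u,p$-derivative combinations produced when $\p_t$ hits products of $\mu$, $f_2$, the transport coefficient $\frac{(\p_t+\e^{-1}v\cdot\nabla_x)\sqrt\mu}{\sqrt\mu}$, and the nonlinear piece $\Gamma(f_2,f_2)$, and then reorganising the output into the displayed form so that every term is primed for absorption in the energy estimate of Section~\ref{sec:B}. I would proceed symbolically, using the explicit identities $\p_t\sqrt\mu/\sqrt\mu=\tfrac{\e}{2}(v-\e u)\cdot\p_tu$ and $\e^{-1}v\cdot\nabla_x\sqrt\mu/\sqrt\mu=\tfrac{1}{2}(v-\e u)^{\otimes 2}\!:\!\nabla_x u+\tfrac{1}{2}u\cdot\nabla u\cdot(v-\e u)$ so that each derivative translates cleanly into a polynomial-in-$(v-\e u)$-times-Gaussian form, and then collecting the resulting monomials in $\nabla_x^\beta\p_t^\ell u$ and $\nabla_x^\beta\p_t^\ell p$ produces exactly (\ref{est:R3})--(\ref{est:R4}).
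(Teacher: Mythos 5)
Your overall plan is the same one the paper executes: substitute the ansatz \eqref{F_e} into \eqref{Boltzmann}, identify the hierarchy \eqref{eqtn_f_1}--\eqref{eqtn_f_5} and the transport coefficient, cancel \eqref{eqtn_f_2} by the incompressibility condition together with \eqref{f_2}, cancel the hydrodynamic projection of \eqref{eqtn_f_3} by the Navier--Stokes momentum equation, collect the residuals into $\mathfrak{R}_1,\mathfrak{R}_2$, and obtain the pointwise bounds by Taylor-expanding $\mu=M_{1,\e u,1}$ and absorbing $(v-\e u)$-polynomials into the Gaussian. The treatment of the boundary condition and of the commutators $L_t,\mathbf{P}_t,\Gamma_t$ in the $\p_t f_R$-equation also matches the paper. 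So the approach is not a different route; it is the paper's route.

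However, there is one genuine gap: you package $\frac{\e}{\delta\kappa}\Gamma(f_2,f_2)$ into $\mathfrak{R}_2$ and assert that \eqref{est:R2} then follows ``by direct expansion.'' It does not. A direct bound using only \eqref{est:f2} would give
\[
\frac{\e}{\delta\kappa}\,\big|\Gamma(f_2,f_2)\big|\lesssim \frac{\e}{\delta\kappa}\big(|p|+\kappa|\nabla_x u|\big)^2\,\nu(v)\,e^{-\varrho|v-\e u|^2},
\]
whose leading piece $\frac{\e}{\delta\kappa}|p|^2$ is \emph{not} present in \eqref{est:R2} and would destroy the quantitative structure the rest of Section~\ref{sec:B} relies on. The missing observation is that $\mathbf{P}f_2=p\,\varphi_0\sqrt\mu=p\sqrt\mu\in\mathcal N$, so $\Gamma(\mathbf{P}f_2,\mathbf{P}f_2)=p^2\,\mu^{-1/2}Q(\mu,\mu)=0$, and hence
\[
\Gamma(f_2,f_2)=2\,\Gamma\big(\mathbf{P}f_2,(\mathbf{I}-\mathbf{P})f_2\big)+\Gamma\big((\mathbf{I}-\mathbf{P})f_2,(\mathbf{I}-\mathbf{P})f_2\big),
\]
which together with $|(\mathbf{I}-\mathbf{P})f_2|\lesssim\kappa|\nabla_x u|e^{-\varrho|v-\e u|^2}$ yields the stated $\frac{\e}{\delta}(|p|+\kappa|\nabla_x u|)|\nabla_x u|$. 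This cancellation is a deliberate consequence of the choice \eqref{Pf_2}; without flagging it, your argument for \eqref{est:R2} fails. (The paper calls this out explicitly in a remark.) You should also be aware that the paper's split between $\mathfrak{R}_1$ and $\mathfrak{R}_2$ is slightly finer than yours: $(\mathbf{I}-\mathbf{P})\mathfrak{R}_1$ is exactly the leading $(\mathbf{I}-\mathbf{P})$-piece \eqref{H_NS_2}, while the lower-order remnants of \eqref{eqtn_f_3} (which carry an extra factor of $\e$) go into $\mathfrak{R}_2$; and there is a small typo in your identity for the transport coefficient (the $u\cdot\nabla u\cdot(v-\e u)$ term should carry an $\e$ factor, consistent with \eqref{transp:mu}).
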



\begin{remark} 
We note that due to the choice of (\ref{Pf_2}) we remove a contribution of $p^2$ in $\frac{\e}{\delta \kappa} \Gamma(f_2,f_2)$.
And also we remark that $\mathfrak{R}_4$ is quasi-linear for $\p_t^2p$ and $\nabla_x \p_t^2 u$. 
\end{remark}

\subsection{Derivatives of $A_{ij}$ and Commutators in the local Maxwellian setting  
}\label{sec:2}  First we check properties of $L$ and $\Gamma$ defined in (\ref{L_Gamma}). Recall the notation of the global Maxwellian $\mu_0:= M_{1,0,1}$. It is convenient to define  
\Be\label{L_0}
L_0  f (v) := \frac{-2}{\sqrt{\mu_0 }} Q( \mu_0, \sqrt{\mu_0}  {f}) (v), \ \ 
\Gamma_0 ( {f}, {g}) (v) := \frac{1}{\sqrt{\mu_0 }} Q(\sqrt{\mu_0}  {f}, \sqrt{\mu_0} {g}) (v).
\Ee
For a given $\e u$, we define $\tilde{f}(\cdot) := f(\cdot + \e u)$. Then we have 
\Be\label{L_0=L}
Lf (v+\e u)=L_0 \tilde{f}(v), \ \ \Gamma(f,g) (v+\e u)=\Gamma_0 (\tilde{f}, \tilde{g} )(v).
\Ee
 As in (\ref{basis}) a null space of $L_0$, denoted by $\mathcal{N}_0$, is a subspace of $L^2(\R^3)$ spanned by orthonormal bases $\{ \tilde{\varphi}_i \sqrt{\mu_0 }\}_{i=0}^4$ with
\Be\label{basis_0}
\begin{split}
 \tilde{\varphi}_0 := 1
,   \  \ \  \tilde{\varphi}_i: =   {v_i   } 
 \ \ \text{for} \ i=1,2,3 
,   \  \ \  \tilde{\varphi}_4: =   
( | {v } |^2-3 )/{\sqrt{6}}.
\end{split}\Ee
\hide
We define a hydrodynamic projection $\mathbf{P}$ as an $L_v^2$-projection on $\mathcal{N}$ such as 
\Be\begin{split}
\label{P_0}
\mathbf{\tilde{P}} g:= \sum
( \tilde{P}_j g) \tilde{\varphi}_j \sqrt{\mu_0}, \ \ 
 \tilde{P}_j g:= \langle g ,\tilde{\varphi}_j \sqrt{\mu } \rangle  , \  \text{and}  \ \  
\tilde{P} g:= (\tilde{P_0} g, \tilde{P_1} g, \tilde{P_2} g, \tilde{P_3} g, \tilde{P_4} g ),
\end{split}
\Ee
where $\langle \cdot, \cdot \rangle $ stands an $L^2_v$-inner product.
\unhide 
We denote a projection $\tilde{\mathbf{P}}$ on $\mathcal{N}_0$ as in (\ref{P}). From standard properties of $L_0$ and (\ref{L_0=L}), we can easily deduce the corresponding properties of $L$, namely the null space in (\ref{basis}), the spectral gap estimate in (\ref{s_gap}), and the existence of a unique inverse $L^{-1}: \mathcal{N}^\perp \rightarrow \mathcal{N}^\perp$ in (\ref{L_inverse}) which is defined via $L^{-1}_0: \mathcal{N}_0^\perp \rightarrow  \mathcal{N}_0^\perp$ with the identity
\Be
(L^{-1} f) (v) =( L^{-1}_0 \tilde{f}) (v-\e u).\label{L_inverse}
\Ee
 The inverse enjoys the following bound which turns out useful to prove Lemma \ref{lemma:A}. 
\begin{lemma}\label{lemma:L-1}For $0 < \varrho < \frac{1}{4}$ and $g \in \mathcal{N}_0^\perp$
 \Be\label{map:L-1}
 \|  \nu_0(v)e^{\varrho |v|^2}L_0^{-1} g (v)\|_{L^\infty_v} \lesssim 
 \| e^{\varrho |v|^2} g(v) \|_{L^\infty_v}
+   \| \nu_0(v)^{-1} e^{\varrho |v|^2} g(v) \|_{L^2_v}.
  \Ee 
\end{lemma}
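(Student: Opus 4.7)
The plan is to reduce \eqref{map:L-1} to a classical $L^2$ spectral-gap bound combined with the Grad-type weighted kernel estimate for $K_0$, via the standard one-then-two-step iteration of the identity $L_0^{-1}g=h\iff \nu_0 h=g+K_0 h$. Let $h:=L_0^{-1}g$ and $w(v):=e^{\varrho|v|^2}$. Since $g\in\mathcal N_0^\perp$, we automatically have $h\in\mathcal N_0^\perp$, which allows us to use the coercivity \eqref{s_gap} stated for $L$, transferred to $L_0$ through the isometry (\ref{L_0=L}).

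First I would obtain a weighted $L^2_v$ bound for $h$. By \eqref{s_gap} and $h\in\mathcal N_0^\perp$, one has $\|\sqrt{\nu_0}\,h\|_{L^2_v}^2\lesssim\langle L_0 h,h\rangle=\langle g,h\rangle$, and Cauchy--Schwarz gives $\|\sqrt{\nu_0}\,h\|_{L^2_v}\lesssim\|\nu_0^{-1/2}g\|_{L^2_v}$. To convert $\nu_0^{-1/2}g$ into the norms on the right-hand side of \eqref{map:L-1}, I would factor $\nu_0^{-1}|g|^2=(\nu_0^{-1}wg)(w^{-1}g)$, apply Cauchy--Schwarz in $v$, and use $\|w^{-1}g\|_{L^2_v}\lesssim \|e^{-2\varrho|v|^2}\|_{L^2_v}^{1/2}\|wg\|_{L^\infty_v}$, which is finite since $\varrho>0$. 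This delivers
\[
\|\sqrt{\nu_0}\,h\|_{L^2_v}\lesssim\|\nu_0^{-1}wg\|_{L^2_v}+\|wg\|_{L^\infty_v}.
\]

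Next I would estimate $\nu_0 wh$ pointwise via $\nu_0 h=g+K_0 h$ together with the one-step iteration $h=\nu_0^{-1}g+\nu_0^{-1}K_0h$, which yields
\[
\nu_0(v)w(v)h(v)=w(v)g(v)+wK_0\nu_0^{-1}g(v)+wK_0\nu_0^{-1}K_0h(v).
\]
The crucial input is the Grad-type weighted kernel estimate: the conjugated kernel $k_\varrho(v,v_*):=w(v)\mathbf{k}(v,v_*)w(v_*)^{-1}$ retains Gaussian decay in $|v-v_*|$ for any $\varrho<1/4$, so that $\sup_v \int k_\varrho(v,v_*)\,dv_*\lesssim \nu_0(v)^{-1}$ and the analogous integrability against $\nu_0^{-1}$ holds. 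This immediately bounds the middle term by $\|wg\|_{L^\infty_v}$. For the double-iteration term I would split the iterated kernel $\iint k_\varrho(v,v_*)\nu_0(v_*)^{-1}k_\varrho(v_*,v_{**})\,dv_*$ into a bounded, smooth, compactly supported piece in $(v_*,v_{**})$ and its complement. On the complement, Gaussian tails yield a contribution $\tfrac{C}{N}\|wh\|_{L^\infty_v}\leq \tfrac{C}{N}\|\nu_0 wh\|_{L^\infty_v}$ since $\nu_0\geq 1$; on the compact piece, Cauchy--Schwarz in $(v_*,v_{**})$ produces $C_N\|h\|_{L^2_v}\leq C_N\|\sqrt{\nu_0}\,h\|_{L^2_v}$. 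Choosing $N$ large enough to absorb the $L^\infty_v$ piece into the left-hand side and inserting the $L^2_v$ bound from the previous paragraph yields \eqref{map:L-1}.

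The main obstacle is the weighted kernel estimate for $k_\varrho$. The exponent of $\mathbf{k}(v,v_*)$ carries a quadratic form of the schematic shape $-c|v-v_*|^2 - c(|v|^2-|v_*|^2)^2/|v-v_*|^2$, and $\varrho<1/4$ is precisely the threshold under which $\varrho(|v|^2-|v_*|^2)$ added to this quadratic form remains negative definite after completing the square; this preserves the Gaussian decay of $k_\varrho$ needed for every integrability statement above. Once this is in hand, the splitting into small and smooth regions and the absorption by $\nu_0\geq 1$ are routine and follow the Grad--Guo $L^\infty$ framework used throughout the rest of the paper.
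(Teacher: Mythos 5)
Your proof is correct, but it takes a genuinely different route from the paper's. The paper obtains a \emph{weighted} $L^2$ bound $\|e^{\varrho|v|^2}L_0^{-1}g\|_{L^2_v}\lesssim\|e^{\varrho|v|^2}\nu_0^{-1}g\|_{L^2_v}$ by treating $\nu_0^{-1}L_0=\mathrm{id}-\nu_0^{-1}K_0$ as a Fredholm operator on the Hilbert space $\{g:e^{\varrho|v|^2}g\in L^2\}$ (this requires re-verifying the Hilbert--Schmidt--type compactness of the conjugated kernel $\mathbf{k}_0(v,v_*)e^{\varrho|v|^2}/e^{\varrho|v_*|^2}$), and then a \emph{single} Neumann step $L_0^{-1}g=\nu_0^{-1}g+\nu_0^{-1}K_0L_0^{-1}g$ with Cauchy--Schwarz in $v_*$ upgrades the weighted $L^2$ bound to the weighted $L^\infty$ bound with no absorption. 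You instead derive only the \emph{unweighted} $L^2$ bound $\|\sqrt{\nu_0}\,h\|_{L^2_v}\lesssim\|\nu_0^{-1/2}g\|_{L^2_v}$ directly from the spectral gap \eqref{s_gap}, which is more elementary (no Fredholm argument in an exotic space), but it is too weak for a single iteration because the weight $w$ does not cancel after just one application of the conjugated kernel. Hence your double iteration, the small-versus-compact splitting of the iterated kernel, and the absorption of the $\tfrac{C}{N}\|\nu_0 wh\|_{L^\infty}$ piece. This is exactly the Vidav/Guo--Strain scheme used elsewhere in the paper for the $L^\infty$ propositions, so it is sound, but it carries an implicit a priori hypothesis: to absorb you must already know $\|\nu_0 wh\|_{L^\infty_v}<\infty$. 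That needs a short regularization or bootstrap argument (e.g.\ run the iteration with a weaker weight $e^{\sigma|v|^2}$, $\sigma\ll\varrho$, where $L^\infty$ finiteness is clear, then upgrade), which you should state. The paper's single-iteration-after-weighted-$L^2$ route avoids absorption altogether, which is the principal cleanup it buys in exchange for the more delicate Fredholm setup; your route trades that for the simpler spectral-gap input. Both are legitimate, and the weighted kernel estimate \eqref{kw/w} with the $\varrho<1/4$ threshold is indeed the common crucial ingredient that both proofs rely on.
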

 The proof is based on the well-known decomposition of $L_0= \nu_0 -K_0$ and the compactness of $K_0$: We first recall a standard decomposition 
\Be\label{decom_L0}
\begin{split}
L_0 g(v) =&
 \ \nu_0(v) g (v) - K_0 g(v)\\
:=&
\iint_{\R^3 \times \mathbb{S}^2} |(v-v_*) \cdot \mathfrak{u}| \mu_0(v_*) \dd \mathfrak{u} \dd v_*
g(v)\\
&
-  
\frac{1}{\sqrt{\mu_0 (v)}}
\iint_{\R^3 \times \mathbb{S}^2} |(v-v_*) \cdot \mathfrak{u}|\big\{
\mu_0 (v) \sqrt{\mu_0 (v_*)}  g(v_*)
\\
& \ \ \ \ \  \ \ \ \ \  \ \ \ \ \  \ \ \ \ \  \ \ \ \ \  \ \ \ \ \  
-\mu_0 (v^\prime) \sqrt{\mu_0 (v_*^\prime) } g(v_*^\prime)-\mu_0 (v^\prime_*) \sqrt{\mu_0 (v^\prime) } g(v^\prime)
\big\}
\dd v_*
,
\end{split}
\Ee
where $ \langle v\rangle\lesssim \nu_0 (v) \lesssim  \langle v\rangle$. For (\ref{nu_K}) we have $\nu(v)= \nu_0 (v-\e u)$ and $\mathbf{k}(v,v_*) = \mathbf{k}_0 (v-\e u, v_*-\e u)$.  It is well-known (see (3.50) and (3.52) in \cite{gl}) that one can write $K_0 g (v)= \int_{\R^3} \mathbf{k}_0 (v,v_*) g(v_*) \dd v_*$ such that for some constants $C_1, C_2>0$ 
\Be\label{est:k}
 \mathbf{k}_0(v,v_*) = C_1 |v-v_*| e^{- \frac{|v|^2 + |v_*|^2}{4}}-
 \frac{C_{2}}{|v-v_*|}  e^{- \frac{|v-v_*|^2}{8}
- \frac{1}{8} \frac{(|v|^2 - |v_*|^2)^2}{|v-v_*|^2}}. 
\Ee
 It is convenient to introduce a new notation, for $\vartheta>0$,
\Be\label{def:k}
k_{\vartheta}(v,v_*):= \frac{1}{|v-v_*|}
e^{- \vartheta{|v-v_*|^2} 
-  \vartheta\frac{(|v|^2 - |v_*|^2)^2}{|v-v_*|^2}}.
\Ee
Clearly $|\mathbf{k}_0 (v,v_*)| \lesssim k_{\vartheta} (v,v_*)$ for $0<\vartheta\leq 1/8$.

Standard compactness estimates read as follows: 
\begin{lemma}
 For $0<\varrho<2 \vartheta$ and $C \in \R^3$, there exists $C_{\varrho,\vartheta}>0$ such that 
\Be\label{kw/w}
\Big| {k}_\vartheta (v,v_*) \frac{e^{\varrho |v|^2+ C \cdot v}}{e^{\varrho |v_*|^2 + C \cdot v_*}} \Big| \lesssim \frac{1}{|v-v_*|} e^{-C_{\varrho} \frac{|v-v_*|^2}{2} } 
\ \ \text{for} \ 
0 < \varrho < 2 \vartheta.
\Ee
 Moreover
\begin{equation}
\begin{split}
 \label{est:int_k}
 \int_{\R^3} 
 (1+ |v-v_*|)
k_{\vartheta}(v,v_*)
 \frac{e^{\varrho|v|^2
 + C \cdot v 
 }}{e^{\varrho |v_*|^2  +   C \cdot v_*  
 }}  
 \dd v_*\lesssim_{ \vartheta,\varrho
 } \frac{1}{1+|v|} ,\\ 
 \int_{\R^3} \frac{1}{|v-v_*|
}
k_{\vartheta}(v,v_*)
 \frac{e^{\varrho|v|^2   +  C \cdot v
 }}{e^{\varrho |v_*|^2 +  C \cdot v_* 
 }} 
 \dd v_*\lesssim_{ \vartheta,\varrho
} 1,
\end{split}
\end{equation}  
 while the same bounds replacing $|v|$ with $|v_*|$ hold for integrations over $v$.
\end{lemma}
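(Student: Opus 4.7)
The plan is to deduce both (\ref{kw/w}) and (\ref{est:int_k}) from a single algebraic rearrangement of the exponent. Substituting $\eta := v - v_*$ (with $v$ held fixed) and using the identity $|v|^2 - |v_*|^2 = 2v\cdot\eta - |\eta|^2 = |\eta|(s - |\eta|)$, where $\hat\eta := \eta/|\eta|$ and $s := 2v\cdot\hat\eta$, the exponent
\begin{equation*}
E := \varrho(|v|^2-|v_*|^2) + C\cdot(v-v_*) - \vartheta|v-v_*|^2 - \vartheta\,\frac{(|v|^2-|v_*|^2)^2}{|v-v_*|^2}
\end{equation*}
inside the modulus of (\ref{kw/w}) expands to $E = -\vartheta s^2 + (2\vartheta+\varrho)\,s|\eta| - (2\vartheta+\varrho)|\eta|^2 + C\cdot\eta$. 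Completing the square in $s$ puts this in the canonical form
\begin{equation*}
E = -\vartheta(s - s_0|\eta|)^2 \;-\; c_1 |\eta|^2 \;+\; C\cdot\eta, \qquad s_0 := \frac{2\vartheta+\varrho}{2\vartheta},\qquad c_1 := \frac{(2\vartheta-\varrho)(2\vartheta+\varrho)}{4\vartheta},
\end{equation*}
in which the hypothesis $0<\varrho<2\vartheta$ forces $c_1>0$. Both terms on the right-hand side will be used.

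For the pointwise bound (\ref{kw/w}), I would simply drop the nonpositive term $-\vartheta(s-s_0|\eta|)^2$ and absorb $C\cdot\eta$ via Young's inequality, producing $E\leq -\tfrac{c_1}{2}|\eta|^2 + \tfrac{|C|^2}{2c_1}$. Multiplying by the prefactor $|\eta|^{-1}$ from $k_\vartheta$ then yields (\ref{kw/w}) with $C_{\varrho,\vartheta}:=c_1$ and implicit constant depending only on $|C|$, $\varrho$, $\vartheta$.

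The main obstacle is the decay factor $1/(1+|v|)$ in the first line of (\ref{est:int_k}); this does \emph{not} follow from inserting (\ref{kw/w}) alone, since $\int_{\R^3}(1+|\eta|)|\eta|^{-1}e^{-c|\eta|^2}\,\dd\eta$ is finite but contains no $|v|$-decay. To recover the gain I retain the term $-\vartheta(s - s_0|\eta|)^2$, which encodes an angular concentration that sharpens as $|v|\to\infty$. Passing to spherical coordinates $\eta = r\omega$ with $r>0$, $\omega\in\S^2$, and setting $u := \hat v\cdot\omega\in[-1,1]$ (so that $s = 2|v|u$), the angular integral reduces, after a trivial bound on the azimuthal direction absorbing $C\cdot\eta$, to $\int_{-1}^{1} e^{-\vartheta(2|v|u - s_0 r)^2}\,\dd u$. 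For $|v|\geq 1$ the substitution $w := 2|v|u - s_0 r$ converts this into $(2|v|)^{-1}\int_{\R} e^{-\vartheta w^2}\,\dd w \lesssim 1/|v|$; for $|v|\leq 1$ the trivial bound $\lesssim 1$ suffices; together they give $\lesssim 1/(1+|v|)$. The remaining radial factor $\int_0^\infty (r+r^2)e^{-c_1 r^2/2}\,\dd r$ is finite, so the first line of (\ref{est:int_k}) follows. The second line is immediate from (\ref{kw/w}) using $\int_{\R^3}|\eta|^{-2}e^{-c|\eta|^2}\,\dd\eta = 4\pi\int_0^\infty e^{-cr^2}\,\dd r < \infty$. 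Finally, the symmetric statement replacing $|v|$ by $|v_*|$ when integrating over $v$ is obtained by repeating the argument with $v_*$ held fixed, using $|v|^2-|v_*|^2 = 2v_*\cdot\eta + |\eta|^2$ under $\eta = v - v_*$; the analogous completion of the square yields the same constant $c_1$, and the angular gain now falls on $|v_*|$.
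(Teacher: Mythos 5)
Your proof is correct, and it takes a genuinely different (and more self-contained) route than the one the paper presents. For the pointwise bound \eqref{kw/w} the paper argues directly in $(v,v_*)$: it uses the AM--GM majorant $-\vartheta|v-v_*|^2 - \vartheta\frac{(|v|^2-|v_*|^2)^2}{|v-v_*|^2} \le -2\vartheta(|v|+|v_*|)\,\big||v|-|v_*|\big|$ and then observes that $\varrho(|v|^2-|v_*|^2)$ has magnitude $\varrho(|v|+|v_*|)\,\big||v|-|v_*|\big|$, so a strict sub-portion of the majorant absorbs the weight when $\varrho<2\vartheta$; for \eqref{est:int_k} the paper simply cites Lemma 5 of \cite{GKTT1} and does not give a proof. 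You instead change variables to $\eta = v - v_*$, reduce the exponent exactly to the canonical form $-\vartheta(s - s_0|\eta|)^2 - c_1|\eta|^2 + C\cdot\eta$ with $s = 2v\cdot\hat\eta$, $s_0 = \tfrac{2\vartheta+\varrho}{2\vartheta}$, and $c_1 = \tfrac{(2\vartheta-\varrho)(2\vartheta+\varrho)}{4\vartheta}>0$, which I have checked carefully. This buys two things the cited-out argument does not make visible: first, the bound \eqref{kw/w} falls out by dropping the negative square and Young's inequality; and second, the crucial $1/(1+|v|)$ gain in \eqref{est:int_k} becomes transparent as angular Gaussian concentration -- retaining the $-\vartheta(s-s_0|\eta|)^2$ term, the $\S^2$-integral in $u = \hat v\cdot\omega$ gives $\int_{-1}^1 e^{-\vartheta(2|v|u - s_0 r)^2}\dd u \lesssim (1+|v|)^{-1}$ after the substitution $w = 2|v|u - s_0 r$, with the remaining radial integral $\int_0^\infty(r+r^2)e^{r|C|}e^{-c_1 r^2}\dd r$ finite. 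Your handling of the symmetric statement (fixing $v_*$, $\eta = v-v_*$, $|v|^2-|v_*|^2 = 2v_*\cdot\eta+|\eta|^2$, same constant $c_1$ after completing the square in $s' = 2v_*\cdot\hat\eta$) is also correct and gives the decay in $|v_*|$ as required. In short: you have reproduced the result with a cleaner and fully self-contained calculation; the paper's presented proof is shorter because it outsources the hard half to \cite{GKTT1}.
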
 
The proof of (\ref{kw/w}) relies on a fact that the exponent has a majorant  $- \vartheta {|v-v_*|^2} 
-  \vartheta   \frac{(|v|^2 - |v_*|^2)^2}{|v-v_*|^2}\leq -2   \vartheta (|v|+|v_*|) ||v|- |v_*||$ which is a negative definite. Note that an exponent of $\frac{e^{\varrho|v|^2
}}{e^{\varrho |v_*|^2 
 }}$ equals
${\varrho (|v|+ |v_*|) ||v| - |v_*||  } 
 $ which can be absorbed as long as $0 < \varrho < 2 \vartheta$. This yields 
(\ref{kw/w}). 
 We refer to a proof of Lemma 5 in \cite{GKTT1} for details to show (\ref{est:int_k}). 
  
 %

\hide
\begin{align}
 \nu  (v)  :=   
 \iint_{\R^3 \times \S^2} |(v-\e u-v_*) \cdot  \mathfrak{u} |   \mu_0 (v_*) \dd  \mathfrak{u}  \dd v_* \sim \langle v-\e u \rangle 
  ,\label{nu}\\
  Kf(v)  := K_1f - K_2f :=
\frac{1}{\sqrt{\mu}}  \iint_{\R^3 \times \S^2} |(v-v_*) \cdot \mathfrak{u}|
\mu \sqrt{\mu_*}   f(v_*)   
-\frac{1}{\sqrt{\mu}}  \iint_{\R^3 \times \S^2} |(v-v_*) \cdot \mathfrak{u}|
\{   \mu^\prime \sqrt{\mu_*^\prime} f(v_*^\prime) +\mu_*^\prime
\sqrt{\mu^\prime} f(v^\prime)
\} 
   , 
   \label{K}
\end{align}
where we have used a change of variables $v_* \mapsto v_* - \e u$ in (\ref{nu}) and a standard estimate (e.g. (3.54) of \cite{gl}) at the last step of (\ref{nu}). In (\ref{K}) we have used notations $\mu= \mu(v), \mu_* = \mu(v_*), \mu^\prime = \mu(v^\prime)$, and $\mu_*^\prime= \mu(v_*^\prime)$ for the sake of simplicity. A null space of $L$ inherits (\ref{basis}) from bases of $\frac{1}{\sqrt{\mu_0 }} Q( \mu_0, \sqrt{\mu_0} \tilde{f})$. A spectral gap estimate of $\frac{1}{\sqrt{\mu_0 }} Q( \mu_0, \sqrt{\mu_0} \tilde{f})$ (e.g. Lemma 3.4.1 in \cite{gl}) implies (\ref{s_gap}). A compactness of $K_0$ in $L^2$ implies $K$ is also compact.  \unhide

\begin{proof}[Proof of Lemma \ref{lemma:L-1}]
We consider an operator $g(v) \mapsto \nu_0^{-1} L_0 g (v):= \frac{1}{\nu_0(v)}L_0 g(v)$ on a restricted space of $\{ g \in L^2(\R^3): e^{\varrho |v|^2} g(v) \in L^2(\R^3)\}$. First we claim that 
\Be\label{L0:restric}
\nu_0^{-1} L_0: \{ g \in L^2(\R^3): e^{\varrho |v|^2} g(v) \in L^2(\R^3)\} \rightarrow \{ g \in L^2(\R^3): e^{\varrho |v|^2} g(v) \in L^2(\R^3)\}.
\Ee
 From (\ref{decom_L0}) we have $\nu_0^{-1} L_0 g (v)= g(v) - \nu_0^{-1} 
e^{-\varrho |v|^2}\int_{\R^3} \mathbf{k}_0 (v,v_*) \frac{e^{\varrho |v|^2}}{e^{\varrho |v_*|^2}}  e^{\varrho |v_*|^2}g(v_*) \dd v_*$, \hide  Setting $\eta=v-v_*$ we compute an exponent of $e^{- \frac{|v-v_*|^2}{8}
- \frac{1}{8} \frac{(|v|^2 - |v_*|^2)^2}{|v_*|^2}}  {e^{\varrho (|v|^2- |v_*|^2)}}$
\Be
- \frac{1}{8} |\eta|^2 - \frac{1}{8} \frac{||\eta|^2- 2 v\cdot \eta|^2}{|\eta|^2}
- \varrho \{|v-\eta|^2 - |v|^2 \}
= \Big(- \frac{1}{4} - \varrho\Big)|\eta|^2 + \Big(\frac{1}{2}+ 2 \varrho\Big) v\cdot \eta - \frac{1}{2} \frac{|v \cdot \eta|^2}{|\eta|^2}.\label{exponent}
\Ee 
For $|\varrho|< 1/4$ the discriminant of the above quadratic form of $\eta$ and $\frac{v\cdot \eta}{\eta}$ is negative: $(\frac{1}{2}+ 2 \varrho)^2+ 2\big(-\frac{1}{4} - \varrho\big)=4( \varrho^2-\frac{1}{16})<0$. Hence the exponent can be bounded above by $- C_{\varrho} \frac{|v-v_*|^2}{2} - C_\varrho |v \cdot (v-v_*)|$ for some $0<C_\varrho \ll_\varrho 1$. This yields that 
\Be\label{kw/w}
\Big|\mathbf{k}_0 (v,v_*) \frac{e^{\varrho |v|^2}}{e^{\varrho |v_*|^2}} \Big| \lesssim \frac{1}{|v-v_*|} e^{-C_{\varrho} \frac{|v-v_*|^2}{2} } \in L^\infty_{v_*}L^1_{v} \cap L^\infty_{v}L^1_{v_*}.
\Ee
\unhide
and, using (\ref{est:int_k}), for $\varrho < 2 \vartheta \leq 1/4$,
\begin{align*}
&|e^{\varrho |v|^2} \nu_0^{-1} L_0 g (v)|\\
 &\leq |e^{\varrho |v|^2} g(v)|
+ \nu_0(v)^{-1} \sup_v \sqrt{ \int_{\R^3}  k_{\vartheta}  (v,v_*) \frac{e^{\varrho |v|^2}}{e^{\varrho |v_*|^2}}\dd v_*}
\sqrt{ \int_{\R^3}   k_{\vartheta}  (v,v_*) \frac{e^{\varrho |v|^2}}{e^{\varrho |v_*|^2}}
| e^{\varrho |v_*|^2}  g(v_*)|^2
\dd v_*}\\
&\lesssim |e^{\varrho |v|^2} g(v)|
+ 
\sqrt{ \int_{\R^3}  k_{\vartheta}   (v,v_*) \frac{e^{\varrho |v|^2}}{e^{\varrho |v_*|^2}}
| e^{\varrho |v_*|^2}  g(v_*)|^2
\dd v_*}.
\end{align*}
Therefore we prove (\ref{L0:restric}) from 
\Be\label{cont_L0}
\begin{split}
\|e^{\varrho |v|^2} \nu_0^{-1} L_0 g (v)\|_{L^2_v}
&\lesssim \|e^{\varrho |v|^2}  g (v)\|_{L^2_v} + \sqrt{
\sup_{v_*} \int_{\R^3} k_\vartheta (v,v_*) \frac{e^{\varrho |v|^2}}{e^{\varrho |v_*|^2}} \dd v \int_{\R^3}  |e^{\varrho |v_*|^2} g(v_*)|^2
}\\
& \lesssim  \|e^{\varrho |v|^2}  g (v)\|_{L^2_v} . 
\end{split}\Ee

Now we view $\{ g \in L^2(\R^3): e^{\varrho |v|^2} g(v) \in L^2(\R^3)\}$ as the Hilbert space with an inner product $\langle e^{\varrho |v|^2} \cdot, e^{\varrho |v|^2} \cdot \rangle $. Then the compactness of $\nu_0^{-1}K_0$ in this space is equivalent to the compactness of $g \mapsto \int_{\R^3} \mathbf{k}_0 (v,v_*) \frac{e^{\varrho |v|^2}}{e^{\varrho |v_*|^2}} g(v_*) \dd v_*$ in a usual $L^2_v$. From Lemma 3.5.1 of \cite{gl}, it suffices to prove that (i) $\int_{\R^3} \mathbf{k}_0 (v,v_*) \frac{e^{\varrho |v|^2}}{e^{\varrho |v_*|^2}} \dd v$ is bounded in $v_*$, (ii) $\mathbf{k}_0 (v,v_*) \frac{e^{\varrho |v|^2}}{e^{\varrho |v_*|^2}} \in L^2 ( \{|v-v_*| \geq \frac{1}{n} \ \text{and}\  |v| \leq n\})$ for all $n\in \mathbb{N}$, and (iii) $\sup_{v}\int_{\R^3}  \mathbf{k}_0 (v,v_*) \frac{e^{\varrho |v|^2}}{e^{\varrho |v_*|^2}} \{ \mathbf{1}_{|v-v_*| \leq \frac{1}{n}} + \mathbf{1}_{|v|\geq n} \}\dd u\rightarrow 0$ as $n\rightarrow \infty$. Both conditions (i) and (ii) come from the first bound of (\ref{est:int_k}) directly. We prove (iii) from (\ref{kw/w}) and the first bound of (\ref{est:int_k}). Now applying the Fredholm alternative to $\nu_0^{-1} L_0=id -\nu_0^{-1}K_0 $ in the Hilbert space, we obtain an inverse map $(\nu_0^{-1} L_0)^{-1}$ which is a bounded operator of the Hilbert space. Note that $L_0^{-1} (g)= (\nu_0^{-1} L_0)^{-1} (\nu_0^{-1} g)$. Hence we derive that 
\Be\label{cont:L-1}
 \|  e^{\varrho |v|^2}L_0^{-1} g \|_{L^2_v}
 = \|  e^{\varrho |v|^2} (\nu_0^{-1} L_0)^{-1} (\nu_0^{-1} g)\|_{L^2_v}
  \lesssim 
 \|e^{\varrho |v|^2} \nu_0^{-1}g \|_{L^2_v} . 
\Ee

From the decomposition of $L_0$, we have $L^{-1}_0 g(v)= \nu_0(v)^{-1} g(v) + \nu_0(v)^{-1} KL^{-1}_0 g(v)$ for $g \in \mathcal{N}_0^\perp$. Then we have 
\begin{align*}
&|e^{\varrho |v|^2} L_0^{-1} g(v)| \\
& \leq | \nu_0(v)^{-1}e^{\varrho |v|^2}  g (v)|
+ \Big|
\nu_0(v)^{-1}  
\int_{\R^3} 
\mathbf{k}_0 (v,v_*) \frac{e^{\varrho |v|^2}}{e^{\varrho |v_*|^2}}
e^{\varrho |v_*|^2} L^{-1}_0 g(v_*)  
\dd v_*
\Big|\\
 &\leq \nu_0(v)^{-1}  \bigg\{
 |  e^{\varrho |v|^2}  g (v)|
 + \sqrt{\int_{\R^3}   \Big| \mathbf{k}_0 (v,v_*) \frac{e^{\varrho |v|^2}}{e^{\varrho |v_*|^2}}\Big|^2   \dd v_*}
 \sqrt{\int_{\R^3} 
|  e^{\varrho |v_*|^2} L^{-1}_0 g (v_*)|^2 \dd v_*
 }
 \bigg\},
\end{align*}
while $\big|\mathbf{k}_0 (v,v_*) \frac{e^{\varrho |v|^2}}{e^{\varrho |v_*|^2}} \big|^2 \lesssim \frac{1}{|v-v_*|^2} e^{-2C_{\varrho} \frac{|v-v_*|^2}{2} } \in L^{\infty}_{v} L^{1}_{v_*}$ from (\ref{kw/w}). Hence we prove (\ref{map:L-1}).  \end{proof}

  \hide
we have $|\nu_0(v)^{-1}e^{\varrho |v|^2} L_0g(v) |
  \lesssim \| e^{\varrho |\cdot |^2} g(\cdot )\|_\infty$ for $0 < \varrho < \frac{1}{4}$. This implies that, for $0 < \varrho < \frac{1}{4}$,

 From the Weyl's theorem we deduce a spectral gap estimate (\ref{s_gap}). \unhide

 Equipped with Lemma \ref{lemma:L-1} we provide bounds of $A_{ij}$ in (\ref{f_2}) and its derivatives: 
\begin{lemma}\label{lemma:A}
 For $0 < \varrho < \frac{1}{4}$
\Be\label{est:A&A_x}
\begin{split}
 |A_{ij}(v)|\lesssim e^{- \varrho |v-\e u|^2},
 \ 
  |\nabla_x A_{ij}(v)|\lesssim \e |\nabla_x u|e^{- \varrho |v-\e u|^2} 
, \    |\p_t A_{ij}(v)|\lesssim \e |\p_t u|e^{- \varrho |v-\e u|^2}, 
  \\
  |\nabla_x \p_t A_{ij}(v)|\lesssim \e\{ |\nabla_x \p_t u| + \e |\nabla_x u| |\p_t u|
 \}
 e^{- \varrho |v-\e u|^2}.
  \end{split}
 \Ee 
\end{lemma}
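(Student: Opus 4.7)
The starting point is the covariance identity
\begin{equation*}
A_{ij}(t,x,v) = h_{ij}\bigl(v-\e u(t,x)\bigr), \qquad h_{ij}(w):= L_0^{-1}\Big(w_i w_j \sqrt{\mu_0(w)} - \tfrac{|w|^2}{3}\delta_{ij}\sqrt{\mu_0(w)}\Big),
\end{equation*}
which follows from (\ref{L_0=L}) and (\ref{L_inverse}) applied to the definition (\ref{f_2}) of $A_{ij}$, using that the shift $v\mapsto v-\e u$ sends $\tilde{\varphi}_i(v)=v_i$ to $w_i$. Since $h_{ij}$ does not depend on $(t,x)$, all $t$- and $x$-derivatives of $A_{ij}$ are produced by the chain rule, and the lemma reduces to the three $w$-uniform Gaussian bounds
\begin{equation*}
|h_{ij}(w)| + |\nabla_w h_{ij}(w)| + |\nabla_w^2 h_{ij}(w)| \lesssim e^{-\varrho|w|^2}, \qquad 0 < \varrho < \tfrac{1}{4}.
\end{equation*}

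The zeroth-order bound is a direct consequence of Lemma \ref{lemma:L-1}: the source $g_{ij}(w) := w_i w_j \sqrt{\mu_0(w)} - \tfrac{|w|^2}{3}\delta_{ij}\sqrt{\mu_0(w)}$ is a polynomial in $w$ times $e^{-|w|^2/4}$, so that $\|e^{\varrho|w|^2}g_{ij}\|_{L^\infty_w} + \|\nu_0^{-1}e^{\varrho|w|^2}g_{ij}\|_{L^2_w} \lesssim 1$ for every $\varrho < 1/4$. Lemma \ref{lemma:L-1} therefore gives $|h_{ij}(w)| \lesssim \nu_0(w)^{-1} e^{-\varrho|w|^2} \lesssim e^{-\varrho|w|^2}$, and substituting into $A_{ij}(v)=h_{ij}(v-\e u)$ yields the first estimate in (\ref{est:A&A_x}).

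To extract bounds on $\nabla_w h_{ij}$ and $\nabla_w^2 h_{ij}$ I iterate the fixed-point identity $h_{ij} = \nu_0^{-1}(g_{ij} + K_0 h_{ij})$ from the decomposition (\ref{decom_L0}). Differentiating once in $w$,
\begin{equation*}
\nabla_w h_{ij}(v) = \nabla_v(\nu_0^{-1}g_{ij})(v) + (\nabla_v \nu_0^{-1})(v) \!\int_{\R^3}\! \mathbf{k}_0(v,v_*) h_{ij}(v_*)\,\dd v_* + \nu_0(v)^{-1}\!\int_{\R^3}\! \nabla_v \mathbf{k}_0(v,v_*) h_{ij}(v_*)\,\dd v_*.
\end{equation*}
The first two summands are controlled by $e^{-\varrho|v|^2}$ using the already-established bound $|h_{ij}|\lesssim e^{-\varrho|\cdot|^2}$ together with the weighted kernel bounds (\ref{kw/w})--(\ref{est:int_k}). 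For the third, I use the explicit form (\ref{est:k}) of $\mathbf{k}_0$: $\nabla_v \mathbf{k}_0(v,v_*)$ is pointwise dominated by $k_\vartheta(v,v_*)/|v-v_*|$ times a polynomial in $(v,v_*)$, and the resulting $v_*$-integral is uniformly finite by (a slight reinforcement of) the second line of (\ref{est:int_k}), after decreasing $\vartheta$ if necessary. Multiplying by $|h_{ij}(v_*)|\lesssim e^{-\varrho|v_*|^2}$ and applying (\ref{kw/w}) then gives a Gaussian bound for $\nabla_w h_{ij}$. One more run of the same scheme, now applied to $\nabla_w h_{ij}$, delivers $|\nabla_w^2 h_{ij}(w)|\lesssim e^{-\varrho|w|^2}$. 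Substituting these pointwise bounds into the chain-rule formulas
\begin{align*}
\nabla_x A_{ij}(t,x,v) &= -\e (\nabla_x u_k)(t,x)\,(\p_{w_k} h_{ij})(v-\e u),\\
\p_t A_{ij}(t,x,v) &= -\e (\p_t u_k)(t,x)\,(\p_{w_k} h_{ij})(v-\e u),\\
\nabla_x \p_t A_{ij}(t,x,v) &= -\e (\nabla_x \p_t u_k)\,(\p_{w_k} h_{ij})(v-\e u) + \e^2 (\nabla_x u_\ell)(\p_t u_k)\,(\p^2_{w_\ell w_k} h_{ij})(v-\e u)
\end{align*}
reproduces (\ref{est:A&A_x}) with its precise coefficients.

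\textbf{Main obstacle.} The technically most delicate point is the control of $\nabla_v \mathbf{k}_0$: differentiating (\ref{est:k}) in $v$ turns the mild $1/|v-v_*|$ singularity into a $1/|v-v_*|^2$ singularity on the diagonal. One must verify that the super-Gaussian factor $e^{-\vartheta(|v|^2-|v_*|^2)^2/|v-v_*|^2}$ is still strong enough to keep the weighted integral $\int \nabla_v \mathbf{k}_0(v,v_*)\, e^{\varrho|v|^2}/e^{\varrho|v_*|^2}\,\dd v_*$ bounded uniformly in $v$. This is a strengthening of (\ref{est:int_k}) obtained by splitting into the near-diagonal region $|v-v_*|\leq 1$, where the enhanced singularity is absorbed by the super-Gaussian exponent exactly as in the proof of (\ref{kw/w}), and the far region $|v-v_*|\geq 1$, where the singularity is harmless.
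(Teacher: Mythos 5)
Your overall strategy is the same as the paper's: reduce to the translation-invariant object $A_{0,ij}=L_0^{-1}\big((v_iv_j-\tfrac{|v|^2}{3}\delta_{ij})\sqrt{\mu_0}\big)$, get the zeroth-order Gaussian bound from Lemma \ref{lemma:L-1}, control $\nabla_v A_{0,ij}$ and $\nabla_v^2 A_{0,ij}$ by differentiating an integral identity, and finish with the chain rule $A_{ij}(v)=A_{0,ij}(v-\e u)$. The one genuine deviation is \emph{where} the derivative hits the kernel, and there your proposal has a gap.

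When you differentiate the fixed-point identity $h_{ij}=\nu_0^{-1}(g_{ij}+K_0h_{ij})$ with $K_0$ written in the form $\int \mathbf{k}_0(v,v_*)h_{ij}(v_*)\dd v_*$, the factor $\nabla_v\mathbf{k}_0(v,v_*)$ has two issues: the $1/|v-v_*|^2$ singularity, which you correctly identify and which is indeed controlled by the second line of (\ref{est:int_k}), and a polynomial factor $|v|$ coming from $\nabla_v\big(|v|^2-|v_*|^2\big)=2v$ inside the super-Gaussian exponent. The latter is what your proposal misses. "Decreasing $\vartheta$" does not absorb it: $k_\vartheta$ decays in $|v-v_*|$ and in $\big||v|^2-|v_*|^2\big|$, but not in $|v|+|v_*|$, so the factor $|v|$ survives near the diagonal at large $|v|$. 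To close the argument along your route you would need to exploit the strict gap $\varrho'<\tfrac14$ between the weight of $h_{ij}(v_*)$ and the output weight $\varrho<\varrho'$, since then $e^{-(\varrho'-\varrho)|v_*|^2}$ kills the $|v|$ factor on the near-diagonal region; this is a workable but different argument than the one you cite.

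The paper sidesteps this entirely by first changing variables $v_*\mapsto v-v_*$ so that $K_0g(v)=\int \mathbf{k}_0(v,v-v_*)g(v-v_*)\dd v_*$. Differentiating $\mathbf{k}_0(v,v-v_*)$ in $v$ with the \emph{difference} $v_*$ held fixed gives $\nabla_v\big(|v|^2-|v-v_*|^2\big)=2v_*$, so the polynomial factor is $|v_*|$ rather than $|v|$; being small precisely near the singularity at $v_*=0$, it is trivially absorbed by the Gaussian in $v_*$. This is the clean observation behind (\ref{nabla_vL}) and the subsequent application of (\ref{est:int_k}), and is the detail your writeup should incorporate (or replace by the explicit weight-gap argument). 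A secondary, harmless difference: the paper derives an equation of the form $L_0\p_{v_k}A_{0,ij}=(\text{r.h.s.})$ and inverts $L_0$ again via Lemma \ref{lemma:L-1}, whereas you bound $\nabla h_{ij}$ directly from the fixed-point identity without a second inversion; both routes work once the kernel-derivative estimate is in place.
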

\begin{proof}
  It is convenient to introduce a notation, with $L_0$ in (\ref{L_0}),
 \Be\label{A0_ij}
 {A}_{0,ij}(v):= L^{-1}_0\Big((v_i v_j   - \frac{|v|^2}{3} \delta_{ij} )\sqrt{\mu_0}\Big)(v).
 \Ee
 Then from (\ref{map:L-1}) and (\ref{L_inverse}) we can immediately prove the first bound in (\ref{est:A&A_x}). 

 Recall the notations in (\ref{L_0}) and (\ref{L_0=L}).  By taking a derivative to $L_0 (\ref{A0_ij})$, it follows that, from the decomposition of $L_0A_{0,ij} (v)
= \nu_0(v) A_{0,ij} (v)- \int_{\R^3}
\mathbf{k}_0 (v, v-v_*) A_{0,ij} (v-v_*)
  \dd v_* 
$ and (\ref{est:k}),
\Be
 \begin{split}
 L_0 \p_{v_k} A_{0,ij} =& 
 \p_{v_k} (v_i v_j - \frac{|v|^2}{3}) \sqrt{\mu_0} + (v_i v_j - \frac{|v|^2}{3})  \p_{v_k}  \sqrt{\mu_0}\\
 & - 
\Big\{ \p_{v_k} \nu_0(v)
 A_{0,ij}(v)
 - \int_{\R^3} \p_{v_k} [ \mathbf{k}_0 (v,v-v_*)] A_{0,ij} (v-v_*)\dd v_*
 \Big\}.\label{nabla_vL}
 \end{split}
 \Ee
From (\ref{est:k}) and $\nabla_v(|v|^2- |v-v_*|^2)^2= 4 v_{*} (|v| + |v-v_*|) (|v- |v-v_*|)$, it follows
 $|\nabla_v [ \mathbf{k}_0 (v,v-v_*)]| \lesssim 
|v_*| \exp\{- \frac{ |v-v_*|^2+ |v*|^2}{8}\}
+ \frac{1}{|v_*|}  \exp\{- \frac{|v_*|^2}{8}
- \frac{1}{8} \frac{(|v  |^2 - |v-v_*|^2)^2}{| v_*|^2}\}$. From the first bound of (\ref{est:int_k}), it follows $|\int_{\R^3} \p_{v_k} [ \mathbf{k}_0 (v,v-v_*)] A_{0,ij} (v-v_*)\dd v_*| \lesssim e^{-\varrho |v|^2}$ for any $0 < \varrho< 1/4$.  Recall a projection $\tilde{\mathbf{P}}$ on $\mathcal{N}_0$.  Then $|(\mathbf{I}- \tilde{\mathbf{P}}) \text{ r.h.s. of }(\ref{nabla_vL})| \lesssim e^{-\varrho|v|^2}$. Now applying (\ref{map:L-1}) to $ \p_{v_k} A_{0,ij}= L_0^{-1} ((\mathbf{I}- \tilde{\mathbf{P}}) \text{ r.h.s. of }(\ref{nabla_vL}))$ we derive 
\Be\label{est:A_v}
| \nabla_v A_{0,ij} (v)| \lesssim e^{-\varrho |v|^2} \text{ for any } 0 < \varrho< 1/4.
\Ee
 From (\ref{f_2}) and (\ref{L_inverse}), and the fact $\tilde{\varphi}_i=v_i$ for $i=1,2,3$, and $\tilde{\varphi}_4=\frac{|v|^2}{3}$ (the notation $\tilde{f}$ is defined in (\ref{L_0=L})), we have  
  \Be\label{A_ij} 
 {A}_{ij}(v)
 = L^{-1}_0\Big((v_i v_j   - \frac{|v|^2}{3} \delta_{ij} )\sqrt{\mu_0}\Big)(v-\e u)=
 {A}_{0,ij}(v-\e u).
 \Ee
 Therefore we prove the second and third bounds  in (\ref{est:A&A_x}) using the fact that $\nabla_{x,t} A_{ij}(v)= -\e \nabla_{x,t} u \nabla_v A_{0,ij}(v-\e u)$. 
 
 Now we prove 
 \Be\label{est:A_vv}
 |\nabla_v^2 A_{0,ij} (v)| \lesssim e^{- \varrho  |v|^2} .
 \Ee
 By taking one more derivative to (\ref{nabla_vL}), we derive that  
 \begin{align*}
 L_0 \p_{v_k}\p_{v_\ell} A_{0,ij} &=  \ 
 \p_{v_k} \p_{v_\ell}  (v_i v_j - \frac{|v|^2}{3}) \sqrt{\mu_0} + \p_{v_\ell} (v_i v_j - \frac{|v|^2}{3})  \p_{v_k}  \sqrt{\mu_0}
 +  (v_i v_j - \frac{|v|^2}{3}) \p_{v_\ell} \p_{v_k}  \sqrt{\mu_0}
 \\
 & - 
 \p_{v_k} \p_{v_\ell}  \nu_0(v)
 A_{0,ij}(v)- 
  \p_{v_k}   \nu_0(v)
 \p_{v_\ell}A_{0,ij}(v)\\
 &
+\int_{\R^3} \p_{v_k}  \p_{v_\ell}[ \mathbf{k}_0 (v,v-v_*)] A_{0,ij} (v-v_*)
 + \p_{v_k} [ \mathbf{k}_0 (v,v-v_*)]  \p_{v_\ell} [A_{0,ij} (v-v_*)] \dd v_*
 .\label{nabla_vL}
 \end{align*}
The terms in the first two lines in r.h.s are easily bounded above as $e^{-\varrho |v|^2}$, recalling the fact $|\nabla_v\nu_0(v)| + |\nabla^2_v\nu_0(v)| \lesssim 1$. We only focus on the terms in the last line. From $|\p_{v_\ell} \nabla_v (|v|^2  - |v-v_*|^2)^2| \leq  \big|4 v_* \Big( \frac{v_\ell}{|v|} + \frac{(v-v_*)_\ell}{|v-v_*|}
 \Big)(|v| - |v-v_*|) \big| + 4 \big| v_* (|v| + |v-v_*|)\Big( \frac{v_\ell}{|v|} - \frac{(v-v_*)_\ell}{|v-v_*|} \Big)\big|\lesssim 
 |v_*|^2 + |v_*||v|,
 $ we have $| \p_{v_\ell }\nabla_v [ \mathbf{k}_0 (v,v-v_*)]| \lesssim 
|v_*| \exp\{- \frac{ |v-v_*|^2+ |v*|^2}{8}\}
+ \frac{1+|v|}{|v_*|^2}  \exp\{- \frac{|v_*|^2}{8}
- \frac{1}{8} \frac{(|v  |^2 - |v-v_*|^2)^2}{| v_*|^2}\}$. Using the second estimate of (\ref{est:int_k}) with the first bound of (\ref{est:A&A_x}), we have $\big|\int_{\R^3} \p_{v_k}  \p_{v_\ell}[ \mathbf{k}_0 (v,v-v_*)] A_{0,ij} (v-v_*) \dd v_*\big|\lesssim e^{- \varrho |v |^2}$. From (\ref{est:A_v}) and the first bound of (\ref{est:A&A_x}), it follows that $\big|\int_{\R^3}   \p_{v_k} [ \mathbf{k}_0 (v,v-v_*)]  \p_{v_\ell} [A_{0,ij} (v-v_*)] \dd v_*\big| \lesssim e^{- \varrho |v |^2}$. Now we invert the operator $L_0$ and use (\ref{map:L-1}) to conclude (\ref{est:A_vv}). 

Finally from $\p_t \nabla_x A_{ij} (v) = -\e \p_{t} \nabla_x u \nabla_v A_{0,ij} (v-\e u) + \e^2 \nabla_x u \p_t u \nabla_v^2 A_{0,ij} (v-\e u)$, (\ref{est:A_v}), and (\ref{est:A_vv}), we conclude the last estimate of (\ref{est:A&A_x}). 
\end{proof}

For the estimates of $\p_t f_R$ we derive the commutator estimate of $\p_t L- L\p_t $ and the corresponding one for $\Gamma$ as follows. 
\begin{lemma}Suppose $\e |u|\lesssim 1$ in the definition of $\mu$ in (\ref{mu_e}).  For $L$ and $\Gamma$ in (\ref{L_Gamma}) and (\ref{nu_K}), 
\Be\label{dec:L_t}
\begin{split}
\p_t (Lf)      &=L\p_tf +
 L_t( \mathbf{I}-\mathbf{P})f - L (\mathbf{I} - \mathbf{P})( \mathbf{P}_t f),\\
 \p_t (\Gamma (f,g))&=
 \{\Gamma(\p_t f, g) + \Gamma(f,\p_t g)\} +  \Gamma_t (f,g),
\end{split}
\Ee
where 
\Be
\begin{split}\label{def:L_t}
L_t g (t,v)&:= - \e \p_t u \cdot \nabla_v \nu_0 (v-\e u ) g(t, v)\\
& \ \ \ \ 
+ \e \p_t u \cdot  \int_{\R^3} (\nabla_v \mathbf{k}_0 + \nabla_{v_*} \mathbf{k}_0  )  (v-\e u, v_* - \e u)g(t,v_*)\dd v_*,
\\
(\mathbf{I} -\mathbf{P})\mathbf{P}_t g&:=- \e \sum _{j=0}^4
  ( {P}_j g) (\mathbf{I} -\mathbf{P})\big( \p_tu \cdot \nabla_v( \varphi_j \sqrt{\mu })\big)
  ,\\
  \Gamma_t(f,g) (t,v) &:=\frac{\e}{2} \iint_{\R^3 \times \S^2} 
|(v-v_*) \cdot \mathfrak{u}| 
 \p_t u  \cdot (v_*- \e u) 
\sqrt{ \mu(v_* )} \big\{
f(t,v ^\prime) g(t,v_*^\prime)\\
& \ \ \ \ \ \  \ \ \ \ \ \    \times 
+ g(t,v ^\prime) f(t,v_*^\prime)
- f(t,v  ) g(t,v_* )- g(t,v  ) f(t,v_* )
\big\}
\dd \mathfrak{u} \dd v_*.
\end{split}
\Ee
We have 
\Be
\begin{split}\label{est:L_t}
&\bigg|\int_{\R^3} L_t (\mathbf{I} - \mathbf{P})f(v)  g(v)  \dd v\bigg|
 \lesssim \e |\p_t u|
\| \nu^{1/2} (\mathbf{I} -\mathbf{P}) f \|_{L^2_v} \| \nu^{1/2} g \|_{L^2_v}
,\\
&\bigg|\int_{\R^3} L(\mathbf{P}_t f) (v) g(v)\dd v\bigg|
 \lesssim \e |\p_t u| 
|Pf| 
\| \nu^{1/2} (\mathbf{I} -\mathbf{P}) g \|_{L^2_v},\\ 
&\bigg|\int_{\R^3}\Gamma_t (f,g)(v) h(v) \dd v \bigg| \\
&\lesssim 
\e |\p_t u| 
\Big(
\| e^{\varrho |v|^2 + C \cdot v} g\|_{L^\infty_v}
\| \nu^{1/2}  (\mathbf{I}- \mathbf{P}) f \|_{L^2_v}\\
& \ \ \ \ \ \ \ \   \ \ \ + \| e^{\varrho |v|^2 + C \cdot v} f\|_{L^\infty_v}
\| \nu^{1/2}  (\mathbf{I}- \mathbf{P}) g \|_{L^2_v}
+ |Pf||Pg|    \Big)
\| \nu^{1/2} h \|_{L^2_v}
.
\end{split}
\Ee
Pointwise estimates are given as follows: for $0 < \varrho< 1/4$ and $C \in \R^3$ 
\Be
\begin{split}
\label{est_infty:L_t}
|
 L_t( \mathbf{I}-\mathbf{P})f(t, v) - L( \mathbf{P}_t f)(t, v)
|  \lesssim  \e |\p_t u|  
 \| e^{ \varrho |v|^2 + C \cdot v} f(t,v) \|_{L^\infty_v} \nu(v)^2 
 e^{- \varrho |v-\e u |^2},\\
|  \Gamma _t(f,g) (t,v)  
|  \lesssim
\e |\p_t u|   
 \| e^{\varrho |v|^2+ C \cdot v} f(t,v) \|_{L^\infty_v}  \| e^{\varrho |v|^2
 + C \cdot v
 }g(t,v) \|_{L^\infty_v}
  \frac{\nu(v)}{e^{    \varrho |v |^2+ C \cdot v } },\\
\left|    \Gamma (f,g)(v)\right| \lesssim \|e^{\varrho |v|^{2}+ C \cdot v}  f(v) \|_{L^{\infty}_{v}}\|e^{\varrho |v|^{2}+ C \cdot v}  g(v) \|_{L^{\infty}_{v}} \frac{\nu(v)}{e^{    \varrho |v |^2+ C \cdot v } } ,
\end{split}
\Ee
and 
\Be
|\Gamma(f, g)(v)| \lesssim \| e^{\varrho |v|^2+ C \cdot v} f \|_\infty \Big( \nu(v)| g(v)| + \int_{\R^3} k_{\vartheta}(v,v_*)  |g(v_*)| \dd v_* \Big).\label{est_Carl:Gamma}
\Ee
\end{lemma}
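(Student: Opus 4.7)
The identities in (\ref{dec:L_t}) come from the chain rule applied to the explicit formulas for $L$ and $\Gamma$. Since the $t$-dependence enters only through the shift built into $\nu(v)=\nu_0(v-\e u)$ and $\mathbf{k}(v,v_*)=\mathbf{k}_0(v-\e u, v_*-\e u)$, one immediately obtains $\p_t(Lf) = L\p_t f + (\p_t L) f$ with $(\p_t L)f = L_t f$ in the notation of (\ref{def:L_t}). To extract the form actually stated I would first rewrite $Lf = L(\mathbf{I}-\mathbf{P})f$ using $L\mathbf{P}=0$, then differentiate; since $\p_t(\mathbf{I}-\mathbf{P})f = (\mathbf{I}-\mathbf{P})\p_t f - \mathbf{P}_t f$ and $L\mathbf{P}(\mathbf{P}_t f)=0$, the stated decomposition follows. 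The explicit formula for $\mathbf{P}_t$ comes from $\varphi_j\sqrt{\mu}(v) = \tilde{\varphi}_j(v-\e u)\sqrt{\mu_0}(v-\e u)$, which yields $\p_t(\varphi_j\sqrt\mu) = -\e\p_t u\cdot\nabla_v(\varphi_j\sqrt\mu)$. For $\Gamma$, the representation (\ref{Gamma}) depends on $t$ through $\sqrt{\mu(v_*)}$ alone (the product $\sqrt{\mu(v)\mu(v_*)}$ that would appear in $Q(\sqrt\mu f,\sqrt\mu g)$ collapses to $\sqrt{\mu(v_*)}$ via $\mu(v)\mu(v_*) = \mu(v')\mu(v_*')$), so differentiating $\sqrt{\mu(v_*)}$ produces $\Gamma_t$ exactly with the coefficient $\frac{\e}{2}\p_t u\cdot(v_*-\e u)$.

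For the weak bounds in (\ref{est:L_t}), the multiplication piece of $L_t$ is immediate from $|\nabla_v\nu_0|\lesssim 1$ and Cauchy--Schwarz. The integral piece requires pointwise control of $(\nabla_v\mathbf{k}_0 + \nabla_{v_*}\mathbf{k}_0)(v-\e u,v_*-\e u)$, which I would derive by direct differentiation of (\ref{est:k}) exactly as in the proof of Lemma \ref{lemma:A}: the gradient is dominated by $|v_*|e^{-(|v-v_*|^2+|v_*|^2)/8}$ plus $|v_*|^{-1}$ times a $k_{\vartheta}$-type kernel, so (\ref{est:int_k}) turns the Cauchy--Schwarz into the claimed product $\|\nu^{1/2}(\mathbf{I}-\mathbf{P})f\|_{L^2_v}\|\nu^{1/2}g\|_{L^2_v}$. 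The $\mathbf{P}_t$ bound uses $|P_j f|\lesssim |Pf|$ together with the rapid decay of $(\mathbf{I}-\mathbf{P})(\p_t u\cdot \nabla_v(\varphi_j\sqrt\mu))$, which sits well inside the domain of $L$ so that the image lies in $L^2(\nu)$ by standard bounds. For $\Gamma_t$ the extra factor $(v_*-\e u)$ is absorbed into $\nu(v_*)\sqrt{\mu(v_*)}$, after which one runs the usual gain/loss split of the Boltzmann kernel with the appropriate argument pulled out in the weighted $L^\infty$ norm $\|e^{\varrho|v|^2+C\cdot v}\,\cdot\,\|_{L^\infty_v}$.

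The pointwise estimates (\ref{est_infty:L_t}) and the Carleman-type bound (\ref{est_Carl:Gamma}) follow the same template but with $L^\infty$ rather than $L^2$ arguments. After extracting $\|e^{\varrho|v|^2+C\cdot v}f\|_{L^\infty_v}$ one faces integrals of the form $\int |\nabla\mathbf{k}_0|\,\tfrac{e^{\varrho|v|^2+C\cdot v}}{e^{\varrho|v_*|^2+C\cdot v_*}}\,\dd v_*$, which are controlled by (\ref{kw/w}) and (\ref{est:int_k}); the outer factor $\nu(v)^2 e^{-\varrho|v-\e u|^2}$ in the first line of (\ref{est_infty:L_t}) comes from $|\nabla_v\nu_0(v-\e u)|\lesssim \nu(v)$ combined with one extra power of $\nu(v)$ harvested from the kernel integration. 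For $\Gamma$ itself, the loss part $\Gamma_-$ is handled directly using $\iint |(v-v_*)\cdot\mathfrak{u}|\sqrt{\mu(v_*)}\,\dd\mathfrak{u}\dd v_*\sim \nu(v)$; for the gain part $\Gamma_+$ the classical change of variables $(v_*,\mathfrak{u})\mapsto(v',v_*')$ converts it into an integral against a Carleman kernel dominated by a multiple of $k_\vartheta(v,v_*)$, yielding the second term in (\ref{est_Carl:Gamma}).

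The main obstacle I anticipate is the uniform tracking of the exponential weights $e^{\varrho|v|^2+C\cdot v}$ through the derivative kernels. Differentiating $\mathbf{k}_0$ produces additional polynomial growth in $v$ and $v_*$, and one must verify that the quadratic exponent remaining after conjugation by $e^{\varrho|v|^2+C\cdot v}/e^{\varrho|v_*|^2+C\cdot v_*}$ is still negative-definite uniformly; this forces $0<\varrho<1/4$ and requires a discriminant computation in the spirit of (\ref{kw/w}), now including the linear drift $C\cdot v$. A secondary nuisance is that the natural Gaussian weight is centered at $v-\e u$ while the decay of $\mathbf{k}_0$ is expressed in the uncentered variable, so the frames must be shifted repeatedly via (\ref{L_0=L}) when combining the individual estimates into the final bounds.
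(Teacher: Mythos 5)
Your proposal is correct and follows essentially the same route as the paper: you differentiate $L(\mathbf{I}-\mathbf{P})f$ and the explicit form of $\Gamma$ by the chain rule to obtain (\ref{dec:L_t}), observing that the $t$-dependence lives only in the shift by $\e u$ (equivalently, in $\sqrt{\mu(v_*)}$ for $\Gamma$); you then bound $\nabla_v\nu_0$, $\nabla_v\mathbf{k}_0$, and $\nabla_{v_*}\mathbf{k}_0$ by direct differentiation of (\ref{est:k}) as in the proof of Lemma \ref{lemma:A}, and feed these into (\ref{est:int_k}) and (\ref{kw/w}) to get both the weak $L^2$ bounds in (\ref{est:L_t}) and the pointwise weighted bounds in (\ref{est_infty:L_t}), treating the last two $\Gamma$ estimates as classical Grad/Carleman facts combined with (\ref{L_0=L}). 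The one imprecision is cosmetic: $|\nabla_v\nu_0|\lesssim 1$, not $\lesssim\nu$; the extra $\nu(v)^2$ in the first pointwise bound comes from the $\nu_0(v)^2|v-v_*|$ factor in the derivative bound on $\mathbf{k}_0$, not from $\nabla_v\nu_0$.
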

\begin{proof}
The decomposition (\ref{dec:L_t}) with (\ref{def:L_t}) comes from a direct computation to (\ref{nu_K}) and $
\p_t (Lf_R) =  \p_t (L( \mathbf{I}-\mathbf{P})f_R) =L ( \mathbf{I}-\mathbf{P})\p_tf_R + L_t( \mathbf{I}-\mathbf{P})f_R + L (-\mathbf{P}_t f_R).
$ On the other hand, from (\ref{est:k}) it is easy to check that, for any $0< \varrho < 1/4$,  
\begin{align*} 
|\nabla_v \nu_0 (v)| = \Big| \iint_{\R^3 \times \S^2}   \mathfrak{u}  \frac{(v-v_*) \cdot \mathfrak{u}}{|(v-v_*) \cdot \mathfrak{u}|}\mu_0 (v_*) \dd \mathfrak{u} \dd v_* \Big|\lesssim 1,\\  
|\nabla_v \mathbf{k}_0 (v,v_*)| +  |\nabla_{v_*} \mathbf{k}_0 (v,v_*)|  \lesssim 
\Big(|v-v_*|^{-1}+
 \nu_0 (v)^2 |v-v_*| 
\Big)
k_{\varrho/2} (v,v_*).
\end{align*}
These estimates above combining with (\ref{est:int_k}) and $\mathbf{P}L=0$ yield the first two estimates of (\ref{est:L_t}).

We derive $\Gamma_t$ as in the third identity of (\ref{def:L_t}) from a direct computation to (\ref{Gamma}) and \hide

We read the form of $\Gamma$ from (\ref{L_Gamma}), using $\mu(v^\prime)\mu(v_*^\prime)=\mu(v )\mu(v_* )$,
  Taking a temporal derivative, $\p_t \Gamma(f,f) (t,v)$ equals $2\Gamma(f,\p_t f) (t,v)$ plus a remainder which takes the same form of (\ref{Gamma}) where $\sqrt{\mu(v_* )}$ is replaced by $\p_t \sqrt{\mu(v_* )}$. 

From $\mu(v) = \mu_0 (v-\e u)$, it follows that
\unhide
$
 \p_t \sqrt{\mu(v_*)} =  - \e \p_t u \cdot \nabla_v \sqrt{\mu_0 (v_* - \e u )} = \frac{1}{2}  \e \p_t u \cdot  (v_*-\e u ) \sqrt{\mu_0 (v_* - \e u )} 
.
$
Then it is standard (see Lemma 2.13 in \cite{EGKM2} for example) to have the last estimate in (\ref{est:L_t}).

The first bound of (\ref{est_infty:L_t}) is a direct consequence of applying (\ref{kw/w}) and (\ref{est:int_k}) to the first identity of (\ref{def:L_t}). For the second bound of (\ref{est_infty:L_t}) we bound it as 
\begin{align*}
&\e |\p_t u |
 \| e^{\varrho |v|^2+ C \cdot v} f(t,v) \|_{L^\infty_v}  \| e^{\varrho |v|^2+ C \cdot v} g(t,v) \|_{L^\infty_v} 
\frac{1}{e^{    \varrho |v |^2+ C \cdot v } }  \iint_{\R^3 \times \S^2} 
|(v-v_*) \cdot \mathfrak{u}| 
e^{-  \varrho |v_* |^2- C \cdot v_*} 
\dd \mathfrak{u} \dd v_*\\
\lesssim&  \    
 \frac{\nu(v)}{e^{    \varrho |v |^2+ C \cdot v } }\e |\p_t u |
 \| e^{\varrho |v|^2+ C \cdot v} f(t,v) \|_{L^\infty_v}  \| e^{\varrho |v|^2+ C \cdot v} g(t,v) \|_{L^\infty_v} 
\end{align*}
For the last bound of (\ref{est_infty:L_t}) we recall a standard estimate (e.g. \cite{gl}) that $$|\frac{1}{\nu_{0}(v)} e^{\varrho |v|^{2}+ C \cdot v} \Gamma_{0}(f,g)(v)|\lesssim \|e^{\varrho |v|^{2}+ C \cdot v}  f(v) \|_{L^{\infty}_{v}}\|e^{\varrho |v|^{2}+ C \cdot v}  g(v) \|_{L^{\infty}_{v}} .$$ 
From the second equality of (\ref{L_0=L}) we deduce the last bound of (\ref{est_infty:L_t}). A bound (\ref{est_Carl:Gamma}) is standard. \end{proof}

\subsection{Proof of Proposition \ref{prop:Hilbert}}

We verify two statements of Section \ref{sec:1.1} Hilbert Expansion. Firstly, we will show that the solvability condition (\ref{Comp1}) implies the incompressible condition (\ref{incomp}).   From (\ref{mu_e}), (\ref{basis}), and direct computations we verify the first identity of (\ref{Comp1}). \hide
\Be\label{trans_mu}
\frac{\e^{-1} (v- \e u )\cdot \nabla_x \mu }{\sqrt{\mu}}= \sum_{\ell, m=1}^3 \p_\ell u_m\varphi_\ell  \varphi_m  \sqrt{\mu}.
\Ee\unhide
Then from the oddness of the integrand with respect to the variable $\varphi_i$ we derive that $\Big\langle \varphi_i \sqrt{\mu}, \frac{ \e^{-1} (v- \e u )\cdot \nabla_x \mu}{\sqrt{\mu}} \Big\rangle =0$ for $i=1,2,3$. For $i=0,4$, we compute that 
 \Be\notag
 \Big\langle \varphi_i \sqrt{\mu}, \frac{ \e^{-1} (v- \e u )\cdot \nabla_x \mu}{\sqrt{\mu}} \Big\rangle
 = \sum_{\ell=1}^3 
 \langle
 \varphi_i \sqrt{\mu}, \varphi_\ell \varphi_\ell  \sqrt{\mu}
  \rangle
  \p_\ell u_\ell = \left\{\delta_{i0}  + \delta_{i4}\sqrt{\frac{2}{3}} \right\} ( \nabla_x \cdot u)  \ \ \text{for} \  i=0,4.
 \Ee
 This shows that (\ref{Comp1}) implies (\ref{incomp}). 

 Secondly, we will verify the following statement of Section \ref{sec:1.1}: the leading order terms of the hydrodynamic part in (\ref{eqtn_f_3}) vanish by solving the Navier-Stokes equations \eqref{NS_k}-\eqref{noslip}. 
 Consider (\ref{eqtn_f_3}). We set  $\mathbf{P}f_2 =\{ \tilde{\rho}  \varphi_0+ \sum_{\ell=1}^3 \tilde{u}_\ell    \varphi_\ell +\tilde{\theta}  \varphi_4  \}\sqrt{\mu}$ whose coefficients will be determined as in (\ref{Pf_2}). Then  the leading order term of  $(\ref{eqtn_f_3})= \frac{1}{\delta} (\ref{H_NS})$ can be decomposed as 
\Be \begin{split}
&-  \frac{1}{\delta}
 \mathbf{P} \Big(  (v- \e u) \cdot    (\p_t u + u \cdot \nabla_x u ) \sqrt{\mu}  \\
 &
 + 
\underbrace{ (v- \e u) \cdot   \big(  \nabla_x  \tilde{\rho} \varphi_0 \sqrt{\mu}-  \sum_{\ell=1}^3  \nabla_x  \tilde{u}_\ell \varphi_\ell \sqrt{\mu}
 +  \nabla_x\tilde{\theta} \varphi_4  \sqrt{\mu} \big)}_{ (\ref{H_NS_1})_* }
 -  
 \sum_{\ell,m=1}^3 \kappa (v- \e u) \cdot A_{\ell m}   \nabla_x\p_\ell    u_m  \Big),\label{H_NS_1}\\
\end{split}\Ee
\Be
\begin{split}
 &- \frac{1}{\delta} (\mathbf{I}-\mathbf{P})  
 \Big(
  (v- \e u) \cdot 
   \big(  \nabla_x  \tilde{\rho} \varphi_0 \sqrt{\mu}+  \sum_{\ell=1}^3  \nabla_x  \tilde{u}_\ell \varphi_\ell \sqrt{\mu}
 +  \nabla_x\tilde{\theta} \varphi_4  \sqrt{\mu} \big)
 \Big)\\
 &  + \frac{1}{\delta} (\mathbf{I} - \mathbf{P})\Big(  \sum_{\ell,m=1}^3 \kappa (v- \e u) \cdot A_{\ell m}   \nabla_x\p_\ell   u_m\Big),\label{H_NS_2}
  \end{split}\Ee
 while the lower order term consists of 
\Be
\begin{split}\label{H_NS_lot}
-\frac{1}{\delta}  (v- \e u) \cdot \nabla_x \Big(   \tilde{\rho} \varphi_0 \sqrt{\mu}+  \sum_{\ell=1}^3   \tilde{u}_\ell \varphi_\ell \sqrt{\mu}
 +   \tilde{\theta} \varphi_4  \sqrt{\mu} \Big) \\
+\frac{1}{\delta}  (v- \e u) \cdot \Big(   \nabla_x \tilde{\rho} \varphi_0 \sqrt{\mu}+  \sum_{\ell=1}^3  \nabla_x  \tilde{u}_\ell \varphi_\ell \sqrt{\mu}
 +  \nabla_x\tilde{\theta} \varphi_4  \sqrt{\mu} \Big) \\
 +  \frac{\kappa }{\delta}\sum_{\ell,m=1}^3(v-\e u) \cdot \nabla_x A_{\ell m} \p_\ell u_m.
\end{split}\Ee

First we focus on a leading order contribution of  $(\ref{H_NS_1})_*$   in (\ref{H_NS_1}). 
A direct computation yields 
\Be\label{p_2}
\begin{split}
&\big\langle  \varphi_i \sqrt{\mu},  (\ref{H_NS_1})_*  \big\rangle \\
&= 
\begin{cases} C_i \nabla_x \cdot \tilde{u}, \quad i =0,4\\
\p_i \tilde{\rho}
\langle \varphi_i \sqrt{\mu}, \varphi_i \sqrt{\mu}\rangle 
+ 
 \p_i \tilde{\theta} \langle \varphi_i\sqrt{\mu}, \varphi_i \varphi_4  \sqrt{\mu}\rangle    =   \p_i \Big( \tilde{\rho} + \sqrt{\frac{2}{3} }\tilde{\theta} \Big), \quad i=1,2,3.
\end{cases}
\end{split}
\Ee
 \hide
 
 $\big\langle \varphi_i \sqrt{\mu} , (v- \e u) \cdot \nabla_x    \mathbf{P} f_2  \big\rangle=C_i \nabla_x \cdot \tilde{u}$ with $C_i\neq 0$ for $i=0,4$. On the other hand we derive that, for  i=1,2,3$, 
 \Be \begin{split}\label{p_2}
\Big\langle  \varphi_i \sqrt{\mu}, 
 (v-\e u) \cdot  \nabla_x   \mathbf{P} f_2 \Big \rangle 
= &
\p_i \tilde{\rho}
\langle \varphi_i \sqrt{\mu}, \varphi_i \sqrt{\mu}\rangle 
+ 
 \p_i \tilde{\theta} \langle \varphi_i\sqrt{\mu}, \varphi_i \varphi_4  \sqrt{\mu}\rangle   
%
  =   
   \p_i \Big( \tilde{\rho} + \sqrt{\frac{2}{3} }\tilde{\theta} \Big).
 \end{split}\Ee\unhide
 Among many other choices we make a special choice $(\tilde{\rho}, \tilde{u}, \tilde{\theta})=(p,0,0)$ which is equivalent to (\ref{Pf_2}). 
From (\ref{Delta_u}), (\ref{p_2}), and (\ref{Pf_2}), it follows that for $(u,p)$ solving (\ref{NS_k})
\Be\label{H_NS_1=0}
(\ref{H_NS_1})= \frac{1}{\delta} (v- \e u ) \sqrt{\mu} 
\cdot \big\{ \p_t u+ u \cdot \nabla_x u - \kappa \eta_0 \Delta u + \nabla_x p \big\}
= \frac{1}{\delta} (v- \e u ) \sqrt{\mu} \cdot (\ref{NS_k}) =0, 
\Ee
which verifies the second statement of Section \ref{sec:1.1}. 

Now we turn to proving the estimates. While the leading order terms vanish in (\ref{eqtn_f_3}), the rest of terms of (\ref{eqtn_f_3}) are bounded as follows. 
 Upon the choice of (\ref{Pf_2}), the first term of (\ref{H_NS_2}) vanishes  and 
the first line of (\ref{H_NS_lot}) are bounded by  
\begin{align}
\Big|
 \frac{\e}{2 \delta
} (v-\e u) \cdot \nabla_x u \cdot (v-\e u) \mathbf{P} f_2\Big|
\lesssim \frac{\e}{\delta} 
|\nabla_x u| 
|p| 
\langle v-\e u \rangle^2  \sqrt{\mu}.\label{est1:H_NS_lot}
\end{align} 
From (\ref{est:A&A_x}) we deduce that the second term of (\ref{H_NS_2}) and the second line of (\ref{H_NS_lot}) are bounded respectively by 
 \Be\label{est2:H_NS_lot}
 \frac{\kappa}{\delta} |\nabla_x^2 u|  |v-\e u|
  e^{- \varrho |v-\e u|^2}, \ 
 \frac{ \e \kappa}{\delta}  |\nabla_x u|
  |v-\e u|
  e^{- \varrho |v-\e u|^2} \ \ \text{for any }
  0 < \varrho < 1/4.
 \Ee 
\hide

$(v - \e u) \cdot \nabla_x  \mathbf{P} f_2$ in (\ref{H_NS}).  From (\ref{mu_e}) we expand the term as 
 \begin{align*}
(v - \e u) \cdot \nabla_x  \mathbf{P} f_2&=  
\sum_{j=1}^3\p_j  \tilde{\rho}  \varphi_j    \sqrt{\mu} 
+ 
\sum_{i,j=1}^3 \p_i \tilde{u}_j \varphi_i \varphi_j \sqrt{\mu}
+ 
\sum_{j=1}^3 \p_j \tilde{\theta} \varphi_j  \varphi_4 \sqrt{\mu} 
 \\
 &
-\e 
\sum_{i,j=1}^3  \tilde{u}_i \p_j u_i \varphi_j\sqrt{\mu}
- \e 
\frac{2}{\sqrt{6}}\sum_{i,j=1}^3 \tilde{\theta}\p_j u_i   \varphi_i \varphi_j \sqrt{\mu}
+ \frac{\e}{2
} (v-\e u) \cdot \nabla_x u \cdot (v-\e u) \mathbf{P} f_2
.
  \end{align*}

  %

 we are able to remove the entire leading order of hydrodynamic part of $(v - \e u) \cdot \nabla_x  \mathbf{P} f_2$ in (\ref{H_NS}).\unhide 
In conclusion we end up with the following result: Assume $(u,p)$ solves (\ref{NS_k})-(\ref{noslip}), and both (\ref{f_2}) and (\ref{Pf_2}) hold. Then 
 \begin{align}
 |(\ref{eqtn_f_3})-(\ref{H_NS_2}) | 
  \lesssim   \frac{\e}{\delta} 
  \{
  |\nabla_x u| 
  |p| 
    + \kappa |\nabla_x u|
  \}
   \langle v-\e u  \rangle^2
   e^{-  \frac{|v-\e u|^2}{4}}
  ,\label{est:Pf_3}
  \\
   | (\mathbf{I} - \mathbf{P}) (\ref{H_NS_2}) |= |  (\ref{H_NS_2}) |
   \lesssim
    \frac{1}{\delta}
 \kappa |\nabla _x^2 u|
   e^{-   \varrho{|v-\e u|^2} }
 .\label{est:I-Pf_3}
 \end{align}
 
\smallskip
 
The term $\p_t (\ref{eqtn_f_3})$ can be bounded similarly. The entire leading order term of $\p_t (\ref{eqtn_f_3})$ can be decomposed as 
 \begin{align}
 - \frac{1}{\delta}
 \mathbf{P} \Big( (v- \e u) \cdot   \p_t (\p_t u + u \cdot \nabla_x u ) \sqrt{\mu}  
 + (v- \e u) \cdot   \Big(  \nabla_x \p_t p \varphi_0 \sqrt{\mu}
 \Big) 
\notag \\
- 
 \sum_{\ell,m=1}^3 \kappa (v- \e u) \cdot A_{\ell m}   \nabla_x\p_\ell \p_t  u_m  \Big),\label{H_NS_t1}\\
  -\frac{1}{\delta} (\mathbf{I}-\mathbf{P})  
 \Big(
  (v- \e u) \cdot  (  \nabla_x \p_t p \varphi_0 \sqrt{\mu}
    ) 
 \Big)  + \frac{1}{\delta} (\mathbf{I} - \mathbf{P})\Big(  \sum_{\ell,m=1}^3 \kappa (v- \e u) \cdot A_{\ell m}   \nabla_x\p_\ell \p_t  u_m\Big).\label{H_NS_t2}
  \end{align}
Following the argument to get (\ref{Delta_u}) and (\ref{p_2}), 
we derive that 
 \Be\label{est:H_NS_t1}
 (\ref{H_NS_t1})= -\frac{1}{\delta} (v- \e u) \sqrt{\mu} \cdot  \p_t (\ref{NS_k})=0. 
 \Ee
 On the other hand,
 \Be\label{est:H_NS_t2}
 |(\ref{H_NS_t2})| \lesssim \frac{1}{\delta}
 \kappa |\nabla_x ^2 \p_t u|
  e^{- \varrho |v-\e u|^2}.
 \Ee 
Now the lower order term $\p_t (\ref{eqtn_f_3})- (\ref{H_NS_t1})- (\ref{H_NS_t2})$ consists of 
 \Be\begin{split}\label{lower:H_NS_t}
 &  \frac{\e}{\delta} \p_t u \cdot \Big\{
 (\p_t u + u \cdot \nabla_x u ) \sqrt{\mu}  
 +   \nabla_x   \mathbf{P} f_2
- \kappa   \nabla_x
\Big( \sum_{\ell,m=1}^3 A_{\ell m} \p_\ell u_m\Big)
 \Big\} \\
 &+ \frac{\kappa}{\delta} (v- \e u ) \cdot \nabla_x \Big( \sum_{\ell,m=1}^3  \p_t A_{\ell m} \p_\ell u_m\Big)
 \\
 & - \frac{1}{\delta}  (v- \e u) \cdot  \nabla_x  \p_t   \big( p \varphi_0 \sqrt{\mu}
 \big)
+\frac{1}{\delta} (v- \e u) \cdot  \big(  \nabla_x \p_t p \varphi_0 \sqrt{\mu}
\big) 
 .
\end{split}\Ee
Since the lower order term of $\p_t (\ref{eqtn_f_3})$ always contains $|\p_t (v-\e u)| \leq  \e |\p_t u|$, they can be bounded by, from (\ref{est:A&A_x}) and (\ref{Pf_2}),  
\Be\label{est:lower:H_NS_t}
\begin{split}
|(\ref{lower:H_NS_t})| \lesssim&  \  \frac{\e}{\delta}|\p_t u| \big\{
|\p_t u| + |u| |\nabla_x u|  + |\nabla_x p| 
+ \e |\nabla_x u| 
|p| 
+ \kappa \e |\nabla_x u| ^2 + \kappa |\nabla_x ^2 u|
\big\} e^{- \varrho |v-\e u|^2}\\
&
+  \frac{  \e}{\delta} 
\{
|\nabla_x \p_t u| (1+ \kappa |\nabla_x u|) + |\p_t p| |\nabla_x u| 
\}
e^{- \varrho |v-\e u|^2}. 
\end{split}
\Ee

 \smallskip

\hide  Direct computations yield $\big\langle \varphi_\ell \sqrt{\mu} , (v- \e u) \cdot \nabla_x    \mathbf{P} f_2  \big\rangle=C_\ell \nabla_x \cdot \tilde{u}$ with $C_\ell\neq 0$ for $\ell=0,4$, and
\Be \begin{split}\label{p_2}
\Big\langle  \varphi_\ell \sqrt{\mu}, 
  (v-\e u) \cdot  \nabla_x   \mathbf{P} f_2 \Big \rangle 
  =    \p_\ell \Big( \tilde{\rho} + \sqrt{\frac{2}{3} }\tilde{\theta} \Big)
  + \frac{\e}{2 }\{
  - \tilde{u} \cdot \p_\ell u + \p_\ell u_\ell \tilde{u}_\ell
  \} \ \ \text{for}  \ \ell=1,2,3.
 \end{split}\Ee
In (\ref{Pf_2}) we will make $\p_\ell \Big( \tilde{\rho} + \sqrt{\frac{2}{3} }\tilde{\theta} \Big)$ equal the pressure $\p_\ell p$ of (\ref{incomp}). \unhide


 Now we consider (\ref{eqtn_f_4}). From (\ref{f_2}), (\ref{Pf_2}), and (\ref{est:A&A_x}), we derive that 
  \begin{align}\label{ptPf2}
 | (\p_t + u \cdot \nabla_x )   \mathbf{P}f_2 |  &\lesssim 
 \Big\{
 |\p_t p| + |u| |\nabla_x p| 
+ \e 
|p|
\{ |\p_t u| + |u| |\nabla_x u|\}
\Big\} \langle v-\e u\rangle ^{ 2} e^{  -\frac{|v-\e u|^2}{4}}
,
\end{align}
\Be\label{ptptPf2}
\begin{split}
&| \p_t  (\p_t  + u \cdot \nabla_x   )\mathbf{P}f_2 | \\
 \lesssim& \ 
  \Big\{   
 |\p_t^2 p| + |\p_t u| |\nabla_x p|+ | u| |\nabla_x \p_t p| 
+ \e 
| \p_t p|
  \{ |\p_t u| + |u| |\nabla_x u|\} \\
& \  \ + \e   |  p|
\{ |\p_t^2 u| + |\p_t u| |\nabla_x u|+  |u| |\nabla_x \p_t  u|\}+ \e |\p_t u| \{\text{r.h.s. of } (\ref{ptPf2}) \}
\Big\}   \langle v-\e u\rangle ^{ 2} e^{  -\frac{|v-\e u|^2}{4}},
\end{split}
\Ee
and, for $0< \varrho < 1/4$, 
\Be
\begin{split}\label{ptI-Pf2}
  | (\p_t + u \cdot \nabla_x ) (\mathbf{I} - \mathbf{P}) f_2|
\lesssim
\kappa\Big\{
 \{ | \nabla_x \p_t u| + |u| |\nabla_x^2 u|\}
+ \e \{|\p_t u| + |u| |\nabla_x u|\} |\nabla_x u|
\Big\} e^{-\varrho |v-\e u|^2}
,
\end{split}
\Ee
\Be
\begin{split}\label{ptI-Pf2_t}
& | \p_t (\p_t + u \cdot \nabla_x ) (\mathbf{I} - \mathbf{P}) f_2|\\
\lesssim& \ \kappa\Big\{ \{ | \nabla_x \p_t^2 u| + |\p_t u| |\nabla_x^2 u|+ |u| |\nabla_x^2 \p_t  u|\}
+ \e \{|\p_t u| + |u| |\nabla_x u|\} |\nabla_x \p_t u|\\
& \ \ \ \ + \e \{|\p_t^2 u| + |\p_t u| |\nabla_x u|+ |u| |\nabla_x \p_t u|\} |\nabla_x u|
+ \e |\p_t u| \{ \text{r.h.s. of } (\ref{ptI-Pf2}) \}
\Big\}  e^{-\varrho |v-\e u|^2}
.
\end{split}
\Ee 


 Next  we consider the last term in (\ref{eqtn_f_4}). From (\ref{mu_e}) and (\ref{incomp})
   \begin{align}
 \frac{(\p_t + \e^{-1} v\cdot \nabla_x) \sqrt{\mu}}{\sqrt{\mu}} 
 & = \frac12  \big[  (v-\e u) \cdot \nabla_x u    \cdot (v-\e u)  
  + \e ( \p_t u + u \cdot \nabla_x u) \cdot (v-\e u)   \big] 
  \notag
  ,
  \\
\p_t \left(   \frac{(\p_t + \e^{-1} v\cdot \nabla_x) \sqrt{\mu}}{\sqrt{\mu}} \right) &=
\frac12 \big[  \e (\p_t^2 u + u \nabla_x \p_t u- \p_t u \cdot \nabla_x u) \cdot (v-\e u)
- \e^2 \p_t u \cdot (\p_t u + u \cdot \nabla_x u) \notag\\
&\qquad+ (v-\e u) \cdot \nabla_x\p_t u    \cdot (v-\e u) \big]  \notag
 , 
  \end{align}
    \hide
  \begin{align}
 \frac{(\p_t + \e^{-1} v\cdot \nabla_x) \sqrt{\mu}}{\sqrt{\mu}} 
 & = (v-\e u) \cdot \nabla_x u    \cdot (v-\e u)  
  + \e ( \p_t u + u \cdot \nabla_x u) \cdot (v-\e u)   
  \notag
  ,
  \\
\p_t \left(   \frac{(\p_t + \e^{-1} v\cdot \nabla_x) \sqrt{\mu}}{\sqrt{\mu}} \right) &=
  \e (\p_t^2 u + u \nabla_x \p_t u- \p_t u \cdot \nabla_x u) \cdot (v-\e u)
- \e^2 \p_t u \cdot (\p_t u + u \cdot \nabla_x u)  \notag
 . 
  \end{align}\unhide
  and hence we derive (\ref{transp:mu}) and (\ref{transp:mu_t}). 
  
  \smallskip
  
  Applying (\ref{est:A&A_x}) to (\ref{f_2}), it follows that, for $0< \varrho < \frac{1}{4}$, 
\begin{align}
|(\mathbf{I} - \mathbf{P}) f_2 | \lesssim    \kappa |\nabla_x u| e^{- \varrho |v-\e u|^2}
, \  \ 
|\p_t (\mathbf{I} - \mathbf{P}) f_2 | \lesssim    \kappa\{ | \p_ t \nabla_x u|
+   \e |\p_t u| |   \nabla_x u|\} e^{- \varrho |v-\e u|^2}.\label{est:I-Pf2}
\end{align}
From (\ref{Pf_2})
\Be\label{est:Pf2}
|\mathbf{P} f_2| \lesssim
|p| 
  e^{- \frac{|v-\e u|^2}{4}}, \ \ 
|\p_t \mathbf{P} f_2| \lesssim 
|\p_t p|
e^{- \frac{|v-\e u|^2}{4}} + \e |\p_t u |
| p| 
 \langle v-\e u \rangle
  e^{- \frac{|v-\e u|^2}{4}}.
\Ee
These estimates give (\ref{est:f2}) and (\ref{est:f2_t}). 

\smallskip

The last term of (\ref{eqtn_f_4}) is bounded as  
\Be\begin{split}\label{est:tras_muf_2}
\frac{\e}{\delta} \Big| \frac{(\p_t + \e^{-1} v\cdot \nabla_x) \sqrt{\mu}}{\sqrt{\mu}}  f_2 
 \Big|\lesssim \frac{\e}{\delta} \{|p| 
 + \kappa |\nabla_x u|
 \}
 \{ |\nabla_x u| + \e (|\p_t u| + |u | |\nabla_x u|) \} e^{-\varrho |v-\e u|^2},
 \end{split}\Ee
  \Be\begin{split}\label{est:tras_muf_2_t}
& \frac{\e}{\delta} \Big|
 \p_t \bigg(
  \frac{(\p_t + \e^{-1} v\cdot \nabla_x) \sqrt{\mu}}{\sqrt{\mu}}  f_2 
\bigg) \Big|\\
\lesssim& \frac{\e}{\delta} \{|\nabla_x u| + \e (|\p_t u| + |u| |\nabla_x u|)\}
\{ |\p_t p| 
+ \kappa |\p_t \nabla_x u|  + \e |\p_t u|  (
|p| 
+\kappa |\nabla_x u|)
\}e^{-\varrho |v-\e u|^2} \\
&+ \frac{\e}{\delta} (|p| 
+ \kappa |\nabla_x u|)
\{
|\nabla_x\p_tu|+\e(|\p_t ^2 u | + |u ||\nabla_x \p_t u| + |\p_t u| |\nabla_x u|)\\
&  \ \ \ \ \  \ \ \ \ \ \ \ \ \ \  \ \ \ \ \ \ \ \ \ \ + \e^2 |\p_t u| (|\p_t u| + |u| |\nabla_x u|)
\}e^{-\varrho |v-\e u|^2}.
 \end{split}
\Ee  

 Lastly from (\ref{est_infty:L_t}), (\ref{Pf_2}), and (\ref{f_2})
\Be\label{est:G22}
\begin{split}
\frac{\e}{\delta \kappa}|\Gamma (f_2,f_2)(v)|
&= \frac{\e}{\delta \kappa}|\Gamma (f_2,f_2)(v)
- \Gamma (\mathbf{P} f_2, \mathbf{P}f_2 )
|\\
& \lesssim \frac{\e  }{\delta  } (|p|+ \kappa| \nabla_x u|) |\nabla_x u|   
\nu(v)
e^{-\varrho |v-\e u|^2},
\end{split}
\Ee
\Be\label{est:G22_t}
\begin{split}
&\frac{\e}{\delta \kappa}|\p_t \Gamma (f_2,f_2)(v)|\\ &\lesssim 
 \frac{\e}{\delta
 }
\{(|p|   + \kappa |\nabla_x u| )
|\p_t \nabla_x u| 
+(|\p_t p| + \kappa |\nabla_x \p_t u| )|\nabla_x   u|
\}
  \nu(v) e^{-\varrho |v-\e u|^2}\\
 & + \frac{\e^2}{\delta  }|\p_t u|(|p|   + \kappa |\nabla_x u| )|\nabla_x u|  \nu(v)  e^{-\varrho |v-\e u|^2},
  \end{split}
\Ee 
where we have used $\Gamma (\mathbf{P} f_2, \mathbf{P}f_2 )=\Gamma(p \sqrt{\mu}, p \sqrt{\mu})=0$ to eliminate the contribution of $p^2$ in (\ref{est:G22}). 

Finally we wrap up the estimates of the source term of (\ref{eqtn_fR}) to show (\ref{est:R1}) and (\ref{est:R2}). The term $(\mathbf{I} - \mathbf{P}) \mathfrak{R}_{1}$ consists of (\ref{H_NS_2}), which is bounded as (\ref{est:I-Pf_3}) and hence we prove (\ref{est:R1}). The rest of terms form $\mathfrak{R}_{2}$, which can be proved to be bounded as (\ref{est:R2}), from (\ref{est:Pf_3}), (\ref{ptPf2}), (\ref{ptI-Pf2}), (\ref{est:tras_muf_2}), and (\ref{est:G22}).  

Now we consider the source term of (\ref{eqtn_fR_t}). The term $(\mathbf{I} - \mathbf{P}) \mathfrak{R}_3$ consists of (\ref{H_NS_t2}), which is bounded as (\ref{est:H_NS_t2}). From (\ref{est:H_NS_t1}), 
(\ref{est:lower:H_NS_t}), (\ref{ptptPf2})-(\ref{ptI-Pf2_t}), (\ref{transp:mu}), (\ref{transp:mu_t}), (\ref{est:tras_muf_2_t}), (\ref{est:G22_t}), and (\ref{est_infty:L_t}), we prove (\ref{est:R4}).

\hide 
 \Be
 \begin{split}
   &\Big[ \p_t + \frac{1}{\e} v\cdot \nabla_x + \frac{1}{\e^2 \kappa} L \Big]f_R
  +(\ref{transp:mu}) f_R - \frac{2}{\kappa} \Gamma(f_2, f_R)
 - \frac{\delta}{\e \kappa} \Gamma(f_R,f_R) - \frac{\e}{\delta \kappa} \Gamma(f_2,f_2)
   \\
   =& (\ref{est:Pf_3}) + (\ref{est:I-Pf_3})
   + \frac{\e}{\delta} (\ref{ptPf2}) \langle v-\e u \rangle^2 e^{- \frac{|v-\e u|^2}{4}}
   + \frac{\e}{\delta \kappa} (\ref{ptI-Pf2}) e^{- \varrho |v-\e u|^2}
   + \frac{\e}{\delta} 
 \end{split}
 \Ee

 \Be
 \begin{split}
 |(\ref{eqtn_f_4})| \lesssim& \frac{\e}{\delta} \Big\{
|\p_t ( \tilde{\rho}, \tilde{u}, \tilde{\theta})| + |u|  |\nabla( \tilde{\rho}, \tilde{u}, \tilde{\theta})|
+ \e |  ( \tilde{\rho}, \tilde{u}, \tilde{\theta})|( |\p_t u| + |u| |\nabla u|\\
& \ \ \ 
+ \kappa (|\p_t \nabla u| + \e |\nabla u| |\p_t u|)
+ \kappa |u| (|  \nabla^2 u| + \e |\nabla u| ^2)
 \Big\} \langle v-\e u \rangle^6 \sqrt{\mu}  .\label{r_4}
 \end{split}
 \Ee

  \textit{Expansion of (\ref{eqtn_f_5}):} Directly we compute and bound
  \Be
  \begin{split}\label{r_5}
  (\ref{eqtn_f_5}) & \lesssim 
 \Big\{
 \frac{\e^2}{\delta} |\p_t u| + \frac{\e}{\delta} |\nabla_x u|
  \Big\} \langle v-\e u \rangle^2  |f_2|\\
  &\lesssim \frac{\e}{\delta \kappa^{\frac{1}{2}}} \Big[
  \e| \kappa^{\frac{1}{2}} \p_t u  |  + | \kappa^{\frac{1}{2}} \nabla  u  | 
   \Big]
   \Big[
   \kappa^{\frac{1}{2}} |   \kappa^{\frac{1}{2}}  \nabla u| + |(\tilde{\rho}, \tilde{u}, \tilde{\theta})|
   \Big]  \langle v-\e u \rangle^4 \sqrt{\mu}.
 \end{split} \Ee

 Finally we conclude that

  From the average lemma and the $L^6$-bound of (\ref{L6}), we are able to bound (\ref{nonlinear_1}) by 
  \Be
  \| P f_R \|_{L^\infty_t L^6_x}
  \Ee

 as 
\Be
 \sqrt{  \int^t_0 \delta \kappa^{-\frac{1}{2}} \| \mathbf{P} f_R \|_{L^4_{x,v}}^2
 }
 \sqrt{  \int^t_0 \| 
 \e^{-1} \kappa^{-\frac{1}{2}}
  (\mathbf{I }-\mathbf{P}) f_R\|_{\nu}^2}
\Ee

order to bound (\ref{nonlinear_1}) we consider a bound of $\mathbf{P}f_R$.


which implies the leading order term of $(v- \e u) \cdot \nabla_x    \mathbf{P} f_2$ of (\ref{H_NS}) contributes the pressure $p$ of (\ref{NS_k}).


\hide
We claim that 
\begin{align}
 \langle \varphi_\ell \sqrt{\mu},  (\ref{H_NS}) \rangle
   = \frac{1}{\delta} \{
  \p_t u_\ell  + u  \cdot \nabla_x u_\ell  -  \eta_0   \kappa \Delta_x u _\ell + \p_\ell p 
  \}
   + \frac{\e}{2\delta}\{
  - \tilde{u} \cdot \p_\ell u + \p_\ell u_\ell \tilde{u}_\ell
  \} 
    \ \ \text{for } \ell=1,2,3,
  \label{NS_ell}
 \\
  \langle  \varphi_\ell \sqrt{\mu},  (\ref{H_NS}) \rangle 
  =  \langle  \varphi_\ell \sqrt{\mu}, \frac{\kappa}{\delta} R_1\rangle 
  = \frac{\kappa \e }{\delta} O(|\nabla_x u|^2)
    \ \ \text{for } \ell=0,4
  . \label{Bous_ell}
\end{align}\unhide

Clearly 
\Be\label{est:f_2}
|f_2| \lesssim \{\kappa^{\frac{1}{2}} |\kappa^{\frac{1}{2}} \nabla u| + |(\tilde{\rho}, \tilde{u}, \tilde{\theta})|\} \langle v-\e u \rangle^2 \sqrt{\mu}.
\Ee

Now we consider the last term of (\ref{H_NS}). 
 Expand 
 \[
 (v-\e u )\cdot \nabla_x L^{-1} \left(  \sum_{i,j=1}^3 \varphi_i  \varphi_j  \sqrt{\mu}  - \frac{|v-\e u |^2}{3} \delta_{ij}   \sqrt{\mu} \right)\p_i u_j 
 =  \sum_{i,j,k=1}^3  \varphi_k  L^{-1}  
\Big(
 \varphi_i  \varphi_j  \sqrt{\mu} - \frac{|v-\e u |^2}{3} \delta_{ij}   \sqrt{\mu}
 \Big)
 \p_k \p_i u_{  j} + r_1,
 \]
where $r_1:=\sum_{i,j,k=1}^3  \varphi_k  L^{-1}  
 \p_k\Big(
 \varphi_i  \varphi_j  \sqrt{\mu} - \frac{|v-\e u |^2}{3} \delta_{ij}   \sqrt{\mu}
 \Big)
 \p_i u_{  j}$. Clearly $|r_1| \lesssim \e |\nabla_x u|^2 \langle v-\e u \rangle^4 \sqrt{\mu}$. For evenness/oddness $\langle \varphi_\ell \sqrt{\mu}, r_1 \rangle =0$ for all $\ell=1,2,3$.

  Note that $L^{-1}$ is symmetric under any orthonormal transformation, which implies that $L^{-1}$ preserves evenness/oddness. From this fact, for $\ell=0,4$, $(\ref{f_2:main})=0$ since $\varphi_\ell \varphi_k A_{ij}$ is always odd in at least one among $\varphi_1, \varphi_2, \varphi_3$ when $i,j,k  \in \{1,2,3\}.$ For $\ell=1,2,3$
 \begin{align}
(\ref{f_2:main})
%
 %
 =  \sum_{i ,  j,k=1}^3 \langle  \varphi_\ell  \varphi_k \sqrt{\mu},  A_{ij}
\rangle  \p_{k} \p_i u_{ j}
=   \sum_{i , j,k=1}^3 \left\langle 
 \varphi_\ell  \varphi_k \sqrt{\mu}- \frac{|v- \e u|^2}{3} \delta_{\ell k}
 \sqrt{\mu}
,  A_{ij}
\right\rangle  \p_{k} \p_i u_{ j} =  \sum_{i ,  j, k=1}^3 \left\langle 
L A_{\ell k}
,  A_{ij}
\right\rangle  \p_{k} \p_i u_{  j}. 
\label{u_xx}
 \end{align}

 \hide
 
  \begin{align*}
 &\langle |v |^2  \sqrt{\mu}, v\cdot \nabla_x L^{-1} (\mathbf{I} - \mathbf{P}) (v\cdot \nabla_x u_\kappa \cdot v \sqrt{\mu})\rangle \\
  =& 2 \sum_i \left\langle  v_i \frac{|v |^2-5}{2}  \sqrt{\mu},  \p_i L^{-1} (\mathbf{I} - \mathbf{P}) (v\cdot \nabla_x u_\kappa \cdot v \sqrt{\mu})\right\rangle \\
   =& 2 \sum_i \left\langle B_i,   (v\cdot \nabla_x  \p_iu_\kappa \cdot v \sqrt{\mu})\right\rangle\\
   =& 2 \sum_{i,j,k} \langle B_i, v_k v_j \sqrt{\mu} \rangle  \p_k \p_i u_{\kappa,j}\\
   =&0, 
 \end{align*}
where we have used that $B_i$ is odd in $v_i$. Hence we check (\ref{Bous_ell}) for $v\cdot \nabla_x L^{-1} (\mathbf{I} - \mathbf{P}) (v\cdot \nabla_x u_\kappa \cdot v \sqrt{\mu})$ contribution. \unhide

%
%
\hide
From $L^{-1} \Gamma (\mathbf{P}f, \mathbf{P}f)= \frac{( \mathbf{P} f)^2}{2\sqrt{\mu}}$, we derive that 
\begin{align}\notag
 L^{-1} \Gamma (u_\kappa \cdot v \sqrt{\mu}, u_\kappa \cdot v \sqrt{\mu})  
 = (\mathbf{I} - \mathbf{P}) ( \frac{1}{2 \sqrt{\mu}} \{u_\kappa \cdot v \sqrt{\mu}\}^2) 
 =  \frac{1}{2}(\mathbf{I} - \mathbf{P})
\left(  \sum_{i,j} u_{\kappa,i} u_{\kappa, j} v_i v_j \sqrt{\mu} \right) 
=  \frac{1}{2} 
  \sum_{i,j} u_{\kappa,i} u_{\kappa, j} \{v_i v_j - \frac{|v|^2}{3} \delta_{ij} \}\sqrt{\mu}  ,
\end{align}
where we have used   
 \[
 (\mathbf{I} - \mathbf{P})( v_i v_j \sqrt{\mu} ) 
=  \Big\{
v_iv_j - \frac{|v|^2}{3}  \delta_{ij}
\Big\} \sqrt{\mu}.
 \]

Clearly from oddness of the functions we derive that 
 \begin{align*}
 \left\langle
 \sqrt{\mu}, v\cdot \nabla_x L^{-1} (\Gamma (u_\kappa\cdot v \sqrt{\mu},u_\kappa\cdot v \sqrt{\mu} ))
 \right\rangle 
 = \left\langle
 \sqrt{\mu}, v\cdot \nabla_x   \frac{1}{2} 
\left(  \sum_{i,j} u_{\kappa,i} u_{\kappa, j} \{v_i v_j - \frac{|v|^2}{3} \delta_{ij} \} \sqrt{\mu} \right)\right\rangle 
=  0,\\
 \left\langle
|v|^2 \sqrt{\mu}, v\cdot \nabla_x L^{-1} (\Gamma (u_\kappa\cdot v \sqrt{\mu},u_\kappa\cdot v \sqrt{\mu} ))
 \right\rangle 
 = \left\langle
|v|^2  \sqrt{\mu}, v\cdot \nabla_x   \frac{1}{2} 
\left(  \sum_{i,j} u_{\kappa,i} u_{\kappa, j} \{v_i v_j - \frac{|v|^2}{3} \delta_{ij} \} \sqrt{\mu} \right)\right\rangle 
=  0,
 \end{align*}
and hence we check (\ref{Bous_ell}) for $v\cdot \nabla_x   \frac{1}{2} 
\left(  \sum_{i,j} u_{\kappa,i} u_{\kappa, j} \{v_i v_j - \frac{|v|^2}{3} \delta_{ij} \} \sqrt{\mu} \right) $ contribution.

Now we compute 
  \begin{align} 
 \left\langle
v_\ell \sqrt{\mu}, \sum_k v_k \p_k L^{-1} (\Gamma (u_\kappa\cdot v \sqrt{\mu},u_\kappa\cdot v \sqrt{\mu} ))
 \right\rangle  &= 
 \sum_k  \left\langle v_\ell v_k \sqrt{\mu}
,  
 (\mathbf{I} - \mathbf{P})
\left(  \sum_{i,j}  \p_k u_{\kappa,i} u_{\kappa, j} v_i v_j \sqrt{\mu} \right) 
 \right\rangle\notag \\
 &=  \sum_k  \left\langle
  \{v_\ell v_k  - \frac{|v|^2}{3}
  \delta_{\ell k}
  \}\sqrt{\mu}
,  
    \sum_{i,j}  \p_k u_{\kappa,i} u_{\kappa, j} \{v_i v_j - \frac{|v|^2}{3} \delta_{ij}
    \}
    \sqrt{\mu} 
 \right\rangle\notag \\
& = \sum_{i,j,k} 
 \left\langle
  \{v_\ell v_k  - \frac{|v|^2}{3}
  \delta_{\ell k}
  \}\sqrt{\mu}
,    \{v_i v_j - \frac{|v|^2}{3} \delta_{ij}
    \}
    \sqrt{\mu} 
 \right\rangle  \p_k u_{\kappa,i} u_{\kappa, j}
 \label{vLG}
 .
 \end{align}
From direct computations and $\int_{\R} \frac{e^{- {(v_1)^2}/{2}}}{(2\pi )^{1/2}} \dd v_1=1=\int_{\R}  (v_1)^2\frac{e^{- {(v_1)^2}/{2}}}{(2\pi )^{1/2}} \dd v_1,$ and $\int_{\R}  (v_1)^4\frac{e^{- {(v_1)^2}/{2}}}{(2\pi )^{1/2}} \dd v_1=3$, we have  
 \begin{align}\notag
 & \left\langle
  \{v_\ell v_k  - \frac{|v|^2}{3}
  \delta_{\ell k}
  \}\sqrt{\mu}
,    \{v_i v_j - \frac{|v|^2}{3} \delta_{ij}
    \}
    \sqrt{\mu} 
 \right\rangle = \frac{4}{3} \delta_{\ell k } \delta_{i j} \delta_{i\ell}   - \frac{2}{3} \delta_{\ell k} \delta_{ij} (1- \delta_{i \ell})
+  \delta_{\ell i} \delta_{k j} (1- \delta_{\ell k}) 
+   \delta_{\ell j} \delta_{ki}(1- \delta_{\ell k}) .
  \end{align}
Then we derive 
  \Be\begin{split}
(\ref{vLG}) &=
\frac{4}{3} \p_\ell u_{\kappa, \ell} u_{\kappa, \ell} - \frac{2}{3} \sum_{j (\neq \ell)} \p_ \ell u_{\kappa, j} u_{\kappa, j} + \sum_{j (\neq \ell)} \p_j u_{\kappa, \ell} u_{\kappa, j} 
+ u_\ell \sum_{i (\neq \ell) } \p_i u_{\kappa, i}
\\
&=(\frac{4}{3} -2 ) \p_\ell u_{\kappa, \ell} u_{\kappa, \ell}- \frac{2}{3} \sum_{j (\neq \ell)} \p_ \ell u_{\kappa, j} u_{\kappa, j}
+ \sum_{j  } \p_j u_{\kappa, \ell} u_{\kappa, j} 
+ u_\ell \sum_{i   } \p_i u_{\kappa, i}
\\
&= - \frac{2}{3} \sum_{j  } \p_ \ell u_{\kappa, j} u_{\kappa, j}
+ \sum_{j  } \p_j u_{\kappa, \ell} u_{\kappa, j} 
+ u_\ell \sum_{i   } \p_i u_{\kappa, i}
\\
&= - \frac{1}{3} \p_\ell|u_\kappa|^2
+  u_\kappa \cdot \nabla_x u_{\kappa,\ell}  +  (\nabla \cdot u_\kappa )u_{\kappa, \ell}.\label{vLG1}
  \end{split}\Ee\unhide
%
%
\hide From we end up with reduced equation 
 \Be
 \begin{split}
 \p_t f_R + \frac{1}{\e} v\cdot \nabla_x f_R+ \frac{1}{\e^2 \kappa } Lf_R 
 \\
 =
  \frac{  \delta }{\e\kappa}\Gamma(f_R,f_R)
 - \frac{\e }{\delta} \p_t f_2  - \frac{\kappa}{\delta} (\mathbf{I} - \mathbf{P}) ( v\cdot \nabla_x L^{-1} (v\cdot \nabla_x u_\kappa \cdot v \sqrt{\mu}) ) \\
 +
  \frac{1}{\delta\kappa} \Gamma(f_1,f_2) + \frac{  1}{\e\kappa} \Gamma(f_1,f_R) 
+    \frac{\e }{\delta\kappa} \Gamma({f_2}, f_2) +  \frac{ 1}{\kappa} \Gamma({f_2}, f_R).
 \end{split}
 \Ee\unhide
 Finally we note that 
\begin{align}
 (\mathbf{I} - \mathbf{P}) \big( (v-\e u) \cdot \nabla_x \mathbf{P} f_2\big)= \{O(|(  \nabla\tilde{\rho},\nabla \tilde{u},\nabla \tilde{\theta})|)  + \e O(|\nabla_x u|  |(\tilde{\rho}
, \tilde{u}, \tilde{\theta})| )\} \langle v-\e u \rangle^4  \mu^{\frac{1}{2}}, \label{r2}\\
 (\mathbf{I} - \mathbf{P}) \big( (v-\e u) \cdot \nabla_x  (\mathbf{I} - \mathbf{P})f_2\big)
= \{O(|\nabla^2 u|) + \e O(|\nabla u|^2)v\} \langle v-\e u \rangle^4 \mu^{\frac{1}{2}}.\label{r3}
\end{align}

\Be
\int_0^t\iint_{\O \times \R^3}\frac{\delta}{\e\kappa} \Gamma( f_R,(\mathbf{I} - \mathbf{P})f_R) (\mathbf{I} - \mathbf{P}) f_R.\label{nonlinear_2}
\Ee
 
  \hide
 
Clearly $\langle v_\ell \sqrt{\mu}, \p_t u_\kappa \cdot v \sqrt{\mu}\rangle = \p_t u_\kappa$. From $ \langle   \sqrt{\mu}, \p_t u_\kappa \cdot v \sqrt{\mu}\rangle=0= \langle  |v|^2 \sqrt{\mu}, \p_t u_\kappa \cdot v \sqrt{\mu}\rangle$, we check (\ref{Bous_ell}) for $\p_t u_\kappa$ contribution.

 Since $u_\kappa$ solves (\ref{NS_k}) we derive (\ref{H3}). Finally we derive an equation for $f_R$, by inserting (\ref{H1}), (\ref{f_2}), and (\ref{H3}) into (\ref{eqtn_f}), as 

 where $f_1, f_2, R_1, R_2$ are defined in (\ref{f_1}), (\ref{f_2}), (\ref{H3}) respectively. \unhide

 \hide

First we expand 
\Be\begin{split}\notag
&\p_t \mu_\e + \frac{1}{\e} {v}\cdot \nabla_x  \mu_\e\\
 =&  \e \p_t u_\kappa \cdot (v-\e u_\kappa)\mu_\e + (v\cdot \nabla_x u_\kappa) \cdot (v- \e u_\kappa) \mu_\e\\
 =&v\cdot \nabla_x u_\kappa \cdot v \mu_\e +  \e \{ \p_t u_\kappa \cdot v- v \cdot \nabla_x u_\kappa \cdot u_\kappa \} \mu_\e + \e^2 (-\p_t u_\kappa \cdot u_\kappa) \mu_\e 
\end{split}\Ee
We denote 
\begin{align}
\frac{\p_t \mu_\e + \frac{1}{\e} {v}\cdot \nabla_x  \mu_\e}{\sqrt{\mu_\e}} =&
\underbrace{v\cdot \nabla_x u_\kappa \cdot v \sqrt{\mu_\e}}_{=(I-\mathbf{P})\mathfrak{M}_1} +  \e \underbrace{ \{ \p_t u_\kappa \cdot v- v \cdot \nabla_x u_\kappa \cdot u_\kappa \} \sqrt{\mu_\e }+ \e  (-\p_t u_\kappa \cdot u_\kappa) \sqrt{\mu_\e}}_{\mathbf{P}\mathfrak{M}_2}
\notag
\\
 =& (I-\mathbf{P})\mathfrak{M}_1 + \e \mathbf{P}\mathfrak{M}_2\label{M1M2}
\end{align}

\hide\Be
n(x) \cdot \f{1}{\e}\int_{\R^3}
F_\e (t,x,v)  v   
 \dd v \rightarrow u^{(1)}(t,x).\label{hydro_limit}
\Ee
Define 
\Be
\mu_\e : = M_{1+ \e \rho, \e u, 1+ \e \theta }.
\Ee

Expanding by $\e$,
\Be
\Bs\label{mu_e}
\mu_\e = \mu_0+ \e \Big\{\rho + u \cdot v + \theta \frac{|v|^2- 3}{2}\Big\} \mu_0+ \e^2 \varphi_2\mu_0 + \e^{3} \mu_{\e, R} .
\end{split}
\Ee
where 
\Be
\mu_{\e, R}
:=\e^{-3}\int^\e_0 \frac{1}{2!} \frac{\p^3 \mu_\tau}{\p \tau^3} |\e - \tau|^2 \dd \tau 
=  O_{\rho,u, \theta}(\langle v\rangle^6 ).
\Ee

The solution $F_\e$ is found as a form of 
\Be
F_\e = \mu_\e
 + 
  \e^{2+  {\kappa} } F_{ 2} 
 + 
  \e^{3+  {\kappa} } F_{ 3} + \e^{1+ \f{3}{2} \kappa}F_{ R} .
\Ee
The equation of $F_R^\e$ is given by 
\begin{eqnarray*}
&&\p_t F_R+ \frac{1}{\e} v \cdot \nabla_x F_R   - \frac{1}{\e^{2+\kappa}} Q(\mu_\e, F_R)- \frac{1}{\e^{1- \frac{\kappa}{2}}} Q(F_R,F_R)
\label{F_R}
\\
&=&  \frac{\e^{-1}}{\e^{  \frac{3\kappa}{2}  }}\Big\{
- v\cdot \nabla_x \varphi_1
+ Q(\mu_\e, F_2)  
\Big\}\label{F_R1}\\
&+&   \frac{1}{\e^{  \frac{3\kappa}{2}  }}
\Big\{
-\p_t \varphi_1 - v\cdot \nabla_x \varphi_2 -\e^\kappa v\cdot \nabla_x F_2 + Q(\mu_\e, F_3)
\Big\}\label{F_R2}\\
&+&  \frac{\e}{\e^{\frac{3\kappa}{2}}}
 \Big\{- \p_t \varphi_2 - v\cdot \nabla_x \mu_R
\Big\}\label{F_R3}\\
&+& Q(F_2,F_R)+ \e Q(F_3,F_R)
\\
&+& \e^{1- \f{\kappa}{2}} Q(F_2,F_2) +  \e^{2- \f{\kappa}{2}} Q(F_2,F_3)-  \e^{1- \f{\kappa}{2}}\p_t F_2
\\
&+&  \e^{3- \frac{\kappa}{2}}Q(F_3,F_3)- \e^{2- \frac{\kappa}{2}} \p_t F_3
-  \e^{1- \frac{\kappa}{2}} v\cdot \nabla_x  F_3 - \e^{2- \frac{3\kappa}{2}} \p_t \mu_{  R}.\label{F_R4}
\end{eqnarray*}

\hide

 \Be\label{Hilbert_R}
 \Bs
 &{\color{red} \e^{\delta}} \p_t  F^\e_R + {\color{red} \e^{\delta}} \e^{-1} v\cdot \nabla_x F^\e_R
 - \e^{-2-\delta} \left\{Q(\mu_\e, {\color{red} \e^{\delta}}F^\e_R+ \e  F_2
 + \e^{2 } F_3
 )+ Q({\color{red} \e^{\delta}}F^\e_R+ \e F_2+ \e^{2 } F_3,\mu_\e)
 \right\}\\
 =& \ \e^{-1  } Q({\color{red} \e^{\delta}}F^\e_R+ \e  F_2 +  \e^{2 } F_3    , {\color{red} \e^{\delta}}F^\e_R+ \e  F_2+  \e^{2 } F_3 ) \\
 & - \e^{- 1-  \delta  } 
 [
 \p_t   + \e^{-1} v\cdot \nabla_x]
 \left\{
  \mu_\e + \e^{2+\delta}F_2 + \e^{3+ \delta} F_3
 \right\}
 .
 \end{split}
 \Ee
 Note that for $\delta>0$
 \Be\Bs\label{Hilbert_small}
 &- \e^{- 1-  {\delta} } 
 [
 \p_t   + \e^{-1} v\cdot \nabla_x]
 \left\{
   \e^{2+\delta}F_2 + \e^{3+ \delta} F_3
\right\}\\
=& -   v\cdot \nabla_x F_2- \e^{1 }\{\p_t F_2 +v\cdot \nabla_x F_3\}
- \e^{ 2 } \p_t F_3 
=   O(1).
\end{split}
\Ee
 \unhide
  \unhide

 \hide

Then 
\Be
\begin{split}\label{transp_mu}
\f{\p_t \mu_\e  + \f{1}{\e} v\cdot \nabla_x \mu_\e}{\mu_\e}= & \e \p_t \varphi^{(1)}_1 + \f{\e^2}{2} \p_t \varphi^{(2)}_1 - \e^2 \p_t \varphi^{(1)}_2\\
&+ v\cdot \nabla_x \varphi^{(1)}_1 + \f{\e}{2} v\cdot \nabla_x \varphi^{(2)}_1
 -\e \varphi_2^{(1,1)}- \f{\e^2}{2} \varphi_2^{(1,2)} + \e^2 \varphi_3 
+ \e^3 \varphi_R,
\end{split}
\Ee
where
\hide Using
$\f{1}{1+ \e a  } = 
\sum_{j=0}^{k-1}   ( \e a   )^j  + O(\e^k).$
 we have 
\begin{equation}\notag
\begin{split}
&\f{ [\p_t + \f{1}{\e} v\cdot \nabla_x] \mu_\e  }{\mu_\e}\\
=& \Big(\e \p_t \rho^{(1)} + \f{\e^2}{2} \p_t \rho^{(2)}\Big)
\Big\{1 -\Big(\e \p_t \rho^{(1)} + \f{\e^2}{2} \p_t \rho^{(2)}\Big) 
\Big\} \\
&+  \Big(\e \p_t \theta^{(1)} + \f{\e^2}{2} \p_t \theta^{(2)}\Big) \Big\{1 - \Big(\e \p_t \theta^{(1)} + \f{\e^2}{2} \p_t \theta^{(2)}\Big)\Big\} \frac{
\left|
\f{v- \e u^{(1)} -\f{\e^2}{2} u^{(2)}}{ \sqrt{1+ \e \theta^{(1)}+\f{\e^2}{2} \theta^{(2)} }}
\right|^2
-3}{2}
\\
&+O(\e^3)
\end{split}
\end{equation}

We denote the expansion of $\frac{ [\p_t + \f{1}{\e} v\cdot \nabla_x] \mu_\e  }{\mu_\e}$ in $\e$  
\Be
[\p_t + \f{1}{\e} v\cdot \nabla_x ]\mu_\e = v\cdot \nabla_x \varphi^{(1)} +   \e \p_t \varphi^{(1)}_1 \mu_\e+ \f{\e}{2} v\cdot \nabla_x \varphi^{(2)}_1
\Ee
where  

\unhide
\begin{equation}\notag
\begin{split}
\varphi^{(i)}_1: =
  &  \ 
 \rho^{(i)} + u^{(i)} \cdot v + \theta^{(i)} \f{|v|^2-3}{2} , \\
 \varphi^{(i,j)}_2: = &      \ 
  v\cdot 
\rho^{(i) } \nabla \rho^{(j)}  
+ v \cdot \nabla u^{(j)} \cdot u^{(i)}
+  v\cdot \theta^{(i)} \nabla u^{(j)} \cdot v 
\\
&  +  
\Big\{
2 \theta^{(i)} \f{|v|^2-3}{2} + u^{(i) } \cdot v + \f{3}{2} \theta^{(i)}
\Big\}
 \nabla \theta^{(j)} \cdot v,\\
 \varphi^{(1)}_2 : = &  
\f{(\rho^{(1)})^2}{2} + \f{|u^{(1)}|^2}{2} + \f{3 (\theta^{(1)})^2}{4}
+   (\theta^{(1)} u^{(1)}) \cdot v
+ 
  (\theta^{(1)})^2
\f{|v|^2-3}{2}\\
\varphi_3 : =  &     
 - \f{1}{2} \rho^{(2)} \nabla \rho^{(1)} \cdot v- \f{1}{2} u^{(2)} \cdot \nabla u^{(1)} \cdot v+ 
 \theta^{(1) } u^{(1)} \cdot \nabla_x u^{(1)} \cdot v
 - \f{1}{2}  v \cdot \theta^{(2)} \nabla  u^{(1)} \cdot v
 \\
   &+  
   \Big\{
   \f{|u^{(1)}|^2}{2} -\f{3}{4} \theta^{(2)} + 3 (\theta^{(1)})^2 + \big(2 \theta^{(1)} u^{(1)} - \f{1}{2} u^{(2)}\big)\cdot v
   - \big(
   (\theta^{(1)})^2 + \theta^{(2)}
   \big) \f{|v|^2-3}{2}
   \Big\} \nabla_x \theta^{(1)} \cdot v.
\end{split}
\end{equation}
\hide

\Be 
 \f{\f{1}{\e} v\cdot \nabla_x \mu_\e}{\mu_\e} 
 =  v\cdot \nabla_x \varphi^{(1)}_1 + \f{\e}{2} v\cdot \nabla_x \varphi^{(2)}_1
 -\e \varphi_2^{(1,1)}- \f{\e^2}{2} \varphi_2^{(1,2)} + \e^2 \varphi_3 
\hide    
 & - \e 
\Big\{
v \cdot \rho^{(1)}\nabla_x \rho^{(1)} + v\cdot \nabla_x u^{(1)} \cdot u^{(1)}
+ v\cdot \theta^{(1)} \nabla_x u^{(1)} \cdot v\Big\}
\\
& 
- \e  \Big\{
2 \theta^{(1)} \f{|v|^2-3}{2} + u^{(1)} \cdot v + \f{3}{2} \theta^{(1)}
\Big\} \nabla_x \theta^{(1)} \cdot v
\\
&  - \f{\e^2}{2} 
\Big\{ v\cdot 
\rho^{(1) } \nabla \rho^{(2)}  
+ v \cdot \nabla u^{(2)} \cdot u^{(1)}
+  v\cdot \theta^{(1)} \nabla u^{(2)} \cdot v\Big\}
\\
&  - \frac{\e^2}{2}
\{
2 \theta^{(1)} \f{|v|^2-3}{2} + u^{(1) } \cdot v + \f{3}{2} \theta^{(1)}
\}
 \nabla \theta^{(2)} \cdot v\\
 & + \e^2\Big\{
 - \f{1}{2} \rho^{(2)} \nabla \rho^{(1)} \cdot v- \f{1}{2} u^{(2)} \cdot \nabla u^{(1)} \cdot v+ 
 \theta^{(1) } u^{(1)} \cdot \nabla_x u^{(1)} \cdot v
 - \f{1}{2}  v \cdot \theta^{(2)} \nabla  u^{(1)} \cdot v
 \Big\}\\
   &+ \e^2 
   \Big\{
   \f{|u^{(1)}|^2}{2} -\f{3}{4} \theta^{(2)} + 3 (\theta^{(1)})^2 + \big(2 \theta^{(1)} u^{(1)} - \f{1}{2} u^{(2)}\big)\cdot v
   - \big(
   (\theta^{(1)})^2 + \theta^{(2)}
   \big) \f{|v|^2-3}{2}
   \Big\} \nabla_x \theta^{(1)} \cdot v\\  & \unhide
 + O(\e^3)
 \Ee

\unhide

\unhide

The equation of $f_R$ is given by 
\begin{eqnarray*}
\delta \e \sqrt{\mu_\e}\{
 \p_t f_R+ \frac{1}{\e} \munderbar{v}\cdot \nabla_x f_R \}  
 + \p_t \mu_\e + \frac{1}{\e} v\cdot \nabla_x \mu_\e 
 + \delta \e \frac{ \p_t \mu_\e + \frac{1}{\e} v\cdot \nabla_x \mu_\e}{2 \sqrt{\mu_\e}} f_R
 - \frac{2}{\kappa\e^{2 }} Q(\mu_\e , \delta \e \sqrt{\mu_\e} f_R)\\ =
  \frac{1}{\kappa \e^2
} Q( \delta \e \sqrt{\mu_\e} f_R, \delta \e \sqrt{\mu_\e} f_R).
 \end{eqnarray*}
 or equivalently 
 \Be
 \begin{split}
  \p_t f_R+ \frac{1}{\e} \munderbar{v}\cdot \nabla_x f_R - \frac{2}{\kappa\e^{2 }} \frac{1}{\sqrt{\mu_\e}} Q(\mu_\e ,  \sqrt{\mu_\e} f_R)\\ =
  \frac{\delta }{\kappa \e 
} 
 \frac{1}{\sqrt{\mu_\e}}
Q(  \sqrt{\mu_\e} f_R,  \sqrt{\mu_\e} f_R)
+ \frac{ \p_t \mu_\e + \frac{1}{\e} v\cdot \nabla_x \mu_\e }{\delta \e \sqrt{\mu_\e}} +  \frac{ \p_t \mu_\e + \frac{1}{\e} v\cdot \nabla_x \mu_\e}{2 \mu_\e } f_R
 \end{split}
 \Ee
 or 
  \Be\label{eqtn_f}
 \begin{split}
  \p_t f_R+ \frac{1}{\e}  v\cdot \nabla_x f_R - \frac{1}{\kappa\e^{2 }}
  L_\e f_R \\ =
  \frac{\delta }{\kappa \e 
} 
\Gamma_\e(    f_R,    f_R)
+
\frac{1}{\delta \e } (I-\mathbf{P})\mathfrak{M}_1 + \frac{1}{\delta  }  \mathbf{P}\mathfrak{M}_2
+  \frac{ \p_t \mu_\e + \frac{1}{\e} v\cdot \nabla_x \mu_\e}{2 \mu_\e } f_R
 \end{split}
 \Ee
 
 \hide

where we have used an expansion-in-$\e$, $
\p_t \mu_\e   + \f{1}{\e} \munderbar{v}\cdot \nabla_x \mu_\e =    \varphi_0  + \e \varphi_1 + \e^2 \varphi_2 + \e^3 \varphi_R.$ Here 
\Be\label{varphi_1}
\varphi_0 = \munderbar{v}\cdot \nabla_x \varphi^{(1)}_1  \mu_\e, \ \ \ 
\varphi_1 =  \p_t \varphi_1^{(1)}  \mu_\e- \varphi_2^{(1,1)} \mu_\e
  + \f{1}{2}\munderbar{v}\cdot \nabla_x   \varphi_1^{(2)} \mu_\e,
\Ee
where
\begin{equation}\notag
\begin{split}
\varphi^{(i)}_1: =
  &  \ 
 \rho^{(i)} + u^{(i)} \cdot v + \theta^{(i)} \f{|v|^2-3}{2} , \\
 \varphi^{(i,j)}_2: = &      \ 
  v\cdot 
\rho^{(i) } \nabla \rho^{(j)}  
+ v \cdot \nabla u^{(j)} \cdot u^{(i)}
+  v\cdot \theta^{(i)} \nabla u^{(j)} \cdot v 
 +  
\Big\{
2 \theta^{(i)} \f{|v|^2-3}{2} + u^{(i) } \cdot v + \f{3}{2} \theta^{(i)}
\Big\}
 \nabla \theta^{(j)} \cdot v.
\end{split}
\end{equation}

\color{red}{We can rewrite this as the following:} \color{black}{Let}
\begin{eqnarray}
\varphi_2 ^{(i)} = | u ^{(i)} + \theta^{(i)}v |^2 + \left ( \rho^{(i)} \right )^2 -\frac{3}{2}  \left ( \theta^{(i)} \right )^2, \\
\varphi_3 ^{(1,2)} = \frac{1}{2} \theta^{(1)} | u^{(1)} + \theta^{(1)} v |^2 - \frac{1}{2} (u^{(1)} + \theta^{(1)} v ) \cdot (u^{(2)} + \theta^{(2)}v ) \\
+\left \{ \frac{1}{3} \left ( \rho^{(1)} \right )^3 - \frac{1}{2} \rho^{(1)} \rho^{(2) } \right \} - \frac{3}{2} \left \{ \frac{1}{3} \left ( \theta^{(1)} \right )^3 - \frac{1}{2} \theta^{(1)} \rho^{(2) } \right \}
\end{eqnarray}
then
\begin{eqnarray}
\varphi_0 = \munderbar{v} \cdot \nabla_x \varphi_1 ^{(1)} \mu_\e, \\
\varphi_1 = \left \{\partial_t \varphi_1 ^{(1)} + \munderbar{v} \cdot \nabla_x \left ( \frac{\varphi_1 ^{(2) }- \varphi_2 ^{(1)} }{2} \right ) \right \} \mu_\e, \\
\varphi_2 = \left \{ \partial_t \left ( \frac{\varphi_1 ^{(2) }- \varphi_2 ^{(1)} }{2} \right ) + \munderbar{v} \cdot \nabla_x \varphi_3 ^{(1,2) } \right \} \mu_\e.
\end{eqnarray}

We set all (\ref{F_R1}), (\ref{F_R2}), and (\ref{F_R3}) to be vanished by solving
\begin{eqnarray}
   v\cdot \nabla_x \varphi_1^{(1)} \mu_\e
&\!\!\! =\!\! \! &2Q(\mu_\e, F_2)  
   ,
 \label{H1=0}
 \\
 \p_t \varphi_1^{(1)}  \mu_\e- \varphi_2^{(1,1)} \mu_\e
  + \f{1}{2}v\cdot \nabla_x   \varphi_1^{(2)} \mu_\e 
&\!\! \!=\!\!\!  & 2 Q(\mu_\e,  F_3)
 \label{H2=0},\\
\varphi_2 +\f{\delta}{\e}v\cdot \nabla_x F_2
&\!\! \!=\!\! \! & 2Q(\mu_\e, F_4)    .\label{H3=0}
\end{eqnarray}
By the Fredholm alternative the solvability conditions of (\ref{H1=0}), (\ref{H2=0}), and (\ref{H3=0}) are that the left hand side of each equations are entirely included in $\left (\text{Ker} L \right )^{\color{red}{\perp}}$. (See the information on $L$ and its properties in (\ref{L})-(\ref{P})) In particular we solve
\Bes
&\mathbf{P} ( v\cdot \nabla_x \varphi_1^{(1)}\mu_\e)=0,&\\
&\mathbf{P} ( \p_t \varphi_1^{(1)}  \mu_\e- \varphi_2^{(1,1)} \mu_\e)=0,\ \ \ \mathbf{P} ( v\cdot \nabla_x   \varphi_1^{(2)} \mu_\e ) =0,&\\
&\mathbf{P} (   \varphi_2
 +2\f{\delta}{\e} v\cdot \nabla_x F_2)=0.&
\Ees

From straightforward computations 
\Be
\int_{\R^3} \mu_\e \begin{bmatrix}1 \\ v \\ \frac{|v|^2-3}{2}
\end{bmatrix}  \dd v = \begin{bmatrix}
1+ \e \rho^{(1)} + \f{\e^2}{2} \rho^{(2)}  \\
 (1+ \e \rho^{(1)} + \f{\e^2}{2} \rho^{(2)} ) (\e u^{(1)} + \f{\e^2}{2} u^{(2)})  \\
\frac{  |\e u^{(1)  } + \f{\e^2}{2} u^{(2)}|^2 -3}{2}(1+ \e \rho^{(1)} + \f{\e^2}{2} \rho^{(2)})
+   \frac{ 3}{ 2   }   (1+ \e \theta^{(1)} +  \f{\e^2}{2} \theta^{(2)} ) (1+ \e \rho^{(1)} + \f{\e^2}{2} \rho^{(2)}) 
\end{bmatrix},
\Ee
and 
\Be
\Bs
&\int_{\R^3} \mu_\e v_i \begin{bmatrix}1 \\ v_j \\ \frac{|v|^2-3}{2}
\end{bmatrix}  \dd v\\
 =& \begin{bmatrix}
  (1+ \e \rho^{(1)}+ \f{\e^2}{2} \rho^{(2)}) (\e u^{(1)}_i + \f{\e^2}{2} u^{(2)}_i)
 \\
 \delta_{ij} (1+ \e \theta^{(1)} + \f{\e^2}{2} \theta^{(2)} ) (1+ \e \rho^{(1)} + \f{\e^2}{2} \rho^{(2)}) +   (1+ \e \rho^{(1)} + \f{\e^2}{2} \rho^{(2)}) (\e u_i^{(1)} + \f{\e^2}{2} u_i^{(2)} )(\e u_j^{(1)} + \f{\e^2}{2} u_j^{(2)} )
 \\ 
 5 (1+ \e \rho^{(1)} + \f{\e^2}{2} \rho^{(2)}) (1+ \e \theta^{(1)} + \f{\e^2}{2} \theta^{(2)})  (\e u_i^{(1)}  + \f{\e^2}{2} u_i^{(2)}) 
 - \frac{3}{2} (1+ \e \rho^{(1)} + \f{\e^2}{2} \rho^{(2)} )  (\e u_i^{(1)} + \f{\e^2}{2} u_i ^{(2)})
\end{bmatrix}\\
&+ \begin{bmatrix}
0 \\
0 \\
   (1+ \e \rho^{(1)} + \f{\e^2}{2} \rho^{(2)}) |\e u^{(1)} + \f{\e^2}{2} u^{(2)}|^2
 ( \e u_i^{(1)} + \f{\e^2}{2} u_i^{(2)} )
\end{bmatrix}
\end{split}\Ee

Then one can easily find set of equations which is equivalent to as  
\Be\label{Bousn}
\nabla_x \cdot u^{(i)} =0, \ \ \nabla_x (\rho^{(i)} + \theta^{(i)})=0,\ \ \text{for} \ i=1,2,
\Ee
and  
 \Be\label{Euler1}
 \f{\p}{\p t} \begin{bmatrix}
\rho^{(1)}\\
u^{(1)}\\
 \theta^{(1)}
\end{bmatrix}
+ \begin{bmatrix}
u^{(1)} \cdot \nabla_x \rho^{(1)}\\
u^{(1)} \cdot \nabla_x u^{(1)} +\nabla_x p^{(1)}\\
-  u^{(1)} \cdot \nabla_x \rho^{(1)}
\end{bmatrix}=0,
\Ee
and
\Be\label{Euler2}
\f{\p}{\p t} 
\begin{bmatrix}
\rho^{(2)} \\
u^{(2)} + 2 \rho^{(1)} u^{(1)} \\
\theta^{(2)} + 2 \theta^{(1)} \rho^{(1)} + \f{2}{3} |u^{(1)}|^2
\end{bmatrix}
+  
\begin{bmatrix}
u^{(2)} \cdot \nabla_x \rho^{(1)} + u^{(1)} \cdot \nabla_x \rho^{(2)}\\
u^{(1) } \cdot \nabla_x u^{(2)} + u^{(2)} \cdot \nabla_x u^{(1)} + 2 u^{(1)} \cdot \nabla_x  (\rho^{(1)} u^{(1)})
+\nabla_x p^{(2)}\\
- u^{(2)} \cdot \nabla_x  \rho^{(1)} 
- u^{(1)} \cdot \nabla_x  \rho^{(2)}  
+ \f{4}{3} u^{(1)} \cdot \nabla_x |u^{(1)}|^2
\end{bmatrix}=0,
\Ee
where $p^{(1)}:= \frac{1}{2} \rho^{(2)} + \f{1}{2} \theta^{(2)} + \theta^{(1)} \rho^{(1)}$ and $p^{(2)}
:=\theta^{(1)} \rho^{(2)} + \theta^{(2)} \rho^{(1)}
$.  

Note that (\ref{Euler2}) is a linear system. It is a direct application of the energy estimate that the classical solution of (\ref{Euler2}) exists as long as the classical solution of (\ref{Euler}) persists {\color{red}(Check!!)}. Finally by the Fredholm alternative we have unique solutions $F_2,F_3,F_4 \in (\text{Ker} L)^\perp$ to (\ref{H1=0}), (\ref{H2=0}), (\ref{H3=0}) respectively.

\unhide
 \hide

\Bes
|(\mathbf{I} - \mathbf{P}) F_2| \lesssim 1,\\
|\varphi^{(2)}_1| \lesssim \f{\delta}{\e},\\
|(\mathbf{I} - \mathbf{P}) F_4| \lesssim  \f{\delta}{\e},\\
|\varphi^{(1)}_1| \lesssim1,\\
|(\mathbf{I} - \mathbf{P}) F_3| \lesssim1.
\Ees

Focus on (\ref{H3=0}). For the solvability 
\Be
\mathbf{P} \big( \p_t ( \varphi_2^{(1)} + \varphi_2^{(2)} ) +
  v\cdot \nabla_x ( \varphi_3^{(2)}+\varphi^R_3) +\e^{\kappa-1} v\cdot \nabla_x F_2\big) =0,
\Ee
which is equivalent to 
\Be
 \mathbf{P} \p_t\varphi_2^{(2)} + \nabla_x \cdot (\mathbf{P} v \varphi^{(2)}_3)
=- \mathbf{P} \p_t \varphi_2^{(1)} - \nabla_x \cdot (\mathbf{P} v \varphi^{R}_3)- \e^{\kappa-1} \nabla_x \cdot (\mathbf{P} v F_2).
\Ee

Let us choose that $F_2, F_3 \in N^\perp$ so that 
\Be\begin{split}
&\iint_{\O \times \R^3} F^\e(t) \begin{bmatrix}
1 \\ 
|v|^2
\end{bmatrix} \dd v \dd x\\
 = & 
\iint_{\O \times \R^3} F^\e(0) \begin{bmatrix}
1 \\ 
|v|^2
\end{bmatrix} \dd v \dd x \\
=& \iint_{\O \times \R^3} \mu^\e \begin{bmatrix}
1 \\ 
|v|^2
\end{bmatrix} \dd v \dd x + \e^{1+ \frac{\delta}{2}}\iint_{\O \times \R^3}F^\e_R \begin{bmatrix}
1 \\ 
|v|^2
\end{bmatrix} \dd v \dd x \\
= & \begin{bmatrix}
1+ \e \rho
\\
(1+ \e \rho) \e |u|^2 + 3 (1+ \e \rho) (1+ \e \theta)
\end{bmatrix}+ \e^{1+ \frac{\delta}{2}}\iint_{\O \times \R^3}F^\e_R \begin{bmatrix}
1 \\ 
|v|^2
\end{bmatrix} \dd v \dd x.
\end{split}\Ee
We set 
\Be
\iint_{\O \times \R^3}F^\e_R \begin{bmatrix}
1 \\ 
|v|^2
\end{bmatrix} \dd v \dd x= \begin{bmatrix}
0\\ 
0
\end{bmatrix}.
\Ee

The solvability of (\ref{H1}) is guaranteed if (Fredholm) 
\Be
\int_{\R^3} v\cdot \nabla_x  \Big\{\rho + u \cdot v + \theta \frac{|v|^2- 3}{2}\Big\} \mu_0\begin{bmatrix}
 1 \\
 v\\
 \frac{|v|^2- 3}{2}
\end{bmatrix}
\dd v = 0 ,
\Ee
which can be read as 
\Be\label{incompress_Bousnq}
\nabla_x \cdot u =0, \ \ \nabla_x (\rho+ \theta ) =0. 
\Ee

To solve (\ref{H2}) we need 
\Be\label{Fredholm}
\Bs
0= &\int_{\R^3} \left\{\p_t \Big\{\rho + u \cdot v + \theta \frac{|v|^2- 3}{2}\Big\}\mu_0
 + v\cdot \nabla_x \varphi_2 \mu_0 
\right\} \begin{bmatrix}
 1 \\
 v\\
 \frac{|v|^2- 3}{2}
\end{bmatrix}
 \dd v\\
 =&\p_t  \int_{\R^3}\Big\{\rho + u \cdot v + \theta \frac{|v|^2- 3}{2}\Big\}\mu_0 \begin{bmatrix}
 1 \\
 v\\
 \frac{|v|^2- 3}{2}
\end{bmatrix}
 \dd v
 + \nabla_x \cdot \int_{\R^3}  \varphi_2 v \mu_0  \begin{bmatrix}
 1 \\
 v\\
 \frac{|v|^2- 3}{2}
\end{bmatrix}
 \dd v   \\
 = & \p_t \int_{\R^3}   \begin{bmatrix}
 1 \\
 v\\
 \frac{|v|^2- 3}{2}
\end{bmatrix}\f{\p \mu_\e}{\p \e}\Big|_{\e=0} \dd v
+ \nabla_x \cdot \int_{\R^3} v  \begin{bmatrix}
 1 \\
 v\\
 \frac{|v|^2- 3}{2}
\end{bmatrix} \frac{1}{2} \f{\p^2 \mu_\e}{\p \e^2}\Big|_{\e=0}
\dd v 
\end{split}\Ee
where we have used (\ref{mu_e}) at the last line.

 Therefore the condition of (Fredholm) is equivalent to 
 \Be\label{Euler_1}\Bs
\p_t \begin{bmatrix}
\rho \\
u \\
\frac{3}{2} \theta
 \end{bmatrix}
 +  \begin{bmatrix}
 \nabla_x \cdot \{\rho u \}\\
u\cdot \nabla_x u  + \nabla_x \{\theta \rho\}\\
\nabla_x \cdot \{5 (\rho+ \theta) u - \frac{3}{2} \rho u\}
 \end{bmatrix}=0.
 \end{split} 
 \Ee
From (\ref{incompress_Bousnq}) and the first and the last row of (\ref{Euler_1}) we derive $\rho = - \theta +C$ for some constant $C$ and
\Be\label{Euler_theta}
\p_t \theta + u \cdot \nabla_x \theta =0.
\Ee  
From the second to fourth row of (\ref{Euler_1}), 
\Be
\label{Euler_u}
\p_t u + u \cdot \nabla_x u + \nabla_x p =0,
\Ee
where $p := \theta \rho = \theta(- \theta + C)$.

Now we can read (\ref{Hilbert_R}), using (\ref{Hilbert_small}), (\ref{H1}), and (\ref{H2}),
\Be\label{eqtn_R}
\Bs
&\p_t F^\e_R + \e^{-1} v\cdot \nabla_x F^\e_R
- \e^{-2-\delta} \left\{Q(\mu_\e, F^\e_R )+ Q(F^\e_R ,\mu_\e)
\right\}\\
=& \ \e^{-1  } Q(F^\e_R , F^\e_R ) - \e^{- 1-  {\delta} } 
[
\p_t   + \e^{-1} v\cdot \nabla_x] 
\{ \e^{2+ \delta} F_2 + \e^{3+ \delta} F_3\}
- \e^{ 1- {\delta} } \{  \p_t \varphi_2 \mu_0 + \e  \p_t \mu^\e_R\}
- \e^{1 -  {\delta} } 
  v\cdot \nabla_x \mu^\e_R
\\
&+ \{
Q(F_R^\e, F_2) + Q(F_2, F^\e_R)
\}+ \e \{ Q(F_2,F_2) 
+ Q(F^\e_R, F_3) + Q(F_3, F^\e_R)
\}
+ \e^2 \{ 
Q(F_2, F_3) + Q(F_3,F_2)
\}
+ \e^3Q(F_3,F_3)
.
\end{split}
\Ee
\unhide

\section{Perturbation background
}
A linearized operator with $\mu_\e$ is 
 \Be\label{L}
 L_\e f = \frac{1}{\sqrt{ \mu_\e}} Q(\mu_\e, \sqrt{\mu_\e} f). 
 \Ee
From (\ref{collision_inv}) the kernel of $L$ has five orthonormal basis $\text{Ker} L= \langle \{\varphi_i \sqrt{\mu_\e}\}_{i=1}^5
 \rangle_{L^2_v(\R^3)}$ with 
 \Be\label{basis}
\varphi_0 := \frac{1}{\sqrt{R}},   \  \ \ \varphi_i: = \frac{1}{\sqrt{R}} \frac{v_i -U }{\sqrt{T}}
 \ \ \text{for} \ i=1,2,3 
,   \  \ \ \varphi_4: =  \frac{1}{ \sqrt{R}}
 \frac{ \big|\f{v-U}{\sqrt{T}}\big|^2-3}{\sqrt{6}}.
\Ee

We denote an $L^2_v$-projection $\mathbf{P}$ on $\text{Ker} L$ such as 
\Be\label{P}
\mathbf{P} g:= \sum_{j=0}^4  ( {P}_j g) \varphi_j \sqrt{\mu_\e}, \ \  
P g:= (P_0 g, P_1 g, P_2 g, P_3 g, P_4 g ), \ \ 
 {P}_j g:= \int_{\R^3} g \varphi_j \sqrt{\mu_\e} \dd v \ \ \text{for } j=0,1, \cdots,4.
\Ee

\hide

From the previous section we derive an equation of $f_R$ as
\Be\begin{split}\label{eqtn_f}
 &\p_t f_R+ \frac{1}{\e} v \cdot \nabla_xf_R   - \frac{1}{\delta\e^{2 }} Lf_R- \frac{\delta^{\f{1}{2}}}{\e
} \Gamma(f_R,f_R)  + \f{1}{2} \f{[\p_t + \f{1}{\e}v\cdot\nabla_x ] \mu_\e}{\mu_\e} f_R
\\
 =&   -   \f{\e}{\delta^{\f{1}{2}}} \f{1}{\sqrt{\mu_\e}}\p_t (F_2+ \e F_3 + \e^2 F_4) 
- 
\f{\e }{\delta^{\f{1}{2}}}\f{1}{\sqrt{\mu_\e}}
v\cdot \nabla_x  (    F_3 + \e  F_4)   \\
&+ \f{2\e}{\sqrt{\mu_\e}}  Q(F_2 + \e F_3 + \e^2 F_4, \sqrt{\mu_\e}f_R)  +\f{\e^2 }{\delta^{\f{1}{2}}} \f{1}{\sqrt{\mu_\e}}Q(F_2+ \e F_3 + \e^2 F_4,F_2+ \e F_3 + \e^2 F_4)  
 - 
 \f{\e^2}{\delta^{\f{3}{2}}}\f{\varphi_R }{\sqrt{\mu_\e}} .
\end{split}\Ee


It is well-known that {\color{red}(this has to be checked!!)}
\Be
|F_i (t,x,v)| \lesssim |v|^5 \mu_\e.
\Ee

\unhide

\hide
The boundary term from energy estimate is 
\Bes
&&\int^t_0
\frac{1}{\e}\iint_{n(x) \cdot v>0}
|F_R(t,x,v)|^2
(n(x) \cdot v)
\dd S_x \dd v
 \dd s \\
 &+& 
\f{1}{\e} \int^t_0
\iint_{n(x) \cdot v<0}
|F_R(t,x,R_x v) + \e^{1/2} r(t,x,v) \sqrt{\mu_\e}|^2
(n(x) \cdot v)
\dd S_x \dd v
 \dd s\\
 &=&   \int^t_0
\iint_{n(x) \cdot v<0} | r(t,x,v)|^2  \mu_\e 
(n(x) \cdot v)
\dd S_x \dd v
 \dd s\\
 &+&  \int^t_0
\iint_{n(x) \cdot v<0}
 \f{2}{\e^{1/2}} F_R(t,x,R_x v)    r(t,x,v) \sqrt{\mu_\e} 
(n(x) \cdot v)
\dd S_x \dd v
 \dd s\\
 &\lesssim&
 \int^t_0
\iint_{n(x) \cdot v<0} | r(t,x,v)|^2  \mu_\e  
\dd S_x \dd v
 \dd s\\
 &+& \frac{o(1)}{\e} \int^t_0\iint_{n(x) \cdot v>0} |F_R(t,x,v)|^2\sqrt{\mu_\e} |n(x) \cdot v|^2 \dd S_x \dd v \dd s  
 +  \int^t_0 \| r(s)\|_\infty \dd s^2.
\Ees

Trace theorem says 
\Bes
&&\int^t_0 \int_{\gamma_+} |f(s,x,v)|  |n(x) \cdot v| \dd  \gamma \dd s\\
&\lesssim& \iint_{\O \times \R^3} |f(0,x,v)| \dd v \dd x + \int^t_0 \iint_{\O \times \R^3} |f(s,x,v)| \dd v \dd x \dd s\\
&+& \int^t_0 \iint_{\O \times \R^3} \big| \p_t f(s,x,v) + \frac{1}{\e} v\cdot \nabla_x f(s,x,v)\big| \dd v \dd x \dd s
\Ees
\begin{proof}
From 
\Bes
&&\f{d}{d\tau} f(t+\tau,X(t+ \frac{\tau}{\e};t,x,v), V(t+ \frac{\tau}{\e};t,x,v))\\
&=& [\p_t + \e^{-1} v\cdot \nabla_x] f(t+\tau,X(t+ \frac{\tau}{\e};t,x,v), V(t+ \frac{\tau}{\e};t,x,v)).
\Ees
Integrating along $\tau \in [s,   0]$ for $-\e \tb(t,x,v) \leq s \leq 0$
\Bes
&&\mathbf{1}_{t>\e \tb(t, x,v)}f(t,x,v)\\
&=&\mathbf{1}_{t>\e \tb(t, x,v)}  f(t+s,X(t+ \frac{s}{\e};t,x,v), V(t+ \frac{s}{\e};t,x,v))\\
&+& \mathbf{1}_{t>\e \tb(t, x,v)}  \int^0_{s} [\p_t + \e^{-1} v\cdot \nabla_x] f(t+\tau,X(t+ \frac{\tau}{\e};t,x,v), V(t+ \frac{\tau}{\e};t,x,v)) \dd \tau .
\Ees
Integrating over $s \in [- \e \tb(t,x,v),0]$
\Bes
&&\e \tb(t,x,v) \mathbf{1}_{t>\e \tb(t, x,v)}f(t,x,v)\\
&=& \int^0_{- \e \tb(t,x,v) }\mathbf{1}_{t>\e \tb(t, x,v)}  f(t+s,X(t+ \frac{s}{\e};t,x,v), V(t+ \frac{s}{\e};t,x,v)) \dd s \\
&+ &\int^0_{- \e \tb(t,x,v) }\mathbf{1}_{t>\e \tb(t, x,v)}  \int^0_{s} [\p_t + \e^{-1} v\cdot \nabla_x] f(t+\tau,X(t+ \frac{\tau}{\e};t,x,v), V(t+ \frac{\tau}{\e};t,x,v)) \dd \tau
\dd s\\
&\leq &\int^0_{- \e \tb(t,x,v) }\mathbf{1}_{t>\e \tb(t, x,v)}  f(t+s,X(t+ \frac{s}{\e};t,x,v), V(t+ \frac{s}{\e};t,x,v)) \dd s \\
&+ & \e \tb(t,x,v)   \mathbf{1}_{t>\e \tb(t, x,v)}  \int^0_{-\e \tb(t,x,v)} [\p_t + \e^{-1} v\cdot \nabla_x] f(t+\tau,X(t+ \frac{\tau}{\e};t,x,v), V(t+ \frac{\tau}{\e};t,x,v)) \dd \tau 
\Ees
From $\tb(t,y,u) \gtrsim \frac{|n(y) \cdot u|}{|u|^2}$ we conclude that 
\Bes
&&  \mathbf{1}_{t>\e \tb(t, x,v)}|n(x) \cdot v| \mu_0^{1/2} |f(t,x,v)|\\
&\leq & \frac{ \mathbf{1}_{t>\e \tb(t, x,v)} |n(x) \cdot v| \mu_0^{1/2} }{\e \tb(t,x,v)}\int^0_{- \e \tb(t,x,v) } | f(t+s,X(t+ \frac{s}{\e};t,x,v), V(t+ \frac{s}{\e};t,x,v)) |\dd s\\
&+& \mathbf{1}_{t>\e \tb(t, x,v)} |n(x) \cdot v| \mu_0^{1/2}  \int^0_{-\e \tb(t,x,v)}\big| [\p_t + \e^{-1} v\cdot \nabla_x] f(t+\tau,X(t+ \frac{\tau}{\e};t,x,v), V(t+ \frac{\tau}{\e};t,x,v)) \big|\dd \tau \\
&\lesssim&  \frac{ \mathbf{1}_{t>\e \tb(t, x,v)} |v|^2 \mu_0^{1/2} }{\e  }\int^0_{- \e \tb(t,x,v) } | f(t+s,X(t+ \frac{s}{\e};t,x,v), V(t+ \frac{s}{\e};t,x,v)) |\dd s\\
&+& \mathbf{1}_{t>\e \tb(t, x,v)} |n(x) \cdot v| \mu_0^{1/2}  \int^0_{-\e \tb(t,x,v)}\big| [\p_t + \e^{-1} v\cdot \nabla_x] f(t+\tau,X(t+ \frac{\tau}{\e};t,x,v), V(t+ \frac{\tau}{\e};t,x,v)) \big|\dd \tau .
\Ees

From $\tb(t,y,u) \gtrsim \frac{|n(y) \cdot u|}{|u|^2}$
\Bes
&&\mathbf{1}_{t>\e \tb( y,u)}
\e |n(y) \cdot u|\sqrt{\mu_0} |f(t,y,u)|\\
&\lesssim& \mathbf{1}_{t>\e \tb( y,u)}
\frac{  |n(y) \cdot u| \sqrt{\mu_0}}{  \tb(y,-u) } \int^0_{ - \e \tb(y,-u) } |f(t+s, X(\e^{-1}s;0,y,u))| \dd s\\
&+& \mathbf{1}_{t>\e \tb( y,u)} \frac{  |n(y) \cdot u| \sqrt{\mu_0}}{  \tb(y,-u) } 
\int^0_{ - \e \tb(y,-u) }
\int^t_s |[\p_t + \frac{1}{\e} v\cdot \nabla_x] f(t+\tau, X(\e^{-1} \tau;0,y,u), V(\e^{-1} \tau;0,y,u))| \dd \tau \dd s\\
&\lesssim&
 |u|^2\sqrt{\mu_0 (u)} \int^0_{ - \e \tb(y,-u) } |f(t+s, X(\e^{-1}s;0,y,u))| \dd s\\
&+& |u|^2\sqrt{\mu_0 (u)} 
\int^0_{ - \e \tb(y,-u) }
\int^t_s |[\p_t + \frac{1}{\e} v\cdot \nabla_x] f(t+\tau, X(\e^{-1} \tau;0,y,u), V(\e^{-1} \tau;0,y,u))| \dd \tau \dd s.
\Ees

If 
\Bes
&&\mathbf{1}_{t< \e \tb(y,u)} |f(t,y,u)|\\
&\leq& |f(0,X(\e^{-1}t;0,y,u), V(\e^{-1}t;0,y,u))|\\
&+& \int^0_{-t} |[\p_t + \e^{-1} v\cdot \nabla_x]f(t+s, X(\e^{-1}t;0,y,u), V(\e^{-1}t;0,y,u))| \dd s
\Ees
\end{proof}

By the trace theorem,
\Bes
&& \frac{o(1)}{\e} \int^t_0\iint_{n(x) \cdot v>0} |F_R(t,x,v)|^2\sqrt{\mu_\e} |n(x) \cdot v|^2 \dd S_x \dd v \dd s  \\
&\lesssim&\frac{o(1)}{\e} \int^t_0\iint - \frac{1}{\e^{2+\kappa}}Q(\mu_\e, F_R) 
F_R
\dd   x \dd v \dd s 
\Ees
\unhide

\unhide


\unhide

 \section{A priori estimates for $f_R$}\label{sec:B}
For each $\e>0$ an existence of a unique 
 solution in a time interval $[0,\infty)$ can be found in \cite{EGKM2}. Thereby we only focus on a priori estimates of $f_R$ in different spaces. For the sake of simplicity at times we will use simplified notations 
\Be\label{short_notation}
\| g(t,x,v) \|_{L^{p_1}_t L^{p_2}_x L^{p_3}_v}: = 
\Big\|  
\big\|
\|
g(t,x,v)\|_{L^{p_3}_v(\R^3)}
\big\|_{L^{p_2}_x(\O)}
\Big\|_{L^{p_1}_t ([0,T])}, \ \ \| g \|_{L^p_{t,x,v}} := \| g \|_{L^p_t L^p_x L^p_v}. 
\Ee 
Recall the boundary integral and the norms in (\ref{bdry_int}). Also recall $\mathfrak{w}=\mathfrak{w}_{\varrho, \ss}(x,v)$ in (\ref{weight}) and  $\mathfrak{w}'=\mathfrak{w}_{\varrho', \ss}(x,v) $ for $0<\varrho'<\varrho$.

\hide
  For the rest of terms, we only need to consider \bcb commutator estimates for \ec 
 $\p_t \Gamma(f_R,f_R) - 2 \Gamma(f_R, \p_t f_R),$ and $\p_t \Gamma(f_2, f_R) - \Gamma(f_2, \p_t f_R)- \Gamma( \p_t f_2,  f_R)$. \bcb  
 \hide
We derive that 
\Be\label{dec:L_t}
\p_t (Lf_R) =  \p_t (L( \mathbf{I}-\mathbf{P})f_R) =L ( \mathbf{I}-\mathbf{P})\p_tf_R + L_t( \mathbf{I}-\mathbf{P})f_R + L (-\mathbf{P}_t f_R)=
L\p_tf_R + L_t( \mathbf{I}-\mathbf{P})f_R - L( \mathbf{P}_t f_R),
\Ee
where 
\Be
\begin{split}\label{def:L_t}
L_t g (t,v):= - \e \p_t u \cdot \nabla_v \nu_0 (v-\e u ) g(t, v)
+ \e \p_t u \cdot  \int_{\R^3} \{\nabla_v \mathbf{k}_0(v-\e u, v_* - \e u) + \nabla_{v_*} \mathbf{k}_0(v-\e u, v_* - \e u) \}  g(t,v_*)\dd v_*,
\\
\mathbf{P}_t g:=- \e \sum _{j=0}^4
  ( {P}_j g) \p_tu \cdot \nabla_v( \varphi_j \sqrt{\mu })
  -\e  \sum _{j=0}^4
  \langle g ,    \p_t  u \cdot  \nabla_v (\varphi_j \sqrt{\mu }) \rangle \varphi_j \sqrt{\mu },
\end{split}
\Ee
\bcb From (\ref{est:k}) and (\ref{est:int_k}) \ec it is easy to check that, for any $0< \varrho < 1/4$,  
\begin{align*} 
|\nabla_v \nu_0 (v)| = \Big| \iint_{\R^3 \times \S^2}   \mathfrak{u}  \frac{(v-v_*) \cdot \mathfrak{u}}{|(v-v_*) \cdot \mathfrak{u}|}\mu_0 (v_*) \dd \mathfrak{u} \dd v_* \Big|\lesssim 1,\\  
|\nabla_v \mathbf{k}_0 (v,v_*)| +  |\nabla_{v_*} \mathbf{k}_0 (v,v_*)|  \lesssim 
\Big(|v-v_*|^{-1}+
 \nu_0 (v)^2 |v-v_*| 
\Big)
k_{\varrho/2} (v,v_*).
\end{align*}

Upon the change of variables $\tilde{v}_*:= v- v_*$, the exponent can be expanded as 
\Be
\begin{split}
- \vartheta |\tilde{v}_*|^2 - \vartheta \frac{\big||\tilde{v}_*|^2 - 2 v \cdot\tilde{v}_* \big|^2}{|\tilde{v}_*|^2}
-  \big\{\varrho|v-\tilde{v}_*|^2+ \varrho^\prime |v-\tilde{v}_*| a\cdot (v-\tilde{v}_*)
 - \varrho|v|^2-   \varrho^\prime |v| a\cdot v\big\}
\end{split}
\Ee \unhide


First two differences are bounded as in (\ref{est_infty:L_t}). For the last difference we follow the proof of (\ref{est_infty:L_t}) to have  
\begin{align}
|\p_t \Gamma(f_2, f_R)(t,v) - \Gamma(f_2, \p_t f_R)(t,v)- \Gamma( \p_t f_2,  f_R)(t,v) | \lesssim 
\e |\p_t u| e^{C \e^2 |u|^2} \nu(v)^2 e^{- \varrho |v-\e u|^2} 
\{
|p| + |\tilde{u}|  + \kappa |\nabla_x u |
\}
\| e^{\varrho |v|^2} f_R(t,v) \|_{L^\infty_v}. \notag
\end{align}

\ec

\unhide

 \subsection{$L^2$-Energy estimate} 
Our starting point is a basic $L^2$-energy estimate for the Boltzmann remainder $f_R$ and its temporal derivative $\p_t f_R$ in which the dissipation (\ref{dissipation}) plays an important role in the nonlinear estimate.

\hide
{\color{red} JJ: In $d_2$ and $d_{2,t}$ below, do we want to use $\sigma_0$ from \eqref{s_gap} in place of $\frac12$?}
\unhide

\begin{proposition}\label{prop:energy} Under the same assumptions in Proposition \ref{prop:Hilbert}, we have 
\Be
\begin{split}\label{est:Energy}
&  \| f_R (t)\|_{L^2_{x,v}}^2+  {d}_{2}
 \int^t_0 
  \| \kappa^{-\frac{1}{2}} \e^{-1} \sqrt{\nu} (\mathbf{I} - \mathbf{P})f_R \|_{L^2_{x,v}}^2
  + \int^t_0 | \e^{-\frac{1}{2}}
f_R
|_{  L^2_\gamma}^2
  \\
  \lesssim & \ \| f_R (0)\|_{L^2_{x,v}}^2 
   +
  (1+ \| (\ref{transp:mu})\|_{L^\infty_{t,x}} )\int^t_0 \| P f_R (s) \|_{L^2_x}^2 \dd s 
  +
     \frac{ \delta^2}{\kappa^3 }
 \| \kappa^{1/2}   {P} f_R(s) \|_{L^\infty_tL^6_{x }}^2  
\| \kappa^{1/2}P  f_R \|^2_{L^2_tL^3_{x}} \\
&
 + \frac{\e\kappa^2}{ \delta^2}  
 |  \nabla_x u|^2_{L^2_t L^2 (\p\O)}
 +  \kappa  \e^{1/8}
  \| 
 \nabla_x u  
  \|_{L^2_{t,x }}  ^2 
+ \kappa  \e^2  
  \|(\ref{est:R1})
  \|^2_{L^2_{t,x}}  + \|(\ref{est:R2})  \|^2_{L^2_{t,x}},
  \end{split}
\Ee
where 
\Be\label{d2}
{d}_{2}
:=  \frac{\sigma_0}{2}- \delta \e \| \mathfrak{w} f_R \|_{L^\infty_{t,x,v}}
-( \e^{\frac{15}{16}} \| \mathfrak{w} f_R \|_{L^\infty_{t,x,v}} )^2
  - \e^2 \| (\ref{est:f2})  \|_{L^\infty_{t,x}}  -
 \frac{\e^2}{\kappa }  \| (\ref{est:f2}) \|_{L^\infty_{t,x}}^2
- \e  \kappa^{1/2}   \|  (\ref{transp:mu})_*  \|_{L^\infty_{t,x}}
 .
 \Ee
Here $L^p_t=L^p_t([0,t])$ in particular. Note that a weighted $L^\infty$-bound of $f_R$ is involved in this energy estimate, where the weight $\mathfrak{w}=\mathfrak{w}_{\varrho, \ss}(x,v)$ is defined in (\ref{weight}).
 
 We also have 
 \Be\begin{split}\label{est:Energy_t}
&\| \p_t f_R (t) \|_{L^2_{x,v}}^2 + {d}_{2,t}
 \| \kappa^{-\frac{1}{2}} \e^{-1} \sqrt{\nu}(\mathbf{I} - \mathbf{P}) \p_t f_R\|_{L^2_{t,x,v}}^2
 +  |\e^{- \frac{1}{2}} \p_t f_R|_{L^2_tL^2_\gamma}^2
-  \e \| \p_t  u \|_{L^\infty_{t,x} } |      f_R
|_{L^2 _t L^2_{\gamma}}^2
 \\
\lesssim& \ \| \p_t f_R (0) \|_{L^2_{x,v}}^2
+\kappa^{-1} \delta^2 
\| Pf_R \|_{L^\infty_t L^6_x}^2 
\{\| P f_R \|_{L^2_t L^3_x}^2
+
\| P \p_t f_R \|_{L^2_t L^3_x}^2
\}\\
&+ \Big\{  
 \kappa^{-1 } \| \p_t u \|_{L^\infty_{t,x }} ^2
 + \| \nabla_x \p_t u \|_{L^\infty_{t,x}}
 + \e \| \p_t^2 u \|_{L^2_{t} L^\infty_x} 
+
\e \kappa^{-1/2} (1+  \| \p_t u \|_{L^\infty_{t,x}} )\| (\ref{est:f2}) \|_{L^\infty_{t,x}} \\
& \ \ \ \ \
+\| (\ref{transp:mu}) \|_{L^\infty_{t,x} } 
+ \| (\ref{transp:mu_t})_* \|_{L^\infty_{t,x} } 
\Big\}  \times 
\int^t_0
  \|     P \p_tf_R(s) \|_{L^{2}_x }^2 \dd s \\
  &
  +  \Big\{   \kappa^{-1} \| \p_t u \|_{L^\infty_{t,x,v}}^2
  + \| \nabla_x \p_t u \|_{L^\infty_{t,x}}
 + \e \| \p_t^2 u \|_{L^2_{t} L^\infty_x} 
  + (\e \| (\ref{est:f2})\|_{L^\infty_{t,x}})^2\\
& \ \ \ \ \ 
    + (\e\kappa^{-1/2} \| (\ref{est:f2_t})\|_{L^\infty_{t,x}})^2
  + \| (\ref{transp:mu_t})_*\|_{L^\infty_{t,x}} 
  \Big\} \times 
  \int^t_0
  \|     P  f_R(s) \|_{L^{2}_x }^2 \dd s  \\
  &+
 \Big\{
 \e (1+ \e \| (\ref{est:f2})\|_{L^\infty_{t,x}}) \| \p_t u \|_{L^\infty_{t,x}}
 + \e \kappa \| \nabla_x \p_t u \|_{L^\infty_{t,x}}
 + \e^2 \kappa \|   \p_t ^2u \|_{L^2_tL^\infty_{ x}}
 \\
 & \ \ \ \ \
 + (\e \kappa^{1/2} \| (\ref{transp:mu_t})_*\|_{L^\infty_{t,x}})^2
 +( \e \delta \| \mathfrak{w} f_R \|_{L^\infty_{t,x,v}}) ^2
 \Big\} \times 
  \| \e^{-1}\kappa^{-1/2} \sqrt{\nu} (\mathbf{I} -\mathbf{P}) f_R \|_{L^2_{t,x,v}}^2
  \\ 
  &+ e^{- \frac{\varrho}{4 \e^2}} \{\| (\ref{transp:mu})\|_{L^\infty_{t,x}}^2 + \| \nabla_x \p_t u \|_{L^2_t L^\infty_x}^2 \}
  + (\e \kappa^{1/2} \| (\ref{est:R3}) \|_{L^2_{t,x}})^2 +  \|(\ref{est:R4})\|_{L^2_{t,x}}^2\\
  &+ 
 \frac{\e \kappa^2}{ \delta^2} 
 | |\p_t \nabla_x u| + \e |\p_t u| |\nabla_x u| |_{L^2_tL^2 (\p\O) }^2
 + \frac{\e^3 \kappa^2  }{\delta} 
|\nabla_x u|^2_{L^2_t L^2(\p\O)}  \| \p_t  u \|_{L^\infty_{t,x}} ,
 \end{split}\Ee
\hide

\Be\begin{split}\label{est:Energy_t}
&\| \p_t f_R (t) \|_{L^2_{x,v}}^2 + d_{1, \e, \delta }
 \| \kappa^{-\frac{1}{2}} \e^{-1} \sqrt{\nu}(\mathbf{I} - \mathbf{P}) \p_t f_R\|_{L^2_{t,x,v}}^2
 +  |\e^{- \frac{1}{2}} \p_t f_R|_{L^2_tL^2_\gamma}^2
 \\
\lesssim& \ \| \p_t f_R (0) \|_{L^2_{x,v}}^2
+ \Big( 
(\kappa^{-1/2} \| \p_t u \|_{L^\infty_{t,x,v}})^2
+ \e^2 \| (\ref{est:f2}) \|_{L^\infty_{t,x,v}}^2
+
\| (\ref{transp:mu})
 \|_{L^\infty_{t,x} 
 }+ \frac{\e^2}{\delta^2 \kappa^2}\Big) \int^t_0
  \|     P \p_tf_R(s) \|_{L^{2}_x }^2 \dd s \\
  &
  + \bcb \kappa^{-1} \| \p_t u \|_{L^\infty_{t,x,v}}^2\int^t_0
  \|     P  f_R(s) \|_{L^{2}_x }^2 \dd s  \ec
  +\kappa \e^2  e^{-\frac{\varrho}{4\e^2}}  \| 
(\ref{transp:mu})
 \|_{L^\infty_{t,x} 
 }^2  \int^t_0 \| \mathfrak{w} \p_t f_R(s) \|_{ L^\infty_{x,v}}^2 \dd s\\
 &+
 \big\{
 \frac{\delta^2}{\kappa^3} \| \kappa^{1/2} Pf_R \|_{L^\infty_tL^6_x}^2 +  \frac{\delta}{\kappa^{5/2}} \|   (\ref{est:f2}) \|_{L^\infty_tL^6_x}^2
 \big\}
  \| \kappa^{1/2} P \p_t f_R \|_{L^2_t L^3_x}^2
  +   \kappa^{-1}\e^2  \| (\ref{est:f2_t}) \|_{L^2_{t,x }} ^2
  \| \mathfrak{w} f_R \|_{L^\infty_{t,x,v}}^2
  \\
  &
  +( \frac{\e}{\delta^2} + \frac{\e^3}{\delta}\| \p_t u \|_{L^\infty} )| (\ref{est:f2_t}) |_{L^2_tL^2_{\gamma }}^2
+
  \kappa \e^2 \| (\ref{est:R3}) + (\ref{est:R4})\|^2_{L^2_{t,x}}
 +
 \|(\ref{transp:mu_t})\|_{L^\infty_{t,x}}
\{ 
\| f_R\|_{L^\infty_t L^2_{x,v}}^2 + \| \p_t f_R\|_{L^\infty_t L^2_{x,v}}^2\}\\
&
\bcb
+ 
\big\{1+
\e^4\| \p_t u \|_{L^\infty_{t,x,v}}^2 
 (  \delta ^2
  \| \mathfrak{w} f_R \|_{L^\infty_{t,x,v}}^2
  +\| (\ref{est:f2})\|_{L^\infty_{t,x}}^2
   )
\big\}
 \| \kappa^{-\frac{1}{2}} \e^{-1} \sqrt{\nu} (\mathbf{I} - \mathbf{P}) f_R \|_{L^2_{t,x,v}} ^2
 \ec
 \\
 & 
+ \frac{\delta^2}{\kappa^3} \| \p_t u \|_{L^\infty_{t,x,v}}^2\| \kappa^{1/2} P f_R \|_{L^\infty_tL^6_x}^2 \|\kappa^{1/2}  Pf_R \|_{L^2_t L^3_x}^2\\
&+  \frac{\e \kappa^2}{ \delta^2} 
 | |\p_t \nabla_x u| + \e |\p_t u| |\nabla_x u| |_{L^2_tL^2 (\p\O) }^2
 +  \| \p_t  u \|_{\infty}\frac{\e^3}{\delta} 
\kappa^2 |\nabla_x u|^2_{L^2_t L^2(\p\O)} 
, 
\end{split}\Ee\unhide
where $\p_t f_R (0,x,v) := f_{R,t}(0,x,v)$ is defined in \eqref{initial_f}. 
Here
\Be\label{d2t}
\begin{split}
{d}_{2,t}&:= \frac{\sigma_0}{2}- \e( \kappa^{-1/2} + \e  \| \p_t u\|_{L^\infty_{t,x}}) \| (\ref{est:f2})\|_{L^\infty_{t,x}}- 
\e \kappa \| (\ref{transp:mu}) \|_{L^\infty_{t,x}}
- (\e\kappa^{1/2} \| (\ref{transp:mu_t})_* \|_{L^\infty_{t,x}})^2\\
& \  \  \ - \e \kappa \| \nabla_x \p_t u \|_{L^\infty_{t,x}}
- \e^2 \kappa \|   \p_t^2 u \|_{L^2_tL^\infty_{x}}
+ \e \| \p_t u \|_{L^\infty_{t,x}} (1+ \e \| \p_t u \|_{L^\infty_{t,x}}) \\
&
- 
\e^{- \frac{\varrho}{4 \e^2}}
(\e \kappa^{1/2} \| \mathfrak{w}_{\varrho^\prime, \ss} \p_t f_R \|_{L^2_t L^\infty_{x,v}})^2 -  \e \delta (1+ \e \| \p_t u \|_{L^\infty_{x,v}}) \| \mathfrak{w} f_R \|_{L^\infty_{t,x}}\\
&
- (\e \kappa^{1/2} \| \mathfrak{w} f_R \|_{L^\infty_{t,x,v}})^2,
\end{split}
\Ee
where $0<\varrho^\prime < \varrho$. 

\begin{remark}
We utilize several different time-space norms to control the fluid source terms, which possess the initial-boundary and boundary layers as in Theorem \ref{thm_bound}.  
\end{remark}

\hide

\Be
\begin{split}\label{est:Energy_t}
&  \|\p_t  f_R (t)\|_2^2+ d_{  \e, \delta}
 \int^t_0 
  \| \kappa^{-\frac{1}{2}} \e^{-1} \sqrt{\nu} (\mathbf{I} - \mathbf{P}) \p_t f_R \|_2^2
  + \int^t_0 | \e^{-\frac{1}{2}}
(1- P_{\gamma_+}) \p_t f_R
|_{  2, {\gamma_+}}^2
  \\
  \lesssim & \ \| \p_t  f_R (0)\|_2^2 + \frac{\e}{2\delta^2} \int^t_0 \int_{\gamma_+} |(1- P_{\gamma_+}) \p_t f_2|^2
  + \e \| \p_t u \|_\infty
   \int^t_0 |  f_R
|_{  2, {\gamma_+}}^2
   \\
   &+
  \e^2 \big\|| \p_t  p| +| \p_t  \tilde{u} | + \kappa |\nabla_x  \p_t   u| \big\|_{L^\infty_{t,x}}    \int^t_0 
  \| \kappa^{-\frac{1}{2}} \e^{-1} \sqrt{\nu} (\mathbf{I} - \mathbf{P})  f_R \|_2^2
   \\
  & +
  \| |\nabla_x u | + \e |\p_t u| + \e |u| |\nabla_x u|\|_{L^2_tL^{3/2}_{x}} \| {P}f_R  \|_{L^\infty_tL^{6}_{x} } ^2
  +
   \Big(\frac{\e}{\kappa^2}+  
   \frac{ \delta^2}{\kappa^3 }
 \|  \kappa^{1/2}  {P} f_R(s) \|_{L^\infty_tL^6_{x }}^2  
\Big)
\| \kappa^{1/2}  P \p_t f_R \|^2_{L^2_tL^3_{x}} 
\\
&+ \frac{\kappa  \e^2 }{\delta^2} \|\mathfrak{q}(|\nabla_x \p_t \tilde{u}|, |\nabla_x^2 \p_t u|)\|^2_{L^2_{t,x}}\\
&  +  \frac{\e^4}{\delta^2 \kappa } \big\|
(1+ 
|\p_t^2 p| + |\nabla_x \p_t^2 u| 
) 
\\
& \ \ \ \ \ \ \ \ \  \times 
 \mathfrak{q} (
|p|, |\nabla_x p|, |\p_t p|,|\nabla_x \p_t p | , |u| , |\nabla_x u|, |\p_t u|, |\nabla_x \p_t u|, |\nabla_x^2 u|, |\nabla_x^2 \p_t u|, 
|\tilde{u}|, |\nabla_x \tilde{u}|,|\p_t \tilde{u}| ,|\nabla_x \p_t \tilde{u}|, |\p_t ^2 \tilde{u}|
) 
 \big\|_{L^2_{t,x}}^2,
  \end{split}
\Ee
 where  
 \Be
d_{2,\e, \delta}:= d_{1,\e, \delta}- 
  \e^2 \big\|| \p_t  p| +| \p_t  \tilde{u} | + \kappa |\nabla_x  \p_t   u| \big\|_{L^\infty_{t,x}}
 - \e  \||\p_t  p| +|\p_t  \tilde{u} | + \kappa |\nabla_x\p_t   u| \|_{L^\infty_{t}L^6_x}^2
 \Ee
 \unhide

\end{proposition}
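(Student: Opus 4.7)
The plan is to run standard $L^2$-energy estimates on the equations \eqref{eqtn_fR} and \eqref{eqtn_fR_t}, pairing each with $f_R$ (resp.\ $\p_t f_R$) and integrating over $[0,t]\times\Omega\times\R^3$. For \eqref{est:Energy}, the transport term $\p_t+\e^{-1}v\cdot\nabla_x$ yields $\tfrac12\tfrac{d}{dt}\|f_R\|_{L^2_{x,v}}^2$ together with a boundary integral $\tfrac{1}{2\e}\int_{\gamma_+\cup\gamma_-}|f_R|^2$, while the collision term gives the dissipation $\tfrac{1}{\e^2\kappa}\langle L f_R,f_R\rangle\ge \tfrac{\sigma_0}{\e^2\kappa}\|\sqrt{\nu}(\mathbf I-\mathbf P)f_R\|_{L^2_v}^2$ from the spectral gap \eqref{s_gap}. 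The boundary contribution is split via \eqref{bdry_fR}: the $P_{\gamma_+}$-reflected part satisfies $|P_{\gamma_+}f_R|_{L^2_{\gamma_-}}^2\le(1-c)|f_R|_{L^2_{\gamma_+}}^2$ (strict contraction of diffuse reflection), leaving a positive boundary coercivity $\sim\tfrac{1}{\e}|f_R|_{L^2_\gamma}^2$, while the $(1-P_{\gamma_+})f_2$ piece is moved to the right-hand side and handled by the trace bound $\tfrac{\e^2}{\delta^2}|(1-P_{\gamma_+})f_2|_{L^2_{\gamma_+}}^2\lesssim\tfrac{\e\kappa^2}{\delta^2}|\nabla_x u|_{L^2(\p\Omega)}^2$ via \eqref{est:f2} and \eqref{noslip}.

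The linear source terms on the right of \eqref{eqtn_fR} are dispatched as follows. The transport-Maxwellian factor $(\p_t+\e^{-1}v\cdot\nabla_x)\sqrt{\mu}/\sqrt{\mu}$ grows like $\langle v-\e u\rangle^2$ by \eqref{transp:mu}, so its contribution is split between $\mathbf Pf_R$ (absorbed into $\int\|Pf_R\|_{L^2_x}^2$ via the $\|\nabla_x u\|_\infty$ factor) and $(\mathbf I-\mathbf P)f_R$ (controlled by the weighted $L^\infty$-norm $\|\mathfrak w f_R\|_\infty$ combined with a small prefactor times the dissipation, contributing to $d_2$). The source $(\mathbf I-\mathbf P)\mathfrak R_1$ is purely kinetic, so $\langle\mathfrak R_1,f_R\rangle=\langle\mathfrak R_1,(\mathbf I-\mathbf P)f_R\rangle$ and is bounded by Cauchy--Schwarz using \eqref{est:R1} against the dissipation (producing $\kappa\e^2\|(\ref{est:R1})\|_{L^2_{t,x}}^2$); the general source $\mathfrak R_2$ is estimated directly in $L^2_{t,x}$ via \eqref{est:R2}.

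The nonlinear terms are the heart of the estimate. Using \eqref{est:G22} the linear-in-$f_R$ collision $\tfrac{2}{\kappa}\Gamma(f_2,f_R)$ is absorbed into $d_2$ and the $Pf_R$ growth through $\|f_2\|_\infty$ via $\|\Gamma(f_2,\cdot)\|_{\nu^{-1}}\lesssim\|f_2\|_\infty\|\nu^{1/2}\cdot\|_{L^2}$. The cubic piece $\tfrac{\delta}{\e\kappa}\Gamma(f_R,f_R)$ is split as
\[
\Gamma(f_R,f_R)=\Gamma(\mathbf Pf_R,\mathbf Pf_R)+\Gamma(\mathbf Pf_R,(\mathbf I-\mathbf P)f_R)+\Gamma((\mathbf I-\mathbf P)f_R,f_R).
\]
For the pure-kinetic pieces we use \eqref{est_Carl:Gamma} with the $L^\infty$-bound $\|\mathfrak w f_R\|_\infty$ to absorb into $d_2$; for $\Gamma(\mathbf Pf_R,\mathbf Pf_R)\cdot(\mathbf I-\mathbf P)f_R$ we apply H\"older in space with $L^6_x\cdot L^3_x\cdot L^2_x$, producing precisely the term $\tfrac{\delta^2}{\kappa^3}\|\kappa^{1/2}Pf_R\|_{L^\infty_t L^6_x}^2\|\kappa^{1/2}Pf_R\|_{L^2_t L^3_x}^2$ after pairing against $\tfrac{1}{\e\kappa^{1/2}}\|\sqrt{\nu}(\mathbf I-\mathbf P)f_R\|_{L^2_{t,x,v}}$. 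Summing gives \eqref{est:Energy} with the constant $d_2$ in \eqref{d2}.

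For \eqref{est:Energy_t} we repeat the procedure on \eqref{eqtn_fR_t} after applying \eqref{dec:L_t}. The commutators $\tfrac{1}{\e^2\kappa}\{L_t(\mathbf I-\mathbf P)f_R-L(\mathbf P_t f_R)\}$ are estimated by \eqref{est:L_t}: the $L_t(\mathbf I-\mathbf P)f_R$ contribution pairs cleanly with $\sqrt{\nu}(\mathbf I-\mathbf P)\p_t f_R$ and is absorbed into $d_{2,t}$ up to $\kappa^{-1}\|\p_t u\|_\infty^2\int\|P\p_t f_R\|_{L^2_x}^2$, while $L(\mathbf P_t f_R)$ couples $Pf_R$ with the dissipation (contributing the $\kappa^{-1}\|\p_t u\|_\infty^2\int\|Pf_R\|_{L^2_x}^2$ term). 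The commutator $\Gamma_t$ and the cubic $\tfrac{2\delta}{\e\kappa}\Gamma(f_R,\p_t f_R)$ are treated analogously, with the trilinear $L^6\cdot L^3\cdot L^2$ bound producing the term $\kappa^{-1}\delta^2\|Pf_R\|_{L^\infty_tL^6_x}^2\|P\p_t f_R\|_{L^2_tL^3_x}^2$. The boundary treatment uses \eqref{bdry_fR_t}: the contraction again yields positive coercivity $|\p_t f_R|_{L^2_\gamma}^2/\e$, while the $r_{\gamma_+}(f_R)$ remainder (which contains $\p_t\sqrt{\mu}\sim\e\p_t u$) produces the explicit $-\e\|\p_t u\|_\infty|f_R|_{L^2_\gamma}^2$ and the trace term $\tfrac{\e^3\kappa^2}{\delta}|\nabla_x u|_{L^2(\p\Omega)}^2\|\p_tu\|_\infty$; the $\p_t(\mathbf I-\mathbf P)f_2$ boundary piece is controlled via $\p_t(\ref{est:f2})$ giving $\tfrac{\e\kappa^2}{\delta^2}\||\nabla_x\p_t u|+\e|\p_tu||\nabla_x u|\|_{L^2(\p\Omega)}^2$. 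The transport-Maxwellian contribution $\p_t\big((\p_t+\e^{-1}v\cdot\nabla_x)\sqrt{\mu}/\sqrt{\mu}\big)f_R$ is split between the hydrodynamic part, which feeds $\int\|Pf_R\|_{L^2_x}^2$, and the $(\mathbf I-\mathbf P)$ part absorbed by the weighted $L^\infty$-norm; the large-$|v|$ tail is cut off at $|v-\e u|\gtrsim\e^{-1}$, producing the exponentially small $e^{-\varrho/(4\e^2)}$ remainder. Finally the source $(\mathbf I-\mathbf P)\mathfrak R_3$ and general $\mathfrak R_4$ are bounded in $L^2_{t,x}$ via \eqref{est:R3}--\eqref{est:R4}.

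The main obstacle is the accounting for the trilinear nonlinearity and the commutator in the equation for $\p_t f_R$: careful pairing is needed to make sure each singular factor in $\e$ or $\kappa$ is either absorbed into the dissipation constant $d_{2,t}$ (hence the explicit form of \eqref{d2t}) or placed onto a quantity already controlled by $\mathcal E+\mathcal D+\mathcal F_p$, and to ensure that the boundary terms from the diffuse reflection boundary condition combine with the $L^2_\gamma$ coercivity of the trace with the right sign. Collecting the contributions yields \eqref{est:Energy_t}.
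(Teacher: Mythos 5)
Your plan lines up with the paper's at almost every point: the spectral-gap lower bound for $\frac{1}{\e^2\kappa}\langle Lf_R,f_R\rangle$, the $\mathbf P/(\mathbf I-\mathbf P)$ split of the cubic term with H\"older in $L^6_x\cdot L^3_x\cdot L^2_x$, the treatment of $\Gamma(f_2,\cdot)$, the commutators $L_t$, $\mathbf P_t$, $\Gamma_t$ via Lemma 3, the $|v|\lesssim\e^{-1}$ cutoff in $\p_t((\p_t+\e^{-1}v\cdot\nabla_x)\sqrt\mu/\sqrt\mu)$ producing the $e^{-\varrho/4\e^2}$ tail, and the source terms $\mathfrak R_1,\dots,\mathfrak R_4$ via their pointwise bounds. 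This is essentially the paper's route and I have no issue there.

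The gap is in the boundary coercivity. You assert that the reflected part satisfies a \emph{strict} contraction $|P_{\gamma_+}f_R|_{L^2_{\gamma_-}}^2\le(1-c)|f_R|_{L^2_{\gamma_+}}^2$ and that this directly yields positive coercivity $\sim\tfrac{1}{\e}|f_R|_{L^2_\gamma}^2$ in \eqref{est:Energy}, and you repeat the same mechanism for $\p_t f_R$. This is false: $P_{\gamma_+}$ is the orthogonal projection onto the one-dimensional span of $\sqrt{c_\mu\mu}$ in $L^2(\{n\cdot v>0\},|n\cdot v|\,\dd v)$, so $|P_{\gamma_+}g|_{L^2_{\gamma_+}}\le|g|_{L^2_{\gamma_+}}$ with equality whenever $g$ is proportional to $\sqrt\mu$ on the boundary; there is no uniform gap $c>0$. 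Consequently the energy identity only donates coercivity for $(1-P_{\gamma_+})f_R$, and the $P_{\gamma_+}$-component comes with a wrong-sign contribution that cannot be absorbed by a contraction argument. In the paper this is exactly why the trace theorem \eqref{trace} is invoked (with $h=|f_R|^2$ and $h=|\p_t f_R|^2$), converting $\tfrac{1}{\e}\int_0^t\int_{\gamma_+^N}|f_R|^2$ into bulk terms controlled by the dissipation and then absorbing the $P_{\gamma_+}$-part after choosing $N$ and then $C$ large; the $\gamma_-$ contribution is recovered from $\gamma_+$ via \eqref{f:+-}. Without the trace theorem the full $\int_0^t|\e^{-1/2}f_R|^2_{L^2_\gamma}$ on the left of \eqref{est:Energy} is unreachable, so your argument as written does not close at the boundary. (Two secondary slips, not fatal: on $\gamma_-$ only $(\mathbf I-\mathbf P)f_2$ enters the boundary condition \eqref{bdry_fR} since $\mathbf P f_2 = p\varphi_0\sqrt\mu$ satisfies the diffuse reflection exactly, so the relevant pointwise bound is \eqref{est:I-Pf2}, not the full \eqref{est:f2}; and the $\frac{2}{\kappa}\Gamma(f_2,f_R)$ term is bounded as in \eqref{est:Gf_2} using $\|(\ref{est:f2})\|_\infty$, whereas \eqref{est:G22} estimates the $\Gamma(f_2,f_2)$ contribution that sits inside $\mathfrak R_2$.)
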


The following trace theorem is useful to control the boundary terms. 
\begin{lemma}[Trace theorem]
 
\Be\label{trace}
\frac{1}{\e}\int^t_0 \int_{\gamma_+^N} |h| \dd \gamma \dd s \lesssim_N  \iint_{\O \times \R^3} |h(0)|  +   \int^t_0 \iint_{\O \times \R^3} |h|  + \int^t_0 \iint_{\O \times \R^3 } | \p_t h +\frac{1}{\e} v\cdot \nabla_x h|,
\Ee
where $\gamma_+^N:= \{(x,v) \in \gamma_+: |n(x) \cdot v|> 1/N \ \text{and} \ 1/N<|v|<N \}$.\end{lemma}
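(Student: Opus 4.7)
The plan is to apply the method of characteristics along with an explicit Jacobian change of variables that trades the boundary measure $|n\cdot v|\,dS_x\,dv$ for an interior measure on $\Omega\times \R^3$, at the cost of a factor of $\e$. For each $(s,x,v)\in [0,t]\times \gamma_+^N$, define the backward trajectory $X(\tau;x,v):= x-\tau v/\e$. Because $\O = \mathbb T^2\times\R_+$ with outward normal $n=(0,0,-1)$, the definition of $\gamma_+^N$ forces $v_3<-1/N$, so $X_3(\tau)=-\tau v_3/\e>0$ for every $\tau>0$ and the backward exit time is infinite. Setting $g:=\p_t h+\e^{-1}v\cdot\nabla_x h$, the standard ODE identity along the characteristic gives
\[
h(s,x,v) \;=\; h(s-\tau, X(\tau), v) \;+\; \int_0^\tau g(s-\tau', X(\tau'), v)\, d\tau' \qquad \text{for every } \tau\in [0,s].
\]

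Next, I fix an order-one constant $\delta_*\in (0,1]$ (independent of $\e$) and split into two regimes. For $s\ge\delta_*$, averaging the identity over $\tau\in (0,\delta_*)$ produces
\[
|h(s,x,v)| \;\le\; \tfrac{1}{\delta_*}\int_0^{\delta_*} |h|(s-\tau,X(\tau),v)\,d\tau \;+\; \int_0^{\delta_*}|g|(s-\tau',X(\tau'),v)\,d\tau'.
\]
For $s<\delta_*$, taking $\tau=s$ gives $|h(s,x,v)|\le |h(0,X(s),v)|+\int_0^s |g|(s-\tau',X(\tau'),v)\,d\tau'$, so the first term involves only the initial datum on the interior trajectory.

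The decisive step is the change of variables. Multiplying by $|n\cdot v|\,dS_x\,dv\,ds$, integrating over $[0,t]\times\gamma_+^N$, and using for each fixed $v$ with $v_3<-1/N$ the map
\[
(x_h,\tau)\in\mathbb T^2\times(0,\delta_*) \;\longmapsto\; (X_h,X_3)=\bigl(x_h-\tau v_h/\e,\,-\tau v_3/\e\bigr)\in\mathbb T^2\times (0,-\delta_* v_3/\e),
\]
whose Jacobian is $|v_3|/\e=|n\cdot v|/\e$ (so $dS_x\,d\tau=(\e/|n\cdot v|)\,dX$), together with the time shift $\tilde s := s-\tau\in [0,t]$, trades a 2D boundary integral plus one auxiliary $\tau$ (or $s$) integration for a 3D integral over $\Omega$ and produces exactly the factor $\e/|n\cdot v|$ which cancels $|n\cdot v|$ in $d\gamma$, leaving an overall factor of $\e$. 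The same map applied with $\tau$ replaced by $s$ handles the initial-datum term. Assembling the three contributions yields
\[
\int_0^t\!\int_{\gamma_+^N}|h|\,d\gamma\,ds \;\lesssim\; \e\,\|h(0)\|_{L^1_{x,v}} + \tfrac{\e}{\delta_*}\int_0^t\!\iint_{\O\times\R^3}|h| + \e\int_0^t\!\iint_{\O\times\R^3}|g|,
\]
and dividing by $\e$ produces the lemma; the dependence on $N$ enters only via $\delta_*$ and the $v$-volume of $\{|v|<N,\,|v_3|>1/N\}$.

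The main obstacle is keeping the bookkeeping of the change of variables clean: verifying that $\tilde s$ stays in $[0,t]$ after the time shift, that the auxiliary interval $(0,-\delta_* v_3/\e)$ is contained in the $x_3$-range of $\Omega$, and that the averaging in the $s\ge\delta_*$ regime matches the pointwise-$\tau=s$ representation in the $s<\delta_*$ regime at the interface. In the half space all three are automatic from $v_3<0$ and the unbounded $x_3$-direction, and the non-glancing condition $|v_3|>1/N$ built into $\gamma_+^N$ keeps every Jacobian non-degenerate, so no delicate analysis near grazing directions is required.
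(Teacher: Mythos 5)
Your proof is correct and follows the standard characteristics-plus-change-of-variables argument that the paper delegates to Lemma 3.2 of \cite{EGKM2} and Lemma 7 of \cite{CKL}. In the half-space the bookkeeping you flag as the main obstacle is indeed automatic, since $v_3<0$ on $\gamma_+$ forces the backward characteristic to enter $\O$ and never exit, and the Jacobian factor $|v_3|/\e$ exactly cancels $|n\cdot v|$ in $\dd\gamma$; your bound even comes out $N$-independent, which is stronger than the stated $\lesssim_N$.
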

The proof is standard (for example see Lemma 3.2 in \cite{EGKM2} or Lemma 7 in \cite{CKL}). 

\begin{proof}[\textbf{Proof of Proposition \ref{prop:energy}}]

First we prove (\ref{est:Energy}). An energy estimate to (\ref{eqtn_fR}) and (\ref{bdry_fR}) reads as 
\begin{align}
&\frac{1}{2}\|f _R(t)\|_{L^2_{x,v}}^2 - \frac{1}{2} \|f _R (0)\|_{L^2_{x,v}}^2
+ \frac{1}{ \kappa \e^{2 }} \int^t_0 \iint _{\O \times \R^3}f _R L  f_R\label{Energy_LHS} \\
& +  \frac{1}{2\e} \int^t_0 \int_{\gamma_+} |f_R|^2 
-  \frac{1}{2\e} \int^t_0 \int_{\gamma_-} | 
P_{\gamma_+}   f_R- \frac{\e}{\delta} (1- P_{\gamma_+})
(\mathbf{I} -\mathbf{P})
 f_2
|^2 \label{Energy_bdry}
 \\
=  
&   \frac{\delta }{\kappa \e } \int^t_0 \iint_{\O \times \R^3} 
\Gamma (f_R,f_R) (\mathbf{I} - \mathbf{P}) f_R 
\label{Energy_Gamma}
\\
&+
 \frac{2}{  \kappa} \int^t_0 \iint_{\O \times \R^3} \Gamma(f_2, f_R) (\mathbf{I} - \mathbf{P}) f_R \label{Energy_Gf_2}
\\
&+ \int^t_0 \iint_{\O \times \R^3} ( \mathbf{I} -  \mathbf{P}) \mathfrak{R}_1  ( \mathbf{I} -  \mathbf{P}) f_R \label{Energy_I-PR_1}\\
&+ \int^t_0 \iint_{\O \times \R^3}  \mathfrak{R}_2  f_R \label{Energy_R2}\\
&+ \int^t_0 \iint_{\O \times \R^3}
\frac{- (\p_t + \e^{-1} v\cdot \nabla_x ) \sqrt{\mu}}{\sqrt{\mu}} |f_R|^2.\label{Energy_v^3}
\end{align}
Among others two terms (\ref{Energy_Gamma}) and (\ref{Energy_v^3}) are most problematic.

We start with (\ref{Energy_LHS}). 
From the spectral gap estimate in (\ref{s_gap}), we have 
\Be\label{est:EL}
(\ref{Energy_LHS})\geq \frac{1}{2}\|f _R(t)\|_{L^2_{x,v}}^2 - \frac{1}{2} \|f _R (0)\|_{L^2_{x,v}}^2
+  \sigma_0
\| \kappa^{-\frac{1}{2}} \e^{-1} \sqrt{\nu}(\mathbf{I} - \mathbf{P}) f_R\|_{L^2_{t,x,v}}^2. 
\Ee

Now we consider (\ref{Energy_Gamma}), in which we need integrability gain of $\mathbf{P}f_R$ in $L^6_x$ 
of the next sections. From decomposition $f_R= \mathbf{P} f_R + (\mathbf{I} - \mathbf{P}) f_R$ and $\Gamma=\Gamma_+- \Gamma_-$ in (\ref{Gamma}),  we derive 
\Be
\begin{split}
\label{est:EG}
|(\ref{Energy_Gamma})| \lesssim &  \ 
\frac{\delta}{\kappa \e}  \sum_{i=\pm }\int^t_0 \iint_{\O \times \R^3} | \nu^{-\frac{1}{2}} \Gamma_{i}( |f_R|, (\mathbf{I} -\mathbf{P}) f_R)| | \sqrt{\nu}(\mathbf{I} - \mathbf{P}) f_R|\\
&
+\frac{\delta}{\kappa \e}  \sum_{i=\pm } \int^t_0 \iint_{\O \times \R^3}  | \nu^{-\frac{1}{2}} \Gamma_{i}(|\mathbf{P}f_R|, |\mathbf{P} f_R|)| | \sqrt{\nu}(\mathbf{I} - \mathbf{P}) f_R|
\\
 \lesssim &   \ \delta \e \|   \mathfrak{w}f_R\|_{L^\infty_{x,v}}  \| \kappa^{-\frac{1}{2}} \e^{-1} \sqrt{\nu}
(\mathbf{I} - \mathbf{P}) f_R \|^2_{L^2_{t,x,v}} 
\\
&+  \frac{ \delta
}{\kappa^{3/2}}
 \|\kappa^{1/2}     {P} f_R \|_{L^\infty_tL^6_{x }}
 \| \kappa^{1/2}   {P} f_R \|_{L^2_tL^3_{x }}
\| \kappa^{-\frac{1}{2}} \e^{-1} \sqrt{\nu}
(\mathbf{I} - \mathbf{P}) f_R \|_{L^2_{t,x,v}}.
\end{split}\Ee

From (\ref{est:I-Pf2}) and (\ref{est:Pf2}),   
\Be
\begin{split}
&|(\ref{Energy_Gf_2})|\\ &\leq  
\frac{\e}{\kappa^{1/2}}
  \|  (\ref{est:f2}) \|_{L^\infty_{t,x}} 
  \{
   \|  P f_R \|_{L^2_{t,x}} 
+  \kappa^{ \frac{1}{2}} \e  \| \kappa^{-\frac{1}{2}} \e^{-1}  \sqrt{\nu}(\mathbf{I} - \mathbf{P}) f_R \|_{L^2_{t,x,v}} 
  \}\\
  & \ \ \ \ \ \ \ \times
 \| \kappa^{-\frac{1}{2}} \e^{-1}  \sqrt{\nu}(\mathbf{I} - \mathbf{P}) f_R \|_{L^2_{t,x,v}} \\
 &\lesssim  
\{\e^2 \| (\ref{est:f2})  \|_{L^\infty_{t,x}}
+ \frac{\e^2}{\kappa}  \| (\ref{est:f2})  \|_{L^\infty_{t,x}}^2
\}
  \| \kappa^{-\frac{1}{2}} \e^{-1}  \sqrt{\nu} (\mathbf{I} - \mathbf{P}) f_R \|_{L^2_{t,x,v}}^2
 + 
\|  P  f_R \|_{L^2_tL^2_{x}}^2.
  \label{est:Gf_2}
\end{split}
\Ee

From (\ref{est:R1}) and (\ref{est:R2}) we derive that 
\Be\begin{split}\label{est:I-PR_1R2}
|(\ref{Energy_I-PR_1})| &\lesssim 
 \kappa^{ 1/2} \e   \| (\ref{est:R1})
 \|_{L^2_{t,x}} \| \kappa^{-1/2} \e^{-1} (\mathbf{I} - \mathbf{P} ) f_R \|_{L^2_{t,x,v}},
\\
|(\ref{Energy_R2})|& \lesssim   
\| (\ref{est:R2})\|_{L^2_{t,x,v}}^2 +   \| \mathbf{P}f_R\|_{L^2_{t,x,v}}^2  +
 \kappa^{ 1/2} \e \| (\ref{est:R2})\|_{L^2_{t,x,v}} 
\|
  \kappa^{-1/2} \e^{-1}
  (\mathbf{I} -\mathbf{P})f_R\|_{L^2_{t,x,v}} 
 .
\end{split}\Ee
\hide

\textit{Estimate of (\ref{Energy_PR_1}):} From (\ref{PR_1})
\Be
|(\ref{Energy_PR_1})| \lesssim  \frac{\e}{\delta \kappa}\left\|
 |\tilde{u}| |\kappa^{\frac{1}{2}} \nabla u|
\right\|_{L^2_{t,x,v}} \left\|
\kappa^{\frac{1}{2}} \mathbf{P} f_R 
\right\|_{L^2_{t,x,v}} .\label{est:PR_1}
\Ee

\textit{Estimate of (\ref{Energy_I-PR_1}):} From (\ref{I-PR_1})
\Be\begin{split}
|(\ref{Energy_I-PR_1})| \lesssim & \frac{\e}{\delta \kappa^{\frac{1}{2}}}\left\|
 |\kappa \nabla^2 u|  + \kappa^{\frac{1}{2}} |\kappa^{\frac{1}{2}} \nabla u| 
 \big[ \e |(\tilde{\rho}, \tilde{u}, \tilde{\theta})| + \e \kappa^{-\frac{1}{2}}|\kappa^{\frac{1}{2}} \nabla_x u |\big]
 + \e |(\tilde{\rho}, \tilde{u}, \tilde{\theta})|^2 + \kappa^{\frac{1}{2}} 
 |\kappa^{\frac{1}{2}}  \nabla(\tilde{\rho}, \tilde{u}, \tilde{\theta}) |
\right\|_{L^2_{t,x,v}}\\
& \times  \left\| 
\kappa^{-\frac{1}{2}} \e^{-1} (\mathbf{I}-\mathbf{P}) f_R 
\right\|_{L^2_{t,x,v}}.
\label{est:I-PR_1}
 \end{split}
\Ee

\textit{Estimate of (\ref{Energy_R2}):} From (\ref{R2})
\Be\begin{split}
|(\ref{Energy_R2})| \lesssim & \frac{\e}{ \delta \kappa}
\| \frac{\delta \kappa^{1/2}}{\e} \mathfrak{R}_2 \|_{L^2_{t,x,v}}
 \| \kappa^{\frac{1}{2}} f_R \|_{L^{2}_{t,x,v}}.\label{est:R2}
\end{split}
\Ee
\unhide

Next using (\ref{transp:mu}) it follows that 
\Be\label{est:Energy_v^3}
\begin{split}
&|(\ref{Energy_v^3})|\\ 
& \lesssim \ 
  \kappa^{1/2} \e
  \| \mathfrak{w}^{-1}{  \nu^{3/2} } 
 \nabla_x u   
  \|_{L^2_{t,x,v}}  
 \| \mathfrak{w}f_R\|_{ L^\infty_{t,x,v}}
\|\kappa^{-\frac{1}{2}} \e^{-1} \sqrt{\nu} (\mathbf{I} - \mathbf{P})f_R\|_{L^2_{t,x,v}}\\
& \ \ + \| (\ref{transp:mu})\|_{L^\infty_{t,x}} \int^t_0 \| P f_R (s) \|_{L^2_x}^2 \dd s
+ \e  \kappa^{1/2}  \| (\ref{transp:mu})_* \|_{L^\infty_{t,x}}  \|\kappa^{-\frac{1}{2}} \e^{-1} \sqrt{\nu} (\mathbf{I} - \mathbf{P})f_R\|_{L^2_{t,x,v}}^2\\
&\lesssim  \ 
\big\{( \e^{\frac{15}{16}} \| \mathfrak{w} f_R \|_{L^\infty_{t,x,v}} )^2
+  \e  \kappa^{1/2}   \|  (\ref{transp:mu})_*  \|_{L^\infty_{t,x}}
\big\}
 \|\kappa^{-\frac{1}{2}} \e^{-1} \sqrt{\nu} (\mathbf{I} - \mathbf{P})f_R\|_{L^2_{t,x,v}}^2
\\
& \ \ + \| (\ref{transp:mu})\|_{L^\infty_{t,x}} \int^t_0 \| P f_R (s) \|_{L^2_x}^2 \dd s +  (\kappa^{\frac{1}{2}} \e^{\frac{1}{16}}
  \| 
 \nabla_x u  
  \|_{L^2_{t,x }} )^2.
\end{split}
\Ee

Finally we control the boundary term (\ref{Energy_bdry}) using a trace theorem (\ref{trace1}).
\hide
 (Lemma 3.2 in \cite{EGKM2})
\Be\label{trace1}
\frac{1}{\e}\int^t_0 \int_{\gamma_+^N} |h| \dd \gamma \dd s \lesssim_N  \iint_{\O \times \R^3} |h(0)|  +   \int^t_0 \iint_{\O \times \R^3} |h|  + \int^t_0 \iint_{\O \times \R^3 } | \p_t h +\frac{1}{\e} v\cdot \nabla_x h|,
\Ee
where $\gamma_+^N:= \{(x,v) \in \gamma_+: |n(x) \cdot v|> 1/N \ \text{and} \ 1/N<|v|<N \}$. 
\unhide
First we have, from (\ref{bdry_fR}),
\Be
\begin{split}
(\ref{Energy_bdry}) &= \frac{1}{2\e} \int_0^t \int_{\gamma_+} \{
|f_R|^2 - |P_{\gamma_+} f_R|^2
\} - \frac{\e}{2\delta^2} \int^t_0 \int_{\gamma_-} |(1- P_{\gamma_+})
 (\mathbf{I}-\mathbf{P}) 
 f_2|^2\\
 &
 \ \ -  \int^t_0 
\int_{\gamma_-}  \frac{1}{\e^{1/2}}P_{\gamma_+ } f_R \frac{\e^{1/2}}{\delta}(1-  {P}_{\gamma_+}) (\mathbf{I}-\mathbf{P})  f_2 
\\
&\geq  \frac{1}{2 } 
| \e^{-\frac{1}{2}}
(1- P_{\gamma_+}) f_R
|_{L^2_t L^2_{\gamma_+}}^2  
-\frac{1}{8C} | \e^{-\frac{1}{2}}
  P_{\gamma_+}   f_R
|_{L^2_t L^2_{\gamma_+}}^2\\
& \ \ \ 
 -( \frac{\e}{2\delta^2}
  + 2C \frac{\e}{\delta^2}
 )
  \int^t_0 \int_{\gamma_+} |(1- P_{\gamma_+}) 
   (\mathbf{I}-\mathbf{P}) 
   f_2|^2
  \ \ \text{for} \ C\gg 1,
\label{est:EB}
\end{split}
\Ee
where we have used the fact $|P_{\gamma_+}f_R|_{L^2_{\gamma_+}}= |P_{\gamma_+}f_R|_{L^2_{\gamma_-}}$ from $P_{\gamma_+}f_R(t,x,v)$ being a function of $(t,x,|v|)$ due to $u|_{\p\O}=0$.

Now we estimate $P_{\gamma_+}f_R$. 
Since $P_{\gamma_+}$ in (\ref{bdry_fR}) is a projection of $c_\mu\sqrt{\mu}$ on $\gamma_+$, it follows $\int_{\gamma_+} |P_{\gamma_+}f|^2 \leq 2 \int_{\gamma_+^N} |P_{\gamma_+}f|^2$ for large enough $N>0$, where $\gamma_+^N:= \{(x,v) \in \gamma_+: |n(x) \cdot v|> 1/N \ \text{and} \ 1/N<|v|<N \}$. Setting $h=|f|^2$ in (\ref{trace}) and using (\ref{eqtn_fR}), (\ref{est:R1}), and (\ref{est:R2}) we derive 
\Be \label{est:bdry1}
\begin{split}
&\frac{1}{\e}\int^t_0 \int_{\gamma_+ } |f|^2 \dd \gamma \dd s\\
&
 \leq C_N  \iint_{\O \times \R^3} |f(0)|^2  +   \int^t_0 \iint_{\O \times \R^3} |f|^2  \\
& \ \ \ + \int^t_0 \iint_{\O \times \R^3 }\Big|\Big[
 - \frac{1}{\e^2 \kappa} Lf_R  + \frac{1}{\kappa} \Gamma({f_2}, f_R)  
+    \frac{   \delta }{ \e\kappa}\Gamma(f_R, f_R) \\
&  \ \ \ \ \ \ \ \ \ \ \ \  \ \ \ \ \ \ \ \ \ \ \ \ 
-  \frac{(  \p_t + 
\e^{-1} v\cdot \nabla_x) \sqrt{\mu}}{\sqrt{\mu}}  f_{R} 
  + 
 (\mathbf{I}- \mathbf{P})\mathfrak{R}_1  +  \mathfrak{R}_2\Big] f_R\Big|\\
 &\leq C_N \big\{  \|f(0)\|^2_{L^2_{x,v}}  + \|    \mathbf{P}  f_R \|_{L^{2}_{t,x,v}}^2
 + \| \e^{-1} \kappa^{-1/2} \sqrt{\nu} (\mathbf{I} - \mathbf{P}) f_R \|_{L^{2}_{t,x,v}}^2\\
 &  \ \ \ \ \ \ \ \ \ \ \ \  \ \ \ 
 + (\ref{est:EG}) + (\ref{est:Gf_2}) +(\ref{est:I-PR_1R2})
 + (\ref{est:Energy_v^3})\big\}.
 \end{split}
\Ee
Furthermore from (\ref{bdry_fR}) and (\ref{est:bdry1}) 
\Be\label{f:+-}
| f_R|_{L^2_t L^2_{\gamma_-}
 }^2\lesssim 
| f_R|_{L^2_t L^2_{\gamma_+}
 }^2 + 
 \frac{\e^2}{\delta^2} | (1- P_{\gamma_+})  (\mathbf{I}-\mathbf{P}) f_2| _{L^2_t L^2_{\gamma_-}
 }
 ^2
 = | f_R|_{L^2_t L^2_{\gamma_+}
 }^2 + 
 \frac{\e^2\kappa^2}{\delta^2} | \nabla_x u| _{L^2_t L^2 (\p\O)
 }
 ^2 . 
\Ee


Finally we collect the terms as 
\Be\notag
\begin{split}
&\text{r.h.s of} \ (\ref{est:EL})+ (\ref{est:EB})
+ \frac{1}{4C}  | \e^{-\frac{1}{2}}
  P_{\gamma_+}   f_R
|_{L^2_t L^2_{\gamma_+}}^2 + \frac{\e^{-1}}{16C} | f_R|_{ L^2_t L^2_{\gamma_-}}^2\\
&\leq \text{r.h.s of} \  (\ref{est:EG}) + (\ref{est:Gf_2}) +  (\ref{est:I-PR_1R2}) + (\ref{est:Energy_v^3}) + \frac{1}{4C} \times  \text{r.h.s of} \   (\ref{est:bdry1})
+ \frac{\e^{-1}}{16C}  \times  \text{r.h.s of} \    (\ref{f:+-}). 
\end{split}
\Ee
We choose large $N$ and then large $C$ so that $\frac{C_N}{4C}\ll \sigma_0$. Using Young's inequality for products, and then moving contributions of $ \| \kappa^{-\frac{1}{2}} \e^{-1} \sqrt{\nu} (\mathbf{I} - \mathbf{P}) f_R \|_{L^2_{t,x,v}}^2$ to l.h.s., we derive (\ref{est:Energy}).


\smallskip

Next we prove (\ref{est:Energy_t}). An energy estimate to (\ref{eqtn_fR_t}) and (\ref{bdry_fR_t}) lead to (\ref{est:Energy_t})
\begin{align}
&\frac{1}{2}\| \p_t f _R(t)\|_2^2 - \frac{1}{2} \| \p_t  f _R (0)\|_2^2
+ \frac{1}{ \kappa \e^{2 }} \int^t_0 \iint _{\O \times \R^3}\p_t  f _R L  \p_t f_R\label{Energy_LHS_t} \\
& +  \frac{1}{2\e} \int^t_0 \int_{\gamma_+} |\p_t f_R|^2 \notag\\
&-  \frac{1}{2\e} \int^t_0 \int_{\gamma_-} | 
  P_{\gamma_+} \p_t f_R- \frac{\e}{\delta}  (1-P_{\gamma_+})  \p_t   (\mathbf{I} -\mathbf{P}) f_2 
+r_{\gamma_+} (f_R)- \frac{\e}{\delta}r_{\gamma_+}  (
 (\mathbf{I} -\mathbf{P})
f_2)
|^2 \label{Energy_bdry_t}
 \\
= &
  - \frac{1}{\e^2 \kappa } \int^t_0 \iint_{\O \times \R^3} L_t (\mathbf{I} - \mathbf{P})f_R
 \p_t  f_R +\frac{1}{\e^2 \kappa }  \int^t_0 \iint_{\O \times \R^3} L(\mathbf{P}_t f_R)(\mathbf{I} - \mathbf{P}) \p_t f_R
  \label{Energy_L_t}
 \\
& +  \frac{2\delta }{\kappa \e } \int^t_0 \iint_{\O \times \R^3} 
\Gamma (f_R,\p_t f_R) (\mathbf{I} - \mathbf{P}) \p_t f_R 
+  \frac{2 }{  \kappa} \int^t_0 \iint_{\O \times \R^3} \Gamma( f_2,\p_t f_R) (\mathbf{I} - \mathbf{P})\p_t  f_R
\label{Energy_Gamma_t}
\\
&
+
 \frac{2}{  \kappa} \int^t_0 \iint_{\O \times \R^3} \Gamma(\p_t f_2, f_R) (\mathbf{I} - \mathbf{P})\p_t  f_R 
 \label{Energy_Gf_2_t}
\\
&+
 \frac{2}{\kappa} \int^t_0 \iint_{\O \times \R^3} \Gamma_t(  {f_2}, f_R)
\p_t f_R+ \frac{\delta}{\e \kappa }   \int^t_0 \iint_{\O \times \R^3}  \Gamma_t (f_R,f_R)\p_tf_R
\label{Energy_p_tGamma}
\\
&+ \int^t_0 \iint_{\O \times \R^3} ( \mathbf{I} -  \mathbf{P}) \mathfrak{R}_3  ( \mathbf{I} -  \mathbf{P}) \p_t f_R \label{Energy_I-PR_1_t}
\end{align}
\begin{align}
&+ \int^t_0 \iint_{\O \times \R^3}  \mathfrak{R}_4  \p_t f_R \label{Energy_R2_t}\\
&+ \int^t_0 \iint_{\O \times \R^3}
\frac{- (\p_t + \e^{-1} v\cdot \nabla_x ) \sqrt{\mu}}{\sqrt{\mu}} |\p_t f_R|^2
+ \int^t_0 \iint_{\O \times \R^3}\p_t \Big(
\frac{ -(\p_t + \e^{-1} v\cdot \nabla_x ) \sqrt{\mu}}{\sqrt{\mu}} \Big)f_R  \p_t f_R .\label{Energy_v^3_t}
\end{align}

We consider the first term of (\ref{Energy_v^3_t}). We decompose $\p_t f_R = \mathbf{P} \p_t f_R + (\mathbf{I} - \mathbf{P}) \p_t f_R$. The contribution of $\mathbf{P} \p_t f_R$ can be bounded above as, from (\ref{transp:mu}),  
\Be\label{est:Energy_v^3_t:1}
  \| (\ref{transp:mu}) 
 \|_{L^\infty_{t,x}  }
  \int^t_0
  \|     P \p_tf_R(s) \|_{L^{2}_x }^2 \dd s  .
\Ee
For the contribution of $(\mathbf{I} - \mathbf{P}) \p_t f_R$ we utilize an extra decomposition $\mathbf{1}_{|v|\leq \e^{-1}} + \mathbf{1}_{|v|\geq  \e^{-1}}$ Then it is bounded as 
\Be\label{est:Energy_v^3_t:2}
\begin{split}
&\|(\ref{transp:mu}) \|_\infty
\Big\{
\iiint  \mathbf{1}_{|v|\leq  \e^{-1}} |v|   \nu(v) |(\mathbf{I} - \mathbf{P}) \p_t f_R|^2\\
& \ \ \ \ \ \ \ \ \ \ \ \ \ \ 
+ \iiint  \mathbf{1}_{|v|\geq  \e^{-1}} \frac{|v|^{3/2}}{\mathfrak{w}^\prime (v)} \mathfrak{w}^\prime (v) \p_t f_R(v) \sqrt{\nu(v)} |(\mathbf{I} - \mathbf{P}) \p_t f_R |
\Big\}\\
\lesssim & \ \|(\ref{transp:mu}) \|_\infty
\Big\{ \e^{-1} \| \sqrt{\nu} (\mathbf{I} -\mathbf{P}) \p_t f_R \|_{L^2_{t,x,v}}^2
+ e^{-\frac{\varrho}{4\e^2}}
\| \mathfrak{w}^\prime \p_t f_R \|_{L^2_t L^\infty_{x,v}} \| \sqrt{\nu} (\mathbf{I} -\mathbf{P}) \p_t f_R \|_{L^2_{t,x,v}}
\Big\}.
\end{split}
\Ee 

For the second term of (\ref{Energy_v^3_t}) using (\ref{transp:mu_t}) we bound it by \\
\Be
\begin{split}
& 
 \| (\ref{transp:mu_t})_* \|_{L^\infty_{t,x}}
\| \sqrt{\nu} (\mathbf{I} -\mathbf{P})  f_R \|_{L^2_{t,x,v}}
 \| \sqrt{\nu} (\mathbf{I} -\mathbf{P}) \p_t f_R \|_{L^2_{t,x,v}}\\ 
 &
+ e^{-\frac{\varrho}{4\e^2}}
\| \nabla_x \p_t u \|_{L^2_t L^\infty_{ x}}
\| \mathfrak{w}   f_R \|_{ L^\infty_{t,x,v}} \| \sqrt{\nu} (\mathbf{I} -\mathbf{P}) \p_t f_R \|_{L^2_{t,x,v}}
 \\
&+\{ \| \nabla_x \p_t u \|_{L^\infty_{t,x}}+ \|(\ref{transp:mu_t})_*\|_{L^\infty_{t,x}}\}
\Big\{
\int^t_0 \| Pf_R (s)\|_{L^2_x}^2 \dd s  + \int^t_0 \|   P\p_t f_R  (s)\|_{L^2_x}^2 \dd s  
\Big\}.
 \label{est:Energy_v^3_t_second}
\end{split}
\Ee

Using (\ref{est:L_t}) we bound (\ref{Energy_L_t}) and (\ref{Energy_p_tGamma}) as 
\Be\begin{split}
|(\ref{Energy_L_t})|&\lesssim \kappa^{-\frac{1}{2}} \|\p_t u\|_{L^\infty_{t,x}} 
\| \kappa^{-\frac{1}{2}} \e^{-1} \sqrt{\nu} (\mathbf{I} - \mathbf{P}) f_R \|_{L^2_{t,x,v}} \\
& \ \ \ \ \ \ \ \ \ \times
\{\| P \p_t f_R \|_{L^2_{t,x}}
+ \kappa^{ \frac{1}{2}} \e 
\| \kappa^{-\frac{1}{2}} \e^{-1} \sqrt{\nu} (\mathbf{I} - \mathbf{P}) \p_t f_R \|_{L^2_{t,x,v}} 
\} \\
& \ \ + \kappa^{-\frac{1}{2}} \|\p_t u\|_{L^\infty_{t,x}} 
\| \kappa^{-\frac{1}{2}} \e^{-1} \sqrt{\nu} (\mathbf{I} - \mathbf{P})\p_t  f_R \|_{L^2_{t,x,v}}
\| P   f_R \|_{L^2_{t,x}}
,
\label{est:Energy_L_t} 
\end{split}\Ee
\Be\begin{split}\label{est:Energy_p_tGamma}
|(\ref{Energy_p_tGamma})|& \lesssim  
\kappa^{-\frac{1}{2}}\e\|\p_t u \|_{L^\infty_{t,x}} \| (\ref{est:f2}) \|_{L^\infty_{t,x}} \{ \| \sqrt{\nu}(\mathbf{I} - \mathbf{P}) f_R \|_{L^2_{t,x,v}} + \| P f_R \|_{L^2_{t,x}}\} \| \p_t f_R \|_{L^2_{t,x,v}}\\
& \ \ + \delta \kappa^{-1} \|\p_t u\|_{L^\infty_{t,x}}
\{
\| P \p_t f_R \|_{L^2_{t,x}}+ \| \sqrt{\nu} (\mathbf{I} -\mathbf{P}) \p_t f_R \|_{L^2_{t,x,v}}
\}\\
&  \ \ \ \ \ \ \ \ \ \  \times 
\{
\| P f_R \|_{L^\infty_tL^6_x} \| Pf_R \|_{L^2_t L^3_x}
+ \| \kappa^{-\frac{1}{2}} \e^{-1} (\mathbf{I} -\mathbf{P}) f_R \|_{L^2_{t,x,v}} 
\kappa^{ \frac{1}{2}} \e \|  \mathfrak{w}f _R\|_{L^\infty_{t,x,v}}
\}.
\end{split}\Ee

\hide
From (\ref{transp:mu}), we bound  by
\Be\label{est:Energy_v^3_t}
 \begin{split}
 &
 \| 
 |\nabla_x u| +  |\p_t u| + \e |u||\nabla_x u|  
 \|_{L^\infty_{t,x} ([0,T ] \times \O)}
  \\
 & \times 
 \{ \kappa^{ \frac{1}{2}} \e   \|w \p_t f_R\|_{L^2_t ((0,T) ; L^\infty( \O \times \R^3))}
  \| \kappa^{-\frac{1}{2}} \e^{-1} (\mathbf{I} - \mathbf{P}) \p_t f_R\|_{L^2 ([0,T] \times \O \times \R^3)}
  +
  \int^t_0
  \|     P \p_tf_R(s) \|_{L^{2}_x(\O)}^2 \dd s  \}.
 \end{split}
\Ee\unhide

The rest of terms can be controlled similarly as in the proof of (\ref{est:Energy}):
\begin{align}
(\ref{Energy_LHS_t})\geq &  \  \frac{1}{2}\| \p_t f _R(t)\|_{L^2_{x,v}}^2 - \frac{1}{2} \|\p_t f _R (0)\|_{L^2_{x,v}}^2
+  \sigma_0
\| \kappa^{-\frac{1}{2}} \e^{-1} \sqrt{\nu}(\mathbf{I} - \mathbf{P}) \p_t f_R\|_{L^2_{t,x,v}}^2,\label{est:EL_t} \\
|(\ref{Energy_Gamma_t})|  \lesssim &   \ 
\{  \delta \e \|   \mathfrak{w}f_R\|_{L^\infty_{t,x,v}} +  \e^2 \|   (\ref{est:f2})\|_{L^\infty_{t,x }}   \} \| \kappa^{-\frac{1}{2}} \e^{-1} \sqrt{\nu}
(\mathbf{I} - \mathbf{P}) \p_t f_R \|^2_{L^2_{t,x,v}} \notag
\\
&+  \frac{  \delta
}{\kappa^{3/2}}
  \|\kappa^{1/2}     {P} f_R \|_{L^\infty_tL^6_{x }} 
 \| \kappa^{1/2}   {P} \p_t f_R \|_{L^2_tL^3_{x }}
\| \kappa^{-\frac{1}{2}} \e^{-1} \sqrt{\nu}
(\mathbf{I} - \mathbf{P}) \p_t f_R \|_{L^2_{t,x,v}}\label{est:EG_t}\\
&+ \frac{\e}{\kappa^{1/2}}   \|    {P} f_2 \|_{L^\infty_{t,x} }
\|  {P} \p_t f_R \|_{L^2_{t,x} }
\| \kappa^{-\frac{1}{2}} \e^{-1} \sqrt{\nu}
(\mathbf{I} - \mathbf{P}) \p_t f_R \|_{L^2_{t,x,v}},\notag
\end{align}
\begin{align}
\\
|(\ref{Energy_Gf_2_t})| 
 \lesssim & \  \kappa^{-\frac{1}{2}}\e
   \| \sqrt{\nu} \p_t f_2 \|_{L^\infty_{t,x,v}}
 \{
 \|P f_R \|_{L^2_{t,x}}
 +  \|\sqrt{\nu}(\mathbf{I} -\mathbf{P}) f_R \|_{L^2_{t,x,v}}
 \}
  \| \kappa^{-\frac{1}{2}} \e^{-1} (\mathbf{I} - \mathbf{P})  \p_t f_R \|_{L^2_{t,x,v}} 
,
  \label{est:Gf_2_t}\\
  |(\ref{Energy_I-PR_1_t})| \lesssim & \  
 \kappa^{ 1/2} \e  \|
 (\ref{est:R3})
 \|_{L^2_{t,x}} \| \kappa^{-1/2} \e^{-1} (\mathbf{I} - \mathbf{P} )\p_t f_R \|_{L^2_{t,x,v}},
\label{est:Energy_I-PR_1_t}
\\
|(\ref{Energy_R2_t})| \lesssim   & \ 
\| (\ref{est:R4}) \|_{L^2_{t,x}}
 \big\{
  \|  {P } \p_t f_R\|_{L^2_{t,x}}+ 
  \kappa^{1/2}\e
  \|
  \kappa^{-1/2} \e^{-1}
  (\mathbf{I} -\mathbf{P}) \p_tf_R\|_{L^2_{t,x,v}} \big\}.\label{est:Energy_R2_t}
\end{align}
 \hide We recall (\ref{est:f2_t}).
We remark that, from (\ref{f_2}), (\ref{est:A&A_x}), and (\ref{Pf_2}), 
\Be\label{est:f2_t}
|\p_t (\mathbf{I} - \mathbf{P}) f_2| \lesssim \kappa ( |\nabla_x\p _t u| + 
 \e |\p_t u| |\nabla_x u|) e^{-\varrho |v-\e u|^2}, \ |\p_t Pf_2| \lesssim 
 (|\p_t p| + |\p_t \tilde{u} |) + \e |\p_t u| (|p| + |\tilde{u}|).
\Ee \unhide

Lastly we estimate (\ref{Energy_bdry_t}) and the first term of (\ref{Energy_v^3_t}). As in (\ref{est:EB}) we derive that (\ref{Energy_bdry_t}) is bounded from below by
 \Be
 \begin{split}\label{est:EB_t}
 \frac{1}{2 } 
&| \e^{-\frac{1}{2}}
(1- P_{\gamma_+}) \p_t  f_R
|_{L^2((0,T); L^2_{\gamma_+})}^2  
-\frac{1}{8C} | \e^{-\frac{1}{2}}
  P_{\gamma_+} \p_t   f_R
|_{L^2 ((0,T); L^2_{\gamma_+})}^2\\
&
 -
 C
 \Big\{
  \frac{\e}{ \delta^2} 
 |(1- P_{\gamma_+}) \p_t  (\mathbf{I} -\mathbf{P})   f_2|_{L^2 ((0,T); L^2_{\gamma_-})}^2
 +  \e \| \p_t  u \|_{\infty } |      f_R
|_{L^2 ((0,T); L^2_{\gamma_+})}^2\\
& \ \ \ \ \ \  +  \| \p_t  u \|_{\infty}\frac{\e^3}{\delta} |  (\mathbf{I} -\mathbf{P})    f_2
|_{L^2 ((0,T); L^2_{\gamma_+})}^2
  \Big\}\\
\geq  & \frac{1}{2 } 
 | \e^{-\frac{1}{2}}
(1- P_{\gamma_+}) \p_t  f_R
|_{L^2((0,T); L^2_{\gamma_+})}^2  
-\frac{1}{8C} | \e^{-\frac{1}{2}}
  P_{\gamma_+} \p_t   f_R
|_{L^2 ((0,T); L^2_{\gamma_+})}^2\\
&
 -
 C
 \Big\{
  \frac{\e \kappa^2}{ \delta^2} 
 | |\p_t \nabla_x u| + \e |\p_t u| |\nabla_x u| |_{L^2_tL^2 (\p\O) }^2
 +  \e \| \p_t  u \|_{\infty } |      f_R
|_{L^2 ((0,T); L^2_{\gamma_+})}^2\\
& \ \ \ \ \ \ \ \ \ +  \| \p_t  u \|_{\infty}\frac{\e^3}{\delta} 
\kappa^2 |\nabla_x u|^2_{L^2_t L^2(\p\O)} 
  \Big\}
  \ \ \text{for} \ C\gg 1,
  \end{split}
 \Ee
 where we have used $|r_{\gamma_+} (g)|_{L^2(\gamma_-)}\lesssim  \e \| \p_t u \|_\infty
 | g|_{L^2(\gamma_-)}$ from (\ref{bdry_fR_t}).  Now we bound $P_{\gamma_+} \p_t f_R$ using (\ref{trace}). Following the argument arriving at (\ref{est:bdry1}) and setting $h=|\p_t f|^2$ we derive 
 \Be
 \begin{split}
& \frac{1}{\e} \int^t_0 \int_{\gamma_+} |   \p_t f  |^2 \dd \gamma \dd s \\
 &\lesssim_N
 \| \p_t f(0) \|_{L^2_{x,v}} +  \| \p_t f  \|_{L^2_{t,x,v}} 
 + \int_0^t \iint_{\O \times \R^3} \Big|\Big( - \frac{1}{\e^2 \kappa} L \p_t f_R  + \text{r.h.s of } (\ref{eqtn_fR_t})\Big)\p_t f_R  \Big|\\
 &\lesssim_N \| \p_t f(0) \|_{L^2_{x,v}} ^2+  \| P \p_t f  \|_{L^2_{t,x}} ^2
 + \| \e^{-1} \kappa^{-1/2} \sqrt{\nu} (\mathbf{I}-\mathbf{P}) \p_t f  \|_{L^2_{t,x,v}} ^2\\
 & \ \ \  \ \ \   + (\ref{est:Energy_v^3_t:1}) + \cdots + (\ref{est:Energy_p_tGamma})+
 (\ref{est:EG_t})+  \cdots + (\ref{est:Energy_R2_t}).
 \end{split}\Ee

We conclude (\ref{est:Energy_t}) by collecting the terms.\end{proof}

\hide

\Be
\begin{split}\label{est:f2}
|(\ref{Energy_f2})| & \leq \{ \e \kappa^{\frac{3}{2}} \delta^{-1} 
\| \nabla_x^2 u_\kappa \|_{L^2_{t,x,v}}
+\e \kappa^{\frac{1}{2}} \delta^{-1} \| |u_\kappa||\nabla_x u_\kappa|\|_{L^2_{t,x,v}}
\}
\| \kappa^{-\frac{1}{2}} \e^{-1} \sqrt{\nu} (\mathbf{I}-\mathbf{P}) f_R \|_{L^{2}_{t,x,v}}\\
& \leq  C_{\sigma_0} 
\| \e \kappa^{\frac{3}{2}} \delta^{-1}  \nabla_x^2 u_\kappa \|_{L^2_{t,x,v}}^2
+C_{\sigma_0} \|\e \kappa^{\frac{1}{2}} \delta^{-1} |u_\kappa||\nabla_x u_\kappa|\|_{L^2_{t,x,v}}^2
+ \frac{\sigma_0}{10}\| \kappa^{-\frac{1}{2}} \e^{-1} \sqrt{\nu} (\mathbf{I}-\mathbf{P}) f_R \|_{L^{2}_{t,x,v}}^2.
\end{split}\Ee

\textit{Estimate of (\ref{Energy_f3}):} From (\ref{f_2}), 
\Be
\begin{split}\label{est:f3}
|(\ref{Energy_f3})| &\leq \e \delta^{-1} \| \p_t\mathbf{P}f_2 \|_{L^2_{t,x,v}} \| \mathbf{P} f_R \|_{L^2_{t,x,v}}\\
&+ 
\{  
\e^2 \kappa^{3/2} \delta^{-1}
\|  \p_t \nabla_x u_\kappa \|_{L^2_{t,x,v}}
+
\e^2 \kappa^{1/2} \delta^{-1}
\| |\p_t u_\kappa | | u_\kappa |\|_{L^2_{t,x,v}}
\}
\| \kappa^{-1/2} \e^{-1}\sqrt{\nu}(\mathbf{I} - \mathbf{P}) f_R\|_{L^2_{t,x,v}} \\
&+ \delta^{-1}\kappa^{ \frac{1}{2}} \e   \| \nabla_x (\rho_2, u_2, \theta_2) \|_{L^2_{t,x,v}} \| \kappa^{-\frac{1}{2}} \e^{-1} \sqrt{\nu} (\mathbf{I} - \mathbf{P}) f_R \|_{L^2_{t,x,v}}
\end{split}
\Ee

\hide Furthermore  
\begin{align}
|(\ref{Energy_Gamma}) | \leq& \   {\delta}{ \kappa ^{-1/2} } \int^t_0  \|\nu^{-1/2} \Gamma(f_R,f_R) \|_2 
\| 
\kappa^{-1/2} \e^{-1} \nu^{1/2} (\mathbf{I} - \mathbf{P}) f_R \|_2 \notag\\
 \leq & \ o(1) \int^t_0 \| 
\kappa^{-1/2} \e^{-1} \nu^{1/2} (\mathbf{I} - \mathbf{P}) f_R \|_2^2 \notag\\
&+ C \delta^2 \kappa^{-1} \int_0^t\|\nu^{-1/2} \Gamma(f_R,f_R) \|_2 ^2\label{Gamma_L2}
\end{align} 
and 
\begin{align}
(\ref{Energy_forcing})  \leq & \ o(1) \int^t_0 \| 
\kappa^{-1/2} \e^{-1} \nu^{1/2} (\mathbf{I} - \mathbf{P}) f_R \|_2^2 \notag\\
&+ C  \kappa^{3 } \e^2 \delta^{-2} \int^t_0 \|(\mathbf{I} - \mathbf{P}) ( v\cdot \nabla_x L^{-1} (v\cdot \nabla_x u_\kappa \cdot v \sqrt{\mu}) ) \|_2^2\label{Forcing_L2}
\end{align} \unhide
\unhide

\hide

Thus we might have  
\Be\begin{split}
&\| f_R(t) \|_2^2 + \int^t_0\| \e^{-1 } \kappa^{-\frac{1}{2}} (\mathbf{I} - \mathbf{P}) f_R\|_2^2\\
\lesssim  & \| f_R (0) \|_2^2 + \int^t_0 \frac{\delta}{\kappa^{1/2}}  \| \Gamma(f_R, f_R) \|_2\| \e^{-1 } \kappa^{-\frac{1}{2}}  (\mathbf{I} - \mathbf{P}) f_R\|_2 
\\
&+ \int_0^t \iint_{\O \times \R^3} \frac{\kappa^{1/2}}{\delta} \| 
(\mathbf{I} - \mathbf{P} )\mathfrak{M}_1 \|_2 \| 
\e^{-1} \kappa^{-1/2} 
(\mathbf{I} - \mathbf{P}) f_R \|_2
\\
&+ \frac{1}{\delta} \int^t_0 \iint |\mathbf{P} \mathfrak{M}_2 \mathbf{P} f_R|+ \int^t_0 \iint_{\O \times \R^3}\frac{[\p_t + \e^{-1} v\cdot \nabla_x ]\mu_\e}{\mu_\e}
|f_R|^2  
\end{split}\Ee
As long as the lower order term can be bounded, the major task is to have a good bound of the $\Gamma$-contribution in the last line. By decomposition $f_R = \mathbf{P} f_R + ( \mathbf{I} - \mathbf{P})f_R$, we have 
\Be\begin{split}\label{est_Gamma}
& \int^t_0 \delta^2   \|  \Gamma(f_R, f_R) \|_2^2\\
\lesssim&  \int^t_0 \delta^2   \|  \Gamma(\mathbf{P}f_R, \mathbf{P}f_R) \|_2^2 + 
  \int^t_0 \delta^2   \|  \Gamma(( \mathbf{I} -\mathbf{P})f_R,  |f_R|) \|_2^2 \\
  \lesssim & \int^t_0 \| \delta^{\frac{1}{2}}  \mathbf{P} f_R\|_4^4
  + \int^t_0\| \e^{-1 } \delta^{-\frac{1}{2}} (\mathbf{I} - \mathbf{P}) f_R\|_2^2 \| \e \delta^{\frac{3}{2}} f_R \|_\infty^2.
\end{split}\Ee

This estimate holds for any dimension. But the spaces are chosen for 3D. 
\unhide

\hide
\section{$\mathbf{P}f^\e$-estimate}
From now we set
\Be\label{delta_e}
\delta= \e^{\kappa} \ for \ some \ \kappa>0.
\Ee

Roughly we have 
\Bes
 \e^{\f{\kappa}{2}}  v\cdot \nabla_x f_R = - \e^{1+  \f{\kappa}{2}}\p_t f_R   
- \e^{-1-\f{\kappa}{2}}  {L}  f_R  +   \e^{\f{\kappa}{2}} \Gamma (f_R , f_R ) {\color{red}+(l.o.t)}
\Ees
The weak formulation is 
\Bes
 \iint_{\O \times \R^3} - \e^{\f{\kappa}{2}}   f_R v\cdot \nabla_x \phi
= \iint_{\O \times \R^3} \{ - \e^{1+  \f{\kappa}{2}}\p_t f_R  
- \e^{-1-\f{\kappa}{2}}  {L}  f_R +   \e^{\f{\kappa}{2}} \Gamma (f_R, f_R)\} \phi
\Ees
Set {\color{red}(Check the set of $\psi$'s in \cite{EGKM2})}, for some integer $m>0$,
\Be
\phi = \psi(v) \cdot \nabla_x (-\Delta_x)^{-1} | \e^{\f{\kappa}{2}} \mathbf{P}f_R|^{(m-1)} 
\Ee
Then 
\Bes
  \iint |\e^{\f{\kappa}{2}} \mathbf{P}f_R |^m \lesssim  
\| \e^{-1 - \f{\kappa}{2}} (\mathbf{I} - \mathbf{P}) f_R\|_2  
\| \nabla_x (-\Delta_x)^{-1} | \e^{\f{\kappa}{2}} \mathbf{P}f_R|^{m-1}  \|_2  + \cdots
\Ees
Note that if $h \in L^m$ then $|h|^{m-1} \in L^{\frac{m}{m-1}}$ and hence $\nabla_x (-\Delta_x)^{-1} |h|^{m-1} \in W^{1,\frac{m}{m-1} }$. From Gagliardo-Nirenberg-Sobolev inequality in 2D
\Be
\|\nabla_x (-\Delta_x)^{-1} |h|^{m-1}  \|_{2}\lesssim \| \nabla_x \nabla_x (-\Delta_x)^{-1} |h|^{m-1}  \|_{\frac{m}{m-1}} \lesssim 
 \|   |h|^{m-1}  \|_{\frac{m}{m-1}} \lesssim \| h  \|_{m}^{m-1}
\Ee 
with $\frac{1}{2} = \frac{1}{m/(m-1)}- \frac{1}{2}$ which implies that for any $m<\infty$
\Bes
\| \nabla_x (-\Delta_x)^{-1} | \e^{\f{\kappa}{2}} \mathbf{P}f_R|^{m-1}  \|_2
\lesssim
\| \e^{\f{\kappa}{2}} \mathbf{P}f_R\|_{L^m(\R^3_x)}^{m-1}
\Ees
Therefore we have, for any $m<\infty$ 
\Be\label{Lm}
\sup_t\| \e^{\f{\kappa}{2}} \mathbf{P} f_R \|_{L^m}  \lesssim \| \e^{-1 - \f{\kappa}{2}} (\mathbf{I} - \mathbf{P}) f_R \|_2.
\Ee

\unhide

\subsection{$L^6_x$-integrability gain for $\mathbf{P}f_R$}

\begin{proposition}\label{prop:L6}
Under the same assumptions in Proposition \ref{prop:Hilbert}, we have  for all $t  \in [0,T]$
\Be\label{L6}
\begin{split}
&d_6
\|   {P} f_R(t) \|_{L^6_x} \\
\lesssim & \  
 (\e\| (\ref{transp:mu})\|_{L^\infty_{t,x}}
+  \e  \kappa^{-1} 
 \|(\ref{est:f2})\|_{L^\infty_{t,x}}
)
 \| f_R(t) \|_{L^2_{x,v}}
%
%
+ \e \| \p_t f_R(t) \|_{ {L^2_{x,v}}} 
\\
&+o(1)  (\kappa \e)^{1/2}
 \| \mathfrak{w} f_R (t) \|_{L^\infty_{x,v}} 
+  \frac{\e}{\delta}| (\ref{est:f2})|_{L^4(\p\O)} 
  + \e \| 
 (\ref{est:R1})  \|_{L^2_{x,v}}+  \e \| (\ref{est:R2})
 \|_{L^2_{x,v}} \\
&
  + 
\Big(
\frac{1}{\e \kappa }+
 \frac{\delta}{\kappa} \| \mathfrak{w}_{\varrho,\ss} f_R(t) \|_{L^\infty_{x,v}}
 \Big)
 {
\big\{
\| (\mathbf{I} - \mathbf{P}) f_R  \|_{ {L^2_{t,x,v}}} + \| (\mathbf{I} - \mathbf{P})\p_t  f_R  \|_{ {L^2_{t,x,v}}} + \e \| \p_t u \|_{L^\infty_{t,x}} \| P f_R \|_{L^2_{t,x}}
\big\}
}
\\
&+   \| \mathfrak{w}_{\varrho, \ss} f_R(t) \|_{L^\infty _{x,v}}^{1/2}
{
\big\{
|  f_R |_{L^2_tL^2({\gamma_+})}^{1/2}
+ |  \p_t f_R  |_{L^2_tL^2({\gamma_+})}^{1/2}
\big\}
}
,
\end{split}
\Ee
\hide

\Be\label{L6}
\begin{split}
&d_6
\|   {P} f_R(t) \|_{L^6_x} \\
\lesssim & \  
 (\e\| (\ref{transp:mu})\|_{L^\infty_{t,x}}
+  \e  \kappa^{-1} 
 \|(\ref{est:f2})\|_{L^\infty_{t,x}}
)
 \| f_R(t) \|_{L^2_{x,v}}
%
%
+ \e \| \p_t f_R(t) \|_{ {L^2_{x,v}}} 
\\
&+o(1)  (\kappa \e)^{1/2}
 \| \mathfrak{w} f_R (t) \|_{L^\infty_{x,v}} 
+  \frac{\e}{\delta}| (\ref{est:f2})|_{L^4(\p\O)} 
  + \e \| 
 (\ref{est:R1})  \|_{L^2_{x,v}}+  \e \| (\ref{est:R2})
 \|_{L^2_{x,v}} \\
&
  + 
\Big(
\frac{1}{\e \kappa }+
 \frac{\delta}{\kappa} \| \mathfrak{w}_{\varrho,\ss} f_R(t) \|_{L^\infty_{x,v}}
 \Big)
\| (\mathbf{I} - \mathbf{P}) f_R (t) \|_{ {L^2_{x,v}}} +  |(1- P_{\gamma_+}) f_R(t)|_{L^2({\gamma_+})}^{1/2} \| \mathfrak{w}_{\varrho, \ss} f_R(t) \|_{L^\infty _{x,v}}^{1/2},
\end{split}
\Ee\unhide
where
\Be\label{d6}
d_6:=1-\Big[ \frac{\delta}{\kappa} \| Pf_R(t)\|_{L^6_x}^{1/2} \| Pf_R (t) \|_{L^2_x}^{1/2} + \e \| u(t) \|_{L^\infty_x}\Big]^{1/6}.
\Ee

\end{proposition}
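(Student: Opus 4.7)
The strategy is a test function method in the spirit of \cite{EGKM2}, adapted to the local Maxwellian $\mu=M_{1,\e u,1}$. The starting observation is that after multiplication by $\e$, the remainder equation \eqref{eqtn_fR} reads
\begin{equation*}
v\cdot\nabla_x \mathbf{P}f_R=-\e\p_tf_R-\tfrac{1}{\e\kappa}L(\mathbf{I}-\mathbf{P})f_R-v\cdot\nabla_x(\mathbf{I}-\mathbf{P})f_R+\e\,(\text{source and collision})
\end{equation*}
so that ``one spatial derivative of $Pf_R$'' is controlled by $\e\|\p_t f_R\|_{L^2}$ together with the dissipation (which, after inversion through $L^{-1}$, behaves like a spatial derivative on the hydrodynamic part). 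This heuristic is upgraded to an $L^6_x$ bound by the Gagliardo--Nirenberg--Sobolev embedding in three dimensions, encoded through an elliptic test function.

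Concretely, for each $j\in\{0,1,2,3,4\}$ the plan is to introduce a dual velocity profile $\psi_j(v)$ conjugate to the moment vector $v\varphi_k\sqrt\mu$ of \eqref{basis}, and to test the weak form of \eqref{eqtn_fR} against
\begin{equation*}
\phi_j(t,x,v):=\psi_j(v)\cdot\nabla_x g_j(t,x),\qquad g_j:=(-\Delta_x)^{-1}\bigl(|Pf_R|^{4}\,P_j f_R\bigr),
\end{equation*}
with zero Dirichlet data for $g_j$ on $\p\Omega$. Since $u|_{\p\Omega}=0$ we have $\mu|_{\p\Omega}=M_{1,0,1}$, so the moment structure at the boundary is the standard one and $\phi_j$ interacts cleanly with the projection $P_{\gamma_+}$ appearing in \eqref{bdry_fR}. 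Integration by parts in $x$ converts $\iint v\cdot\nabla_x\mathbf{P}f_R\,\phi_j\,\dd v\,\dd x$ into $\sum_j\int P_j f_R\,(-\Delta_x)g_j\,\dd x=\int |Pf_R|^6\,\dd x$, while the elliptic estimate $\|\nabla_x g_j\|_{L^2_x}\lesssim \||Pf_R|^5\|_{L^{6/5}_x}=\|Pf_R\|_{L^6_x}^{5}$ then permits division by $\|Pf_R\|_{L^6_x}^{5}$ to isolate one factor of $\|Pf_R\|_{L^6_x}$ on the left and recover \eqref{L6}. In this pairing, $\e\p_t f_R$ produces the $\e\|\p_t f_R\|_{L^2}$ term, the dissipation $\tfrac{1}{\e\kappa}L(\mathbf{I}-\mathbf{P})f_R$ produces the prefactor $\tfrac{1}{\e\kappa}$ in front of $\|(\mathbf{I}-\mathbf{P})f_R\|_{L^2}$, and the boundary datum $\tfrac{\e}{\delta}(1-P_{\gamma_+})(\mathbf{I}-\mathbf{P})f_2$ from \eqref{bdry_fR} yields the $\tfrac{\e}{\delta}|(\ref{est:f2})|_{L^4(\p\Omega)}$ contribution via H\"older on $\gamma$.

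The main obstacle will be the bookkeeping of corrections peculiar to the local Maxwellian setting together with the nonlinearity. The streaming term $\tfrac{(\p_t+\e^{-1}v\cdot\nabla_x)\sqrt\mu}{\sqrt\mu}\,f_R$ in \eqref{eqtn_fR}, when paired against $\phi_j$, produces the coefficient $\e\|(\ref{transp:mu})\|_{L^\infty_{t,x}}$ multiplying $\|f_R\|_{L^2}$, and the linear collision piece $\tfrac{2}{\kappa}\Gamma(f_2,f_R)$ produces $\e\kappa^{-1}\|(\ref{est:f2})\|_{L^\infty_{t,x}}\|f_R\|_{L^2}$. The boundary trace of $f_R$ generated by integration by parts is handled through the split $|f_R|^2|n\cdot v|\lesssim |f_R|_{\gamma}\,\|\mathfrak{w}f_R\|_{L^\infty}$, which is the origin of the mixed trace/weighted $L^\infty$ contribution on the right-hand side of \eqref{L6}. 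Finally, the genuinely nonlinear piece $\tfrac{\delta}{\e\kappa}\Gamma(f_R,f_R)$ paired with $\phi_j$ produces a trilinear term $\tfrac{\delta}{\kappa}\|Pf_R\|_{L^6_x}\|Pf_R\|_{L^2_x}\|Pf_R\|_{L^6_x}^5$, whose prefactor $\bigl(\tfrac{\delta}{\kappa}\|Pf_R\|_{L^6_x}^{1/2}\|Pf_R\|_{L^2_x}^{1/2}\bigr)^{1/6}$ is absorbed into the constant $d_6$ of \eqref{d6}; the solvability of the dual basis $\psi_j$ against the shifted Maxwellian $M_{1,\e u,1}$ simultaneously demands $\|\e u\|_{L^\infty_x}\ll 1$, which also enters $d_6$. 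Pushing these corrections through without losing an extra power of $\kappa$ beyond those already displayed in \eqref{L6} is the key technical difficulty.
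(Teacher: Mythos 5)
Your overall strategy---test the weak form of \eqref{eqtn_fR} against a dual velocity profile times the gradient of an elliptic auxiliary function whose source is a fifth power of the hydrodynamic part, then invoke the $L^{6/5}\to \dot W^{1,2}$ elliptic estimate and divide by $\|Pf_R\|_{L^6_x}^5$---is exactly the test-function method of \cite{EGKM2} used in the paper's proof, down to the role of $\e\p_t f_R$, the dissipation prefactor $\frac{1}{\e\kappa}$, and the appearance of $\e\|u\|_{L^\infty}$ in $d_6$ from converting between the local projection $\mathbf{P}$ and a global-Maxwellian projection $\tilde{\mathbf{P}}$. However, your choice of boundary condition for the auxiliary elliptic problems is wrong. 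You prescribe zero \emph{Dirichlet} data for all $g_j$, and argue the boundary pairing with $P_{\gamma_+}$ is clean because $\mu|_{\p\O}=M_{1,0,1}$. But the mechanism killing $\int_\gamma\psi\,P_{\gamma_+}f$ in \eqref{no_Pg} differs by component: for $\psi_c$ it is the choice $\beta_c=5$ that makes $\int_{\R^3}(|v|^2-\beta_c)(v\cdot n)^2\mu_0\,\dd v=0$; for $\psi_{b}$ it is oddness of the integrand; but for the density test function $\psi_a$ the $v$-moment $\int_{\R^3}(|v|^2-\beta_a)(v\cdot n)^2\mu_0\,\dd v=-5\neq 0$ (since $\beta_a=10$ is tuned in \eqref{beta_abc} to cancel against $\varphi_4$, not against constants), and it is instead the \emph{Neumann} condition $\p_n\varphi_a|_{\p\O}=0$ of \eqref{phi_a} that annihilates it. With Dirichlet data on your $g_0$ this boundary term does not cancel, and your estimate breaks.

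Two further gaps. First, a single family $g_j=(-\Delta_x)^{-1}(|Pf_R|^4 P_j f_R)$ does not by itself isolate $\sum_i\|b_i\|_{L^6_x}^6$: the pairing of $v\cdot\nabla_x\mathbf{P}f_R$ with a momentum test function produces off-diagonal pieces $\int_\O b_j\,\p_i\p_j\varphi$, and the paper must run \emph{two} momentum families $\psi_{b,1}^{i,j}$ and $\psi_{b,2}^{i,j}$ and combine them as in \eqref{est:wf1_b}. Second, the conclusion \eqref{L6} has on its right-hand side time-integrated norms $\|(\mathbf{I}-\mathbf{P})f_R\|_{L^2_{t,x,v}}$, $\|(\mathbf{I}-\mathbf{P})\p_t f_R\|_{L^2_{t,x,v}}$, and $|f_R|_{L^2_tL^2(\gamma_+)}$, whereas the weak formulation at a fixed time only produces the instantaneous norms $\|(\mathbf{I}-\mathbf{P})f_R(t)\|_{L^2_{x,v}}$ and $|f_R(t)|_{L^2(\gamma_+)}$; you omit the one-dimensional Sobolev-in-time embedding \eqref{I-P:expansion} that converts them, and this step is not cosmetic since only the time-integrated quantities are controlled by the dissipation $\mathcal D$.
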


\begin{proof} For the sake of simplicity we use notations (\ref{short_notation}) throughout this subsection.

We view (\ref{eqtn_fR}) as a weak formulation for a test function $\psi$
\Be\label{weak_form}
\begin{split}
 &\underbrace{
 \iint_{\O \times\R^3}    f_R   v\cdot \nabla_x \psi}_{(\ref{weak_form})_1}
- \underbrace{
\int_\gamma f_R \psi}_{(\ref{weak_form})_2}
-\underbrace{
 \iint_{\O \times\R^3}      \e \p_t f_R   \psi }_{(\ref{weak_form})_3}
\\ 
&=
   \iint_{\O \times \R^3} \psi \left\{ \frac{1}{ \e \kappa} L
  f_R
- \frac{2\e}{\kappa} \Gamma({f_2}, f_R)
-    \frac{   \delta }{ \kappa}\Gamma(f_R, f_R)
+ \frac{( \e\p_t + 
  v\cdot \nabla_x) \sqrt{\mu}}{\sqrt{\mu}} f_{R}
 -
\e(\mathbf{I}- \mathbf{P})\mathfrak{R}_1 - \e\mathfrak{R}_2\right\}.   
\end{split}
\Ee
The proof of the lemma is based on a recent test function method in the weak formulation (\cite{EGKM, EGKM2}). We define
\Be\label{P0f}
\mathbf{\tilde{P}}f_R :=  \Big\{ a + b \cdot v + c \frac{|v|^2-3}{\sqrt{6}} \Big\} \sqrt{\mu_0} \ \text{and} \  \tilde{P}f_R := (a,b,c),
\Ee
where $a:= \langle f_R, \sqrt{\mu_0} \rangle, b:=  \langle f_R,v \sqrt{\mu_0} \rangle$, and $c:= \langle f_R,  \frac{|v|^2-3}{\sqrt{6}}\sqrt{\mu_0} \rangle$. We choose a family of test functions as %
\begin{align}
 \psi _{a }&:= (|v|^{2}-\beta _{a})v\sqrt{
\mu_0}\cdot \nabla _{x}\varphi_{a}
,  \label{phia}\\
 \psi _{b,1 }^{i,j}&:= (v_{i}^{2}-\beta _{b})\sqrt{\mu_0}\partial _{j}%
\varphi _{b }^{j},\quad i,j=1,2,3,  \label{phibj}\\
\psi^{i,j}_{b,2} &:=|v|^{2}v_{i}v_{j}\sqrt{\mu_0}\partial _{j}\varphi _{b }^{i} ,\quad i\neq
j,  \label{phibij} \\
\psi_{c }&:= (|v|^{2}-\beta _{c})v\sqrt{\mu_0}\cdot \nabla _{x}\varphi%
_{c } ,  \label{phic}
\end{align}
where we choose $\beta_a=10, \beta_b=1, \beta_c=5$ such that 
\Be
0 = \int_{\R^3} (|v|^2 - \beta_a) \frac{|v|^2-3}{\sqrt{6}} (v_1)^2 \mu_0 (v) \dd v 
=
\int_{\R} (v_1^2 - \beta_b) \mu_0 (v_1) \dd v_1 = \int_{\R^3} (|v|^2-\beta_c) v_i^2 \mu_0 (v) \dd v .\label{beta_abc}
\Ee
Here, 
\begin{align}
-\Delta _{x}\varphi_{a }  =  a^{5}&   \ \ \text{with} \ \  \frac{\partial \varphi_{a }}{%
\partial n}\Big|_{\partial \Omega }=0,%
\label{phi_a}
\\ 
-\Delta _{x}\varphi _{b }^{j}  =b^{5}_{j}&   \ \ \text{with} \ \  \varphi _{b}^{j}|_{\partial
\Omega }=0, \label{phi_b}\\
 -\Delta _{x}\varphi_{c }  =c^{5}&  \ \ \text{with} \ \  \varphi %
_{c }|_{\partial \Omega }=0.\label{phi_c}
\end{align} 
A unique solvability to the above Poisson equations when $(a,b,c) \in L^6(\O)$ and an estimate 
\Be
\| \nabla_x^2 \varphi_{(a,b,c)}\|_{L^{6/5}(\O)} + \| \nabla_x \varphi _{(a,b,c)}\|_{L^2(\O)}
+ \| \varphi_{(a,b,c)}\|_{ L^6(\O) } \lesssim \| |\tilde{P}f_R|^5\|_{L^{6/5}(\O)}
\lesssim \| \tilde{P}f_R\|_{L^6(\O)}^5
. \label{elliptic:varphi}
\Ee
 is a direct consequence of Lax-Milgram and suitable extension  (extend $a^5$ of (\ref{phi_a}) evenly in $x_3 \in \R$, and $b^5$ and $c^5$ of (\ref{phi_b}) and (\ref{phi_c}) oddly in $x_3 \in \R$, then solve the Poisson equation, and then restrict the whole space solutions to the half space $x_3>0$)  and a standard elliptic estimate $(L^{\frac{6}{5}}(\O) \rightarrow \dot{W}^{2,\frac{6}{5}} (\O) \cap \dot{W}^{1,2} (\O) \cap L^6(\O))$. 

From $M_{1,\e u , 1}(v) = M_{1,0 , 1}(v) + O( \e) |u| | v-\e u | M_{1,\e u , 1}(v) $ we can easily check that 
\Be\label{diff:P-tP}
|\mathbf{P} f_R (t,x,v)- \mathbf{\tilde{P}}f_R (t,x,v)|\lesssim \e |u(t,x)| |v-\e u| \sqrt{\mu }
 |
f_R(t,x,v)|.
\Ee
Therefore we have 
\Be\label{P-tildeP}
\begin{split}
\| P f_R(t) \|_{ L^6_x} \lesssim&
\| \mathbf{P} f_R (t) \|_{L^6_{x,v}}  \lesssim \| \mathbf{\tilde{P}} f_R (t) \|_{L^6_{x,v}} 
+ \e \| u(t) \|_{\infty} \{
\|  {P} f_R (t) \|_{L^6_{x }}  + \| ( \mathbf{I}-\mathbf{P}) f_R (t) \|_{L^6_{x,v}} 
\}\\
\lesssim & (1+ \e \| u \|_\infty)  \|  {\tilde{P}} f_R (t) \|_{L^6_{x }} 
+ \e \| u(t) \|_{\infty}   \| ( \mathbf{I}-\mathbf{P}) f_R (t) \|_{L^6_{x,v}} 
.
\end{split}
\Ee
Note that $ \| ( \mathbf{I}-\mathbf{P}) f_R (t) \|_{L^6_{x,v}} \leq  \| ( \mathbf{I}-\mathbf{P}) f_R (t) \|_{L^\infty_{x,v}} ^{2/3} \| ( \mathbf{I}-\mathbf{P}) f_R (t) \|_{L^2_{x,v}} ^{1/3}
\lesssim o(1)  (\kappa \e)^{1/2}  \| \mathfrak{w} f_R (t) \|_{L^\infty_{x,v}}  + (\kappa \e)^{-1} \| ( \mathbf{I}-\mathbf{P}) f_R (t) \|_{L^2_{x,v}}$. Hence to prove the lemma and (\ref{L6}) it suffices to prove the same bound for $\|  \tilde{P}f_R \|_{L^6_{x,v}}:=\| ( a,b,c) \|_{L^6_{x }} $.
%

Following the direct computations in the proof of Lemma 2.12 in \cite{EGKM2} we derive that 
\begin{equation}\label{est:wf1}  (\ref{weak_form})_1 =
\begin{cases}
 -5  \| a(t) \|_{6}^6 +o(1)  \|   \mathbf{\tilde{P}} f_R(t) \|_6^6
+O(1) \| (\mathbf{I} - \mathbf{ {P}}) f_R(t) \|_6^6
& \text{if } \psi=\psi_a,\\ 
-2 \int_\O   b_i  \p_i \p_j  \varphi_b^j 
+o(1)  \|   \mathbf{\tilde{P}} f_R(t) \|_6^6 +O(1) \| (\mathbf{I} - \mathbf{ {P}}) f_R(t) \|_6^6  & \text{if } \psi=\psi_{b,1}^{i,j},\\ 
\int_\O b_j \p_{i} \p_j  \varphi_b^i
 + \int_\O b_i \p_{j} \p_j  \varphi_b^i +O(1) \| (\mathbf{I} - \mathbf{ {P}}) f_R(t) \|_6^6
 & \text{if } \psi=\psi_{b,2}^{i,j} \ \text{and} \ i \neq j,\\
 5  \| c(t)\|_6^6 +o(1)  \|   \mathbf{\tilde{P}} f_R(t) \|_6^6 + O(1) \| (\mathbf{I} - \mathbf{ {P}}) f_R(t) \|_6^6  & \text{if } \psi=\psi_c.
    \end{cases}
\end{equation}
For $\|  b_i\|_{6}^6$, using the second and third estimate of (\ref{est:wf1}) we deduce that 
\Be\label{est:wf1_b}
\begin{split}
\|  b_i\|_{L^6(\O)}^6&= - \int_\Omega b_i \Delta_x\varphi_b^i \dd x =  - \int_\Omega b_i \p_i^2\varphi_b^i \dd x  - 
\sum_{j(\neq i)} \int_\Omega b_i \p_j^2\varphi_b^i \dd x
\\
&= \frac{1}{2}\sum_{j}(\ref{weak_form})_1|_{\psi^{j,i}_{b,1}}  - \sum_{j(\neq i)}{(\ref{weak_form})_1|_{\psi^{i,j}_{b,2}}} 
+ o(1)  \|   \mathbf{\tilde{P}} f_R(t) \|_6^6 +O(1) \| (\mathbf{I} - \mathbf{ {P}}) f_R(t) \|_6^6.
\end{split}\Ee
\hide From the second and third estimate of (\ref{est:wf1}) and 
\Be\label{est:wf1_b}
\begin{split}
\|  b_i\|_{L^6(\O)}^6&=
\|b_i \p_i^2 \varphi_b^i \|_{L^1(\O)} +\sum_{j(\neq i)} \|  b_i \p_j^2\varphi_b^i \|_{L^1(\O)} 
\lesssim  -\sum_{j(\neq i)} \|  b_j \p_i \p_j \varphi_b^i \|_{L^1(\O)}
+ o(1)  \|   \mathbf{\tilde{P}} f_R(t) \|_6^6 +O(1) \| (\mathbf{I} - \mathbf{ {P}}) f_R(t) \|_6^6\\
&\lesssim  o(1)  \|  {\tilde{P}} f_R(t) \|_6^6 +O(1) \| (\mathbf{I} - \mathbf{ {P}}) f_R(t) \|_6^6.
\end{split}\Ee\unhide

Now we consider the boundary term $(\ref{weak_form})_2$. 
From (\ref{phia})-(\ref{phic}) and (\ref{beta_abc})
\begin{equation}   \label{no_Pg}
  \int_{\gamma}  \psi P_{\gamma_+} f=
\begin{cases}
 \int_{\p\O}\p_n \varphi_a  \int_{\R^3} (|v|^2 - \beta_a) (v\cdot n)^2 \mu_0 \dd v \dd S_x  =0
& \text{if } \psi=\psi_a,\\
0& \text{if } \psi=\psi_{b,1}^{i,j} \ \text{or} \ \psi_{b,2}^{i,j},\\
 \int_{\p\O}\p_n \varphi_c  \int_{\R^3} (|v|^2 - \beta_c) (v\cdot n)^2 \mu_0 \dd v \dd S_x  =0
& \text{if } \psi=\psi_c. 
\end{cases}
\end{equation}
Here we have used the Neumann boundary condition of (\ref{phi_a}) for $\psi_a$, and the last identity in (\ref{beta_abc}) for $\psi_c$. For $\psi_{b,1}^{i,j}$ or $\psi_{b,2}^{i,j}$ we used the fact that the integrands are odd in $v$. From (\ref{bdry_fR}),  we decompose $f|_{\gamma}= P_{\gamma_+} f + \mathbf{1}_{\gamma_+} (1- P_{\gamma_+})f - \mathbf{1}_{\gamma_-}\frac{\e}{\delta} (1- P_{\gamma_+}) f_2$. From (\ref{no_Pg}) together with (\ref{est:I-Pf2}) and (\ref{est:Pf2}) we have  
\Be
\begin{split}\label{est:wf2}
| (\ref{weak_form})_2|  &= \Big| \cancel{\int_{\gamma}  \psi P_{\gamma_+} f} +    \int_{\gamma}  \psi\{ \mathbf{1}_{\gamma_+} (1- P_{\gamma_+})f - \mathbf{1}_{\gamma_-}\frac{\e}{\delta} (1- P_{\gamma_+}) f_2\} \Big|\\
&\lesssim
 |\nabla_x \varphi| _{L^{4/3}(\p\O)} \big\{|(1-P_{\gamma_+}) f_R|_{4, \gamma_+} 
+ \frac{\e }{\delta }   |(\ref{est:f2})| _{L^{4}(\p\O) }  \big\}
\end{split}
\Ee
where we have used $|\int_{\gamma_+} \psi (1-P_{\gamma_+}) f|\lesssim  |\nabla_x \varphi|_{L^{ {4} / {3}}(\p\O) } |(1-P_{\gamma_+}) f|_{4, \gamma_+}$ at the last line. Here $\varphi \in \{  \varphi_a, \varphi_b, \varphi_c\}$. For the first term of (\ref{est:wf2}) we interpolate 
\Be\label{inter:bdry}
|(1-P_{\gamma_+}) f_R|_{4, \gamma_+}\lesssim | \e^{-\frac{1}{2}}(1-P_{\gamma_+}) f_R|_{2, \gamma_+}^{1/2} \e^{\frac{1}{4}} \|\mathfrak{w}_{\varrho, \ss} f_R\|_\infty^{1/2}.
\Ee
 For the second term of (\ref{est:wf2}), we use (\ref{elliptic:varphi}) 
 and a trace theorem $(\dot{W}^{1,\frac{6}{5}} (\mathbb{T}^2\times \R_+) \cap L^2(\mathbb{T}^2\times \R_+) \rightarrow  {W}^{1- \frac{1}{6/5},\frac{6}{5}} (\mathbb{T}^2 )) $, and the Sobolev embedding $(W^{\frac{1}{6},\frac{6}{5}}( \mathbb{T}^2) \rightarrow L^{4/3} (\mathbb
 {T}^2))$ to conclude that
 \Be\label{4/3bdry}
 |\nabla_x \varphi|_{L^{\frac{4}{3}}( \mathbb{T}^2)}\lesssim  |\nabla_x \varphi |_{W^{\frac{1}{6},\frac{6}{5}}(  \mathbb{T}^2)}\lesssim \|  \nabla_x \varphi \|_{\dot{W}^{1, \frac{6}{5}}( \mathbb{T}^2 \times \R_+)
 \cap L^2(\O)
 }
 \lesssim \| \tilde{P}f_R \|_{L^6(\mathbb{T}^2 \times \R_+)}^5 
 . 
 \Ee

Next we consider $(\ref{weak_form})_3$. For $\psi$ of (\ref{phia})-(\ref{phic}) and $\varphi$ of (\ref{phi_a})-(\ref{phi_c}), using (\ref{elliptic:varphi}), it follows that 
\Be\begin{split}
|(\ref{weak_form})_3|&\lesssim \e \| \p_t f_R \|_{L^{2}_{x,v}} \| \psi \|_{L^{2}_{x,v}}
\lesssim \e \| \p_t f_R \|_{L^{2}_{x,v}}\| \nabla_x \varphi \|_{L^{2}_{x }}
\lesssim  \e \| \p_t f_R \|_{L^{2}_{x,v}} \| \tilde{P}f_R \|_{L^6_x}^{5}\\
&
\leq O(1) [\e \| \p_t f_R \|_{L^{2}_{x,v}}]^6 + o(1) \|  \tilde{P}f_R \|_{L^6_x}^{6}. \label{est:wf3}
\end{split}\Ee

Lastly we consider the right hand side of (\ref{weak_form}). From (\ref{nu_K}), (\ref{est:int_k}), (\ref{est_Carl:Gamma}), and (\ref{elliptic:varphi}), it follows 
\Be\label{est:wf_L}
\begin{split}
&\Big|\iint_{\O \times \R^3} \psi \frac{1}{\e \kappa } Lf_R\Big|  =\Big| \iint_{\O \times \R^3} \psi \frac{1}{\e \kappa } L (\mathbf{I} - \mathbf{P})f_R\Big|\\
&\lesssim 
\frac{1}{\e \kappa} \int_\O \int_{\R^3}
|\nabla_x \varphi_{(a,b,c)}(x)|
 \mu(v)^{1/4}
\Big[ \nu(v ) |(\mathbf{I} - \mathbf{P})f_R (x,v ) |\\
& \ \ \ \ \ \ \ 
+\int_{\R^3} k_\vartheta (v,v_*)  |(\mathbf{I} - \mathbf{P})f_R (x,v_*) | \dd v_*\Big]
\dd v \dd x \\
&\lesssim \frac{1}{\e  \kappa}  \| \nabla_x \varphi_{(a,b,c)} \|_{L^2_x} \| (\mathbf{I} - \mathbf{P}) f_R\|_{L^2_{x,v}} \lesssim  \frac{1}{\e  \kappa} \|  \tilde{P}f \|_{L^6_x}^5\| (\mathbf{I} - \mathbf{P}) f_R\|_{L^2_{x,v}}\\
&\leq o(1)  \|  \tilde{P}f \|_{L^6_x}^6+ \big[  \e^{-1}  \kappa^{-1}  \| (\mathbf{I} - \mathbf{P}) f_R\|_{L^2_{x,v}}\big]^6
.
\end{split}
\Ee
Note that, from (\ref{est_Carl:Gamma}), $|\Gamma(\frac{\e}{\kappa} {f_2}, f_R)
|\lesssim 
\frac{\e}{\kappa} \| \mathfrak{w}_{\varrho, \ss} f_2 \|_{\infty} 
\mathfrak{w}_{\varrho, \ss}(v)^{-1}
\big[\nu(v) f_R(v)+
\int_{\R^3} k_{\vartheta} (v,v_*) f_R(v_*) \dd v_*\big].$ Then from (\ref{est:I-Pf2}) and (\ref{est:Pf2})
\Be\label{est:wf_Gamma1}
\begin{split}
\Big|\iint_{\O \times \R^3} \psi 
\frac{\e}{\kappa} \Gamma({f_2}, f_R)
\Big|  &\lesssim   \| \nabla_x \varphi_{(a,b,c)} \|_{L^2_x}
 \frac{\e}{\kappa}
 \|(\ref{est:f2})\|_\infty
\| f_R \|_{L^2_{x,v}}\\
&\leq o(1)  \|  \tilde{P}f \|_{L^6_x}^6+ \big[  \e  \kappa^{-1} 
 \|(\ref{est:f2})\|_\infty
  \|   f_R\|_{L^2_{x,v}}\big]^6.
\end{split}
\Ee
For the contribution of $\Gamma(f_R,f_R)$ we decompose $f_R= \mathbf{P} f_R + (\mathbf{I} - \mathbf{P}) f_R$. From (\ref{est_Carl:Gamma}) (or (\ref{est_infty:L_t})) 
\Be
\begin{split}
&|\Gamma(f_R,f_R)(v)|\\
& \lesssim |\Gamma(\mathbf{P}f_R,\mathbf{P}f_R)(v)| +   |\Gamma(( \mathbf{I}-\mathbf{P})f_R,( \mathbf{I}-\mathbf{P})f_R)(v)|\\
&\lesssim\nu(v) 
|Pf_R |^2\\
& \ \ \ \  + \| \mathfrak{w}_{\varrho,\ss} f_R \|_\infty 
\Big\{
\nu(v) |( \mathbf{I}-\mathbf{P})f_R)(v)|  + \int_{\R^3} k_\vartheta (v,v_*) |( \mathbf{I}-\mathbf{P})f_R)(v_*)|\dd v_*
\Big\}.
\end{split}
\Ee
Then from (\ref{phia})-(\ref{phic}), (\ref{est:int_k}), and the H\"older's inequality ($1=1/2+ 1/3+ 1/6$)
\Be
\begin{split}\label{est:wf_Gamma2}
&\Big|\iint_{\O \times \R^3} \psi  
   \frac{   \delta }{  \kappa}\Gamma(f_R, f_R) 
\Big| \\ &\lesssim \frac{\delta}{ \kappa} \| \nabla_x \varphi_{(a,b,c)}\|_{L^2_x}\Big\{ \| Pf_R \|_{L^3_x}  \| Pf_R \|_{L^6_x} 
+
\| \mathfrak{w}_{\varrho,\ss} f_R \|_{L^\infty_{x,v}} \|( \mathbf{I}-\mathbf{P})f_R  \|_{L^2_{x,v}}
 \Big\}\\
 & \lesssim \frac{\delta}{  \kappa} 
 \| \tilde{P}f_R \|_{L^6_x}^{5}  \| Pf_R\|_{L_x^6}^{3/2}
  \| Pf_R \|_{L_x^2}^{1/2} +  \frac{\e \delta}{  \kappa^{1/2}} \| \tilde{P}f_R \|_{L^6_x}^{5} \| \mathfrak{w}_{\varrho,\ss} f_R \|_{L^\infty_{x,v}} \|\e^{-1} \kappa^{-1/2}( \mathbf{I}-\mathbf{P})f_R  \|_{L^2_{x,v}}, 
\end{split}
\Ee
where we have used an interpolation $\| Pf_R\|_{L^3}\leq \| Pf_R\|_{L^6}^{1/2} \| Pf_R \|_{L^2}^{1/2}$ and (\ref{elliptic:varphi}) at the last step. A contribution of the rest of terms in the r.h.s of (\ref{weak_form}) can be easily bounded as, from (\ref{est:R1}) and (\ref{est:R2}), 
\Be
\begin{split}\label{est:wf_Gamma3}
&  \iint_{\O \times \R^3} |\psi| \left| 
 \frac{( \e\p_t + 
  v\cdot \nabla_x) \sqrt{\mu}}{\sqrt{\mu}} f_{R}
 -
\e(\mathbf{I}- \mathbf{P})\mathfrak{R}_1 - \e\mathfrak{R}_2\right|\\ \lesssim & \  
\| Pf_R \|_{L^6_x}^{5}\Big\{
\e\| (\ref{transp:mu})\|_{\infty}
 \| f_R \|_{L^2_{x,v}}
 + \e \| 
 (\ref{est:R1}) + (\ref{est:R2})
 \|_{L^2_{x,v}}
\Big\}.
\end{split}
\Ee

In conclusion, collecting the terms from (\ref{est:wf1}) with (\ref{est:wf1_b}), (\ref{est:wf2}) with (\ref{inter:bdry}) and (\ref{4/3bdry}), (\ref{est:wf3}), (\ref{est:wf_L}), (\ref{est:wf_Gamma1}), (\ref{est:wf_Gamma2}), (\ref{est:wf_Gamma3}), and utilizing (\ref{P-tildeP}), and two facts from (\ref{Sob_1D}):
{
 \Be \label{I-P:expansion}
 \begin{split}
 \sup_{0 \leq s \leq t} \|(\mathbf{I} -\mathbf{P}) f_R(s ) \|_{L^2_{x,v}} 
 &\lesssim 
 \|(\mathbf{I} -\mathbf{P}) f_R  \|_{L^2_{t,x,v}} 
 +  \|(\mathbf{I} -\mathbf{P})\p_t f_R  \|_{L^2_{t,x,v}} 
 +  \e \| \p_t u \|_{L^\infty_{t,x}}\|  {P} f_R  \|_{L^2_{t,x}} ,\\
 \sup_{0 \leq s \leq t}  |(1- P_{\gamma_+}) f_R(s)|_{L^2({\gamma_+})}
&\lesssim \sup_{0 \leq s \leq t}   |  f_R(s)|_{L^2({\gamma_+})}
  \lesssim  
   |  f_R |_{L^2_tL^2({\gamma_+})}
+  |\p_t  f_R |_{L^2_tL^2({\gamma_+})},
     \end{split}
 \Ee
} 
we prove (\ref{L6}).\hide

In conclusion, collecting the terms from (\ref{est:wf1}) with (\ref{est:wf1_b}), (\ref{est:wf2}) with (\ref{inter:bdry}) and (\ref{4/3bdry}), (\ref{est:wf3}), (\ref{est:wf_L}), (\ref{est:wf_Gamma1}), (\ref{est:wf_Gamma2}), (\ref{est:wf_Gamma3}), and utilizing (\ref{P-tildeP}), and \bcb
\begin{align*}
\|  (\mathbf{I} - \mathbf{P}) f_R (t) \|_{ {L^2 (\O \times \R^3)}}
 \lesssim  
   \e \kappa^{ 1/2} 
   \{ \| \e^{-1} \kappa^{-1/2} \sqrt{\nu} (\mathbf{I} - \mathbf{P}) f_R \|_{L^2_{t,x,v}}
   + \| \e^{-1} \kappa^{-1/2} \sqrt{\nu} (\mathbf{I} - \mathbf{P})\p_t  f_R \|_{L^2_{t,x,v}}
   \}\\
   |(1- P_{\gamma_+}) f_R(t)|_{L^2({\gamma_+})}
  \lesssim     \e^{1/2} \{ \| \e^{-1} \kappa^{-1/2} \sqrt{\nu} (\mathbf{I} - \mathbf{P}) f_R \|_{L^2_{t,x,v}}
   + \| \e^{-1} \kappa^{-1/2} \sqrt{\nu} (\mathbf{I} - \mathbf{P})\p_t  f_R \|_{L^2_{t,x,v}}
   \}
   , 
   \end{align*}
 which come from (\ref{Sob_1D}), \ec we prove (\ref{L6}).
 \unhide
\hide
Taking $\mathbf{\tilde{P}}$ to (\ref{eqtn_fR}), we derive following equations (in a weak form), for some non-zero constant $d_1,d_2,d_3,d_4$, 
\Be\label{local_conserv}
\begin{split}
\begin{bmatrix}
\p_t a + \frac{d_1}{\e} \nabla_x\cdot b\\
\p_t b + \frac{d_2}{\e}\nabla_x a + \frac{d_3}{\e} \nabla_x c\\
\p_t c +  \frac{d_4}{\e}\nabla_x \cdot b
\end{bmatrix}  =\mathbf{\tilde{P}}  \Big(- \frac{1}{\e^2\kappa}Lf_R +   \{\text{r.h.s of }  (\ref{eqtn_fR})\} \Big).
\end{split}
\Ee
This right hand side can be estimated effectively by using cancellation in the action of $\mathbf{P}$ as 
\begin{align}\notag
\frac{1}{\e^2\kappa}\mathbf{\tilde{P}} Lf_R&= \frac{1}{\e^2\kappa} \cancel{\mathbf{P} Lf_R} + \frac{1}{\e^2\kappa} (\mathbf{\tilde{P}}- \mathbf{P}) L (\mathbf{I} - \mathbf{P})
f_R=  \frac{O(1)}{\e \kappa} |u| |v-\e u| \sqrt{\mu} \| \sqrt{\nu}(\mathbf{I} - \mathbf{P}) f_R\|_{L^2_v}.\notag
\end{align}
Then using (\ref{diff:P-tP}), (\ref{nu_K}), and (\ref{est:int_k}) we bound the rest as 
\Be\label{local_con1}
\begin{split}
\frac{1}{\e^2\kappa} |(\mathbf{\tilde{P}}- \mathbf{P}) L (\mathbf{I} - \mathbf{P})f_R| &  \lesssim 
\frac{1}{\e^2 \kappa} \e|u| |v-\e u|\sqrt{\mu} \int_{\R^3} \langle v-\e u^2 \rangle  \sqrt{\mu(v)}
\Big\{ \nu(v ) |(\mathbf{I} - \mathbf{P})f_R (v ) |
+\int_{\R^3} k_\vartheta (v,v_*)  |(\mathbf{I} - \mathbf{P})f_R (v_*) | \dd v_*\Big\}
\dd v \\
&\lesssim \frac{1}{\e  \kappa} |u| |v-\e u|\sqrt{\mu}  \| (\mathbf{I} - \mathbf{P}) f_R\|_{L^2_v}.
\end{split}
\Ee
Similarly $\mathbf{\tilde{P}} \{\text{r.h.s of }  (\ref{eqtn_fR})\}  = \mathbf{P} \{\text{r.h.s of }  (\ref{eqtn_fR})\}  +
(\mathbf{\tilde{P}}- \mathbf{P}) \{\text{r.h.s of }  (\ref{eqtn_fR})\}$ where, from (\ref{est:R2}),.
\Be\begin{split} \label{local_con2}
| \mathbf{P} \{\text{r.h.s of }  (\ref{eqtn_fR})\}  |
 &= \Big| -\mathbf{P}\Big( \frac{( \p_t + 
\e^{-1} v\cdot \nabla_x) \sqrt{\mu}}{\sqrt{\mu}} f_{R} +  \mathfrak{R}_2\Big)\Big|\\
&
\lesssim  \big\{ |\nabla_x u |\|f_R\|_{L^2_v}+ \frac{\e}{\delta \kappa } \mathfrak{q}(|u|, |\nabla_{x } u |, |\p_{t}u|, |\nabla_{x}^{2} u|, |\nabla_{x} \p_{t} u|, |p|, |\nabla_{x } p |, |\p_{t} p|, |\tilde{u}|, |\nabla_{x } \tilde{u}|, |\p_{t} \tilde{u}|) \big\}
\langle v-\e u \rangle^2  \sqrt{\mu} 
,
\end{split}
\Ee
and, from (\ref{est:R1}), (\ref{est:R2}), (\ref{est_Carl:Gamma}), (\ref{est:I-Pf2}), and (\ref{est:Pf2}), 
\Be
\begin{split}
|(\mathbf{\tilde{P}}- \mathbf{P}) \{\text{r.h.s of }  (\ref{eqtn_fR})\}|
 \lesssim&  \  \e |u|  |v-\e u| \sqrt{\mu}  
\Big\{ (\| \mathfrak{w}_{\varrho, \ss} f_2 \|_{\infty} + \| \mathfrak{w} _{\varrho, \ss}  f_R \|_\infty ) \| f_R \|_{L^2_v}  + |\nabla_x u| + \frac{1}{\delta}
\mathfrak{q}(
|\nabla_{x}\tilde{u}|, 
|\nabla_x^2   u |
)\\
& \ \ \ \ \ \ \   \ \ \ \ \ \ \  \ \ \ \ \ \ \    + \frac{\e}{\delta \kappa } \mathfrak{q}(|u|, |\nabla_{x } u |, |\p_{t}u|, |\nabla_{x}^{2} u|, |\nabla_{x} \p_{t} u|, |p|, |\nabla_{x } p |, |\p_{t} p|, |\tilde{u}|, |\nabla_{x } \tilde{u}|, |\p_{t} \tilde{u}|)
\Big\}\\
\lesssim& \  \e |u|  |v-\e u| \sqrt{\mu}  
\big\{ (|p| +|\tilde{u}| + \kappa |\nabla_x  u| + \| \mathfrak{w} _{\varrho, \ss}  f_R \|_\infty) \| f_R\|_{L^2_v}\\
&+ \frac{ \e}{\delta} |u|  |v-\e u| \sqrt{\mu}   \mathfrak{q}(|u|, |\nabla_{x } u |, |\p_{t}u|, |\nabla_{x}^{2} u|, |\nabla_{x} \p_{t} u|, |p|, |\nabla_{x } p |, |\p_{t} p|, |\tilde{u}|, |\nabla_{x } \tilde{u}|, |\p_{t} \tilde{u}|)
\big\}.\label{local_con3}
\end{split} \Ee

Now we view $\psi$ of (\ref{phia})-(\ref{phic}) as the same combination with $(\p_t \varphi_a, \p_j \p_t \varphi_b^j,\p_j \p_t \varphi_b^i, \nabla_x \p_t \varphi_c)$. Clearly from (\ref{phi_a})-(\ref{phi_c}), such $\p_t \varphi$'s can be viewed as unique solutions in $L^6(\O) \cap \dot{W}^{1,2} (\O) \cap \dot{W}^{2, \frac{6}{5}} (\O)$ of 
\begin{align}
-\Delta _{x} \p_t \varphi_{a }  =  \p_t( a^{5})&   \ \ \text{with} \ \  \frac{\partial \p_t \varphi_{a }}{%
\partial n}\Big|_{\partial \Omega }=0,%
\label{phi_at}
\\ 
-\Delta _{x} \p_t\varphi _{b }^{j}  = \p_t(b^{5}_{j})&   \ \ \text{with} \ \   \p_t\varphi _{b}^{j}|_{\partial
\Omega }=0, \label{phi_bt}\\
 -\Delta _{x} \p_t\varphi_{c }  = \p_t c^{5}&  \ \ \text{with} \ \   \p_t\varphi %
_{c }|_{\partial \Omega }=0.\label{phi_ct}
\end{align} 
\Be
\sup_{\nabla\phi \in }\int\p_t (a^5) \phi
\Ee
Note that $\p_t (a)^5= 5 \p_t a a^4= -5 \frac{d_1}{\e} a^4  \nabla_x \cdot b $. 
\Be
\| \nabla_x \p_t \varphi \|_{  L^{6/5}}  \lesssim \| \p_t (a^5) \|_{W^{-1,6/5}} 
\Ee
 \unhide
\end{proof}

\subsection{Average in Velocity} 
 We prove a version of velocity lemma when a suitable bound for source terms is only known in a finite time interval. In this section we often specify domains in which an $L^p$-norm is taken while the simplified notation (\ref{short_notation}) will be used only when the domain is $[0,T] \times \O\times \R^3$.

\begin{proposition}\label{prop:average}
 Assume the same assumptions in Proposition \ref{prop:Hilbert}. Then we have, for $2<p<3$,  
\Be\begin{split}\label{average_3D}
&d_3\big\|    {P} f_R
 \big\|_{L^2_t L^p_x }
 \\
\lesssim & \ 
(1+ 
\e \|
(\ref{transp:mu})
 \|_{L^2_t  L^\infty_x }
)
 \| f_R \|_{L^\infty_t L^2 _{x,v}}
\\
& +\Big\{
\frac{1}{\e \kappa }
+ \frac{\delta}{\kappa} \| \mathfrak{w}_{\varrho, \ss} f_R \|_{L^\infty_{t,x,v} } +   \| \mathfrak{w}_{\varrho, \ss } f_R   \|_{ L^\infty _{t,x,v}}^{\frac{p-2}{p}}
\Big\}
 \| \sqrt{\nu} (\mathbf{I} - \mathbf{P})f_R \|_{L^2_{t,x,v} }
 \\
 &
+ \| f_R (0) \|_{L^2_\gamma} +
 \e \|
 (\ref{est:R3})\| _{L^2_{t,x}    }+\e  \|(\ref{est:R4})
  \|_{L^2_{t,x}    },
\end{split}\Ee
with 
\Be\label{d3}
d_{3}:= 1- O(\e) \|u\|_{L^\infty_{t,x}}
- 
 \frac{\e}{\kappa} \|(\ref{est:f2}) \|_{L^\infty_t   L_x^{\frac{2p}{p-2}} } - \frac{\delta}{\kappa} \| Pf_R \|_{L^\infty_tL_x^{6} }^{\frac{3(p-2)}{p}} 
\|  \mathfrak{w}_{\varrho,\ss}  f_R \|_{L_{t,x,v}^{\infty} }^{\frac{6-2p}{p}},
\Ee
and for $\varrho'<\varrho$ 
\Be\begin{split}\label{average_3Dt}
&d_{3,t} \big\|    {P}  \p_t f_R
 \big\|_{L^2_t L^p_x }
 \\
\lesssim & 
 \frac{1}{\kappa}  \| \p_t u \|_{ L^\infty_{t, x}  }\big(
1+  \e^2   \| (\ref{est:f2}) \|_{L^\infty_{t,x}}
\big)\| Pf_R \|_{  L^2_{t, x}  } 
+ \frac{\delta \e}{\kappa} 
\| \p_t u \|_{L^\infty_{t,x} }\|    {P} f_R \|_{L^\infty_tL^6_{x }}^{\frac{3(p-2)}{p}}\| \mathfrak{w} f_R\|_{L^\infty_{t,x,v}}^{\frac{6-2p}{p}}\|    {P} f_R \|_{L^2_tL^p_{x }} \\
&+  \frac{\e}{\kappa}  \| \p_t u \|_{L^\infty_{t,x} }
(\delta \| \mathfrak{w} f_R \|_{L^\infty_{t,x,v}} + \| (\ref{est:f2}) \|_{L^\infty_{t,x}})
 \| \sqrt{\nu} (\mathbf{I} -\mathbf{P}) f_R \|_{L^2_{t,x,v}}  
\\ &+ (\kappa \e ) ^{\frac{2}{p-2}} \|  \mathfrak{w}_{\varrho^\prime,\ss} \p_t  f_R \|_{L^2_t  L^\infty_{x,v}  }+
\| \p_t f_R \|_{L^\infty_t L^2_{x,v}}+
\e  \| (\ref{transp:mu_t})\|_{L^2_tL^\infty_{x}} \| f_R\|_{L^\infty_t L^2_{x,v}} 
\\
&+
\Big\{\frac{1}{\kappa \e} + \frac{\delta}{\kappa} \|   \mathfrak{w}_{\varrho,\ss} f_R\|_{L^\infty_{t,x,v}} +  \frac{\e}{\kappa} \|   (\ref{est:f2})\|_{L^\infty_{t,x}}  
+ \e \| 
(\ref{transp:mu})
 \|_{L^\infty_{t,x}  }
\Big\}
\| \sqrt{\nu} (\mathbf{I} - \mathbf{P}) \p_t f_R \|_{L^2_{t,x,v}}
\\
&+ \|\p_t  f_R (0) \|_{L^2_\gamma} + 
\frac{\e}{\kappa}  \| 
 (\ref{est:f2_t}) \|_{L^2_{t,x,v}} 
  \| \mathfrak{w}_{\varrho,\ss} f_R \|_{L^\infty_{t,x,v}}
+ \e \| (\ref{est:R3})\|_{L^2_{t,x}}
+ \e  \| (\ref{est:R4})\|_{L^2_{t,x}},
\end{split}\Ee
with  
\Be\label{d3t}
d_{3,t}:= 1- O(\e) \|u\|_{L^\infty_{t,x}} 
- 
 \frac{\e}{\kappa} \| 
 (\ref{est:f2})
  \|_{L^\infty_t   L_x^{\frac{2p}{p-2}}  }-\e
\| (\ref{transp:mu})\|_{L^\infty_{t } L^{\frac{2p}{p-2}}_x }
  - \frac{\delta}{\kappa} \| P f_R \|_{L^\infty_t  L_x^{6} }^{\frac{3(p-2)}{p}} 
\|  \mathfrak{w}_{\varrho,\ss} f_R \|_{L^{\infty} _{t,x,v}}^{\frac{6-2p}{p}},
\Ee
where both bounds are uniform-in-$p$ for $2<p<3$. 

\end{proposition}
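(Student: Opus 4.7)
The plan is to obtain an averaging-type estimate for the hydrodynamic part $\mathbf{P}f_R$ (and $\mathbf{P}\p_t f_R$) in $L^2_tL^p_x$ with $p<3$ by means of the $TT^*$ duality method of \cite{JV,GV,EGKM3}, which avoids the Fourier-based classical velocity averaging lemma (not directly available here because the source terms in \eqref{eqtn_fR}--\eqref{eqtn_fR_t} are only controlled on the finite interval $[0,T]$ and the domain has a boundary). First I would view \eqref{eqtn_fR}, after multiplying by $\e$, as a transport equation
\[
\e\p_t f_R+v\cdot\nabla_x f_R \;=\;\mathcal{S}[f_R],
\]
with the source $\mathcal{S}[f_R]$ consisting of $-\frac{1}{\e\kappa}L f_R$, the streaming commutator $-\e\frac{(\p_t+\e^{-1}v\cdot\nabla_x)\sqrt\mu}{\sqrt\mu}f_R$, the nonlinear terms $\frac{2\e}{\kappa}\Gamma(f_2,f_R)$ and $\frac{\delta}{\kappa}\Gamma(f_R,f_R)$, and $\e(\mathbf{I}-\mathbf{P})\mathfrak{R}_1+\e\mathfrak{R}_2$. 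Using the boundary condition \eqref{bdry_fR}, a backward-characteristic Duhamel representation for $f_R$ along the trajectory $(t-s/\e,x-\frac{s}{\e}v,v)$, up to either the initial time or the first hitting of $\p\O$, produces a splitting into a contribution from the initial/boundary traces and an integrated source term.

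Next, by duality, $\|\mathbf{P}f_R\|_{L^2_tL^p_x}=\sup\{\langle \mathbf{P}f_R,\varphi\rangle:\|\varphi\|_{L^2_tL^{p'}_x}\le 1\}$, so it suffices to bound $\langle f_R,\varphi\sqrt\mu\,\psi_j\rangle$ for each macroscopic mode $\psi_j\in\{1,v_i,(|v|^2-3)/\sqrt6\}$. I would test against a function of the form $\phi(t,x,v)=\chi(t,x,v)\psi_j\sqrt\mu$ where $\chi$ is built so that $\e\p_t\chi+v\cdot\nabla_x\chi=\varphi\,\psi_j$ via integration backward along the same characteristics; this is the $TT^*$ duality, and it is here that we must extend the solution outside $[0,T]$ in a specially designed (parabolically enlarged) domain to obtain a meaningful dual Duhamel estimate without losing integrability. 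The resulting two-sided integral is then bounded term by term using: (i) the dissipation norm $\|\sqrt\nu(\mathbf I-\mathbf P)f_R\|_{L^2_{t,x,v}}$ for the $L$-contribution, generating the factor $\frac{1}{\e\kappa}$; (ii) the bilinear bounds \eqref{est_Carl:Gamma}--\eqref{est_infty:L_t} for the $\Gamma$-terms, where one decomposes $f_R=\mathbf Pf_R+(\mathbf I-\mathbf P)f_R$ and interpolates $\|\mathbf Pf_R\|_{L^p_x}$ between $L^6_x$ and $L^\infty_x$ (producing the mixed exponents $\frac{3(p-2)}{p}$ and $\frac{6-2p}{p}$ and hence the $d_3$ constant in \eqref{d3}); (iii) the pointwise estimates \eqref{est:R1}--\eqref{est:R2} for the remainders; (iv) the streaming term \eqref{transp:mu} which contributes $\e\|(\ref{transp:mu})\|_{L^2_tL^\infty_x}\|f_R\|_{L^\infty_tL^2_{x,v}}$; and (v) the boundary trace, estimated by $\|f_R(0)\|_{L^2_\gamma}$ coming from the Duhamel integration of the bounce-back term in \eqref{bdry_fR}.

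For the derivative statement \eqref{average_3Dt}--\eqref{d3t}, I would repeat the same argument starting from \eqref{eqtn_fR_t} instead of \eqref{eqtn_fR}. The new feature is that the commutators $\frac{1}{\e^2\kappa}L_t(\mathbf I-\mathbf P)f_R$, $\frac{1}{\e^2\kappa}L(\mathbf P_t f_R)$, $\Gamma_t(f_2,f_R)$ and $\frac{\delta}{\e\kappa}\Gamma_t(f_R,f_R)$ appear, each producing an extra factor $\e|\p_t u|$ (cf.\ \eqref{est:L_t}); these generate the terms in \eqref{average_3Dt} proportional to $\frac{1}{\kappa}\|\p_t u\|_{L^\infty_{t,x}}$. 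Moreover the source term $(\ref{est:R4})$ carries a higher-order initial-boundary layer in $\p_t^2 p,\nabla_x\p_t^2 u$, which forces the use of the weaker weighted $L^2_tL^\infty$ norm $\|\mathfrak{w}_{\varrho',\ss}\p_t f_R\|_{L^2_tL^\infty_{x,v}}$ with the factor $(\kappa\e)^{2/(p-2)}$; this is the reason for the restriction $p<3$ (the endpoint $p=3$ would blow up this factor) and for keeping $p$ close to $3$.

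The main technical obstacle, in both parts, is the last one: reconciling the finite-time-interval nature of our a priori source bounds with the $TT^*$ machinery, which inherently wants global-in-time integration. I would handle it by a tailored cut-off/extension, in the spirit of \cite{EGKM3}, of $f_R$ and its equation beyond $[0,T]$ (together with a mild spatial thickening near $\p\O$ to avoid creating new boundary singularities), choosing the extension so that the extended source retains the same $L^2_{t,x}$ bounds up to a universal constant. The exponent $p<3$ is then dictated precisely by the balance between the $L^6$-$L^\infty$ interpolation of $\mathbf Pf_R$ inside the nonlinear $\Gamma$ estimate and the integrability loss incurred by inverting the transport operator via characteristics in three dimensions; letting $|p-3|\ll1$ gives a sufficient margin while keeping all the constants $d_3,d_{3,t}$ close to $1$ under the smallness assumptions of the theorem.
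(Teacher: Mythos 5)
Your proposal identifies the right architecture --- an extension of $f_R$ beyond $[0,T]$, a $TT^*$-type duality in $L^2_t L^p_x$ for the velocity average of the extended solution, term-by-term estimation of the source via the dissipation, the $L^6$--$L^\infty$ interpolation of $\mathbf{P}f_R$, and the pointwise bounds \eqref{est:R1}--\eqref{est:R4}, plus the commutator mechanism for $\p_t f_R$ and the origin of the $p<3$ restriction. This is essentially the route taken in the paper's Steps 1--5, with the same cited precedents \cite{JV,GV,EGKM3}.

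Two places where your description diverges from what is actually executed are worth noting, since the details matter. First, you envision a Duhamel representation for $f_R$ ``up to the initial time or the first hitting of $\p\O$,'' tracking the diffuse-reflection bounce explicitly through \eqref{bdry_fR}. The paper instead eliminates all bounces at the outset: it defines a spatial extension $f_E$ of $f_I$ to $\R^3\setminus\tilde\O$ by pure free transport along the backward trajectory from the boundary trace (via $\tilde t_B$, $\tilde x_B$ in \eqref{tB}--\eqref{ext:f2}), so that the cutoff extension $\bar f_R$ satisfies a boundary-free transport equation $\e\p_t\bar f_R + v\cdot\nabla_x\bar f_R = \bar g$ on $(-\infty,T]\times\R^3\times\R^3$ and the Duhamel formula \eqref{f_R:duhamel} has a single integral with no bounce terms; the influence of the boundary condition is then recovered a posteriori by estimating the piece of $\bar g$ supported outside $\tilde\O$ via the trace theorem \eqref{trace1}, which is how $\|f_R(0)\|_{L^2_\gamma}$ and the extra $\|f_R\|_{L^\infty_t L^2_{x,v}}$ appear in \eqref{est3:g2}. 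Tracking the diffuse-reflection integral through an iterated Duhamel with bounces, as you propose, would be substantially harder (grazing trajectories make the bounce count unbounded, and $P_{\gamma_+}$ is an integral operator in $v$), and this is precisely what the extension device is designed to avoid. Second, you phrase the duality as constructing a test function $\chi$ satisfying $\e\p_t\chi+v\cdot\nabla_x\chi=\varphi\psi_j$; the paper instead works directly with the explicit Duhamel operator $S$ of \eqref{def:S} and computes $SS^*$ (Lemma \ref{lemma:average}), using the change of variables $v\mapsto y=x+\frac{\tau-t}{\e}v$ and Young's inequality in both $y$ and $t$. These two presentations are dual to each other, so this is more a difference of packaging than of substance, but the kernel computation $\|\frac{\e^3}{|\tau-t|^3}\tilde\varphi(\e\frac{|\cdot|}{|\tau-t|})^2\|_{L^{p/2}_x}\lesssim(\frac{\e}{|\tau-t|})^{3-6/p}$, which is the precise origin of the $p<3$ threshold, would need to be made explicit either way. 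Also, the extension is by free transport and cutoff, not ``parabolic'' --- that adjective does not describe the construction.
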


We prove the proposition by several steps. 

\
 

\textbf{Step 1: Extension.} We define a subset 
\Be
\tilde{\O} := (0,2\pi)\times (0,2\pi) \times (0,\infty) \subset \R^3.\label{def:tilde_O}
\Ee 
We regard $\tilde{\O}$ as an open subset but not a periodic domain as $\O$. Without loss of generality we may assume that $f_R(0,x,v)$ is defined in $\R^3 \times \R^3$ and $\|f_R(0)
\|_{L^p(\R^3 \times \R^3)} \lesssim \|f_R(0)
\|_{L^p(\tilde{\O} \times \R^3)}$ for all $1 \leq p \leq \infty$.  
Then we extend a solution for whole time $t \in \R$ as 
\Be\label{ext:f1}
f_I (t,x,v) : = \mathbf{1}_{t \geq 0 } f_R (t,x,v) + \mathbf{1}_{t \leq 0} \chi_1 (t) f_R (0, x
,v ), 
\Ee
where a smooth non-negative function $\chi_1$ satisfies $\chi_1 (t)\equiv  1$ for $t \in [-1,0]$, $\chi_1 (t) \equiv 0$ for $t<-2$, and $0 \leq \frac{d}{dt}\chi_1 \leq 4$.

 A closure of $\tilde{\O}$ is given as $cl(\tilde{\O})= [0,2\pi]\times [0,2\pi] \times [0,\infty)$. Let us define $\tilde{t}_B(x,v) \in \R$ for $(x,v)  \in  (\R^3 \backslash \tilde{\O}) \times \R^3$. 
We consider $\tilde{B}(x,v):=\{s \in \R:x+ s v \in \R^3 \backslash cl(\tilde{\O})  \}$. Clearly if $\tilde{B}(x,v) \neq \emptyset$ then $ \{s>0\}\subset \tilde{B} (x,v)$ or $  \{s<0\}\subset\tilde{B} (x,v)$ exclusively. We define
\Be\label{tB}
\tilde{t}_B(x,v)  = \begin{cases} 0 & \text{if} \ \  x \in \p\tilde{\O}, 
\\
\inf \tilde{B}(x,v)
& \text{if} \ \ x \in  
\R^3 \backslash cl(\tilde{\O})
\ \text{and} \ \tilde{B}(x,v) \neq \emptyset \ \text{and} \ 
 \{s>0\}\subset\tilde{B}(x,v) 
,\\
\sup \tilde{B}(x,v)
&  \text{if} \ \ x \in  
\R^3 \backslash cl(\tilde{\O})
\ \text{and} \ \tilde{B}(x,v) \neq \emptyset \ \text{and} \ 
\{s<0\} \subset\tilde{B}(x,v) ,\\
  - \infty& \text{if} \  \ \tilde{B}(x,v) = \emptyset \ \text{and} \  x \notin \p\tilde{\O}. 
\end{cases}
\Ee
Using (\ref{tB}) we define 
\Be\label{ext:f2}
f_E (t,x,v): =  \mathbf{1}_{(x,v) \in (\R^3 \backslash \tilde{\O}) \times \R^3 } f_I (t+ \e \tilde{t}_B(x,v), \tilde{x}_B(x,v),v) \ \ \text{with} \ \ \tilde{x}_B(x,v):= x+\tilde{t}_B(x,v)v.
\Ee
 It is easy to see that $\e \p_t f_E + v\cdot \nabla_x f_E=0$ in the sense of distributions.


Next we define two cutoff functions. For any $N>0$ we define smooth non-negative functions as
\Be
\begin{split}
 &\chi_2 (x) \equiv 1 \ \text{for} \  x \in [-\pi, 3\pi] \times [-\pi, 3\pi] \times [-\pi, \infty), \\ 
& \chi_2 (x ) \equiv 0 \ \text{for} \  x \notin [-2\pi, 4\pi] \times [-2\pi, 4\pi] \times [-2\pi, \infty)   , \ \  
|\nabla_{x}\chi_2  |  \leq 10   
,\label{chi1}\\
\end{split}\Ee
\Be
\begin{split}
&\chi _3 (v) \equiv 1 \ \text{for}  \ 
|v| \leq N-1,  \ \text{and} \ |v_i|\geq 2/N  \ \text{for all } i=1,2,3,\\
&\chi_3  (v) \equiv 0 \ \text{for} \ |v| \geq  N  \ \text{or} \ |v_i| \leq 1/N \ \text{for any } i=1,2,3, \ \ |\nabla_v \chi _3  | \leq 10  
.\label{chi_v}
\end{split}\Ee
We denote 
\Be\label{def:UV}
U : = [-2\pi, 4\pi] \times [-2\pi, 4\pi] \times [-2\pi, \infty), \ \   V: = 
\{ v\in \R^3: |v| \leq N \} \cap \bigcap_{i=1,2,3} \{ v\in \R^3: |v_i| \geq 1/N  \} 
\Ee

We define an extension of cut-offed solutions   
\Be\label{ext:f+}
 \bar{f}_{R}(t,x,v) :=
 \chi_2 (x) \chi_3 (v)  \big\{
  \mathbf{1}_{\tilde{\O}}(x)
 f_I(t,x,v)
 +  f_E (t,x,v)\big\}
 \  \ \text{for} \ (t,x,v) \in (-\infty,T] \times \R^3 \times \R^3
 .
\Ee
\hide
\Be\label{ext:f+}
 \bar{f}_R(t,x,v) = 
 \begin{cases}
 \mathbf{1}_{|v_3|\geq 1/N}
 \big[\mathbf{1}_{t\geq 0} e^{-t}f_R(t,x,v) 
+\mathbf{1}_{t<0} e^tf_R(0,x- \frac{t}{\e} v,v)
 \big]& \text{if } x_3\geq 0,\\ 
 \mathbf{1}_{|v_3|\geq 1/N}
 \big[ 
 \mathbf{1}_{t- \tb(x,v)
 \geq 0}
 e^{-(t-\tb(x,v)
 )}
 f_R(t-\tb(x,v)
 , \xb(x,v)
 , v)
 +  \mathbf{1}_{t- \tb(x,v) 
 < 0}
 e^{ t- \tb(x,v) 
  }
 f_R(0, 
 x- \frac{t}{\e} v
 , v)
 \big]
 & \text{if } x_3<0.
 \end{cases}
\Ee\unhide
\hide

\Be
\begin{split}
f_1(t,x,v): =  \mathbf{1}_{(x_1,x_2) \in [-\pi, \pi]^2}
\Big\{& 
\mathbf{1}_{x_3\geq 0 } f_R(t,x,v)
 + \mathbf{1} _{x_3<0}   \mathbf{1}_{t- \e \frac{x_3}{v_3}
 \geq 0}
\mathbf{1}_{v_3<0}
 f_R(t-\e\frac{x_3}{v_3}
 ,  x-  \frac{x_3}{v_3} v
 , v)
\\
 & 
  +\mathbf{1} _{x_3<0}   \big[ \mathbf{1}_{t-  \e\frac{x_3}{v_3}
 < 0}  \mathbf{1}_{v_3<0}
 + 
   \mathbf{1}_{v_3>0}
\big] 
f_R(0, x- \frac{t}{\e} v 
 , v) \Big\}
  \end{split}
\Ee
\Be
\begin{split}
 f_2 (t,x,v ): =    \mathbf{1}_{(x_1,x_2) \notin [-\pi, \pi]^2}\Big\{&
 \end{split}
\Ee
\Be
\begin{split}
\end{split}
\Ee\unhide
We note that in the sense of distributions $\bar{f}_R$ solves
\Be\label{eqtn:bar_f}
\begin{split}
& \e \p_t \bar{f}_R + v\cdot \nabla_x \bar{f}_R 
 %
=      \bar{g}
 \ \ \text{in} \ (-\infty,T] \times \R^3 \times \R^3,\\
 & \bar{g}:=     \frac{v\cdot \nabla_x \chi_2  }{\chi_2  } \bar{f}_R + \mathbf{1}_{t \geq 0} \mathbf{1}_{\tilde{\O}} (x) \chi_2(x)  \chi_3(v) 
[\e \p_t + v\cdot \nabla_x] f_R\\
&  \ \ \ \ \ \ +\mathbf{1}_{t \leq 0} \{
\e\p_t \chi_1(t) f_R (0,x,v) + \chi_1 (t) v\cdot \nabla_x f_R (0,x,v) 
\}
 %
  \end{split}
\Ee\hide
with
\begin{align}
\bar{g} 
:=& \   \mathbf{1}_{t\geq 0}   \mathbf{1}_{\O}(x) \chi_1 (x) \chi_2 (v) 
 (\e \p_t   + v\cdot \nabla_x+ \frac{\nu }{\kappa \e}  )f_R(t,x,v).
\label{etqn_barf1} 
\end{align}\unhide
\hide
  \\
  &+ 
     \mathbf{1}_{\O}(x)
     \chi
      (v)
\big[
  \mathbf{1}_{t\geq 0} 
  f_R(t,x,v)
 + 
 \mathbf{1}_{t<0}
e^{- \frac{\nu t}{\kappa \e^2}} 
 f_R(0,x- \frac{t}{\e} v,v) \big]
  \label{etqn_barf2}
    \\ 
  &+
    \e  \chi_1^\prime(t)   
   \chi_2 (v)   \mathbf{1}_{t-  \e \frac{x_3}{v_3}
 \geq 0}\mathbf{1}_{\O^c}(x)   
\mathbf{1}_{v_3<0}
 f_R(t-  \e\frac{x_3}{v_3}
 ,  x-  \frac{x_3}{v_3} v
 , v)\label{etqn_barf3}
   \\
    &+  
       \e  \chi_1^\prime(t) 
         \chi_2 (v)   \mathbf{1}_{\O^c}(x)  \big[ \mathbf{1}_{t-  \e\frac{x_3}{v_3}
 < 0}  \mathbf{1}_{v_3<0}
 + 
   \mathbf{1}_{v_3>0}
\big] f_R(0, x- \frac{t}{\e} v 
 , v).  \label{etqn_barf4} 
\unhide
Here we have used the fact that $\bar{f}_R$ in (\ref{eqtn:bar_f}) is continuous along the characteristics across $\p \tilde{\O}$ and $\{t=0\}$. 
We derive that, using (\ref{eqtn:bar_f}),
\Be\label{f_R:duhamel}
\bar{f}_R(t,x,v) =
 \frac{1}{\e} \int^t_{-\infty} 
 \bar{g} (s, x- \frac{t-s}{\e} v, v)  \dd s \ \ \text{for} \ (t,x,v) \in 
(-\infty,T] \times \R^3
 \times \R^3.
\Ee

\hide
$\bar{f}_R$ is continuous across $\{x_3=0\}$, $\{t=0\}$, and $\{t-\tb(x,v)=t- \e  \frac{x_3}{v_3}=0\}$, and the fact of $(\e \p_t   + v\cdot \nabla_x   )f_R(t-\e \frac{x_3}{v_3}, x- \frac{x_3}{v_3} v, v)=0$,  
$(\e \p_t   + v\cdot \nabla_x  )f_R(0,x- \frac{t}{\e} v,v)=0$, and $(\e \p_t + v\cdot \nabla_x) (t-\e \frac{x_3}{v_3})=0$ in the sense of distributions.
\unhide

Recall $\tilde{\varphi}_i \in \{\tilde{\varphi}_0, \cdots \tilde{\varphi}_4\}$ in (\ref{basis}). From (\ref{ext:f+}) we note that   
\Be\begin{split}
 &
\left\|\int_{\R^3} \bar{f}_R(t,x,v) \tilde{\varphi}_i (v) \sqrt{\mu_0(v)}    \dd v \right\|_{L^2_t ((0,T); L^p_x ( \tilde{\O}))} \\ 
&= 
 \left\|\int_{\R^3}
\chi_2 (x) \chi_3 (v)
 f_R(t,x,v)  \tilde{\varphi}_i (v) \sqrt{\mu_0(v)}    \dd v   \right\|_{L^2_t ((0,T); L^p_x (\tilde{\O}))}\label{decomp1:L2L3}
 \end{split}\Ee
 \hide
 \begin{align}
 \\
&+
 \left\|\int_{\R^3} 
\chi_2 (x) \chi_3 (v)  \mathbf{1}_{t-\e \frac{x_3}{v_3}\geq 0}
 \mathbf{1}_{v_3<0}
 e^{- \frac{\nu }{\kappa \e^2 } \frac{ \e x_3}{v_3}}
 f_R(t-\e\frac{x_3}{v_3}
 ,  x-  \frac{x_3}{v_3} v
 , v)
 \tilde{\varphi}_i (v) \sqrt{\mu_0(v)}    \dd v
 \right\|_{L^2_t ((0,T); L^p_x (\O^c))}\label{decomp2:L2L3}\\  
 &+
 \left\|\int_{\R^3}  
\chi_2 (x) \chi_3 (v) \big[ \mathbf{1}_{t-  \e\frac{x_3}{v_3}
 < 0}  \mathbf{1}_{v_3<0}
 + 
   \mathbf{1}_{v_3>0}
\big] e^{- \frac{\nu t}{\kappa \e^2}} f_R(0, x- \frac{t}{\e} v 
 , v)  
  \tilde{\varphi}_i (v) \sqrt{\mu_0(v)}    \dd v
 \right\|_{L^2_t ((0,T); L^p_x (\O^c))}\label{decomp3:L2L3}.
\end{align} 
\unhide
From (\ref{P}), we decompose 
\Be\notag
\begin{split}
(\ref{decomp1:L2L3})\geq &   \  \Big\| 
\sum_j     \chi_2 (x)  \tilde{P}_jf_R(t,x)
\int_{\R^3}  \chi_3 (v)    \tilde{\varphi}_j (v)    \tilde{\varphi} _i (v)  \mu_0  (v)    \dd v \Big\|_{L^2_t ((0,T); L^p_x (\tilde{\O}))}
\\&
 -  \left\|\int_{\R^3}
 \chi_3 (v) 
(\mathbf{I} - \mathbf{\tilde{P}}) f_R(t,x,v)  \tilde{\varphi}_i (v) \sqrt{\mu_0(v)}    \dd v   \right\|_{L^2_t ((0,T); L^p_x (\tilde{\O}))}.
\end{split}\Ee
We consider the right hand side of above terms. From (\ref{diff:P-tP}), $\int \tilde{\varphi}_i\tilde{\varphi}_j \mu_0 = \delta_{ij}$, and (\ref{chi_v}), the first term can be bounded below by $\big(1-O( \e) \|u\|_\infty- O(\frac{1}{N})\big)
 \big\|   \chi_2   {P} f_R
 \big\|_{L^2_t ((0,T); L^p_x (\tilde{\O}))}$. For the second term we use (\ref{diff:P-tP}), $L^2_t (0,T)  \subset L^p_t( 0,T )$, and $L^1(\{|v|\leq N\}) \subset L^p(\{|v|\leq N\})$ to bound it above by $C_{T,N} \| (\mathbf{I} - \mathbf{P}) f_R \|_{L^p((0, T) \times \tilde{\O} \times \R^3)}+ \big( O( \e) \|u\|_\infty+ O(\frac{1}{N})\big)
 \big\|   {P} f_R
 \big\|_{L^2_t( (0,T)  ;L^p_x(\tilde{\O}))}$. Hence we derive 
\Be
\begin{split} 
&(\ref{decomp1:L2L3}) \\
\geq  & \  \big(1- O(\e )\|u\|_\infty- O(\frac{1}{N})\big)
 \big\|   {P} f_R
 \big\|_{L^2_t( (0,T)  ;L^p_x(\tilde{\O}))} - C_{T,N} \| (\mathbf{I} - \mathbf{P}) f_R \|_{L^p((0, T) \times\tilde{\O} \times \R^3)}
  \\
 \geq  &  \ \big(1- O(\e) \|u\|_\infty- O(\frac{1}{N})\big)  \big\|    {P} f_R
 \big\|_{L^2_t((0,T);L^p_x(\tilde{\O}))}\\
 &
 -C_{T,N}   \| \mathfrak{w}_{\varrho, \ss } f_R (t) \|_{ L^\infty((0,T) \times \tilde{\O} \times \R^3)}^{\frac{p-2}{p}} \| (\mathbf{I} - \mathbf{P}) f_R \|_{L^2((0, T) \times \tilde{\O} \times \R^3)}^{\frac{2}{p}}
.\label{est:decomp1:L2L3}
\end{split}
\Ee 

\

\textbf{Step 2: Average lemma}. Recall $\tilde{\varphi}_i \in \{\tilde{\varphi}_0, \cdots \tilde{\varphi}_4\}$ in (\ref{basis}). We choose $\tilde{\varphi}(v)$ such that 
\Be\label{tilde_varphi}
\begin{split}
 \chi_3 (v) 
 |\tilde{\varphi}_i(v)|  \sqrt{\mu_0 (v)}  \leq \tilde{\varphi}(v) , \ \ \tilde{\varphi}(v) \in  C^\infty_c (\R^3) \\
 \ \ \text{and} \ \ \tilde{\varphi}(v)\equiv 0 \ \  \text{for} \ \ |v|\geq N \ \ \text{or} \ \ |v_i| \leq 1/N \ \text{for any } i=1,2,3. 
\end{split}\Ee
\hide
 $
 \bar{f}_R(t,x,v)=
 \frac{1}{\e} \int^t_{-\infty} 
 \bar{g} (s, x- \frac{t-s}{\e} v, v)  \dd s$ for $(t,x,v) \in 
 [0,T] \times \tilde{\O}
 \times \R^3.$ If $|v_i| \leq 1/N$ for $i=1$ or $i=2$ then $\bar{g} (s, x- \frac{t-s}{\e} v, v)=0$ from (\ref{chi_v}) and (\ref{eqtn:bar_f}). Now we consider the case of $|v_i| > 1/N$ for $i=1,2$. For $(t,x) \in [0,T] \times \tilde{\O}$ then $x_i- \frac{t-s}{\e} v_i\notin [-2\pi, 4\pi]$ if $s<t- 10N \e $. On the other hand from (\ref{chi1}) and (\ref{eqtn:bar_f}) we derive that $\bar{g} (s, x- \frac{t-s}{\e} v, v)=0$ if $x_i- \frac{t-s}{\e} v_i \notin [-2\pi, 4\pi]$ for either $i=1$ or $i=2$. Therefore 
\Be\label{f_R:duhamel}
 \bar{f}_R(t,x,v) =
 \frac{1}{\e} \int^t_{ t-10N \e } 
  \bar{g} (s, x- \frac{t-s}{\e} v, v)   \dd s \  \  \text{for} \ 
 (t,x,v) \in 
(-\infty,T] \times \tilde{\O}
 \times \R^3.
\Ee
\unhide

\begin{lemma}\label{lemma:average}
We define 
\Be\label{def:S}
S(\bar{g})(t,x):=
\frac{1}{\e} \int^t_{-\infty
}\int_{\R^3}
| \bar{g}(s, x- \frac{t-s}{\e} v, v)  | \tilde{\varphi} (v) \dd v \dd s \ \  \text{for} \ (t,x) \in(-\infty , T ] \times 
\R^3. 
\Ee
Then, for $p<3$ and $1\ll N$,
\Be\label{bound:S}
\| S(\bar{g}) \|_{L^2_t((0,T);  L^p_x(\mathbb{T}^2 \times \R))} \lesssim_{N} 
 \| \mathbf{1}_{(t,x,v) \in \mathfrak{D}_T}   \bar{g}\|_{L^2  ((0,T)\times (\mathbb{T}^2 \times \R) \times \{|v| \leq N \})},
\Ee
 where the bound (\ref{bound:S}) only depends on $N$ but can be independent on $p<3$. 
\end{lemma}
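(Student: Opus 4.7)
The plan is to prove Lemma~\ref{lemma:average} via the $TT^*$ method combined with a dispersive estimate for the kinetic transport semigroup, following the framework from \cite{JV,GV,EGKM3}.

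First, I would recognize $S(\bar g)$ as an averaged Duhamel representation. Writing $\sigma = (t-s)/\e$ shows
$$
S(\bar g)(t,x) = \int_0^\infty\!\int_{\R^3} |\bar g|(t-\e\sigma,\, x-\sigma v,\, v)\,\tilde\varphi(v)\, dv\, d\sigma = \int_{\R^3} \tilde\varphi(v)\, G(t,x,v)\, dv,
$$
where $G$ is the unique solution of the scaled transport equation $\e\p_t G + v\cdot\nabla_x G = |\bar g|$ with data vanishing at $t=-\infty$. Thus the claim reduces to an integrated-velocity (Strichartz-type) estimate for the inhomogeneous kinetic transport. Since $\bar g$ has time support in $[-2,T]$, $G$ is automatically well defined for $t\in[0,T]$.

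Next, I would apply the $TT^*$ method to the operator $T h(t,x) := \int \tilde\varphi(v)\, G_h(t,x,v)\, dv$, where $G_h$ solves the scaled transport equation with source $h$. A direct computation gives
$$
T^*\phi(s,y,v) = \tilde\varphi(v)\int_s^\infty \phi\bigl(t,\, y + \tfrac{t-s}{\e} v\bigr)\, dt,
$$
so by duality $\|T\|_{L^2\to L^2_tL^p_x}^2 = \|TT^*\|_{L^2_tL^{p'}_x\to L^2_tL^p_x}$. The kernel of $TT^*$ is a convolution-type operator weighted by $\tilde\varphi^2$ along shifted characteristics. Here the finite $t$-support of $\phi\in L^2_t((0,T);L^{p'}_x)$ is essential in order to make the $\sigma$-integral absolutely convergent.

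The central ingredient is then the dispersion estimate for the free kinetic transport: since $\tilde\varphi$ is supported in $|v|\le N$ with $|v_i|\ge 1/N$, the change of variables $w = \tau v$ yields
$$
\Big\|\int f_0(x-\tau v)\,\tilde\varphi(v)\, dv\Big\|_{L^\infty_x} \;\le\; \frac{C_N\|\tilde\varphi\|_\infty}{|\tau|^3}\|f_0\|_{L^1_x}.
$$
Interpolating this with the trivial $L^2\to L^2$ isometry produces a dispersive $L^p$ decay of order $|\tau|^{-3(1-2/p)}$. For $p<3$ this exponent is strictly less than $1$, making the kernel locally integrable at $\tau = 0$ (and usable via Hardy--Littlewood--Sobolev-type estimates in the $TT^*$ integration). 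Putting these together yields $\|TT^*\phi\|_{L^2_tL^p_x} \lesssim_N \|\phi\|_{L^2_tL^{p'}_x}$, and hence the desired bound $\|S(\bar g)\|_{L^2_tL^p_x}\lesssim_N \|\mathbf 1_{\mathfrak D_T}\bar g\|_{L^2}$. The $\e$-uniformity of the constant follows from the parabolic rescaling $\tilde G(t,x,v) = G(\e t,x,v)$, under which both the equation and the $L^2$/$L^p$ norms transform with matching factors so that $\e$ drops out.

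The main obstacle is the endpoint $p=3$: the dispersive kernel becomes $|\tau|^{-1}$, which is not locally integrable, so the constant $C_N$ in the conclusion blows up as $p\uparrow 3$. This is precisely why the statement is restricted to $p<3$ (avoiding the kinetic Strichartz endpoint), and it is also why in Proposition~\ref{prop:average} we interpolate the $L^2_tL^p_x$ bound with an $L^\infty$-estimate to recover the nonlinear trilinear control near $L^2_tL^3_x$. A secondary technical point is that the operator $T$ is not bounded on the full time axis; one must carefully incorporate the support of $\bar g$ in $[-2,T]$ and the support of test functions $\phi$ in $[0,T]$ to ensure the $TT^*$ computation makes sense and the Christ--Kiselev-type causality adjustment is tame.
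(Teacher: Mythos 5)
Your general framework is right (the $TT^\ast$ method, a dispersive kernel of order $|\tau|^{-3(1-2/p)}$ for the velocity average, and the observation that $p<3$ makes this exponent strictly less than $1$), and indeed the paper proceeds via an $SS^\ast$ computation that is structurally the one you describe. However, your treatment of the $\e$-uniformity is where the argument genuinely fails, and this is not a ``secondary technical point'' — it is the whole content of the lemma.

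Your proposed ``parabolic rescaling $\tilde G(t,x,v)=G(\e t,x,v)$'' does not close the argument. After this rescaling the time interval $[0,T]$ becomes $[0,T/\e]$, so if the $\e=1$ estimate carries any dependence on the length of the time interval — and it does, because the kernel $|\tau|^{-3(1-2/p)}$ is only \emph{locally} integrable, not in $L^1(\R_\tau)$ — then the constant blows up as $\e\downarrow0$. Likewise, merely invoking the compact $t$-support of $\bar g$ in $[-2,T]$ and of the test functions in $[0,T]$ gives a time-integration window of length $O(T)$, which after scaling costs a factor of the form $\e^{1-6/p}T^{6/p-1}$; this is \emph{not} bounded as $\e\downarrow0$ for any $p<3$. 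The crucial mechanism in the paper is geometric and you do not use it: on the support of $\tilde\varphi$ one has $|v_i|\ge 1/N$ for $i=1,2$, and the spatial cutoff confines $x_h$ to a box of side $O(1)$. Consequently, the characteristics leave the support in time $O(N\e)$, and the convolution in $\tau$ collapses to the interval $|\tau-t|\lesssim N\e$ (see (\ref{def:S*_1}) and (\ref{form:SS*})). It is precisely this $O(N\e)$-length window — not the compact support of the data or any rescaling — that converts the locally integrable kernel $(\e/|\tau-t|)^{3-6/p}$ into an $L^1_\tau$ object of size $O(N^{\,6/p-2}\e)$; the $\e$'s then cancel against the $\e^{-2}$ prefactor and the $O(N\e)$ length of the $s$-integration, producing a constant depending only on $N$. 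Without this step, no version of Hardy--Littlewood--Sobolev or Christ--Kiselev will rescue the $\e$- (or $T$-) uniformity, so as written your argument would yield a constant that degenerates as the Knudsen number vanishes.

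A minor additional remark: when invoking the dispersive bound $\|\int f_0(x-\tau v)\tilde\varphi(v)\,dv\|_{L^\infty}\lesssim C_N|\tau|^{-3}\|f_0\|_{L^1}$ on $\mathbb{T}^2\times\R$, the change of variables $v\mapsto y=x-\tau v$ is not injective because of the periodicity in $x_h$; this is harmless here because $\tilde\varphi$ is compactly supported so the map has at most finitely many preimages, but it deserves a sentence. The paper sidesteps this by first extending to $\R^3$ and cutting off (the domain $U$ in (\ref{def:UV})).
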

We remark that from (\ref{f_R:duhamel}) and (\ref{def:S}) $ \int_{\R^3}\bar{f}_R(t,x,v) \tilde{\varphi}_i (v) \dd v \leq S(\bar{g})(t,x)$.
\begin{proof}[\textbf{Proof of Lemma \ref{lemma:average}}]
We prove (\ref{bound:S}) by a $TT^*$($SS^*$ for our case) method. First we derive a dual of $S$ in the following equalities:\hide. We consider a function space 
\Be\label{space:G}
\begin{split}
\mathcal{G}:= \left\{ g(t,x,v) \in L^2((-\infty,T] \times \R^3\times \R^3)\bigg| 
\begin{array}{c}
 \text{supp} (g) \in (-\infty,T] \times U\times V ,
\\
 \text{supp} (S(g)) \subset (-\infty,T] \times U   \ \text{and} \ 
S(g) (t,x) \in L^2((-\infty,T] \times U ) 
\end{array}  \right\}.
\end{split}
\Ee
For $g \in \mathcal{G}$\unhide
\Be\begin{split}\label{derive:S*}
& \int^T_{-\infty} \int_{
\R^3
 }
S(\bar{g}) (t,x) h(t,x) 
 \dd x \dd t  \\
&=\int^T_{-\infty} \int_{\R^3
} \frac{1}{\e} \int^t_{-\infty}\int_{\R^3} 
%
%
|\bar{g}(s, x- \frac{t-s}{\e} v, v) | \tilde{\varphi} (v)  h(t,x) 
\dd v \dd s  \dd x \dd t \\
&=\int^T_{-\infty} \int_{\R^3
}\int_{\R^3} |\bar{g}(s, x , v)  |
\left[
\frac{1}{\e}  \int^T_{s} 
 h(t,x+ \frac{t-s}{\e} v) \tilde{\varphi} (v)
\dd t  \right]
\dd v  \dd x \dd s \\
&=\int^T_{-\infty} \iint_{\R^3\times \R^3} |\bar{g}(t, x , v)  |
\left[
\frac{1}{\e}  \int^T_{t} 
h(s,x+ \frac{s-t}{\e} v) \tilde{\varphi} (v)
\dd s  \right]
\dd v  \dd x \dd t \\
&= \int^T_{-\infty}  \iint_{\R^3\times \R^3} | \bar{g}(t, x , v)  |
S^*(h) (t,x,v)
\dd v  \dd x \dd t ,
\end{split}\Ee
where we have defined 
\Be\label{def:S*} 
S^*(h) (t,x,v) : = \frac{1}{\e}  \int^T_{t} 
h(\tau,x+ \frac{\tau-t}{\e} v) \tilde{\varphi} (v)
\dd \tau. 
\Ee 
Here, in the second equality of (\ref{derive:S*}) we have used the Fubini theorem for changing order of $s$ and $t$ integrations, and then used a change of variables $x \mapsto x- \frac{t-s}{\e} v$. 
In the third equality of (\ref{derive:S*}) we have used a change of variable $(t,s) \mapsto (s,t)$ and the fact $\text{Supp} (g) \subset (-\infty,T] \times U \times V$.

On the other hand, for $1/p+1/q=1$
, following the argument of (\ref{derive:S*}) with $h(t,x) = \mathbf{1}_{x \in \tilde{\O}} h(t,x)$ we derive that 
\Be \label{S<S*:1}
\begin{split}
\| S(\bar{g}) \|_{L^2_t((-1,T];  L^p_x(\tilde{\O}))} &= \sup_{\| h \|_{L^2_t((-1,T];  L^q_x(\tilde{\O}))} \leq 1} \int^T_{-1} \int_{\tilde{\O}}
S(\bar{g}) (t,x) h(t,x) 
 \dd x \dd t\\
 &= 
 \sup_{\| h \|_{L^2_t((-1,T];  L^q_x(\tilde{\O}))} \leq 1} \int^T_{-1} \iint_{U\times V}
 |\bar{g} (t,x,v) |S^*(h)(t,x,v) 
\dd v \dd x \dd t. 
\end{split}
\Ee
It is important to check the integral region in space of the last term of (\ref{S<S*:1}). From (\ref{def:S*}), we note that if $x + \frac{\tau-t}{\e} v \notin  cl( \tilde{\O})$ for all $\tau \in [t,T]$ then the last term would vanish since $\text{supp} (h) \subset (-\infty,T] \times \tilde{\O}$. Therefore we can exclude $(t,x,v)$ from the last integration in (\ref{S<S*:1}) if $L(t,x,v) \cap  \tilde{\O}  = \emptyset$ for $L(t,x,v):=\{x + \frac{\tau-t}{\e} v : \tau \in [t,T] \}$. Now we define 
\Be\label{def:mathcalU}
\mathfrak{D}_T:= 
\big\{ (t,x,v) \in  (-1, T] \times U \times V:  L(t,x,v) \cap  \tilde{\O}  \neq \emptyset \big\}. 
\Ee
Then we can write 
\Be \label{S<S*:2}
\begin{split}
(\ref{S<S*:1})&  = 
 \sup_{\| h \|_{L^2_t((-1,T];  L^q_x(\tilde{\O}))} \leq 1} \int^T_{-1} \iint_{U\times V}
 \mathbf{1}_{(t,x,v) \in \mathfrak{D}_T} |\bar{g}(t, x , v)  |
S^*(h) (t,x,v)
\dd v  \dd x \dd t\\
 &\leq  \|   \mathbf{1}_{(t,x,v) \in \mathfrak{D}_T} \bar{g}\|_{L^2  ((-1,T]\times U \times V)} \sup_{\| h \|_{L^2_t((-1,T];  L^q_x(\tilde{\O}))} \leq 1} 
 \big\| 
 S^* (h)(t,x,v) \big\|_{L^2 ((-1,T]\times U \times V)}.
\end{split}
\Ee
Therefore to prove (\ref{bound:S}) it suffices to show that
\Be\label{bound:S*}
\|  
S^* (h) \|_{L^2  ((-1,T]\times U \times V)} \lesssim
 \| h \|_{L^2_t((-1,T];  L^q_x(\tilde{\O}))}. 
\Ee
Note that since $\text{supp}(h) \subset(-1,T] \times U$ and $\text{supp} (\tilde{\varphi}) = V$ for $(x,v) \in U\times V$, we have, with $x= (x_1,x_2,x_3), v=(v_1,v_2,v_3)$ $$|x_1 + \frac{\tau- t}{\e} v_1|\geq \frac{|\tau-t|}{\e}|v_1| - |x_1|
\geq \frac{10 \pi  N \e}{\e} \frac{1}{N} - 4\pi > 4\pi  \ \ \text{if} \ \tau\geq t+ 10 \pi  N \e.
$$  
Hence we can rewrite (\ref{def:S*}) as 
\Be\label{def:S*_1}
\begin{split}
S^*(h) (t,x,v)   =& \frac{1}{\e}  \int^{\min \{T, t+ 10 \pi N\e \}}_{t} 
h(\tau,x+ \frac{\tau-t}{\e} v) \tilde{\varphi} (v)
\dd \tau \\
&
 \ \ \text{for} \  (x,v) \in U\times V, 
\ \ \text{if} \   \text{supp}(h) \subset (-1,T] \times U. 
\end{split}\Ee

On the other hand, from (\ref{derive:S*}), we have for $\text{supp}(h) \in (-1,T] \times \tilde{\O}$, 
\Be\notag
\begin{split}
 \|  
 S^* (h) \|_{L^2  ((-1,T]\times U \times V)}^2 
=&\int^T_{-1} \iint_{U\times V}  
S^* (h)(t,x,v)
S^* (h)(t,x,v)
 \dd v  \dd x \dd  t\\
 = & \int^T_{-1} \iint_{U\times V} SS^* (h) (t,x) h(t,x) \dd x \dd t \\
 \leq &  \     \| SS^* (h)\|_{L^2_t((-1,T);  L^p_x(U))} \| h\| _{L^2_t((-1,T];  L^q_x(\tilde{\O}))}.
\end{split}
\Ee
Therefore to show (\ref{bound:S*}) (which will imply (\ref{bound:S})) we only need to prove that, for $\text{supp}(h) \subset (-1,T] \times \tilde{\O}$, 
\Be\label{claim:SS*}
 \| SS^* (h)\|_{L^2_t((-1,T];  L^p_x(U))}\lesssim
\| h\| _{L^2_t((-1,T];  L^q_x(\tilde{\O}))}. 
\Ee

Now we prove (\ref{claim:SS*}). From (\ref{def:S}) and (\ref{def:S*_1}), we read 
\Be\notag\label{form:SS*}
\begin{split}
 SS^*(h) (t,x) =& \  
 \frac{1}{\e } \int^t_{-1} \int_{\R^3}  S^*(h) (s, x- \frac{t-s}{\e} v,v ) \tilde{\varphi} (v) \dd v \dd s\\
=& \  
 \frac{1}{\e^2} \int^{t}_{-1} \int_{\R^3} 
\int^{\min \{T, s+ 10 \pi N\e \}}
_s 
h(\tau, x- \frac{t-s}{\e} v + \frac{\tau-s}{\e} v )
 \dd \tau
 (\tilde{\varphi} (v)) ^2\dd v \dd s\\
 =& \  
 \frac{1}{\e^2} \int^{t}_{-1} 
\int^{\min \{T, s+ 10 \pi N\e \}}
_s\int_{\R^3} 
h(\tau, x
 + \frac{\tau-t}{\e} v )
 (\tilde{\varphi} (v)) ^2  \dd v  \dd \tau \dd s. 
 \end{split}
\Ee 
Now for the same reason to restrict $\tau$-integration in (\ref{def:S*_1}) we rewrite the above expression as 
\Be\label{form:SS*}
 SS^*(h) (t,x) =\frac{1}{\e^2} \int^{t}_{
 \max\{-1, t- 10\pi N \e
 \}
 } 
\int^{\min \{T, s+ 10 \pi N\e \}}
_s\int_{\R^3}  
h(\tau, x 
 + \frac{\tau-t}{\e} v )
 (\tilde{\varphi} (v)) ^2  \dd v  \dd \tau \dd s.
\Ee

We consider a map with the change of variables
\Be\label{COV_average}
v \in  V
\mapsto     y:=x+ \frac{\tau-t}{\e } v \in  \R^3,  \ \ 
 \dd v=  {\dd y}\Big/{\left|\frac{\p y}{\p v}\right|}  =\frac{ \e^3}{|\tau- t|^3}\dd y.
\Ee
\hide
Note that $|y| \leq \sqrt{2}\pi + \frac{N}{N^2}< 2 \pi $ from $\tau -s < \frac{\e}{N^2}$. Hence this map is not one-to-one but the image of the map (\ref{map:v_y}) can cover $\mathbb{T}^2 \times \R$ at most $9$ times. More precisely for each $n_i  \in \{-1,0,1\}$ for $i=1,2$
\Be \notag
y_i= x_i + \frac{\tau-s }{\e} v_i - n_i \pi \in \mathbb{T}
 \ \ \text{for}  \ 
 \ v_i   \in I_{n_i}:= \Big[
\frac{1}{\frac{\tau-s }{\e} } \{(n_i-1)\pi  -  x_i\}  , \frac{1}{\frac{\tau-s }{\e} }\{(n_i+1) \pi  -  x_i\}\Big].  
\Ee
Therefore   
\Be\label{COV_average}
 \mathbf{1}_{(v_1,v_2) \in I_{n_1} \times I_{n_2}} 
 \dd v= \frac{1}{\left|\frac{\p y}{\p v}\right|} \dd y \leq \frac{ \e^3}{|\tau- t|^3}\dd y. 
\Ee\unhide
Now we apply (\ref{COV_average}) to (\ref{form:SS*}) 
and derive that 
\Be\label{bound:SS*}
\begin{split}
|SS^*(h) (t,x)| &\leq 
\frac{1}{\e^2} \int^t_{t- 10 \pi N\e }\int^{
\min \{ s+ 10 \pi N\e\}
 }_s   \int_{\tilde{\O}}
| 
\mathbf{1}_{ \tau \in [ -1, T] } 
h(\tau, y )| 
 \frac{\e^3}{|\tau-t|^3}  
 \tilde{\varphi}  \Big(\e\frac{|y-x| }{|\tau-t|}
 \Big)^2
 \dd y   \dd \tau   \dd s.
 \end{split}
\Ee

First using the Minkowski's inequality and the Young's inequality to a convolution in $y$ with $1+ 1/p=1/q + 1/(p/2)$ we have 
\Be\label{bound1:SS*}
\begin{split}
&\left\| \frac{1}{\e^2} \int^t_{t- 10\pi N \e }\int^{s+ 10 \pi N \e }_s 
\int_{\tilde{\O}}
\mathbf{1}_{ \tau \in [ -1, T] }   |h(\tau, y )|
 \frac{\e^3}{|\tau-t|^3} 
 \tilde{\varphi} \Big(\e \frac{|y-x|}{|\tau-t|}\Big)^2
 \dd y\dd \tau   \dd s\right\|_{L^p_x(\tilde{\O})}\\
 \leq & \  \frac{1}{\e^2} \int^t_{t- 10\pi N \e }\int^{s+ 10 \pi N \e }_s   \left\| \int _{\tilde{\O}}
\mathbf{1}_{ \tau \in [ -1, T] }   |h(\tau, y )|
 \frac{\e^3}{|\tau-t|^3} 
 \tilde{\varphi} \Big(\e \frac{|y-x|}{|\tau-t|}\Big)^2
 \dd y \right\|_{L^p_x(\tilde{\O})} \dd \tau   \dd s\\
  \leq & \  \frac{1}{\e^2} \int^t_{t- 10\pi N \e }\int^{s+ 10 \pi N \e }_s  \|
  \mathbf{1}_{ \tau \in [ -1, T] } 
  h(\tau, \cdot  )\|_{L^q_x(
  \tilde{\O})}\underbrace{\Big\| 
 \frac{\e^3}{|\tau-t|^3} 
 \tilde{\varphi} \Big(\e \frac{|\cdot|}{|\tau-t|}\Big)^2
 \Big\|_{L^{p/2}_x(\tilde{\O})}}_{(\ref{bound1:SS*})_*} \dd \tau   \dd s.
\end{split}\Ee
From the properties of $\tilde{\varphi} \in C_c^\infty$, it follows that 
\Be\notag
(\ref{bound1:SS*})_* 
\leq 
 \frac{\e^3}{|\tau-t|^3}  
\left( \frac{|\tau-t|^3}{\e^3}\right)^{\frac{2}{p}}
 \left[ \int_{ \R^3}\left| \tilde{\varphi}(\tilde{y}) \right|^{p } \dd \tilde{y}
 \right]^{\frac{1}{p/2}}
 \lesssim \bigg(\frac{\e}{|\tau-t|}\bigg)^{3-\frac{6}{p}}, 
\Ee
where $\tilde{y}=  \frac{\e  }{|\tau-t|}(y-x)$ with $\dd \tilde{y}=  \frac{\e^3 }{|\tau-t|^3}\dd y$. Therefore we derive that
\Be\label{bound2:SS*}
\begin{split}
\|SS^*(h)(t, \cdot) \|_{L^p_x} &\lesssim \frac{1}{\e^2} \int^t_{t- 10 \pi N \e }
\int^{s+ 10 \pi N \e}_s  
 \|
 \mathbf{1}_{ \tau \in [ -1, T] } 
 h(\tau, \cdot  )\|_{L^q_x(\tilde{\O})} \bigg(\frac{\e}{|\tau-t|}\bigg)^{3-\frac{6}{p}}    \dd \tau \dd s 
 . 
\end{split}\Ee
Using the Minkowski's inequality and the Young's inequality, finally we prove (\ref{claim:SS*}) as 
\begin{align*}
 &\big\|\|SS^*(h)(t, \cdot) \|_{L^p_x} \big\|_{L^2_t (0, T)}  \\
\lesssim & \  \frac{1}{\e^2} 
\Big\| \mathbf{1}_{[t- 10 \pi N \e , t]} (s)
\Big\|_{L^1_s  } 
\left\|
\sup_{s \in [t- 10 \pi N \e , t]}
\int^{s+ 10 \pi N \e}_{s}  
 \|
 \mathbf{1}_{ \tau \in [ -1, T] } 
 h(\tau, \cdot  )\|_{L^q_x(\tilde{\O})} \bigg(\frac{\e}{|\tau-t|}\bigg)^{3-\frac{6}{p}}  
 \dd \tau \right\|_{L^2_t}\\
 \lesssim & \  \frac{1}{\e^2} 
10 \pi N \e 
\left\|
\int^{t+ 10 \pi N \e}_{t- 10 \pi N \e}  
 \| \mathbf{1}_{ \tau \in [ -1, T] } h(\tau, \cdot  )\|_{L^q_x(\tilde{\O})} \bigg(\frac{\e}{|\tau-t|}\bigg)^{3-\frac{6}{p}}  
 \dd \tau \right\|_{L^2_t}\\
 \lesssim & \  \frac{1}{\e^2}   10 \pi N  \e 
\big\|\|h(\tau, \cdot  )\|_{L^q_x(\tilde{\O})}\big\|_{L^2_\tau ((-1, T])}
\left\|\bigg(\frac{\e}{| t|}\bigg)^{3-\frac{6}{p}}  
\right\|_{L^1_t ((0,  10 \pi N \e ))}\\
\lesssim  & \ 
N^{-1 + \frac{6}{p}} 
\| h \|_{L^2_t ((-1 , T];L^q_x(\tilde{\O}) )} .
\end{align*} \end{proof}
\hide
Here we have used $\int^T_0 e^{- \frac{C_0 t}{\kappa \e^2 } }\dd t \leq \kappa \e^2 \{1-  e^{- \frac{C_0 T}{\kappa \e^2 } }\} \leq \kappa \e^2$ and 
\Be\begin{split}
\int_0^{T  } s^{\frac{6}{p}-3} e^{- \frac{C_0 s}{\kappa \e^2 } }\dd s
&= \int_0^{T } \frac{1}{\frac{6}{p}-2}\frac{d}{ds} \big(s^{\frac{6}{p}-2}\big) e^{- \frac{C_0 s}{\kappa \e^2 } }\dd s\\
&=\frac{1}{\frac{6}{p}-2}  T ^{\frac{6}{p}-2} e^{- \frac{C_0}{\kappa\e^2}  T }
+ \frac{1}{\frac{6}{p}-2} \int^{T }_0 (\kappa \e^2)^{\frac{6}{p}-2}
\left(\frac{s}{\kappa \e^2}\right)^{\frac{6}{p}-2} \frac{C_0}{\kappa \e^2} e^{- \frac{C_0 s}{\kappa \e^2 } }\dd s\\
& \lesssim  (\kappa \e^2)^{\frac{6}{p}-2} \ \ \text{for} \ \ p<3. \notag
\end{split}\Ee\unhide

\

\textbf{Step 3: Applying Lemma \ref{lemma:average}.}  
Now we apply Lemma \ref{lemma:average} to (\ref{f_R:duhamel}) and derive that 
\begin{align}
& \left\| \int_{\R^3}\bar{f}_R (t,x,v) \tilde{\varphi} (v)  \dd v \right\|_{L^2_t( (-1,T]; L^p_x (\tilde{\O}))}\notag\\ 
\lesssim& \  \| \mathbf{1}_{(t,x,v) \in \mathfrak{D}_T}  \bar{g} \|_{L^2  ((-1,T]\times U \times V)}\notag \\
\lesssim & \   \| 
 f_R (t,x,v)\|_{L^2  ((0,T]\times \tilde{\O} \times V)}  + \| 
 f_R (0,x,v) \|_{L^2  (  \tilde{\O} \times V)}
 \notag
 \\
&+ \| 
 \mathbf{1}_{(t,x,v) \in \mathfrak{D}_T}
 f_I (t+ \e \tilde{t}_B(x,v),  \tilde{x}_B(x,v) ,v)
\|_{L^2  ((-1,T]\times (U \backslash  \tilde{\O})\times V)}
\label{g2}\\
&+  \| [\e\p_t + v\cdot \nabla_x ] f_R
\|_{L^2  ((0,T]\times \tilde{\O} \times V)},\label{g3}
\end{align}
where we have used (\ref{ext:f+}), (\ref{ext:f1}), (\ref{ext:f2}), and the fact that $|v\cdot \nabla_x \chi_2 (x)| \lesssim_N 1$ on $v \in V$. 

First we consider (\ref{g2}). We split the cases of (\ref{g2}) according to (\ref{tB}). For $x \in \p\tilde{\O}$, which has a zero measure in $L^2  ((-1,T]\times (U \backslash  \tilde{\O})\times V)$, we have $\tilde{t}_B(x,v)=0$ from the first line of (\ref{tB}). If $\tilde{B}(x,v) = \emptyset$ and $x \notin \p\tilde{\O}$ then $\tilde{t}_B(x,v)=-\infty$ from the last line of (\ref{tB}) and hence $\bar{f}_R(-\infty)=0$ since $\chi_1(-\infty)=0$ in (\ref{ext:f1}). Therefore we derive that 
\begin{align}
(\ref{g2})\leq
&
 \   \| 
\mathbf{1}_{\{s<0\} \subset \tilde{B} (x,v)}
 \mathbf{1}_{(t,x,v) \in \mathfrak{D}_T} 
 f_I (t+ \e \tilde{t}_B(x,v), \tilde{x}_B(x,v)
 ,v)
\|_{L^2  ((-1,T]\times (U \backslash  \tilde{\O})\times V)}
\label{g2:1}
\\
&+  \| 
\mathbf{1}_{\{s>0\} \subset \tilde{B} (x,v)}
 \mathbf{1}_{(t,x,v) \in \mathfrak{D}_T} 
 f_I (t+ \e \tilde{t}_B(x,v), \tilde{x}_B(x,v),v)
\|_{L^2  ((-1,T]\times (U \backslash  \tilde{\O})\times V)}.
\label{g2:2} 
\end{align}
We need a special attention to (\ref{g2:1}). Since $(t,x,v) \in \mathfrak{D}_T$ we know that 
$\inf \{\tau \geq t: x+  \frac{\tau - t}{\e} v \in cl(\tilde{\O})\} \leq T.$ 
If $\{s<0\} \subset \tilde{B} (x,v)$ then, from the third line of (\ref{tB}), $\tilde{t}_B(x,v) = \sup \tilde{B} (x,v)= \sup \{s \in \R: x+ sv \in \R^3 \backslash cl (\tilde{\O})\} \leq (T-t)/\e$. Therefore the argument of $f_I$ in (\ref{g2:1}) is confined as
\Be\label{t+tB<}
(t+ \e \tilde{t}_B(x,v), \tilde{x}_B(x,v),v) \in (- \infty,T] \times \p\tilde{\O} \times V .
\Ee
For (\ref{g2:2}), from the second line of (\ref{tB}), $\tilde{t}_B (x,v)= \inf \tilde{B}(x,v) = \inf 
\{s \in \R: x+ sv \in \R^3 \backslash cl (\tilde{\O})\} \leq 0$. Therefore $t+ \e \tilde{t}_B (x,v) \leq t \leq T$ and hence the argument of $f_I$ in (\ref{g2:2}) is confined as in (\ref{t+tB<}). Now we apply the Minkowski's inequality in time, change of variables $t+ \e \tilde{t}_B (x,v) \mapsto t$, and use (\ref{t+tB<}) to derive that 
\Be\label{est1:g2}
(\ref{g2:1}) + (\ref{g2:2}) \lesssim
\Big\|
 \| f_I(t, \tilde{x}_B(x,v),v)\|_{L^2_t ((-1, T])}
 \Big\|_{L^2_{x,v}((U \backslash \tilde{\O}) \times V)}.
\Ee

Let us define an outward normal $\tilde{n}(x)$ on $\p\tilde{\O}$. More precisely 
\Be\label{tilde_n}
\tilde{n}(x)= \begin{cases}
(0,0,-1) &\ \ \text{if} \ x_3=0 \ \text{and} \ x \in \p\tilde{\O},\\
((-1)^{\frac{x_1}{2\pi}+1},0,0) &\ \ \text{if} \ x_1 \in \{0, 2\pi\} \ \text{and} \ x \in \p\tilde{\O},\\
(0,(-1)^{\frac{x_2}{2\pi}+1},0) &\ \ \text{if} \  x_2 \in \{0, 2\pi\} \ \text{and} \ x \in \p\tilde{\O}.
\end{cases}
\Ee
From (\ref{def:UV}) we have therefore $(x ,v) \in (U \backslash \tilde{\O}) \times V$ then $|\tilde{n}( \tilde{x}_B (x,v) ) \cdot v|\geq 1/N$. We consider maps 
\Be\begin{split}\label{bdry_map}
(x_1,x_3) &\mapsto  \tilde{x}_B (x,v) 
\in  (0,2\pi) \times (0, 2\pi) \times \{x_3=0\},\\
& \ \ \text{with} \ \ \Big|\det\Big(\frac{\p (  \tilde{x}_{B,1} (x,v) ,     \tilde{x}_{B,2} (x,v) )}{\p (x_1,x_3)}\Big)\Big|= \Big|\frac{ v_2}{v \cdot \tilde{n}}\Big|,\\
(x_i,x_3) &\mapsto  (\tilde{x}_{B,i} (x,v), \tilde{x}_{B,3} (x,v))  
\in  (0,2\pi)   \times (0,\infty), \\
&\ \ \text{with} \ \ \Big|\det\Big(\frac{\p (  \tilde{x}_{B,i} (x,v) ,     \tilde{x}_{B,3} (x,v) )}{\p (x_1,x_3)}\Big)\Big|= \Big|\frac{ v_i }{v \cdot \tilde{n}}\Big|,  \ \ \text{for} \ i =1,2. 
\end{split}\Ee
 Note that if $v \in V$ of (\ref{def:UV}) then $|v_i| \geq 1/N$ for all $i=1,2,3.$ We define 
\Be\label{def:tilde_gamma}
\tilde{\gamma} := \p\tilde{\O} \times \R^3, \ \ \ \tilde{\gamma}^{N} := \p\tilde{\O} \times (\R^3\backslash V). 
\Ee
We apply the change of variables (\ref{bdry_map}) to (\ref{est1:g2}):
\Be\label{est2:g2}
\begin{split}
(\ref{est1:g2})&= 
\bigg\| 
\bigg[
\int_{-2\pi}^{4\pi}
\int_{-2\pi}^\infty \int_{-2\pi}^{4\pi}
 \| f_I (t, \tilde{x}_B(x,v),v)\|_{L^2_t ((-1, T])}^2
 \dd x_1 \dd x_3 \dd x_2
\bigg]^{1/2} 
\bigg\|_{L^2_{v}(V)}\\
&\leq   \bigg\|  \bigg[ 5 \times 6 \pi N   \int_{\p\tilde{\O}}\int_{-1}^T  |f_I (t,  y,v) |^2 |v \cdot \tilde{n}(y)| \dd t  \dd y\bigg]^{1/2} \bigg\|_{L^2_{v}(V)}\\
&\lesssim \|f_R\|_{L^2((0,T) \times \tilde{\gamma} \backslash \tilde{\gamma}^{  N   } )}
+ \|f_R(0)\|_{L^2(  \tilde{\gamma} \backslash \tilde{\gamma}^{   N   } )}.
\end{split}
\Ee
We recall the trace theorem:
\Be\label{trace1}
\begin{split}
\int^T_{0} \int_{\tilde{\gamma}\backslash \tilde{\gamma}^{1/N}} |h| \dd \gamma \dd s 
\lesssim& \sup_{t \in [0,T]} \| h(t) \|_{L^1(\tilde{\O} \times V)}  + \int^T_{0} \| h(s) \|_{L^1(\tilde{\O} \times V)} \dd s \\
&
+ \int^T_{0} \| [\e \p_t + v\cdot \nabla_x ] h \|_{L^1(\tilde{\O} \times V)} \dd s.
\end{split}
\Ee
We apply (\ref{trace1}) with $h=f^2$ and derive an estimate 
\Be\begin{split}
\label{est:bdry}
 &\|f_R\|_{L^2((0,T) \times \tilde{\gamma} \backslash \tilde{\gamma}^{  N   } )}^2 \\
 &\lesssim \sup_{t \in [0,T]} \| f_R(t) \|_{L^2(\tilde{\O} \times V)}^2  + \int^T_{0} \| f_R(s) \|_{L^2(\tilde{\O} \times V)}^2 \dd s 
+ \int^T_{0}  \iint_{\tilde{\O}\times V}\big| f_R[\e \p_t + v\cdot \nabla_x ] f_R \big| \dd x \dd v  \dd s\\
&\lesssim _T 
 \| f_R \|_{L^\infty ([0,T];L^2( {\O} \times \R^3))}^2
 + \big\|  [\e \p_t+ v\cdot \nabla_x ]   f_R  \big\|_{L^2( [0,T] \times  {\O} \times \R^3)}.
\end{split}\Ee
\hide
where the last term can be bounded 
from (\ref{eqtn_fR}) as 
\Be
\begin{split}
\int^T_{0}  \iint_{\O \times \R^3}
\Big|&
- \frac{1}{\e \kappa} (\mathbf{I} - \mathbf{P}) f_R  L(\mathbf{I} - \mathbf{P}) f_R 
+ \frac{\e}{\kappa} \Gamma(f_2, f_R) (\mathbf{I} - \mathbf{P}) f_R
+ \frac{\delta}{\kappa} \Gamma(f_R, f_R) (\mathbf{I} - \mathbf{P}) f_R
- \frac{(\e \p_t + v\cdot \nabla_x ) \sqrt{\mu}}{\sqrt{\mu}} |f_R|^2\\
&+ \e (\mathbf{I} - \mathbf{P}) \mathfrak{R}_1  (\mathbf{I} - \mathbf{P}) f_R
+ \e   \mathfrak{R}_2  f_R
\Big|
\dd x \dd v  \dd t
\end{split}
\Ee\unhide
Finally we conclude a bound of (\ref{g2}) as below via (\ref{g2:1}), (\ref{g2:2}), (\ref{est1:g2}), (\ref{est2:g2}), and (\ref{est:bdry}) 
\Be\label{est3:g2}
(\ref{g2})\lesssim \| f_R (0)\|_{L^2_\gamma} + \| f_R \|_{L^\infty ([0,T];L^2( {\O} \times \R^3))} + \underbrace{\big\|  [\e \p_t+ v\cdot \nabla_x ]   f_R  \big\|_{L^2( [0,T] \times  {\O} \times \R^3)}}_{(\ref{est3:g2})_*}. 
\Ee

Next we estimate (\ref{g3}) (and $(\ref{est3:g2})_*$). 
Using (\ref{eqtn:bar_f}) and (\ref{eqtn_fR}) we conclude that 
\Be\notag
\begin{split}
&(\ref{g3})+ (\ref{est3:g2})_*
 \\
& \lesssim  
  \bigg\|  
 - \frac{1}{\e \kappa}    L(\mathbf{I} - \mathbf{P}) f_R 
 +  \frac{\e}{\kappa} \Gamma(f_2, f_R) + \frac{\delta}{\kappa} \Gamma(f_R, f_R)  \\
 & \ \ \ \ \ 
  -  \frac{( \e \p_t + 
  v\cdot \nabla_x) \sqrt{\mu}}{\sqrt{\mu}} f_{R}
  +  
\e (\mathbf{I}- \mathbf{P})\mathfrak{R}_1 +\e \mathfrak{R}_2
    \bigg\|_{L^2 ((0,T]\times \O \times
V)}.
 \end{split}
\Ee
Following the arguments of (\ref{est:EG})-(\ref{est:Energy_v^3}), and (\ref{est:R1}), (\ref{est:R2}), we derive that 
\Be \label{est:bar_f3_1}
\begin{split}
&(\ref{g3})+(\ref{est3:g2})_*\\
 &\lesssim   
 \  \Big\{
 \frac{\e}{\kappa} \|  (\ref{est:f2})  \|_{L^\infty_t ((0,T); L_x^{\frac{2p}{p-2}}(\O)) } + \frac{\delta}{\kappa} \| Pf_R \|_{L^\infty_t ((0,T); L_x^{\frac{2p}{p-2}}(\O)) } 
 \Big\}
 \| P f_R \|_{L^2_t ((0,T); L_x^{p}(\O))}\\
&+\Big\{
\frac{1}{\e \kappa }
+ \frac{\delta}{\kappa} \| \mathfrak{w}_{\varrho, \ss} f_R \|_{L^\infty_t ((0,T) \times \O \times \R^3) }
\Big\}
 \| (\mathbf{I} - \mathbf{P})f_R \|_{L^2_t ((0,T) \times \O \times \R^3) }
 \\
 &+\e \big\|  (\ref{transp:mu}) \big\|_{L^2_t ((0,T); L^\infty_x(\O))} \|  f_R(t) \|_{L^\infty_t ((0,T);L^2(\O \times \R^3))} \\
 &+  \e  \{  \|(\ref{est:R1})\| _{L^2_t ((0,T); L_x^{2}(\O))}
 +  \|(\ref{est:R2})\| _{L^2_t ((0,T); L_x^{2}(\O))}
 \} 
,
\end{split}
\Ee
where we further bound 
\Be\label{est:bar_f3_1:1}
\| Pf_R \|_{  L_x^{\frac{2p}{p-2}}(\O)  }
\leq \| Pf_R \|_{L_x^{6}(\O)}^{\frac{3(p-2)}{p}} 
\|  \mathfrak{w}_{\varrho,\ss} f_R \|_{L_x^{\infty}(\O)}^{\frac{6-2p}{p}}.
\Ee
\hide
and conclude that 
\Be
\begin{split}(\ref{g3})
 \lesssim& \ 
\Big\{
  \frac{\e}{\kappa}
 \| \mathfrak{w}_{\varrho, \ss} f_2 \|_{L^\infty_t ((0,T); L_x^{\frac{2p}{p-2}}(\O)) } + \frac{ \delta}{\kappa}  \| Pf_R \|_{L^\infty_t ((0,T);L_x^{6}(\O))}^{\frac{3(p-2)}{p}} 
\|  \mathfrak{w}_{\varrho,\ss} f_R \|_{L_ {\infty}((0,T) \times\O\times\R^3)}^{\frac{6-2p}{p}}
\Big\}
\| P f_R \|_{L^2_t ((0,T); L_x^{p}(\O))}\\
  &+ 
\e \big\| |\nabla_x u| + \e |\p_t u| + \e |u| |\nabla_x u| \big\|_{L^2_t ((0,T); L^\infty_x(\O))}
 \|  f_R(t) \|_{L^\infty_t ((0,T);L^2(\O \times \R^3))}\\
 &+ \frac{\delta}{\kappa}
  \| \mathfrak{w}_{\varrho, \ss} f_R \|_{L^\infty_t ((0,T) \times \O \times \R^3) } 
  \| (\mathbf{I} - \mathbf{P})f_R \|_{L^2_t ((0,T) \times \O \times \R^3) }\\
 &+
 \frac{\e}{\delta}
 \|\mathfrak{q}(|\nabla_x \tilde{u}|, |\nabla_x^2 u|)\| _{L^2_t ((0,T); L_x^{2}(\O))}
 \\
 &  + \frac{\e^2}{\delta \kappa} 
 \| \mathfrak{q}(|u|, |\nabla_{x } u |, |\p_{t}u|, |\nabla_{x}^{2} u|, |\nabla_{x} \p_{t} u|, |p|, |\nabla_{x } p |, |\p_{t} p|, |\tilde{u}|, |\nabla_{x } \tilde{u}|, |\p_{t} \tilde{u}|) \|_{L^2_t ((0,T); L_x^{2}(\O))}.
\end{split}
\Ee
\unhide

\ 

\textbf{Step 4. Proof of (\ref{average_3D}). } First we use (\ref{est:decomp1:L2L3}) and then (\ref{g2}) and (\ref{g3}). We bound (\ref{g2}) via (\ref{est1:g2}) and (\ref{est2:g2}), which are bounded by (\ref{est:bdry}) and (\ref{est:bar_f3_1}) respectively. These conclude that, for $p<3$, 
\Be\begin{split}\label{final_est:3D}
&\big(1- O(\e) \|u\|_\infty- O(\frac{1}{N})\big)  \big\|    {P} f_R
 \big\|_{L^2_t((0,T);L^p_x(\tilde{\O}))}\\
 &
 -C_{T,N}   \| \mathfrak{w}_{\varrho, \ss } f_R (t) \|_{ L^\infty((0,T) \times \tilde{\O} \times \R^3)}^{\frac{p-2}{p}} \| (\mathbf{I} - \mathbf{P}) f_R \|_{L^2((0, T) \times \tilde{\O} \times \R^3)}^{\frac{2}{p}}\\
\leq & \  \left\|\int_{\R^3} \bar{f}_R(t,x,v) \tilde{\varphi}_i (v) \sqrt{\mu_0(v)}    \dd v \right\|_{L^2_t ((0,T); L^p_x ( \tilde{\O}))}\\
\leq & \  \left\|\int_{\R^3} \bar{f}_R(t,x,v) \tilde{\varphi}  (v)     \dd v \right\|_{L^2_t ((0,T); L^p_x ( \tilde{\O}))}\\
\lesssim & \ 
 \| f_R \|_{L^\infty ([0,T];L^2( {\O} \times \R^3))}
+ \| f_R (0) \|_{L^2_\gamma}+ \text{r.h.s. of }  (\ref{est:bar_f3_1}) \text{ with } (\ref{est:bar_f3_1:1}). 
\end{split}\Ee
Then we move a contribution of $\| P f_R \|_{L^2_t ((0,T); L_x^{p}(\O))}$ to the l.h.s and use (\ref{est:bar_f3_1:1}). This concludes (\ref{average_3D}).

\

\textbf{Step 5: Sketch of proof for (\ref{average_3Dt}). } We follow the same argument for (\ref{average_3D}). Thereby we only pin point the difference of the proof of (\ref{average_3Dt}). Recall $\p_t f_R (0,x,v) = f_{R,t}(0,x,v)$ from \eqref{initial_f}. \hide From (\ref{eqtn_fR}) we define 
\Be\label{f:initial_t}
\begin{split}
 f_{R,t} (0,x,v):= & - \frac{1}{\e} v\cdot \nabla_x   f_R 
  -   \frac{1}{ \e^2\kappa} L
  f_R +  \frac{1}{\kappa} \Gamma({f_2} , f_R )
+    \frac{   \delta }{\e\kappa}\Gamma(f_R , f_R )\\
&
-  \frac{( \p_t + 
\e^{-1} v\cdot \nabla_x) \sqrt{\mu}}{\sqrt{\mu}} f_{R }
  + 
(\mathbf{I}- \mathbf{P})\mathfrak{R}_1  + \mathfrak{R}_2 
\Big|_{t=0},
\end{split}
\Ee
where we read all terms in r.h.s at $t=0$. \unhide We regard $\tilde{\O}$ as an open subset but not a periodic domain as $\O$. Without loss of generality we may assume that $f_{R,t}(0,x,v)$ is defined in $\R^3 \times \R^3$ and $\|f_{R,t}(0)
\|_{L^p(\R^3) \times \R^3} \lesssim \|f_{R,t}(0)
\|_{L^p(\tilde{\O}) \times \R^3}$ for all $1 \leq p \leq \infty$. Then we extend a solution for whole time $t \in \R$ as 
\Be\label{ext:f1_t}
  f_{  I,t} (t,x,v) : = \mathbf{1}_{t \geq 0 } \p_t f_R (t,x,v) + \mathbf{1}_{t \leq 0} \chi_1 (t)  f_{R,t} (0, x
,v ).
\Ee
Using $\tilde{t}_B(x,v) $ in (\ref{tB}) we define 
\Be\label{ext:f2_t}
 f_{E,t} (t,x,v): =  \mathbf{1}_{(x,v) \in (\R^3 \backslash \tilde{\O}) \times \R^3 }   f_{I,t} (t+ \e \tilde{t}_B(x,v), \tilde{x}_B(x,v),v).
\Ee
We define an extension of cut-offed solutions   
\Be\label{ext:f+_t}
  \bar{f}_{R,t}(t,x,v) :=
 \chi_2 (x) \chi_3 (v)  \big\{
  \mathbf{1}_{\tilde{\O}}(x)
 f_{I,t}(t,x,v)
 +  f_{E,t} (t,x,v)\big\}
 \  \ \text{for} \ (t,x,v) \in (-\infty,T] \times \R^3 \times \R^3
 . 
\Ee
 We note that in the sense of distributions $\bar{f}_{R,t}$ solves
\Be\label{eqtn:bar_f_t}
\begin{split}
& \e \p_t \bar{f}_{R,t} + v\cdot \nabla_x \bar{f}_{R,t} =     \   \bar{g}_t 
 \ \ \text{in} \ (-\infty,T] \times \R^3 \times \R^3, \ \ \text{where}\\
& \bar{g}_t :  =     \frac{v\cdot \nabla_x \chi_2  }{\chi_2  } \bar{f}_{R,t} + \mathbf{1}_{t \geq 0} \mathbf{1}_{\tilde{\O}} (x) \chi_2(x)  \chi_3(v) 
[\e \p_t + v\cdot \nabla_x] \p_t f_R \\
 &\quad \ \ +\mathbf{1}_{t \leq 0} 
\chi_2 (x) \chi_3 (v) \{
\e\p_t \chi_1(t) 
f_{R,t} (0,x,v) + \chi_1 (t) v\cdot \nabla_x f_{R,t} (0,x,v) 
\}.
 %
  \end{split}
\Ee
Here we have used the fact that $\bar{f}_{R,t}$ in (\ref{eqtn:bar_f_t}) is continuous along the characteristics across $\p \tilde{\O}$ and $\{t=0\}$. 
We derive that, using (\ref{eqtn:bar_f_t}),
\Be\label{f_R:duhamel_t}
\bar{f}_{R,t}(t,x,v) =
 \frac{1}{\e} \int^t_{-\infty} 
 \bar{g}_t (s, x- \frac{t-s}{\e} v, v)  \dd s \ \ \text{for} \ (t,x,v) \in 
(-\infty,T] \times \R^3
 \times \R^3.
\Ee

Now we apply Lemma \ref{lemma:average} to (\ref{f_R:duhamel_t}) and derive that, for $p<3$,
\Be\label{bound:S_t}
\begin{split}
&\| S(\bar{g_t}) \|_{L^2_t((0,T);  L^p_x(\mathbb{T}^2 \times \R))} \\
\lesssim &
 \| \mathbf{1}_{(t,x,v) \in \mathfrak{D}_T}   \bar{g}_t\|_{L^2  ((0,T)\times (\mathbb{T}^2 \times \R) \times \{|v| \leq N \})}\\
 \lesssim & \ \| f_{R,t} (0)\|_{L^2(\O \times \R^3)} + \| \e \p_t  f_R + v\cdot \nabla_x f_R \|_{L^2 ((0,T) \times \tilde{\O} \times V)} \\
 &+  \| 
 \mathbf{1}_{(t,x,v) \in \mathfrak{D}_T}
 f_{I,t} (t+ \e \tilde{t}_B(x,v),  \tilde{x}_B(x,v) ,v)
\|_{L^2  ((-1,T]\times (U \backslash  \tilde{\O})\times V)}.
 \end{split}
\Ee
Following the same argument of (\ref{est3:g2})-(\ref{est:bar_f3_1}) we deduce that 
\Be\label{est:bar_f3_1_t}
\begin{split}
(\ref{bound:S_t})
 \lesssim  &
 \|\p_t  f_R \|_{L^\infty ([0,T];L^2( {\O} \times \R^3))}  +  \|\p_t  f_R (0)\|_{L^2_\gamma} 
\\
& +\big\|  - \frac{1}{\e \kappa}    L(\mathbf{I} - \mathbf{P}) \p_t f_R  + \e \times \text{r.h.s. of } (\ref{eqtn_fR_t}) 
    \big\|_{L^2 ((0,T]\times \O \times
V)}.
 \end{split}
\Ee
From (\ref{est:Energy_v^3_t:1})-(\ref{est:Energy_v^3_t_second}), the last term of (\ref{est:bar_f3_1_t}) is bounded above by 
\Be
\begin{split}\label{est:bar_f3_1_t:last}
&
\Big\{\frac{1}{\kappa} \| \p_t u \|_{L^\infty_{t,x}}
\Big(
1+ \delta \e   \| \mathfrak{w}f_R \|_{L^\infty_{t,x,v}}
\Big)
+ \frac{\e^2}{\kappa} \|  (\ref{est:f2}) \|_{L^\infty_{t,x}}
\Big\} 
\Big\{
\| Pf_R \|_{L^2_{t,x}}+ \| \sqrt{\nu} (\mathbf{I} -\mathbf{P}) f_R \|_{L^2_{t,x,v}}
\Big\}
\\
&+
\Big\{\frac{1}{\kappa \e} + \frac{\delta}{\kappa} \|   \mathfrak{w}f_R\|_{L^\infty_{t,x,v}} +  \frac{\e}{\kappa} \|(\ref{est:f2})  \|_{L^\infty_{t,x}}  + \e \| 
 (\ref{transp:mu})
 \|_{L^\infty_{t,x} }
\Big\}
\| \sqrt{\nu} (\mathbf{I} - \mathbf{P}) \p_t f_R \|_{L^2_{t,x,v}}
 \\
 &+   
\Big\{ 
\frac{\delta}{\kappa}\|    {P} f_R \|_{L^\infty_tL^6_{x }}^{\frac{3(p-2)}{p}}
\| \mathfrak{w} f_R\|_{L^\infty_{t,x,v}}^{\frac{6-2p}{p}}
+ \frac{\e}{\kappa}\|  (\ref{est:f2}) \|_{L^\infty_tL^{\frac{2p}{p-2}}_{x }}   
+  \e \| 
 (\ref{transp:mu})
 \|_{L^\infty_{t } L^\frac{2p}{p-2}_x
 }
\Big\}
 \|    {P} \p_t f_R \|_{L^2_tL^p_{x }}
\\ 
&+ \frac{\e}{\kappa}  \| 
 (\ref{est:f2_t}) \|_{L^2_{t,x,v}}
  \| \mathfrak{w} f_R \|_{L^\infty_{t,x,v}}
 +\e 
\|(\ref{transp:mu_t})\|_{L^2_tL^\infty_{x}}
\| f_R\|_{L^\infty_t L^2_{x,v}} 
 +   \e \{  \|
 (\ref{est:R3})
 \|_{L^2_{t,x}}
 +
 \|  (\ref{est:R4}) \|_{L^2_{t,x}}
 \}.
\end{split}
\Ee
Here the most singular term comes from $\frac{1}{\e^2 \kappa} L(\mathbf{P}_t f_R)$ in the r.h.s. of (\ref{eqtn_fR_t})
.

On the other hand from (\ref{ext:f+_t}) and the argument of (\ref{decomp1:L2L3}) we derive 
\Be\begin{split}\label{decomp1:L2L3_t}
&\| S(\bar{g_t}) \|_{L^2_t((0,T);  L^p_x(\mathbb{T}^2 \times \R))}\gtrsim\left\|\int_{\R^3}  \bar{f}_{R,t}(t,x,v) \tilde{\varphi}_i (v) \sqrt{\mu_0(v)}    \dd v \right\|_{L^2_t ((0,T); L^p_x ( \tilde{\O}))} 
\\
\gtrsim & \ \big(1- O(\e )\|u\|_\infty- O(\frac{1}{N})\big)
 \big\|   {P} \p_t  f_R
 \big\|_{L^2_t( (0,T)  ;L^p_x(\tilde{\O}))}\\
 &- (\kappa \e ) ^{\frac{2}{p-2}} \|  \mathfrak{w}^\prime \p_t  f_R \|_{L^2_t ((0,T);L^\infty_{x,v} (  \O \times \R^3))}-\frac{1}{\kappa \e }\|  (\mathbf{I} - \mathbf{P}) \p_t  f_R  \|_{L^2 ((0,T) \times   \O \times \R^3)}
 %
.
 \end{split}\Ee
Here we have used 
\Be\label{I-P_infty,2}
\begin{split}
& \left\|\int_{\R^3}
\chi_2 (x) \chi_3 (v)
 (\mathbf{I} - \mathbf{P}) \p_t  f_R(t,x,v)  \tilde{\varphi}_i (v) \sqrt{\mu_0(v)}    \dd v   \right\|_{L^2_t ((0,T); L^p_x (\tilde{\O}))} \\
 \leq &  \left\|  
 (\mathbf{I} - \mathbf{P}) \p_t  f_R(t,x,v)      \right\|_{L^2_t ((0,T); L^p_{x,v} (\tilde{\O} \times \R^3))} \\
 \lesssim & \ \Big\| \|  \mathfrak{w}^\prime \p_t  f_R \|_{L^\infty_{x,v} (  \O \times \R^3)}^{\frac{p-2}{p}}
 \|  (\mathbf{I} - \mathbf{P}) \p_t  f_R  \|_{L^2_{x,v} (  \O \times \R^3)}^{\frac{2}{p}}
 \Big\|_{L^2_t((0,T))}\\
 \lesssim& \  \Big\| \|  \mathfrak{w} ^\prime\p_t  f_R \|_{L^\infty_{x,v} (  \O \times \R^3)}^{\frac{p-2}{p}}\Big\|_{L^{\frac{2p}{p-2}}_t((0,T))}
 \Big\| \|  (\mathbf{I} - \mathbf{P}) \p_t  f_R  \|_{L^2_{x,v} (  \O \times \R^3)}^{\frac{2}{p}}\Big\|_{L^p_t((0,T))} \\
 \lesssim& \  (\kappa \e)^{\frac{2}{p}} \|  \mathfrak{w}^\prime \p_t  f_R \|_{L^2_t ((0,T);L^\infty_{x,v} (  \O \times \R^3))}^{\frac{p-2}{p}} (\kappa \e)^{-\frac{2}{p}} \|  (\mathbf{I} - \mathbf{P}) \p_t  f_R  \|_{L^2 ((0,T) \times   \O \times \R^3)}^{\frac{2}{p}}\\
 \lesssim & \ (\kappa \e ) ^{\frac{2}{p-2}} \|  \mathfrak{w}^\prime \p_t  f_R \|_{L^2_t ((0,T);L^\infty_{x,v} (  \O \times \R^3))}+ (\kappa \e)^{-1} \|  (\mathbf{I} - \mathbf{P}) \p_t  f_R  \|_{L^2 ((0,T) \times   \O \times \R^3)}.
\end{split}
\Ee 

Combining (\ref{decomp1:L2L3_t}), (\ref{bound:S_t}), (\ref{est:bar_f3_1_t}), and (\ref{est:bar_f3_1_t:last}) and choosing $N\gg1$ we conclude (\ref{average_3Dt}).

\hide

consider (\ref{ext:f1}), (\ref{tB}), and (\ref{def:mathcalU}).

From the change of variables $x- \frac{t}{\e} \mapsto x$ we bound the second term of (\ref{g1}) by

\begin{align}
 \chi_2 (x) \chi_3 (v)  \big\{
  \mathbf{1}_{\tilde{\O}}(x)
 f_1(t,x,v)
 +  f_2 (t,x,v)\big\}
\end{align}

Therefore we derive that 
\begin{align}
&\left\|\int_{\R^3} \bar{f}_R(t,x,v) \tilde{\varphi}_i (v) \sqrt{\mu_0(v)}    \dd v \right\|_{L^2_t ((0,T); L^p_x (\mathbb{T}^2 \times \R ))}\label{bar_f1}
\\
\leq & \  \left\| \int_{\R^3} e^{-   \frac{C_\nu }{\kappa \e^2 }   t}  f_R (0,x- \frac{t}{\e} v , v)    \tilde{\varphi}_i (v) \sqrt{\mu_0(v)}    \dd v \right\|_{L^2_t ((0,T); L^p_x (\mathbb{T}^2 \times \R ))}\label{bar_f2}\\
&+\left\|  \frac{1}{\e} \int^t_{0} \int_{\R^3}
e^{-  \frac{C_\nu }{\kappa \e^2 } (t-s) }
|\bar{g} (s, x- \frac{t-s}{\e} v, v)| \tilde{\varphi}(v) \dd v  \dd s
\right\|_{L^2_t ((0,T); L^p_x (\mathbb{T}^2 \times \R ))}. \label{bar_f3}
\end{align}

For (\ref{bar_f2}), using the Minkowski's inequality, the change of variables $x- \frac{t}{\e}v\mapsto x$, and (\ref{ext:f+}), it follows that 
\Be\label{est:bar_f2}
(\ref{bar_f2}) \leq    \int_{\R^3}  \Big\|e^{-   \frac{C_\nu }{\kappa \e^2 }   t} \Big\|_{L^2_t ( 0,T )} \Big\| f_R (0,\cdot , v) \Big\|_{L^p_x (\mathbb{T}^2 \times \R )}   \tilde{\varphi}_i (v) \sqrt{\mu_0(v)}    \dd v  
\lesssim 
\big\| f_R ( 0 ) \big\|_{L^p_{x,v} (\O\times \R^3 )}. 
\Ee 

For (\ref{bar_f3}) we decompose 
\Be\notag
(\ref{bar_f3})
= \underbrace{\left\|  \frac{1}{\e} \int^t_{0}  \mathbf{1}_{t-s < \frac{\e}{N^2}} \cdots
\right\|_{L^2_t ((0,T); L^p_x (\mathbb{T}^2 \times \R ))}}_{(\ref{bar_f3})_1}+ \underbrace{ \left\|  \frac{1}{\e} \int^t_{0}  \mathbf{1}_{t-s \geq  \frac{\e}{N^2}} \cdots
\right\|_{L^2_t ((0,T); L^p_x (\mathbb{T}^2 \times \R ))}}_{(\ref{bar_f3})_2}. 
\Ee

For $(\ref{bar_f3})_2$ using $\mathbf{1}_{t-s \geq  \frac{\e}{N^2}} e^{-  \frac{C_\nu }{\kappa \e^2 } (t-s) } \leq e^{-  \frac{C_\nu  }{N^{ 2} } \frac{1}{\kappa \e}  }$ we derive 
\Be\label{est:bar_f3_2}
\begin{split}
(\ref{bar_f3})_2 \lesssim& 
\   \frac{e^{-  \frac{C_\nu  }{N^{ 2} } \frac{1}{\kappa \e}  }}{\e}
\big\|      \mathbf{1}_{t\geq 0}   \mathbf{1}_{\O}(x)  
 (\e \p_t   + v\cdot \nabla_x+ \frac{\nu }{\kappa \e}  )f_R(t,x,v)\big\|_{L^2_t ((0,T); L^p_x (\mathbb{T}^2 \times \R \times \{|v| \leq N\} ))}\\
 \lesssim & \  \kappa  e^{-  \frac{C_\nu  }{2 N^{ 2} }  \frac{1}{\kappa \e } }   \left\|   \frac{1}{\e \kappa} Kf_R + \frac{\e}{\kappa} \Gamma(f_2, f_R) + \frac{\delta}{\kappa} \Gamma(f_R, f_R) 
  -  \frac{( \e \p_t + 
  v\cdot \nabla_x) \sqrt{\mu}}{\sqrt{\mu}} f_{R}
  +  
\e (\mathbf{I}- \mathbf{P})\mathfrak{R}_1 +\e \mathfrak{R}_2
    \right\|_{L^2_t ((0,T); L^p (\O \times
 \{|v| \leq N \}))}
 \end{split}
\Ee

Finally collecting the estimates from (\ref{est:bar_f2}), (\ref{est:bar_f3_1}), and (\ref{est:decom:L2L3}) for (\ref{bar_f1}), we prove (\ref{average_3D}). 

\hide
  Applying the standard Average lemma (e.g. \cite{Laure,gl}) and the Sobolev embedding $H^{1/2}(\O) \subset L^3(\O)$, we derive that 
 \Be\label{VA}
 \begin{split}
 \left\|\int_{\R^3} \bar{f}_R(t,x,v) \tilde{\varphi}_i (v) \sqrt{\mu_0(v)}    \dd v \right\|_{L^2_t (\R)L^3_x(\mathbb{T}^2 \times \R)}&\lesssim  \left\|\int_{\R^3} \bar{f}_R(t,x,v) \tilde{\varphi}_i (v) \sqrt{\mu_0(v)}   \dd v \right\|_{L^2_tH^{1/2}_x}\\
&\lesssim_N    \| \bar{f}_R \|_{L^2_{t,x,v}} +
 \| (\ref{etqn_barf1})
 \|_{L^2_{t,x,v}} +  \| (\ref{etqn_barf2})
 \|_{L^2_{t,x,v}} +  \| (\ref{etqn_barf3})
 \|_{L^2_{t,x,v}}.
\end{split}
\Ee
\unhide

For (\ref{decomp2:L2L3}), using $L^2_t(0,T) \subset L^p_t(0,T)$ and $L^1(\{|v|\leq N\}) \subset L^p(\{|v|\leq N\})$ and $t-\e \frac{x_3}{v_3} \mapsto t$, it follows that 
\Be\label{est1:decomp2:L2L3}
 \begin{split}
(\ref{decomp2:L2L3}) &
\lesssim_{N,T}   \Big\| 
\int_{\R^3} 
 \chi(v) \mathbf{1}_{t-\e \frac{x_3}{v_3}\geq 0}
 \mathbf{1}_{v_3<0}
 e^{- \frac{\nu }{\kappa \e^2 } \frac{ \e x_3}{v_3}}
 f_R(t-\e\frac{x_3}{v_3} 
 ,  x-  \frac{x_3}{v_3} v
 , v) 
 \tilde{\varphi}_i (v) \sqrt{\mu_0(v)}    \dd v
%
 \Big\|_{L^p ((0,T) \times \O \times \R^3)} \\
 &\lesssim_{N,T}  
  \bigg\| \mathbf{1}_{|v| \leq N } \mathbf{1}_{|v_3| \geq 1/N}
\Big\| 
 f_R(t , x-  \frac{x_3}{v_3} v, v)
\Big\|_{L^p_t(0,T)}   \bigg\|_{L^p_{x,v}(  \O \times \R^3)}.
   \end{split}
\Ee
 Now we use a map, for given $|v| \leq N $ and $|v_3|\geq 1/ N$,
\Be
\begin{split}\label{COV_bdry}
(x_1,x_2 ) \in \mathbb{T}^2  \mapsto 
 \Big(
 x_1-  \frac{x_3}{v_3} v_1, x_2-  \frac{x_3}{v_3} v_2  \Big) \in \mathbb{T}^2 
  \ \  \text{with} \  \ 
\left|\frac{\p   \xb(x,v) }{\p (x_1,x_2 )}\right|
=\left|
\begin{bmatrix}
1 & 0
\\
0& 1 
\end{bmatrix} 
\right|=1.
\end{split}\Ee
The image of the map can cover $\mathbb{T}^2$ at most $N^2$-times. By the change of variables of (\ref{COV_bdry}) we derive that 
\Be\label{est2:decomp2:L2L3}
\begin{split}
(\ref{decomp2:L2L3}) \lesssim _{N,T} (\ref{est1:decomp2:L2L3}) &\lesssim_{N,T} 
\bigg[ \int_{
\substack{
|v|\leq N , \\ |v_3|\geq1/N}
} 
\int_0^{ T }  \int_{\p\O}  |
 f_R(t,y, v) |^p   \dd S_y \dd t
  \dd v \bigg]^{1/p} 
  \lesssim_{N,T} \bigg[ 
\int_0^{T }   \int_{\p\O} \int_{
\substack{
|v|\leq N , \\ |v_3|\geq1/N}
} 
 |
 f_R(t,y, v) |^p  |v_3|   \dd v  \dd S_y \dd t
\bigg]^{1/p}.
  \end{split}
\Ee
Finally we utilize an $L^\infty$-bound of $f_R$ to conclude 
\Be\label{est3:decomp2:L2L3}
(\ref{decomp2:L2L3}) \lesssim _{N,T} (\ref{est2:decomp2:L2L3}) \lesssim _{N,T} \sup_{0 \leq t \leq T }\big[ \e \|  \mathfrak{w}_{\varrho, \ss} f_R \|_\infty\big]^{\frac{p-2}{p}}
| \e^{-1/2} f_R| _{L^2_t((  0,T );L^2  (\gamma_+))}^{\frac{2}{p}}. 
\Ee
\hide
By using the same map (\ref{COV_bdry}) and following the argument to get (\ref{est3:decomp2:L2L3}), we derive that 
\Be\label{est:decomp4:L2L3}
(\ref{decomp4:L2L3})\lesssim_{N,T}  \sup_{0 \leq t \leq T+1 }\big[ \e \|  \mathfrak{w}_{\varrho, \ss} f_R \|_\infty\big]^{1/3}
| \e^{-1/2} f_R| _{L^2_t( [0,T+1])L^2_{\gamma_+}}^{2/3}. 
\Ee
\unhide

For (\ref{decomp3:L2L3}), using the Minkowski's inequality then $x- \frac{t}{\e} v \mapsto v$ and (\ref{ext_f0}), it follows 
\Be\begin{split}\label{est:decomp3:L2L3}
(\ref{decomp3:L2L3}) &\lesssim_{N,T}  \left\|\int_{\R^3}  
 \chi  (v) \tilde{\varphi}_i (v) \sqrt{\mu_0(v)}  
\Big\| f_R(0
 , x- \frac{t}{\e} v
 , v) \Big\|_{L^p_x (\O^c)}   \dd v
 \right\|_{L^2_t(0,T ) }\\
 &\lesssim_{N,T}
 \left\|\int_{\R^3} 
 \chi (v) \tilde{\varphi}_i (v) \sqrt{\mu_0(v)}  
 \| f_R(0
 , \cdot 
 , v)  \|_{L^p_x (\O)}   \dd v
 \right\|_{L^2_t(0,T) }
 \lesssim_{N,T}   \| f_R(0 )  \|_{L^p_{x,v}    } . 
\end{split}\Ee

Finally from (\ref{decomp1:L2L3})-(\ref{decomp3:L2L3}), (\ref{est:decomp1:L2L3}), (\ref{est1:decomp2:L2L3}), (\ref{est3:decomp2:L2L3}), and (\ref{est:decomp3:L2L3}) we conclude that, for $p<3$, 
\Be\label{est:decom:L2L3}
\begin{split}
&\big(1-O( \e) \|u\|_\infty- O(\frac{1}{N})\big)
 \big\|    {P} f_R
 \big\|_{L^2_t ((0,T); L^p_x (\O ))}\\
 \lesssim_{T,N} &\left\|\int_{\R^3} \bar{f}_R(t,x,v) \tilde{\varphi}_i (v) \sqrt{\mu_0(v)}    \dd v \right\|_{L^2_t ((0,T); L^p_x (\mathbb{T}^2 \times \R ))} +   \sup_{0 \leq t \leq T } \| \mathfrak{w}_{\varrho, \ss } f_R (t) \|_\infty^{\frac{p-2}{p}} \| (\mathbf{I} - \mathbf{P}) f_R \|_{L^2((0, T) \times \O \times \R^3)}^{\frac{2}{p}}\\
 &+ \sup_{0 \leq t \leq T  }\big[ \e \|  \mathfrak{w}_{\varrho, \ss} f_R \|_\infty\big]^{\frac{p-2}{p}}
| \e^{-1/2} f_R| _{L^2_t((  0,T );L^2  (\gamma_+))}^{\frac{2}{p}} +  \| f_R(0 )  \|_{L^p_{x,v}    }. 
\end{split}
\Ee
\unhide


\subsection{$L^\infty$-estimate} In this section we develop a unified $L^\infty$-estimate in the local Maxwellian setting. We devise the weight functions to control an extra growth in $|v|$ comes from $\frac{(\p_t + \e^{-1} v\cdot \nabla_x) \sqrt{\mu}}{\sqrt{\mu}}$ and its temporal derivative:
\Be\notag
\mathfrak{w}_{\varrho, \ss}(x,v)=\mathfrak{w} := \exp\{\varrho |v|^2 - 
\mathfrak{z}_{\ss}(x_3) (x \cdot v) 
\} \ \ \text{for} \ \ 0< \ss \ll \frac{\varrho}{2\pi} \ \text{and} \ 0 < \varrho < \frac{1}{4},
\Ee 
where $\mathfrak{z}_{\ss}: \R_+ \rightarrow \R_+$ is defined as, for $\ss>0$ 
\Be\notag\label{z_ss}
\begin{split}
\mathfrak{z}_{\ss}(x_3)= \ss   \ \ \text{for} \ \  x_3 \in \big[0, \frac{1}{\ss}-1 \big],  \ \ \text{and} \ \ 
\mathfrak{z}_{\ss}(x_3)=\frac{1}{1+ x_3}  \  \ \text{for} \ \  x_3\in \big[\frac{1}{\ss}-1 , \infty\big).
\end{split}
\Ee 
\hide

Recall a weight function (\ref{weight}). We use the weight function   Let us denote $\mathfrak{z}_{\ss}: \R_+ \rightarrow \R_+$ defined as, for $\ss>0$ 
\Be\label{z_ss}
\begin{split}
\mathfrak{z}_{\ss}(x_3)= \ss   \ \ \text{for} \ \  x_3 \in \big[0, \frac{1}{\ss}-1 \big],  \ \ \text{and} \ \ 
\mathfrak{z}_{\ss}(x_3)=\frac{1}{1+ x_3}  \  \ \text{for} \ \  x_3\in \big[\frac{1}{\ss}-1 , \infty\big).
\end{split}
\Ee 
We define a weight function as 
\Be\label{weight}
\mathfrak{w}=\mathfrak{w}_{\varrho, \ss}(x,v) := \exp\{\varrho |v|^2 - 
\mathfrak{z}_{\ss}(x_3) (x \cdot v) 
\} \ \ \text{for} \ \ 0< \ss \ll \frac{\varrho}{2\pi} \ \text{and} \ 0 < \varrho < \frac{1}{4}. 
\Ee \unhide
We often abuse the notation of $\mathfrak{w}_{\varrho, \ss}$ and $\mathfrak{w}$. We compute to have 
\begin{align*}
&\frac{v\cdot \nabla_x \mathfrak{w}_{\varrho, \ss}(x,v) }{\mathfrak{w}_{\varrho, \ss}(x,v)}\\
&=   - \mathfrak{z}_{\ss} (x_3) |v |^2  -  v_3 \p_{x_3}\mathfrak{z}_{\ss}(x_3) 
( x_1 v_1+ x_2 v_2 + x_3 v_3 )    
\\
 &=
  - \mathfrak{z}_{\ss} (x_3) |v_3|^2  -  x_3 \p_{x_3}\mathfrak{z}_{\ss}(x_3)   |v_3|^2 
- \mathfrak{z}_{\ss} (x_3) (|v_1|^2 + |v_2|^2)  
- \p_{x_3} \mathfrak{z}_{\ss} (x_3)(x_1v_1 + x_2 v_2) v_3 
  \\
&= -\ss \mathbf{1}_{[0, \ss ^{-1}-1]} (x_3) |v |^2 
-  \mathbf{1}_{[ \ss ^{-1}-1, \infty)} (x_3) (1+ x_3)^{-2}|v_3|^2  
- \mathbf{1}_{[ \ss ^{-1}-1, \infty)} (x_3)  \frac{1}{1+ x_3}
(|v_1|^2 + |v_2|^2)  \\
&  \ \ \ - \p_{x_3} \mathfrak{z}_{\ss} (x_3)(x_1v_1 + x_2 v_2) v_3  ,
\end{align*}
where we have used $\p_{x_3} \mathfrak{z}_{\ss} (x_3)= \mathbf{1}_{[ \ss ^{-1}-1,\infty)} (x_3) \frac{-1}{(1+ x_3)^2}$. The last term, the sole term without a sign, can be bounded as 
\begin{align*}
&|- \p_{x_3} \mathfrak{z}_{\ss} (x_3)(x_1v_1 + x_2 v_2) v_3 |\\ \leq& \ 
2\sqrt{2 } \pi \mathbf{1}_{[ \ss ^{-1}-1,\infty)} (x_3)  (1+ x_3)^{-2} (|v_1|^2 + |v_2|^2)^{1/2}  |v_3|  \\
\leq & \   4 \pi^2 \mathbf{1}_{[ \ss ^{-1}-1,\infty)} (x_3)  (1+ x_3)^{-2}
(|v_1|^2 + |v_2|^2) 
+  \frac{1}{2}\mathbf{1}_{[ \ss ^{-1}-1,\infty)} (x_3)  (1+ x_3)^{-2}
|v_3|^2 .
\end{align*}
Therefore we conclude that 
\begin{equation}\label{est:vw_x}
\begin{split}
- v\cdot \nabla_x \mathfrak{w}_{\varrho, \ss}(x,v)  
\geq& \  \Big\{ \ss \mathbf{1}_{[0, \ss ^{-1}-1]} (x_3) |v |^2  
+  \frac{1}{2} \mathbf{1}_{[ \ss ^{-1}-1, \infty)} (x_3) (1+ x_3)^{-2}|v_3|^2  
 \\
& \ \ +(1- 4 \pi^2 \ss) \mathbf{1}_{[\ss^{-1}-1, \infty)} (x_3) \frac{1}{1+ x_3} (|v_1|^2 + |v_2|^2) 
\Big\} \mathfrak{w}_{\varrho, \ss}(x,v)
 \\
\geq & \ \frac{\mathfrak{z}_{\ss} (x_3)}{2}  |v|^2 \mathfrak{w}_{\varrho, \ss}(x,v).
\end{split}
\end{equation}


\hide
From the choice of parameters and the fact $(x_1,x_2) \in (-\pi,\pi)^2$, it follows that $e^{\varrho|v|^2/2} \leq \mathfrak{w}_{\varrho, \ss}(x,v) \leq e^{ 2\varrho|v|^2}$. This particular form of the weight function (\ref{weight}) is designed 
\unhide

We consider 
\Be\label{h}
h(t,x,v) = \mathfrak{w}_{\varrho, \ss}(x,v) f_R(t,x,v).
\Ee
An equation for $h$ can be written from (\ref{eqtn_fR}) and (\ref{bdry_fR}) as 
\Be \label{eqtn:h}
\p_t  h+ \frac{1}{\e} v\cdot \nabla_x h+  \frac{\nu_{\ss}  }{\e^2 \kappa}  
h =  \frac{1}{\e^2 \kappa}K_{\mathfrak{w}} h  + \mathcal{S}_h
,
\Ee 
 \Be\label{bdry:h}
 h|_{\gamma_-} = \mathfrak{w} P_{\gamma_+} \Big(\frac{h}{\mathfrak{w}}\Big)+ r.
 \Ee
 For (\ref{h}), we have $r=- \frac{\e}{\delta} \mathfrak{w} (1- P_{\gamma_+}) f_2$ and $\mathcal{S}_h: = \frac{\delta}{\kappa \e}  \Gamma_{ {\mathfrak{w} }}( {h} ,  {h} )
+\frac{2}{\kappa} \Gamma_{\mathfrak{w}} ( \mathfrak{w} f_2,h) + 
\mathfrak{w} (\mathbf{I} - \mathbf{P}) \mathfrak{R}_1 +\mathfrak{w}  \mathbf{R}_2,
$ and 
\Be\label{nu_ss}
\nu_{\ss}    :=
 \nu(v)
 - \e \kappa \frac{v \cdot \nabla_x  \mathfrak{w}_{\varrho, \ss}}{ \mathfrak{w}_{\varrho, \ss}} +  \e^2 \kappa \frac{(\p_t + \e^{-1} v\cdot \nabla_x) \sqrt{\mu}}{\sqrt{\mu}} 
,
  \Ee
where we denote $\Gamma_{\mathfrak{w}}(\cdot, \cdot )(v) := \mathfrak{w} (v)\Gamma(\frac{\cdot}{\mathfrak{w}}, \frac{\cdot}{\mathfrak{w}})(v)$ and $K_{\mathfrak{w}}(\cdot ):= \mathfrak{w} K( \frac{\cdot}{\mathfrak{w}})$. 
 
If we have
\Be\label{decay:u}
\e^{5/2} \kappa |\p_t u| +  
\e^{1/2}\sup_{x \in \O}(1+ x_3)  |\nabla_x u(t,x)|  <\infty,
\Ee
then for sufficiently small $\e, \kappa>0$, from (\ref{est:vw_x}), 
\Be
\nu_{\ss}  \geq \nu(v) +\frac{ \e \kappa}{2}   \mathfrak{z}_{\ss} (x_3) |v|^2 
 - \e^2\kappa \{
 \e |\p_t u|  + |\nabla_x u| |v| 
 \}|v-\e u| \geq \frac{\nu(v) }{2} + \frac{ \e \kappa}{4}   \mathfrak{z}_{\ss} (x_3) |v|^2 .
 \label{lower:nu_ss}
 \Ee

 From (\ref{L_Gamma}), (\ref{Gamma}), and (\ref{weight})
\Be\label{est:Gamma_w}
\begin{split}
 & |\mathfrak{w}(v) \Gamma( \frac{h}{\mathfrak{w}}, \frac{h}{\mathfrak{w}}) (v)| \\
&\leq   \iint_{\R^3 \times \S^2} 
|(v-v_*) \cdot \mathfrak{u}| 
\sqrt{ \mu(v_* )} e^{- \varrho |v_*|^2+ \frac{\varrho}{2 } |v_*|}
\big\{
|h( v ^\prime)|| h( v_*^\prime)|
+| h( v  )||h( v_* )|
\big\}
\dd \mathfrak{u} \dd v_* \\
&\lesssim_\varrho \nu(v) \|h\|_{L^\infty_v}^2.
\end{split}
\Ee
 From (\ref{est:k}) clearly we have 
 \Be\label{k_w}
\mathbf{k}(v,v_*) \frac{\mathfrak{w}_{\varrho , \ss }(v)}{\mathfrak{w}_{\varrho , \ss }(v
 _*)} \leq   \mathbf{k}_{\mathfrak{w}} (v,v_*): =  \frac{2C_{2}}{|v-v_*|}  e^{- \frac{|v-v_*|^2}{8}
- \frac{1}{8} \frac{(|v-\e u|^2 - |v_*- \e u|^2)^2}{|v-v_*|^2}}
 \frac{\mathfrak{w}_{\varrho , \ss }(v)}{\mathfrak{w}_{\varrho , \ss }(v
 _*)} .
 \Ee
As in (\ref{est:int_k}) we derive  
 \Be\label{est:K_w}
 \int_{\R^3}
 \mathbf{k}_{\mathfrak{w}} (v,v_*)
  \dd v_* \lesssim \frac{1}{1+ |v|}.
 \Ee

 \begin{proposition}\label{est:Linfty}Recall $\mathfrak{w}_{\varrho,\ss}$ in (\ref{weight}).
Assume the same assumptions in Proposition \ref{prop:Hilbert}. In addition we assume (\ref{decay:u}), and the conditions of $\varrho$ and $\ss$ in (\ref{weight}). Then  
 \Be\label{Linfty_3D}
 \begin{split}
&d_\infty
 \| \mathfrak{w}_{\varrho, \ss} f _R  \|_ {L^\infty_{t,x,v}}  \\
 \lesssim &  \ 
  \|  \mathfrak{w}_{\varrho, \ss} f (0)\|_ {L^\infty_{ x,v}}   
 +\frac{\e}{\delta}
 \| (\ref{est:f2}) \|_{L^\infty_{t,x}}
 + 
 \e^2 \kappa( \| (\ref{est:R1})\|_{L^\infty_{t,x}}+ \| (\ref{est:R2})\|_{L^\infty_{t,x}})
\\
&+ \frac{1}{\e^{1/2} \kappa^{1/2}}   \|  P  f_R  \|_{L^\infty_tL^6_
{x}} + \frac{1}{\e^{3/2} \kappa^{3/2}}\Big\{ \|  \sqrt{\nu}(\mathbf{I} -\mathbf{P}) f_R  \|_{L^2_{t,x,v}}
+  \| \sqrt{\nu}(\mathbf{I} -\mathbf{P}) \p_t f_R  \|_{L^2_{t,x,v}}
\Big\}\\
&
{
+\frac{1}{\e^{1/2}\kappa^{3/2}} \| \p_t u \|_{L^\infty_{t,x}} \| Pf_R \|_{L^2_{t,x}}
}
,
 \end{split}\Ee%
 %
where 
\Be\label{dinfty}
d_{\infty}:= 1-  \e^2
\| (\ref{est:f2}) \|_{L^\infty_{t,x}}  - 
  \e\delta 
 \| \mathfrak{w}_{\varrho, \ss} f_R    \|_{L^\infty_{t,x,v}}  
 .
\Ee
\end{proposition}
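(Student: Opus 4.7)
The plan is to apply the $L^\infty$ framework of Guo \cite{Guo10} adapted to the local Maxwellian $\mu=M_{1,\e u,1}$ and to the weighted function $h=\mathfrak{w}_{\varrho,\ss}f_R$. First I will integrate the transport equation (\ref{eqtn:h}) along the straight backward characteristic $(s,X(s),v)=(s,x-\frac{t-s}{\e}v,v)$, using the coercivity $\nu_{\ss}\geq \nu(v)/2 + \e\kappa\mathfrak{z}_{\ss}(x_3)|v|^2/4$ from (\ref{lower:nu_ss}) to extract the exponential damping $\exp\{-\frac{1}{\e^2\kappa}\int_s^t\nu_{\ss}\,d\tau\}$. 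In the half-space, backward characteristics either reach $s=0$, producing the initial-data term $\|\mathfrak{w}f(0)\|_{L^\infty_{x,v}}$, or hit $\p\O$ in finite time. The diffuse reflection boundary condition (\ref{bdry:h}) forces iteration through the stochastic cycle: define bouncing times $t_1>t_2>\cdots$, positions $x_j\in\p\O$, and outgoing velocities $v_j$ distributed via $d\sigma_j=c_\mu\mu(v_j)|n(x_j)\cdot v_j|\,dv_j$. At each bounce the inhomogeneous term $r=-\frac{\e}{\delta}\mathfrak{w}(1-P_{\gamma_+})f_2$ contributes $\frac{\e}{\delta}\|(\ref{est:f2})\|_{L^\infty_{t,x}}$, while after $k_0$ bounces the surviving measure is exponentially small in $k_0$ (following the small-measure argument of \cite{EGKM,EGKM2}).

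Next I will handle the gain term $\frac{1}{\e^2\kappa}K_\mathfrak{w}h$ by a second Duhamel iteration: inserting the Duhamel representation of $h(s_1,X(s_1),v_*)$ inside the kernel integral $\int\mathbf{k}_\mathfrak{w}(V(s_1),v_*)h(s_1,X(s_1),v_*)\,dv_*$ produces a double integral of the schematic form
\[
\frac{1}{\e^4\kappa^2}\int_0^t\int_0^{s_1}\iint\mathbf{k}_\mathfrak{w}(V(s_1),v_*)\mathbf{k}_\mathfrak{w}(v_*,v_{**})|h(s_2,X_2,v_{**})|\,dv_{**}\,dv_*\,ds_2\,ds_1.
\]
To convert this into an $L^2_{x,v}$ bound I split the velocity domain into three pieces: (i) the large-velocity set $\{|v_*|\geq N\}\cup\{|v_{**}|\geq N\}$, where (\ref{k_w})--(\ref{est:K_w}) combined with the exponential weight give a factor $e^{-\vartheta N^2}$ absorbed by $o(1)\|h\|_{L^\infty_{t,x,v}}$; (ii) a near-grazing strip of small measure treated analogously; and (iii) the main bounded-velocity region, where Guo's geometric change of variables $v_*\mapsto y:=X(s_1)-\frac{s_1-s_2}{\e}v_*$ has Jacobian $(|s_1-s_2|/\e)^3$, converting the $v_*$-integral into a spatial integral. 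A Cauchy--Schwarz in $(y,v_{**})$ then bounds (iii) by a factor of the form $\frac{1}{\e^{3/2}\kappa^2}\sup_{s\leq t}\|f_R(s)\|_{L^2_{x,v}}$ after using that the time singularity $|s_1-s_2|^{-3/2}$ is integrable on the restricted window away from the diagonal.

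To close the estimate I decompose $f_R=\mathbf{P}f_R+(\mathbf{I}-\mathbf{P})f_R$ inside the resulting $L^2$ bound. The hydrodynamic part is estimated via H\"older on the spatial ball of radius $O(t/\e)$ swept by the trajectory, giving the $\frac{1}{\e^{1/2}\kappa^{1/2}}\|Pf_R\|_{L^\infty_tL^6_x}$ term. For the microscopic part I use the one-dimensional Sobolev embedding $L^\infty_t([0,T])\hookrightarrow H^1_t([0,T])$ to bound $\sup_s\|(\mathbf{I}-\mathbf{P})f_R(s)\|_{L^2_{x,v}}\lesssim\|(\mathbf{I}-\mathbf{P})f_R\|_{L^2_{t,x,v}}+\|\p_t(\mathbf{I}-\mathbf{P})f_R\|_{L^2_{t,x,v}}$ as in (\ref{Sob_1D}); the commutator decomposition $\p_t(\mathbf{I}-\mathbf{P})=(\mathbf{I}-\mathbf{P})\p_t-(\mathbf{I}-\mathbf{P})\mathbf{P}_t$ from (\ref{dec:L_t}) introduces the extra term $\frac{1}{\e^{1/2}\kappa^{3/2}}\|\p_tu\|_{L^\infty_{t,x}}\|Pf_R\|_{L^2_{t,x}}$ via the prefactor $\e\p_tu$ in $\mathbf{P}_t$. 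Rescaling $\|(\mathbf{I}-\mathbf{P})f_R\|_{L^2_{t,x,v}}\leq \e\kappa^{1/2}\|\e^{-1}\kappa^{-1/2}\sqrt{\nu}(\mathbf{I}-\mathbf{P})f_R\|_{L^2_{t,x,v}}$ converts the $\frac{1}{\e^{3/2}\kappa^2}$ prefactor into $\frac{1}{\e^{3/2}\kappa^{3/2}}$ on the dissipation norm, matching the target (\ref{Linfty_3D}). Finally, the source $\mathcal{S}_h$ contributes: the nonlinear term $\frac{\delta}{\kappa\e}\Gamma_\mathfrak{w}(h,h)$ and the mixed term $\frac{2}{\kappa}\Gamma_\mathfrak{w}(\mathfrak{w}f_2,h)$ are bounded pointwise via (\ref{est:Gamma_w}) as $\nu(v)\delta\e\|h\|_{L^\infty}^2$ and $\nu(v)\e^2\|(\ref{est:f2})\|_{L^\infty_{t,x}}\|h\|_{L^\infty}$, and after division by $\nu_{\ss}$ along the trajectory these are absorbed into the factor $d_\infty$ on the left; the terms $\mathfrak{w}(\mathbf{I}-\mathbf{P})\mathfrak{R}_1$ and $\mathfrak{w}\mathfrak{R}_2$ produce $\e^2\kappa(\|(\ref{est:R1})\|_{L^\infty_{t,x}}+\|(\ref{est:R2})\|_{L^\infty_{t,x}})$.

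The main obstacle is the interplay between the stochastic bouncing cycle and the double-Duhamel iteration: at the second iteration, the backward characteristic $X_2$ may itself encounter the boundary before $s_2$, generating a nested cascade of bounces. I will handle this by truncating the cycle depth at $k_0$ chosen so that the residual $k_0$-bounce probability $(C(t/\e)^{1/2})^{k_0}$ is smaller than $e^{-1/(\kappa\e)^{?}}$ in the target bound, following \cite{EGKM2}. A secondary bookkeeping difficulty is ensuring that the Jacobian of $v_*\mapsto y$ does not degenerate across a bounce, which requires the standard restriction to non-grazing outgoing velocities already present in Guo's framework; the weight $\mathfrak{w}_{\varrho,\ss}$ in (\ref{weight}) was designed precisely to produce the coercive contribution (\ref{est:vw_x}) that absorbs the quadratic-in-$v$ drift $\frac{(\p_t+\e^{-1}v\cdot\nabla_x)\sqrt{\mu}}{\sqrt{\mu}}$ into $\nu_{\ss}$, so the exponential damping along characteristics remains uniform in the local-Maxwellian setting under the hypothesis (\ref{decay:u}).
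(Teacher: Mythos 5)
Your overall skeleton matches the paper's: express $h=\mathfrak{w}_{\varrho,\ss}f_R$, integrate (\ref{eqtn:h}) along characteristics using the coercive $\nu_\ss$, substitute the diffuse boundary condition (\ref{bdry:h}), iterate Duhamel twice through the compact kernel $K_{\mathfrak{w}}$, change variables $v_*\mapsto y$ to convert the inner $v_*$-integral into a spatial one, decompose $f_R=\mathbf{P}f_R+(\mathbf{I}-\mathbf{P})f_R$ with $L^6_x$/$L^2_{x,v}$ H\"older, and deploy the 1D Sobolev embedding (\ref{Sob_1D}) together with the commutator $\mathbf{P}_t$ to trade $\sup_s\|(\mathbf{I}-\mathbf{P})f_R(s)\|_{L^2}$ for $L^2_t$ norms of $f_R,\p_tf_R$. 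All of that is correct and is what the paper does.

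However, you import the full multi-bounce stochastic cycle and small-measure truncation machinery from the general bounded convex domain setting of \cite{Guo10,EGKM,EGKM2}, and you single out ``the interplay between the stochastic bouncing cycle and the double-Duhamel iteration'' as the main obstacle. That obstacle does not exist here. The domain is the half-space $\mathbb T^2\times\R_+$, the characteristics are straight lines, and $v_3$ is preserved along them. A backward characteristic from $(t,x,v)$ with $v_3\leq 0$ never meets $\p\O$; with $v_3>0$ it hits $\{x_3=0\}$ exactly once, at time $t-\tb(x,v)$. At that moment the diffuse reflection integrates over outgoing velocities $\mathfrak v_3<0$, and for each such $\mathfrak v$ the further backward trajectory has $x_3=(t-\tb-s)|\mathfrak v_3|/\e>0$, so it never returns to $\p\O$. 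There is at most one bounce; there is no nested cascade and no $k_0$-cycle. The paper's formulas (\ref{h:bdry}) and (\ref{bdry:K}) encode exactly this: a single boundary substitution, after which Duhamel terminates at $s=0$. Invoking the small-measure/stochastic-cycle argument here is not wrong in principle, but it is a misreading of the geometry that would substantially bloat the proof for no gain.

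There is also an arithmetic slip in your exponent bookkeeping. The change of variables $v_*\mapsto y=x-\frac{t-s}{\e}v-\frac{s-\tau}{\e}v_*$ on the restricted window $s-\tau\gtrsim\e^2\kappa$ has Jacobian $|{(s-\tau)}/{\e}|^3\gtrsim(\e\kappa)^3$, and the $L^2$ H\"older piece therefore acquires the prefactor $(\e\kappa)^{-3/2}=\e^{-3/2}\kappa^{-3/2}$ \emph{directly}; similarly the $L^6$ piece acquires $(\e\kappa)^{-1/2}$. The paper's statement (\ref{Linfty_3D}) is phrased in the bare norm $\|\sqrt\nu(\mathbf{I}-\mathbf{P})f_R\|_{L^2_{t,x,v}}$, with no rescaling to the dissipation norm inside this proposition. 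Your intermediate prefactor $\e^{-3/2}\kappa^{-2}$ does not come out of the Jacobian, and the proposed rescaling $\|(\mathbf{I}-\mathbf{P})f_R\|_{L^2}\leq\e\kappa^{1/2}\|\e^{-1}\kappa^{-1/2}\sqrt\nu(\mathbf{I}-\mathbf{P})f_R\|_{L^2}$ applied to $\e^{-3/2}\kappa^{-2}$ yields $\e^{-1/2}\kappa^{-3/2}$, not the claimed $\e^{-3/2}\kappa^{-3/2}$; either way it does not match. You should instead carry the decomposition $f_R=\mathbf{P}f_R+(\mathbf{I}-\mathbf{P})f_R$ through the double-Duhamel before the change of variables, apply the change of variables with exponent $p=6$ on the hydrodynamic part and $p=2$ on the microscopic part, and only afterwards apply (\ref{Sob_1D}) and the commutator bound $|\mathbf{P}_tf_R|\lesssim\e|\p_tu||Pf_R|$ to produce the three terms $\frac{1}{\e^{1/2}\kappa^{1/2}}\|Pf_R\|_{L^\infty_tL^6_x}$, $\frac{1}{\e^{3/2}\kappa^{3/2}}\{\|\sqrt\nu(\mathbf{I}-\mathbf{P})f_R\|_{L^2_{t,x,v}}+\|\sqrt\nu(\mathbf{I}-\mathbf{P})\p_tf_R\|_{L^2_{t,x,v}}\}$, and $\frac{1}{\e^{1/2}\kappa^{3/2}}\|\p_tu\|_{L^\infty}\|Pf_R\|_{L^2_{t,x}}$.
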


\begin{proposition}\label{prop:Linfty_t}
Assume the same assumptions of Proposition \ref{est:Linfty}. We denote 
\Be\label{w_prime}
\mathfrak{w}^\prime(x,v) : =   \mathfrak{w}_{\varrho^\prime, \ss}(x,v) \ \ \text{for} \ \varrho^\prime<\varrho.
\Ee
Let $p<3$. Then 
\Be\label{Linfty_3D_t}
\begin{split}
&d_{\infty, t}
\| \mathfrak{w} ^\prime  \p_t  f_R  \|_{L^2_t ((0,T);L^{\infty}_{x,v} (\O \times \R^3))}\\
\lesssim & \ 
\e\kappa^{1/2}
\| \mathfrak{w} ^\prime  \p_t  f_R(0 ) \|_{ L^{\infty}_{x,v}}+ 
 \frac{1}{\e^{3/p} \kappa^{3/p}} \| P \p_t f  \|_{L^2_t  L^p_{x} }
 +  \frac{1}{\e^{3/2} \kappa^{3/2} }
 \| \sqrt{\nu} ( \mathbf{I} - \mathbf{P}) \p_t f  \|_{L^2 _{t,x,v}}
\\
&+ \frac{\e}{\delta}
 \|(\ref{est:f2_t})\|_{ L^{\infty}_{x,v}} +  \frac{\e^2}{\delta} \|\p_t u\|_{ L^{\infty}_{x,v}} \|(\ref{est:f2}) \|_{ L^{\infty}_{x,v}}
  + \e^2 \kappa \|(\ref{est:R3})\|_{ L^{\infty}_{x,v}}+ \e^2 \kappa \|(\ref{est:R4})  \|_{ L^2_tL^{\infty}_{x}}\\
  & +\e \big(  \|\p_t u \|_{L^\infty_{t,x}}+ \e 
\|(\ref{est:f2_t})\|_{L^\infty_{t,x}}+
  \e  \kappa\| (\ref{transp:mu_t}) \|_{L^2_tL^\infty_{x}}
 \big)
 \| \mathfrak{w} f_R\|_{L^\infty_{t,x,v}}\\
  &
  +
\e     \Big(
\e\| (\ref{est:f2_t})\|_{L^\infty_{t,x}} + \e  \kappa  \|(\ref{transp:mu_t})\|_{L^2_tL^\infty_{x}} \\
& \ \ \ \ 
 + \|\p_t u\|_{L^\infty_{t,x}}  \big(1+\e^2\|(\ref{est:f2}) \|_{L^\infty_{t,x}}  +  \e \delta   \| \mathfrak{w} f_R \|_{L^\infty_{t,x,v}} \big)
 \Big) \| \mathfrak{w} f_R\|_{L^\infty_{t,x,v}}
,
\end{split}
\Ee
with 
\Be\label{dinftyt}
d_{\infty, t}: = 
1-
\e^2 \| (\ref{est:f2})\|_{L^\infty_{t,x}} - 
\e \delta \| \mathfrak{w} f_R\|_{L^\infty_{t,x,v}}.
\Ee
\end{proposition}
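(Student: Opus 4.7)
The plan follows the $L^\infty$ architecture already used for Proposition \ref{est:Linfty}, transplanted to the mixed $L^2_t L^\infty_{x,v}$ space since the driving quantities $\nabla_x\p_t^2 u$ and $\p_t^2 p$ are only $L^2_t$-bounded, reflecting the initial-boundary layer captured by $\phi_{\kappa t}$ in Theorem~\ref{thm_bound}. First I will set $h_t:=\mathfrak{w}'\p_t f_R$ with $\mathfrak{w}'=\mathfrak{w}_{\varrho',\ss}$, $\varrho'<\varrho$, and recast (\ref{eqtn_fR_t})--(\ref{bdry_fR_t}) as
\[
\p_t h_t + \tfrac{1}{\e}v\cdot\nabla_x h_t + \tfrac{\nu'_{\ss}}{\e^2\kappa}h_t = \tfrac{1}{\e^2\kappa}K_{\mathfrak{w}'}h_t + \mathcal{S}_{h_t},
\qquad h_t|_{\gamma_-}=\mathfrak{w}'P_{\gamma_+}(h_t/\mathfrak{w}') + r_t,
\]
where $\nu'_{\ss}\geq \nu/2+\e\kappa\mathfrak z_{\ss}|v|^2/4$ by the analogue of (\ref{lower:nu_ss}) under (\ref{decay:u}) and $\varrho'<\varrho$. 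The source $\mathcal{S}_{h_t}$ collects: the commutators $-\frac{1}{\e^2\kappa}L_t(\mathbf{I}-\mathbf{P})f_R + \frac{1}{\e^2\kappa}L(\mathbf{P}_tf_R)$, controlled pointwise by (\ref{est_infty:L_t}); the $\Gamma$-terms and commutator $\Gamma_t$, controlled by (\ref{est_infty:L_t}) and (\ref{est:Gamma_w}); the transport factor $\p_t[(\p_t+\e^{-1}v\cdot\nabla_x)\sqrt\mu/\sqrt\mu]h_t$ via (\ref{transp:mu_t}); and $\mathfrak{w}'(\mathbf{I}-\mathbf{P})\mathfrak{R}_3+\mathfrak{w}'\mathfrak{R}_4$ via (\ref{est:R3})--(\ref{est:R4}). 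The boundary remainder is $r_t=-\frac{\e}{\delta}\mathfrak{w}'(1-P_{\gamma_+})\p_t(\mathbf{I}-\mathbf{P})f_2+\mathfrak{w}'[r_{\gamma_+}(f_R)-\frac{\e}{\delta}r_{\gamma_+}((\mathbf{I}-\mathbf{P})f_2)]$, estimated using (\ref{est:f2_t}) and the smallness $\e\|\p_tu\|_\infty$ coming from $r_{\gamma_+}$ in (\ref{bdry_fR_t}).

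Next I apply the Duhamel formula along the backward characteristic and iterate the diffuse reflection boundary condition twice, exactly as in the proof of Proposition~\ref{est:Linfty}. The $K_{\mathfrak{w}'}$ kernel is split into a good compact region $\{|v|\leq N,\,|v-v_*|\geq 1/N\}$ and a small remainder estimated by $N^{-1}\|h_t\|_\infty$; the latter is absorbed into the left-hand side for $N\gg 1$. On the compact region I substitute Duhamel once more, producing a double kernel in $(v,v_*,v_{**})$; the standard change of variables $v_{*}\mapsto y=X(s_1;s_2,\cdot,v_*)$ (using $\e\|u\|_\infty\ll 1$ so that trajectories stay close to straight lines) converts this into an $L^2_{x,v}$ integral of $h_t/\mathfrak{w}'=\p_t f_R$ times a $(\kappa\e)^{-3/2}$ Jacobian factor. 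Decomposing $\p_t f_R=\mathbf{P}\p_tf_R+(\mathbf{I}-\mathbf{P})\p_tf_R$ and trading extra velocity integrability for spatial integrability via Lemma~\ref{lemma:L-1} and the velocity-average improvement supplied by Proposition~\ref{prop:average} yields the two prefactors $\e^{-3/p}\kappa^{-3/p}$ on $\|P\p_tf_R\|_{L^2_tL^p_x}$ and $\e^{-3/2}\kappa^{-3/2}$ on $\|\sqrt\nu(\mathbf{I}-\mathbf{P})\p_tf_R\|_{L^2_{t,x,v}}$ that appear in (\ref{Linfty_3D_t}).

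The $L^2_t$ norm is then taken on the outside. The source terms $\mathfrak{R}_3,\mathfrak{R}_4$ produce $\e^2\kappa\|(\ref{est:R3})\|_{L^\infty_{t,x}}$ and $\e^2\kappa\|(\ref{est:R4})\|_{L^2_tL^\infty_x}$, while the boundary and $\p_t f_2$ contributions deliver the $\e/\delta$ and $\e^2/\delta$ terms. The $(\ref{transp:mu_t})$ factor and $\Gamma_t$ commutator give rise to the final line multiplying $\|\mathfrak{w}f_R\|_{L^\infty_{t,x,v}}$, whose linear factor $\e\|\p_tu\|_\infty\|\mathfrak wf_R\|_\infty$ together with the quadratic $\e\delta\|\p_tu\|_\infty\|\mathfrak wf_R\|_\infty^2$ are exactly what (\ref{Linfty_3D_t}) records. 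The initial datum term $\e\kappa^{1/2}\|\mathfrak{w}'\p_tf_R(0)\|_\infty$ arises from the region $t<\e\tilde t_B$ in the Duhamel expansion once $L^2_t$ is applied, using $\int_0^T\mathbf 1_{t<\e\tilde t_B}\,dt\lesssim \e^2\kappa^2$ on the compact velocity region.

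The principal obstacle is the commutator $\frac{1}{\e^2\kappa}L(\mathbf{P}_tf_R)$, which by (\ref{def:L_t})--(\ref{est_infty:L_t}) scales like $\frac{\|\p_tu\|_\infty}{\e\kappa}|Pf_R|\,\nu e^{-\varrho|v-\e u|^2}$; after the Vidav iteration it produces precisely the term $\e^{-1/2}\kappa^{-3/2}\|\p_tu\|_{L^\infty_{t,x}}\|Pf_R\|_{L^2_{t,x}}$ in (\ref{Linfty_3D_t}), and one has to carefully track the weight gap $\varrho-\varrho'>0$ so that the exponential loss $e^{-\varrho|v-\e u|^2}\mathfrak w'(v)$ from the kernel dominates the (weaker) growth of $\mathfrak w'$, enabling closure of the estimate with $d_{\infty,t}$ of (\ref{dinftyt}) on the left. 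The second delicate point is that $\nu$ and $\mathbf k$ depend on $(t,x)$ through $\mu=M_{1,\e u,1}$; but (\ref{L_0=L}), (\ref{kw/w})--(\ref{est:int_k}), together with $\e\|u\|_\infty\ll 1$, ensure uniform kernel bounds so that the boundary-iteration and change-of-variables steps go through verbatim as in the global-Maxwellian case.
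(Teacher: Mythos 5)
Your overall architecture is correct and mirrors the paper's: $h=\mathfrak{w}'\p_t f_R$ with $\varrho'<\varrho$, Duhamel along characteristics, double iteration of the diffuse reflection, the $N$-splitting of $K_{\mathfrak{w}'}$, the change of variables $v_*\mapsto y$ with Jacobian $\gtrsim(\e\kappa)^3$ on the compact region, the hydrodynamic/microscopic decomposition yielding the $\e^{-3/p}\kappa^{-3/p}\|P\p_t f_R\|_{L^2_tL^p_x}$ and $\e^{-3/2}\kappa^{-3/2}\|(\mathbf{I}-\mathbf{P})\p_t f_R\|_{L^2_{t,x,v}}$ factors, and finally Young's inequality in $t$ for the temporal convolutions.

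However, your treatment of the commutator $\tfrac{1}{\e^2\kappa}L(\mathbf{P}_tf_R)$ contains a genuine error. You claim it enters the Vidav iteration and ``produces precisely the term $\e^{-1/2}\kappa^{-3/2}\|\p_t u\|_{L^\infty_{t,x}}\|Pf_R\|_{L^2_{t,x}}$ in \eqref{Linfty_3D_t}.'' No such term appears in \eqref{Linfty_3D_t}. That term is in \eqref{Linfty_3D} (Proposition \ref{est:Linfty}, for $f_R$, not $\p_t f_R$), and there it arises from a completely different mechanism: the $L^\infty_t$ Sobolev recovery of $\sup_s\|(\mathbf{I}-\mathbf{P})f_R(s)\|_{L^2_{x,v}}$ via \eqref{Sob_1D} and \eqref{I-P:expansion}, not from any commutator. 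In the $L^2_tL^\infty_{x,v}$ setting of the present proposition, the commutator $\tfrac{1}{\e^2\kappa}L(\mathbf{P}_tf_R)$ is part of the source $\mathcal{S}_h$ and is handled \emph{pointwise}: one has $|\tfrac{1}{\e^2\kappa}L(\mathbf{P}_tf_R)|\lesssim \tfrac{|\p_t u|}{\e\kappa}|Pf_R|\nu e^{-\varrho|v-\e u|^2}\lesssim\tfrac{|\p_t u|}{\e\kappa}\|\mathfrak{w}f_R\|_{L^\infty_{t,x,v}}\nu e^{-\varrho|v-\e u|^2}$, and after multiplying by the damping factor $\tfrac{\e^2\kappa}{\nu(v)}$ this contributes exactly the $\e\|\p_t u\|_{L^\infty_{t,x}}\|\mathfrak{w}f_R\|_{L^\infty_{t,x,v}}$ term in the penultimate line of \eqref{Linfty_3D_t}. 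Your proof as written would require a term absent from the statement and would fail to produce the one that is actually there.

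A second, more minor inaccuracy: the initial datum contribution $\e\kappa^{1/2}\|\mathfrak{w}'\p_t f_R(0)\|_{L^\infty_{x,v}}$ does not arise from integrating the indicator $\mathbf{1}_{t<\e\tilde t_B}$; rather, it comes from the $L^2_t(0,T)$-norm of the exponential damping prefactor $e^{-\nu t/(2\e^2\kappa)}$ that multiplies $\|h(0)\|_{L^\infty_{x,v}}$, since $\|e^{-\nu t/(2\e^2\kappa)}\|_{L^2_t}\sim\e\kappa^{1/2}$. Your claimed bound $\int_0^T\mathbf{1}_{t<\e\tilde t_B}\,dt\lesssim\e^2\kappa^2$ is also not consistent: on the compact velocity region one has $\tb(x,v)=\e x_3/|v_3|\lesssim\e N$, so this integral is $O(\e)$, not $O(\e^2\kappa^2)$, and the resulting $L^2_t$ power would be $\e^{1/2}$ rather than $\e\kappa^{1/2}$.
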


 In the proof of propositions, for simplicity, we often use $\| \ \cdot \ \|_\infty$ for $\| \ \cdot \ \|_{L^\infty_{t,x,v}}$, $\| \ \cdot \ \|_{L^\infty_{x,v}}$ or $\| \ \cdot \ \|_{L^\infty_{x}}$ if there would be no confusion. 

\begin{proof}[\textbf{Proof of Proposition \ref{est:Linfty}}]
 We define backward exit time and position as 
 \Be\label{tb}
 \tb(x,v) : = \e \frac{x_3}{v_3}, \ 
  \ \ 
  \xb(x,v) := x-   \frac{x_3}{v_3} v \ \ \text{for} \ \ (x,v) \in  \O  \times \R^3. 
 \Ee
 %
Since the characteristics for (\ref{eqtn:h}) are given by $(x- \frac{t-s}{\e}v, v)$, we have, for $0 \leq t-s <  \tb(x,v)$,   
 \Be\label{Duhamel}
 \frac{d}{ds}\Big\{  e^{-\int^t_s \frac{\nu_{\ss}  }{\e^2 \kappa}  } h(s,x- \frac{t-s}{\e}v, v )
\Big\}=e^{-\int^t_s \frac{\nu_{\ss}  }{\e^2 \kappa}  } 
\Big\{ \frac{1}{\e^2 \kappa}K_{\mathfrak{w}} h 
+\mathcal{S}_h
\Big\}(s,x- \frac{t-s}{\e}v, v ).
 \Ee
Here $e^{-\int^t_s \frac{\nu_{\ss}  }{\e^2 \kappa} }=e^{-\int^t_s \frac{\nu_{\ss} (\tau , x- \frac{t-\tau}{\e}v, v )}{\e^2 \kappa} \dd \tau}$. We regard $(x_1- \frac{t-s}{\e} v_1, x_2- \frac{t-s}{\e} v_2) \in \mathbb{R}^2$ belongs to $\mathbb{T}^2$ without redefining them in $[- \pi, \pi]^2$. 
 
 Now we represent $h$ using (\ref{Duhamel}) and (\ref{bdry:h}) as 
 \begin{align}
 h(t,x,v) =& \mathbf{1}_{t-\tb(x,v)<0}e^{-\int^t_0 \frac{\nu_{\ss}  }{\e^2 \kappa}  } h(0,x- \frac{t }{\e}v, v )
\notag
 \\
 &+ \int^t_{\max\{0, t-\tb(x,v)\}}
 e^{-\int^t_s \frac{\nu_{\ss}  }{\e^2 \kappa}  } 
 \frac{1}{\e^2 \kappa}K_{\mathfrak{w}} h   (s,x- \frac{t-s}{\e}v, v )
 \dd s
 \label{h:K}
  \\
  &+ \int^t_{\max\{0, t-\tb(x,v)\}}
 e^{-\int^t_s \frac{\nu_{\ss}  }{\e^2 \kappa}  } 
\mathcal{S}_h(s,x- \frac{t-s}{\e}v, v )
 \dd s
\notag
  \\
& + \mathbf{1}_{t-\tb(x,v)\geq 0}e^{-\int^t_{t-\tb(x,v)} \frac{\nu_{\ss}  }{\e^2 \kappa}  } 
 h(t-\tb(x,v),\xb(x,v), v )
.\label{h:bdry}
 \end{align}
 Since the integrand of (\ref{h:bdry}) reads on the boundary, using the boundary condition (\ref{bdry:h}) and (\ref{Duhamel}) again, we represent it as 
 \begin{align}
&
h(t-\tb(x,v),\xb(x,v), v )\notag
\\
=&
\mathfrak{w} (\xb(x,v), v )
c_\mu   \sqrt{\mu(v)}
 \int_{\mathfrak{v}_3<0} h(t-\tb(x,v),\xb(x,v), \mathfrak{v})  \frac{ \sqrt{\mu(\mathfrak{v})} |\mathfrak{v}_3| }{\mathfrak{w} (\xb(x,v), \mathfrak{v}) }\dd \mathfrak{v}
+ r(t-\tb(x,v),\xb(x,v),  v ) 
 \notag
\\
=&
\mathfrak{w} (\xb(x,v), v )
c_\mu   \sqrt{\mu(v)}
 \int_{\mathfrak{v}_3<0} 
 e^{
- \int^{t-\tb(x,v)}_0 \frac{\nu_{\ss}  }{\e^2 \kappa}  
} 
 h(0,\xb(x,v)- \frac{t-\tb(x,v)}{\e} \mathfrak{v}, \mathfrak{v})  \frac{ \sqrt{\mu(\mathfrak{v})} |\mathfrak{v}_3| }{\mathfrak{w} (\xb(x,v), \mathfrak{v}) }\dd \mathfrak{v}
\notag
\\
&+\mathfrak{w} (\xb(x,v), v )
c_\mu   \sqrt{\mu(v)}
 \int_{\mathfrak{v}_3<0}\notag \\
 & \ \ \ \times 
 \int^{t-\tb(x,v)}_0 
 e^{- \int^{t-\tb(x,v)}_s
 \frac{\nu_{\ss}}{\e^2 \kappa} 
 } \frac{1}{\e^2 \kappa} K_{\mathfrak{w}} h (s, \xb(x,v) -
 \frac{t-\tb(x,v)-s}{\e} \mathfrak{v}, \mathfrak{v}
  )
   \frac{ \sqrt{\mu(\mathfrak{v})} |\mathfrak{v}_3| }{\mathfrak{w} (\xb(x,v), \mathfrak{v}) }
   \dd s
   \dd \mathfrak{v}\label{bdry:K}\\
   &+\mathfrak{w} (\xb(x,v), v )
c_\mu   \sqrt{\mu(v)}
 \int_{\mathfrak{v}_3<0}\notag \\
 &  \ \ \ \ \ \ \ \ \ \ \ \times
 \int^{t-\tb(x,v)}_0 
 e^{- \int^{t-\tb(x,v)}_s
 \frac{\nu_{\ss}}{\e^2 \kappa} 
 }  \mathcal{S}_h (s, \xb(x,v) -
 \frac{t-\tb(x,v)-s}{\e} \mathfrak{v}, \mathfrak{v}
  )
   \frac{ \sqrt{\mu(\mathfrak{v})} |\mathfrak{v}_3| }{\mathfrak{w} (\xb(x,v), \mathfrak{v}) }
   \dd s 
   \dd \mathfrak{v}\notag
  \\&  +  r (t-\tb(x,v),\xb(x,v),  v )  ,\notag
 \end{align}
 where $r= - \frac{\e}{\delta} \mathfrak{w} (1- P_{\gamma_+}) f_2$ and $e^{- \int^{t-\tb(x,v)}_0 \frac{\nu_{\ss}  }{\e^2 \kappa}  
}:=  e^{-\int^{t-\tb(x,v)}_0 \frac{1}{\e^2 \kappa} 
{\nu_{\ss} (\tau , x- \frac{ \tb(x,v) }{\e}v
- \frac{t-\tb(x,v) -s }{\e} \mathfrak{v}
, \mathfrak{v} )}\dd \tau}$. 

Note that, from (\ref{est:R1}), (\ref{est:R2}), (\ref{est:I-Pf2}), (\ref{est:Pf2}), and (\ref{est:Gamma_w}), 
\Be\label{est:S_h}
\begin{split}
|\mathcal{S}_h(s, x- \frac{t-s}{\e} v,v)|&\lesssim   \nu(v)\frac{\delta}{\kappa \e}\| h  \|_\infty^2 + \frac{\nu(v)}{\kappa}
\|(\ref{est:f2})\|_\infty
\| h \|_\infty   +\| (\ref{est:R1})\|_\infty+  \| (\ref{est:R2})\|_\infty
 ,\\
| \mathfrak{w} (1- P_{\gamma_+}) f_2| & \lesssim   
\|(\ref{est:f2})\|_\infty.
  \end{split}
\Ee

We derive a preliminary estimate as 
\begin{align}
&|h(t,x,v) |
 \lesssim e^{- \frac{\nu }{2\e^2 \kappa}t} \| h(0)\|_\infty
\notag
\\
 &+  \e\delta  \sup_{0 \leq s \leq t}\| h(s)  \|_\infty^2
 + \e^2
 \sup_{0 \leq s \leq t} \|(\ref{est:f2})\|_\infty \| h(s) \|_\infty\notag\\
 &
 +\frac{\e}{\delta}  \sup_{0 \leq s \leq t} \|(\ref{est:f2})\|_\infty
  + 
 {\e^2 \kappa  } (
\| (\ref{est:R1})\|_\infty+  \| (\ref{est:R2})\|_\infty)  
 \label{infty1} 
 \\
&+  \int^t_0  \frac{e^{- \frac{\nu }{2\e^2 \kappa}(t-s)}}{\e^2 \kappa}
\int_{\R^3} \mathbf{k}_{\mathfrak{w}} (v,v_*) |h(s,x- \frac{t-s}{\e}, v_*)| \dd v_*
 \dd s
 \label{K1}
 \\
 &  +\mathfrak{w} (\xb(x,v), v )
c_\mu   \sqrt{\mu(v)}
 \int_{\mathfrak{v}_3<0} 
 \int^{t-\tb(x,v)}_0 
 \frac{ e^{-  
 \frac{\nu }{ 2\e^2 \kappa} (t-s)
 } }{\e^2 \kappa}\notag\\
 &  \ \ \ \ \ \times 
 \int_{\R^3}
  \mathbf{k}_{\mathfrak{w}}(\mathfrak{v} ,v_*) |h (s, \xb(x,v) -
 \frac{t-\tb(x,v)-s}{\e} \mathfrak{v}, v_*
  )| \dd v_* \dd s 
   \frac{ \sqrt{\mu(\mathfrak{v})} |\mathfrak{v}_3| }{\mathfrak{w} (\xb(x,v), \mathfrak{v}) }\dd \mathfrak{v}.\label{K2} 
\end{align}
We note that $|h(s,x- \frac{t-s}{\e}, v_*)|$ has the same upper bound. Then we bound (\ref{K1}) by a summation of  $(\ref{infty1})$ and
\Be 
\begin{split}\label{K2_1}
\sup_{
\substack{(\xb, v) \in \p\O \times \R^3  \\
t-\tb\geq 0
}}
\mathfrak{w} (\xb , v )
&c_\mu   \sqrt{\mu(v)}
 \int_{\mathfrak{v}_3<0} 
 \int^{t-\tb}_0 
 \frac{ e^{-  
 \frac{\nu }{ 2\e^2 \kappa} (t-s)
 } }{\e^2 \kappa}\\
 & \times 
 \int_{\R^3}
  \mathbf{k}_{\mathfrak{w}}(\mathfrak{v} ,v_*) |h (s, \xb  -
 \frac{t-\tb -s}{\e} \mathfrak{v}, v_*
  )| \dd v_* \dd s 
   \frac{ \sqrt{\mu(\mathfrak{v})} |\mathfrak{v}_3| }{\mathfrak{w} (\xb , \mathfrak{v}) }\dd \mathfrak{v},
\end{split}
\Ee
and importantly 
\Be
\begin{split}
\int^t_0 & \frac{e^{- \frac{\nu(v)}{2\e^2 \kappa}(t-s)}}{\e^2 \kappa}
\int_{\R^3} \mathbf{k}_{\mathfrak{w}} (v,v_*) 
\int^s_0
 \frac{e^{- \frac{\nu(v_*)}{2\e^2 \kappa}(s-\tau)}}{\e^2 \kappa}\\ &\times 
 \int_{\R^3}\mathbf{k}_{\mathfrak{w}} (v_*,v_{**}) 
 |h(s,x- \frac{t-s}{\e}v - \frac{s-\tau}{\e}v_* , v_{**})| 
 \dd v_{**}
\dd \tau
\dd v_*
 \dd s. \label{double_K}
\end{split} 
\Ee
We consider (\ref{double_K}). We decompose the integration of $\tau \in [0,s] = [0, s- o(1)\e^2 \kappa] \cup [s- o(1)\e^2 \kappa, s]$. The contribution of $\int^s_{s-o(1) \e^2 \kappa} \cdots \dd \tau$ is bounded as 
\Be\label{bound:small_t}
\frac{2}{\nu(v)}\big(1- e^{- \frac{\nu(v)}{2 \e^2 \kappa}}\big)
 \| \mathbf{k}_{\mathfrak{w}}(v , \cdot)\|_{L^1}
\frac{o(1) \e^2 \kappa}{\e^2 \kappa}
 \| \mathbf{k}_{\mathfrak{w}}(v_*, \cdot)\|_{L^1}
 \sup_{0 \leq s \leq t}\| h(s) \|_\infty\leq o(1) \sup_{0 \leq s \leq t}\| h(s) \|_\infty.
\Ee
For the rest of term we decompose $\mathbf{k}_{\mathfrak{w} }(v_*,v_{**})= \mathbf{k}_{\mathfrak{w},N}(v_*,v_{**}) + \{ \mathbf{k}_{\mathfrak{w} }(v_*,v_{**})-  \mathbf{k}_{\mathfrak{w},N}(v_*,v_{**})\}$ where $\mathbf{k}_{\mathfrak{w},N}(v_*,v_{**}):=\mathbf{k}_{\mathfrak{w}}(v_*,v_{**})$ $\times  
\mathbf{1}_{ \frac{1}{N} <|v_*- v_{**}|< N 
 \  \& 
  \ 
|v_*|< N  
}.
$ From (\ref{est:K_w}), \\ $\int_{\R^3} \mathbf{k}_{\mathfrak{w}}(v_*,v_{**}) \mathbf{1}_{|v_*|\geq N} \dd v_{**} \lesssim 1/N$. Also from the fact $\mathbf{k}_{\mathfrak{w}}(v_*,v_{**}) \leq \frac{e^{-C |v_*- v_{**}|^2}}{|v_*- v_{**}|} \in L^1(\{ v_*- v_{**} \in \R^3 \})$, $\sup_{v_*}\int_{\R^3} \mathbf{k}_{\mathfrak{w}}(v_*,v_{**}) \{\mathbf{1}_{\frac{1}{N}\geq |v_*- v_{**}| } + \mathbf{1}_{  |v_*- v_{**}|\geq N }  \}\dd v_{**} \downarrow 0$ as $N \rightarrow \infty$. Hence for $N\gg1$
\Be\begin{split}
(\ref{double_K})& \leq  \int^t_0  \frac{e^{- \frac{\nu(v)}{2\e^2 \kappa}(t-s)}}{\e^2 \kappa}
\int_{\R^3} \mathbf{k}_{\mathfrak{w} ,N} (v,v_*) 
\int^{s- o(1) \e^2 \kappa}_0
 \frac{e^{- \frac{\nu(v_*)}{2\e^2 \kappa}(s-\tau)}}{\e^2 \kappa}\\
 & \ \ \ \  \times 
 \int_{\R^3}\mathbf{k}_{\mathfrak{w},N } (v_*,v_{**}) 
 |h(s,x- \frac{t-s}{\e}v - \frac{s-\tau}{\e}v_* , v_{**})| 
 \dd v_{**}
\dd \tau
\dd v_*
 \dd s  \\
 &\leq C_N  \int^t_0  \frac{e^{- \frac{\nu(v)}{2\e^2 \kappa}(t-s)}}{\e^2 \kappa}
\int_{|v_*| \leq 2N}  
\int^{s- o(1) \e^2 \kappa}_0
  \frac{e^{- \frac{\nu(v_*)}{2\e^2 \kappa}(s-\tau)}}{\e^2 \kappa}
\\
 & \ \ \ \  \times  \int_{|v_{**}| < 2N} 
 |f_R(s,x- \frac{t-s}{\e}v - \frac{s-\tau}{\e}v_* , v_{**})| 
 \dd v_{**}
\dd \tau
\dd v_*
 \dd s
 \label{int:Kf}
 \\
 & \ \ \ + o(1) \sup_{0 \leq s \leq t} \| h(s)\|_{L^\infty_{x,v}}, 
\end{split}\Ee 
where we have used the fact $\sup_{x}\mathbf{k}_{\mathfrak{w} } (v_*, v_{**}) \mathfrak{w}_{\varrho, \ss}(v_{**}) \leq C_N<\infty$ when $\frac{1}{N} <|v_*- v_{**}|< N$ and $|v_*|< N$ (then $|v_{**}|< 2N$).

Now we decompose $f_R=\mathbf{P}f_R+ (\mathbf{I} -\mathbf{P})f_R$. We first take integrations (\ref{int:Kf}) over $v_{*}$ and $v_{**}$ and use Holder's inequality with $p=6, p=2$ in $1/p+ 1/p^\prime=1$ for $\mathbf{P}f_R , (\mathbf{I} -\mathbf{P})f_R$ respectively to derive 
\Be\label{int:Kf1}
\begin{split}
&(\ref{int:Kf})\\
 \leq& \  (4N)^3C_N
\frac{1}{\nu(v)}
\sup_{\substack{0 \leq s \leq t\\
0 \leq \tau \leq s- o(1) \e^2 \kappa
}}
\left(\iint_{|v_*|\leq N,|v_{**}| \leq 2 N}  |\mathbf{P}f_R(s, x- \frac{t-s}{\e} v - \frac{s-\tau}{\e} v_*, v_{**})|^6  \dd v_{**}\dd v_*\right)^{1/6}   \\
& + (4N)^3C_N
\frac{1}{\nu(v)}\\
& \  \times 
\sup_{\substack{0 \leq s \leq t\\
0 \leq \tau \leq s- o(1) \e^2 \kappa
}}
\left(\iint_{|v_*|\leq N,|v_{**}| \leq 2 N}  | ( \mathbf{I}-\mathbf{P})f_R(s, x- \frac{t-s}{\e} v - \frac{s-\tau}{\e} v_*, v_{**})|^2  \dd v_{**}\dd v_*\right)^{1/2}  .
\end{split}
\Ee
Now we consider a map
\Be\label{COV}
v_* \in \{\R^3: |v_*| \leq N\} \mapsto y:=x- \frac{t-s}{\e} v - \frac{s-\tau}{\e} v_* \in \O, \ \ \text{where} \ \ 
\Big|\frac{\p y}{\p v_*}\Big|= \Big|\frac{s-\tau}{\e}\Big|^3 \gtrsim \e^3 \kappa^3.
\Ee
We note that this mapping is not one-to-one and the image can cover $\O$ at most $N$ times. Therefore we have 
\begin{align*}
&\left(\iint_{|v_*|\leq N,|v_{**}| \leq N}  |\mathbf{P}f_R(s, x- \frac{t-s}{\e} v - \frac{s-\tau}{\e} v_*, v_{**})|^6  \dd v_{**}\dd v_*\right)^{1/6}\\
&\leq N^{1/6} \left(\iint_{|v_*|\leq N,|v_{**}| \leq N}  |\mathbf{P}f_R(s, y, v_{**})|^6  \dd v_{**}
\frac{\dd y}{\e^3 \kappa^3}\right)^{1/6} \leq \frac{N^{1/6}}{\e^{1/2} \kappa^{1/2}} \| \mathbf{P} f_R(s) \|_{L^6_{x,v}},
\end{align*}
\begin{align*}
&\left(\iint_{|v_*|\leq N,|v_{**}| \leq N}  |( \mathbf{I}-\mathbf{P})f_R(s, x- \frac{t-s}{\e} v - \frac{s-\tau}{\e} v_*, v_{**})|^6  \dd v_{**}\dd v_*\right)^{1/6}\\
&\leq \frac{N^{1/2}}{\e^{3/2} \kappa^{3/2}} \| ( \mathbf{I}-\mathbf{P})f_R(s) \|_{L^2_{x,v}}.
\end{align*}
Therefore we conclude that 
\Be\label{est:double_K}
\begin{split}
&(\ref{double_K})\\ &\leq (4N)^3C_N(\ref{int:Kf1}) + o(1)\sup_{0 \leq s \leq t} \| h(s)\|_{L^\infty_{x,v}} \\
&\leq (4N)^4C_N\left\{
\frac{1}{\e^{1/2}\kappa^{1/2}} \sup_{0 \leq s \leq t}\| \mathbf{P}  f_R(s ) \|_{L^6_{x,v}} 
+
  \frac{1}{\e^{3/2}\kappa^{3/2}}  \sup_{0 \leq s \leq t} \|(\mathbf{I} -\mathbf{P}) f_R(s ) \|_{L^2_{x,v}} \right\}\\
  &  \ \ \ \ + o(1)\sup_{0 \leq s \leq t} \| h(s)\|_{L^\infty_{x,v}} \\
  &\lesssim_N  
\frac{1}{\e^{1/2}\kappa^{1/2}} \sup_{0 \leq s \leq t}\| \mathbf{P}  f_R(s ) \|_{L^6_{x,v}} 
+
  \frac{1}{\e^{3/2}\kappa^{3/2}} 
\Big\{ \|(\mathbf{I} -\mathbf{P}) f_R  \|_{L^2_{t,x,v}}
+  \|(\mathbf{I} -\mathbf{P}) \p_t f_R  \|_{L^2_{t,x,v}}
\Big\} \\
& \ \ \ \  {
+ \frac{1}{\e^{1/2} \kappa^{3/2}} \| \p_t u \|_{L^\infty_{t,x}}\|  {P} f_R  \|_{L^2_{t,x}} 
} + o(1)\sup_{0 \leq s \leq t} \| h(s)\|_{L^\infty_{x,v}},
\end{split}
\Ee
where we have used (\ref{Sob_1D}) the Sobolev embedding in 1D at the last line.

Now we consider (\ref{K2}) and (\ref{K2_1}). We decompose $s \in [0, t-\tb] = [0, t-\tb - o(1) \e^2 \kappa] \cap [t-\tb - o(1) \e^2 \kappa, t-\tb]$. The contribution of $\int^{t-\tb}_{t-\tb- o(1) \e^2 \kappa} \cdots $ is bounded as 
\Be
\frac{o(1) \e^2 \kappa}{\e^2 \kappa } \| \mathbf{k}_{\mathfrak{w}}(\mathfrak{v}, \cdot ) \|_{L^1} \sup_{0 \leq s \leq t} \| h(s) \|_\infty
\leq o(1)\sup_{0 \leq s \leq t} \| h(s) \|_\infty. \label{bound:small_t1}
\Ee
For $s \in  [0, t-\tb - o(1) \e^2 \kappa]$ we consider a map as (\ref{COV})
\Be\label{COV_1}
\mathfrak{v}  \in \{\mathfrak{v}  \in \R^3: \mathfrak{v} _3<0 \} \mapsto y:=\xb - \frac{t-\tb -s}{\e} \mathfrak{v} \in \O, \ \  \text{where} \ \ 
\left|\frac{\p y}{\p \mathfrak{v}}\right| = \left|\frac{t-\tb -s}{\e}\right|^3\gtrsim \e^3 \kappa^3. 
\Ee
Following the argument to have (\ref{int:Kf1}) we bound 
\Be\begin{split}
&\text{the contribution of} \ \int^{t-\tb- o(1) \e^2 \kappa}_0 \cdots \ \text{of } \ (\ref{K2_1})\\
& 
\lesssim_N \frac{1}{\e^{1/2} \kappa^{1/2}} \| \mathbf{P} f_R(s) \|_{L^6_
{x,v}} + \frac{1}{\e^{3/2} \kappa^{3/2}} \Big\{ \|(\mathbf{I} -\mathbf{P}) f_R  \|_{L^2_{t,x,v}}\\
& \ \ \ \ \ \ \ 
+  \|(\mathbf{I} -\mathbf{P}) \p_t f_R  \|_{L^2_{t,x,v}}
\Big\}   {
+ \frac{1}{\e^{1/2} \kappa^{3/2}} \| \p_t u \|_{L^\infty_{t,x}}\|  {P} f_R  \|_{L^2_{t,x}} 
}  . \label{est:K_bdry}
\end{split}\Ee

In conclusion, we bound $|h(t,x,v)|$ by $(\ref{infty1})$, (\ref{est:double_K}), (\ref{bound:small_t1}), (\ref{est:K_bdry}) and conclude (\ref{Linfty_3D}) by choosing small enough $o(1)$ in (\ref{est:double_K}) and (\ref{bound:small_t1}). 
\end{proof}
\hide

 From (\ref{est:k}) and (\ref{est_infty:L_t}) it follows easily that  
 \Be\label{eqtn:h}
|  \p_t h +  \e^{-1} v\cdot \nabla_x h +\frac{\nu}{2\e^2 \kappa }  h | \leq \frac{1}{\e^2 \kappa }   \int_{\R^3}
k_{\vartheta, \varrho} (v,v_*)
|h(v_*)| \dd v_* + g, \ \ \ |h||_{\gamma_-} \leq  \mathfrak{w}P_{\gamma_+} | \mathfrak{w}^{-1}h| + r. 
 \Ee
 where $k_{\vartheta, \varrho}(v,v_*)$ has been defined in (\ref{k_the_rho}).
 
  \Be\notag
 K_\varsigma h:= \int_{\R^3} k_\varsigma (v,v_*)h(v_*) \dd v_*, \ \ \ where \ \   k_\varsigma (v,v_*):= C \frac{e^{- \varsigma |v-v_*|^2}}{|v-v_*|}.
 \Ee

 For some later purpose we set a notation 
\Be\label{k_the_rho}
k_{\vartheta, \varrho } (v,v_*) : = e^{- \vartheta{|v-v_*|^2} 
-  \vartheta\frac{(|v|^2 - |v_*|^2)^2}{|v-v_*|^2}}
\frac{e^{\varrho|v|^2}}{e^{\varrho |v_*|^2}} . 
\Ee

  The (scaled) characteristics $[Y,W]$ are given by 
 \Be\label{trajectory_e}\Bs
Y(s;t,x,v) &= X(t- \frac{t-s}{\e};t,x,v),\\
W(s;t,x,v) &= V(t- \frac{t-s}{\e};t,x,v).
\end{split} \Ee
Along the characteristics,
\Be\Bs\label{Duhamel_linear}
&\frac{d}{ds}\Big\{ h(s , Y(s;t,x,v), W(s;t,x,v))
e^{- \frac{\nu(v)}{\e^2 \kappa} (t-s) }
\Big\}\\
=& \  \frac{1}{\e^2 \kappa}K h (s , Y(s;t,x,v), W(s;t,x,v))e^{-\frac{\nu(v)}{\e^2 \kappa}  (t-s) }.
\end{split}\Ee
Then 
\Be\Bs\notag
h(t,x,v) =& \ h_0 (Y(0;t,x,v), W(0;t,x,v)) e^{-  
 \frac{\nu(v)}{\e^2 \kappa}  t  }
 \\
&+ \int^t_0 
\frac{ e^{
-  \frac{\nu(v)}{\e^2 \kappa} (t-s) }}{\e^2 \kappa }
 \int_{\R^3} k_w(W(s;t,x,v),u )
  h (s , Y(s;t,x,v), u) \dd u
 \dd s\\
 & +\cdots 
\end{split}\Ee
Without the boundary we would have 
\Bes
&&h(t,x,v) \\
&\sim&  
\int^t_0 
\frac{e^{-  \frac{\nu_0 (t-s)}{\e^{2 } \kappa} }}{\e^{2 } \kappa}
 \int_{|u| \leq N } 
 \int^s_0 \frac{e^{-  \frac{\nu_0 (s-s^\prime)}{\e^{2 } \kappa} }}{\e^{2 } \kappa} \int_{\R^3}
k_w ( \cdots , u^\prime) h(s^\prime,x- \frac{t-s}{\e}v - \frac{s-s^\prime}{\e}u,u^\prime)
\dd u^\prime \dd  s^\prime
  \dd u
 \dd s\\
 &\sim&\int \int \int^s_{s- o(1) \e^{2 } \kappa} \int + \int \int \int^{s- o(1) \e^{2 } \kappa }_0 \int\\
 &\sim& O(1)\| h\|_\infty+ 
 \int^t_0 
\frac{e^{-  \frac{\nu_0 (t-s)}{\e^{2 } \kappa} }}{\e^{2 } \kappa}\int^{s-o(1) \e^{2 } \kappa}_0 \frac{e^{-  \frac{\nu_0 (s-s^\prime)}{\e^{2 } \kappa} }}{\e^{2 } \kappa} 
\left\| h(s^\prime,x- \frac{t-s}{\e}v - \frac{s-s^\prime}{\e}u,u^\prime)\right\|_{L^p_{u,u^\prime}
(\{|u| \leq N \} \times \R^3)
}
 \dd s^\prime \dd s .
\Ees

Here the dimension enters the business. Assume that the function only depends on $(t, {x},v) \in \R \times \R^3 \times \R^3$. 
Now we consider a change of variables, for $s^\prime \in [0, s- \kappa \e^{2 } \kappa]$,
\Be
 {u} 
\mapsto  {X}: =  {x}- \frac{t-s}{\e} {v} - \frac{s-s^\prime}{\e} {u}.
\Ee
We compute 
\Be
\dd  {u} = \frac{1}{\left|\frac{s-s^\prime}{\e}\right|^3}\dd {X} \lesssim
\frac{1}{\e^3 \kappa^3 }\dd \munderbar{X}.
\Ee
Then 
\Bes
&&\left\| h(s^\prime, {x}- \frac{t-s}{\e} {v} - \frac{s-s^\prime}{\e} {u},u^\prime)\right\|_{L^p_{u,u^\prime}}\\
&\lesssim& \frac{1}{ \e^{3/p} \kappa^{3/p}} \| h(s^\prime, \cdot, \cdot) \|_{L^p}.
\Ees
Therefore for $(x,v) \in \{ |\nabla \xi(x) \cdot v|> o(1) \} \cup \{|v| \leq \frac{1}{o(1)}\}$ we have $C=C(\O, o(1))>0$ such that 
\Be
|h(t,x,v)| \leq
e^{-\f{\nu(v) t}{\e^{2 }\kappa }} |h_0 (Y(0;t,x,v), W(0;t,x,v))|
+
\frac{ C}{ \e^{3/p} \kappa^{3/p}} 
 \int^t_0 
\frac{e^{-  \frac{\nu_0 (t-s)}{\e^{2 }\kappa } }}{\e^{2 }\kappa}
\| h(s) \|_p 
 \dd s 
 .
\Ee
Roughly we will have 
\Be
\| h \|_\infty \lesssim \frac{\e \kappa^{1/2}}{(\e \kappa)^{3/2}} \| \e^{-1} \kappa^{1/2}(\mathbf{I} - \mathbf{P}) f_R \|_2 + \frac{\kappa^{-1/2}}{(\e \kappa)^{3/6}} \| \kappa^{1/2} \mathbf{P}f_R \|_{L^6} \lesssim \frac{1}{\e^{1/2} \kappa}. 
\Ee

\hide

Then, for each time interval $[N, N+1]$, we obtain that 
\Be
\| h(N+1) \|_\infty \leq e^{- \frac{\nu_0}{\e^{2+\kappa}}} \| h(N) \|_\infty + \frac{C}{ \e^{2(1+ \kappa)/p}}
\int^{N+1}_N \frac{e^{- \frac{\nu_0}{\e^{2+ \kappa}}(N+1-s) }}{\e^{2+ \kappa}} \| h(s) \|_p \dd s + C,
\Ee
where $C>0$ does not depend on $N$.

Then inductively we derive 
\Be
\Bs
&\| h(N+1) \|_\infty \\
\leq & \ e^{- \frac{\nu_0 2}{\e^{2+\kappa}}} \| h(N-1) \|_\infty+ \frac{C}{ \e^{2(1+ \kappa)/p}} 
\int^{N+1}_{N-1} \frac{e^{- \frac{\nu_0}{\e^{2+ \kappa}}(N+1-s) }}{\e^{2+ \kappa}} \| h(s) \|_p \dd s 
+ C \sum_{j=0}^1 e^{- \frac{\nu_0}{\e^{2+\kappa}}j}\\
&\cdots
\\
\leq &  \ e^{- \frac{\nu_0 (N+1)}{\e^{2+ \kappa}}} \| h(0) \|_\infty + \f{C}{ \e^{2(1+ \kappa)/p}}
\int^{N+1}_0  \frac{e^{- \frac{\nu_0}{\e^{2+ \kappa}}(N+1-s) }}{\e^{2+ \kappa}} \| h(s) \|_p \dd s 
+ \frac{C}{1- e^{- \frac{\nu_0}{\e^{2+\kappa}}}}.
\end{split}
\Ee

\unhide

\unhide

\begin{proof}[\textbf{Proof of Proposition \ref{prop:Linfty_t}}]
Since many parts of the proof are overlapped with the proof of Proposition \ref{est:Linfty} we only pin point the differences. An equation for $\mathfrak{w}^\prime \p_t f_R$ takes the similar form of (\ref{eqtn:h}) and (\ref{bdry:h}). We can read (\ref{eqtn_fR_t}) for 
\Be\label{h_t}
h(t,x,v) = \mathfrak{w} ^\prime(x,v) \p_t  f_R(t,x,v), \ \ \text{for} \ \varrho^\prime<\varrho,
\Ee
 as (\ref{eqtn:h}) and (\ref{bdry:h}) replacing  
\Be\label{S_h_t}
\begin{split}
\mathcal{S}_h & =  \frac{2}{\kappa} \Gamma_{\mathfrak{w}^\prime}( {\mathfrak{w}^\prime}{f_2},h) 
+    \frac{ 2 \delta }{\e\kappa}\Gamma_{\mathfrak{w}^\prime}( {\mathfrak{w}^\prime} f_R,h)+  \frac{2}{\kappa} \Gamma_{\mathfrak{w}^\prime}( {\mathfrak{w}^\prime}\p_t {f_2}, {\mathfrak{w}^\prime}f_R)  \\
&
 \ \   -
\p_t \Big( \frac{( \p_t +
  \e^{-1} v\cdot \nabla_x) \sqrt{\mu}}{\sqrt{\mu}} \Big)
  \frac{\mathfrak{w}^\prime}{\mathfrak{w} }
  \mathfrak{w} f_{R}  + {\mathfrak{w}^\prime} (\mathbf{I} -  \mathbf{P})\mathfrak{R}_3 + {\mathfrak{w}^\prime} \mathfrak{R}_4\\
 & \ \ 
    - \frac{1}{\e^2 \kappa } \mathfrak{w}^\prime L_t (\mathbf{I} - \mathbf{P}) f_R +\frac{1}{\e^2 \kappa } \mathfrak{w}^\prime  L(\mathbf{P}_t f_R) +  \frac{2}{\kappa} \mathfrak{w}^\prime  \Gamma_t(  {f_2}, f_R)+ \frac{\delta}{\e \kappa }  \mathfrak{w}^\prime \Gamma_t (f_R,f_R)
,
\\
 r&= - \frac{\e}{\delta} {\mathfrak{w}^\prime}  (1-P_{\gamma_+})  \p_tf_2 
+{\mathfrak{w}^\prime}  r_{\gamma_+} (f_R)-{\mathfrak{w}^\prime} \frac{\e}{\delta}r_{\gamma_+}  (f_2),
\end{split}
\Ee
where $r _{\gamma_+}(g) $ has been defined in (\ref{bdry_fR_t}).

We have the same equality of (\ref{h:K}), (\ref{h:bdry}) with (\ref{bdry:K}) for $h$ of (\ref{h_t}) but replacing $S_h$ and $r$ of (\ref{S_h_t}). Note that $\frac{\mathfrak{w}^\prime(x,v)}{\mathfrak{w} (x,v)}\lesssim e^{- (\varrho- \varrho^\prime)|v|^2}$ and hence $\Big|\p_t \Big( \frac{( \p_t +
  \e^{-1} v\cdot \nabla_x) \sqrt{\mu}}{\sqrt{\mu}} \Big)
  \frac{\mathfrak{w}^\prime}{\mathfrak{w} }\Big| \lesssim (\ref{transp:mu_t})
  $ from (\ref{transp:mu_t}). From (\ref{est_infty:L_t}), (\ref{est:R3}), (\ref{est:R4}), (\ref{est:I-Pf2}), (\ref{transp:mu}), (\ref{transp:mu_t}), (\ref{est:Pf2}), and (\ref{est:Gamma_w}), we bound terms of (\ref{S_h_t}) 
  \Be
\begin{split}
|S_h|& \lesssim     \nu(v)
\big\{\frac{1}{\kappa}
 | (\ref{est:f2}) |
 +\frac{\delta}{\kappa \e } \| \mathfrak{w} f_R\|_\infty 
 \big\}
 \| h\|_\infty 
  + (\ref{est:R3}) + (\ref{est:R4})\\
  & \ \ 
 + \Big(
 \frac{\nu(v)}{\kappa} (\ref{est:f2_t}) + (\ref{transp:mu_t})
 + |\p_t u| \big(\frac{1}{\e \kappa } + \frac{\e}{\kappa} (\ref{est:f2})  + \frac{\delta}{ \kappa }\| \mathfrak{w} f_R \|_\infty\big)
 \Big) \| \mathfrak{w} f_R \|_\infty
  ,
  \label{est:S_h_t1}
  \end{split}\Ee
\Be
\begin{split}
|r| \lesssim & \ \frac{\e}{\delta}
 (\ref{est:f2_t}) +  \frac{\e^2}{\delta} |\p_t u|  (\ref{est:f2}) 
+ \e  |\p_t u |\| \mathfrak{w} f_R\|_\infty
.\label{est:S_h_t2}
\end{split}\Ee 
  
  \hide
\Be
\begin{split}
|S_h| \lesssim& \   \nu(v)
\big\{\frac{1}{\kappa} (\| p \|_\infty + \| \tilde{u} \|_\infty + \kappa \| \nabla_x u \|_\infty) +\frac{\delta}{\kappa \e } \| \mathfrak{w} f_R\|_\infty 
 \big\}
 \| h\|_\infty\\
 & 
+ \frac{\nu(v)}{\kappa} \big\{
|\p_t p|  + |\p_t \tilde{u}| + \e |\p_t u| (|p| + |\tilde{u}|) + \kappa (|\nabla_x \p_t u| + \e |\p_t u| |\nabla_x u|)
\big\}\| \mathfrak{w} f_R\|_\infty\\
&+  \e (|\p_t^2 u| + |u||\nabla_x \p_t u| + |\p_t u| |\nabla_x u| )+ \e^2 |\p_t u| (|\p_t u| + |u||\nabla_x u|) \| \mathfrak{w} f_R\|_\infty +  \frac{1}{\delta}
\mathfrak{q}(
|\nabla_{x} \p_t \tilde{u}|, 
|\nabla_x^2 \p_t u | 
)\\
&
+  \frac{\e}{\delta \kappa}\{1+ 
|\p_t^2 p| + |\nabla_x \p_t^2 u| 
\} 
\\
& \ \ \   \ \ \  \times 
 \mathfrak{q} (
|p|, |\nabla_x p|, |\p_t p|,|\nabla_x \p_t p | , |u| , |\nabla_x u|, |\p_t u|, |\nabla_x \p_t u|, |\nabla_x^2 u|, |\nabla_x^2 \p_t u|, 
|\tilde{u}|, |\nabla_x \tilde{u}|,|\p_t \tilde{u}| ,|\nabla_x \p_t \tilde{u}|, |\p_t ^2 \tilde{u}|
) ,\\
|r| \lesssim & \ \frac{\e}{\delta}\{ |\p_t p| +|\p_t \tilde{u}| + \kappa |\nabla_x \p_t u |\}
+\e  |\p_t u | \{ \| \mathfrak{w} f_R\|_\infty+ 
 |  p| +|  \tilde{u}| + \kappa |\nabla_x   u |
\}.\label{est:S_h_t}
\end{split}
 \Ee
 \unhide
 
 Then as in  (\ref{infty1})-(\ref{double_K}) we derive a preliminary estimate as 
 \begin{align}
& |h(t,x,v) |
\notag
\\
& \lesssim    \ e^{- \frac{\nu }{2\e^2 \kappa}t} \| h(0)\|_\infty+ 
 \frac{\e^2 \kappa }{\nu(v)}
 (\ref{est:S_h_t1})
 + (\ref{est:S_h_t2})
 \label{infty1_t}
  \\
&+  \int^t_0  \frac{e^{- \frac{\nu }{2\e^2 \kappa}(t-s)}}{\e^2 \kappa}
\int_{\R^3} \mathbf{k}_{\mathfrak{w}^\prime} (v,v_*) |h(s,x- \frac{t-s}{\e}, v_*)| \dd v_*
 \dd s
 \label{K1_t}
 \\
 &  +\mathfrak{w}^\prime (\xb(x,v), v )
c_\mu   \sqrt{\mu(v)}
 \int_{\mathfrak{v}_3<0} 
 \int^{t-\tb(x,v)}_0 
 \frac{ e^{-  
 \frac{\nu }{ 2\e^2 \kappa} (t-s)
 } }{\e^2 \kappa}\notag\\
 & \ \ \ \ \     \times 
 \int_{\R^3}
  \mathbf{k}_{\mathfrak{w}^\prime}(\mathfrak{v} ,v_*) |h (s, \xb(x,v) -
 \frac{t-\tb(x,v)-s}{\e} \mathfrak{v}, v_*
  )| \dd v_* \dd s 
   \frac{ \sqrt{\mu(\mathfrak{v})} |\mathfrak{v}_3| }{\mathfrak{w}^\prime (\xb(x,v), \mathfrak{v}) }\dd \mathfrak{v}.\label{K2_t} 
\end{align} 
\hide\begin{align}
&|h(t,x,v) |\notag
\\
 \lesssim &  \ e^{- \frac{\nu }{2\e^2 \kappa}t} \| h(0)\|_\infty
 + \frac{\e}{\delta}\{ |\p_t p| +|\p_t \tilde{u}| + \kappa |\nabla_x \p_t u |\}
+\e  |\p_t u | \{ \| \mathfrak{w} f_R\|_\infty+ 
 |  p| +|  \tilde{u}| + \kappa |\nabla_x   u |
\}
\label{infty1_t}
\\
&
 + \e^2  \kappa \big\{ \frac{1}{\kappa}
  (\| p \|_\infty + \| \tilde{u} \|_\infty + \kappa \| \nabla_x u \|_\infty) +\frac{\delta}{\kappa \e } \| \mathfrak{w} f_R\|_\infty 
 \big\}
 \sup_{0 \leq s \leq t}\| h(s)  \|_\infty
 \label{infty2_t}
  \\
 & +  \e^2
  \big\{
|\p_t p|  + |\p_t \tilde{u}| + \e |\p_t u| (|p| + |\tilde{u}|) + \kappa (|\nabla_x \p_t u| + \e |\p_t u| |\nabla_x u|)
\big\}\| \mathfrak{w} f_R\|_\infty
\label{infty3_t}\\
&+  \e^3 \kappa  (|\p_t^2 u| + |u||\nabla_x \p_t u| + |\p_t u| |\nabla_x u| )+ \e^4 \kappa  |\p_t u| (|\p_t u| + |u||\nabla_x u|) \| \mathfrak{w} f_R\|_\infty +  \frac{\e^2 \kappa}{\delta}
\mathfrak{q}(
|\nabla_{x} \p_t \tilde{u}|, 
|\nabla_x^2 \p_t u | 
)
\label{infty4_t} \\
 &+
  \frac{\e^3}{\delta }\{1+ 
|\p_t^2 p| + |\nabla_x \p_t^2 u| 
\} \notag
\\
& \ \ \   \ \ \  \times 
 \mathfrak{q} (
|p|, |\nabla_x p|, |\p_t p|,|\nabla_x \p_t p | , |u| , |\nabla_x u|, |\p_t u|, |\nabla_x \p_t u|, |\nabla_x^2 u|, |\nabla_x^2 \p_t u|, 
|\tilde{u}|, |\nabla_x \tilde{u}|,|\p_t \tilde{u}| ,|\nabla_x \p_t \tilde{u}|, |\p_t ^2 \tilde{u}|
)\label{infty5_t}
 \\
&+  \int^t_0  \frac{e^{- \frac{\nu }{2\e^2 \kappa}(t-s)}}{\e^2 \kappa}
\int_{\R^3} \mathbf{k}_{\mathfrak{w}^\prime} (v,v_*) |h(s,x- \frac{t-s}{\e}, v_*)| \dd v_*
 \dd s
 \label{K1_t}
 \\
 &  +\mathfrak{w}^\prime (\xb(x,v), v )
c_\mu   \sqrt{\mu(v)}
 \int_{\mathfrak{v}_3<0} 
 \int^{t-\tb(x,v)}_0 
 \frac{ e^{-  
 \frac{\nu }{ 2\e^2 \kappa} (t-s)
 } }{\e^2 \kappa}
 \int_{\R^3}
  \mathbf{k}_{\mathfrak{w}^\prime}(\mathfrak{v} ,v_*) |h (s, \xb(x,v) -
 \frac{t-\tb(x,v)-s}{\e} \mathfrak{v}, v_*
  )| \dd v_* \dd s 
   \frac{ \sqrt{\mu(\mathfrak{v})} |\mathfrak{v}_3| }{\mathfrak{w}^\prime (\xb(x,v), \mathfrak{v}) }\dd \mathfrak{v}.\label{K2_t} 
\end{align} \unhide
As (\ref{K2_1}) and (\ref{double_K}), 
we bound (\ref{K1_t}) by a summation of  $(\ref{infty1_t})$ and 
\begin{align}
&\int^t_0  \frac{e^{- \frac{C_\nu
}{2\e^2 \kappa}(t-s)}}{\e^2 \kappa}
\int^{s-o(1) \e^2 \kappa}_0
 \frac{e^{- \frac{C_\nu
 }{2\e^2 \kappa}(s-\tau)}}{\e^2 \kappa}
\int_{|v_*| \leq 2N} 
\notag\\
& \ \ \ \ \ \times \int_{|v_{**} | \leq 2N}
 |h(s,x- \frac{t-s}{\e}v - \frac{s-\tau}{\e}v_* , v_{**})| 
 \dd v_{**}
\dd v_*
\dd \tau
 \dd s , \label{double_K_t}\\
&+\sup_{
\substack{(\xb, v) \in \p\O \times \R^3  \\
t-\tb\geq 0
}}
\mathfrak{w}^\prime (\xb , v )
c_\mu   \sqrt{\mu(v)}
 \int_{\mathfrak{v}_3<0} 
 \int^{t-\tb -o(1) \e^2 \kappa}_0 
 \frac{ e^{-  
 \frac{\nu }{ 2\e^2 \kappa} (t-s)
 } }{\e^2 \kappa}\notag\\
 &\ \ \ \ \ \  \ \ \ \ \ \  \ \ \ \ \ \ \ \ \ \  \times 
 \int_{|v_*| \leq 2N}
   |h (s, \xb  -
 \frac{t-\tb -s}{\e} \mathfrak{v}, v_*
  )| \dd v_* \dd s 
   \frac{ \sqrt{\mu(\mathfrak{v})} |\mathfrak{v}_3| }{\mathfrak{w}^\prime (\xb , \mathfrak{v}) }\dd \mathfrak{v}
   \label{K2_1_t} 
   \\
   &+ o(1) \sup_{0 \leq s \leq t} \| h(s) \|_{L^{\infty}_{x,v}}
   .\label{bound:small_t1_t}
\end{align} 
Then we follow the argument of (\ref{int:Kf1})-(\ref{est:double_K}) to derive that, for $p<3$,
\begin{align}
|(\ref{double_K_t})| \lesssim&\  \int^t_0  \frac{e^{- \frac{C_\nu
}{2\e^2 \kappa}(t-s)}}{\e^2 \kappa}
\int^{s-o(1) \e^2 \kappa}_0
 \frac{e^{- \frac{C_\nu
 }{2\e^2 \kappa}(s-\tau)}}{\e^2 \kappa}
 \frac{N^{1/3}}{\e^{3/p} \kappa^{3/p}}
 \| \mathbf{P} \p_t f (\tau) \|_{L^p_{x,v}}
 \dd \tau \dd s\label{est:Kf_t:1}
 \\
 &+ \int^t_0  \frac{e^{- \frac{C_\nu
}{2\e^2 \kappa}(t-s)}}{\e^2 \kappa}
\int^{s-o(1) \e^2 \kappa}_0
 \frac{e^{- \frac{C_\nu
 }{2\e^2 \kappa}(s-\tau)}}{\e^2 \kappa}
 \frac{N^{1/2}}{\e^{3/2} \kappa^{3/2}}
 \| \mathbf{P} \p_t f (\tau) \|_{L^2_{x,v}}
 \dd \tau \dd s.\label{est:Kf_t:2}
\end{align}
Now we use the Young's inequality for temporal convolution twice to derive that, for $p<3$,  
\Be
\begin{split}\label{est:double_K_t}
&\|(\ref{double_K_t})\|_{L^2_t(0,T)}\\
\lesssim& \ 
\bigg\|\frac{e^{- \frac{C_\nu
}{2\e^2 \kappa}|s |}}{\e^2 \kappa}  \bigg\|_{L^1_s(\R)}
\bigg\|\\
& \times 
\int^s_0
 \frac{e^{- \frac{C_\nu
 }{2\e^2 \kappa}(s-\tau)}}{\e^2 \kappa}
 \bigg(
 \frac{N^{1/3}}{\e^{3/p} \kappa^{3/p}}
 \| \mathbf{P} \p_t f (\tau) \|_{L^p_{x,v}}
 +  \frac{N^{1/2}}{\e^{3/2} \kappa^{3/2}}
 \| (\mathbf{I}- \mathbf{P}) \p_t f (\tau) \|_{L^2_{x,v}}
 \bigg)
 \dd \tau
\bigg\|_{L^2_s (\R)} \\
\lesssim & \ 
\bigg\|\frac{e^{- \frac{C_\nu
}{2\e^2 \kappa}|s |}}{\e^2 \kappa}  \bigg\|_{L^1_s(\R)}
\bigg\|
\frac{e^{- \frac{C_\nu
}{2\e^2 \kappa}|\tau |}}{\e^2 \kappa} \bigg\|_{L^1_\tau (\R)} \\
& \times
\bigg(
\frac{N^{1/3}}{\e^{3/p} \kappa^{3/p} }
 \| P \p_t f  \|_{L^2_t ((0,T);L^p_{x}(\O))}
 + \frac{N^{1/2}}{\e^{3/2} \kappa^{3/2} }
 \|  (\mathbf{I}- \mathbf{P}) \p_t f  \|_{L^2 ((0,T) \times \O\times \R^3)}
 \bigg)\\
 \lesssim_N  & \ 
 \frac{1}{\e^{3/p} \kappa^{3/p} }
 \| P \p_t f  \|_{L^2_t ((0,T);L^p_{x}(\O))}
 +  \frac{1}{\e^{3/2} \kappa^{3/2} }
 \| ( \mathbf{I} - \mathbf{P}) \p_t f  \|_{L^2 ((0,T) \times \O\times \R^3)}
.
\end{split}
\Ee 

As in (\ref{est:K_bdry}), for (\ref{K2_1_t}) we use (\ref{COV_1}) to derive that, for $p<3$,  
\Be\label{est:K_bdry_t}
\begin{split}
&\|(\ref{K2_1_t})\|_{L^2_t (0,T)}\\
 \lesssim & \ 
\bigg\|\int^t_0 \frac{e^{- \frac{C_\nu}{2\e^2\kappa} (t-s)}}{\e^2 \kappa}
\bigg(
\frac{1}{\e^{3/p} \kappa^{3/p} }
 \| \mathbf{P} \p_t f (s) \|_{L^p_{x,v}}
 +  \frac{1}{\e^{3/2} \kappa^{3/2}}
 \| (\mathbf{I}- \mathbf{P}) \p_t f (s) \|_{L^2_{x,v}}
\bigg)
\dd s\bigg\|_{L^2_t (0,T)}
\\
 \lesssim & \ 
 \bigg\|\frac{e^{- \frac{C_\nu
}{2\e^2 \kappa}|s |}}{\e^2 \kappa}  \bigg\|_{L^1_s(\R)}
\Big\{\frac{1}{\e^{3/p} \kappa^{3/p} }
 \| P \p_t f  \|_{L^2_t ((0,T);L^p_{x}(\O))}
 +  \frac{1}{\e^{3/2} \kappa^{3/2} }
 \| ( \mathbf{I} - \mathbf{P}) \p_t f  \|_{L^2 ((0,T) \times \O\times \R^3)}
\Big\}\\
\lesssim & \ 
\frac{1}{\e^{3/p} \kappa^{3/p} }
 \| P \p_t f  \|_{L^2_t ((0,T);L^p_{x}(\O))}
 +  \frac{1}{\e^{3/2} \kappa^{3/2} }
 \| ( \mathbf{I} - \mathbf{P}) \p_t f  \|_{L^2 ((0,T) \times \O\times \R^3)},
\end{split}
\Ee
where we have used the Young's inequality for temporal convolution. 

In conclusion, we bound $\|h\|_{L^2_t L^\infty_{x,v}}$ by $\|(\ref{infty1_t})\|_{L^2_t L^\infty_{x,v}},$ (\ref{est:double_K_t}), (\ref{bound:small_t1_t}), (\ref{est:K_bdry_t}) and conclude (\ref{Linfty_3D_t}) by choosing small enough $o(1)$ in (\ref{bound:small_t1_t}). 
\end{proof}

\subsection{Proof of Theorem \ref{main_theorem:conditional}}

An existence of a unique global solution $F$ for each $\e>0$ can be found in \cite{EGKM,EGKM2,EGKM3,EGM}. Thereby we only focus on the (a priori) estimates (\ref{est:E}). 


 \

\textbf{Step 1. }  Define $T_*>0$ as  
\Be\label{condition:close}
\begin{split}
T_*=\sup   \Big\{ t\geq 0:  \   &
\min \{d_2, d_{2,t}, d_6, d_3, d_{3,t}, d_\infty, d_{\infty, t}\}
\geq \frac{\sigma_0}{4}\\
&  \ \  \text{and} \ \ 
\frac{\delta \e^{1/2}}{\kappa} \sqrt{\mathcal{D}(s)} + 
\e \delta \| \mathfrak{w}_{\varrho, \ss} f(s) \|_{L^\infty_{x,v}} 
+ \frac{\e^{1/2} \delta}{\kappa^{1+ \mathfrak{P} }} 
\| Pf_R(s) \|_{L^2_x}
\ll 1 \\
& \text{ for all}  \ 0 \leq s \leq t 
 \Big\},
%
\end{split}
\Ee
where $d_2, d_{2,t}, d_6, d_3, d_{3,t}, d_\infty, d_{\infty, t}$ are defined in (\ref{d2}), (\ref{d2t}), (\ref{d6}), (\ref{d3}), (\ref{d3t}), (\ref{dinfty}) and (\ref{dinftyt}).

From (\ref{condition:theorem}) and (\ref{condition:close}) we read all the estimates of Proposition \ref{prop:energy}, Proposition \ref{prop:L6}, Proposition \ref{est:Linfty}, Proposition \ref{prop:average}, and Proposition \ref{prop:Linfty_t} in terms of $\mathcal{E} (t)$ and $\mathcal{D} (t)$ as follows. 

\hide Suppose there exists $T>0$ such that 
\Be\begin{split}\label{condition1:close}
  &\frac{\delta}{\kappa} \| Pf_R(t)\|_{L^6(\O)}^{1/2} \| Pf_R (t) \|_{L^2(\O)}^{1/2} 
   + \frac{\delta}{\kappa} \| Pf_R \|_{L_x^{6}(\O)}^{\frac{3(p-2)}{p}} 
\|  \mathfrak{w}_{\varrho,\ss} f_R \|_{L_x^{\infty}(\O)}^{\frac{6-2p}{p}}
+
 ( \e\delta 
+ \e^2
\| (\ref{est:f2}) \|_{L^\infty_{t,x}}
 ) \sup_{0 \leq s \leq t}\| \mathfrak{w}_{\varrho, \ss} f_R (s)  \|_\infty  
  \\
  &  + \e \| u(t) \|_{L^\infty (\O)}  
+
 \frac{1}{N}  +
  \e
\| (\ref{transp:mu})\|_{L^\infty_{t } ([0,T ] ; L^{\frac{2p}{p-2}}_x( \O))}
 +
\e^2 \| (\ref{est:f2})\|_{L^\infty_{t,x}} 
+
 \frac{\e}{\kappa} \| 
 (\ref{est:f2})
  \|_{L^\infty_t ([0,T]; L_x^{\frac{2p}{p-2}}(\O)) } \ll 1,
\end{split}\Ee
and 
\Be\label{condition2:close}
\frac{\e^{1/2} \delta}{\kappa } \mathcal{D}^{1/2} \lesssim 1
\Ee
 and
 \Be
 \e\| (\ref{transp:mu})\|_{\infty}
+  \e  \kappa^{-1} 
 \|(\ref{est:f2})\|_\infty\lesssim \frac{1}{\kappa^{1/2}}
 \Ee
\unhide
From (\ref{Linfty_3D}), (\ref{condition:close}), and (\ref{condition:theorem}) 
\Be 
 \begin{split}
 \label{Linfty_3D_ED}
&\sup_{0 \leq s \leq t} \| \mathfrak{w}_{\varrho, \ss} f _R(s)  \|_{L^\infty_{x,v}}\\
 \lesssim  \  & 
 \frac{1}{\e^{1/2} \kappa^{1/2}
 } \sup_{0 \leq s  \leq t} \| \mathbf{P} f_R(s) \|_{L^6_
{x,v}}
+
 \frac{1}{\e^{1/2} \kappa } \sqrt{ \mathcal{D}(t) } 
 {+ \frac{1}{\e^{1/2}\kappa^{1+ \mathfrak{P}}} 
 \| P f _R\|_{L^2_{t,x}}
 }
 \\
&  
 + \|  \mathfrak{w}_{\varrho, \ss} f (0)\|_\infty
 + \e^{1/2} \exp \Big(\frac{3}{\kappa^{\mathfrak{P}^\prime}}\Big).
%
%
%
%
%
\end{split}
 \Ee
 Now applying (\ref{Linfty_3D_ED}) to (\ref{L6}) we derive that  \hide Using $\|  (\mathbf{I} - \mathbf{P}) f_R (t) \|_{ {L^2 (\O \times \R^3)}}
 \lesssim  
   \e \kappa^{ 1/2} \sqrt{\mathcal{D}(t)},$ and $|(1- P_{\gamma_+}) f_R(t)|_{L^2({\gamma_+})}
  \lesssim     \e^{1/2} \sqrt{\mathcal{D}(t)}$ from (\ref{Sob_1D}), and using (\ref{condition:close}), we derive that \unhide
\Be\label{L6_ED}
\begin{split}
\sup_{0 \leq s \leq t}\|   {P} f_R(s) \|_{L^6_x}\lesssim & \ 
\frac{\e}{\kappa} \exp \Big(\frac{1}{\kappa^{\mathfrak{P}^\prime}}  \Big)\sup_{0 \leq s \leq t} \sqrt{\mathcal{E}(s)}+
\frac{1}{\kappa^{1/2}} \sqrt{\mathcal{D}(t)} {+ \frac{1}{ \kappa^{1/2 + \mathfrak{P}} }
\| P f _R\|_{L^2_{t,x}}
 }\\
&+ ( \e   \kappa)^{\frac{1}{2}}    \|  \mathfrak{w}_{\varrho, \ss} f (0)\|_{L^\infty_{x,v}} +\e^{1/2} \exp \Big(\frac{3}{\kappa^{\mathfrak{P}^\prime}}\Big) . \end{split}
\Ee
\hide
 \Be\notag\label{L6_ED}
\begin{split}
& 
\|   {P} f_R(t) \|_{L^6_x} \\
%
 \lesssim & \ 
\big( o(1) + \frac{\delta \e^{1/2}}{\kappa} \sqrt{\mathcal{D} (t)}\big)
 \|   {P} f_R(t) \|_{L^6_x}
 + (\e+\e\| (\ref{transp:mu})\|_{L^\infty_{t,x}}
+  \e  \kappa^{-1} 
 \|(\ref{est:f2})\|_{L^\infty_{t,x}}
)
\sqrt{\mathcal{E}(t)}\\
&
 + \frac{1}{\kappa^{1/2}}  
 \big(
1+ \delta \e \big\{  \frac{1}{\e^{1/2} \kappa } \sqrt{ \mathcal{D}(t) } + \|  \mathfrak{w}_{\varrho, \ss} f (0)\|_\infty
 +\frac{\e}{\delta}
 \| (\ref{est:f2}) \|_{L^\infty_{t,x}} 
 +
 \e^2  \kappa\| (\ref{est:R1})\|_{L^\infty_{t,x}} + 
\e^2  \kappa\| (\ref{est:R2})\|_{L^\infty_{t,x}} \big\}
 \big)
   \sqrt{\mathcal{D}(t)} \\
   &
 +   
 \frac{\e^{\frac{3}{2}}\kappa^{\frac{1}{2}}  }{\delta}
 \| (\ref{est:f2}) \|_{L^\infty_{t,x}} 
 +  \frac{\e}{\delta}| (\ref{est:f2})|_{L^4(\p\O)}   +  \e\{\| 
 (\ref{est:R1}) \|_{L^2_{x,v}}+  \|(\ref{est:R2})
 \|_{L^2_{x,v}} \}
 + \e^{\frac{5}{2}} \kappa^{\frac{3}{2}} \{ \| (\ref{est:R1})\|_{L^\infty_{t,x}}+ \| (\ref{est:R2})\|_{L^\infty_{t,x}}
 \}
   +( \e   \kappa)^{\frac{1}{2}}    \|  \mathfrak{w}_{\varrho, \ss} f (0)\|_{L^\infty_{x,v}} ,
\end{split}
\Ee
 where we have used $\|  (\mathbf{I} - \mathbf{P}) f_R (t) \|_{ {L^2 (\O \times \R^3)}}
 \lesssim  
   \e \kappa^{ 1/2} \sqrt{\mathcal{D}(t)},$ and $|(1- P_{\gamma_+}) f_R(t)|_{L^2({\gamma_+})}
  \lesssim     \e^{1/2} \sqrt{\mathcal{D}(t)}$ from (\ref{Sob_1D}).
 \unhide
   From (\ref{Linfty_3D_ED}), (\ref{L6_ED}), and (\ref{condition:close}) and (\ref{condition:theorem}) we conclude that
 \Be\label{L6Linfty_ED}
\begin{split}
& \sup_{0 \leq s \leq t} \big\{ \kappa^{\frac{1}{2}} \|   {P} f_R(s) \|_{L^6_x}  + 
 \e ^{\frac{1}{2}}\kappa \| \mathfrak{w}_{\varrho, \ss} f _R(s) \|_{L^\infty_{x,v}}\big\}  \\
 &
 \lesssim   \underbrace{  
\exp \Big(\frac{3}{\kappa^{\mathfrak{P}^\prime}}\Big)
 +  \sqrt{\mathcal{F}_{   p  }  (0)}   +  
\sup_{0 \leq s \leq t} \big\{\sqrt{\mathcal{E}(s)}
 +  \sqrt{\mathcal{D}(s)}
\big\}
{+
\frac{1}{\kappa^{\mathfrak{P}}}  \| P f_R \|_{L^2_{t,x}}
} 
 }_{(\ref{L6Linfty_ED})_*}
.
\end{split}
\Ee

 From (\ref{average_3D}), (\ref{L6Linfty_ED}), (\ref{condition:close}) and (\ref{condition:theorem})
 \Be\begin{split}\label{average_3D_ED}
\kappa^{\frac{1}{2}} \|    {P} f_R
 \|_{L^2_t((0,t);L^p_x )}
\lesssim  
\underbrace{
(\ref{L6Linfty_ED})_* \Big\{1+ 
 \frac{\e^{1/2} \delta}{\kappa}(\ref{L6Linfty_ED})_*
 + \Big( \e^{\frac{p+2}{2(p-2)} }\kappa^{\frac{2}{p-2}} (\ref{L6Linfty_ED})_* \Big)^{\frac{p-2}{p}}
\Big\}}_{(\ref{average_3D_ED})_*}.
\end{split}\Ee  

\hide
\Be\begin{split}\label{average_3Dt}
&\Big(1- O(\e) \|u\|_{L^\infty_{t,x}} 
- 
 \frac{\e}{\kappa} \| 
 (\ref{est:f2})
  \|_{L^\infty_t   L_x^{\frac{2p}{p-2}}  }-\e
\| (\ref{transp:mu})\|_{L^\infty_{t } L^{\frac{2p}{p-2}}_x }
  - \frac{\delta}{\kappa} \| P f_R \|_{L^\infty_t  L_x^{6} }^{\frac{3(p-2)}{p}} 
\|  \mathfrak{w}_{\varrho,\ss} f_R \|_{L^{\infty} _{t,x,v}}^{\frac{6-2p}{p}}
%
%
\Big)  \big\|    {P}  \p_t f_R
 \big\|_{L^2_t L^p_x }
 \\
\lesssim & 
\frac{1}{\kappa}  \| \p_t u \|_{L^2_tL^\infty_{ x}}\big(
1+  \e^2   \| (\ref{est:f2}) \|_{L^\infty_{t,x}}
\big)\| Pf_R \|_{L^\infty_tL^2_{ x}}
+ \frac{\delta \e}{\kappa} 
\| \p_t u \|_{L^\infty_{t,x} }\|    {P} f_R \|_{L^\infty_tL^6_{x }}^{\frac{3(p-2)}{p}}\| \mathfrak{w} f_R\|_{L^\infty_{t,x,v}}^{\frac{6-2p}{p}}\|    {P} f_R \|_{L^2_tL^p_{x }} \\
&+ \frac{\e}{\kappa}  \| \p_t u \|_{L^\infty_{t,x} }
(\delta \| \mathfrak{w} f_R \|_{L^\infty_{t,x,v}} + \| (\ref{est:f2}) \|_{L^\infty_{t,x}})
 \| \sqrt{\nu} (\mathbf{I} -\mathbf{P}) f_R \|_{L^2_{t,x,v}}
\\ &+ (\kappa \e ) ^{\frac{2}{p-2}} \|  \mathfrak{w}_{\varrho,\ss} \p_t  f_R \|_{L^2_t  L^\infty_{x,v}  }+
\| \p_t f_R \|_{L^\infty_t L^2_{x,v}}
\\
&+
\Big\{\frac{1}{\kappa \e} + \frac{\delta}{\kappa} \|   \mathfrak{w}_{\varrho,\ss} f_R\|_{L^\infty_{t,x,v}} +  \frac{\e}{\kappa} \|   (\ref{est:f2})\|_{L^\infty_{t,x}}  
+ \e \| 
(\ref{transp:mu})
 \|_{L^\infty_{t,x}  }
\Big\}
\| \sqrt{\nu} (\mathbf{I} - \mathbf{P}) \p_t f_R \|_{L^2_{t,x,v}}\\
&+ \|\p_t  f_R (0) \|_{L^2_\gamma} + 
\frac{\e}{\kappa}  \| 
 (\ref{est:f2_t}) \|_{L^2_{t,x,v}} 
  \| \mathfrak{w}_{\varrho,\ss} f_R \|_{L^\infty_{t,x,v}}+
\e \| (\ref{transp:mu_t})\|_\infty \| f_R\|_{L^\infty_t L^2_{x,v}} 
+ \e \| (\ref{est:R3})\|_{L^2_{t,x}}
+ \e  \| (\ref{est:R4})\|_{L^2_{t,x}},
\end{split}\Ee
\Be\label{Linfty_3D_t}
\begin{split}
&\Big(
1-
\e^2 \| (\ref{est:f2})\|_{L^\infty_{t,x}} - 
\e \delta \| \mathfrak{w} f_R\|_{L^\infty_{t,x,v}}
\Big)
\| \mathfrak{w} ^\prime  \p_t  f_R  \|_{L^2_t ((0,T);L^{\infty}_{x,v} (\O \times \R^3))}\\
\lesssim & \ 
 \frac{1}{\e^{3/p} \kappa^{3/p}} \| P \p_t f  \|_{L^2_t  L^p_{x} }
 +  \frac{1}{\e^{3/2} \kappa^{3/2} }
 \| \sqrt{\nu} ( \mathbf{I} - \mathbf{P}) \p_t f  \|_{L^2 _{t,x,v}}
 +\e \big(  \|\p_t u \|_{L^\infty_{t,x}}+ \e 
\|(\ref{est:f2_t})\|_{L^\infty_{t,x}}+
  \e  \kappa\| (\ref{transp:mu_t}) \|_{L^\infty_{t,x}}
 \big)
 \| \mathfrak{w} f_R\|_{L^\infty_{t,x,v}}
\\
&+ 
\e\kappa^{1/2}
\| \mathfrak{w} ^\prime(x,v) \p_t  f_R(0 ) \|_{ L^{\infty}_{x,v}}
+ \frac{\e}{\delta}
 \|(\ref{est:f2_t})\|_{ L^{\infty}_{x,v}} +  \frac{\e^2}{\delta} \|\p_t u\|_{ L^{\infty}_{x,v}} \|(\ref{est:f2}) \|_{ L^{\infty}_{x,v}}
  + \e^2 \kappa \|(\ref{est:R3})\|_{ L^{\infty}_{x,v}}+ \e^2 \kappa \|(\ref{est:R4})  \|_{ L^{\infty}_{x,v}}\\
  &
  \bcb
  +
\e     \Big(
\e\| (\ref{est:f2_t})\|_{L^\infty_{t,x}} + \e  \kappa  \|(\ref{transp:mu_t})\|_{L^\infty_{t,x}} 
 + \|\p_t u\|_{L^\infty_{t,x}}  \big(1+\e^2\|(\ref{est:f2}) \|_{L^\infty_{t,x}}  +  \e \delta   \| \mathfrak{w} f_R \|_{L^\infty_{t,x,v}} \big)
 \Big) \| \mathfrak{w} f_R\|_{L^\infty_{t,x,v}}
.\ec
\end{split}
\Ee
\unhide

Using (\ref{L6Linfty_ED}) and (\ref{mathfrak_C'}), from (\ref{average_3Dt}) and (\ref{Linfty_3D_t}), we deduce that, for $p<3$ and $\varrho^\prime<\varrho$,
\Be\begin{split}\label{average_3Dt_ED}
&  \kappa^{ \frac{1}{2}+\ss} \big\|    {P}  \p_t f_R
 \big\|_{L^2_t((0,t); L^p_x) }
 + (\e  \kappa)^{3/p}  \kappa^{ \frac{1}{2}+\ss} \| \mathfrak{w} _{\varrho^\prime,\ss}  \p_t  f_R  \|_{L^2_t((0,t);  L^{\infty}  _{x,v})} 
\\
&\lesssim    \underbrace{ (\ref{average_3D_ED})_*
\Big\{ 1
+
\e^{1- \frac{3-p}{p}}
\delta \kappa^{ - \frac{3}{p}}
\{(\ref{L6Linfty_ED})_*  +(\ref{average_3D_ED})_* \}\Big\}}_{(\ref{average_3Dt_ED})_*}.
\end{split}\Ee


 \hide
\textit{Step 1. }  Define $T_*>0$ as  
\Be\label{condition:close}
T_*=\sup   \Big\{ t\geq 0:  \   
\min \{d_2, d_{2,t}, d_6, d_3, d_{3,t}, d_\infty, d_{\infty, t}\}
\geq \frac{1}{4} \  \  \text{and} \ \ \frac{\delta \e^{1/2}}{\kappa} \sqrt{\mathcal{D}(t)}\ll 1 \Big\},
%
\Ee
where $d_2, d_{2,t}, d_6, d_3, d_{3,t}, d_\infty, d_{\infty, t}$ are defined in (\ref{d2}), (\ref{d2t}), (\ref{d6}), (\ref{d3}), (\ref{d3t}), (\ref{dinfty}) and (\ref{dinftyt}).

From (\ref{condition:theorem}) and (\ref{condition:close}) we read all the estimates of Proposition \ref{prop:energy}, Proposition \ref{prop:L6}, Proposition \ref{est:Linfty}, Proposition \ref{prop:average}, and Proposition \ref{prop:Linfty_t} in terms of $\mathcal{E} (t)$ and $\mathcal{D} (t)$ as follows. 

\hide Suppose there exists $T>0$ such that 
\Be\begin{split}\label{condition1:close}
  &\frac{\delta}{\kappa} \| Pf_R(t)\|_{L^6(\O)}^{1/2} \| Pf_R (t) \|_{L^2(\O)}^{1/2} 
   + \frac{\delta}{\kappa} \| Pf_R \|_{L_x^{6}(\O)}^{\frac{3(p-2)}{p}} 
\|  \mathfrak{w}_{\varrho,\ss} f_R \|_{L_x^{\infty}(\O)}^{\frac{6-2p}{p}}
+
 ( \e\delta 
+ \e^2
\| (\ref{est:f2}) \|_{L^\infty_{t,x}}
 ) \sup_{0 \leq s \leq t}\| \mathfrak{w}_{\varrho, \ss} f_R (s)  \|_\infty  
  \\
  &  + \e \| u(t) \|_{L^\infty (\O)}  
+
 \frac{1}{N}  +
  \e
\| (\ref{transp:mu})\|_{L^\infty_{t } ([0,T ] ; L^{\frac{2p}{p-2}}_x( \O))}
 +
\e^2 \| (\ref{est:f2})\|_{L^\infty_{t,x}} 
+
 \frac{\e}{\kappa} \| 
 (\ref{est:f2})
  \|_{L^\infty_t ([0,T]; L_x^{\frac{2p}{p-2}}(\O)) } \ll 1,
\end{split}\Ee
and 
\Be\label{condition2:close}
\frac{\e^{1/2} \delta}{\kappa } \mathcal{D}^{1/2} \lesssim 1
\Ee
 and
 \Be
 \e\| (\ref{transp:mu})\|_{\infty}
+  \e  \kappa^{-1} 
 \|(\ref{est:f2})\|_\infty\lesssim \frac{1}{\kappa^{1/2}}
 \Ee
\unhide
From (\ref{Linfty_3D}), (\ref{condition:close}), \bcb and (\ref{condition:theorem})\ec
\Be 
 \label{Linfty_3D_ED}
\sup_{0 \leq s \leq t} \| \mathfrak{w}_{\varrho, \ss} f _R(s)  \|_{L^\infty_{x,v}}
 \lesssim   \ 
 \frac{1}{\e^{1/2} \kappa^{1/2}
 } \sup_{0 \leq s  \leq t} \| \mathbf{P} f_R(s) \|_{L^6_
{x,v}}
+
 \frac{1}{\e^{1/2} \kappa } \sqrt{ \mathcal{D}(t) } + \|  \mathfrak{w}_{\varrho, \ss} f (0)\|_\infty
 + \e^{1/2} \exp \Big(\frac{3}{\kappa^{\mathfrak{P}^\prime}}\Big).
%
%
%
%
%
 \Ee
 Now we apply (\ref{Linfty_3D_ED}) to (\ref{L6}). Using $\|  (\mathbf{I} - \mathbf{P}) f_R (t) \|_{ {L^2 (\O \times \R^3)}}
 \lesssim  
   \e \kappa^{ 1/2} \sqrt{\mathcal{D}(t)},$ and $|(1- P_{\gamma_+}) f_R(t)|_{L^2({\gamma_+})}
  \lesssim     \e^{1/2} \sqrt{\mathcal{D}(t)}$ from (\ref{Sob_1D}), and using (\ref{condition:close}), we derive that 
\Be\label{L6_ED}
\sup_{0 \leq s \leq t}\|   {P} f_R(s) \|_{L^6_x}\lesssim 
\frac{\e}{\kappa} \exp \Big(\frac{1}{\kappa^{\mathfrak{P}^\prime}}  \Big)\sup_{0 \leq s \leq t} \sqrt{\mathcal{E}(s)}+
\frac{1}{\kappa^{1/2}} \sqrt{\mathcal{D}(t)}+ ( \e   \kappa)^{\frac{1}{2}}    \|  \mathfrak{w}_{\varrho, \ss} f (0)\|_{L^\infty_{x,v}} +\e^{1/2} \exp \Big(\frac{3}{\kappa^{\mathfrak{P}^\prime}}\Big) . 
\Ee
\hide
 \Be\notag\label{L6_ED}
\begin{split}
& 
\|   {P} f_R(t) \|_{L^6_x} \\
%
 \lesssim & \ 
\big( o(1) + \frac{\delta \e^{1/2}}{\kappa} \sqrt{\mathcal{D} (t)}\big)
 \|   {P} f_R(t) \|_{L^6_x}
 + (\e+\e\| (\ref{transp:mu})\|_{L^\infty_{t,x}}
+  \e  \kappa^{-1} 
 \|(\ref{est:f2})\|_{L^\infty_{t,x}}
)
\sqrt{\mathcal{E}(t)}\\
&
 + \frac{1}{\kappa^{1/2}}  
 \big(
1+ \delta \e \big\{  \frac{1}{\e^{1/2} \kappa } \sqrt{ \mathcal{D}(t) } + \|  \mathfrak{w}_{\varrho, \ss} f (0)\|_\infty
 +\frac{\e}{\delta}
 \| (\ref{est:f2}) \|_{L^\infty_{t,x}} 
 +
 \e^2  \kappa\| (\ref{est:R1})\|_{L^\infty_{t,x}} + 
\e^2  \kappa\| (\ref{est:R2})\|_{L^\infty_{t,x}} \big\}
 \big)
   \sqrt{\mathcal{D}(t)} \\
   &
 +   
 \frac{\e^{\frac{3}{2}}\kappa^{\frac{1}{2}}  }{\delta}
 \| (\ref{est:f2}) \|_{L^\infty_{t,x}} 
 +  \frac{\e}{\delta}| (\ref{est:f2})|_{L^4(\p\O)}   +  \e\{\| 
 (\ref{est:R1}) \|_{L^2_{x,v}}+  \|(\ref{est:R2})
 \|_{L^2_{x,v}} \}
 + \e^{\frac{5}{2}} \kappa^{\frac{3}{2}} \{ \| (\ref{est:R1})\|_{L^\infty_{t,x}}+ \| (\ref{est:R2})\|_{L^\infty_{t,x}}
 \}
   +( \e   \kappa)^{\frac{1}{2}}    \|  \mathfrak{w}_{\varrho, \ss} f (0)\|_{L^\infty_{x,v}} ,
\end{split}
\Ee
 where we have used $\|  (\mathbf{I} - \mathbf{P}) f_R (t) \|_{ {L^2 (\O \times \R^3)}}
 \lesssim  
   \e \kappa^{ 1/2} \sqrt{\mathcal{D}(t)},$ and $|(1- P_{\gamma_+}) f_R(t)|_{L^2({\gamma_+})}
  \lesssim     \e^{1/2} \sqrt{\mathcal{D}(t)}$ from (\ref{Sob_1D}).
 \unhide
   From (\ref{Linfty_3D_ED}), (\ref{L6_ED}), and (\ref{condition:close}) and (\ref{condition:theorem}) we conclude that
 \Be\label{L6Linfty_ED}
\begin{split}
 \sup_{0 \leq s \leq t} \big\{ \kappa^{\frac{1}{2}} \|   {P} f_R(s) \|_{L^6_x}  + 
 \e ^{\frac{1}{2}}\kappa \| \mathfrak{w}_{\varrho, \ss} f _R(s) \|_{L^\infty_{x,v}}\big\}  
 \lesssim   \underbrace{  
\exp \Big(\frac{3}{\kappa^{\mathfrak{P}^\prime}}\Big)
 +  \sqrt{\mathcal{F}_{ \bcb p \ec}  (0)}   +  
\sup_{0 \leq s \leq t} \big\{\sqrt{\mathcal{E}(s)}
 +  \sqrt{\mathcal{D}(s)}
\big\} }_{(\ref{L6Linfty_ED})_*}
.
\end{split}
\Ee

 From (\ref{average_3D}), (\ref{L6Linfty_ED}), (\ref{condition:close}) and (\ref{condition:theorem})
 \Be\begin{split}\label{average_3D_ED}
\kappa^{\frac{1}{2}} \|    {P} f_R
 \|_{L^2_t((0,t);L^p_x )}
\lesssim  
\underbrace{
(\ref{L6Linfty_ED})_* \Big\{1+ 
 \frac{\e^{1/2} \delta}{\kappa}(\ref{L6Linfty_ED})_*
 + \Big( \e^{\frac{p+2}{2(p-2)} }\kappa^{\frac{2}{p-2}} (\ref{L6Linfty_ED})_* \Big)^{\frac{p-2}{p}}
\Big\}}_{(\ref{average_3D_ED})_*}.
\end{split}\Ee  

\hide
\Be\begin{split}\label{average_3Dt}
&\Big(1- O(\e) \|u\|_{L^\infty_{t,x}} 
- 
 \frac{\e}{\kappa} \| 
 (\ref{est:f2})
  \|_{L^\infty_t   L_x^{\frac{2p}{p-2}}  }-\e
\| (\ref{transp:mu})\|_{L^\infty_{t } L^{\frac{2p}{p-2}}_x }
  - \frac{\delta}{\kappa} \| P f_R \|_{L^\infty_t  L_x^{6} }^{\frac{3(p-2)}{p}} 
\|  \mathfrak{w}_{\varrho,\ss} f_R \|_{L^{\infty} _{t,x,v}}^{\frac{6-2p}{p}}
%
%
\Big)  \big\|    {P}  \p_t f_R
 \big\|_{L^2_t L^p_x }
 \\
\lesssim & 
\frac{1}{\kappa}  \| \p_t u \|_{L^2_tL^\infty_{ x}}\big(
1+  \e^2   \| (\ref{est:f2}) \|_{L^\infty_{t,x}}
\big)\| Pf_R \|_{L^\infty_tL^2_{ x}}
+ \frac{\delta \e}{\kappa} 
\| \p_t u \|_{L^\infty_{t,x} }\|    {P} f_R \|_{L^\infty_tL^6_{x }}^{\frac{3(p-2)}{p}}\| \mathfrak{w} f_R\|_{L^\infty_{t,x,v}}^{\frac{6-2p}{p}}\|    {P} f_R \|_{L^2_tL^p_{x }} \\
&+ \frac{\e}{\kappa}  \| \p_t u \|_{L^\infty_{t,x} }
(\delta \| \mathfrak{w} f_R \|_{L^\infty_{t,x,v}} + \| (\ref{est:f2}) \|_{L^\infty_{t,x}})
 \| \sqrt{\nu} (\mathbf{I} -\mathbf{P}) f_R \|_{L^2_{t,x,v}}
\\ &+ (\kappa \e ) ^{\frac{2}{p-2}} \|  \mathfrak{w}_{\varrho,\ss} \p_t  f_R \|_{L^2_t  L^\infty_{x,v}  }+
\| \p_t f_R \|_{L^\infty_t L^2_{x,v}}
\\
&+
\Big\{\frac{1}{\kappa \e} + \frac{\delta}{\kappa} \|   \mathfrak{w}_{\varrho,\ss} f_R\|_{L^\infty_{t,x,v}} +  \frac{\e}{\kappa} \|   (\ref{est:f2})\|_{L^\infty_{t,x}}  
+ \e \| 
(\ref{transp:mu})
 \|_{L^\infty_{t,x}  }
\Big\}
\| \sqrt{\nu} (\mathbf{I} - \mathbf{P}) \p_t f_R \|_{L^2_{t,x,v}}\\
&+ \|\p_t  f_R (0) \|_{L^2_\gamma} + 
\frac{\e}{\kappa}  \| 
 (\ref{est:f2_t}) \|_{L^2_{t,x,v}} 
  \| \mathfrak{w}_{\varrho,\ss} f_R \|_{L^\infty_{t,x,v}}+
\e \| (\ref{transp:mu_t})\|_\infty \| f_R\|_{L^\infty_t L^2_{x,v}} 
+ \e \| (\ref{est:R3})\|_{L^2_{t,x}}
+ \e  \| (\ref{est:R4})\|_{L^2_{t,x}},
\end{split}\Ee
\Be\label{Linfty_3D_t}
\begin{split}
&\Big(
1-
\e^2 \| (\ref{est:f2})\|_{L^\infty_{t,x}} - 
\e \delta \| \mathfrak{w} f_R\|_{L^\infty_{t,x,v}}
\Big)
\| \mathfrak{w} ^\prime  \p_t  f_R  \|_{L^2_t ((0,T);L^{\infty}_{x,v} (\O \times \R^3))}\\
\lesssim & \ 
 \frac{1}{\e^{3/p} \kappa^{3/p}} \| P \p_t f  \|_{L^2_t  L^p_{x} }
 +  \frac{1}{\e^{3/2} \kappa^{3/2} }
 \| \sqrt{\nu} ( \mathbf{I} - \mathbf{P}) \p_t f  \|_{L^2 _{t,x,v}}
 +\e \big(  \|\p_t u \|_{L^\infty_{t,x}}+ \e 
\|(\ref{est:f2_t})\|_{L^\infty_{t,x}}+
  \e  \kappa\| (\ref{transp:mu_t}) \|_{L^\infty_{t,x}}
 \big)
 \| \mathfrak{w} f_R\|_{L^\infty_{t,x,v}}
\\
&+ 
\e\kappa^{1/2}
\| \mathfrak{w} ^\prime(x,v) \p_t  f_R(0 ) \|_{ L^{\infty}_{x,v}}
+ \frac{\e}{\delta}
 \|(\ref{est:f2_t})\|_{ L^{\infty}_{x,v}} +  \frac{\e^2}{\delta} \|\p_t u\|_{ L^{\infty}_{x,v}} \|(\ref{est:f2}) \|_{ L^{\infty}_{x,v}}
  + \e^2 \kappa \|(\ref{est:R3})\|_{ L^{\infty}_{x,v}}+ \e^2 \kappa \|(\ref{est:R4})  \|_{ L^{\infty}_{x,v}}\\
  &
  \bcb
  +
\e     \Big(
\e\| (\ref{est:f2_t})\|_{L^\infty_{t,x}} + \e  \kappa  \|(\ref{transp:mu_t})\|_{L^\infty_{t,x}} 
 + \|\p_t u\|_{L^\infty_{t,x}}  \big(1+\e^2\|(\ref{est:f2}) \|_{L^\infty_{t,x}}  +  \e \delta   \| \mathfrak{w} f_R \|_{L^\infty_{t,x,v}} \big)
 \Big) \| \mathfrak{w} f_R\|_{L^\infty_{t,x,v}}
.\ec
\end{split}
\Ee
\unhide

Using (\ref{L6Linfty_ED}) and (\ref{mathfrak_C'}), from (\ref{average_3Dt}) and (\ref{Linfty_3D_t}), we deduce that, for $p<3$ and $\varrho^\prime<\varrho$,
\Be\begin{split}\label{average_3Dt_ED}
&  \kappa^{ \frac{1}{2}+\ss} \big\|    {P}  \p_t f_R
 \big\|_{L^2_t((0,t); L^p_x) }
 + (\e  \kappa)^{3/p}  \kappa^{ \frac{1}{2}+\ss} \| \mathfrak{w} _{\varrho^\prime,\ss}  \p_t  f_R  \|_{L^2_t((0,t);  L^{\infty}  _{x,v})} 
\\
&\lesssim    \underbrace{ (\ref{average_3D_ED})_*
\Big\{ 1
+
\e^{1- \frac{3-p}{p}}
\delta \kappa^{\bcb - \frac{3}{p} \ec}
\{(\ref{L6Linfty_ED})_*  +(\ref{average_3D_ED})_* \}\Big\}}_{(\ref{average_3Dt_ED})_*}.
\end{split}\Ee

\unhide


\


\textbf{Step 2. } Using the estimates of the previous step we will close the estimate ultimately in the basic energy estimates (\ref{est:Energy}) and (\ref{est:Energy_t}) via the Gronwall's inequality. We note that from (\ref{mathfrak_C'}) the multipliers of $\int^t_0 \| Pf_R(s) \|_{L^2_x}^2 \dd s$ in (\ref{est:Energy}) and $\int^t_0 \| P \p_t f_R(s) \|_{L^2_x}^2 \dd s$ in (\ref{est:Energy_t}) are bounded above by 
\Be\label{multiplier}
O(1)  \kappa^{-\mathfrak{P}} \big(
1+ \e \kappa^{\frac{1}{2}-\mathfrak{P}} + ( \e \kappa^{\frac{1}{2}-\mathfrak{P}})^2
\big) \lesssim  \kappa^{-\mathfrak{P}} ,
\Ee
 where we have used (\ref{choice:delta}). 

In (\ref{est:Energy}) and (\ref{est:Energy_t}) we bound
\Be\label{interp_3}
\begin{split}
\| \kappa^{1/2} Pf_R \|_{L^2_t L^3_x}&\lesssim 
\kappa^{\frac{1}{2}(1- \frac{p}{3})}
\|   P f_R \|_{L^2_t L^\infty_x}^{1- \frac{p}{3}}
\| \kappa^{1/2} P  f_R \|_{L^2_t L^p_x}^{\frac{p}{3}}
\lesssim _T (\e\kappa)^{-\frac{1}{2} (1- \frac{p}{3})} 
|(\ref{L6Linfty_ED})_*|^{1- \frac{p}{3}} |(\ref{average_3Dt_ED})_*|^{\frac{p}{3}}
,\\
\|   P\p_t f_R \|_{L^2_t L^3_x}&\lesssim
\|   P\p_t f_R \|_{L^2_t L^\infty_x}^{1- \frac{p}{3}}
\|  P\p_t f_R \|_{L^2_t L^p_x}^{\frac{p}{3}}
\lesssim 
\e^{-\frac{3}{p}(1-\frac{p}{3}) } \kappa^{
   - \frac{1}{2} - \mathfrak{P} - \frac{3}{p} (1- \frac{p}{3})
   }
|(\ref{average_3Dt_ED})_*|.
\end{split}
\Ee

We can check that the multiplier of $ \| \e^{-1}\kappa^{-1/2} \sqrt{\nu} (\mathbf{I} -\mathbf{P}) f_R \|_{L^2_{t,x,v}}^2$ in (\ref{est:Energy_t}) is bounded as,  from (\ref{condition:theorem}) and (\ref{L6Linfty_ED}), 
\Be\begin{split}\notag
& \Big\{
 \e (1+ \e \| (\ref{est:f2})\|_{L^\infty_{t,x}}) \| \p_t u \|_{L^\infty_{t,x}}
 + \e \kappa \| \nabla_x \p_t u \|_{L^\infty_{t,x}}
 + (\e \kappa^{1/2} \| (\ref{transp:mu_t})_*\|_{L^\infty_{t,x}})^2
 +( \e \delta \| \mathfrak{w} f_R \|_{L^\infty_{t,x,v}}) ^2 \Big\}\\
&\lesssim  \e\kappa^{1/2-\mathfrak{P}}+ \e \delta^2 \kappa^{-2} |(\ref{L6Linfty_ED})_*|^2.   \end{split}\Ee

Applying (\ref{L6Linfty_ED}), (\ref{average_3D_ED}), (\ref{average_3Dt_ED}), (\ref{interp_3}) to $(\ref{est:Energy})+o(1)(\ref{est:Energy_t})$,  using the above bound and (\ref{choice:delta}),  and collecting the terms, we derive that    
\Be \begin{split}\label{energy_final1}
&\sup_{0 \leq s \leq t}\mathcal{E} (s) + (1- \e \delta^2 \kappa^{-2} |(\ref{L6Linfty_ED})_*|^2) \mathcal{D}(t) \\
\lesssim & \      \mathcal{E}(0) + \mathcal{F} (0) + \exp \Big( \frac{6}{\kappa^{\mathfrak{P}^\prime}}\Big)
    + (\ref{multiplier})
\int^T_0 \mathcal{E} (s) \dd s
\\
&+
\delta^2
 \e^{- (1- \frac{p}{3})}\kappa^{- 4 + \frac{p}{3}}
|(\ref{L6Linfty_ED})_*|^{4- \frac{2p}{3}} |(\ref{average_3Dt_ED})_*|^{\frac{2p}{3} }\\
&
   + \delta^2 \e^{- \frac{6}{p} (1- \frac{p}{3})  }
    \kappa^{-3 - 2\mathfrak{P} - \frac{6}{p}  (1- \frac{p}{3})}
   |(\ref{L6Linfty_ED})_*|^{2 } |(\ref{average_3Dt_ED})_*|^{2}.
  \end{split}\Ee  
  Under the assumption of 
\Be
\begin{split}\label{assump_Gronwall}
 \e^{1/2} \delta \kappa ^{-1}(\ref{L6Linfty_ED})_* \ll 1
,\  \   \e^{\frac{p+2}{2(p-2)} }\kappa^{\frac{2}{p-2}} (\ref{L6Linfty_ED})_*
\ll 1, \  \ 
\big[\e^{1- \frac{3-p}{p}}
\delta \kappa^{ - \frac{3}{p} } \big]^{1/2}
 (\ref{L6Linfty_ED})_*  \ll1,\\
 [\delta^2
 \e^{- (1- \frac{p}{3})}\kappa^{- 4 + \frac{p}{3}}]^{1/4}
 (\ref{L6Linfty_ED})_* \ll1, \ \ 
 [\delta^2 \e^{- \frac{6}{p} (1- \frac{p}{3})  }   
    \kappa^{-3 - 2\mathfrak{P} - \frac{6}{p}  (1- \frac{p}{3})}
]^{1/4}
    (\ref{L6Linfty_ED})_*  \ll1, 
\end{split}
\Ee
  we derive that, for some constants $\mathfrak{C}_1>0$ and $\mathfrak{C}_2>0$,  
\Be\label{gronwall}
\sup_{0 \leq s\leq t}\mathcal{E}(s)  + \mathcal{D}(t) 
\leq \mathfrak{C}_1
\Big(\mathcal{E}(0) + \mathcal{F}_p(0)
+\exp\Big(\frac{6}{\kappa^{\mathfrak{P}^\prime}}\Big)
\Big)+ 
   \mathfrak{C}_2
\kappa^{-\mathfrak{P}} \int^t_0\mathcal{E}(s)  \dd s  .
\Ee  \hide
 \\
&+ \Big(
\frac{\e}{\kappa^2}
+\frac{\delta}{\kappa^{\frac{5}{2}} }\| (\ref{est:f2})\|_{L^\infty_t L^6_x}^2
+ \frac{\e^{2- \frac{6}{p}}e^{- \frac{\varrho}{3 \e^2}} }{\kappa^{\frac{6}{p}}}
\| (\ref{transp:mu}) \|_{L^\infty_{t,x}}^2
+ \|(\ref{transp:mu_t})\|_\infty
+
 \frac{\delta^2}{\kappa^3}\sup_{0 \leq t \leq T}\mathcal{E} (t) 
\Big)\Big[\sup_{0 \leq t \leq T}\mathcal{E} (t)+ \mathcal{D}(T) \Big]
\label{final_E0}
\\
&+ \frac{\delta^2}{\kappa^3} \big\{ (\ref{L6Linfty:force})^4 + (\ref{average_3D_ED:force})^4
+ (\ref{average_3Dt_ED:force})^4
\big\}
+ \frac{\e}{\kappa^2} (\ref{average_3D_ED:force})^2
+
\Big\{\frac{\delta}{\kappa^{\frac{5}{2}} }\| (\ref{est:f2})\|_{L^\infty_t L^6_x}^2
+
 \frac{\e^{2- \frac{6}{p}}e^{- \frac{\varrho}{3 \e^2}} }{\kappa^{\frac{6}{p}}}
\| (\ref{transp:mu}) \|_{L^\infty_{t,x}}^2
\Big\}
 (\ref{average_3Dt_ED:force})^2\label{final_E1}
 \\
 &
+ \kappa  \e^2  
  \|(\ref{est:R1})+ (\ref{est:R2})+ (\ref{est:R3}) + (\ref{est:R4})
  \|^2_{L^2_{t,x}}   
  + \frac{\e}{2\delta^2}  
 | (\ref{est:f2}) |^2_{L^2_t L^2_\gamma}
   +( \frac{\e}{\delta^2} + \frac{\e^3}{\delta}\| \p_t u \|_{L^\infty} )| (\ref{est:f2_t}) |_{L^2_tL^2_{\gamma }}^2+ \mathcal{E}(0).\label{final_E2}
\end{align} 
\unhide
 Note that among others the last condition condition is the strongest in (\ref{assump_Gronwall}), which can be read as, from $\delta=\sqrt{\e}$ of (\ref{choice:delta}),
\Be\label{assump_Gronwall_S}
\delta^{\frac{1}{2}- \frac{3}{p} \big(1- \frac{p}{3}\big) }
\kappa^{- \frac{3}{4} - \frac{\mathfrak{P}}{2} - \frac{3}{2 p} (1- \frac{p}{3}) }
 (\ref{L6Linfty_ED})_* \ll 1.
\Ee

Applying the Gronwall's inequality to (\ref{gronwall}) (we may redefine $\mathcal{E}(t)$ as $\sup_{0 \leq s \leq t}\mathcal{E}(s)$ if necessary), we derive that 
\Be\notag
\begin{split}
\sup_{0 \leq s \leq t}\mathcal{E} (s)
 &\leq 
  \mathfrak{C} _1\Big(\mathcal{E}(0) + \mathcal{F}_p(0)+ \exp\Big(\frac{6}{\kappa^{\mathfrak{P}^\prime}}\Big)\Big) 
  \Big\{
 1+ \frac{\mathfrak{C}_2 t }{\kappa^{\mathfrak{P}}}
 \exp\Big(  \frac{\mathfrak{C}_2  t }{\kappa^{\mathfrak{P}}} \Big)\Big\} 
 .
 \end{split}
 \Ee 
 Applying this estimate to the last term of (\ref{gronwall}) and using the fact $\mathfrak{P}^\prime< \mathfrak{P}$ we derive that, after redefining $\mathfrak{C}_1$ if necessary,  
\Be 
\sup_{0 \leq s \leq t}\mathcal{E} (s) +   \mathcal{D}(t) + \mathcal{F}_p (t)  \leq      \mathfrak{C}_1  \big(
\mathcal{E}(0) + \mathcal{F}_p(0)+ 1 \big) 
  \exp\Big(  \frac{  2\mathfrak{C}_2 t}{ \kappa^{ \mathfrak{P}} }\Big)
    \ \ \text{for all } \ t \leq T_*,
  \label{est:E_1}
\Ee
under the assumptions of (\ref{condition:theorem}), (\ref{condition:close}), and (\ref{assump_Gronwall}).


\

\textbf{Step 3. }   Now we find out the ranges of $\delta, \kappa, \e$ satisfying the assumptions of (\ref{condition:close}) and (\ref{assump_Gronwall}). 
From (\ref{initial_EF}) and (\ref{est:E_1}), if we choose $\delta$ as
\Be\label{choice:delta1}
\delta  \leq\left[  \frac{
\kappa^{  \frac{3}{2} + \mathfrak{P} + \frac{3}{  p} (1- \frac{p}{3}) }
}{\mathfrak{C}_1  \big(
\mathcal{E}(0) + \mathcal{F}_p(0)+ 1  \big) }  
\exp\Big(  \frac{  -2\mathfrak{C}_2 T}{ \kappa^{ \mathfrak{P}} }\Big) 
    \right]^{\frac{1}{1- \frac{6}{p} (1- \frac{p}{3})}}. 
\Ee
then we can achieve (\ref{assump_Gronwall_S}) and hence all conditions of (\ref{assump_Gronwall}). Clearly (\ref{choice:delta}) and (\ref{initial_EF}) ensure (\ref{choice:delta1}).

Now from (\ref{est:E_1}) and (\ref{choice:delta}) we derive (\ref{est:E}), 
which implies 
\Be
\begin{split}
&\sup_{0 \leq s \leq t} \Big\{ 
   \| \kappa^{1/2} Pf_R(s)\|_{L^6_x}  
 + \| \e^{1/2}\kappa \mathfrak{w}_{\varrho,\ss} f_R(s) \|_{L_{x,v}^{\infty} }
+ \|   (\e  \kappa)^{3/p}  \kappa^{ \frac{1}{2}+\mathfrak{P}} \mathfrak{w}_{\varrho^\prime,\ss} f_R(s) \|_{L^2((0,s);L_{x,v}^{\infty}) } 
\Big\}\\
&\lesssim \delta^{-\frac{1}{2}+ \frac{3}{p} (1- \frac{p}{3})}.\notag
\end{split}\Ee
These imply $\min \{d_2, d_{2,t}, d_6, d_3, d_{3,t}, d_\infty, d_{\infty, t}\}
\geq \frac{1}{4}$ and $\frac{\delta \e^{1/2}}{\kappa} \sqrt{\mathcal{D}(t)}\ll 1$ from (\ref{d2}), (\ref{d2t}), (\ref{d6}), (\ref{d3}), (\ref{d3t}), (\ref{dinfty}) and (\ref{dinftyt}).

\hide

From (\ref{est:E}) we check the last condition of (\ref{assump_Gronwall}) holds, which is most singular among other conditions of (\ref{assump_Gronwall}), if 
 
or

From (\ref{choice:delta}) for large enough $\mathfrak{C}>0$
\Be\notag
\begin{split}
 &[\delta^2 \e^{- \frac{6}{p} (1- \frac{p}{3})  } \kappa^{-1 - \frac{6}{p}- 2 \mathfrak{P} }]^{1/4}
 (\ref{L6Linfty_ED})_*\\
 &\lesssim  [\delta^2 \e^{- \frac{6}{p} (1- \frac{p}{3})  } \kappa^{-1 - \frac{6}{p}- 2 \mathfrak{P} }]^{1/4}\bigg\{\exp \Big(\frac{3}{\kappa^{\mathfrak{P}^\prime}}\Big)
 +  \sqrt{\mathcal{F}  (0)}   +  
  \sqrt{\mathfrak{C}_1 }   \sqrt{\mathcal{E}(0)}  
    \exp\big(   \mathfrak{C}_2\kappa^{-\mathfrak{P}} t\big)
\bigg\}\\  
&\ll 1.
\end{split}
\Ee
We can also verify all assumptions of (\ref{assump_Gronwall}).

At the same time from (\ref{est:E}) we can show that  
\Be\label{dED}
\delta^{\mathfrak{q}} \big\{ \sup_{0 \leq s \leq t} \sqrt{\mathcal{E}(s)}
+  \sqrt{\mathcal{D}(t)}\big\} \lesssim 1. 
\Ee 
From (\ref{choice:delta}), (\ref{dED}), for $|p-3|\ll 1$ we can verify (\ref{condition:close}). \unhide Then by the standard continuation argument we can verify all assumptions (\ref{condition:close}) up to $t\leq T$ and $T=T_*$. The estimate (\ref{est:E}) follows easily.     %
%
 
\hide
\Be
\begin{split}
& \delta \times \bigg\{\exp \Big(\frac{3}{\kappa^{\mathfrak{P}^\prime}}\Big)
 +  \sqrt{\mathcal{F}  (0)}   +  
\sup_{0 \leq s \leq t} \big\{\sqrt{\mathcal{E}(s)}
 +  \sqrt{\mathcal{D}(s)}
\big\}\bigg\}\\
&\leq \delta \times(\ref{L6Linfty_ED})\\
&\lesssim \delta \times [\delta^2 \e^{- \frac{6}{p} (1- \frac{p}{3})  } \kappa^{-1 - \frac{6}{p}- 2 \mathfrak{P} }]^{-1/4}\\
&= \delta^{1/2} \e^{\frac{3}{2p} (1- \frac{p}{3}) } \kappa ^{\frac{1}{4} (1+ \frac{6}{p}+ 2 \mathfrak{P})}.
\end{split}\notag
\Ee

From the last condition of (\ref{assump_Gronwall}) we choose 
\Be
\delta^{\mathfrak{q}} \lesssim   \e^{  \frac{3}{p} (1- \frac{p}{3})  } \kappa^{\frac{1}{2} + \frac{3}{p}+  \mathfrak{P} } 
e^{- 2\mathfrak{C}^\prime \kappa^{-\mathfrak{P}}T}.
\Ee

 Then by the standard continuation argument we can verify all assumptions (\ref{condition1:close}), (\ref{condition2:close}), (\ref{average_3D_ED:assump}), (\ref{average_3Dt_ED:assump}). 
 
 \Be
\delta\Big\{  \sup_{0 \leq s \leq t} \sqrt{\mathcal{E}(s)}
+  \sqrt{\mathcal{D}(t)} \Big\} \lesssim \kappa . 
\Ee
\unhide

\hide

\subsection{Proof of Theorem \ref{main_theorem:conditional}: a conditional statement of Theorem \ref{main_theorem}}

We define an energy and dissipation as 
\Be\label{ED}
\begin{split}
\mathcal{E} (t):= & \ \| f_R (t) \|_{L^2_{x,v}}^2 + \| \p_t  f_R (t) \|_{L^2_{x,v}}^2 ,\\
\mathcal{D} (t):=&  \ \int^t_0\Big( \| \kappa^{-\frac{1}{2}} \e^{-1}
\sqrt{\nu}
 (\mathbf{I} - \mathbf{P}) f_R (s) \|_{L^2_{x,v}}^2  +  \|
 \kappa^{-\frac{1}{2}} \e^{-1} \sqrt{\nu}  (\mathbf{I} - \mathbf{P})\p_t  f_R (s) \|_{L^2_{x,v}}^2
 \Big) \dd s \\
 &
 + \int^t_0 \Big(   | \e^{-\frac{1}{2}}
f_R(s)
|_{L^2_\gamma  }^2 +  | \e^{-\frac{1}{2}}
\p_t f_R(s)
|_{   L^2_\gamma}^2\Big) \dd s
 .
\end{split}
\Ee
We also define an auxiliary norm for $p<3$ and $t>0$
\Be
\begin{split}
\mathcal{F}_p(t):= \sup_{0 \leq s \leq t} \Big\{&  \| \kappa^{1/2} Pf_R(s)\|_{L^6_x} 
+ \| \kappa^{1/2} P   f_R\|_{L^2((0,s); L^p_x)}\\
&+ \|  \kappa^{ \ss+1/2}   P \p_t f_R\|_{L^2((0,s); L^p_x)}
+ \| \e^{1/2}\kappa \mathfrak{w}_{\varrho,\ss} f_R(s) \|_{L_{x,v}^{\infty} }
+ \|  \e^{1/2}\kappa \mathfrak{w}_{\varrho^\prime,\ss} f_R(s) \|_{L^2((0,s);L_{x,v}^{\infty}) }
\Big\},
\end{split}
\Ee
where $\mathfrak{w}_{\varrho,\ss}$ and $\mathfrak{w}_{\varrho^\prime,\ss}$ are defined in (\ref{weight}) and (\ref{w_prime}). For $t=0$ we define 
\Be\label{initial_F}
 \mathcal{F}_p(0):= \kappa^{1/2}\|  f_R(0) \|_{L^2_\gamma}
+ \kappa^{\ss+1}   \| \p_t f_R(0) \|_{L^2_\gamma}+
\e^{1/2} \kappa \| \mathfrak{w} f_R(0)\|_{L^\infty_{x,v}}
+ \e  \kappa^{3/2} \| \mathfrak{w}^\prime \p_t f_R(0)\|_{L^\infty_{x,v}}.
\Ee

\begin{theorem}\label{main_theorem:conditional}
Suppose $\mathfrak{P}>0$
\Be
\|  \p_t u\|_{L^\infty ([0,T]\times \bar{\O})}+\kappa^{ 1/2}  \|\nabla_x  u\|_{L^\infty ([0,T]\times \bar{\O})} <
\mathfrak{C}^\prime
\kappa^{1/2}\kappa^{ -\mathfrak{P}}.\label{mathfrak_C'}
\Ee 
Further we assume that 
\Be\label{condition:theorem}
\begin{split}
\end{split}
\Ee
If 
\Be
\mathcal{E}(0) + \mathcal{F}_p (0) <\infty
\Ee
then for all $\e>0$ there exists a unique solution $f_R(t,x,v)$ of (\ref{F_e}) to (\ref{Boltzmann}) and (\ref{diffuse_BC}) with (\ref{scale}) in $t \in [0,T]$ such that 
\Be\label{est:E}
 \sup_{0 \leq t \leq T} \big\{\mathcal{E}(t) + \mathcal{D}(t)+ \mathcal{F}_p (t)\big\} 
 \lesssim 
\Ee
\end{theorem}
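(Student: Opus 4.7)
The plan is to close a coupled system of a priori bounds for the remainder $f_R$ using five different norms and then run a Gronwall argument compatible with the scale $\delta = \sqrt{\e} = \exp(-\mathfrak{C}T/\kappa^{\mathfrak{P}})$. Concretely I would control (i) the basic energy $\|f_R\|_{L^2_{x,v}}^2$ together with the dissipation $\kappa^{-1}\e^{-2}\|\sqrt{\nu}(\mathbf{I}-\mathbf{P})f_R\|_{L^2}^2$, (ii) the same quantities for $\p_t f_R$, (iii) an $L^\infty_tL^6_x$ bound for the hydrodynamic part $\mathbf{P}f_R$, (iv) an $L^2_tL^p_x$ bound for $\mathbf{P}f_R$ and $\mathbf{P}\p_t f_R$ with $p$ slightly below $3$, and (v) weighted $L^\infty$ bounds for $f_R$ in $L^\infty_{t,x,v}$ and for $\p_t f_R$ in $L^2_tL^\infty_{x,v}$. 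The equation and the compatible choice of $(\mathbf{P}f_2,(\mathbf{I}-\mathbf{P})f_2)$ are taken from Proposition \ref{prop:Hilbert}, which after subtracting the Navier--Stokes equation reduces the right-hand side to the manageable source terms $(\mathbf{I}-\mathbf{P})\mathfrak{R}_1$ and $\mathfrak{R}_2$ satisfying \eqref{est:R1}--\eqref{est:R2}.

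The basic $L^2$ energy identity on \eqref{eqtn_fR}--\eqref{bdry_fR} produces two dangerous structures. The trilinear term $\frac{\delta}{\kappa\e}\int \Gamma(f_R,f_R)(\mathbf{I}-\mathbf{P})f_R$ I would split, bounding the $(\mathbf{P},\mathbf{P})$--piece by $\|\mathbf{P}f_R\|_{L^\infty_tL^6_x}\|\mathbf{P}f_R\|_{L^2_tL^3_x}$ times the dissipation and the rest by $\delta\e\|\mathfrak{w}f_R\|_\infty$ times the dissipation. The growth term $(\p_t+\e^{-1}v\cdot\nabla_x)\sqrt{\mu}/\sqrt{\mu}$, whose quadratic growth in $v$ rules out a direct $L^2$ closure, is tamed by introducing the weight $\mathfrak{w}_{\varrho,\ss}$ of \eqref{weight} for which $v\cdot\nabla_x\mathfrak{w}/\mathfrak{w}$ is strictly dissipative by \eqref{est:vw_x}. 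Boundary terms from the diffuse reflection are extracted using a trace theorem as in \eqref{trace} and reduced to contributions of $f_2$ on $\p\O$. The $\p_t f_R$ estimate runs in parallel, with commutators $[\p_t,L]$ and $[\p_t,\Gamma]$ supplying linear terms carrying factors of $\e|\p_t u|/\kappa$ that I would absorb either by the dissipation or by the $L^2_x$-norm of $\mathbf{P}f_R$.

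Next I would produce the $L^6$ integrability gain for $\mathbf{P}f_R$ via a test function argument: write $\mathbf{P}f_R$ in terms of hydrodynamic moments $(a,b,c)$, solve Poisson problems $-\Delta\varphi_{a,b,c}=|a,b,c|^5$ with mixed (Neumann for $a$, Dirichlet for $b,c$) boundary conditions and appropriate parities in $v$, and test \eqref{eqtn_fR} against carefully weighted polynomials in $v$ times $\nabla_x\varphi$. Elliptic regularity $L^{6/5}\hookrightarrow \dot W^{2,6/5}\cap L^6$ then yields $\|\mathbf{P}f_R(t)\|_{L^6_x}$ controlled by the dissipation and $\e\|\p_t f_R\|_{L^2}$. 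The $L^2_tL^p_x$-bound with $p<3$ I would obtain by a velocity-averaging lemma, avoiding the failure of $H^{1/2}\subset L^3$ in 3D: after cutting away $|v|\ll 1$ and $|v|\gg 1$ and extending the solution across $\p\tilde\O$ via free transport to a larger slab, I would prove the $SS^*$ estimate through a pointwise bound on the convolution kernel $\e^3/|\tau-t|^3$ restricted to $\tilde\varphi(\e|y-x|/|\tau-t|)^2$. The weighted $L^\infty$ bound comes from the Duhamel formula along characteristics bouncing off the diffuse boundary, iterating once to use the gain $\int\mathbf{k}_{\mathfrak{w}}(v,v_*) h(v_*)\dd v_*$ from \eqref{est:K_w}, and terminating with the change of variables $v_*\mapsto x-(t-s)v/\e-(s-\tau)v_*/\e$ which converts the double kernel convolution into a pointwise bound by $(\e\kappa)^{-1/2}\|\mathbf{P}f_R\|_{L^6}$ plus $(\e\kappa)^{-3/2}\|(\mathbf{I}-\mathbf{P})f_R\|_{L^2}$.

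The hard part will be closing the Gronwall step. After substituting the $L^6$, $L^p$ and $L^\infty$ estimates into the $L^2$ energy inequalities, the nonlinear contributions produce a term of the form $\delta^2\e^{-a}\kappa^{-b}$ times a power of the auxiliary norms, where $(a,b)$ depend on $p$; the multiplier of $\int_0^t\|\mathbf{P}f_R(s)\|_{L^2}^2\dd s$ is $\sim \kappa^{-\mathfrak{P}}$ from \eqref{mathfrak_C'}. Solving the Gronwall inequality gives $\mathcal{E}(t)\lesssim(\mathcal{E}(0)+\mathcal{F}_p(0)+e^{6\kappa^{-\mathfrak{P}'}})\exp(\mathfrak{C}_2 t\kappa^{-\mathfrak{P}})$, and closing the continuation hypothesis $\min\{d_\bullet\}\geq \sigma_0/4$ in \eqref{d2}--\eqref{dinftyt} requires that the strongest nonlinear multiplier $\delta^{2-6(1-p/3)/p}\kappa^{-3-2\mathfrak{P}-6(1-p/3)/p}\exp(2\mathfrak{C}_2T\kappa^{-\mathfrak{P}})\ll 1$, which is exactly what is forced by the choice \eqref{choice:delta} with $p\uparrow 3$. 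The delicate accounting is to verify that every nonlinear multiplier is dominated by this one, so that a single smallness assumption on $\delta$ closes all continuation thresholds simultaneously; once this is done the estimate \eqref{est:E} follows by standard continuation.
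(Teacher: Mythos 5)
Your proposal matches the paper's proof essentially step for step: the same five norms ($L^2$ energy for $f_R$ and $\p_t f_R$, the $L^6$ test-function gain for $\mathbf{P}f_R$, the $TT^*$ velocity-averaging for the $L^2_tL^p_x$ bound, and the weighted $L^\infty$ Duhamel/characteristics estimate), the same $\kappa^{-\mathfrak{P}}$ Gronwall multiplier, and the same identification of the strongest smallness condition closing the continuation threshold at $p \uparrow 3$. The only slips are bookkeeping ones in the final condition on $\delta$ — substituting $\e=\delta^2$ should give $\delta^{2-\frac{12}{p}(1-\frac{p}{3})}$ rather than $\delta^{2-\frac{6}{p}(1-\frac{p}{3})}$, and the $(\ref{L6Linfty_ED})_*^4$ factor contributes $\exp(4\mathfrak{C}_2T/\kappa^{\mathfrak{P}})$ rather than $\exp(2\mathfrak{C}_2T/\kappa^{\mathfrak{P}})$ — but both conditions are satisfied by the same choice $\delta=\exp(-\mathfrak{C}T/\kappa^{\mathfrak{P}})$, so the argument goes through unchanged.
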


\begin{proof}
An existence of a unique global solution $F$ for each $\e>0$ can be found in \cite{EGKM2}. Thereby we only focus to show the estimate. 

\textit{Step 1.} Let $T_*>0$ such that 
\Be\label{condition:close}
\sup_{t \in [0,T_*]}
\big\{
\delta \e \| \mathfrak{w} f_R  (t) \|_{L^\infty_{x,v}}
+ \frac{\delta }{\kappa } \| Pf_R (t) \|_{L^6_x}^{\frac{1}{2}} \| Pf_R (t) \|_{L^2_x}^{\frac{1}{2}}
+  
\frac{\delta}{\kappa} \| Pf_R(t) \|_{ L_x^{6} }^{\frac{3(p-2)}{p}} 
\|  \mathfrak{w}_{\varrho,\ss}  f_R(t) \|_{L_{ x,v}^{\infty} }^{\frac{6-2p}{p}}
\big\}\ll 1 \ \ 
\text{for some }    T_*  \in (0, T].
\Ee
From (\ref{condition:theorem}) and (\ref{condition:close}) we read all the estimates of Proposition \ref{prop:energy}, Proposition \ref{prop:L6}, Proposition \ref{est:Linfty}, Proposition \ref{prop:average}, and Proposition \ref{prop:Linfty_t} in terms of $\mathcal{E} (t)$ and $\mathcal{D} (t)$ as follow. 

From (\ref{Linfty_3D}), for all $0 \leq t \leq T_*$,
\Be\notag \label{Linfty_3D:final}
\sup_{0 \leq s \leq t} \| \mathfrak{w}_{\varrho, \ss} f _R(s) \|_{L^\infty_{x,v}} 
 \lesssim   \ 
 \frac{1}{\e^{1/2} \kappa^{1/2}
 } \sup_{0 \leq s  \leq t} \| \mathbf{P} f_R(s) \|_{L^6_
{x,v}}
+
 \frac{1}{\e^{1/2} \kappa } \sqrt{ \mathcal{D}(t) } + \|  \mathfrak{w}_{\varrho, \ss} f (0)\|_\infty
 +\frac{\e}{\delta}
 \| (\ref{est:f2}) \|_{L^\infty_{t,x}} 
 + 
\e \kappa^2 \| (\ref{est:R2})\|_{L^\infty_{t,x}} .
 \Ee
From (\ref{L6}) and (\ref{Linfty_3D:final}) we derive that 
\Be\notag\label{L6_ED}
\begin{split} 
\|   {P} f_R(t) \|_{L^6_x}  
& \lesssim   
 ( \e   \kappa)^{\frac{1}{2}}    \|  \mathfrak{w}_{\varrho, \ss} f (0)\|_{L^\infty_{x,v}}+  \frac{1}{\kappa^{1/2}} \Big[\sqrt{\mathcal{E}(t)}
 +  \sqrt{\mathcal{D}(t)}\Big]\\
 & \ \ 
 +   
 \frac{\e^{\frac{3}{2}}\kappa^{\frac{1}{2}}  }{\delta}
 \| (\ref{est:f2}) \|_{L^\infty_{t,x}} 
 +  \e^{\frac{3}{2}}   \kappa^{\frac{5}{2}} 
  \| (\ref{est:R2})\|_{L^\infty_{t,x}} 
 +  \frac{\e}{\delta}| (\ref{est:f2})|_{L^4(\p\O)}   + \e\{ \| 
 (\ref{est:R1})  \|_{L^2_{x,v}}+ \|(\ref{est:R2})
 \|_{L^2_{x,v}}\},
\end{split}
\Ee
where we have used $
 \|  (\mathbf{I} - \mathbf{P}) f_R (t) \|_{ {L^2  _{x,v}}}
 \lesssim  
   \e \kappa^{ 1/2} \sqrt{\mathcal{D}(t)}$, $|(1- P_{\gamma_+}) f_R(t)|_{L^2_{\gamma_+}}
  \lesssim    \e^{1/2} \sqrt{\mathcal{D}(t)}$ from (\ref{Sob_1D}). From these two estimates above in $L^\infty$ and $L^6$ we conclude that
 \Be\label{L6Linfty_ED}
\begin{split}
 \sup_{0 \leq s \leq t} \big\{\|   {P} f_R(s) \|_{L^6_x}  + 
(\e \kappa )^{\frac{1}{2}} \| \mathfrak{w}_{\varrho, \ss} f _R(s) \|_{L^\infty_{x,v}}\big\}  
 \lesssim   \e    ^{\frac{1}{2}}  \kappa  \|  \mathfrak{w}_{\varrho, \ss} f (0)\|_{L^\infty_{x,v}} + \kappa^{-\frac{1}{2}}
 \sup_{0 \leq s \leq t}\Big[\sqrt{\mathcal{E}(s)}
 +  \sqrt{\mathcal{D}(s)}
 +   (\ref{L6Linfty:force})\Big],
\end{split}
\Ee
with 
\Be\label{L6Linfty:force}
\begin{split}
& \frac{\e^{\frac{3}{2}}\kappa  }{\delta}
 \| (\ref{est:f2}) \|_{L^\infty_{t,x}} 
 +  \e^{\frac{3}{2}}   \kappa^{3} 
  \| (\ref{est:R2})\|_{L^\infty_{t,x}} 
 +  \frac{\e \kappa^{\frac{1}{2}}}{\delta}| (\ref{est:f2})|_{L^4(\p\O)}   + \e
  \kappa^{\frac{1}{2}}
 \{ \| 
 (\ref{est:R1}) \|_{L^2_{x,v}} + \|(\ref{est:R2})
 \|_{L^2_{x,v}}\}.
\end{split}
\Ee

From (\ref{average_3D}), (\ref{L6Linfty_ED}), and (\ref{condition:close}) and (\ref{condition:theorem})
 \Be\begin{split}\label{average_3D_ED}
 \big\|    {P} f_R
 \big\|_{L^2_t((0,T);L^p_x(\tilde{\O}))}
\lesssim & \ 
 \| f_R (0) \|_{L^2_\gamma} 
+
\Big[ 
\kappa^{-\frac{1}{2}-\ss} \sqrt{\mathcal{E}(t)} + \kappa^{-\frac{1}{2}} \sqrt{\mathcal{D}(T)}
 + (\ref{average_3D_ED:force})\Big],
\end{split}\Ee
with 
\Be\label{average_3D_ED:force}
 \e^{\frac{1}{2}} \kappa^{-\frac{1}{2}} \delta (\ref{L6Linfty:force})
+\kappa^{ \frac{1}{2}}  \| f_R (0) \|_{L^2_\gamma} +
 \e \kappa^{ \frac{1}{2}}  \|
 (\ref{est:R3}) + (\ref{est:R4})
  \|_{L^2_t ((0,T); L_x^{2}(\O))},
\Ee
where we have assumed 
\Be\label{average_3D_ED:assump}
\e^{1/2} \kappa^{-1/2} \delta
+\e^{\frac{1}{2}} \kappa^{-1} \delta +   \e \kappa  \delta^{- \frac{p-2}{2}}  \lesssim 1.
\Ee

We read (\ref{Linfty_3D}), (\ref{Linfty_3D_t}) as follows. Suppose there exists $T>0$ such that 
\Be\begin{split}\label{condition1:close}
  &\frac{\delta}{\kappa} \| Pf_R(t)\|_{L^6(\O)}^{1/2} \| Pf_R (t) \|_{L^2(\O)}^{1/2} 
   + \frac{\delta}{\kappa} \| Pf_R \|_{L_x^{6}(\O)}^{\frac{3(p-2)}{p}} 
\|  \mathfrak{w}_{\varrho,\ss} f_R \|_{L_x^{\infty}(\O)}^{\frac{6-2p}{p}}
+
 ( \e\delta 
+ \e^2
\| (\ref{est:f2}) \|_{L^\infty_{t,x}}
 ) \sup_{0 \leq s \leq t}\| \mathfrak{w}_{\varrho, \ss} f_R (s)  \|_\infty  
  \\
  &  + \e \| u(t) \|_{L^\infty (\O)}  
+
 \frac{1}{N}  +
  \e
\| (\ref{transp:mu})\|_{L^\infty_{t } ([0,T ] ; L^{\frac{2p}{p-2}}_x( \O))}
 +
\e^2 \| (\ref{est:f2})\|_{L^\infty_{t,x}} 
+
 \frac{\e}{\kappa} \| 
 (\ref{est:f2})
  \|_{L^\infty_t ([0,T]; L_x^{\frac{2p}{p-2}}(\O)) } \ll 1,
\end{split}\Ee
and 
\Be\label{condition2:close}
\frac{\e^{1/2} \delta}{\kappa } \mathcal{D}^{1/2} \lesssim 1
\Ee
 and
 \Be
 \e\| (\ref{transp:mu})\|_{\infty}
+  \e  \kappa^{-1} 
 \|(\ref{est:f2})\|_\infty\lesssim \frac{1}{\kappa^{1/2}}
 \Ee

From (\ref{Linfty_3D})
\Be 
\notag
\sup_{0 \leq s \leq t} \| \mathfrak{w}_{\varrho, \ss} f _R(s) \|_\infty   
 \lesssim   \ 
 \frac{1}{\e^{1/2} \kappa^{1/2}
 } \sup_{0 \leq s  \leq t} \| \mathbf{P} f_R(s) \|_{L^6_
{x,v}}
+
 \frac{1}{\e^{1/2} \kappa } \sqrt{ \mathcal{D}(t) } + \|  \mathfrak{w}_{\varrho, \ss} f (0)\|_\infty
 +\frac{\e}{\delta}
 \| (\ref{est:f2}) \|_{L^\infty_{t,x}} 
 + 
\e \kappa^2 \| (\ref{est:R2})\|_{L^\infty_{t,x}} .
 \Ee
Together with (\ref{L6}) we derive that 
 \Be\notag\label{L6_ED}
\begin{split}
& 
\|   {P} f_R(t) \|_{L^6(\O)} \\
\lesssim & \  
 (\e+\e\| (\ref{transp:mu})\|_{\infty}
+  \e  \kappa^{-1} 
 \|(\ref{est:f2})\|_\infty
)
\sqrt{\mathcal{E}(t)}
%
%
  + 
\frac{1}{ \kappa^{1/2} }\big(1
+
 \delta \e   \| \mathfrak{w}_{\varrho,\ss} f_R(t) \|_{L^\infty (\O \times \R^3)}
 \big)
 \big\{
   \|\e^{-1}\kappa^{-1/2}  (\mathbf{I} - \mathbf{P}) f_R (0) \|_{ {L^2 (\O \times \R^3)}}
 +
 \sqrt{\mathcal{D}(t)}\big\}
\\
& + 
 \e^{-1/2} \kappa^{-1/2}
 |(1- P_{\gamma_+}) f_R(t)|_{L^2({\gamma_+})} + 
 o(1) \e^{1/2} \kappa^{1/2} \|   \mathfrak{w}_{\varrho, \ss} f_R(t) \|_{L^\infty (\O \times \R^3)} 
+  \frac{\e}{\delta}| (\ref{est:f2})|_{L^4(\p\O)} 
  + \e \| 
 (\ref{est:R1}) + (\ref{est:R2})
 \|_{L^2_{x,v}}\\
 \lesssim & \ 
 o(1) \|   {P} f_R(t) \|_{L^6(\O)}
 + \frac{1}{\kappa^{1/2}} \Big[\sqrt{\mathcal{E}(t)}
 +  \sqrt{\mathcal{D}(t)}\Big]
 +   
 \frac{\e^{\frac{3}{2}}\kappa^{\frac{1}{2}}  }{\delta}
 \| (\ref{est:f2}) \|_{L^\infty_{t,x}} 
 +  \e^{\frac{3}{2}}   \kappa^{\frac{5}{2}} 
  \| (\ref{est:R2})\|_{L^\infty_{t,x}} 
 +  \frac{\e}{\delta}| (\ref{est:f2})|_{L^4(\p\O)}   + \e \| 
 (\ref{est:R1}) + (\ref{est:R2})
 \|_{L^2_{x,v}}\\
 &
 +  \kappa^{-\frac{1}{2}}   
   \|\e^{-1}\kappa^{-\frac{1}{2}}  (\mathbf{I} - \mathbf{P}) f_R (0) \|_{ {L^2 (\O \times \R^3)}} 
   +( \e   \kappa)^{\frac{1}{2}}    \|  \mathfrak{w}_{\varrho, \ss} f (0)\|_\infty
   + 
 (\e \kappa )^{-\frac{1}{2}}   |(1- P_{\gamma_+}) f_R(0)|_{L^2({\gamma_+})} ,
\end{split}
\Ee
 where we have used, from $a(t)^2=a(0)^2+ \int_0^t 2 a^\prime (s) a(s) \dd s 
 \leq  a(0)^2+\sqrt{ \int^t_0 | a (s)| \dd s} \sqrt{ \int^t_0 | a^\prime (s)| \dd s}$,
 \begin{align*}
 \|  (\mathbf{I} - \mathbf{P}) f_R (t) \|_{ {L^2 (\O \times \R^3)}}
 \lesssim  
  \|  (\mathbf{I} - \mathbf{P}) f_R (0) \|_{ {L^2 (\O \times \R^3)}}
 + \e \kappa^{ 1/2} \sqrt{\mathcal{D}(t)}, \ \
  |(1- P_{\gamma_+}) f_R(t)|_{L^2({\gamma_+})}
  \lesssim     |(1- P_{\gamma_+}) f_R(0)|_{L^2({\gamma_+})} 
  + \e^{1/2} \sqrt{\mathcal{D}(t)}.
 \end{align*}
 
 From these two estimates above we conclude that
 \Be\label{L6Linfty_ED}
\begin{split}
 \sup_{0 \leq s \leq t} \big\{\|   {P} f_R(s) \|_{L^6(\O)}  + 
(\e \kappa )^{\frac{1}{2}} \| \mathfrak{w}_{\varrho, \ss} f _R(s) \|_\infty\big\}  
 \lesssim   \kappa^{-\frac{1}{2}}
 \Big[\sqrt{\mathcal{E}(t)}
 +  \sqrt{\mathcal{D}(t)}
 +   (\ref{L6Linfty:force})\Big],
\end{split}
\Ee
with 
\Be\label{L6Linfty:force}
\begin{split}
& \frac{\e^{\frac{3}{2}}\kappa  }{\delta}
 \| (\ref{est:f2}) \|_{L^\infty_{t,x}} 
 +  \e^{\frac{3}{2}}   \kappa^{3} 
  \| (\ref{est:R2})\|_{L^\infty_{t,x}} 
 +  \frac{\e \kappa^{\frac{1}{2}}}{\delta}| (\ref{est:f2})|_{L^4(\p\O)}   + \e
  \kappa^{\frac{1}{2}}
  \| 
 (\ref{est:R1}) + (\ref{est:R2})
 \|_{L^2_{x,v}}\\
 &
 +    
   \|\e^{-1}\kappa^{-\frac{1}{2}}  (\mathbf{I} - \mathbf{P}) f_R (0) \|_{ {L^2 (\O \times \R^3)}} 
   +  \e    ^{\frac{1}{2}}  \kappa  \|  \mathfrak{w}_{\varrho, \ss} f (0)\|_\infty
   + 
 \e  ^{-\frac{1}{2}}   |(1- P_{\gamma_+}) f_R(0)|_{L^2({\gamma_+})} .
\end{split}
\Ee

\hide
 \Be 
 \begin{split}
  \label{Linfty_3D_ED}
\sup_{0 \leq s \leq t} \| \mathfrak{w}_{\varrho, \ss} f _R(s) \|_\infty   
 \lesssim  & \ 
 \frac{1}{\e^{1/2} \kappa }
 \Big[\sqrt{ \mathcal{E}(t) }+ \sqrt{ \mathcal{D}(t) } \Big]  +\frac{\e}{\delta}
 \| (\ref{est:f2}) \|_{L^\infty_{t,x}} 
 +  \frac{\e^{\frac{1}{2}}}{ \kappa^{\frac{1}{2}}\delta}| (\ref{est:f2})|_{L^4(\p\O)} 
 + 
\e \kappa^2 \| (\ref{est:R2})\|_{L^\infty_{t,x}} 
 + \e^{\frac{1}{2}} \kappa^{-\frac{1}{2}} \| 
 (\ref{est:R1}) + (\ref{est:R2})
 \|_{L^2_{x,v}}
\\
&
 + \e^{-\frac{1}{2}} \kappa^{-1}   
   \|\e^{-1}\kappa^{-\frac{1}{2}}  (\mathbf{I} - \mathbf{P}) f_R (0) \|_{ {L^2 (\O \times \R^3)}} 
   +     \|  \mathfrak{w}_{\varrho, \ss} f (0)\|_\infty
   + 
 (\e \kappa )^{-1}   |(1- P_{\gamma_+}) f_R(0)|_{L^2({\gamma_+})}.
 \end{split}
 \Ee
 \unhide
 
 From (\ref{average_3D}), (\ref{L6Linfty_ED}), and (\ref{condition1:close})
 \Be\begin{split}\label{average_3D_ED}
 \big\|    {P} f_R
 \big\|_{L^2_t((0,T);L^p_x(\tilde{\O}))}
\lesssim & \ 
\kappa^{-\frac{1}{2}}
\Big[
 \sqrt{\mathcal{E}(t)} +  \sqrt{\mathcal{D}(T)}
 + (\ref{average_3D_ED:force})\Big],
\end{split}\Ee
with 
\Be\label{average_3D_ED:force}
 \e^{\frac{1}{2}} \kappa^{-\frac{1}{2}} \delta (\ref{L6Linfty:force})
+\kappa^{ \frac{1}{2}}  \| f_R (0) \|_{L^2_\gamma} +
 \e \kappa^{ \frac{1}{2}}  \|
 (\ref{est:R3}) + (\ref{est:R4})
  \|_{L^2_t ((0,T); L_x^{2}(\O))},
\Ee
where we have assumed 
\Be\label{average_3D_ED:assump}
\e^{1/2} \kappa^{-1/2} \delta
+\e^{\frac{1}{2}} \kappa^{-1} \delta +   \e \kappa  \delta^{- \frac{p-2}{2}}  \lesssim 1.
\Ee

Using (\ref{L6Linfty_ED}) from (\ref{average_3Dt}) and (\ref{Linfty_3D_t}) we deduce that, for $p<3$ but $|p-3|\ll 1$,
\Be\begin{split}\label{average_3Dt_ED}
  \big\|    {P}  \p_t f_R
 \big\|_{L^2_t((0,T);L^p_x(\tilde{\O}))}
 + (\e  \kappa)^{3/p} \| \mathfrak{w} ^\prime  \p_t  f_R  \|_{L^2 ((0,T);L^{\infty} (\O \times \R^3))} 
\lesssim    
\kappa^{-\frac{1}{2}} \Big[
 \sqrt{\mathcal{E}(t)}+ \sqrt{\mathcal{D}(T)}
+ (\ref{average_3Dt_ED:force}) \Big],
\end{split}\Ee
with 
\Be\label{average_3Dt_ED:force}
\begin{split}
& \kappa^{\frac{1}{2}}  \|\p_t  f_R (0) \|_{L^2_\gamma} 
+ 
(\e  \kappa)^{\frac{3}{p}+1} 
\| \mathfrak{w} ^\prime(x,v) \p_t  f_R(0,x,v) \|_{{ L^{\infty}_{x,v}}}
+ \e \kappa^{\frac{1}{2}}\Big[
 (
 \| (\ref{est:R3})\|_{L^2_{t,x}}
+    \| (\ref{est:R4})\|_{L^2_{t,x}})
  + (\e  \kappa)^{\frac{3}{p}+1} ( \|(\ref{est:R3}) \|_\infty+   \|(\ref{est:R4}) \|_\infty)
  \Big]
\\
&
+(\e  \kappa)^{3/p} \frac{\e \kappa^{\frac{1}{2}}  }{\delta}
\Big[
 \|(\ref{est:f2_t})\|_\infty 
 + \e \|\p_t u\|_\infty  \|(\ref{est:f2}) \|_\infty
 \Big] +(\e  \kappa)^{3/p} \e^{\frac{1}{2}}  \big[  \|\p_t u \|_\infty+ \e 
\|(\ref{est:f2_t})\|_\infty+
  \e  \kappa \|(\ref{transp:mu_t}) \|_\infty
 \big]  (\ref{L6Linfty:force}),
\end{split}
\Ee
where we have assumed that 
\Be\label{average_3Dt_ED:assump}
\begin{split}
 \kappa^{ \frac{1}{2}}  + 
 \kappa^{ \frac{1}{2}} \e  (
1
 + \e^{\frac{3}{p}+ \frac{1}{2}} 
 )
  \| (\ref{transp:mu_t})\|_\infty
+ \e(\e  \kappa)^{\frac{3}{p}- \frac{1}{2}}  
   \big(  \|\p_t u \|_\infty+ \e 
\|(\ref{est:f2_t})\|_\infty
 \big)   
  + \e^2 \|   (\ref{est:f2})\|_{L^\infty_{x,v}}  
+ \e^2 \kappa  \| 
(\ref{transp:mu})
 \|_{L^\infty_{t,x} ([0,T ] \times \O)}
  \lesssim 1.
\end{split}
\Ee

\smallskip

\textit{Step 2. } Using all of estimates of the previous step we will close the estimate ultimately in the basic energy estimates (\ref{est:Energy}) and (\ref{est:Energy_t}) via the Gronwall's inequality. 

Applying (\ref{L6Linfty_ED}), (\ref{average_3D_ED}), and (\ref{average_3Dt_ED}) to (\ref{est:Energy}) and (\ref{est:Energy_t}) and collecting the terms, we derive that    
\begin{align}
&\sup_{0 \leq t \leq T}\mathcal{E} (t) + \mathcal{D}(T)\notag\\
\lesssim &\   \big( \|(\ref{transp:mu})\|_{\infty}   + \frac{\e^2}{\delta^2 \kappa^2}
\big)
\int^T_0 \mathcal{E} (s) \dd s
\notag
 \\
&+ \Big(
\frac{\e}{\kappa^2}
+\frac{\delta}{\kappa^{\frac{5}{2}} }\| (\ref{est:f2})\|_{L^\infty_t L^6_x}^2
+ \frac{\e^{2- \frac{6}{p}}e^{- \frac{\varrho}{3 \e^2}} }{\kappa^{\frac{6}{p}}}
\| (\ref{transp:mu}) \|_{L^\infty_{t,x}}^2
+ \|(\ref{transp:mu_t})\|_\infty
+
 \frac{\delta^2}{\kappa^3}\sup_{0 \leq t \leq T}\mathcal{E} (t) 
\Big)\Big[\sup_{0 \leq t \leq T}\mathcal{E} (t)+ \mathcal{D}(T) \Big]
\label{final_E0}
\\
&+ \frac{\delta^2}{\kappa^3} \big\{ (\ref{L6Linfty:force})^4 + (\ref{average_3D_ED:force})^4
+ (\ref{average_3Dt_ED:force})^4
\big\}
+ \frac{\e}{\kappa^2} (\ref{average_3D_ED:force})^2
+
\Big\{\frac{\delta}{\kappa^{\frac{5}{2}} }\| (\ref{est:f2})\|_{L^\infty_t L^6_x}^2
+
 \frac{\e^{2- \frac{6}{p}}e^{- \frac{\varrho}{3 \e^2}} }{\kappa^{\frac{6}{p}}}
\| (\ref{transp:mu}) \|_{L^\infty_{t,x}}^2
\Big\}
 (\ref{average_3Dt_ED:force})^2\label{final_E1}
 \\
 &
+ \kappa  \e^2  
  \|(\ref{est:R1})+ (\ref{est:R2})+ (\ref{est:R3}) + (\ref{est:R4})
  \|^2_{L^2_{t,x}}   
  + \frac{\e}{2\delta^2}  
 | (\ref{est:f2}) |^2_{L^2_t L^2_\gamma}
   +( \frac{\e}{\delta^2} + \frac{\e^3}{\delta}\| \p_t u \|_{L^\infty} )| (\ref{est:f2_t}) |_{L^2_tL^2_{\gamma }}^2+ \mathcal{E}(0).\label{final_E2}
\end{align} 
From (\ref{relat:kappa_delta}) and (??) we bound (\ref{final_E0}) by $o(1) \times \big[\sup_{0 \leq t \leq T}\mathcal{E} (t)+ \mathcal{D}(T) \big]$ which can be absorbed in the left hand side. From (??) there exists a constant $\mathfrak{C}>0$ independent of $\e,\kappa, \delta$, satisfying 
\Be
(\ref{final_E1})+(\ref{final_E2}) \ll \mathfrak{C}.\label{mathfrak_C}
\Ee

From (\ref{mathfrak_C'}) Then we derive that  (we may redefine $\mathcal{E}(T)$ as $\sup_{0 \leq t \leq T}\mathcal{E}(t)$ if necessary)
\Be\label{gronwall}
\mathcal{E}(T)  + \mathcal{D}(T) 
\leq \mathfrak{C} \Big\{ 1 +   \mathfrak{C}^\prime
\kappa^{-\mathfrak{P}} \int^T_0\mathcal{E}(s)  \dd s  \Big\}.
\Ee 
Applying the Gronwall's inequality to (\ref{gronwall}), we derive that $\sup_{0 \leq t \leq T}\mathcal{E} (t)
 \leq 
  \mathfrak{C}  \exp\big(  \mathfrak{C}^\prime \kappa^{-\mathfrak{P}} T\big).$ Using this estimate to the last term of (\ref{gronwall}) we derive that 
\Be 
\sup_{0 \leq t \leq T}\mathcal{E} (t)+   \mathcal{D}(T) \leq 
   \mathfrak{C} 
   \big\{1+  \mathfrak{C} \mathfrak{C}^\prime \kappa^{-\mathfrak{P}} T
    \exp\big(  \mathfrak{C}^\prime \kappa^{-\mathfrak{P}} T\big)
    \big\}\leq   \mathfrak{C} 
   \big\{1+  \mathfrak{C} 
    \exp\big( 2 \mathfrak{C}^\prime \kappa^{-\mathfrak{P}} T\big)
    \big\}.
  \label{est:E}
\Ee
If we choose 
 \Be\label{relat:kappa_delta}
\kappa =
\left(
\frac{ \mathfrak{C}^\prime T}{ \ln \left(
\frac{1}{2\mathfrak{C}} (1+ \frac{1}{\delta^{1/10} })
\right)}
\right)^{1/\mathfrak{P}}
\Ee
then we derive that 
\Be\label{gronwall_final}
\sup_{0 \leq t \leq T} \delta^{\frac{1}{10}}\mathcal{E} (t) + \delta^{\frac{1}{10}}\mathcal{D} (T) \lesssim 1 .
\Ee
Then by the standard continuation argument we can verify all assumptions (\ref{condition1:close}), (\ref{condition2:close}), (\ref{average_3D_ED:assump}), (\ref{average_3Dt_ED:assump}). 
 \end{proof}
 
 \unhide


\hide

\hide
 \Be\notag\label{Linfty_3D_t}
\begin{split}
& 
\\
\lesssim & \ 
  \| P \p_t f  \|_{L^2_t ((0,T);L^p_{x}(\O))}
 + 
  \frac{1}{\kappa^{1/2}}
  \e(\e  \kappa)^{\frac{3}{p}- \frac{1}{2}}  
   \big(  \|\p_t u \|_\infty+ \e 
\|(\ref{est:f2_t})\|_\infty+
  \e  \kappa \|(\ref{transp:mu_t}) \|_\infty
 \big)  
\{
\Big[\sqrt{\mathcal{E}(t)}
 +  \sqrt{\mathcal{D}(t)}\Big]
\}
\\
.
\end{split}
\Ee\unhide

 \Be\label{Linfty_3D_t_ED}
\begin{split}
&
\| \mathfrak{w} ^\prime  \p_t  f_R  \|_{L^2 ((0,T);L^{\infty} (\O \times \R^3))}\\
\lesssim & \ 
\frac{1}{\e^{3/p} \kappa^{3/p}}
\| P \p_t f  \|_{L^2_t ((0,T);L^p_{x}(\O))}
 +  \frac{1}{\e^{1/2} \kappa }
\sqrt{ \mathcal{D}(t)}+
\| \mathfrak{w} ^\prime(x,v) \p_t  f_R(0,x,v) \|_{{L^2_tL^{\infty}_{x,v}}}\\
& + \frac{\e}{\delta}\{ |\p_t p| +|\p_t \tilde{u}| + \kappa |\nabla_x \p_t u |\}
+\e  |\p_t u | \{ \| \mathfrak{w} f_R\|_\infty+ 
 |  p| +|  \tilde{u}| + \kappa |\nabla_x   u |
\}\\  
 & +  \e^2
  \big\{
|\p_t p|  + |\p_t \tilde{u}| + \e |\p_t u| (|p| + |\tilde{u}|) + \kappa (|\nabla_x \p_t u| + \e |\p_t u| |\nabla_x u|)
\big\}\| \mathfrak{w} f_R\|_\infty
 \\
&+  \e^3 \kappa  (|\p_t^2 u| + |u||\nabla_x \p_t u| + |\p_t u| |\nabla_x u| )+ \e^4 \kappa  |\p_t u| (|\p_t u| + |u||\nabla_x u|) \| \mathfrak{w} f_R\|_\infty +  \frac{\e^2 \kappa}{\delta}
\mathfrak{q}(
|\nabla_{x} \p_t \tilde{u}|, 
|\nabla_x^2 \p_t u | 
)
 \\
 &+
  \frac{\e^3}{\delta }\{1+ 
|\p_t^2 p| + |\nabla_x \p_t^2 u| 
\}  
\\
& \ \ \   \ \ \  \times 
 \mathfrak{q} (
|p|, |\nabla_x p|, |\p_t p|,|\nabla_x \p_t p | , |u| , |\nabla_x u|, |\p_t u|, |\nabla_x \p_t u|, |\nabla_x^2 u|, |\nabla_x^2 \p_t u|, 
|\tilde{u}|, |\nabla_x \tilde{u}|,|\p_t \tilde{u}| ,|\nabla_x \p_t \tilde{u}|, |\p_t ^2 \tilde{u}|
).
\end{split}
\Ee

with 
\Be
\begin{split}\label{Linfty:force}
 & \|  \mathfrak{w}_{\varrho, \ss} f (0)\|_\infty
 +\frac{\e}{\delta} \sup_{0 \leq s \leq t} \{\|p(s)\|_\infty + \|\tilde{u}(s)\|_\infty + \kappa \|\nabla_x u(s)\|_\infty   \} 
 \\
 & + 
\frac{\e^2 \kappa }{\delta} 
\sup_{0 \leq s \leq t} \mathfrak{q}( \|u \|_\infty , \|\nabla_{x } u \|_\infty, \|\p_{t}u\|_\infty, \|\nabla_{x}^{2} u\|_\infty, \|\nabla_{x} \p_{t} u\|_\infty, \|p\|_\infty, \|\nabla_{x } p \|_\infty, \|\p_{t} p\|_\infty, \|\tilde{u}\|_\infty, \|\nabla_{x } \tilde{u}\|_\infty, \|\p_{t} \tilde{u}\|_\infty).
\end{split}
\Ee

\Be\label{cond:Linfty_3D}
\Big( \e\delta 
+ \e^2
 \sup_{0 \leq s \leq T}\{
 |p(s)| + |\tilde{u}(s)| + \kappa |\nabla_x u(s)|  
 \}
\Big) \sup_{0 \leq s \leq T}\| \mathfrak{w}_{\varrho, \ss} f_R (s)  \|_\infty  \ll1.
\Ee

\hide
Then 
 \Be\label{Linfty_3D_ED}
 \begin{split}
&\sup_{0 \leq s \leq t} \| \mathfrak{w}_{\varrho, \ss} f _R(s) \|_\infty  \\
 \lesssim &  \ 
 \frac{1}{\e^{1/2} \kappa^{1/2}
 } \sup_{0 \leq s  \leq t} \| \mathbf{P} f_R(s) \|_{L^6_
{x,v}} + \frac{1}{\e^{1/2} \kappa } \sqrt{ \mathcal{D}(t)}
+  \|  \mathfrak{w}_{\varrho, \ss} f (0)\|_\infty
 +\frac{\e}{\delta} \sup_{0 \leq s \leq t} \{\|p(s)\|_\infty + \|\tilde{u}(s)\|_\infty + \kappa \|\nabla_x u(s)\|_\infty   \} 
 \\
 & + 
\frac{\e^2 \kappa }{\delta} 
\sup_{0 \leq s \leq t} \mathfrak{q}( \|u \|_\infty , \|\nabla_{x } u \|_\infty, \|\p_{t}u\|_\infty, \|\nabla_{x}^{2} u\|_\infty, \|\nabla_{x} \p_{t} u\|_\infty, \|p\|_\infty, \|\nabla_{x } p \|_\infty, \|\p_{t} p\|_\infty, \|\tilde{u}\|_\infty, \|\nabla_{x } \tilde{u}\|_\infty, \|\p_{t} \tilde{u}\|_\infty).
\end{split}\Ee
and 
\unhide

Now we read (\ref{L6}), (\ref{average_3D}), and (\ref{average_3Dt}) as follows: Suppose
\Be\label{condition:energy_est}
\begin{split}
\sup_{0 \leq t \leq T} 
\Big\{&
\frac{\delta}{\kappa} \| Pf_R(t)\|_{L^6(\O)}^{1/2} \| Pf_R (t) \|_{L^2(\O)}^{1/2}
+ \frac{\delta}{\kappa} \| Pf_R \|_{L_x^{6}(\O)}^{\frac{3(p-2)}{p}} 
\|  \mathfrak{w}_{\varrho,\ss} f_R \|_{L_x^{\infty}(\O)}^{\frac{6-2p}{p}}
\\
& + 
 \e \| u \|_{L^\infty (\O)} +  \frac{\e}{\kappa} \| \mathfrak{w}_{\varrho, \ss} f_2 \|_{L^\infty_t ((0,T); L_x^{\frac{2p}{p-2}}(\O)) }
+ \e \| 
 |\nabla_x u| + \e  |\p_t u| + \e |u||\nabla_x u|  
 \|_{L^\infty_{t } ([0,T ] ; L^{\frac{2p}{p-2}}_x( \O))}\Big\}
 \ll1.
\end{split}
\Ee   
Then from (\ref{L6}) we have
\Be\label{L6_ED}
\begin{split}
& 
\|   {P} f_R(t) \|_{L^6(\O)} \\
\lesssim & \   \|  (\mathbf{I} - \mathbf{P}) f_R(t)\|_{L^6 (\O \times \R^3)} 
+ 
\Big(
\frac{1}{\e \kappa }+
 \frac{\delta}{\kappa} \| \mathfrak{w}_{\varrho,\ss} f_R(t) \|_{L^\infty (\O \times \R^3)}
 \Big)
\| (\mathbf{I} - \mathbf{P}) f_R (t) \|_{ {L^2 (\O \times \R^3)}}\\
&+\big(
\e \kappa^{-1} (\| p \|_\infty + \| \tilde{u} \|_\infty + \kappa \| \nabla_x u \|_\infty)
+\e^2 \| \p_t u \|_\infty + \e \| \nabla_x u \|_\infty
\big)
\|   f_R(t) \|_{ {L^2 (\O \times \R^3)}}
+ \e \| \p_t f_R(t) \|_{ {L^2 (\O \times \R^3)}} \\
& +  |(1- P_{\gamma_+}) f_R(t)|_{L^2({\gamma_+})}^{1/2} \| \mathfrak{w}_{\varrho, \ss} f_R(t) \|_{L^\infty (\O \times \R^3)}^{1/2}
\\
&+  \frac{\e}{\delta}
\{
|p(t)|_{L^4(\p\O)} + |\tilde{u}(t)|_{L^4(\p\O)} + \kappa  |\nabla_x u(t)|_{L^4(\p\O)} 
\}
  + \frac{\e}{\delta \kappa  } \mathfrak{q}(|u|, |\nabla_{x } u |, |\p_{t}u|, |\nabla_{x}^{2} u|, |\nabla_{x} \p_{t} u|, |p|, |\nabla_{x } p |, |\p_{t} p|, |\tilde{u}|, |\nabla_{x } \tilde{u}|, |\p_{t} \tilde{u}|)\\
 \lesssim  & \ 
\| \mathfrak{w} f_R \|_\infty^{2/3} (\e \kappa^{1/2})^{1/3} \mathcal{D}^{1/6}
+ (\kappa^{-1/2} +\frac{\e \delta}{\kappa^{1/2}} \| w f \|_\infty) \mathcal{D}^{1/2}
+ \big\{\e + \e \kappa^{-1} (\| p \|_\infty + \| \tilde{u} \|_\infty + \kappa \| \nabla_x u \|_\infty)
+\e^2 \| \p_t u \|_\infty + \e \| \nabla_x u \|_\infty\big\} \mathcal{E}^{1/2}\\
&+ \e^{1/4} \mathcal{D}^{1/4} \| wf \|_\infty^{1/2}+  \frac{\e}{\delta}
\{
|p(t)|_{L^4(\p\O)} + |\tilde{u}(t)|_{L^4(\p\O)} + \kappa  |\nabla_x u(t)|_{L^4(\p\O)} 
\}
  + \frac{\e}{\delta \kappa  } \mathfrak{q}(|u|, |\nabla_{x } u |, |\p_{t}u|, |\nabla_{x}^{2} u|, |\nabla_{x} \p_{t} u|, |p|, |\nabla_{x } p |, |\p_{t} p|, |\tilde{u}|, |\nabla_{x } \tilde{u}|, |\p_{t} \tilde{u}|)
\end{split}
\Ee

From (\ref{average_3D}), and (\ref{average_3Dt}) 
\Be\begin{split}\label{average_3D}
&   \big\|    {P} f_R
 \big\|_{L^2_t((0,T);L^p_x(\tilde{\O}))}
 \\
\lesssim & \ 
(1+ \e \big\| |\nabla_x u| + \e |\p_t u| + \e |u| |\nabla_x u| \big\|_{L^2_t ((0,T); L^\infty_x(\O))})
 \| f_R \|_{L^\infty ([0,T];L^2( {\O} \times \R^3))}
\\
&+\Big\{
\frac{1}{\e \kappa }
+ \frac{\delta}{\kappa} \| \mathfrak{w}_{\varrho, \ss} f_R \|_{L^\infty_t ((0,T) \times \O \times \R^3) } +   \| \mathfrak{w}_{\varrho, \ss } f_R (t) \|_{ L^\infty((0,T) \times \tilde{\O} \times \R^3)}^{\frac{p-2}{p}}
\Big\}
 \| (\mathbf{I} - \mathbf{P})f_R \|_{L^2_t ((0,T) \times \O \times \R^3) }
 \\
 &+
 (\ref{average_3D:force}) ,
\end{split}\Ee
with 
\Be\label{average_3D:force}
 \| f_R (0) \|_{L^2_\gamma} + \frac{  \e  }{\delta } \|\mathfrak{q}(|\nabla_x \tilde{u}|, |\nabla_x^2 u|)\| _{L^2_t ((0,T); L_x^{2}(\O))}
  +  \frac{\e^2 }{\delta \kappa  } \| \mathfrak{q}(|u|, |\nabla_{x } u |, |\p_{t}u|, |\nabla_{x}^{2} u|, |\nabla_{x} \p_{t} u|, |p|, |\nabla_{x } p |, |\p_{t} p|, |\tilde{u}|, |\nabla_{x } \tilde{u}|, |\p_{t} \tilde{u}|) \|_{L^2_t ((0,T); L_x^{2}(\O))}
\Ee

and
\Be\begin{split}\label{average_3Dt}
&  \big\|    {P}  \p_t f_R
 \big\|_{L^2_t((0,T);L^p_x(\tilde{\O}))}
 \\
\lesssim & \  (\kappa \e ) ^{\frac{2}{p-2}} \|  \mathfrak{w} \p_t  f_R \|_{L^2_t ((0,T);L^\infty_{x,v} (  \O \times \R^3))}+
\| \p_t f_R \|_{L^\infty ([0,T]; L^2(\O\times \R^3))}
+ \|\p_t  f_R (0) \|_{L^2_\gamma}\\
&+
\Big\{\frac{1}{\kappa \e} + \frac{\delta}{\kappa} \|   wf_R\|_{L^\infty_{x,v}} +  \frac{\e}{\kappa} \|   wf_2\|_{L^\infty_{x,v}}  + \e \| 
 |\nabla_x u| + \e  |\p_t u| + \e |u||\nabla_x u|  
 \|_{L^\infty_{t,x} ([0,T ] \times \O)}
\Big\}
\| \sqrt{\nu} (\mathbf{I} - \mathbf{P}) \p_t f_R \|_{L^2_{t,x,v}}\\
&+ \e ^2 \{(\| \p_t ^2 u \|_\infty + \| u \|_\infty \| \nabla_x \p_t u \|_\infty + \| \p_t u \|_\infty \| \nabla_x u \|_\infty) + \e  \| \p_t u \|_\infty (\| \p_t u \|_\infty + \| u \|_\infty \| \nabla_x u \|_\infty) 
\}\| f_R\|_{L^\infty_t L^2_{x,v}} 
\\
 &+ \frac{ \e }{\delta} \|\mathfrak{q}(|\nabla_x  \p_t \tilde{u}|, |\nabla_x^2 \p_t u|)\|_{L^2_{t,x}}  \\
& +
\frac{\e^2}{\delta \kappa}
\big\|
(1+ 
|\p_t^2 p| + |\nabla_x \p_t^2 u| 
) 
\\
 & \ \ \   \times 
 \mathfrak{q} (
|p|, |\nabla_x p|, |\p_t p|,|\nabla_x \p_t p | , |u| , |\nabla_x u|, |\p_t u|, |\nabla_x \p_t u|, |\nabla_x^2 u|, |\nabla_x^2 \p_t u|, 
|\tilde{u}|, |\nabla_x \tilde{u}|,|\p_t \tilde{u}| ,|\nabla_x \p_t \tilde{u}|, |\p_t ^2 \tilde{u}|
) 
 \big\|_{L^2_{t,x,v}} .
\end{split}\Ee

We conclude that if $\frac{\e^{1/2} \delta}{\kappa } \mathcal{D}^{1/2}\ll 1$

and then 
\Be\label{L6_ED:last}
\begin{split} 
\|   {P} f_R(t) \|_{L^6(\O)}  
 \lesssim  &\   
 \kappa^{-1/2} (1+ \kappa^{1/2} + \e \delta + \frac{\e^{1/2} \delta}{\kappa^{1/2}} \mathcal{D}^{1/2}) \mathcal{D}^{1/2} \\ 
&
+ \big\{\e + \e \kappa^{-1} (\| p \|_\infty + \| \tilde{u} \|_\infty + \kappa \| \nabla_x u \|_\infty)
+\e^2 \| \p_t u \|_\infty + \e \| \nabla_x u \|_\infty\big\} \mathcal{E}^{1/2}+ \e^{1/2} \kappa^{1/2} (\ref{Linfty:force})+ (\ref{L6:force}),
\end{split}
\Ee
with 
\Be
\begin{split}\label{L6:force}
 \frac{\e}{\delta}
\{
|p(t)|_{L^4(\p\O)} + |\tilde{u}(t)|_{L^4(\p\O)} + \kappa  |\nabla_x u(t)|_{L^4(\p\O)} 
\}
  + \frac{\e}{\delta \kappa  } \mathfrak{q}(|u|, |\nabla_{x } u |, |\p_{t}u|, |\nabla_{x}^{2} u|, |\nabla_{x} \p_{t} u|, |p|, |\nabla_{x } p |, |\p_{t} p|, |\tilde{u}|, |\nabla_{x } \tilde{u}|, |\p_{t} \tilde{u}|).
\end{split}
\Ee
We further derive 
\Be\begin{split}\label{average_3D_ED}
&   \big\|    {P} f_R
 \big\|_{L^2_t((0,T);L^p_x(\tilde{\O}))}
 \\
\lesssim & \ 
(1+ \e \big\| |\nabla_x u| + \e |\p_t u| + \e |u| |\nabla_x u| \big\|_{L^2_t ((0,T); L^\infty_x(\O))})
\sup_{t \in [0,T]}  \sqrt{\mathcal{E}(t)}
\\
&+\Big\{
\frac{1}{  \kappa^{1/2} }
+  \frac{\e\delta}{\kappa^{1/2}} \| \mathfrak{w}_{\varrho, \ss} f_R \|_{L^\infty_t ((0,T) \times \O \times \R^3) } + \e \kappa^{1/2}  \| \mathfrak{w}_{\varrho, \ss } f_R (t) \|_{ L^\infty((0,T) \times \tilde{\O} \times \R^3)}^{\frac{p-2}{p}}
\Big\} 
\sqrt{\mathcal{D}(T)}
 \\
 &+
 (\ref{average_3D:force}) \\
 \lesssim & \ 
\sup_{t \in [0,T]}  \sqrt{\mathcal{E}(t)}
 +
\frac{1}{  \kappa^{1/2} } \sqrt{\mathcal{D}(T)}
 +
 (\ref{average_3D:force})
\end{split}\Ee

From 
\Be\label{Linfty_3D_t_ED1}
\begin{split}
&
\| \mathfrak{w} ^\prime  \p_t  f_R  \|_{L^2 ((0,T);L^{\infty} (\O \times \R^3))}\\
\lesssim & \ 
\frac{1}{\e^{3/p} \kappa^{3/p}}
\| P \p_t f  \|_{L^2_t ((0,T);L^p_{x}(\O))}
 +  \frac{1}{\e^{1/2} \kappa }
\sqrt{ \mathcal{D}(t)}\\
& + 
\Big\{
\e  |\p_t u | 
+ \e^2
  \big[
|\p_t p|  + |\p_t \tilde{u}| + \e |\p_t u| (|p| + |\tilde{u}|) + \kappa (|\nabla_x \p_t u| + \e |\p_t u| |\nabla_x u|)
\big]
+ \e^4 \kappa  |\p_t u| (|\p_t u| + |u||\nabla_x u|) 
\Big\}
\| \mathfrak{w} f_R\|_\infty  \\
&+ (\ref{Linfty:force}) 
\end{split}
\Ee
with 
\Be
\begin{split}\label{Linfty:force}
 &
\| \mathfrak{w} ^\prime(x,v) \p_t  f_R(0,x,v) \|_{{L^2_tL^{\infty}_{x,v}}}
+ \frac{\e}{\delta}\{ |\p_t p| +|\p_t \tilde{u}| + \kappa |\nabla_x \p_t u |\}
+\e  |\p_t u |   \{  
 |  p| +|  \tilde{u}| + \kappa |\nabla_x   u |
\}
 \\
 &+  \e^3 \kappa  (|\p_t^2 u| + |u||\nabla_x \p_t u| + |\p_t u| |\nabla_x u| )
 +  \frac{\e^2 \kappa}{\delta}
\mathfrak{q}(
|\nabla_{x} \p_t \tilde{u}|, 
|\nabla_x^2 \p_t u | 
)
  \\
 &+
  \frac{\e^3}{\delta }\{1+ 
|\p_t^2 p| + |\nabla_x \p_t^2 u| 
\}  
\\
& \ \ \   \ \ \  \times 
 \mathfrak{q} (
|p|, |\nabla_x p|, |\p_t p|,|\nabla_x \p_t p | , |u| , |\nabla_x u|, |\p_t u|, |\nabla_x \p_t u|, |\nabla_x^2 u|, |\nabla_x^2 \p_t u|, 
|\tilde{u}|, |\nabla_x \tilde{u}|,|\p_t \tilde{u}| ,|\nabla_x \p_t \tilde{u}|, |\p_t ^2 \tilde{u}|
).
\end{split}
\Ee

From
\Be\begin{split}\label{average_3Dt_ED}
&  \big\|    {P}  \p_t f_R
 \big\|_{L^2_t((0,T);L^p_x(\tilde{\O}))}
 \\
\lesssim & \  (\kappa \e ) ^{\frac{2}{p-2}} \|  \mathfrak{w} \p_t  f_R \|_{L^2_t ((0,T);L^\infty_{x,v} (  \O \times \R^3))}\\
&
+ \Big(
1  + \e ^2 \{(\| \p_t ^2 u \|_\infty + \| u \|_\infty \| \nabla_x \p_t u \|_\infty + \| \p_t u \|_\infty \| \nabla_x u \|_\infty) + \e  \| \p_t u \|_\infty (\| \p_t u \|_\infty + \| u \|_\infty \| \nabla_x u \|_\infty) 
\}
\Big)
\sup_{0 \leq t \leq T} \sqrt{\mathcal{E} (t)}
\\
&+
\Big\{\frac{1}{\kappa \e} + \frac{\delta}{\kappa} \|   wf_R\|_{L^\infty_{x,v}} +  \frac{\e}{\kappa} \|   wf_2\|_{L^\infty_{x,v}}  + \e \| 
 |\nabla_x u| + \e  |\p_t u| + \e |u||\nabla_x u|  
 \|_{L^\infty_{t,x} ([0,T ] \times \O)}
\Big\}
\sqrt{\mathcal{D}(t)}\\
& + (\ref{average_3Dt:force})
\end{split}\Ee
with 
\Be\label{average_3Dt:force}
\begin{split}
 &  \|\p_t  f_R (0) \|_{L^2_\gamma}+ \frac{ \e }{\delta} \|\mathfrak{q}(|\nabla_x  \p_t \tilde{u}|, |\nabla_x^2 \p_t u|)\|_{L^2_{t,x}}  \\
& +
\frac{\e^2}{\delta \kappa}
\big\|
(1+ 
|\p_t^2 p| + |\nabla_x \p_t^2 u| 
) 
\\
 & \ \ \   \times 
 \mathfrak{q} (
|p|, |\nabla_x p|, |\p_t p|,|\nabla_x \p_t p | , |u| , |\nabla_x u|, |\p_t u|, |\nabla_x \p_t u|, |\nabla_x^2 u|, |\nabla_x^2 \p_t u|, 
|\tilde{u}|, |\nabla_x \tilde{u}|,|\p_t \tilde{u}| ,|\nabla_x \p_t \tilde{u}|, |\p_t ^2 \tilde{u}|
) 
 \big\|_{L^2_{t,x,v}} .
\end{split}
\Ee
Then 
\Be\begin{split}\notag
&
\Big(
1-  (\kappa \e)^{\frac{2}{p-2}- \frac{3}{p}}
\Big)
  \big\|    {P}  \p_t f_R
 \big\|_{L^2_t((0,T);L^p_x(\tilde{\O}))}
 \\
\lesssim & \  (\kappa \e ) ^{\frac{2}{p-2}} 
 \frac{1}{\e^{1/2} \kappa }
\sqrt{ \mathcal{D}(t)}\\
&+ (\kappa \e ) ^{\frac{2}{p-2}} \Big\{
\e  |\p_t u | 
+ \e^2
  \big[
|\p_t p|  + |\p_t \tilde{u}| + \e |\p_t u| (|p| + |\tilde{u}|) + \kappa (|\nabla_x \p_t u| + \e |\p_t u| |\nabla_x u|)
\big]
+ \e^4 \kappa  |\p_t u| (|\p_t u| + |u||\nabla_x u|) 
\Big\}
\Big\{ \frac{1}{\e^{1/2} \kappa } \sqrt{ \mathcal{D}(t) } + (\ref{Linfty:force}) \big\}\\
&+(\kappa \e ) ^{\frac{2}{p-2}} 
(\ref{Linfty:force})
\\
&
+ \Big(
1  + \e ^2 \{(\| \p_t ^2 u \|_\infty + \| u \|_\infty \| \nabla_x \p_t u \|_\infty + \| \p_t u \|_\infty \| \nabla_x u \|_\infty) + \e  \| \p_t u \|_\infty (\| \p_t u \|_\infty + \| u \|_\infty \| \nabla_x u \|_\infty) 
\}
\Big)
\sup_{0 \leq t \leq T} \sqrt{\mathcal{E} (t)}
\\
&+
\Big\{\frac{1}{\kappa \e} + \frac{\delta}{\kappa} \|   wf_R\|_{L^\infty_{x,v}} +  \frac{\e}{\kappa} \|   wf_2\|_{L^\infty_{x,v}}  + \e \| 
 |\nabla_x u| + \e  |\p_t u| + \e |u||\nabla_x u|  
 \|_{L^\infty_{t,x} ([0,T ] \times \O)}
\Big\}
\sqrt{\mathcal{D}(t)}\\
& + (\ref{average_3Dt:force})\\
\lesssim&
 \ \frac{1}{\kappa \e } \sqrt{\mathcal{D}(t)} + \sup_{0 \leq t \leq T} \sqrt{\mathcal{E}(t)}
 +(\kappa \e ) ^{\frac{2}{p-2}} 
(\ref{Linfty:force}) + (\ref{average_3Dt:force})
\end{split}\Ee

\hide

\Be\begin{split}
\| f_R (t) \|_{2}^2 \lesssim e^{(1+ \| \nabla_x u \|_{L^\infty_{t,x}})t } \Big\{&
\| f_R (0)\|_2^2  
  +
   \Big(\frac{\e}{\kappa^2}+  \frac{ \delta^2}{\kappa^{3 }}
\|\kappa^{1/2}  w \mathbf{P} f_R \|_{L^\infty_tL^6_{x,v}}^2  
\Big)
\| \kappa^{1/2} w \mathbf{P} f_R \|_{L^2_tL^3_{x,v}}^2+ \frac{\kappa  \e^2 }{\delta^2} \|\mathfrak{q}(|\nabla_x \tilde{u}|, |\nabla_x^2 u|)\|^2_{L^2_{t,x}} \\
& +  \frac{\e^4}{\delta^2 \kappa } \| \mathfrak{q}(|u|, |\nabla_{x } u |, |\p_{t}u|, |\nabla_{x}^{2} u|, |\nabla_{x} \p_{t} u|, |p|, |\nabla_{x } p |, |\p_{t} p|, |\tilde{u}|, |\nabla_{x } \tilde{u}|, |\p_{t} \tilde{u}|) \|_{L^2_{t,x}}^2
\Big\}.
\end{split}\Ee
Recall the assumption $\| \nabla_x u \|_{L^\infty_{t,x}}\lesssim \kappa^{-\mathfrak{p}}$ in (). For any fixed $T>0$ then 
\Be
\sup_{0 \leq t \leq T}\| f_R (t) \|_{2} \lesssim  e^{T/\kappa^p}\| f_R(0) \|_2 
\Ee
We choose 
\Be
\kappa = \min \Big\{\frac{1}{T}, \frac{1}{T\ln (\ln \frac{1}{\e})}\Big\}^{1/\mathfrak{p}}.
\Ee
Then $e^{T \kappa^{-\mathfrak{p}}} \leq e^{ \min \{1,  \ln (\ln \frac{1}{\e})\}} \leq
\min\{ e, 
 \ln \frac{1}{\e}\}$. It follows that 
 \Be
\sup_{0 \leq t \leq T}\|  \frac{1}{\ln \frac{1}{\e}}  f_R (t) \|_{2} \lesssim1. 
 \Ee
 From (\ref{est:Energy})
 \Be
\| \kappa^{- \frac{1}{2}} \e^{-1} \sqrt{\nu} (\mathbf{I} - \mathbf{P}) f_R \|_2^2
\lesssim   T^2 \ln (\ln \frac{1}{\e})  (\ln \frac{1}{\e})^2  \lesssim _T ( \ln \frac{1}{\e}  )^3
 \Ee
 and hence 
 \Be
 \left\| \kappa^{- \frac{1}{2}} \frac{1}{(\ln \frac{1}{\e})^{3/2}} \e^{-1} \sqrt{\nu} (\mathbf{I} - \mathbf{P}) f_R \right\|_2^2 \lesssim 1
 \Ee

 \unhide

  \unhide

 \section{Navier-Stokes approximations of the Euler equations}\label{sec:5}
 In this section we prove Theorem \ref{thm_bound}. The proof of the theorem relies on the integral representation of the solution to the Navier-Stokes equations using the Green's function for the Stokes problem in the same spirit of \cite{NN2018}. 

 \subsection{Elliptic estimates and Nonlinear estimates
}
 
 In this section, we prove the estimates of the solutions of incompressible Navier-Stokes equations in large Reynolds numbers with the no slip boundary condition satisfying \eqref{NS_k}-\eqref{noslip} based on recent Green's function approach using the vorticity formulation of \eqref{NS}-\eqref{NSB} applied to the inviscid limit problem \cite{Mae14, NN2018, KVW, FW}. An advantage of working with analytic function spaces is the Cauchy estimates useful for recovery of the loss of derivatives. We recall the spaces, norms, and terminology we have defined in Section \ref{sec:MR}. 

\begin{lemma}[\cite{NN2018,FW}, Embeddings and Cauchy estimates] The following holds\label{lem_embedding}
\begin{enumerate}
\item $\mathfrak B^{\lambda, \kappa t} \subset \mathfrak{L}^{1,\lambda}$ and 
$\mathfrak B^{\lambda, \kappa } \subset \mathfrak{L}^{1,\lambda}$. 
\item $\| g_1 g_2\|_{\ast,\lambda}\lesssim \|g_1\|_{\infty,\lambda} \|g_2\|_{\ast,\lambda}$. 
\item $ \sum_{|\beta|=1} \|D^\beta g\|_{\ast,\lambda}   
\lesssim \frac{\|g\|_{\ast,\tilde{\lambda}}}{\tilde\lambda -\lambda}$, for any $0< \lambda <\tilde\lambda$. 
\end{enumerate}
For (2) and (3), $\|\cdot \|_{\ast,\lambda}$ can be either $\|\cdot\|_{\infty,\lambda,\kappa}$ or $  \|\cdot \|_{\infty,\lambda,\kappa t}$ or $\| \cdot \|_{\infty, \lambda , 0}$ or $ \|\cdot\|_{1,\lambda}$. 
\end{lemma}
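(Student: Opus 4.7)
The plan is to handle the three claims separately, each reducing to a standard Fourier-convolution or Cauchy-estimate argument once the behavior of the boundary-layer weights $\phi_\kappa$ and $\phi_{\kappa t}$ is pinned down. Throughout, the key structural observation is that the exponential weight $e^{\lambda|\xi|}$ in the horizontal Fourier variable is submultiplicative, so all three claims decouple into (i) a per-mode pointwise or $z$-integral estimate and (ii) a trivial sum over $\xi$ using Young's inequality for convolutions.

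For part (1), I would fix $\xi$ and bound $\|g_\xi\|_{\mathcal{L}^1_\lambda}$ by $\|g_\xi\|_{\mathcal{L}^\infty_{\lambda,\kappa}}$ (or by $\|g_\xi\|_{\mathcal{L}^\infty_{\lambda,0}}$ in the vertical case). By the pointwise definition of the weighted $\mathcal L^\infty$ norm one has $|g_\xi(z)|\le e^{-\bar\alpha\text{Re}\,z}(1+\phi_\kappa(z))\|g_\xi\|_{\mathcal L^\infty_{\lambda,\kappa}}$, so the task reduces to showing $\sup_{\sigma\le\lambda}\int_{\partial\mathcal H_\sigma}(1+\phi_\kappa(z))e^{-\bar\alpha\text{Re}\,z}|\dd z|<\infty$ uniformly in $\kappa$. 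The $e^{-\bar\alpha\text{Re}\,z}$ factor controls the tail, and the rescaling $y=\text{Re}\,z/\sqrt\kappa$ together with $\mathfrak r>1$ in \eqref{BL} yields $\int_0^\infty\phi_\kappa(x)\dd x=\int_0^\infty\phi(y)\dd y<\infty$ independently of $\kappa$. The same rescaling works for $\phi_{\kappa t}$, proving the $\mathfrak B^{\lambda,\kappa t}\subset\mathfrak{L}^{1,\lambda}$ embedding uniformly in $t$ and $\kappa$.

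For part (2), Fourier convolution gives $(g_1g_2)_\xi=\sum_{\eta}(g_1)_{\xi-\eta}(g_2)_\eta$. Pointwise at each $z$, I would absorb the weight by writing $|g_1(z)|\le e^{-\bar\alpha\text{Re}\,z}\|(g_1)_{\xi-\eta}\|_{\mathcal L^\infty_{\lambda,0}}$, which cancels exactly the $e^{\bar\alpha\text{Re}\,z}$ in the target norm $\|\cdot\|_{\ast,\lambda}$ regardless of whether the target carries $\phi_\kappa$, $\phi_{\kappa t}$, or neither. This gives the per-mode bound $\|(g_1g_2)_\xi\|_{\mathcal{\ast}_\lambda}\le\sum_\eta\|(g_1)_{\xi-\eta}\|_{\mathcal L^\infty_{\lambda,0}}\|(g_2)_\eta\|_{\mathcal{\ast}_\lambda}$. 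Summing with the submultiplicative bound $e^{\lambda|\xi|}\le e^{\lambda|\xi-\eta|}e^{\lambda|\eta|}$ and Young's inequality in $\xi$ yields the stated product estimate.

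For part (3), horizontal derivatives are trivial since $(D_h^\beta g)_\xi=(i\xi)^{\beta_h}D_3^{\beta_3}g_\xi$ and $|\xi|e^{\lambda|\xi|}\le(e(\tilde\lambda-\lambda))^{-1}e^{\tilde\lambda|\xi|}$. The substantive point is the conormal derivative $D_3=\zeta(x_3)\p_{x_3}$ with $\zeta(z)=z/(1+z)$. For each $\xi$ I would apply the Cauchy integral formula on a disk of radius $r\sim(\tilde\lambda-\lambda)\min\{\text{Re}\,z,1\}/2$ centered at $z\in\partial\mathcal H_\sigma$ with $\sigma<\lambda$, which is contained in $\mathcal H_{\tilde\lambda}$ by the pencil geometry \eqref{complex_domain}. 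This gives $|\p_zg_\xi(z)|\lesssim r^{-1}\sup_{w\in B_r(z)}|g_\xi(w)|$. The prefactor $\zeta(\text{Re}\,z)$ then exactly offsets the $1/\text{Re}\,z$ near the boundary, producing the uniform bound $|D_3g_\xi(z)|\lesssim(\tilde\lambda-\lambda)^{-1}\sup_{B_r(z)}|g_\xi|$. The main (though still routine) obstacle is that on $B_r(z)$ the weight $e^{\bar\alpha\text{Re}\,w}/(1+\phi_\kappa(w))$ should be comparable to its value at $z$, uniformly in $\kappa$; this follows because $\phi_\kappa(w)/\phi_\kappa(z)$ is controlled on scales $|w-z|\le r\le\sqrt\kappa$ (small $\text{Re}\,z$) and on scales $|w-z|\le r\lesssim 1$ (bulk) by the explicit form of $\phi$. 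Summing over $\xi$ with the weighted $\ell^1$ norm and replacing the sup over $z\in\partial\mathcal H_\sigma$ by the sup over $\mathcal H_{\tilde\lambda}$ concludes the proof.
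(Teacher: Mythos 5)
Parts (1) and (2) of your argument are sound. The $\phi_\kappa$-rescaling in (1) correctly shows $\int_0^\infty\phi_\kappa = \int_0^\infty\phi < \infty$ independently of $\kappa$ (and similarly for $\phi_{\kappa t}$ independently of $t$), and the per-mode convolution plus Young's inequality in (2) gives the product estimate for all four target norms, including the $\mathcal{L}^1_\lambda$-based one. A small correction: nothing ``cancels exactly'' in (2); the weight $e^{\bar\alpha\text{Re}\,z}/(1+\phi_\kappa(z))$ in the target is carried entirely by the $g_2$ factor, while the $g_1$ factor contributes only its plain supremum (or its $\mathcal{L}^\infty_{\lambda,0}$-norm, at the harmless cost of an extra $e^{-\bar\alpha\text{Re}\,z}\le 1$).

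Part (3) has a genuine gap. Your Cauchy-disk argument yields the pointwise bound $|\zeta(z)\p_z g_\xi(z)|\lesssim(\tilde\lambda-\lambda)^{-1}\sup_{B_{r}(z)}|g_\xi|$, and the weight-comparability step does go through once phrased correctly: $r\sim(\tilde\lambda-\lambda)\min\{\text{Re}\,z,1\}$ forces $\text{Re}\,w/\text{Re}\,z$ into a fixed interval on $B_r(z)$, so $\phi_\kappa(w)\lesssim\phi_\kappa(z)$ uniformly in $\kappa$ by the power law of $\phi$; the threshold $\sqrt\kappa$ you invoke plays no role. This settles the three $L^\infty$-type norms. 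But the lemma also asserts (3) for $\|\cdot\|_{1,\lambda}$, which the paper actually uses in the analyticity recovery step \eqref{recovery_1}, and here your sup-over-disk estimate does not close. The quantity to control is $\sup_{0\le\sigma\le\lambda}\int_{\p\mathcal{H}_\sigma}\zeta(z)|\p_z g_\xi(z)||\dd z|$, and integrating a pointwise supremum over $B_r(z)$ against $|\dd z|$ does not produce anything bounded by $\sup_{\tilde\sigma}\int_{\p\mathcal{H}_{\tilde\sigma}}|g_\xi||\dd z|$. The correct route is to represent $\p_z g_\xi(z)=\frac{1}{2\pi i}\int_{\p\mathcal{H}_{\tilde\sigma}}\frac{g_\xi(w)}{(w-z)^2}\dd w$ over a nearby pencil boundary (closing the contour at the tip and using decay at infinity), then Fubini the $z$- and $w$-integrations and verify $\int_{\p\mathcal{H}_\sigma}\zeta(z)|w-z|^{-2}|\dd z|\lesssim(\tilde\sigma-\sigma)^{-1}$ uniformly in $w\in\p\mathcal{H}_{\tilde\sigma}$; this contour-plus-Fubini step, which is the one carried out in \cite{NN2018}, is what your sketch leaves out.
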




\begin{lemma}[\cite{NN2018,FW}, Elliptic estimates] \label{lem_elliptic} Let $\phi$ be the solution of $-\Delta\phi = \o$ with the zero Dirichlet boundary condition, and let $u=\nabla \times \phi$. Then 
\Be
\begin{split}\label{est:elliptic}
\| u  \|_{\infty,\lambda}
+ \|\nabla u \|_{1,\lambda} 
 &\lesssim \|\o\|_{1,\lambda} , \\ 
\| \nabla_h u  \|_{\infty,\lambda} + \| \nabla u_3\|_{\infty,\lambda} 
  &\lesssim \sum_{0 \leq |\beta| \leq1}
\| \nabla_h^\beta\o\|_{1,\lambda} , \\
\|   \p_3 u _h\|_{\infty, \lambda }  &\lesssim  \sum_{0 \leq |\beta|\leq 1}
\| \nabla_h^\beta\o\|_{1,\lambda}
+ \|   \o_h \|_{\infty, \lambda },\\
\| \zeta^{-1} \nabla_h^{\beta^\prime} u_3 \|_{\infty, \lambda} &\lesssim \sum_{0 \leq |\beta|\leq 1} \|   \nabla_h ^{\beta+ \beta^\prime} \o_h \|_{1, \lambda}.
\end{split}
\Ee 
\end{lemma}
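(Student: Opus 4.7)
The plan is to reduce everything to Fourier modes in the horizontal variables, construct the explicit Green's function for the resulting family of second order ODEs in $x_3$, and then recognize each estimate as a convolution-type bound that fits into the analytic norms $\|\cdot\|_{1,\lambda}$ and $\|\cdot\|_{\infty,\lambda}$.

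First I would take the horizontal Fourier transform of $-\Delta\phi = \omega$ with $\phi|_{\partial\Omega}=0$. For each mode $\xi\in\mathbb{Z}^2$, writing $\phi_\xi(z)$ and $\omega_\xi(z)$ for the Fourier coefficients, I get $(|\xi|^2-\partial_z^2)\phi_\xi = \omega_\xi$ with $\phi_\xi(0)=0$ and decay at $+\infty$. For $\xi\neq 0$ this is solved by $\phi_\xi(z) = \int_0^\infty G_\xi(z,y)\,\omega_\xi(y)\,dy$ with the explicit Green's function
\[
G_\xi(z,y) = \frac{1}{2|\xi|}\bigl(e^{-|\xi||z-y|} - e^{-|\xi|(z+y)}\bigr),
\]
and for $\xi = 0$ by $\phi_0(z) = \int_0^\infty \min(z,y)\,\omega_0(y)\,dy$. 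Both $G_\xi$ and its derivatives extend holomorphically to the pencil $\mathcal{H}_\lambda$, so the representation formulas persist for the holomorphic extensions used in the norms \eqref{norm_L1} and \eqref{norm_BL}.

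Second, I would derive each component of $u=\nabla\times\phi$ from this representation and estimate the resulting kernels. The key kernel bounds are
\[
\sup_{z\in\mathcal{H}_\lambda}\int_0^\infty |\partial_z^j G_\xi(z,y)|\,dy \lesssim |\xi|^{j-2}, \qquad \int_0^\infty\sup_{y\in\mathcal{H}_\lambda} |\partial_z^j G_\xi(z,y)|\,dz \lesssim |\xi|^{j-2},
\]
combined with a Young-type convolution in $z$. Summing $e^{\lambda|\xi|}$ yields the first inequality of \eqref{est:elliptic}. For the second inequality, each horizontal derivative $\nabla_h$ costs a factor of $|\xi|$, which is absorbed by the one extra derivative on $\omega$ on the right-hand side; the estimate for $\nabla u_3$ uses in addition the structural fact that $u_3 = \partial_1\phi_2 - \partial_2\phi_1$ is already one horizontal derivative, so $\partial_3 u_3 = -\nabla_h\cdot u_h$ by the divergence-free condition and reduces to controlling $\nabla_h u_h$. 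The fourth inequality then follows from the second by writing $u_3(x_h,x_3) = \int_0^{x_3} \partial_3 u_3(x_h,s)\,ds$ (possible because $\phi_h|_{x_3=0}=0$ forces $u_3|_{x_3=0}=0$), using $\partial_3 u_3 = -\nabla_h\cdot u_h$ once more, and absorbing the factor $(1+x_3)/x_3$ coming from $\zeta^{-1}$ into this integration.

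The main obstacle is the third inequality, where a direct differentiation of the Green's function representation would produce an apparent $x_3^{-1}$ singularity near the boundary, since the vector potential $\phi_h$ vanishes on $\partial\Omega$ but the horizontal vorticity $\omega_h$ generally does not, so the Dirichlet boundary condition does not transfer to $\omega_h$. To avoid this, I would not differentiate $\phi_h$ directly in the normal direction; instead I use the curl identity componentwise, writing $\partial_3 u_h$ as a horizontal derivative of $u_3$ minus $\omega_h$ (with the appropriate sign), i.e. $\partial_3 u_1 = \partial_1 u_3 - \omega_2$ and $\partial_3 u_2 = \partial_2 u_3 + \omega_1$. The first term on each right-hand side is already controlled by the second inequality, and the second term is exactly the source of the $\|\omega_h\|_{\infty,\lambda}$ contribution on the right. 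This is the mechanism by which the boundary layer singularity for $\partial_3 u_h$ is moved off $u$ and onto $\omega_h$, where it is carried by the $\phi_\kappa$-weight in $\mathfrak{B}^{\lambda,\kappa}$; with this trick in place, what remains are routine convolution bounds of the type already carried out in \cite{NN2018, FW}.
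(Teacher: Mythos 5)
Your proposal follows the same Fourier/Green's-function strategy as the paper and is essentially correct, but it routes the last two estimates differently, and one of those detours is worth examining.

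For the third inequality your curl-identity shortcut $\p_3 u_h = \nabla_h u_3 \pm \o_h$ is a genuinely cleaner path than the paper's. The paper differentiates the Biot--Savart representation and carries out the explicit computation of $\p_z(\p_3(-\Delta)^{-1}\o_2)_\xi$; the upshot of that computation is that a local term $-\o_{\xi,2}(z)$ pops out next to nonsingular integral terms. Your approach simply reads this off from $\o_2 = \p_3 u_1 - \p_1 u_3$ (and the analogous identity for $u_2$) without the explicit kernel manipulation: the local term is $\o_h$, the nonsingular part is $\nabla_h u_3$, and the latter is already covered by the second inequality. The two arguments are mathematically identical, but yours makes transparent \emph{why} the $\|\o_h\|_{\infty,\lambda}$ term must appear. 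The minor sign discrepancy in your statement ($\p_3 u_1 = \p_1 u_3 + \o_2$, not $-\o_2$) is, as you note, immaterial.

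For the fourth inequality your route is slightly off. You propose to write $\nabla_h^{\beta'}u_3 = -\int_0^{x_3}\nabla_h^{\beta'}\nabla_h\cdot u_h\,ds$ and then invoke the second inequality to bound $\nabla_h^{\beta'+1}u_h$. But the second inequality controls $\nabla_h u$ by $\sum_{|\beta|\leq 1}\|\nabla_h^\beta\o\|_{1,\lambda}$ with the \emph{full} $\o$ on the right; applied naively this gives $\|\zeta^{-1}\nabla_h^{\beta'}u_3\|_{\infty,\lambda}\lesssim \sum_{|\beta|\leq 1}\|\nabla_h^{\beta+\beta'}\o\|_{1,\lambda}$, which is weaker than the stated bound with only $\o_h$. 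What saves the sharper statement is that the $\o_3$-contributions in $\nabla_h\cdot u_h$ cancel: writing $u_1 = \p_2\phi_3 - \p_3\phi_2$ and $u_2 = \p_3\phi_1 - \p_1\phi_3$, one checks $\nabla_h\cdot u_h = \p_2\p_3\phi_1 - \p_1\p_3\phi_2$, so $\phi_3$ and hence $\o_3$ drops out entirely. Equivalently, the paper works directly with $u_3 = \p_1\phi_2 - \p_2\phi_1$, which is a function of $\phi_1,\phi_2$ only and therefore of $\o_h$ only. Your scheme works once you observe this cancellation (or bypass the divergence-free relation and use the Biot--Savart formula for $u_3$ directly, together with the half-space Green's function kernel $G_\xi(z,y)$ to bound the resulting integrals); as written it silently loses the $\o_h$-only improvement. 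One further small gap: the factor $\zeta^{-1}=(1+x_3)/x_3$ is not simply absorbed by the integration, since it tends to $1$ for large $x_3$ rather than decaying --- you should split into $x_3\leq 1$ (where the averaging in $\frac{1}{x_3}\int_0^{x_3}$ is used) and $x_3\geq 1$ (where $\zeta^{-1}\leq 2$ and the unweighted estimate suffices), which is what the paper's identity $\frac{1+z}{z}\nabla_h u_3 = \frac{1}{z}\int_0^z\p_y\nabla_h u_3\,dy + \nabla_h u_3$ accomplishes in one line.
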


\begin{proof}
Here we only sketch the proofs. For full justification we refer to Proposition 2.3 in \cite{NN2018} for 2D and Section 4 of \cite{FW} for 3D and the proofs therein.  From $(|\xi|^2   - \p_z^2 )\phi_\xi = \o_\xi$ and $\phi_\xi (0)=0$ we write 
\Be\label{phi_xi}
\begin{split}
\phi_\xi(z)=& \int^z_0 G_- (y,z) \o_\xi (y) \dd y + \int^\infty_z G_+ (y,z) \o_\xi (y) \dd y, \\
&\text{with} \  \ G_{\pm} (y,z) := \frac{-1}{2 |\xi|} \Big( e^{\pm |\xi| (z-y)} -e^{- |\xi | (y+z)}\Big).
\end{split}\Ee
The first two estimates of (\ref{est:elliptic}) can be easily derived from this explicit form. For the third estimate of (\ref{est:elliptic}), we write $u_1 = \p_2 (-\Delta)^{-1}\o_3 - \p_3 (-\Delta)^{-1} \o_2$ and $\p_3u_1 =\p_3 \p_2 (-\Delta)^{-1}\o_3 -\p_3 \p_3 (-\Delta)^{-1} \o_2$.  Then the third estimate of (\ref{est:elliptic}) follows from the identity  
\Be
\begin{split}
 \p_z (\p_3 (-\Delta)^{-1} \o_2)_\xi = \frac{1}{2} \bigg(& \int^z_0 |\xi| e^{-|\xi| (z-y)} (1- e^{-2 |\xi| y}) \o_{ \xi,2} (s, y) \dd y \\
 &+ \int^\infty_z |\xi| e^{-|\xi| (y-z)} (1+ e^{-2 |\xi| z}) \o_{ \xi,2 } (x, y) \dd y \\&
+ \int^\infty_z (-2 |\xi|) e^{- |\xi| (y-x)} e^{-2 |\xi|z } \o_{ \xi,2 } (s, y) \dd y 
\bigg)-  \o_{\xi,2}(z).\notag
\end{split}
\Ee
Next we prove the last estimate. Note that 
\Be
\begin{split}\notag
\frac{1+z}{z} \nabla_ h u_3 (z)  =& \frac{1 }{z} \int^z_0 \p_y \nabla_h u_3 (x_h, y) \dd y  + \nabla_h u_3 (z)\\
&
 = \frac{1 }{z} \int^z_0   \nabla_h 
\big(\p_1\p_3 (-\Delta)^{-1}   \o_2
- \p_2 \p_3 (-\Delta)^{-1}\o_1\big)
 (x_h, y) \dd y + \nabla_h u_3 (z).
\end{split}
\Ee
From (\ref{phi_xi}) we read that for $i=1,2$
\Be
\begin{split}\notag
&\Big||\xi|^{|\beta|}(\p_3 (-\Delta)^{-1} \o_i)_\xi (s,z)\Big|\\
&\leq  \frac{1}{2}
 \Big(
 \int^z_0 e^{-|\xi| (z-y)} (1- e^{- 2|\xi| y})|\xi|^{|\beta|}| \o_{\xi, i} (s,y)| \dd y
 + \int^\infty_z e^{-|\xi| (y-z)} (1+ e^{- 2 |\xi| z})  |\xi|^{|\beta|}| \o_{\xi, i} (s,y) |\dd y
 \Big) \\
 &\lesssim 
\sup_{0 \leq \sigma< \lambda} \big\| |\xi|^{|\beta|} \o_{\xi ,h} \big\|_{L^1 (\p\mathcal{H}_\sigma)}.
 \end{split}
\Ee
From the identity and estimate above we conclude the last bound of (\ref{est:elliptic}).
\end{proof}

\bigskip


As a consequence of Lemma \ref{lem_elliptic}, we have the following nonlinear estimates. 


\begin{lemma}[\cite{NN2018,FW}]\label{lem_bilinear} Let $u$ and $\tilde u$ be the velocity field associated with $\o= \nabla_x \times u$ and $\tilde\o= \nabla_x \times \tilde u$ respectively. Then 
\Be
\begin{split}\label{nonlinear1}
\| u \cdot \nabla \tilde\o \|_{1,\lambda }&\lesssim \|\o\|_{1,\lambda}  \|\nabla_h \tilde\o \|_{1,\lambda} + 
\| (1+ |\nabla_h|) \o \|_{1, \lambda}
\| \zeta \p_z \tilde \o \|_{1,\lambda}, \\
\|\o \cdot \nabla \tilde u_3\|_{1, \lambda} &\lesssim \| \o_h \|_{1, \lambda} \| \nabla_h \tilde{u}_3 \|_{\infty, \lambda} + \| \o_3 \|_{1,\lambda} \| \p_3 \tilde{u}_3 \|_{\infty, \lambda}
\lesssim \| \o \|_{1,\lambda} 
\| (1+ |\nabla_h|) \tilde \o \|_{1, \lambda}
 ,  \\
\|\o \cdot \nabla \tilde u_h\|_{1, \lambda} &\lesssim \| \o_h \|_{1, \lambda} \| \nabla_h \tilde{u}_h \|_{\infty, \lambda} + \| \o_3 \|_{\infty,\lambda} \| \p_3 \tilde{u}_h \|_{1, \lambda}
\lesssim \| \o \|_{1,\lambda} \big(
\| \tilde \o_3 \|_{\infty, \lambda} + 
\| (1+ |\nabla_h|) \o \|_{1, \lambda}
\big)
 .
\end{split}
\Ee

 Moreover
\Be\label{nonlinear2}
\begin{split}
\| u \cdot \nabla \tilde\o_h \|_{*,\lambda }  &\lesssim \|\o\|_{1,\lambda} \|\nabla_h \tilde \o_h\|_{*,\lambda} + 
\big(
\| (1+ |\nabla_h|) \o \|_{1, \lambda}
+ \| \zeta \p_z  \o_3 \|_{\infty, \lambda}
\big)   \| \zeta \p_z \tilde \o_h\|_{*,\lambda}, \\
\| \o \cdot \nabla \tilde u_h \|_{*,\lambda }
&\lesssim 
\| \o_3 \|_{\infty, \lambda, 0}
\big( 
\| (1+ |\nabla_h|)\tilde \o \|_{1, \lambda}
+ \|  \tilde  \o_h \|_{*, \lambda } \big)
+ \|\o_h \|_{*,\lambda }
 \sum_{0\leq |\beta|\leq 1}
\| \nabla_h^\beta\tilde \o\|_{1,\lambda} 
,
\end{split}
\Ee
where $\| \cdot \|_{*, \lambda}$ can be either $\| \cdot \|_{\infty, \lambda , \kappa}$ or $\| \cdot \|_{\infty, \lambda , \kappa t}$.

Furthermore 
\Be
\begin{split}\label{nonlinear2_3}
\| u \cdot \nabla \tilde\o_3 \|_{\infty,\lambda,0} 
 &\lesssim \|\o\|_{1,\lambda} \|\nabla_h\tilde\o_3\|_{\infty,\lambda,0} + 
\| (1+ |\nabla_h|) \o \|_{1, \lambda}
    \| \zeta \p_3 \tilde \o_3\|_{\infty,\lambda,0 },\\
\| \o  \cdot \nabla  \tilde u_3 \|_{\infty,\lambda,0} &\lesssim 
   \| \o_h \|_{*, \lambda } 
   \| (1+ |\nabla_h|^2) \tilde \o_h \|_{1, \lambda}
+ \| \o_3 \|_{\infty, \lambda, 0}  
\| (1+ |\nabla_h|) \tilde \o_h \|_{1, \lambda},
\end{split}
\Ee
where $\| (1+ |\nabla_h|^k) g \|_* = \sum_{\ell=0}^k \| \nabla_h^\ell g \|_*$. 
\end{lemma}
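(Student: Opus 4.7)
\smallskip

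The strategy is to decompose every nonlinear product into its horizontal and vertical pieces and then pair factors by the Banach algebra property of Lemma \ref{lem_embedding}(2), always placing one factor in an $L^\infty$-type analytic norm (to be estimated by elliptic regularity, Lemma \ref{lem_elliptic}) and the other in the target norm. For the transport term $u\cdot \nabla \tilde\o = u_h\cdot \nabla_h \tilde\o + u_3 \p_3\tilde\o$, the horizontal part is immediate: bound $\|u_h\|_{\infty,\lambda}\lesssim \|\o\|_{1,\lambda}$ by the first estimate of Lemma \ref{lem_elliptic} and pair with $\|\nabla_h\tilde\o\|_{*,\lambda}$. For the normal part, the crucial observation is that $u_3|_{x_3=0}=0$, so we rewrite $u_3 \p_3\tilde\o = (u_3/\zeta)\, (\zeta\p_3\tilde\o)$ and invoke the last inequality of Lemma \ref{lem_elliptic}, $\|\zeta^{-1} u_3\|_{\infty,\lambda}\lesssim \|(1+|\nabla_h|)\o_h\|_{1,\lambda}$, so that the vertical derivative appears only through the conormal weight $\zeta\p_z$ in the output norm. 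This gives \eqref{nonlinear1}$_1$ and \eqref{nonlinear2}$_1$ in one stroke; the only modification for the starred norms is that we keep $\|\nabla_h\tilde\o_h\|_{*,\lambda}$ and $\|\zeta\p_z\tilde\o_h\|_{*,\lambda}$ on the right, which is exactly how $\vertiii{\cdot}$ sees the boundary layer. The same idea, with $\tilde\o_3$ in place of $\tilde\o_h$, produces \eqref{nonlinear2_3}$_1$.

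For the stretching terms $\o\cdot \nabla \tilde u = \o_h\cdot \nabla_h\tilde u + \o_3\p_3\tilde u$, one must exploit the asymmetry between $\tilde u_h$ and $\tilde u_3$ encoded in Lemma \ref{lem_elliptic}. When the target component is $\tilde u_3$, both $\nabla_h\tilde u_3$ and $\p_3\tilde u_3$ are controlled in $\|\cdot\|_{\infty,\lambda}$ by $\sum_{|\beta|\leq 1}\|\nabla_h^\beta\o\|_{1,\lambda}$, so one obtains \eqref{nonlinear1}$_2$ directly by pairing with $\|\o\|_{1,\lambda}$. When the target is $\tilde u_h$, the troublesome factor is $\p_3\tilde u_h$, which the third estimate of Lemma \ref{lem_elliptic} bounds in $\|\cdot\|_{\infty,\lambda}$ only at the cost of an extra $\|\tilde\o_h\|_{\infty,\lambda}$; to avoid amplifying the boundary-layer part, we instead pair $\o_3\p_3\tilde u_h$ by putting $\o_3$ in $\|\cdot\|_{\infty,\lambda,0}$ (no boundary layer, by the definition \eqref{[]}) and $\p_3\tilde u_h$ in $\|\cdot\|_{1,\lambda}$, using the second estimate of Lemma \ref{lem_elliptic}. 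The horizontal piece $\o_h\cdot\nabla_h\tilde u_h$ is handled by $\|\nabla_h\tilde u_h\|_{\infty,\lambda}\lesssim \sum_{|\beta|\leq 1}\|\nabla_h^\beta\tilde\o\|_{1,\lambda}$. This gives \eqref{nonlinear1}$_3$ and, with appropriate bookkeeping of the starred norm on $\o_h$, also \eqref{nonlinear2}$_2$.

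The most delicate statement is \eqref{nonlinear2_3}$_2$, the bound on $\o\cdot\nabla\tilde u_3$ in the $\|\cdot\|_{\infty,\lambda,0}$ norm. Here one cannot afford any boundary-layer factor on the output side, so $\o_h$ (which in general is only controlled in $\|\cdot\|_{\infty,\lambda,\kappa}$ or $\|\cdot\|_{\infty,\lambda,\kappa t}$) must be paired with a factor of $\tilde u$ that carries enough regularity to absorb an extra horizontal derivative: this is why the term $\|(1+|\nabla_h|^2)\tilde\o_h\|_{1,\lambda}$ appears on the right, arising from the elliptic estimate $\|\nabla_h\tilde u_3\|_{\infty,\lambda}\lesssim \sum_{|\beta|\leq 1}\|\nabla_h^\beta\o\|_{1,\lambda}$ applied after one additional horizontal derivative falls on $\tilde u_3$. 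The $\o_3$ part is easier since $\|\o_3\|_{\infty,\lambda,0}$ is clean and the elliptic bound for $\p_3\tilde u_3$ does not lose derivatives.

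The main obstacle I anticipate is precisely this last point: matching the boundary-layer structure of the inputs to the prescribed weight of the output norm, in particular avoiding any pairing that would force $\|\p_3\tilde u_h\|_{\infty,\lambda}$ to carry a boundary-layer weight that Lemma \ref{lem_elliptic} cannot provide. Once the decomposition $u_3 = \zeta\cdot(u_3/\zeta)$ is used systematically and the two components $\o_h,\o_3$ are tracked separately with their respective norms in \eqref{[]}, the remainder is an orchestrated application of Lemma \ref{lem_embedding}(2) and Lemma \ref{lem_elliptic}; no Cauchy estimate (part (3) of Lemma \ref{lem_embedding}) is needed at this stage, since no derivative loss is being recovered.
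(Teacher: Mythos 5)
Your decomposition of the nonlinear terms into horizontal and normal pieces, the factorization $u_3 = \zeta\cdot(u_3/\zeta)$ justified by $u_3|_{x_3=0}=0$, and the tracking of the $\o_h$ vs.\ $\o_3$ asymmetry through the norms in \eqref{[]} all match the paper's strategy (which itself mostly refers out to \cite{NN2018,FW}). Your treatment of \eqref{nonlinear1}, \eqref{nonlinear2}$_1$ and the first bound of \eqref{nonlinear2_3} is essentially what the paper carries out.

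There is, however, a genuine gap in your argument for the second bound of \eqref{nonlinear2_3}. You correctly flag the difficulty: $\o_h$ is only controlled in the weighted norm $\|\cdot\|_{*,\lambda}$, while the output must land in the unweighted $\|\cdot\|_{\infty,\lambda,0}$. But the mechanism you propose --- pairing $\o_h$ with $\nabla_h\tilde u_3$ through the elliptic bound $\|\nabla_h\tilde u_3\|_{\infty,\lambda}\lesssim\sum_{|\beta|\leq1}\|\nabla_h^\beta\o\|_{1,\lambda}$ ``after one additional horizontal derivative falls on $\tilde u_3$'' --- does not touch the boundary-layer weight. Extra $\nabla_h$ derivatives trade regularity in $\xi$; they say nothing about the behavior of $\o_h$ as $z\to 0$, and a bare Banach-algebra pairing would only produce $\|\o_h\cdot\nabla_h\tilde u_3\|_{*,\lambda}$, not $\|\cdot\|_{\infty,\lambda,0}$.

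The missing ingredient is the same factorization trick you already use elsewhere, but applied to $\nabla_h\tilde u_3$: write $\nabla_h\tilde u_3 = \zeta(z)\cdot\big(\zeta(z)^{-1}\nabla_h\tilde u_3\big)$ (legitimate since $\tilde u_3$ vanishes at $x_3=0$), control the second factor by the \emph{last} estimate of Lemma \ref{lem_elliptic}, namely $\|\zeta^{-1}\nabla_h^{\beta'}u_3\|_{\infty,\lambda}\lesssim\sum_{0\leq|\beta|\leq1}\|\nabla_h^{\beta+\beta'}\o_h\|_{1,\lambda}$ with $|\beta'|=1$ --- this, not an ``extra derivative on $\tilde u_3$,'' is the real source of $\|(1+|\nabla_h|^2)\tilde\o_h\|_{1,\lambda}$ --- and then use the elementary pointwise bounds $\zeta(z)\big(1+\phi_\kappa(z)\big)\lesssim 1$ and $\zeta(z)\big(1+\phi_\kappa(z)+\phi_{\kappa t}(z)\big)\lesssim 1$ to absorb the boundary-layer weight carried by $\o_h$. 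That uniform bound on $\zeta\cdot(1+\phi_\kappa)$ is exactly where the gain from $*$-norm to $0$-norm happens; without it the estimate does not close.
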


\begin{proof}
 Again we refer to Proposition 2.3 in \cite{NN2018} for 2D and Section 4 of \cite{FW} for the full justification. The bounds (\ref{nonlinear1}) and (\ref{nonlinear2}) directly follow from Lemma \ref{lem_elliptic}. The proof of the first estimate of (\ref{nonlinear2_3}) is an outcome of applying (\ref{est:elliptic}) to an easy bound 
\Be
\| u \cdot \nabla \tilde\o_3 \|_{\infty,\lambda,0} 
 \lesssim \| u_h \|_{\infty, \lambda} \| \nabla_h \tilde{\o}_3 \|_{\infty, \lambda,0}
+  \| \zeta(z) ^{-1} u_3 
 \|_{\infty, \lambda}
\| \zeta(z)  \p_3 \tilde{\o}_3 \|_{\infty, \lambda,0}.\notag
\Ee 
For the second estimate of (\ref{nonlinear2_3}) it suffices to prove the bound for $\o_h \cdot \nabla_h \tilde u_3$. From $|\zeta(z)(1+ \phi_\kappa (z)
 )|\lesssim 1$ or $|\zeta(z)(1+ \phi_\kappa (z)
+ \phi_{\kappa t} (z))|\lesssim 1$,
\begin{align*}
\| \o_h \cdot  \nabla_h \tilde u_3 \|_{\infty, \lambda, 0} & \lesssim \|  \o_h   \| _{*, \lambda } \Big\|  \zeta(z)(1+ \phi_\kappa (z)
+ \phi_{\kappa t} (z)
)  \frac{\nabla_h \tilde u_3}{\zeta(z)} \Big\|_{\infty, \lambda}
\lesssim \|  \o_h   \| _{*, \lambda }
 \|
  \zeta^{-1}{\nabla_h \tilde u_3} 
 \|_{\infty, \lambda}.
\end{align*} 
Then we use the last bound of (\ref{est:elliptic}) to finish the proof. 
\end{proof}

 We finally record the crucial estimate of nonlinear forcing terms $N=-u\cdot\nabla \o+ \o \cdot \nabla u$, as an outcome of {Lemma} \ref{lem_bilinear}, that will be also crucially used to control  $B= [\p_{x_3} (-\Delta)^{-1} (-u \cdot \nabla \o + \o \cdot \nabla u ) ] \, |_{x_3=0}$ in the vorticity formulation  \eqref{NS} and \eqref{NSB}. 

 \begin{lemma}[\cite{NN2018,FW}, Nonlinear estimate]\label{lem_est:N} {Let $\lambda\in(0,\lambda_0-\gamma s)$ be given. We have the following: }
  \Be
 \begin{split}\label{est:N_1}
\| (1+ |\nabla_h |) N \|_{1, \lambda }&\lesssim\big( \| (1+ |\nabla_h| ) \o \|_{1, \lambda} 
+ \|(1+ |\nabla_h| ) \o_3\|_{\infty, \lambda, 0}\big)
 \| (1+ |\nabla_h|^2 ) \o \|_{1, \lambda} \\
& \ \  + \sum_{|\beta|=1}
 \| (1+ |\nabla_h|) D^\beta \o\|_{1, \lambda}  \| (1+ |\nabla_h|^2) \o \|_{1, \lambda} ,
 \end{split}
\Ee
\Be
\begin{split}\label{est:DN_1}
&\sum_{|\beta|=1}\|D^\beta (1+ |\nabla_h |) N \|_{1, \lambda }\\
&\lesssim
 \sum_{|\beta  | \leq 1}  \| D^\beta (1+ |\nabla_h| ) \o \|_{1, \lambda} 
 \bigg(
  \sum_{|\beta| \leq 2}\| D^\beta(1+ |\nabla_h| ) \o\|_{1,\lambda})
  + \| (1+ |\nabla_h|) \o \|_{\infty, \lambda, 0}
  \bigg)
  \\
  & \ \ + \sum_{|\beta | \leq 1} \| D^\beta (1+ |\nabla_h| ) \o_3 \|_{\infty, \lambda, 0}
  \| (1+ |\nabla_h|)^2 \o \|_{1,\lambda}. 
\end{split}
\Ee
 
 For $[[ \ \cdot \ ]]_{*,\lambda}$ to be either $[[ \ \cdot \ ]]_{\infty, \lambda, \kappa }$ or $[[ \ \cdot \ ]]_{\infty, \lambda, \kappa t}$,
\Be\label{est:N_infty}
[[N]]_{*,\lambda}
\lesssim \| (1+ |\nabla_h|^2) \o \|_{1, \lambda}
[[\o]]_{*,\lambda}
+
 \| (1+ |\nabla_h| )\o \|_{1, \lambda}
 [[
 D \o 
 ]]_{*,\lambda},
\Ee
\Be\label{est:DN_infty}
\begin{split}
 \sum_{|\beta|=1}
[[
D^\beta N
]]_{*,\lambda}
&\lesssim \sum_{|\beta|=1}  \| (1+ |\nabla_h|^{|\beta_h|+2}) \o \|_{1, \lambda}
[[ \o ]]_{*,\lambda}\\
& \ \ + \sum_{|\beta|=1} [[D^\beta \o ]]_{*,\lambda} (\| (1+ |\nabla_h|^2) \o \|_{1, \lambda} + \beta_3[[D_3^{\beta_3} \o ]]_{*,\lambda}) \\
& \ \  
 + \sum_{|\beta|=2} [[ D^\beta \o ]]_{*,\lambda} \| (1+ |\nabla_h| )\o\|_{1,\lambda} .
\end{split}\Ee
\end{lemma}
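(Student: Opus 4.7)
The plan is to decompose $N = N^{(1)} + N^{(2)}$ with $N^{(1)} = -u \cdot \nabla \o$ and $N^{(2)} = \o \cdot \nabla u$, and then for each of the four claimed estimates distribute the requested derivatives ($(1+|\nabla_h|)$, $D^\beta$, etc.) onto the two factors via Leibniz, and apply the appropriate bilinear bound from Lemma \ref{lem_bilinear} to each resulting product, converting any velocity factor to a vorticity factor via the elliptic estimates of Lemma \ref{lem_elliptic}. The incompressibility of $u$ lets us commute $u \cdot \nabla$ with constant-coefficient differential operators without extra terms, and commuting with the conormal derivative $D_3 = \zeta(x_3)\p_3$ only produces a harmless factor of $\p_3 \zeta \in L^\infty$ (bounded by $1$).

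First, for \eqref{est:N_1} I would apply Leibniz to write $(1+|\nabla_h|)(u\cdot \nabla \o)$ as a sum of terms of the form $u \cdot \nabla \tilde\o$ with $\tilde\o \in \{\o, \nabla_h \o\}$, plus $\nabla_h u \cdot \nabla \o$ whose velocity derivative is controlled by $\|(1+|\nabla_h|)\o\|_{1,\lambda}$ through the second line of \eqref{est:elliptic}. The first line of \eqref{nonlinear1} then produces the stated right-hand side. For $N^{(2)} = \o \cdot \nabla u$, I split into $\o \cdot \nabla u_h$ and $\o\cdot\nabla u_3$ and apply the second/third lines of \eqref{nonlinear1}, again after distributing $(1+|\nabla_h|)$. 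Estimate \eqref{est:DN_1} follows the same pattern with one further derivative $D^\beta$, $|\beta|=1$; the only new point is that when $D_3$ lands on the $\zeta\p_3 \o$ that already appears in the norm $\vertiii{\cdot}_1$, it produces $D_3^2 \o$, which is still in the norm thanks to the two-derivative term in \eqref{norm_L1T}.

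For the $L^\infty$-type estimates \eqref{est:N_infty} and \eqref{est:DN_infty}, the key is that $[[\,\cdot\,]]_{*,\lambda}$ assigns the boundary-layer weight to horizontal components but uses $\|\cdot\|_{\infty,\lambda,0}$ (weight-free) for the vertical component. Thus I would split $N = (N_h, N_3)$ and treat each separately: $N_h$ is estimated by the two bounds in \eqref{nonlinear2} (with $\tilde\o = \o$), while $N_3 = -u \cdot \nabla \o_3 + \o \cdot \nabla u_3$ is estimated by the two bounds in \eqref{nonlinear2_3}. The $L^1$-based norms on the vorticity appearing on the right of \eqref{nonlinear2}--\eqref{nonlinear2_3} are then absorbed into $\vertiii{\o}_1$, while the $L^\infty$-based factors go into $\vertiii{\o}_{\infty,\kappa}$ or $\vertiii{\o}_{\infty,\kappa t}$. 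For \eqref{est:DN_infty} I apply an additional $D^\beta$ and distribute by Leibniz; when $\beta_h \neq 0$ the derivative passes through to the horizontal derivative factor, while $\beta_3 = 1$ requires using that $\zeta\p_3$ acting on $\o_h$ still lies in $[[\cdot]]_{*,\lambda}$ by \eqref{norm_BLT}.

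The main obstacle will be the careful bookkeeping in \eqref{est:DN_infty} of which component ($\o_h$ vs.\ $\o_3$, and similarly for $u$) appears in which norm. Specifically, in the product $\o \cdot \nabla u$ one must avoid an $[[\o_3]]_{*,\lambda}$-type norm whenever $\o_3$ is the factor being estimated, since $\o_3$ only enjoys the weight-free bound $\|\o_3\|_{\infty,\lambda,0}$; this forces us to place the derivative on $u$ rather than on $\o_3$ in the ambiguous terms, or equivalently to use the second bound of \eqref{nonlinear2_3} which is precisely designed for this. Likewise, the factor $\zeta^{-1} u_3$ in the last line of \eqref{est:elliptic} must be used (rather than $\p_3 u_3$) whenever a $\zeta \p_3$ lands on $\tilde u_3$ in $\o \cdot \nabla \tilde u$, so that the resulting $\zeta^{-1} \cdot \zeta = 1$ cancellation places the estimate in $\|\cdot\|_{\infty,\lambda}$. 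Once this case analysis is done the desired bounds follow directly by collecting terms.
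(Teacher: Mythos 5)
Your proposal takes essentially the same route as the reference \cite{FW} that the paper cites: decompose $N=-u\cdot\nabla\o+\o\cdot\nabla u$, distribute derivatives by Leibniz, apply the bilinear estimates of Lemma \ref{lem_bilinear} to each resulting product, and convert velocity factors into vorticity norms through the elliptic estimates of Lemma \ref{lem_elliptic}. The case analysis you identify as the main bookkeeping hazard (placing $\o_3$ only in the weight-free norm $\|\cdot\|_{\infty,\lambda,0}$, and using $\zeta^{-1}u_3$ together with a $\zeta\p_3$ falling on $u_3$ to produce a bounded factor) is exactly the right concern and is what the split bounds \eqref{nonlinear2}--\eqref{nonlinear2_3} are engineered for.

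One sentence in your setup is wrong, though it does not damage your actual argument. You write that incompressibility of $u$ lets you commute $u\cdot\nabla$ with constant-coefficient differential operators without extra terms. That is false: $\p_j(u\cdot\nabla f)=\p_j u\cdot\nabla f + u\cdot\nabla\p_j f$ regardless of $\nabla\cdot u=0$, and the divergence-free condition plays no role in this commutation. You then immediately go on (correctly) to keep and estimate the Leibniz term $\nabla_h u\cdot\nabla\o$, so the plan itself is consistent; just delete or fix the incompressibility remark. Likewise, be slightly more careful about the $[D_3,\p_3]=-\zeta'\p_3$ commutator when $D_3$ acts on $u\cdot\nabla\o$: the harmful-looking $\zeta'\p_3\o=\zeta^{-1}\zeta'\,D_3\o$ only pairs with $u_3$, and the $\zeta^{-1}$ is absorbed by $\zeta^{-1}u_3$ from the last line of \eqref{est:elliptic}, leaving the bounded weight $\zeta\zeta'=(1+z)^{-2}\zeta\le 1$; your ``$\p_3\zeta\in L^\infty$'' observation alone is not quite the full reason the term is harmless.
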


The proof relies on Lemma \ref{lem_bilinear}. We refer to Lemma 4.2 and Lemma 4.5 in \cite{FW} for the detailed proof.

\subsection{Green's function and integral representation for the Vorticity formulation}

By taking the Fourier transform of \eqref{NS}-\eqref{NSB} in $x_h \in \mathbb{T}^2$, we obtain 
 \begin{align}
\p_t \o_\xi - \kappa \eta_0 \Delta_\xi \o_\xi &= N_\xi  \quad \text{in }\mathbb R_+, \label{NS_f} \\
\kappa \eta_0 (\p_{x_3} +|\xi|)\o_{\xi ,h}  &= B_\xi, \ \ \o_{\xi,3} =0 \quad \text{on } x_3=0 ,\label{NSB_f}
\end{align} 
with $\o_\xi|_{t=0} = {\o_0}_\xi$ for $\xi\in {\mathbb Z^2}$. Here 
\Be 
\Delta_\xi = - |\xi|^2 + \p_{x_3}^2,
\Ee
and
\Be
N_\xi = N_\xi(t,x_3):=  \left( -u \cdot \nabla \o + \o \cdot \nabla u  \right)_\xi (t,x_3), \quad B_\xi=B_\xi(t):=  (\p_{x_3} (-\Delta_\xi)^{-1} N_{\xi,h} (t))|_{x_3=0}.
\Ee
Here $(-\Delta_\xi)^{-1}$ denotes the inverse of $-\Delta_\xi$ with the zero Dirichlet boundary condition at $x_3=0$.

We give the integral representation and present key estimates on Green's function for the Stokes problem. As shown in \cite{NN2018,FW}, letting $G_{\xi}(t,x_3,y)$ be the Green's function for \eqref{NS_f}-\eqref{NSB_f}, the solution can be represented by the integral formula via Duhamel's principle: 
\Be
 \begin{split}\label{o_xi}
 \o_\xi (t,x_3) =& {\int^\infty_0 G_{\xi } (t,x_3, y) \o_{0\xi} (y) \dd y} 
 + {\int^t_0\int^\infty_0 G_{\xi } (t-s, x_3, y) N_\xi (s,y ) \dd y \dd s}\\
 &
- {\int^t_0  G_{\xi } (t-s, x_3, 0) (B_\xi (s) ,0)\dd s} ,
 \end{split}
 \Ee
 where \Be\label{G}
 G_{\xi }  = \begin{bmatrix}
 G_{\xi h} & 0 & 0\\
 0 &  G_{\xi h} & 0 \\
 0& 0&  G_{\xi 3}
 \end{bmatrix},
 \Ee
 with $G_{\xi h}$ of (\ref{G_xi}) and $G_{\xi  3}$ of (\ref{G_xi2}): for $G_{\xi *}$ can be either $G_{\xi h}$ or $G_{\xi  3}$
\begin{align}
\p_ t G_{\xi *} (t, x_3, y) - \kappa\eta_0 \Delta_\xi G_{\xi *} (t, x_3, y) =0, \ \ x_3>0,\label{eqtn:G_xi2}\\
\kappa\eta_0 (\p_{x_3} + |\xi|) G_{\xi h} (t, x_3, y)=0, \ \ x_3=0, \label{bdry:G_xi2}\\
 G_{\xi 3} (t, x_3, y)=0, \ \ x_3=0. \label{bdry:G_xi3}
\end{align}


The following estimates and properties for $G_{\xi}$ will be useful to show the propagation of analytic norms of $\o$, $\p_t\o$ and $\p_t^2\o$.  

 \begin{lemma}[\cite{NN2018,FW}] \label{lem_G}
 \begin{enumerate}
 
 \item (Bounds on $G_{\xi h}$)
The Green's function $G_{\xi h}$ for the Stokes problem (\ref{eqtn:G_xi2}) and (\ref{bdry:G_xi2}) is given by 
  \Be\label{G_xi}
 G_{\xi h} = \tilde{H}_\xi + R_\xi,
 \Ee
 where $\tilde{H}_\xi$ is the one dimensional Heat kernel in the half-space with  the homogeneous Neumann boundary condition which takes the form of 
\Be
 \tilde{H}_\xi (t,x_3,y)= H_\xi(t,x_3-y) + H_\xi(t,x_3+y) = \frac{1}{\sqrt{\kappa\eta_0 t}} \bigg(
 e^{- \frac{|x_3-y|^2}{4 \kappa \eta_0 t}} +  e^{- \frac{|x_3+y|^2}{4 \kappa\eta_0 t}}
 \bigg) e^{- \kappa\eta_0 |\xi|^2 t},
 \label{H_xi}
 \Ee
and the residual kernel $R_\xi$ due to the boundary condition satisfies 
 \Be
 |\p_{x_3}^k R_\xi(t,x_3, y)| \lesssim b^{k+1} e^{- \theta_0 b (x_3 + y)} + 
 \frac{1}{(\kappa \eta_0 t)^{(k+1)/2}} e^{- \theta_0 \frac{|x_3+y|^2}{\kappa\eta_0 t}} e^{- \frac{\kappa\eta_0 |\xi|^2 t}{8}} 
 ,\label{R_xi}
\Ee
 with $b= |\xi| + \frac{1}{\sqrt{\kappa\eta_0}}$  and $R_\xi (t,x_3,y) = R_\xi (t, x_3 + y)$. 
 
  \item  (Formula of $G_{\xi 3}$) The Green's function $G_{\xi 3}$ for the Stokes problem (\ref{eqtn:G_xi2}) and (\ref{bdry:G_xi3}) is given by one dimensional Heat kernel in the half-space with the homogeneous Dirichlet boundary condition as   \Be
  \label{G_xi2}
G_{\xi 3} (t,x_3,y) = H_\xi(t,x_3-y) -H_\xi(t,x_3+y) = \frac{1}{\sqrt{\kappa\eta_0 t}} \bigg(
 e^{- \frac{|x_3-y|^2}{4 \kappa \eta_0 t}} -  e^{- \frac{|x_3+y|^2}{4 \kappa\eta_0 t}}
 \bigg) e^{- \kappa\eta_0 |\xi|^2 t}.
  \Ee

 \item (Complex extension) The Green's function $G_{\xi}$ has a natural extension to the complex domain $\mathcal{H}_\lambda$ for small $\lambda>0$ with similar bounds in terms of $\text{Re}\, y$ and $\text{Re}\, z$ (cf. (3.16) in \cite{NN2018}). 
 The solution $\o_\xi$ to \eqref{NS_f}-\eqref{NSB_f} in $\mathcal{H}_\lambda$ has a similar representation: for any $z\in\mathcal{H}_\lambda$, let $\sigma$ be the positive constant so that $z\in \p \mathcal{H}_\lambda$, then $\o_\xi$ satisfies
 \Be
 \begin{split}\notag
 \o_\xi (t,z) =& {\int_{\p \mathcal{H}_\sigma} G_{\xi } (t,z, y) \o_{0\xi} (y) \dd y} 
 + {\int^t_0\int_{\p \mathcal{H}_\sigma} G_{\xi } (t-s, z, y) N_\xi (s,y ) \dd y \dd s}\\
 &
- {\int^t_0  G_{\xi } (t-s, z, 0) (B_\xi (s), 0) \dd s} .
 \end{split}
 \Ee
 \end{enumerate}
 \end{lemma}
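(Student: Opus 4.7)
\medskip

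The plan is to construct $G_{\xi h}$ and $G_{\xi 3}$ by the reflection/image method applied to the one-dimensional heat equation (with zeroth order term $\kappa\eta_0|\xi|^2$ absorbed into the factor $e^{-\kappa\eta_0 |\xi|^2 t}$), and then use analyticity of the Gaussian to extend to the complex half-strip $\mathcal{H}_\lambda$. Part (2) is the easiest: since the Dirichlet kernel $H_\xi(t,x_3-y) - H_\xi(t,x_3+y)$ manifestly vanishes at $x_3=0$ and solves \eqref{eqtn:G_xi2}, it coincides with $G_{\xi 3}$ by uniqueness, giving \eqref{G_xi2}. I would write this first and move on.

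For part (1), I would write $G_{\xi h} = \tilde H_\xi + R_\xi$ where $\tilde H_\xi$ is the Neumann heat kernel \eqref{H_xi}. Since $\tilde H_\xi$ satisfies \eqref{eqtn:G_xi2} and $\p_{x_3}\tilde H_\xi |_{x_3=0} = 0$, the residual $R_\xi$ must solve \eqref{eqtn:G_xi2} together with the inhomogeneous boundary condition $(\p_{x_3} + |\xi|)R_\xi|_{x_3=0} = -|\xi|\tilde H_\xi(t,0,y)$. Because this boundary data depends on $x_3+y$ (and the equation is translation-invariant once the $|\xi|^2$-decay is factored out), $R_\xi$ depends only on $t$, $z := x_3+y$, so I can reduce to a one-dimensional problem for $r(t,z)= R_\xi(t,z) e^{\kappa\eta_0 |\xi|^2 t}$ on $\{z\ge 0\}$ with Robin-type data on $z=0$. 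I would solve this by Laplace transform in $t$ (or equivalently, by the ansatz $r(t,z)=\int_0^t h(t-s)\, b\, e^{-\theta_0 b(z+\cdots)}\,ds$ plus a Gaussian correction). The decomposition $b=|\xi|+1/\sqrt{\kappa\eta_0}$ arises naturally because the Robin condition forces a scale comparison between the tangential wavenumber $|\xi|$ and the diffusive scale $1/\sqrt{\kappa\eta_0 t}$: for small $t$ the Gaussian dominates (second term in \eqref{R_xi}) while for $t\gtrsim 1/(\kappa\eta_0|\xi|^2)$ the exponential boundary layer $e^{-\theta_0 b z}$ takes over (first term). Differentiating $k$ times in $x_3$ yields the prefactors $b^{k+1}$ and $(\kappa\eta_0 t)^{-(k+1)/2}$ respectively; the factors $\theta_0$ and the $e^{-\kappa\eta_0|\xi|^2 t/8}$ on the second piece come from splitting the Gaussian exponent to absorb derivatives.

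For part (3), I would observe that $H_\xi(t,z) = (\kappa\eta_0 t)^{-1/2}\exp(-z^2/(4\kappa\eta_0 t))\,e^{-\kappa\eta_0 |\xi|^2 t}$ is an entire function of $z$, so $\tilde H_\xi$ extends holomorphically to any $z\in\mathcal{H}_\lambda$, and the estimate $|e^{-z^2/(4\kappa\eta_0 t)}|\le e^{-(\mathrm{Re}\, z)^2/(8\kappa\eta_0 t)}$ on $\mathcal H_\lambda$ (for $\lambda$ small) reproduces the real-variable Gaussian bound with $z$ replaced by $\mathrm{Re}\, z$. The residual $R_\xi$ is likewise an integral of explicit analytic functions in $z$, so the bound \eqref{R_xi} carries over with $x_3, y$ replaced by $\mathrm{Re}\, z, \mathrm{Re}\, y$. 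The integral representation of $\o_\xi$ in $\mathcal{H}_\lambda$ is then obtained by Cauchy's theorem: for fixed $t$ and $z\in\partial\mathcal{H}_\sigma$, shift the contour of integration in $y$ from $\R_+$ to $\partial\mathcal{H}_\sigma$, using holomorphy of $G_\xi(t,z,\cdot)$, rapid decay at infinity (from the Gaussian factor), and the boundary conditions at $y=0$ to kill the arc contributions.

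The main obstacle is the $R_\xi$ bound \eqref{R_xi}: one must carry out the Laplace/Fourier inversion carefully and split the resulting complex integral into the two regimes governed by $b$, matching the decay $e^{-\theta_0 b(x_3+y)}$ in the boundary-layer zone against the diffusive Gaussian in the short-time zone; this is exactly the technical step where the mixed scale $b=|\xi|+1/\sqrt{\kappa\eta_0}$ forces a non-trivial interpolation, and where all constants $\theta_0$ must be fixed uniformly in $\xi, \kappa$. Everything else — including the complex extension — follows from standard heat-kernel analyticity once this point-wise bound is in hand.
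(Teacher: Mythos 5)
Your sketch is correct and follows essentially the same route as the cited proof (Proposition 3.3 and Section 3.3 of \cite{NN2018}), which the paper invokes without reproducing: image/reflection method for $G_{\xi 3}$, Neumann heat kernel plus a Robin residual $R_\xi$ for $G_{\xi h}$, Laplace transform to solve for $R_\xi$ and extract the boundary-layer scale $b=|\xi|+1/\sqrt{\kappa\eta_0}$, and analytic continuation plus contour deformation for the complex extension. One small imprecision worth flagging: the residual's boundary data $-|\xi|\tilde H_\xi(t,0,y)=-2|\xi|H_\xi(t,y)$ lives at $x_3=0$ and so depends only on $y$, not on $x_3+y$; the fact that $R_\xi(t,x_3,y)=R_\xi(t,x_3+y)$ is not read off from the boundary data but emerges after the Laplace transform, since the decaying solution is $\widehat R\sim e^{-\mu x_3}$ and the transformed boundary forcing carries $e^{-\mu y}$, multiplying to $e^{-\mu(x_3+y)}$ — exactly the computation you then carry out.
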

 
 The proof of Lemma \ref{lem_G} can be found in Proposition 3.3 and Section 3.3 of \cite{NN2018}.  The next lemma concerns the convolution estimates.

\begin{lemma} \label{lem_Gc} Let $T>0$ be given. Recall the norms defined in Section \ref{sec:MR}. For any $0\leq s < t\leq T$ and $k\geq 0$, there exists a constant $C_T>0$ so that the following estimates hold: for $G_{\xi *}$ can be either $G_{\xi h}$ or $G_{\xi 3}$
\begin{enumerate}
\item ($\mathcal{L}^1_\lambda$ estimates)  
\begin{align}
\sum_{j=0}^k \left\| (\zeta(z)\p_{z})^j \int_0^\infty G_{\xi*}(t, z, y ) g_\xi (y) \dd y \right\|_{\mathcal{L}^1_\lambda} \leq C_T \sum_{j=0}^k \left\|   (\zeta(z)\p_{z})^j g_\xi \right\|_{\mathcal{L}^1_\lambda},\label{est:L1G1} \\
\sum_{j=0}^k \left\|  (\zeta(z)\p_{z})^j \int_0^\infty G_{\xi *}(t-s,z, y ) g_\xi (y) \dd y \right\|_{\mathcal{L}^1_\lambda} \leq C_T \sum_{j=0}^k \left\|  (\zeta(z)\p_{z})^j g_\xi \right\|_{\mathcal{L}^1_\lambda}.\label{est:L1G2} 
\end{align}

\item ($\mathcal{L}^\infty_{\lambda,\kappa t}$ estimates)
\begin{align}
\sum_{j=0}^k \left\| (\zeta(z)\p_{z})^j \int_0^\infty G_{\xi *} (t, z, y ) g_\xi (y) \dd y \right\|_{\mathcal{L}^\infty_{\lambda,\kappa t}} \leq C_T \sum_{j=0}^k \left\|   (\zeta(z)\p_{z})^j g_\xi \right\|_{\mathcal{L}^\infty_{\lambda,\kappa }}, \label{est:Linfty_tG1}  \\
\sum_{j=0}^k \left\|  (\zeta(z)\p_{z})^j \int_0^\infty G_{\xi *}(t-s, z, y ) g_\xi (y) \dd y \right\|_{\mathcal{L}^\infty_{\lambda,\kappa t}} \leq C_T \sum_{j=0}^k \sqrt{\frac{t}{s}}  \left\|  (\zeta(z)\p_{z})^j g_\xi \right\|_{\mathcal{L}^\infty_{\lambda,\kappa s}}.  \label{est:Linfty_tG2}
\end{align}

\item ($\mathcal{L}^\infty_{\lambda,\kappa}$ estimates) For either $\kappa=0$ or $\kappa>0$
\begin{align}
\sum_{j=0}^k \left\| (\zeta(z)\p_{z})^j \int_0^\infty G_{\xi *}(t,z, y ) g_\xi (y) \dd y \right\|_{\mathcal{L}^\infty_{\lambda,\kappa}} \leq C_T \sum_{j=0}^k \left\|   (\zeta(z)\p_{z})^j g_\xi \right\|_{\mathcal{L}^\infty_{\lambda,\kappa}},  \label{est:LinftyG1} \\
\sum_{j=0}^k \left\|  (\zeta(z)\p_{z})^j \int_0^\infty G_{\xi *}(t-s, z, y ) g_\xi (y) \dd y \right\|_{\mathcal{L}^\infty_{\lambda,\kappa}} \leq C_T \sum_{j=0}^k \left\|  (\zeta(z)\p_{z})^j g_\xi \right\|_{\mathcal{L}^\infty_{\lambda,\kappa}}.  \label{est:LinftyG2}
\end{align}

\end{enumerate}
\end{lemma}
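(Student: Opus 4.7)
The plan is to reduce each bound to pointwise kernel estimates coming from Lemma \ref{lem_G} and then integrate them against the weighted $\mathcal{L}^p_\lambda$ norms. First I would treat the base case $k=0$. Using the decomposition $G_{\xi h} = \tilde H_\xi + R_\xi$ and the explicit Gaussian form (\ref{H_xi}) of $\tilde H_\xi$ together with the pointwise bound (\ref{R_xi}) on $R_\xi$, estimates (\ref{est:L1G1})--(\ref{est:L1G2}) follow from $\int_{\partial\mathcal{H}_\sigma} |G_{\xi *}(t,z,y)| \, |dz| \lesssim 1$ uniformly in $y,t \in [0,T]$, while (\ref{est:LinftyG1})--(\ref{est:LinftyG2}) follow from the dual bound $\int_{\partial\mathcal{H}_\sigma} |G_{\xi *}(t,z,y)| \, |dy| \lesssim 1$ uniformly in $z,t$. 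For the residual $R_\xi$ the exponential decay at scale $b^{-1} = (|\xi|+\kappa^{-1/2})^{-1}$ is in fact sharper than what one needs, since $\int e^{-\theta_0 b(z+y)} \, dy \lesssim b^{-1}$ provides the required $L^1$ mass and $b\, e^{-\theta_0 b(z+y)}$ is an $L^1$ density in $z$. The complex extension of Lemma \ref{lem_G}(3) allows one to run these estimates on $\partial \mathcal{H}_\sigma$ by path deformation, so the analyticity is preserved.

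Next I would upgrade to conormal derivatives $(\zeta(z)\p_z)^j$. Direct differentiation of the Gaussian gives $\p_z^j \tilde H_\xi$ with a prefactor $(\kappa t)^{-j/2}$ that is not uniformly integrable for small $t$; this forces one to use the parity identity $\partial_z \tilde H_\xi(t,z,y) = \partial_y \tilde H_\xi(t,z,y) + 2\p_y H_\xi(t, z+y)$ (and its analogue for $G_{\xi 3}$) to move derivatives from $z$ to $y$. The commutator $\zeta(z) - \zeta(y) = (z-y)/[(1+z)(1+y)]$ then pairs with $|z-y|e^{-|z-y|^2/(4\kappa\eta_0 t)}$, producing a kernel of the same type and allowing an induction on $j$. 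After all integration by parts the derivatives land on $g_\xi$ as $(\zeta \p_y)^j g_\xi$, explaining the sum over $j$ on the right-hand side. The residual $R_\xi$ is handled directly by differentiating under the integral, using (\ref{R_xi}) with $k=j$ and the observation that $\zeta(z)(|\xi|+\kappa^{-1/2})^{j+1} e^{-\theta_0 b z} \lesssim b^j \bigl(1+\phi_\kappa(z)\bigr)$, so the boundary-layer weight absorbs the loss.

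For the weighted estimates (\ref{est:Linfty_tG1})--(\ref{est:Linfty_tG2}) the mechanism is that the heat kernel of width $\sqrt{\kappa t}$ smears a $\sqrt\kappa$-concentration at the input into a $\sqrt{\kappa t}$-concentration at the output. Concretely, one splits $\R_+ = [0,\sqrt{\kappa t}] \cup [\sqrt{\kappa t}, \infty)$ in $y$: on the near-boundary piece the weight factor $1+\phi_\kappa(y)$ has mass $\lesssim \sqrt{t}$, and after integrating the Gaussian one recovers $1+\phi_{\kappa t}(z)$ for $z$ in a $\sqrt{\kappa t}$-neighbourhood of the boundary; on the far piece both weights are $O(1)$ and the bound is trivial. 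For (\ref{est:Linfty_tG2}), when the input weight is $\phi_{\kappa s}$ of width $\sqrt{\kappa s}$ and the propagator runs for time $t-s$, a similar localization argument shows that the mismatch between the input width $\sqrt{\kappa s}$ and the output width $\sqrt{\kappa t}$ costs exactly a factor $\sqrt{t/s}$, matching the statement. Again $R_\xi$ is handled separately using (\ref{R_xi}), and the factor $e^{-\kappa\eta_0 |\xi|^2 t/8}$ together with $\sum_\xi e^{\lambda|\xi|}$ provides convergence uniform in small $\lambda$.

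The main obstacle will be the precise constant-tracking in (\ref{est:Linfty_tG2}), particularly near $s=0$ where $\phi_{\kappa s}$ concentrates into a delta-like spike at $y=0$. One must verify that the boundary-layer weight ratio $(1+\phi_{\kappa t}(z))/(1+\phi_{\kappa s}(y))$ interacts with the kernel $G_{\xi*}(t-s,z,y)$ so as to produce exactly the $\sqrt{t/s}$ growth (and no worse), which requires a careful change of variables $y = \sqrt{\kappa s}\, \tilde y$ localized in the boundary layer and an explicit computation of the resulting Gaussian integral. The second subtle point is that for the residual $R_\xi$ the weight $1+\phi_\kappa(z)$ must be controlled by the pointwise bound $b^{k+1} e^{-\theta_0 b z}$; this is exactly why the factor $\zeta(z)$ appears on the left-hand side rather than bare $\p_z$, since $\zeta(z)$ supplies the missing decay as $z \downarrow 0$ that makes $\zeta(z)\, b\, e^{-\theta_0 b z} \lesssim 1 + \phi_\kappa(z)$ uniformly in $\kappa$.
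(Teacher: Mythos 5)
Your proposal follows the same underlying strategy as the paper: decompose $G_{\xi h}=\tilde H_\xi+R_\xi$, run a Schur/Young test for $j=0$, move conormal derivatives from $z$ to $y$ via the heat-kernel parity $\p_z H=-\p_y H$ for $j\ge 1$, and absorb the extra factor of $b$ that each differentiation of $R_\xi$ produces with one power of $\zeta(z)$. The paper simply defers parts (1)--(2) to Propositions 3.7--3.8 of \cite{NN2018} and only writes out part (3); you instead sketch a self-contained treatment, which is fine.

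Two places where your bookkeeping drifts from what actually works. First, the weighted $\mathcal{L}^\infty_{\lambda,\kappa}$ estimates for $\kappa>0$ do not follow from the bare mass bound $\int |G_{\xi*}|\,|dy|\lesssim 1$: one must carry the ratio $(1+\phi_\kappa(y))/(1+\phi_\kappa(z))$ through the convolution, and the paper's mechanism for controlling it is a split of the $y$-integral at $z/2$ (not at $\sqrt{\kappa t}$), using monotonicity of $\phi_\kappa$ on $[z/2,\infty)$ and the Gaussian suppression $e^{-c|z|^2/(\kappa(t-s))}$ on $[0,z/2]$, followed by the uniform bound of $\sup_z\bigl[(1+\phi_\kappa(0))(1+\phi_\kappa(z))^{-1}e^{-c|z|^2/(\kappa(t-s))}\bigr]$. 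Your $\sqrt{\kappa t}$-split is in the spirit of the $\phi_{\kappa t}$ analysis but is not the one used for $\phi_\kappa$, and the claim that the residual term requires $\zeta(z)$ to control the weight is not right for $j=0$ --- there $R_\xi$ is handled without any $\zeta$ factor; $\zeta(z)$ only enters to absorb the extra $b$ per derivative. Second, a small sign error: from $\tilde H_\xi(t,z,y)=H_\xi(t,z-y)+H_\xi(t,z+y)$ one gets $\p_z\tilde H_\xi=-\p_y\tilde H_\xi+2H'_\xi(t,z+y)$, not $+\p_y\tilde H_\xi$. These are repairable; the overall route is the paper's.
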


\begin{proof} We only give a proof for $G_{\xi h}$ since $G_{\xi 3}$ can be handled easier than the other. The proof of (1) and (2) can be found in Propositions 3.7 and 3.8 of \cite{NN2018}. Here we present the detail for (3), the second inequality. We consider real values $y,z\in \mathbb R_+$ only as the complex extension follows similarly (cf. {(3)} in Lemma \ref{lem_G}). Note that in view of \eqref{G_xi}, \eqref{H_xi} and \eqref{R_xi}, it suffices to show
\begin{align}
\sum_{j=0}^k \left\| (\zeta(z)\p_{z})^j \int_0^\infty R (t-s, z, y ) g_\xi (y) \dd y \right\|_{\mathcal{L}^\infty_{\lambda,\kappa}} \leq C_T \sum_{j=0}^k \left\|   (\zeta(z)\p_{z})^j g_\xi \right\|_{\mathcal{L}^\infty_{\lambda,\kappa}} ,\label{R_c}\\
\sum_{j=0}^k \left\| (\zeta(z)\p_{z})^j \int_0^\infty H (t-s, z, y ) g_\xi (y) \dd y \right\|_{\mathcal{L}^\infty_{\lambda,\kappa}} \leq C_T \sum_{j=0}^k \left\|   (\zeta(z)\p_{z})^j g_\xi \right\|_{\mathcal{L}^\infty_{\lambda,\kappa}},\label{H_c}
\end{align}
where $R(t,z,y)= b e^{- b (y+z)}$ and $H(t,z,y) = \frac{1}{\sqrt{\kappa t}} e^{-\frac{|y-z|^2}{M\kappa t}}$ for some $M>0$. 
We start with \eqref{R_c}.  Let $k=0$ first. First note that 
\begin{align*}
\left| \int_0^\infty R (t-s, z, y ) g_\xi (y) \dd y \right|&= \left| e^{-bz}\int_0^\infty b e^{-(\bar\alpha+b)y} (1+\phi_\kappa(y)) \frac{e^{\bar\alpha y}}{1+\phi_\kappa(y)}  g_\xi (y) \dd y\right| \\
&\leq  e^{-bz} (1+\phi_\kappa(0)) \left\|   g_\xi \right\|_{\mathcal{L}^\infty_{\lambda,\kappa}}   \int_0^\infty b e^{-(\bar\alpha+b)y}  \dd y ,
\end{align*}
since $\phi_\kappa$ is a decreasing function. 
The last integral is uniformly finite for all $|\xi|$ and $\kappa$.  Hence,
\begin{align*}
\left\| \int_0^\infty R (t-s, z, y ) g_\xi (y) \dd y \right\|_{\mathcal{L}^\infty_{\lambda,\kappa}} 
&\lesssim \sup_{z}\left( \frac{1+\phi_\kappa(0)}{1+\phi_\kappa(z)}e^{(\bar\alpha-b) z}  \right) \left\|   g_\xi \right\|_{\mathcal{L}^\infty_{\lambda,\kappa}} .
\end{align*}
For $\bar\alpha>0$, if $\kappa< \frac{1}{4 \eta_0 \bar\alpha^2}$ then 
\begin{align}\label{bound}
\sup_{z}\left( \frac{1+\phi_\kappa(0)}{1+\phi_\kappa(z)}e^{(\bar\alpha-b) z}  \right) \lesssim \frac{ \sqrt{\kappa}+1 }{ \inf_{z}\left[ (\sqrt{\kappa} + \frac{1}{1+|\frac{z}{\sqrt\kappa}|^p} )e^{\frac{z}{2\sqrt{ \eta_0\kappa}}} \right]} <  \min \Big\{1,\frac{\sqrt{\kappa} +1}{\sqrt{\kappa} +
 \frac{1}{ (2 \sqrt{\eta_0})^pp!}
 }\Big\},
\end{align}
where the last bound follows from the fact that $$
 \frac{1}{1+|\frac{z}{\sqrt{\kappa}}|^p} 
 e^{\frac{z}{2\sqrt{\eta_0\kappa}}} \geq  \frac{1}{1+|\frac{z}{\sqrt{\kappa}}|^p} 
\{1+ \frac{1}{p!} | \frac{z}{2\sqrt{\eta_0\kappa}}|^p\} 
\geq \min \big\{1, \frac{1}{ (2 \sqrt{\eta_0})^pp!}\big\}
.$$ 
For $k\geq 1$, since $|\zeta(z)\p_z R |\lesssim be^{-\frac{b z}{2}}$, the derivative estimates follow analogously. Therefore, \eqref{R_c} holds true. 

We move onto \eqref{H_c}. Let $k=0$ first. Note that, for $0 \leq s < t \leq T$ and $\kappa \lesssim 1$ 
\Be
e^{-\frac{|y-z|^2}{2M\kappa(t-s)}} e^{-\bar\alpha y} = e^{-\frac{1}{2}|\frac{y-z}{\sqrt{M\kappa(t-s)}} + \bar\alpha \sqrt{M\kappa(t-s)} |^2}  e^{\frac{M}{2}\bar\alpha^2\kappa(t-s)} e^{-\bar\alpha z} \leq e^{\frac{M}{2}\bar\alpha^2\kappa(t-s)} e^{-\bar\alpha z} \lesssim e^{-\bar\alpha z},
\Ee
and thus
\begin{align*}
\left| \int_0^\infty H (t-s, z, y ) g_\xi (y) \dd y \right|&= \left| \int_0^\infty   \frac{1}{\sqrt{\kappa (t-s)}} e^{-\frac{|y-z|^2}{M\kappa (t-s)}} e^{-\bar\alpha y}(1+\phi_\kappa(y)) \frac{e^{\bar\alpha y}}{1+\phi_\kappa(y)}  g_\xi (y) \dd y\right| \\
&\lesssim  e^{-\bar\alpha z} \left\|   g_\xi \right\|_{\mathcal{L}^\infty_{\lambda,\kappa}}   \int_0^\infty  \frac{1}{\sqrt{\kappa (t-s)}} e^{-\frac{|y-z|^2}{2M\kappa (t-s)}} (1+\phi_\kappa(y))  \dd y. 
\end{align*}
For the last integral, we divide the integral into two: $\int_0^\infty = \int_0^{\frac{z}{2}} + \int_{\frac{z}{2}}^\infty $. For the latter, since $\phi_\kappa$ is decreasing and the kernel is in $L^1_y$, we deduce 
\begin{align*}
\int_{\frac{z}{2}}^\infty \frac{1}{\sqrt{\kappa (t-s)}} e^{-\frac{|y-z|^2}{2M\kappa (t-s)}} (1+\phi_\kappa(y))  \dd y \lesssim 
 1+\phi_\kappa({z}) .
 \end{align*}
For $ \int_0^{\frac{z}{2}} \dd y$, note $|y-z|\geq \frac{z}{2}$ and $1+\phi_\kappa(y) \leq 1+\phi_\kappa(0)$ for $y\in(0,\frac{z}{2})$. Hence
\begin{align*}
\int^{\frac{z}{2}}_0\frac{1}{\sqrt{\kappa (t-s)}} e^{-\frac{|y-z|^2}{2M\kappa (t-s)}} (1+\phi_\kappa(y))  \dd y \lesssim
 e^{-\frac{|z|^2}{16M\kappa (t-s)}} (1+\phi_\kappa(0)) .
 \end{align*}
Then 
\begin{align*}
\left\| \int_0^\infty H (t-s, z, y ) g_\xi (y) \dd y \right\|_{\mathcal{L}^\infty_{\lambda,\kappa}} 
&\lesssim  \sup_{z}\left( \frac{1+\phi_\kappa(0)}{1+\phi_\kappa(z)}e^{-\frac{|z|^2}{16M\kappa (t-s)}}+1 \right) \left\|   g_\xi \right\|_{\mathcal{L}^\infty_{\lambda,\kappa}} .
\end{align*}
A similar argument as in \eqref{bound} shows that $\frac{1+\phi_\kappa(0)}{1+\phi_\kappa(z)}e^{-\frac{|z|^2}{16M\kappa (t-s)}}$ is uniformly finite in $\kappa$. This shows \eqref{H_c}  for $k=0$. 

For the derivative estimate, by splitting the integral into two parts and using $\p_z H (t,z,y)= - \p_y H(t,z,y)$, we rewrite 
\begin{align*}
 \int_0^\infty  \zeta(z)\p_z H (t-s, z, y ) g_\xi (y) \dd y& = \int_0^\frac{z}{2} \zeta(z)\p_z H (t-s, z, y ) g_\xi (y) \dd y - \zeta(z) H(t-s, z, \frac{z}{2}) g_\xi (\frac{z}{2}) \\
&+  \int_\frac{z}{2}^\infty \zeta(z) H (t-s, z, y ) \p_y g_\xi (y) \dd y .
\end{align*}
For the first integral, since  $ |y-z|\geq \frac{z}{2}$  for $y\in(0,\frac{z}{2})$, 
\begin{align*}
| \zeta(z)\p_z H (t-s, z, y ) | &\lesssim \frac{z}{1+z}\frac{1}{\kappa(t-s)} e^{-\frac{|y-z|^2}{2M\kappa (t-s)}}\lesssim |y-z| \frac{1}{\kappa(t-s)} e^{-\frac{|y-z|^2}{2M\kappa (t-s)}}\\
& \lesssim \frac{1}{\sqrt{\kappa(t-s)}} e^{-\frac{|y-z|^2}{4M\kappa (t-s)}}.
\end{align*}
Hence, by the same argument as in $k=0$ leads to the desired bound. For the second term, 
\begin{align*}
|\zeta(z) H(t-s, z, \frac{z}{2}) g_\xi (\frac{z}{2}) |& \lesssim  \frac{z}{\sqrt{\kappa(t-s)}} e^{-\frac{|z|^2}{4M\kappa (t-s)}} e^{-\bar\alpha\frac{z}{2}}(1+\phi_\kappa(\frac{z}{2}))  \left\|   g_\xi \right\|_{\mathcal{L}^\infty_{\lambda,\kappa}} \\
&\lesssim e^{-\frac{|z|^2}{8M\kappa (t-s)}} e^{-\bar\alpha\frac{z}{2}}(1+\phi_\kappa({z}))   \left\|   g_\xi \right\|_{\mathcal{L}^\infty_{\lambda,\kappa}} \\
&= e^{-\frac12| \frac{z}{2\sqrt{M\kappa(t-s)}} -\bar\alpha \sqrt{M\kappa(t-s)} |^2} e^{\frac{M}{2}\bar\alpha^2\kappa(t-s)} e^{-\bar\alpha z}(1+\phi_\kappa({z}))   \left\|   g_\xi \right\|_{\mathcal{L}^\infty_{\lambda,\kappa}} \\
&\lesssim e^{-\bar\alpha z}(1+\phi_\kappa({z}))   \left\|   g_\xi \right\|_{\mathcal{L}^\infty_{\lambda,\kappa}},
\end{align*}
which leads to the desired bound. For the last integral, note that $\zeta(z) \leq 2 \zeta(y)$ for $y\geq \frac{z}{2}$. Therefore the corresponding integral can be treated in the same way as in $k=0$ with $g_\xi(y)$ replaced by $\zeta(y)\p_yg_\xi(y)$. This shows \eqref{H_c} for $k=1$. Other $k\geq 2$ can be estimated analogously. 
\end{proof}

The next result concerns the estimates for the trace kernel. 

\begin{lemma}\label{lem_Gt} Let $a_\xi(s)= [\p_{z}(- \Delta_\xi)^{-1} g_\xi ] \, |_{z=0}$. Then for any $0\leq s<t\leq T$ and $k\geq 0$, we have the following
\begin{align}
\sum_{j=0}^k \left\|  (\zeta(z)\p_{z})^j  G_{\xi h }(t-s,z, 0 ) a_\xi (s) \right\|_{\mathcal{L}^1_\lambda} &\lesssim  \left\|  g_\xi \right\|_{\mathcal{L}^1_\lambda}, \label{trace1} \\
\sum_{j=0}^k \left\|  (\zeta(z)\p_{z})^j  G_{\xi  h}(t-s,z, 0 ) a_\xi (s)  \right\|_{\mathcal{L}^\infty_{\lambda,\kappa}}  &\lesssim \frac{1}{\sqrt{t-s}} \left\| g_\xi \right\|_{\mathcal{L}^1_\lambda}. 
\label{trace2}
\end{align}
\end{lemma}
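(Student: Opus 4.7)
\textbf{Proof plan for Lemma \ref{lem_Gt}.} The key structural observation is that $a_\xi(s)$ is a scalar (independent of $z$), so the $z$-norms completely factor: $\|(\zeta\p_z)^j G_{\xi h}(t-s,\cdot,0)\,a_\xi(s)\|_{*} = |a_\xi(s)|\,\|(\zeta\p_z)^j G_{\xi h}(t-s,\cdot,0)\|_{*}$. I would first handle $a_\xi(s)$ uniformly. Using the explicit Green's function formula for $(|\xi|^2-\p_z^2)$ in \eqref{phi_xi} together with $\phi_\xi(0)=0$, a direct differentiation yields $a_\xi(s)=\int_0^\infty e^{-|\xi|y} g_\xi(s,y)\,dy$ (up to sign), so
\[
|a_\xi(s)|\leq \int_0^\infty |g_\xi(s,y)|\,dy \lesssim \|g_\xi\|_{\mathcal{L}^1_\lambda}.
\]
The complex-domain version $z\in\p\mathcal{H}_\sigma$ follows by the standard contour deformation argument used throughout Section~\ref{sec:5}, since the kernel $e^{-|\xi|y}$ is holomorphic and decaying.

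The task is then reduced to estimating the scalar $z$-functions $(\zeta(z)\p_z)^j G_{\xi h}(t-s,z,0)$ in $\mathcal{L}^1_\lambda$ (for \eqref{trace1}) and $\mathcal{L}^\infty_{\lambda,\kappa}$ (for \eqref{trace2}), uniformly in $\xi$ and $s\in[0,T]$. For both, I would use the decomposition $G_{\xi h}=\tilde H_\xi + R_\xi$ from Lemma \ref{lem_G}, and treat the two pieces separately using \eqref{H_xi} and \eqref{R_xi}. The operator $(\zeta\p_z)^j$ is expanded via the Leibniz/chain rule: since $\zeta(z)=z/(1+z)$ has $\zeta(z)\lesssim\min(z,1)$ and $|\zeta^{(\ell)}(z)|\lesssim(1+z)^{-\ell-1}$, each application produces terms bounded by $\zeta(z)^{j}\p_z^j$ plus lower-order derivative contributions that are strictly better behaved.

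For \eqref{trace2}, the $\mathcal{L}^\infty_{\lambda,\kappa}$ bound follows most directly: the pointwise supremum of $H_\xi(t-s,z)$ is $(\kappa\eta_0(t-s))^{-1/2}$, producing exactly the $\frac{1}{\sqrt{t-s}}$ prefactor; each $\p_z$ trades against a factor $z/(\kappa(t-s))$, and one checks term by term that the weighted supremum $\sup_z \frac{e^{\bar\alpha\,\mathrm{Re}\,z}}{1+\phi_\kappa(z)}\,|(\zeta\p_z)^j \tilde H_\xi(t-s,z,0)|$ is $\lesssim (t-s)^{-1/2}$ (arguing as in the bound \eqref{bound} from the proof of Lemma \ref{lem_Gc}). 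The residual $R_\xi$ contribution is handled identically since $|\p_z^k R_\xi|$ satisfies the same pointwise bound structure.

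For \eqref{trace1}, the main obstacle is that $\int_0^\infty |\p_z H_\xi(t-s,z)|\,dz$ is of order $(\kappa(t-s))^{-1/2}$, which is not uniformly bounded. The role of the conormal factor $\zeta(z)$ is precisely to absorb this singularity. I would split $\int_0^\infty = \int_0^{\sqrt{\kappa(t-s)}} + \int_{\sqrt{\kappa(t-s)}}^\infty$: on the first interval $\zeta(z)\lesssim z$ provides the factor $z/\sqrt{\kappa(t-s)}$ needed to cancel the derivative singularity, while on the second interval $\zeta(z)\lesssim 1$ but the Gaussian decay of $H_\xi$ at the tail combined with the factor $\sqrt{\kappa(t-s)}/z\lesssim 1$ suffices. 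The residual part $R_\xi$ in \eqref{R_xi} contributes $\int \zeta(z) b^{k+1} e^{-\theta_0 b z}dz\lesssim 1$ (uniformly in $b=|\xi|+(\kappa\eta_0)^{-1/2}$, after a case split in $b\sqrt{\kappa(t-s)}$) and an analogous Gaussian term; the exponential weight $e^{-\kappa|\xi|^2(t-s)/8}$ compensates when $|\xi|$ is large. This case analysis, together with the complex contour argument, is where the bulk of the technical verification lies, but each step follows the template established in the proofs of \eqref{est:L1G1}, \eqref{est:L1G2} in Lemma \ref{lem_Gc}.
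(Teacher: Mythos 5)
Your high-level plan matches the paper's: extract $a_\xi(s)=\int_0^\infty e^{-|\xi|y}g_\xi(y)\,dy$ so $|a_\xi(s)|\lesssim\|g_\xi\|_{\mathcal{L}^1_\lambda}$, decompose $G_{\xi h}=\tilde H_\xi+R_\xi$, and estimate the scalar $z$-functions $(\zeta\p_z)^j G_{\xi h}(t-s,z,0)$ in $\mathcal{L}^1_\lambda$ and $\mathcal{L}^\infty_{\lambda,\kappa}$. The paper streamlines this by recording, as a single pointwise observation (cf.\ \eqref{est:Con_G}), that $|(\zeta\p_z)^j G_{\xi h}(t-s,z,0)|\lesssim b\,e^{-c_0 bz}+\frac{1}{\sqrt{\kappa(t-s)}}e^{-c_0 z^2/\kappa(t-s)}$ has the \emph{same} shape as the $j=0$ kernel, because $\zeta(z)\le z$ globally and the extra polynomial factors are absorbed by weakening the Gaussian constant. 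This reduces both \eqref{trace1} and \eqref{trace2} to $k=0$ immediately, with no decomposition of the $z$-line.

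Your split-integral argument for \eqref{trace1} has a concrete gap on the outer region. If you replace $\zeta(z)$ by $1$ for $z\ge\sqrt{\kappa(t-s)}$, then
\begin{equation*}
\int_{\sqrt{\kappa(t-s)}}^\infty |\p_z H_\xi(t-s,z)|\,dz
= \int_{\sqrt{\kappa(t-s)}}^\infty \frac{z}{2(\kappa\eta_0(t-s))^{3/2}}\,e^{-\frac{z^2}{4\kappa\eta_0(t-s)}}\,dz
= \frac{e^{-\frac{1}{4\eta_0}}}{\sqrt{\kappa\eta_0(t-s)}},
\end{equation*}
which is not uniformly bounded; neither the Gaussian tail decay nor the ratio $\sqrt{\kappa(t-s)}/z\lesssim 1$ restores the missing $\sqrt{\kappa(t-s)}$. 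The fix is to keep $\zeta(z)\le z$ on the whole half-line (no split), giving
$|\zeta(z)\p_z\tilde H_\xi(t-s,z,0)|\lesssim\frac{z^2}{(\kappa(t-s))^{3/2}}e^{-z^2/4\kappa(t-s)}\lesssim\frac{1}{\sqrt{\kappa(t-s)}}e^{-z^2/8\kappa(t-s)}$, whose $z$-integral is $O(1)$ uniformly. The same observation handles the residual $R_\xi$ for all $j$: $(\zeta\p_z)^j$ produces a factor $\zeta(z)^j b^{j+1}\lesssim z^j b^{j+1}$ which, paired with $e^{-\theta_0 bz}$, yields an integrand $\lesssim b\,e^{-\theta_0 bz/2}$ (your written bound $\int\zeta(z)b^{k+1}e^{-\theta_0 bz}\,dz$ only closes for $j=1$ as stated; you need $\zeta^j$ rather than $\zeta$). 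With these corrections the argument closes and coincides with the paper's.
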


\begin{proof} Note that from \eqref{G_xi}, \eqref{H_xi} and \eqref{R_xi}, the conormal derivatives $(\zeta(z)\p_z)^j$ of  $G_{\xi h}(t-s,z,0)$ enjoy the same bounds as $G_{\xi  h}(t-s,z,0)$: for some small constant $c_0$,
\Be\label{est:Con_G}
| (\zeta(z)\p_{z})^j  G_{\xi  }(t-s,z, 0 ) | \lesssim b e^{-c_0 b z} + \frac{1}{\sqrt{\kappa(t-s)}} e^{-c_0 \frac{|z|^2}{\kappa (t-s)}}.
\Ee
Therefore, it suffices to show the bounds for $k=0$. 
We first recall the representation formula for $a_\xi$ 
(cf. (4.29) of \cite{KVW} or (4.2) of \cite{FW}): 
\Be\notag
a_\xi(s) = \int_0^\infty e^{-|\xi| y} g_\xi (y) \dd y ,
\Ee
from which we have $\|a_\xi\|_{\mathcal{L}^\infty_\lambda} \lesssim  \left\|  g_\xi \right\|_{\mathcal{L}^1_\lambda}$. Since the above upper bound of $G(t-s,z,0)$ is integrable in $z$,  \eqref{trace1} follows. To show \eqref{trace2}, we compute $\| G_{\xi h}(t-s,z, 0 ) \|_{\mathcal{L}^\infty_{\lambda,\kappa}}$: 
\begin{align*}
\| G_{\xi h}(t-s,z, 0 ) \|_{\mathcal{L}^\infty_{\lambda,\kappa}} \lesssim  \sup_z \left[\frac{ be^{(\bar\alpha-c_0 b) z}}{1+\phi_\kappa(z)}\right]   +  \frac{1}{\sqrt{t-s}} \sup_z\left[ \frac{e^{ \bar\alpha z-c_0 \frac{|z|^2}{\kappa (t-s)}}}{\sqrt\kappa+\sqrt\kappa\phi_\kappa(z)}  \right].
\end{align*}
It is a routine to check that both supremum norms are uniformly bounded in $\kappa$ and $|\xi|$. Therefore \eqref{trace2} is obtained. 
\end{proof}


 \subsection{Proof of Theorem \ref{thm_bound}} 
Our goal is to show that $\o(t)$ indeed belongs to $C^1([0,T];\mathfrak B^{\lambda, \kappa})$ without the initial layer under the compatibility condition \eqref{CC}, and that $\p_t^2\o$ in $\mathfrak B^{\lambda, \kappa t}$ with the initial layer. The existence of $\o(t)$ in $C^1([0,T];\mathfrak B^{\lambda, \kappa t})$ under the assumption of Theorem \ref{thm_bound} can be proved by following the argument of \cite{NN2018} and \cite{FW}. For the 2D case, Theorem 1.1 of \cite{NN2018} indeed ensures the existence of $\o(t)$ in $C^1([0,T];\mathfrak B^{\lambda, \kappa t})$ under the assumption of Theorem \ref{thm_bound}.  Such a result 
follows from Lemma \ref{lem_embedding}, Lemma \ref{lem_G},  Lemma \ref{lem_Gc},  Lemma \ref{lem_elliptic},  Lemma \ref{lem_bilinear}.  A 3D result can be obtained analogously. Hence, it suffices to show the propagation of the analytic norms in \eqref{norm_bound}.  

\

{\bf Step 1: Propagation of analytic norms for $\o$.} It is convenient to define  
\Be\label{|||}
 \vertiii{\o(t)}_t:=  \vertiii{\o(t)}_{\infty, \kappa } + \vertiii{\o(t)}_{1}.
\Ee 
The estimation of $\o$ follows from the nonlinear iteration using the representation formula \eqref{o_xi}.
 
 The estimates for the $L^1$-based norm  $\vertiii{ \o(t)}_1$ are already available in Section 5 of \cite{FW} (for 2D see Section 4.1 of \cite{NN2018}): From (\ref{est:L1G1}), (\ref{est:L1G2}) and (\ref{trace1}), we have that for $k=0,1,2$ 
 \begin{align*}
& \sum_{j=0}^k \| (\zeta(x_3)\p_{x_3})^j \o_{\xi } \|_{\mathcal{L}^1_{\lambda}} \\
&\leq \sum_{j=0}^k  \left\|(\zeta(x_3)\p_{x_3})^j{\int^\infty_0 G_{\xi } (t,x_3, y) \o_{0\xi } (y) \dd y} \right\|_{\mathcal{L}^1_{\lambda}} \\
& \ \ + \sum_{j=0}^k {\int^t_0 \left\| (\zeta(x_3)\p_{x_3})^j\int^\infty_0 G_{\xi  } (t-s, x_3, y) N_{\xi } (s,y ) \dd y \right\|_{\mathcal{L}^1_{\lambda }} \dd s}\\
& \ \ +\sum_{j=0}^k  {\int^t_0 \left\|(\zeta(x_3)\p_{x_3})^j G_{\xi } (t-s, x_3, 0) (B_{\xi } (s),0) \right\|_{\mathcal{L}^1_{\lambda }}\dd s}  \\
&\lesssim \sum_{j=0}^k \left\|  (\zeta(x_3)\p_{x_3})^j \o_{0\xi  } \right\|_{\mathcal{L}^1_{\lambda }} + \sum_{j=0}^k \int_0^t \left\|  (\zeta(x_3)\p_{x_3})^j N_{\xi } (s) \right\|_{\mathcal{L}^1_{\lambda }} \dd  s +  \int_0^t  \| N_{\xi }(s) \|_{\mathcal{L}^1_\lambda} \dd s.
 \end{align*}
 For $k=1$, after summing up over $\xi\in \mathbb Z^2$, we deduce that 
 \Be\label{1_1}
 \begin{split}
 \sum_{0 \leq |\beta| \leq 1} \| D^\beta (1+ |\nabla_h|) \o (s)\|_{1,\lambda}
 \lesssim &
\sum_{0 \leq |\beta| \leq 1} \| D^\beta (1+ |\nabla_h|) \o_0\|_{1,\lambda}\\
&
+ \int^t_0 \sum_{0 \leq |\beta| \leq 1} \| D^\beta (1+ |\nabla_h|) N(s) \|_{1,\lambda} \dd s. 
\end{split}
 \Ee
 Using (\ref{est:N_1}), (\ref{est:DN_1}), and the definition of $\vertiii{ \  \cdot \  }_s$ in (\ref{|||}) we derive that 
 \Be\label{1_1N}
 \begin{split}
 \int^t_0 \sum_{0 \leq |\beta| \leq 1} \| D^\beta (1+ |\nabla_h|) N(s) \|_{1,\lambda} \dd s
& \lesssim  
  \int^t_0
  \vertiii{\o(s)}_s^2  \big[1+(\lambda_0-\lambda -\gamma_0 s)^{- \alpha  }\big] 
  \dd s\\
  & \lesssim \Big(t+ \frac{1}{\gamma_0}\Big) \sup_{0  \leq s \leq t}\vertiii{\o (s) }_s . 
 \end{split}\Ee
 
The second order derivatives can be treated similarly except for the contributions of $N$ for which we apply the analyticity recovery estimate using $(3)$ of Lemma \ref{lem_embedding} while other terms are estimated in the same way. More precisely, we have \Be
\sum_{|\beta|=2}\| D^\beta (1+ |\nabla_h|) N (s) \|_{1,\lambda} \lesssim \frac{1}{\tilde\lambda-\lambda}
\sum_{0 \leq |\beta|\leq1}\| D^\beta (1+ |\nabla_h|)  N (s) \|_{1,\tilde\lambda} \ \text{for any}   \ \tilde\lambda >\lambda, \label{recovery_1}
\Ee
while we choose $\tilde\lambda=\frac{\lambda+\lambda_0-\gamma_0 s}{2}$ in particular. We note that still $\tilde{\lambda}< \lambda_0 - \gamma_0 s$ if $\lambda< \lambda_0 -\gamma_0 s$ and hence from (\ref{est:N_1}) and (\ref{est:DN_1})
\Be
\begin{split}\notag
 &\sum_{0 \leq |\beta| \leq 1}
\|D^\beta (1+ |\nabla_h|) N  (s) \|_{1, \tilde{\lambda}(s) }\\
&\lesssim 
\Big(
\sum_{0 \leq |\beta|\leq 1} \|  D^\beta(1+ |\nabla_h|) \o(s) \|_{1,\tilde\lambda(s) }\Big)
\Big(\sum_{0 \leq |\beta|\leq 2}[[ D^\beta \o(s) ]]_{\infty, \tilde{\lambda}(s), \kappa }
\Big)\\
& \ \ \ 
+\Big(
\sum_{0 \leq |\beta|\leq2} \| D^\beta(1+ |\nabla_h|) \o(s) \|_{1,\tilde\lambda(s) }\Big)
\Big(\sum_{0 \leq |\beta|\leq 1}\| D^\beta(1+ |\nabla_h|) \o (s)\|_{1,\tilde\lambda(s) }\Big)\\
&\lesssim \big[1+
(\lambda_0 -\lambda -\gamma_0 s)^{-\alpha } \big]
\vertiii{\o(s)}_s ^2. 
\end{split}\Ee
Therefore we derive that for $t< \frac{\lambda_0}{2\gamma_0}$ and $\lambda< \lambda_0 - \gamma_0 t$
\Be\label{1_2}
 \begin{split}
   &\sum_{|\beta|=2} \|D^\beta (1+ |\nabla_h|) 
\o(t) \|_{1, \lambda} \\
& \lesssim  \sum_{|\beta|=2} \| D^\beta (1+ |\nabla_h|)
  \o_{0} \|_{1, \lambda_0 }
+ \int_0^t 
 \big[1+ (\lambda_0-\lambda -\gamma_0 s)^{ -(\alpha + 1)} \big]
   \vertiii{ \o(s)}_{s
 }^2 
  \dd s 
 \\
 &  \lesssim    \sum_{|\beta|=2} \| D^\beta (1+ |\nabla_h|)
  \o_{0} \|_{1, \lambda_0 } +
  \Big(  (\lambda_0 - \lambda - \gamma_0 t)^{-\alpha} \frac{1}{\gamma_0} +{t} \Big)
   \sup_{0 \leq s \leq t}\vertiii{ \o(s)}_s^2.
\end{split}
\Ee

Therefore, we conclude that, from (\ref{1_1}) with (\ref{1_1N}), and (\ref{1_2})
\Be\label{est:1}
 \vertiii{\o (t)}_1\lesssim \sum_{0 \leq |\beta|\leq 2} \| D^\beta (1+ |\nabla_h|) 
  \o_{0} \|_{1, \lambda_0 } + (t+ \frac{1}{\gamma_0})  \sup_{0 \leq s \leq t}\vertiii{ \o(s)}_s^2 \ \ \text{for} \ t< \frac{\lambda_0}{2 \gamma_0}. 
\Ee

The propagation of the boundary layer norm $\vertiii{ \o(t)}_{\infty,\kappa}$ can be shown analogously using $\mathcal{L}^\infty_{\lambda, \kappa}$ estimates of Lemma \ref{lem_Gc} and Lemma \ref{lem_Gt}: For $k=0,1,2$ and $\kappa>0$ for $i=1,2$ and $\kappa=0$ for $i=3$ we have 
 \begin{align*}
 &\sum_{j=0}^k \| (\zeta(x_3)\p_{x_3})^j \o_{\xi, i} \|_{\mathcal{L}^\infty_{\lambda,\kappa}} \\
 &\leq \sum_{j=0}^k  \left\|(\zeta(x_3)\p_{x_3})^j{\int^\infty_0 G_{\xi i} (t,x_3, y) \o_{0\xi ,i} (y) \dd y} \right\|_{\mathcal{L}^\infty_{\lambda,\kappa}} \\
& \ \ + \sum_{j=0}^k {\int^t_0 \left\| (\zeta(x_3)\p_{x_3})^j\int^\infty_0 G_{\xi ,i} (t-s, x_3, y) N_{\xi, i} (s,y ) \dd y \right\|_{\mathcal{L}^\infty_{\lambda,\kappa}} \dd s}\\
& \ \ +\sum_{j=0}^k  {\int^t_0 \left\|(\zeta(x_3)\p_{x_3})^j G_{\xi, i} (t-s, x_3, 0) B_{\xi ,i} (s) \right\|_{\mathcal{L}^\infty_{\lambda,\kappa}}\dd s}  \\
&\lesssim \sum_{j=0}^k \left\|  (\zeta(x_3)\p_{x_3})^j \o_{0\xi,  i } \right\|_{\mathcal{L}^\infty_{\lambda,\kappa}} + \sum_{j=0}^k \int_0^t \left\|  (\zeta(x_3)\p_{x_3})^j N_{\xi,i} (s) \right\|_{\mathcal{L}^\infty_{\lambda,\kappa}} \dd  s\\
& \ \ \ \ \  +(1- \delta_{i3}) \int_0^t \frac{1}{\sqrt{t-s}} \| N_{\xi, i} \|_{\mathcal{L}^1_\lambda}.
 \end{align*}

Let $k=1$. After summing up over $\xi\in \mathbb Z$ and $i=1,2$ (with $\kappa>0$) and $i=3$ (with $\kappa=0$)
, we deduce that 
\Be\notag
  \sum_{0\leq |\beta|\leq 1} [[D^\beta 
\o (t) ]]_{\infty, \lambda, \kappa} 
 \lesssim 
  \sum_{0\leq|\beta|\leq 1} [[ D^\beta
  \o_{0} ]]_{\infty, \lambda_0, \kappa}
  + \int^t_0  \sum_{0\leq|\beta|\leq 1}  [[ D^\beta N(s) ]]_{\infty, \lambda,\kappa} \dd s
  +  \int_0^t   \frac{1}{\sqrt{t-s}} \vertiii{ \o(s)}_1^2 \dd s.
  \Ee
Using the definition of $\vertiii{ \ \cdot \  }_s$ in (\ref{|||}), and applying Lemma \ref{lem_est:N} with (\ref{est:N_1}), (\ref{est:N_infty}), and (\ref{est:DN_infty}), we derive 
\Be \notag
\begin{split}
\sum_{0\leq|\beta|\leq 1}  [[ D^\beta N(s) ]]_{\infty, \lambda,\kappa}
&\lesssim \Big( \sum_{0 \leq |\beta | \leq 2} \| D^\beta(1+ |\nabla_h| )\o(s) \|_{1,\lambda} \Big)\Big(\sum_{0 \leq |\beta| \leq 1} [[ D^\beta \o (s) \|_{\infty, \lambda, \kappa}\Big) \\
& \ \ \ +  \| (1+ |\nabla_h| )\o(s) \|_{1,\lambda}
\sum_{|\beta|=2}[[ D^\beta \o (s) ]]_{\infty,\lambda, \kappa}\\
&\lesssim \big[1+(\lambda_0 -\lambda -\gamma_0 s)^{- \alpha } \big]
\vertiii{\o(s)}_s ^2.
\end{split}
\Ee
Therefore we derive that 
\Be
 \begin{split}
 \label{infty_1}
 &  \sum_{0\leq |\beta|\leq 1} [[D^\beta 
\o (t) ]]_{\infty, \lambda, \kappa} \\
& \lesssim  \sum_{0\leq|\beta|\leq 1} [[ D^\beta
  \o_{0} ]]_{\infty, \lambda_0, \kappa}
+ \int_0^t 
   \vertiii{ \o(s)}_{s
 }^2 
 \big[1+
 (\lambda_0-\lambda -\gamma_0 s)^{ - \alpha  } 
 \big]
  \dd s \\
  & \ \ \ \ \ \ \ 
  + \int_0^t   \frac{1}{\sqrt{t-s}} \vertiii{ \o(s)}_s^2 ds \\
 &  \lesssim   \sum_{0\leq|\beta|\leq 1} [[ D^\beta
  \o_{0} ]]_{\infty, \lambda_0, \kappa} +
  \Big(  \sqrt{t} + \frac{1}{\gamma_0}\Big)
   \sup_{0 \leq s \leq t}\vertiii{ \o(s)}_s^2.
\end{split}\Ee

Now we control the second order derivatives similarly except for the $N$. As in (\ref{recovery_1})
we use the analyticity recovery estimate using Lemma \ref{lem_embedding} \Be
\sum_{|\beta|=2}[[  D^\beta N (s) ]]_{\infty,\lambda,\kappa} \lesssim \frac{1}{\tilde\lambda-\lambda}
\sum_{0 \leq |\beta|\leq1}[[ D^\beta N (s) ]]_{\infty,\tilde\lambda,\kappa} \ \text{for any}   \ \tilde\lambda >\lambda, \label{recovery_infty}
\Ee
while again we choose $\tilde\lambda=\frac{\lambda+\lambda_0-\gamma_0 s}{2}$ in particular. We note that still $\tilde{\lambda}< \lambda_0 - \gamma_0 s$ if $\lambda< \lambda_0 -\gamma_0 s$ and hence from (\ref{est:N_infty}) and (\ref{est:DN_infty})
\Be\notag
 \sum_{0 \leq |\beta| \leq 1}
  [[ D^\beta N  (s) ]]_{\infty, \tilde{\lambda}(s), \kappa}\lesssim (\lambda_0 -\lambda -\gamma_0 s)^{- \alpha } 
\vertiii{\o(s)}_s ^2. 
\Ee
Therefore we derive that for $t< \frac{\lambda_0}{2\gamma_0}$ and $\lambda< \lambda_0 - \gamma_0 t$
\Be\begin{split}\label{infty_2}
& \sum_{ |\beta|=2}[[ D^\beta \o(t) ]]_{\infty, \lambda, \kappa}\\ &\lesssim 
  \sum_{ |\beta|=2} \| D^\beta\o_{0} \|_{\infty, \lambda_0, \kappa}
  + \int^t_0
( \lambda_0 - \lambda -\gamma_0 s)^{- (\alpha+ \frac{3}{2}) }
\vertiii{\o(s)}_s^2
\dd s  +  \int_0^t   \frac{1}{\sqrt{t-s}} \vertiii{ \o(s)}_1^2 \dd s 
 \\
&\lesssim  \sum_{ |\beta|=2} \| D^\beta\o_{0} \|_{\infty, \lambda_0, \kappa}
+(\lambda_0- \lambda - \gamma_0 t)^{-\alpha}\Big(\int^t_0 (\lambda_0 - \lambda - \gamma_0 s)^{- \frac{3}{2}} \dd s \Big) \sup_{0 \leq s\leq t} \vertiii{\o(s)}_s^2\\
& \ \ \ \ \ \ \  
+ \sqrt{t} \sup_{0 \leq s \leq t} \vertiii{\o(s)}_s^2
   \\
 &
 \lesssim   \sum_{ |\beta|=2} \| D^\beta\o_{0} \|_{\infty, \lambda_0, \kappa}  
 +\Big( (\lambda_0-\lambda-\gamma_0 t)^{-\alpha} \frac{1}{\gamma_0}  + \sqrt{t}\Big)
 \sup_{0 \leq s \leq t}
 \vertiii{ \o(s)}_{s}^2.
\end{split}\Ee

Therefore we conclude that, from (\ref{infty_1}) and (\ref{infty_2}),
\Be\label{est:infty}
\vertiii{ \o(t)}_{\infty,\kappa} \lesssim  \sum_{0 \leq  |\beta|  \leq 2} \| D^\beta\o_{0} \|_{\infty, \lambda_0, \kappa}
 +\Big( \sqrt{t}+ \frac{1}{\gamma_0}\Big) \sup_{0 \leq s \leq t}\vertiii{ \o(s)}_{s}^2 \ \ \text{for} \ t < \frac{\lambda_0}{2\gamma_0}. 
\Ee
 
In conclusion, from (\ref{est:1}), (\ref{est:infty}), and by a standard continuity argument we obtain for sufficiently large $\gamma_0$
\Be\label{est:|||_t}
\sup_{0 \leq t <\frac{\lambda_0}{2\gamma_0}}\vertiii{ \o(t)}_{t}\lesssim  \sum_{0 \leq  |\beta|  \leq 2} \| D^\beta\o_{0} \|_{\infty, \lambda_0, \kappa}+  \sum_{0 \leq |\beta|\leq 2} \| D^\beta (1+ |\nabla_h|) 
  \o_{0} \|_{1, \lambda_0 }.
\Ee

 \

{\bf Step 2: Propagation of analytic norms for $\p_t\o$.}   The continuity of $\o(t)$ in $t$ follows from the mild solution form \eqref{o_xi} of $\o_\xi(t)$.  We claim that $\o(t) \in C^1([0,T]; \mathfrak B^{\lambda,\kappa})$ and moreover 
$\vertiii{ \p_t\o(t)}_{t}
$ is bounded. To this end, we first derive the mild form of $\p_t\o_\xi$ from \eqref{o_xi}: 
\Be
 \begin{split}\label{o_xi2t}
 \p_t\o_\xi (t,x_3) =& {\int^\infty_0 G_\xi (t,x_3, y) \p_t \o_{0 \xi} (y) \dd y} 
 + {\int^t_0\int^\infty_0 G_\xi (t-s, x_3, y) \p_s N_\xi (s,y ) \dd y \dd s}\\
 &
- {\int^t_0  G_\xi (t-s, x_3, 0) (\p_s B_\xi (s), 0) \dd s} ,
 \end{split}
 \Ee
where we recall $\p_t \o_0$ in \eqref{idata}. To justify this formula, we first recall (\ref{eqtn:G_xi2})-(\ref{bdry:G_xi3}). We start with the horizontal part of the formula (\ref{o_xi2t}) for 
$\p_t\o_{\xi,h}$.  
From Lemma \ref{lem_G}, $G_{\xi h} (t,x_3,y) = H_\xi (t, x_3-y) +H_\xi (t,x_3+y)+ R_\xi (t,x_3+y)$. Then by using the fact that $H^\prime_\xi (t,  \cdot)$ is an odd function, we see 
$
\p_{x_3} G_{\xi h} (t,x_3,y)|_{x_3=0} 
= R^\prime_\xi (t, y)$. 
Now we read (\ref{bdry:G_xi2}) as 
\Be\begin{split}
\label{bdry:G_xi12}
\kappa \eta_0 R^\prime_\xi (t,y ) + \kappa\eta_0 |\xi| G_{\xi h} (t,0,y)=0, \quad \kappa\eta_0 R^\prime_\xi (t,x_3) + \kappa\eta_0 |\xi| G_{\xi h} (t,x_3,0)=0 ,
\end{split}\Ee
where we have used that $H_\xi (t, \cdot)$ is an even function for the second relation. On the other hand, since we also have 
$
\p_{y_3} G_{\xi h} (t,x_3,y)|_{y=0} 
= R^\prime_\xi (t, x_3)$, we deduce that 
\Be\label{bdry:G_xiy2}
\kappa\eta_0 (\p_{y_3}  +|\xi|) G_{\xi h} (t,x_3,y_3) =0, \ \ y_3=0.
\Ee
It is straightforward to see $\Delta_\xi G_{\xi h} = \p_{x_3}^2G_{\xi h} - |\xi|^2G_{\xi h} = \p_y^2 G_{\xi h} - |\xi|^2 G_{\xi h}.$  

We now take $\p_t$ of \eqref{o_xi}: 
\begin{align*}
& \p_t \int^\infty_0 G_{\xi h} (t,x_3,y)\o_{0\xi,h}(y) \dd y = \int^\infty_0 \p_t G_{\xi, h} (t,x_3,y)\o_{0\xi, h}(y) \dd y \\
 &\quad= \int^\infty_0 \kappa\eta_0 (\p_{y}^2 - |\xi|^2) G_{\xi h} (t,x_3,y)\o_{0\xi,h}(y) \dd y \\
 &\quad= \big[ \kappa\eta_0 \p_y G_{\xi h} (t,x_3,y)\o_{0\xi, h}(y) \big]_{y=0}^{y=\infty}
 -  \int^\infty_0 \kappa\eta_0  |\xi|^2  G_{\xi h} (t,x_3,y)\o_{0\xi, h }(y) \dd y\\
 &\quad \ \ \ 
 -  \int^\infty_0 \kappa \eta_0  \p_y G_{\xi h} (t,x_3,y)\p_y\o_{0\xi, h}(y) \dd y
 \\
  &\quad= \big[ \kappa\eta_0 \p_y G_{\xi h} (t,x_3,y)\o_{0\xi, h }(y) \big]_{y=0}^{y=\infty}  -  
 \big[
  \kappa\eta_0    G_{\xi h} (t,x_3,y)\p_y\o_{0\xi, h}(y) 
 \big]_{y=0}^{y=\infty} \\
 &\quad \ \ \ 
+ \int^\infty_0   G_{\xi h} (t,x_3,y)\kappa \eta_0 \Delta_\xi \o_{0\xi, h}(y) \dd y\\
& \quad =- \kappa\eta_0 \p_y G_{\xi h} (t,x_3,0)\o_{0\xi,h}(0)  +  
  \kappa\eta_0    G_{\xi h} (t,x_3,0)\p_y\o_{0\xi,h}(0)  \\
  & \quad \ \ \ \ 
+ \int^\infty_0   G_{\xi h} (t,x_3,y)\kappa\eta_0  \Delta_{\xi h} \o_{0\xi,h}(y) \dd y,
\end{align*}
and 
\begin{align*}
\p_t  {\int^t_0\int^\infty_0 G_{\xi h} (t-s, x_3, y) N_{\xi, h} (s,y ) \dd y \dd s}
  &=\int_0^\infty G_{\xi h} (t,x_3,y)  N_{\xi,h} (0,y)\dd y\\
  & \ \ \ 
+ \int^t_0 \int^\infty_0
G_{\xi h} (s,x_3, y) \p_t N_{\xi, h} (t-s,y)
 \dd y \dd s,\\
 \p_t  {\int^t_0  G_{\xi h} (t-s, x_3, 0) B_\xi (s) \dd s} &=
 G_{\xi h} (t,x_3,0) B_\xi (0)
+\int^t_0 G_{\xi h} (t-s,x_3,0) \p_s B_\xi( s) \dd s 
\end{align*}
Therefore we obtain 
\Be\begin{split}
\p_t \o_{\xi, h} (t,x_3) =&{- \kappa\eta_0 \p_y G_{\xi h} (t,x_3,0)\o_{0\xi,h}(0)  +  
  \kappa\eta_0    G_{\xi h} (t,x_3,0)\p_y\o_{0\xi, h}(0)  - G_{\xi h} (t,x_3,0) B_\xi (0)} \\
  &+ \int^\infty_0   G_{\xi h} (t,x_3,y) \{ \kappa \eta_0\Delta_\xi \o_{0\xi,h}(y)
+ N_{\xi, h} (0,y) \}
 \dd y\\
 &+ \int^t_0 \int^\infty_0
G_{\xi h} (t-s,x_3, y) \p_s N_{\xi, h} (s,y) 
 \dd y \dd s - \int^t_0 G_{\xi h} (t-s,x_3,0) \p_s B_\xi(s) \dd s\label{pt_o2}.
 \end{split}
\Ee
Next we show that the first line in the right-hand side is 0. From (\ref{bdry:G_xiy2})
\begin{align*} 
- \kappa\eta_0 \p_y G_{\xi h} (t,x_3,0)\o_{0\xi, h}(0)  +  
  \kappa\eta_0    G_{\xi h} (t,x_3,0)\p_y\o_{0\xi,h}(0)=G_{\xi h} (t,x_3,0) \kappa\eta_0(|\xi | +  \p_y) \o_{0\xi, h}(0),
\end{align*}
and hence the first line of \eqref{pt_o2} reads 
\Be\label{initial_layer0}
G_{\xi h} (t,x_3,0) \left[ \kappa\eta_0(|\xi | +  \p_{x_3}) \o_{0\xi, h}(0) - B_\xi (0) \right]  , 
\Ee
which is zero due to the first compatibility condition of \eqref{CC}. Recalling $\p_t \o_0$ in \eqref{idata}, the formula \eqref{o_xi2t} for $\p_t\o_{\xi,h}$ has been established. We may follow the same procedure to verify the vertical part of the formula \eqref{o_xi2t} for $\p_t\o_{\xi,3}$ by noting  that the second compatibility condition of \eqref{CC} removes the term $- \kappa\eta_0 \p_y G_{\xi 3} (t,x_3,0)\o_{0\xi,3}(0)$ which would create the initial layer otherwise because $ \p_y G_{\xi 3} (t,x_3,0)$ does not vanish.  

We may now repeat Step 1 for $\p_t\o$ using the representation formula \eqref{o_xi2t}. The estimates are obtained in the same fashion. For the nonlinear terms, since $\p_t N = - u \cdot \nabla \p_t \o - \p_t u\cdot \nabla \o
+ \o \cdot \nabla \p_t  u+\p_t \o  \cdot \nabla  u
$, the structure of $\p_t N$ with respect to $\p_t\o$ is consistent  with the one of $N$ with respect to $\o$ and we can use the bilinear estimates \eqref{nonlinear1} and \eqref{nonlinear2}. In summary, one can derive that for $t < \frac{\lambda_0}{2 \gamma_0}$
\begin{align}
\vertiii{ \p_t\o(t)}_{1} &\lesssim \sum_{0\leq |\beta|\leq 2} \| D^\beta (1+|\nabla_h|) \p_t \o_0  \|_{1, \lambda_0}  
 +( t+  \frac{1}{\gamma_0}) \sup_{0 \leq s \leq t}\vertiii{ \o(s)}_{s} \sup_{0 \leq s \leq t} \vertiii{\p_t \o(s)}_s,
 \label{est:t_1}
 \\
\vertiii{ \p_t\o(t)}_{\infty,\kappa} &\lesssim \sum_{0\leq|\beta|\leq 2} \| D^\beta \p_t \o_{0} \|_{\infty, \lambda_0, \kappa} +( \sqrt{t}+  \frac{1}{\gamma_0}) \sup_{0 \leq s \leq t}\vertiii{ \o(s)}_{s} \sup_{0 \leq s \leq t} \vertiii{\p_t \o(s)}_s, \label{est:t_infty}
\end{align}
which lead to the desired bounds for $\p_t\o(t)$ by choosing sufficiently large $\gamma_0$.  

\hide
\begin{align}
\vertiii{ \p_t\o(t)}_{1} &\lesssim \sum_{0\leq i+j\leq 2} \| \p_{x_1}^i (\zeta(z)\p_{z})^j \o_{1\xi} \|_{1, \lambda_0}  
 + \frac{1}{\gamma_0}\vertiii{ \o(t)}_{1} \vertiii{\p_t \o(t)}_1\\
\vertiii{ \p_t\o(t)}_{\infty,\kappa} &\lesssim \sum_{0\leq i+j\leq 2} \| \p_{x_1}^i (\zeta(z)\p_{z})^j \o_{1\xi} \|_{\infty, \lambda_0, \kappa}  + \vertiii{ \o(t)}_{1} \vertiii{\p_t \o(t)}_1 
 + \frac{1}{\gamma_0}\vertiii{ \o(t)}_{\infty,\kappa}\vertiii{ \p_t\o(t)}_{\infty,\kappa}
\end{align}\unhide
 
 \

 {\bf Step 3: Propagation of analytic norms for $\p_t^2\o_\xi$.}  As a consequence of Step 2, $\p_t\o_\xi(t,x_3)$ solves the following system
  \begin{align}
\p_t^2 \o_\xi - \kappa \eta_0 \Delta_\xi \p_t \o_\xi =\p_t N_\xi  \quad &\text{in }\mathbb R_+, \label{NS_f1} \\
\kappa \eta_0 (\p_{x_3} + |\xi| )\p_t\o_{\xi,h}   = \p_tB_\xi \quad &\text{on } x_3=0, \label{NSB_f1}\\
\p_t \o_{\xi, 3}  = 0 \quad &\text{on } x_3=0, \label{NSB_f3}
\end{align} 
with $\p_t\o_\xi|_{t=0} = \p_t \o_{0 \xi}$ for $\xi\in \mathbb Z^2$ where $\p_t \o_0$ is defined in (\ref{idata}). Then as done in Step 2, by using the properties of $G_\xi$ and  integration by parts and by the last compatibility condition of \eqref{CC}, we can derive the representation formula for $\p_t^2\o$: 
 \Be
 \begin{split}\label{o_xi2tt}
& \p_t^2\o_\xi (t,x_3) =( G_{\xi h} (t,x_3,0) \left[ \kappa\eta_0(|\xi | +  \p_{x_3})  \p_t \o_{ 0\xi, h}(0) - \p_tB_\xi (0) \right] , 0 )  \\
&\quad+  {\int^\infty_0 G_\xi (t,x_3, y) \p_t^2 \o_{0\xi} (y) \dd y} 
 + {\int^t_0\int^\infty_0 G_\xi (t-s, x_3, y) \p_s^2 N_\xi (s,y ) \dd y \dd s}\\
 &\quad 
- {\int^t_0  G_\xi (t-s, x_3, 0)( \p_s^2  B_\xi (s),0) \dd s} ,
 \end{split}
 \Ee
where we recall $\p_t^2\o_0$ in \eqref{idata}. As we do not require higher order compatibility condition for the horizontal vorticity, a new term representing the initial-boundary layer emerges. We first examine $G_\xi (t,z,0)$. Recall (\ref{est:Con_G}).
%

Similar to Lemma \ref{lem_Gt}, we have for $C_0<\infty$
\begin{align}
\sum_{j=0}^k \left\|  (\zeta(z)\p_{z})^j  G_\xi(t,z, 0 ) \right\|_{\mathcal{L}^1_\lambda} \lesssim  C_0 , \quad \sum_{j=0}^k \left\|  (\zeta(z)\p_{z})^j  G_\xi(t,z, 0 )  \right\|_{\mathcal{L}^\infty_{\lambda,\kappa t}}  \lesssim C_0.
\label{trace4}
\end{align}
From (\ref{trace4}), (\ref{trace1}) and (\ref{nonlinear1}) 
\begin{align*}
&
\sum_{0 \leq |\beta| \leq 2}
\sum_{\xi \in \mathbb{Z}^2} e^{\lambda |\xi|}
\left\|
D^\beta_\xi
(1+ |\xi |)
\Big[
( G_{\xi h} (t,x_3,0) \left( \kappa\eta_0(|\xi | +  \p_{x_3})  \p_t \o_{ 0\xi, h}(0) - \p_tB_\xi (0) \right) , 0 )\Big]\right\|_{\mathcal{L}^1_\lambda}\\
&\lesssim \kappa \eta_0 \sum_{0 \leq |\beta| \leq2} \| \nabla_h^\beta(1+ |\nabla_h|) \nabla  \p_t \o_{0, h}\|_{1,\lambda}
+ \sum_{0 \leq |\beta| \leq 2} \| \nabla^\beta_h (1+ |\nabla_h|) \p_t N(0) \|_{1,\lambda}\\
&\lesssim  \kappa \eta_0
\| (1+ |\nabla_h|^3) \nabla \p_t \o_{0 } \|_{1,\lambda}+ 
\| (1+|\nabla_h|^4) \p_t \o_{0} \|_{1,\lambda}
\sum_{0 \leq |\beta| \leq1} \| D^\beta (1+ |\nabla_h|^3) \p_t \o_{0}\|_{1,\lambda}. 
\end{align*}
Hence an ${L}^1$-based analytic norm is easily obtained as 
\Be\label{est:tt_1}
\begin{split}
&\vertiii{\p_t^2 \o (t)} _1\\&\lesssim
 \kappa \eta_0
\| (1+ |\nabla_h|^3) \nabla \p_t \o_{0 } \|_{1, \lambda} + 
\| (1+|\nabla_h|^4) \p_t \o_{0} \|_{1, \lambda}
\sum_{0 \leq |\beta| \leq1} \| D^\beta (1+ |\nabla_h|^3) \p_t \o_{0}\|_{1, \lambda}
 \\
& \ \ 
 +  \sum_{0\leq |\beta|\leq 2} \| D^\beta (1+|\nabla_h|) \p^2_t \o_0  \|_{1, \lambda_0}  
 +( t+  \frac{1}{\gamma_0})
  \sup_{0 \leq s \leq t}\vertiii{ \o(s)}_{s} \sup_{0 \leq s \leq t} \vertiii{\p_t^2 \o(s)}_s
 \\
 &\ \  +( t+  \frac{1}{\gamma_0})
  \sup_{0 \leq s \leq t}\vertiii{ \p_t \o(s)}_{s}  ^2.
\end{split}
\Ee

Now we move to the $L^\infty$-based analytic norm bound. We compute $\| G_\xi(t,z, 0 ) \|_{\mathcal{L}^\infty_{\lambda,\kappa t}}$: 
\begin{align*}
\|  (\zeta(z) \p_z)^jG_\xi(t,z, 0 ) \|_{\mathcal{L}^\infty_{\lambda,\kappa t}} \lesssim  \sup_z \left[\frac{ be^{(\bar\alpha-c_0 b) z}}{1+\phi_\kappa(z) +\phi_{\kappa t} (z)}\right]   +  \sup_z\left[ \frac{e^{ \bar\alpha z-c_0 \frac{|z|^2}{\kappa t}}}{\sqrt{\kappa t}+\sqrt{\kappa t}\phi_\kappa(z) +\sqrt{\kappa t}\phi_{\kappa t}(z)  }  \right]. 
\end{align*}
It is a routine to check that both supremum norms are uniformly bounded in $\kappa$ and $|\xi|$. Hence \eqref{trace4} shows that the kernel $G_\xi(t,z,0)$ is well-behaved in $\mathcal{L}^1_\lambda$ and the initial-boundary layer analytic space $\mathcal{L}^\infty_{\lambda,\kappa t}$. Then we run the same argument as in Step 2 but with $\mathcal{L}^\infty_{\lambda,\kappa t}$ in place of $\mathcal{L}^\infty_{\lambda,\kappa }$. Thanks to \eqref{trace4}, the estimates of the first term in \eqref{o_xi2tt} are bounded by the initial norm \eqref{initial_norm}:
\begin{align*}
&\sum_{0 \leq |\beta| \leq 2} \sum_{\zeta \in \mathbb{Z}^2}  e^{\lambda |\xi|}
 \left\|
D^{\beta}_\xi \Big[( G_{\xi h} (t,x_3,0) \left( \kappa\eta_0(|\xi | +  \p_{x_3})  \p_t \o_{ 0\xi, h}(0) - \p_t B_\xi (0) \right) , 0 ) \Big]\right\|_{\mathcal{L}^\infty_{\lambda, \kappa t}}
\\
&\lesssim \kappa \eta_0 \sum_{0 \leq |\beta| \leq 2} \| \nabla_h^\beta \nabla \p_t \o_{0} \|_{\infty, \lambda } + \sum_{0 \leq |\beta| \leq 2} \| \nabla^\beta_h   \p_t N(0) \|_{1,\lambda}\\
&\lesssim \kappa \eta_0 \sum_{0 \leq |\beta| \leq 2} \| \nabla_h^\beta \nabla \p_t \o_{0} \|_{\infty, \lambda } + \| (1+|\nabla_h|^3) \p_t \o_{0} \|_{1,\lambda}
\sum_{0 \leq |\beta| \leq1} \| D^\beta (1+ |\nabla_h|^2) \p_t \o_{0}\|_{1,\lambda}.
\end{align*}
Other three terms are estimated in the same way as in \cite{NN2018} or \cite{FW} and we arrive that 
\Be\label{est:tt_infty}
\begin{split}
&\vertiii{\p_t^2 \o (t)} _{\infty, \kappa t}\\
&\lesssim
\kappa \eta_0 \sum_{0 \leq |\beta| \leq 2} \| \nabla_h^\beta \nabla \p_t \o_{0} \|_{\infty, \lambda } + \| (1+|\nabla_h|^3) \p_t \o_{0} \|_{1,\lambda}
\sum_{0 \leq |\beta| \leq1} \| D^\beta (1+ |\nabla_h|^2) \p_t \o_{0}\|_{1,\lambda}
 \\
& \ \ 
 +  \sum_{0\leq |\beta|\leq 2} \| D^\beta  \p^2_t \o_0  \|_{\infty, \lambda_0, \kappa t}  
 +( \sqrt{t}+  \frac{1}{\gamma_0})
  \sup_{0 \leq s \leq t}\vertiii{ \o(s)}_{s} \sup_{0 \leq s \leq t} \vertiii{\p_t^2 \o(s)}_s
 \\
 & \ \  +(\sqrt{ t}+  \frac{1}{\gamma_0})
  \sup_{0 \leq s \leq t}\vertiii{ \p_t \o(s)}_{s}  ^2.
\end{split}
\Ee

Finally combining (\ref{est:tt_1}) and (\ref{est:tt_infty}) and then choosing sufficiently large $\gamma_0$ we derive a desired estimate for $\vertiii{\p_t^2 \o(t)}_t$ for $t \in (0, \frac{\lambda_0}{2\gamma_0})$.

Altogether from (\ref{est:|||_t}), (\ref{est:t_1}), (\ref{est:t_infty}), (\ref{est:tt_1}), and (\ref{est:tt_infty}), we finish the proof of the estimate (\ref{norm_bound}).

  \

 {\bf Step 4: Estimate (1), vorticity estimates. }Both \eqref{b1} and \eqref{b2} are direct consequences of (\ref{norm_bound}). To show \eqref{b3}, we first note that the boundedness of $\o(t)$ norms implies 
 $|\p_{x_3}\o_\xi(t,x_3)|\lesssim e^{-\bar{\alpha} x_3}e^{-\lambda |\xi|}$ for all $|\xi|$ and $x_3 \geq 1$ (away from the boundary). When $x_3\leq 1$, we draw on  the equation \eqref{NS_f} to rewrite $ \p_{x_3}^2\o_{\xi,h} = \frac{1}{\kappa\eta_0}\{ \p_t \o_{\xi,h}  + \kappa\eta_0 |\xi|^2 \o_{\xi,h} - N_{\xi,h}\}$ and the boundary condition \eqref{NSB_f}: 
 \Be\begin{split}
 \p_{x_3} \o_{\xi,h} (t, x_3) &= \p_{x_3} \o_{\xi,h} (t,0) + \int_0^{x_3}  \p_{x_3}^2\o_{\xi,h}(t,y) \dd y\\
 &=- |\xi| \o_{\xi,h} (t,0) + \frac{1}{\kappa\eta_0} B_\xi (t) +  \int_0^{x_3}  \frac{1}{\kappa\eta_0}[ \p_t \o_{\xi,h}  + \kappa\eta_0 |\xi|^2 \o_{\xi,h} - N_{\xi,h}](t,y)\dd y. 
\end{split}
\Ee
We now appeal to  $|B_\xi(t)| \leq \|N_\xi(t)\|_{\mathcal{L}^1_\lambda}$ and $\sum_{0\leq \ell\leq 1} ( \vertiii{ \p_t^\ell\o(t)}_{\infty,\kappa}+ \vertiii{ \p_t^\ell\o(t)}_{1}) <\infty$ to obtain that for all $x_3\in \mathbb R_+$
\Be 
| \p_{x_3} \o_{\xi, h} (t, x_3)|\lesssim  \frac{1}{\kappa} e^{-\bar{\alpha} x_3}e^{-\lambda |\xi|} \text{ for } 0<\lambda<\lambda_0,
\Ee
which proves \eqref{b3} for $\o_h$ and $\ell=0$. The remaining case can be estimated similarly. Near $O(1)$ boundary, from \eqref{NS_f1} and \eqref{NSB_f1}, we derive 
 \Be\begin{split}
 &\p_{x_3} \p_t\o_{\xi,h} (t, x_3)\\
  &=- |\xi| \p_t\o_{\xi,h} (t,0) + \frac{1}{\kappa\eta_0} \p_tB_\xi (t) +  \int_0^{x_3}  \frac{1}{\kappa\eta_0}[\p_t^2 \o_{\xi,h}  + \kappa\eta_0 |\xi|^2 \p_t\o_{\xi,h} - \p_tN_{\xi,h}](t,y)\dd y. 
\end{split}
\Ee
Together with  $\sum_{0\leq \ell\leq 1} \vertiii{ \p_t^\ell\o(t)}_{\infty,\kappa}+ \sum_{0\leq \ell\leq 2}  \vertiii{ \p_t^\ell\o(t)}_{1}<\infty$ we deduce \eqref{b3} for $\o_h$ and $\ell=1$.
For $\o_3$ we use $\nabla\cdot \o=0$ to write $\p_3 \o_3 = - \p_1\o_1- \p_2\o_2$. Now \eqref{b3} for $\o_3$ follows from \eqref{b1}. 


  \

 {\bf Step 5: Estimate (2), velocity estimates, except (\ref{ut}). }From (\ref{phi_xi}) 
\Be\label{est:phi}
|\xi|^{\beta_h} | \p_{z}^{\beta_3}\p_t^\ell\phi_\xi (t,z)|
\lesssim \int_{\p \mathcal{H}_\lambda} |\xi|^{|\beta|-1} e^{-|\xi| |y-z|} | \p_t^\ell \o_\xi (t,y)| |\dd y|
 \ \ \text{for} \ \beta_3\leq 1. 
\Ee
For $|\beta|= |\beta_h| + \beta_3=1$ we bound (\ref{est:phi}) by $e^{-\lambda |\xi|} \| \p_t^\ell \o(t) \|_{1,\lambda}$. Then from (\ref{norm_bound}) we conclude (\ref{est:u_t}).

For $|\beta|\geq 2$ and $\beta_3 \leq 1$, we bound (\ref{est:phi}) by 
\Be\label{est:phi1}
\begin{split}
(\ref{est:phi}) &\lesssim
\int_{\p \mathcal{H}_\lambda} |\xi|^{|\beta|-2} |\xi|e^{-|\xi| |y-z|} 
 e^{- {\bar{\alpha}}  \text{Re}\,y}e^{-\lambda |\xi|} \big(1+ \phi_\kappa (y)+ \phi_{\kappa t} (y)\big) 
|\dd y|  \\
&\lesssim |\xi|^{|\beta|-2} e^{-\lambda |\xi|} e^{- \min (1, \frac{\bar{\alpha}}{2})x_3}
\int_{\p \mathcal{H}_\lambda} 
e^{- \frac{\bar{\alpha}}{2} \text{Re} \, y}\big ( 1+ \phi_\kappa (y)+ \phi_{\kappa t} (y)\big) 
|\dd y|\\
&\lesssim |\xi|^{|\beta|-2} e^{-\lambda |\xi|} e^{- \min (1, \frac{\bar{\alpha}}{2})x_3} \ \ \text{for} \ |\beta| \geq 2, \ \text{and} \ \beta_3 \leq 1, \ \text{and} \  \ell=0,1,2, \ \text{and} \  t \in [0,T],
\end{split}
\Ee
where we have used $|\xi||y-z| + \frac{\bar{\alpha}}{2} \text{Re}\, y\geq \min (1, \frac{\bar{\alpha}}{2} ) x_3$ for $|\xi|\geq 1$ and (\ref{norm_bound}).

For $\beta_3=2,3$ we use $\p_z^2 \p_t^\ell\phi_\xi = |\xi|^2  \p_t^\ell\phi_\xi +   \p_t^\ell\o_\xi$. Then following the same argument of (\ref{est:phi1}), we derive 
\Be\label{est:phi2}
\begin{split}
&|\xi|^{\beta_h} | \p_{z}^{\beta_3}\p_t^\ell\phi_\xi (t,z)|\\
&\lesssim |\xi|^{|\beta_h|+2} | \p_{z}^{\beta_3-2}\p_t^\ell\phi_\xi (t,z)|
+|\xi|^{\beta_h} | \p_{z}^{\beta_3-2}\p_t^\ell\o_\xi (t,z)|\\
&\lesssim 
\begin{cases}
(|\xi|^{|\beta|-2}+|\xi|^{\beta_h}) e^{- \lambda |\xi|}  e^{- \min (1, \frac{\bar{\alpha}}{2}) \text{Re}\, z} (1+ \phi_\kappa (z)) \ \ \text{for} \ \ell=0,1, \ \text{and} \ \beta_3=2,\\
 (|\xi|^{|\beta|-2}+|\xi|^{\beta_h}) e^{- \lambda |\xi|}  e^{- \min (1, \frac{\bar{\alpha}}{2})\text{Re}\, z}\kappa^{-1}  \ \ \text{for} \ \ell=0,1, \ \text{and} \ \beta_3=3, \\
(|\xi|^{|\beta|-2}+|\xi|^{\beta_h}) e^{- \lambda |\xi|}  e^{- \min (1, \frac{\bar{\alpha}}{2})\text{Re}\, z} (1+ \phi_\kappa (z)+ \phi_{\kappa t} (z)) \ \ \text{for} \ \ell=2, \ \text{and} \ \beta_3=2. 
\end{cases}
\end{split}\Ee
Finally from (\ref{est:phi1}) and (\ref{est:phi2}) we conclude (\ref{est:u1}) and (\ref{est:u2}).

\hide
 From $e^{-|\xi| |y-z|} \leq 1$ or $|\xi|  e^{-|\xi| |\cdot |} \in L^1(\R)$ 
\Be\begin{split}\label{est:phi_t}
|\xi|^k|\p_t^\ell\phi_\xi (t,x_3)|
&\lesssim |\xi|^{ k-1 } e^{-\lambda |\xi|}
 \| \p_t^\ell\o  (t )\|_{1,\lambda},\\
(1+ \phi_\kappa (x_3)  )^{-1} |\xi|^k |\p_t^\ell \phi_\xi(t,x_3)| &\lesssim |\xi|^{k-2}e^{-\lambda |\xi|}     \|\p_t^\ell \o  (t )\|_{\infty, \lambda,\kappa } 
  \ \ \text{for} \ \ell=0,1,
 \\
(1+ \phi_\kappa (x_3) + \phi_{\kappa t} (x_3))^{-1}  |\xi|^k |\p_t^2 \phi_\xi(t,x_3)| &\lesssim |\xi|^{k-2}   e^{-\lambda |\xi|}
\|\p_t^2\o (t   )\|_{ \infty,\lambda, \kappa t }.
\end{split}\Ee
Once again from (\ref{phi_xi}) we have $\p_z \phi_\xi (z) = \int^z_0 \p_z G_- (y,z) \o_\xi (y) \dd y + \int^\infty_z \p_z G_+ (y,z) \o_\xi (y) \dd y$. Since $|\p_z G_{\pm}(y,z)|  \lesssim e^{-|\xi||y-z|}$
\Be\begin{split}\label{est:phi_tz}
|\xi|^k| \p_{x_3}\p_t^\ell\phi_\xi (t,x_3)| &\lesssim |\xi|^{k} e^{-\lambda |\xi|}
 \| \p_t^\ell\o  (t )\|_{1,\lambda},\\
 (1+ \phi_\kappa (x_3)  )^{-1} |\xi|^k | \p_{x_3} \p_t^\ell \phi_\xi(t,x_3)| &\lesssim |\xi|^{k-1}  e^{-\lambda |\xi|} \| \p_t^\ell \o (t )\|_{ \infty,\lambda,\kappa } 
  \ \ \text{for} \ \ell=0,1,
 \\
(1+ \phi_\kappa (x_3) + \phi_{\kappa t} (x_3))^{-1}  |\xi|^k |\p_{x_3} \p_t^2 \phi_\xi(t,x_3)| &\lesssim |\xi|^{k-1}   e^{-\lambda |\xi|}
\|\p_t^2\o  (t )\|_{ \infty,\lambda, \kappa t }.
\end{split}\Ee
Therefore we derive that, from (\ref{est:phi_t}), (\ref{est:phi_tz}), and $|\xi|^{|\beta_h|}| 
\p_t^\ell u_{\xi} (t,x_3)| \lesssim 
|\xi|^{|\beta_h|+1}| 
\p_t^\ell \phi_{\xi} (t,x_3)| 
+|\xi|^{|\beta_h| }| \p_z
\p_t^\ell \phi_{\xi} (t,x_3)|$, 
\begin{align}
| 
\p_t^\ell u_{\xi} (t,z)|&\lesssim 
 e^{-\lambda |\xi|} \| \p_t^\ell \o_\xi (t ) \|_{1,\lambda}   ,
 \notag
 \\
|\xi|^{|\beta_h|}| 
\p_t^\ell u_{\xi} (t,z)|&\lesssim |\xi|^{|\beta_h| -1}
e^{-\lambda |\xi|}  \| \p_t^\ell \o (t ) \|_{1,\lambda} 
+
(1+ \phi_\kappa(x_3))  
 |\xi|^{|\beta_h|-1} e^{-\lambda |\xi|}  \| \p_t^\ell \o  (t  ) \|_{ \infty,\lambda,\kappa}
  \ \ \text{for} \ |\beta_h|\geq 1 \ \text{and} \ \ell=0,1,
  \label{est:u_t}
  \\
  |\xi|^{|\beta_h|}| 
\p_t^2 u_{\xi} (t,z)|&\lesssim |\xi|^{|\beta_h| -1}
e^{-\lambda |\xi|}  \| \p_t^2\o (t ) \|_{1,\lambda} 
+
(1+ \phi_\kappa(x_3) +\phi_{\kappa t}(x_3) )  
 |\xi|^{|\beta_h|-1} e^{-\lambda |\xi|}  \| \p_t^2\o  (t ) \|_{ \infty,\lambda,\kappa t}
  \ \ \text{for} \ |\beta_h|\geq 1.\notag
\end{align}

From $\p_z^2 \p_t^\ell\phi_\xi = |\xi|^2  \p_t^\ell\phi_\xi +   \p_t^\ell\o_\xi$, (\ref{est:phi_t}) and (\ref{est:phi_tz}) we further derive that 
\Be\begin{split}\label{est:phi_tzz}
|\xi|^k| \p_{x_3}^2\p_t^\ell\phi_\xi (t,x_3)| &\lesssim |\xi|^{k+1} e^{-\lambda |\xi|}
 \| \p_t^\ell\o  (t )\|_{1,\lambda}
 + e^{-\lambda|\xi|} (1+ \phi_\kappa (x_3)) \sum_{m=0,1} \vertiii{ \p_t^m\o(t)}_{\infty, \kappa}  \ \ \text{for} \ \ell=0,1
 ,\\
 |\xi|^k| \p_{x_3}^2\p_t^2\phi_\xi (t,x_3)| &\lesssim |\xi|^{k+1} e^{-\lambda |\xi|}
 \| \p_t^2\o  (t )\|_{1,\lambda}
 + e^{-\lambda|\xi|}
  (1+ \phi_\kappa (x_3)+ \phi_{\kappa t} (x_3))
  \vertiii{ \p_t^2\o(t)}_{\infty, \kappa} .
  \end{split}\Ee
 Therefore we derive that, from (\ref{est:phi_tzz}) and (\ref{est:phi_tz})
\Be\begin{split} \label{est:u_tz}
 |\xi|^{|\beta_h|} | 
 \p_{x_3} \p_t^\ell u_{\xi} (t,z)|&\lesssim 
|\xi| ^{|\beta_h|+1}e^{-\lambda |\xi| }  \| \p_t^\ell \o(t) \|_{1, \lambda }
 + |\xi| ^{|\beta_h| } e^{-\lambda|\xi|} (1+ \phi_\kappa (x_3)) \sum_{m=0,1} \vertiii{ \p_t^m\o(t)}_{\infty, \kappa}  \ \ \text{for} \ \ell=0,1,
 \\
  |\xi|^{|\beta_h|} | 
 \p_{x_3} \p_t^2 u_{\xi} (t,z)|&\lesssim 
|\xi| ^{|\beta_h|+1}e^{-\lambda |\xi| }  \| \p_t^2 \o(t) \|_{1, \lambda }
 + |\xi| ^{|\beta_h| } e^{-\lambda|\xi|} (1+ \phi_\kappa (x_3)+ \phi_{\kappa t} (x_3))  \vertiii{ \p_t^2\o(t)}_{\infty, \kappa t}   .
\end{split}\Ee
From (\ref{est:u_t}), (\ref{est:u_tz}), $ 
\nabla_{x_h}^{\beta_h} \p_{z}^{\beta_3} \p_t ^\ell u(t,x_h,z)
= \sum_{\xi  \in \mathbb Z^2}
(i \xi)^{\beta_h } \p_t^\ell \p_{z}^{\beta_3} u_\xi (t,z) e^{i x_h \cdot \xi }$ and (\ref{norm_bound}) we derive that 
\Be
\begin{split}\label{est:u1}
\sum_{0 \leq |\beta | \leq 1} \sum_{\ell=0,1}  |\nabla ^{\beta } \p_t^\ell u(t,x )|
 \lesssim  1+ \phi_\kappa (x_3)  ,\ \
\sum_{0 \leq |\beta | \leq 1}  |\nabla ^{\beta } \p_t^2 u(t,x )|
 \lesssim  1+ \phi_\kappa (x_3) + \phi_{\kappa t} (x_3) .
\end{split}
\Ee

Now we estimate the second order derivatives of the velocity field. Note that $\p_{x_3}^3 \p_t^\ell\phi_\xi = |\xi|^2 \p_{x_3} \p_t^\ell\phi_\xi +  \p_{x_3} \p_t^\ell\o_\xi$. From (\ref{b3}) and (\ref{est:phi_tz}), 
\Be
|\p_{x_3}^3 \p_t^\ell\phi_\xi| \lesssim
|\xi|^{2} e^{-\lambda |\xi| } \| \p_t^\ell \o(t) \|_{1,\lambda }
 +\kappa^{-1} e^{-\bar \alpha x_3} e^{-\lambda |\xi|}
 \ \ \text{for} \ \ell=0,1.\notag
\Ee
Together this bound with (\ref{est:phi_tzz}) and $
\p_{x_3}^2\p_t^\ell u_{\xi} (t,x_3)| \lesssim 
|\xi| | 
\p_{x_3}^2\p_t^\ell \phi_{\xi} (t,x_3)| 
+  |  \p_{x_3}^3
\p_t^\ell \phi_{\xi} (t,x_3)|$, we derive that 
\[
\sum_{|\beta|=2} \sum_{\ell=0,1} |\nabla^\beta \p_t^\ell u(t,x)|\lesssim 1+ \phi_\kappa (x_3). 
\]
\unhide

 \

 {\bf Step 6: Estimate (3), pressure estimates and (\ref{ut}). }We next turn to the pressure. Taking the divergence to (\ref{NS_k}) and using (\ref{incomp}), we deduce 
\Be\label{eqtn:p}
- \Delta p = \sum_{\ell,m=1}^3 \p_{ \ell} u_m \p_{ m} u_\ell.
\Ee
We obtain the boundary condition of $p$ by reading the third component of (\ref{NS_k}), and then using (\ref{incomp}) and (\ref{noslip}),  
\Be
\begin{split}\label{BC:p}
\p_{ 3} p &= \kappa \eta_0 \Delta u_3=  \kappa \eta_0 \p_{ 3}  \p_{ 3}  u_3 =  - \kappa \eta_0  \p_1 \p_{ 3}  u_1 - \kappa \eta_0 \p_2\p_{ 3}  u_2\\
&
 = - \kappa \eta_0 \p_1 (\o_2 + \p_1 u_3) - \kappa \eta_0 \p_2 (-\o_1 + \p_2 u_3)\\
& = - \kappa \eta_0 \p_1 \o_2  + \kappa \eta_0 \p_2  \o_1  \ \ \text{for} \ x_3=0,
\end{split}
\Ee
 where $\o_1= \p_2 u_3 - \p_3 u_2$ and $\o_2 = - \p_1 u_3 + \p_3 u_1$.

In the Fourier side we read the problem as 
\Be
\begin{split}\label{p_xi}
(|\xi|^2 - \p_3^2) p_\xi (t,x_3) = g_\xi(t,x_3):=
\sum_{\ell, m=1}^3 (\p_\ell u_m \p_m u_\ell)_\xi (t,x_3)
  \ \ \text{for} \ x_3 \in \R_+,\\
\p_3 p_\xi (t,0)  =-  i   \kappa \eta_0 \xi_1 \o_{\xi,2} (t,0)+ i \kappa \eta_0 \xi_2 \o_{\xi, 1}(t,0).
\end{split}
\Ee
A representation of $p_\xi(t,x_3)$ is given by 
\Be\label{form:p_xi}
\begin{split}
p_\xi(t,x_3)&= -\int^{x_3}_0 \frac{1}{2|\xi|} e^{-|\xi| (x_3-y)} g_\xi(y) \dd y  
- \int_{x_3}^\infty \frac{1}{2|\xi|} e^{-|\xi| (y-x_3)} g_\xi(y) \dd y\\
& \ \ \ 
- \int^\infty_0 \frac{1}{2|\xi|} e^{-|\xi| (y+x_3)} g_\xi (y) \dd y 
\\
&
- \frac{1}{|\xi|} e^{-|\xi| x_3} (-  i   \kappa \eta_0 \xi_1 \o_{\xi,2} (t,0)+ i \kappa \eta_0 \xi_2 \o_{\xi, 1}(t,0)),
\end{split}
\Ee
which is valid for all $\xi\neq 0$. 
When $\xi=0$, by integrating \eqref{p_xi} and by using the boundary conditions $\p_3 p_0(t,0)=0$, $u(t,x_h,0)=0$ and the divergence free condition $\nabla \cdot u=0$, we first obtain 
\Be\label{p03}
\begin{split}
\p_3 p_0(t,x_3)&= - \frac{1}{(2\pi)^2} \int_0^{x_3}\iint_{\mathbb T^2}   \sum_{\ell,m=1}^3 \p_{ \ell} u_m \p_{ m} u_\ell \dd x_h \dd y_3  \\
&= - \frac{1}{(2\pi)^2}\iint_{\mathbb T^2} (u\cdot\nabla u_3)(t,x_h,x_3)  \dd x_h \\
&=  - \frac{2}{(2\pi)^2}\iint_{\mathbb T^2} (u_3\p_3 u_3)(t,x_h,x_3)  \dd x_h ,
\end{split}
\Ee 
where we have used the integration by parts and $\nabla \cdot u=0$ at the last step. 

Observe that $\p_3p_0$ decays exponentially in $x_3$, and in particular $\int_0^\infty | \p_3 p_0(t,x_3) |   \dd x_3 <\infty$. The integration yields 
\[
p_0(t,x_3) =p_0(t,0) -  \int_0^{x_3} \frac{2}{(2\pi)^2}\iint_{\mathbb T^2} (u_3\p_3 u_3)(t,x_h,y_3)  \dd x_h \dd y_3 . 
\]
Since $p_0(t,0)$ is a free constant in $x_3$, we fix $p_0(t,x_3)$ by choosing $$p_0(t,0)= \frac{2}{(2\pi)^2}\int_0^{\infty} \iint_{\mathbb T^2} (u_3\p_3 u_3)(t,x_h,y_3) \dd x_h  \dd y_3 <\infty,$$ such that  
\Be\label{p0}
\begin{split}
 p_0(t,x_3) &=  \frac{2}{(2\pi)^2}\int_{x_3}^\infty\iint_{\mathbb T^2} (u_3\p_3 u_3)(t,x_h,y_3) \dd x_h \dd y_3 .
\end{split}
\Ee 
The pressure $p$ is then recovered by 
\Be\label{pressure}
p(t,x_h,x_3) = p_0 (t,x_3) + \sum_{|\xi|\geq 1, \xi\in \mathbb Z^2} p_\xi(t,x_3) e^{ix_h\cdot\xi} ,
\Ee
where $ p_0 (t,x_3) $ and $p_\xi(t,x_3)$ are given in \eqref{p0} and \eqref{form:p_xi}.

Now the pressure estimate follows readily from the velocity and vorticity estimates. To show \eqref{est:p}, we first note from \eqref{est:u_t} and \eqref{est:u1} 
$|p_0(t,x_3)|\lesssim |u_3(t,x)| \int_\Omega |\p_3 u_3 (t,x)| \dd x \lesssim 1$
and from Lemma \ref{lem_elliptic} 
\[
\begin{split}
| g_\xi |& \lesssim e^{-\lambda |\xi|}\sum_{i=1}^2  \left( \|\p_i u_h \|^2_{\infty,\lambda}+ \|\zeta^{-1}\p_i u_3\|_{\infty,\lambda}\|\zeta \p_3 u_i\|_{\infty,\lambda} \right) \\
& \lesssim  e^{-\lambda |\xi|} \left[ \sum_{0\leq |\beta|\leq 1} \|\nabla_h^\beta \o\|^2_{1,\lambda}+\sum_{1\leq |\beta|\leq 2} \|\nabla_h^\beta \o_h\|_{1,\lambda} \left( \sum_{0\leq |\beta|\leq 1} \|\nabla_h^\beta \o\|_{1,\lambda}+ \|\zeta \o_h\|_{\infty,\lambda}\right)  \right] ,
\end{split}
\]
from which we deduce $|p(t,x_h,x_3)|\lesssim 1$. The estimation of $\p_t p $ and $\p_t^2p$ follows analogously. 

For the decay estimates \eqref{est:pdecay}, we start with $\ell=0$ and $\beta=0$. Due to our choice of $p_0(t,x_3)$ in \eqref{p0}, using \eqref{est:u_t} and \eqref{est:u1}, we have the spatial decay for $p_0(t,x_3)$:
\[
|p_0(t,x_3)|\lesssim  \int_{x_3}^\infty \iint_{\mathbb T^2}  (1+\phi_\kappa(y_3)) e^{-\min(1,\frac{\bar\alpha}{2}) y_3 } \dd x_h  \dd y_3\lesssim \kappa^{-\frac12} e^{-\min(1,\frac{\bar\alpha}{2}) x_3 }.
\]
For $\xi\neq 0$, we use another estimate for $|g_\xi|$ and Lemma \ref{lem_elliptic}
\Be\label{g_xi}
\begin{split}
|g_\xi(y)| &\lesssim \sum_{\ell,m=1}^3\sum_{\eta\in \mathbb Z^2} e^{-\lambda|\xi-\eta|} e^{-\min(1,\frac{\bar\alpha}{2}) y}(1+\phi_\kappa(y)) |(\p_m u_\ell)_\eta(y)|  \\
&\lesssim \kappa^{-\frac12} e^{-\lambda |\xi|} e^{-\min(1,\frac{\bar\alpha}{2}) y} \sum_{\ell,m=1}^3 
\sum_{\eta\in \mathbb Z^2} e^{\lambda |\eta| } |(\p_m u_\ell)_\eta(y)| ,
\end{split}
\Ee
from which we deduce that $| p_\xi(t,x_3) |\lesssim  \kappa^{-\frac12} e^{-\min(1,\frac{\bar\alpha}{2}) x_3} $. Hence \eqref{est:pdecay} holds for $\ell=0$ and $\beta=0$. For the pressure gradient estimate when $|\beta|=1$,  
from \eqref{p03} and \eqref{est:u1} we first note  
\[
|\p_3 p_0(t,x_3) | \lesssim \sup_{x_h \in \mathbb T^2}( |u_3| |\p_3 u_3|)  \lesssim (1+\phi_\kappa(x_3)) e^{-\min(1,\frac{\bar\alpha}{2}) x_3}.
\]
For $\xi\neq 0$, by \eqref{g_xi} it is easy to see that $|\xi p_\xi(t,x_3) |\lesssim  \kappa^{-\frac12} e^{-\min(1,\frac{\bar\alpha}{2}) x_3} $. Note that $\p_3 p_\xi(t,x_3)$ has a similar integral form as $|\xi| p_\xi(t,x_3)$ and the estimate follows in the same way, which results in  $ |\p_3 p_\xi(t,x_3) |\lesssim  \kappa^{-\frac12} e^{-\min(1,\frac{\bar\alpha}{2}) x_3}$. This finishes \eqref{est:pdecay} for $\ell=0$ and $|\beta|=1$. The remaining cases for $\ell=1$ and $|\beta|=0,1$ can be treated in the same way. 

For the decay estimate of $\p_t^2p$, we take into account the initial layer which occurs at $\p_t^2 \o$ and $\nabla \p_t^2 u$. First using \eqref{est:u_t}, \eqref{est:u1} and \eqref{est:u2} we have 
\[
\begin{split}
|\p_t^2 p_0(t,x_3) |&\lesssim \left| \int_{x_3}^\infty\iint_{\mathbb T^2} (u_3\p_3\p_t^2 u_3+ \p_t^2u_3\p_3 u_3+2\p_tu_3\p_3\p_t u_3)(t,x_h,y_3) \dd x_h \dd y_3 \right| \\
&\lesssim  \big(1+ \phi_\kappa (x_3) + \phi_{\kappa t} (x_3)\big)e^{-\min (1, \frac{\bar{\alpha}}{2} )x_3} ,
\end{split}
\]
while for $|\xi|\neq 0$ we have 
\[
\begin{split}
|\p_t^2 g_\xi(y)| &\lesssim \sum_{\ell,m=1}^3\sum_{\eta\in \mathbb Z^2} e^{-\lambda|\xi-\eta|} e^{-\min(1,\frac{\bar\alpha}{2}) y}(1+\phi_\kappa(y)) |(\p_m u_\ell)_\eta(y)|  \\
&\lesssim \kappa^{-\frac12} e^{-\lambda |\xi|} e^{-\min(1,\frac{\bar\alpha}{2}) y} \sum_{i=1}^2 \sum_{\ell,m=1}^3 
\sum_{\eta\in \mathbb Z^2} e^{\lambda |\eta| } |(\p_m \p_t^i u_\ell)_\eta(y)| ,
\end{split}
\]
from which we deduce \eqref{est:pt2}. 

The last estimate for $\p_t^\ell u$ for $\ell=1,2$ follows from the equation: $\p_t  u = \kappa\eta_0 \Delta u - u \cdot\nabla u - \nabla  p $ and $\p_t^2 u = \kappa\eta_0 \Delta \p_t u - u \cdot\nabla  \p_t u - \p_t u \cdot\nabla u - \nabla \p_t  p $.

\section*{Acknowledgements} 

Part of this work was conducted while the authors were participating in the INdAM worksop ``Recent advances in kinetic equations and applications'' organized by Francesco Salvarani in Rome. We thank the institute and the organizer for its generous hospitality and support. JJ was supported in part by the NSF Grant DMS-1608494 and by the Simons Fellowship (grant \# 616364). CK was supported in part by National Science Foundation under Grant No. 1501031, Grant No. 1900923, and the Wisconsin Alumni Research Foundation.

\appendix \section{Sobolev embedding in 1D}

 Often we have used a standard 1D embedding: For $T>0$, 
\Be\label{Sob_1D}
|g(t)|^2 \lesssim_T \int_0^T |g(s)|^2 \dd s + \int_0^T |g^\prime(s)|^2 \dd s \ \ \text{for } t \in [0,T].
\Ee

A proof is based on an equality:
\Be
|g(t)|^2= \frac{1}{T/2 }\int^{t+T /2}_t \Big(
g(s) - \int^s_t g^\prime (\tau) \dd \tau 
\Big)^2 \dd s .\notag
\Ee
For $0< t \leq T/2$, 
\Be\notag
\begin{split}
|g(t)|^2 &\leq  \frac{1}{T/2}\int^{t+T/2}_t \Big(
2 |g(s)|^2 + 2\Big | \int^s_t g^\prime (\tau) \dd \tau 
\Big|^2 \Big) \dd s\\
&\leq  \frac{1}{T/2}\int^{t+T/2}_t \Big(
2 |g(s)|^2 + 2 |s-t|  \int^s_t |g^\prime (\tau)|^2 \dd \tau 
  \Big) \dd s\\
  &\leq  \frac{2}{T/2}\int^{t+T/2}_t  
  |g(s)|^2 \dd s + \frac{2}{T/2} \int^{t+ T/2}_t   |s-t|  \int^s_t |g^\prime (\tau)|^2 \dd \tau 
   \dd s\\
   &\leq 
     \frac{2}{T/2}\int^{t+T/2}_t  
  |g(s)|^2 \dd s+ T\int^{t+T/2}_t  
  |g^\prime(s)|^2 \dd s\\
  & \lesssim_T    \int_0^T |g(s)|^2 \dd s + \int_0^T |g^\prime(s)|^2 \dd s.
\end{split}
\Ee

For $ T/2<t \leq T$, using 
\Be
|g(t)|^2= \frac{1}{T/2 }\int_{t-T /2}^t \Big(
g(s) - \int^s_t g^\prime (\tau) \dd \tau 
\Big)^2 \dd s ,\notag
\Ee
we derive that 
\Be\notag
\begin{split}
|g(t)|^2 &\leq  \frac{1}{T/2}\int_{t-T/2}^t \Big(
2 |g(s)|^2 + 2\Big | \int^s_t g^\prime (\tau) \dd \tau 
\Big|^2 \Big) \dd s\\
   &\leq 
     \frac{2}{T/2}\int_{t-T/2}^t  
  |g(s)|^2 \dd s+ T\int_{t-T/2}^t  
  |g^\prime(s)|^2 \dd s\\
  & \lesssim_T    \int_0^T |g(s)|^2 \dd s + \int_0^T |g^\prime(s)|^2 \dd s.
\end{split}
\Ee

\end{document}